\documentclass{amsart}
\usepackage{a4wide}

\usepackage{pgf,tikz,pgfplots}
\pgfplotsset{compat=1.15}
\usepackage{mathrsfs}
\usetikzlibrary{arrows}

\usepackage[T1]{fontenc}
\usepackage{microtype}
\usepackage{lmodern}
\usepackage[colorlinks=true,urlcolor=blue, citecolor=red,linkcolor=blue,linktocpage,pdfpagelabels, bookmarksnumbered,bookmarksopen]{hyperref}
\usepackage[hyperpageref]{backref}
\usepackage{amsthm} %for citing inside theorem header
\usepackage{latexsym,amsmath,amssymb}

\usepackage{bbm}

\usepackage{accents}
\usepackage{esint}
 \usepackage[colorinlistoftodos,prependcaption,textsize=small]{todonotes}

\usepackage{soul}
\usepackage{mathtools} %For \chi with a much lower index (\mychi)
\usepackage{xparse} %For a new definiton of \chi with a slightly lower index
\usepackage[capitalize]{cleveref}

\usepackage[shortlabels]{enumitem}

 \usepackage[textsize=small]{todonotes}
\title{Global existence of the $s$-Schr\"odinger maps in $n \geq 5$.}
\newcommand{\tell}{\mathfrak{l}}
\newcommand{\tn}{\mathfrak{n}}

\newcommand{\N}{{\mathbb N}}

\renewcommand{\S}{{\mathbb S}}

\newtheorem{theorem}{Theorem}
\newtheorem{lemma}[theorem]{Lemma}
\newtheorem{corollary}[theorem]{Corollary}
\newtheorem{proposition}[theorem]{Proposition}

\theoremstyle{definition}
\newtheorem{definition}[theorem]{Definition}

\theoremstyle{remark}
\newtheorem{remark}[theorem]{Remark}

\newcommand\supp{{\rm supp\,}}

\newcommand{\ind}{{\mathbbm{1}} }

\newcommand{\R}{\mathbb{R}}
\newcommand{\C}{\mathbb{C}}

\newcommand{\Z}{\mathbb{Z}}

\newcommand{\brac}[1]{\left (#1 \right )}

\newcommand{\abs}[1]{\left\lvert #1 \right \rvert}

\renewcommand{\i}{{\rm \bf i}}

\newcommand{\barint}{
\rule[.036in]{.12in}{.009in}\kern-.16in \displaystyle\int }

\newcommand{\barcal}{\text{$ \rule[.036in]{.11in}{.007in}\kern-.128in\int $}}

\def\mvint_#1{\mathchoice
          {\mathop{\vrule width 6pt height 3 pt depth -2.5pt
                  \kern -8pt \intop}\nolimits_{\kern -3pt #1}}%
          {\mathop{\vrule width 5pt height 3 pt depth -2.6pt
                  \kern -6pt \intop}\nolimits_{#1}}%
          {\mathop{\vrule width 5pt height 3 pt depth -2.6pt
                  \kern -6pt \intop}\nolimits_{#1}}%
          {\mathop{\vrule width 5pt height 3 pt depth -2.6pt
                  \kern -6pt \intop}\nolimits_{#1}}}

\numberwithin{theorem}{section} \numberwithin{equation}{section}

\newcommand{\aleq}{\lesssim}
\newcommand{\ageq}{\gtrsim}
\newcommand{\aeq}{\approx}

\usepackage{scalerel}[2014/03/10]
\usepackage[usestackEOL]{stackengine}
\def\avint{\,\ThisStyle{\ensurestackMath{%
			\stackinset{c}{.2\LMpt}{c}{.5\LMpt}{\SavedStyle-}{\SavedStyle\phantom{\int}}}%
		\setbox0=\hbox{$\SavedStyle\int\,$}\kern-\wd0}\int}
\let\latexchi\chi
\makeatletter
\renewcommand\chi{\@ifnextchar_\sub@chi\latexchi}
\newcommand{\sub@chi}[2]{% #1 is _, #2 is the subscript
  \@ifnextchar^{\subsup@chi{#2}}{\latexchi^{}_{#2}}%
}
\newcommand{\subsup@chi}[3]{% #1 is the subscript, #2 is ^, #3 is the superscript
  \latexchi_{#1}^{#3}%
}
\makeatother

\author{Ahmed Dughayshim}
\email[Ahmed Dughayshim]{aha80@pitt.edu}

\author{Armin Schikorra}
\email[Armin Schikorra]{armin@pitt.edu}
\address{Department of Mathematics,
University of Pittsburgh,
301 Thackeray Hall,
Pittsburgh, PA 15260, USA}

\begin{document}

\begin{abstract}
We study the $s$-Schr\"odinger map equation
\[
\begin{cases}
\partial_t u = -u \wedge (-\Delta)^s u
    \quad \text{on } \mathbb{R}^n \times \mathbb{R}\\
u: \mathbb{R}^n \times \mathbb{R} \to \mathbb{S}^2\\
u(x,0) = u_0.
\end{cases}
\]
For $s \in (1/2,1)$ and $n \geq 5$ we prove that for small initial data
$u_0$ in the critical Besov space one can obtain a global solution.
\end{abstract}

\maketitle
\tableofcontents
\section{Introduction}
We are interested in studying the following initial value problem $u: \R^n \times \R \to \S^2$
\begin{equation}\label{SSchrodingereq}
\begin{cases}
 \partial_{t} u = - u \wedge (-\Delta)^s u \mbox{    } \mbox{ on } \R^{n} \times \R \\
 u(x,0) = u_{0},
\end{cases}
\end{equation}
here $ \wedge $ stands for the usual cross product in $ \R^{3}$.
When $s=1$ this equation is known as the Schr\"odinger map equation, which arises as the undamped part of the Landau--Lifshitz model describing the evolution of magnetization in ferromagnetic materials; see \cite{SurveySch,ChangShatahUhlenbeck2000,RodnianskiRubinsteinStaffilani2009,Bejenaru2026NewResolutionSpace} and references within. Its well-posedness in the critical spaces was obtained in a series of works \cite{cmp,DIE,Null,AnnalesKI,BejenaruIonescuKenig2007}.

For $s=\frac{1}{2}$ the equation is known as the Halfwave map equation, derived in the physics literature as the continuum limit of Calogero-Moser spin systems, and mathematically investigated more recently in \cite{LenzSchik,halfwav}, with well-posedness discussed in \cite{marsden2024,Uniq,Liu_2023,WaveSystemsArminSilvino,halfn1}, see also \cite{lenzsurvey,GerardLenzmann2018Lax,BerntsonKlabbersLangmann2020}.

% % where $ s \in (1/2,1)$ , $ u : \R^{n+1} \to \S^{2}$, $ \wedge$ stands for the cross product in $ \R^{3}$,  and $ \mathcal{F}((-\Delta)^{s} u ) := \vert \xi \vert^{2s} \hat{u}$. At the end point $ s =1$ the equation \eqref{SSchrodingereq} is the so called Schr\"odinger map equation which was studied in \cite{cmp}, \cite{DIE}, \cite{Null}, \cite{AnnalesKI}, \cite{BejenaruIonescuKenig2007}. Namely, global well-posedness for small initial data $u_{0}$ in the critical Sobolev space $ \dot{H}^{n/2}$ was proven in \cite{AnnalesKI} for all $n \geq 2$. Moreover, for $n =1 $, $ s=1$ well-posedness results were obtained in \cite{ChangShatahUhlenbeck2000}, \cite{RodnianskiRubinsteinStaffilani2009}. For more references on Schr\"odinger maps see \cite{SurveySch},\cite{Bejenaru2026NewResolutionSpace}.\\

% On the other hand, for the end point where $ s=1/2$ the equation \eqref{SSchrodingereq} is the so called halfwave map equation which was studied in \cite{LenzSchik}, \cite{halfwav},\cite{halfwav2}. In particular, see \cite{halfwav} fo well posedness results in high dimension. As for the case $ s= 1/2$, $n=1$ see \cite{halfn1}. The case for which $ n =2,3$, and $ s= 1/2$ seems to be still open. For further references on the halfwave map see \cite{marsden2024}, \cite{Uniq}, \cite{lenzsurvey}, \cite{Liu_2023}.\\

In this article we are interested in initiating the study of well-posednes of \eqref{SSchrodingereq} for the in-between regimes, i.e. for $ s \in (1/2,1)$.  
Our main result concerns global existence of \eqref{SSchrodingereq} with small initial data in the critical Besov norm.

\begin{theorem}\label{mainthm}
Let $ n \geq 5$, $ s \in ( 1/2,1)$,  $ \sigma_{0} \geq \frac{n}{2}+10$, and $Q \in \S^{2}$. Then there exists $ \epsilon := \epsilon(n,s,\sigma_{0}) > 0$ so that for any $ u_{0} : \R^{n} \to \S^{2}$  satisfying $ u_{0} \in B^{\sigma_{0}+1}_{Q}$ and 
$$
\Vert u_{0} \Vert_{\dot{B}_{Q}^{\frac{n}{2}}} \leq \epsilon
$$
then there exists a solution of \eqref{SSchrodingereq}  $u : \R^{n+1} \to \S^{2} $ which satisfies 
$$
u \in C ( \R, \dot{B}^{\sigma_{0}}_{Q}) 
$$ and 
$$
\sup_{t \in \R}  \Vert u \Vert_{\dot{B}_{Q}^{\sigma}} \lesssim \Vert u_{0} \Vert_{\dot{B}_{Q}^{\sigma}},
$$
for any $ \sigma \in [ \frac{n}{2}, \sigma_{0}]$.
\end{theorem}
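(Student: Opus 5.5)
The strategy follows the scheme of Ionescu--Kenig and Bejenaru--Ionescu--Kenig for $s=1$, adapted to the dispersion relation $|\xi|^{2s}$. It combines three ingredients: local existence of smooth solutions, a priori estimates in critical and higher Besov norms closed by a bootstrap/continuity argument, and persistence of regularity to globalize.

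\emph{Step 1: local theory.} For $u_0 \in B^{\sigma_0+1}_Q$ I would construct a local smooth solution $u\in C([-T,T],\dot B^{\sigma_0}_Q)$ by energy methods. Regularize by adding $-\varepsilon(-\Delta)^{k} u$ (or use a Galerkin/mollified system), project back to the sphere, and use the commutator structure. The cross product gives $\langle u\wedge(-\Delta)^s u, (-\Delta)^{\sigma}u\rangle$ with only lower-order commutators $[(-\Delta)^{\sigma/2},u\wedge](-\Delta)^s u$, and these are controlled by Kato--Ponce estimates since $2s<2$ and $\sigma_0$ is large. This gives existence on a time $T$ depending on $\|u_0\|_{B^{\sigma_0+1}}$, together with a blow-up alternative: the solution continues as long as $\|u(t)\|_{\dot B^{\sigma_0}_Q}$ (or even a Lipschitz-type quantity like $\|\nabla u\|_{L^\infty}$) stays bounded.

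\emph{Step 2: a priori estimates.} The main step is to show that if $u$ solves on $[-T,T]$ with $\sup_t\|u\|_{\dot B^{n/2}_Q}\le \epsilon^{1/2}$, then $\sup_t\|u\|_{\dot B^{\sigma}_Q}\lesssim \|u_0\|_{\dot B^\sigma_Q}$ with constants independent of $T$. I would Littlewood--Paley decompose, writing $P_k u$, and rewrite the equation, using $|u|=1$ and $u\cdot \partial u=0$, as a linear fractional Schr\"odinger equation for $P_k u$ with nonlinearity
\[
(\partial_t + u_{\ll k}\wedge(-\Delta)^s)P_k u = \text{(high--low, high--high interactions)}.
\]
The difficulty is the derivative loss in low--high interactions: $u_{\ll k}\wedge(-\Delta)^sP_ku$ is not perturbative. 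I would handle it either by a gauge/frame construction (Coulomb-type frame transporting $\R^3$ components to complex scalars, as done for $s=1$), or more simply, since $n\ge 5$, by using that the leading part is skew-symmetric. The energy estimate for $\|P_k u\|_{L^2}$ then only sees commutators $[(-\Delta)^s, u_{\ll k}]$, which lose $2s-1<1$ derivatives on $P_ku$ and gain one on $u_{\ll k}$.

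To make the commutator terms summable over time one needs dispersive space-time control. I would use the Strichartz estimates for $e^{it(-\Delta)^s}$: frequency localized, these are $\|e^{it(-\Delta)^s}P_kf\|_{L^2_tL^{2n/(n-2)}_x}\lesssim 2^{-k(s-1)}\|P_kf\|_{L^2}$ with the appropriate scaling factor, valid since the Hessian of $|\xi|^{2s}$ is nondegenerate for $s\ne 1/2$. I would also use local smoothing / lateral $L^\infty_{x_1}L^2$ estimates gaining $2^{k(2s-1)/2}$ derivatives. The endpoint $L^2_t$ Strichartz and high dimension $n\ge5$ make the bilinear estimates $\|P_{k_1}u\,\nabla P_{k_2}u\|_{L^2_{t,x}}$ close in $\ell^1$ Besov sums. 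Here the critical Besov (rather than Sobolev) norm helps sum the dyadic pieces. I would define a resolution space $F_k$ (energy plus Strichartz plus smoothing) and $N_k$, prove a linear estimate $\|P_ku\|_{F_k}\lesssim\|P_ku_0\|+\|\text{RHS}\|_{N_k}$, and prove multilinear bounds $\|\text{RHS}_k\|_{N_k}\lesssim \epsilon\sum_j 2^{-\delta|k-j|}\|P_ju\|_{F_j}$. Summing with weights $2^{\sigma k}$ gives the claim for all $\sigma\in[n/2,\sigma_0]$, using frequency-envelope bookkeeping.

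\emph{Step 3: closing.} A continuity argument in $T$ using Step 2 keeps $\|u\|_{\dot B^{n/2}}\lesssim\epsilon$ and $\|u\|_{\dot B^{\sigma_0}}$ bounded uniformly, so by the blow-up alternative of Step 1 the solution extends to all of $\R$. Continuity in time into $\dot B^{\sigma_0}_Q$ follows from the local theory. I expect the main obstacle to be Step 2, specifically the low--high magnetic interaction with the nonlocal operator. Commutators of $(-\Delta)^s$ with $u_{\ll k}$ are not as clean as for $\Delta$, and the smoothing gain $2s-1$ is less than a full derivative. I would first try to absorb this purely through $L^2_t$ endpoint Strichartz estimates in $n\ge5$, paying $2^{k(2-2s)}$-type losses against $\epsilon$-smallness of $\|\nabla u_{\ll k}\|_{L^2_tL^\infty_x}$. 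I would turn to a gauge transform only if that fails.
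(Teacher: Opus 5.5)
Your Steps 1 and 3 match the paper in outline: parabolic regularization with tangential energy estimates gives the local theory, and uniform a priori bounds plus rescaling and continuation globalize. Step 2, which carries all the weight, has a genuine gap.

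The mechanism you commit to is skew-symmetry of $u_{\ll k}\wedge(-\Delta)^s$, plus Strichartz estimates, plus $\epsilon$-smallness. It does not close at critical regularity, for three reasons.

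\begin{itemize}
\item Skew-symmetry helps only inside an energy identity for $\|P_ku\|_{L^2}$. There the geometry even removes the derivative loss (this is \eqref{comestzz1}). But Gronwall then needs a scale-invariant, time-integrable bound such as $\int_{\R}\|\nabla u\|_{L^\infty_x}^{2s}\,dt$. No Strichartz estimate gives $L^{2s}_t$ control, since $2s<2$.
\item In a Duhamel/resolution-space framework the low--high term $(u_{\ll k}-Q)\wedge(-\Delta)^sP_ku$ is quadratic with derivatives on the high-frequency factor; after extracting the commutator it becomes $\nabla u_{\ll k}\,\nabla^{2s-1}P_ku$. Strichartz estimates gain no derivatives, and a loss growing like a power of $2^{k}$ cannot be paid for by a $k$-independent $\epsilon$.
\item Local smoothing does recover exactly the $2s-1$ derivatives: use $L^\infty_eL^2_{e^\perp,t}$ for the high factor together with the inhomogeneous $L^1_eL^2_{e^\perp,t}$ bound \eqref{passingtol1l2}. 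But this works only when the low-frequency coefficient lies in $L^1_eL^\infty_{e^\perp,t}$. That bound is available only as a product of two maximal-function factors in $L^2_eL^\infty_{e^\perp,t}$, i.e.\ only for cubic terms.
\end{itemize}
Even the cubic stereographic formulation runs into trouble: Appendix~\ref{null} reports an unresolved $2^{k_2(1-s)}$ loss when its null structure is exploited.

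The missing idea is the one you list only as a fallback. The paper works with the Coulomb frame, $\sum_i\partial_i\langle e_1,\partial_ie_2\rangle=0$, and the fields $V_i=\langle\partial_iu,e_1\rangle+\i\langle\partial_iu,e_2\rangle$. Then the connection coefficients are quadratic, $A_i=c\sum_j|\nabla|^{-1}R_j\Re(\i\bar V_jV_i)$. As a result every term in \eqref{recudedSchro} is at least cubic in $(V,|\nabla|e_1,|\nabla|e_2)$, measured in $F^{\frac{n-2}{2}}$.

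For $s<1$ this system is not closed in $V$. It contains nonlocal terms that vanish when $s=1$: $\langle e,[\nabla^{2s-1}_i,e](\cdot)\rangle$, $\mathscr H(e_1,e_2)$, and $\partial_j\langle e_1,[\nabla^{2s-1}_i,e_2](V_j)\rangle$. The last one couples $e_1$ and $e_2$. It is controlled only by inserting the Coulomb condition in Fourier form, which produces the splitting into $\Gamma_1$, $\Gamma^a_2$, $\Gamma^b_2$.

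The trilinear estimates then use local smoothing, maximal and localized maximal function bounds in $X_k+Y^e_{k,k'}$ spaces, not Strichartz. The hypothesis $n\ge5$ enters only through the bound for $|\nabla|^{-1}(fg)$ in $F^{\sigma}$, which lets one control $|\nabla|e_i$ and $|\nabla|u$ by $V$ (Lemma~\ref{EcontroledbyV}). Without this reduction your Step 2 does not yield the uniform a priori bound that Steps 1 and 3 rely on.
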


Here we use the following notation; for $ Q \in \S^{2}$ and $ \sigma >0 $ we define $\dot{B}^{\sigma}_{Q}$ as follows
$$
\dot{B}^{\sigma}_{Q} : = \{ u : \R^{n} \to \S^{2} : (u-Q)_{l} \in \dot{B}^{\sigma}_{2,1} \text{  ,} \text{ for } l=1,2,3 \},
$$
where $ \dot{B}^{\sigma}_{2,1} $ is the homogeneous Besov space. We define the norm 
$$
\Vert u \Vert_{\dot{B}^{\sigma}_{Q}} := \sum_{l=1}^{3} \Vert (u - Q)_{l} \Vert_{\dot{B}^{\sigma}_{2,1}}.
$$
Similarly, 
$$
{B}^{\sigma}_{Q} : = \{ u : \R^{n} \to \S^{2} : (u-Q)_{l} \in {B}^{\sigma}_{2,1} \text{  ,} \text{ for } l=1,2,3 \},
$$
where 
$
{B}^{\sigma}_{2,1}
$
is the inhomogeneous Besov space. And we define the norm
$$
\Vert u \Vert_{{B}^{\sigma}_{Q}} := \sum_{l=1}^{3} \Vert (u - Q)_{l} \Vert_{{B}^{\sigma}_{2,1}}.
$$
\\

There has not been a lot of research on \eqref{SSchrodingereq} in the regime $s \in (1/2,1)$. In  \cite{subcritical} one of the authors studied well-posedness in the subcritical regime, more precisely for small initial data in $B^{\frac{n+1}{2}}_{Q}$. In \cite{model}, well-posedness for a ``model'' equation for \eqref{SSchrodingereq}, namely for
$$
(\i \partial_{t} - ( -\Delta)^{s} ) f = (|\nabla|^{1-2s} |f|^{2}) |\nabla|^{2s-1} f
$$
was established. 

One of the  obstacles in the well-posedness analysis is that if one uses the stereographic projection, as was done in \cite{subcritical}, and study the resulting scalar equation, then the nonlinearity does not have a suitable null-structure(one can write the nonlinearity formally in something reminiscent of a null structure, but there is a critical loss of derivatives to make use of it, see appendix \ref{null}). For related discussions regarding the null structure for Schr\"odinger maps, see \cite{Null,Bejenaru2026NewResolutionSpace,Bejenaru2008GlobalResults}. Instead we will use the stereographic projection (this is one place where we use the Besov-space assumption, instead of the slightly weaker Sobolev-space assumption), combined with a Coulomb gauge $e_i: \R^n \to \S^2$, $i=1,2$, $e_1 \perp e_2 \perp u \perp e_i$. The interesting point is that since $s \in (1/2,1)$ we can use the classical, \emph{local}, Coulomb gauge -- i.e. the same used for $s=1$ in \cite{BejenaruIonescuKenig2007}. This leads to several technical issues, in particular the reduced equation is not simply a reduced equation in terms of $V_{i} : = \langle \partial_{i}u, e_{1} \rangle + \i \langle \partial_{i} u, e_{2} \rangle$ (which was the case  for $s=1$). Instead we find a heavily coupled nonlocal equation of $V$, $u$, and $e$ -- and we exploit that the \emph{leading order} terms \emph{mostly} depends on $V$. Another reason we work in the Besov type resolution spaces are the algebra properties of Besov spaces.

\subsection*{Outline} By rotation invariance, it suffices to prove Theorem \ref{mainthm} for $ Q = (0,0,1)$. For this reason, throughout the paper we denote $ Q = (0,0,1)$ and prove \Cref{mainthm} for $Q=(0,0,1)$.
In \Cref{s:resolution} we define the resolution spaces and prove various estimates for these spaces. Most of these estimates were already proven in \cite{subcritical,model}. In 
\Cref{s:algebraprops} we prove that the main resolution space $F^{\sigma}$ form an algebra for appropriate values of $ \sigma$. In \Cref{s:reduction} we derive the modified $s$-Schr\"odinger equation for the functions $V_{i}$. In the remaining sections we  prove the nonlinear estimates for the modified $s$-Schr\"odinger equation. Lastly, in \Cref{s:proofofthm} we prove Theorem \ref{mainthm}. Appendix \ref{s:taylor} contains basic commutator estimates. In Appendix \ref{s:elementary} we recall some elementary estimates including algebra properties of Besov spaces. And in Appendix \ref{s:shorttime} we prove short time existence of \eqref{SSchrodingereq}. In appendix \ref{null} we include the computations regarding the null structure for $ s \in (0,1)$ and explain why we do not take that route in the proof of Theorem \ref{mainthm}.

\subsection*{Acknowledgement} 
A.S. was an Alexander-von-Humboldt Fellow. S.R.F. and A.S. received funding from NSF Career DMS-2044898.

This work is proudly made without use of AI-tools for any of the mathematical reasoning, except for \Cref{s:shorttime}. Beyond that AI was used only for proof-checking of the manuscript and editing.

\section{Resolution spaces}\label{s:resolution}
\subsection*{Notation}
We use the same notation in \cite{model} and \cite{subcritical}. Namely,  we will use Greek letters $\xi,\tau$ for phase space, and Roman letters $x,t$ for physical space.

Throughout this work we assume, unless otherwise stated, that $n \geq 4$ and $s \in (\frac{1}{2},1)$. Constants, like $C$ can change from line to line and generally may depend on the dimension and $s$. We use the notation $A \aleq B$ if there is a multiplicative, nonnegative constant $C$ such that $A \leq C B$, and $A \aeq B$ if $A \aleq B$ and $A \ageq B$.

We will use the following commutator notation for operators $T$ and maps $f,g$.
\[
[T,f](g) = T(fg) - f Tg
\]
\[
H_T(f,g) = T(fg)-(Tf) g - f Tg
\]
We will write the Fourier transform as 
\[
 \mathcal{F}_{\R^k} f(\zeta) = \int_{\R^k} e^{-\i \zeta \cdot x} f(x) dx
\]
and it's inverse (up to a multiplicative constant which we will ignore for the sake of readability)
\[
 \mathcal{F}_{\R^k}^{-1} f(x) = \int_{\R^k} e^{+\i \zeta \cdot x} f(\zeta) d\zeta.
\]
We will omit the subscript when the space is clear from context. Another abuse of notation is that we will ``define'' the Fourier symbol of the operator $\i \partial_t - (-\Delta^s)$ as
 \[
   \mathcal{F} \brac{(\i \partial_t - (-\Delta^s))f}(\xi,\tau) =  -(\tau +  |\xi|^{2s})\mathcal{F} f(\xi,\tau)
\]
the multiplicative constant that should be on the right-hand side plays no role in the analysis.

\subsection*{Frequency projections}
Let $\varphi_{0}$ be the typical Littlewood-Paley bump function, $\varphi_{0} \in C_c^\infty((-2,2),[0,1])$, $\varphi_{0} \equiv 1$ in $[-1,1]$, $\varphi_{0}(r) = \varphi_{0}(|r|)$. Also, let $\varphi(r) := \varphi_{0}(r) - \varphi_{0}(2r)$ and observe that we have 
\[
 1 = \varphi_{0}(r) + \sum_{k=1}^\infty \varphi(r/2^{k}) \quad \forall r \in [0,\infty).
\]
For vectors $\xi \in \R^n$ we will tacitly mean 
\[
\varphi_{0}(\xi) \equiv \varphi_{0}(|\xi|), \quad \varphi(\xi) := \varphi(|\xi|).
\]
We will use the notation 
$$
\varphi_{[k_{1},k_{2}]}(r) = \sum_{m=k_{1}}^{k_{2}} \varphi(r/2^{m}).
$$
In particular, 
$$
\varphi_{k} (r) : = \varphi (r /2^{k} ).
$$
We also define $\varphi^+$ to be the restriction of $\varphi$ to $r > 0$, e.g. for $k_1,k_2 \in \Z$

\[ \varphi^{+}_{[k_{1},k_{2}]}(r) = \ind_{[0,\infty)}(r) \varphi_{[k_{1},k_{2}]}(r). \]

For $f: \R^{n+1} \to \C$, we denote the frequency projections by
\[
 \Delta_{k} f(x,t) := \mathcal{F}^{-1}_{\R^{n+1}}(\varphi_{k}(\xi) \mathcal{F}_{\R^{n+1}}f(\xi,\tau))  \quad k \in \Z,
\] 
and for $j \geq 0$ we denote the modulation 
\[
 Q_j f(x,t) :=  \begin{cases}
                   \mathcal{F}_{\R^{n+1}}^{-1} \brac{\varphi_{0}({\tau {+} |\xi|^{2s}})\,  \mathcal{F}_{\R^{n+1}} f(\xi,\tau)}   \quad &j = 0, \\
                   \mathcal{F}_{\R^{n+1}}^{-1} \brac{\varphi_{j}\brac{{\tau {+} |\xi|^{2s}}}\,  \mathcal{F}_{\R^{n+1}} f(\xi,\tau)} \quad & j \geq 1.
                   \end{cases}
\]
We also set  
\[
 \Delta_{\leq k}  := \sum_{\ell =- \infty }^{k} \Delta_{\ell} 
\]
and similarly for $Q_{\leq j}$
$$
Q_{\leq j} (f) : = \sum_{l =0}^{j} Q_{l} (f).
$$

We will need one more decomposition: fix an even function
\begin{equation}\label{IntegFunc}
     \chi \in C_c^\infty((-2,2),[0,1]), \quad \text{supp}(\chi) \subset [-2/3,2/3]
\end{equation}

such that
\[
 \sum_{\tell \in \Z} \chi(x-\tell) = 1 \quad \forall x \in \R.
\]
For $ l \in \Z^{n}$, 
we set 
\[
 \chi_{k;\tell}(\xi) := \chi( (\xi^1 -\tell^{1})/2^k)\, \chi( (\xi^2 -\tell^{2})/2^k)\, \ldots \chi( (\xi^n -\tell^{n})/2^k)
\]
Then we have 
\[
 \sum_{\tell \in 2^k \Z^n} \chi_{k;\tell}(\xi) = 1 \quad \forall \xi \in \R^n, k \in \Z.
\]
Set 
\begin{equation}\label{eq:Pktell}
 P_{k,\tell} f:= \mathcal{F}^{-1}\brac{\chi_{k,\tell}\mathcal{F} f}
\end{equation}
Thus, for any $k \in \Z$
\begin{equation}\label{eq:weirdfinedecomp}
 f = \sum_{\tell \in 2^k \Z^n} P_{k,\tell} f.
\end{equation}
Observe that $P_{k,\tell}$ is the only projection operator in this work that is \emph{not} rotation invariant.

\subsection{Definition of the spaces:}
Throughout this paper we fix
\[
 \mathscr{E} \subset \S^{n-1},
\]
a finite set of vectors in $\S^{n-1}$ 
so that whenever $ e \in \mathscr{E}$ then $ -e \in \mathscr{E}$. We also take the cardinality of $ \mathscr{E}$ to be large enough so that we have the conclusion of the following ``geometric observation''.
\begin{lemma}\label{la:geometricobserv}
There exists a finite subset $\mathscr{E} \subset \S^{n-1}$ such that the following holds:

 \item For any $\xi \in \R^n$ there exists $e \in \mathscr{E}$ such that 
 \[
  \abs{\xi-\abs{\xi}e} \leq \vert \xi \vert,
 \]
and in particular
 \[
  \langle \xi,e\rangle \geq  \frac{\vert \xi \vert}{2}.
 \]

 Moreover there exists a decomposition of unity on the $\S^{n-1}$-sphere denoted by $( \tilde{\vartheta}_e)_{e \in \mathscr{E}}$ with 
 \[
  \supp \tilde{\vartheta}_e \subset \{\xi \in \S^{n-1}: \quad \langle \xi, e\rangle \geq \frac{1}{2} \}
 \]
and 
 \[
  \sum_{e \in \mathscr{E}} \tilde{\vartheta}_e(\xi/|\xi|) = 1 \quad \forall \xi \in \R^n \setminus \{0\}.
 \]

 We will denote 
 \[
  \vartheta_{e}(\xi) := \tilde{\vartheta}_{e} (\xi/|\xi|)
 \]
and with some abuse of notation will also use this notation for $e \in \S^{n-1}$ for $e \not \in \mathscr{E}$ as simply a cutoff function on the cone.

Observe that $\vartheta_{e} \not \in C^\infty$ since it has a singularity at $0$.
\end{lemma}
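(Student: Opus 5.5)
The plan is a compactness argument on the sphere followed by a standard partition of unity. Everything reduces to unit vectors: for $\xi=0$ the claims are trivial, and for $\xi\neq 0$ both inequalities are homogeneous of degree one in $\xi$. So it suffices to find a finite $\mathscr{E}\subset\S^{n-1}$, symmetric under $e\mapsto -e$, such that for every $\omega\in\S^{n-1}$ some $e\in\mathscr{E}$ has $\langle\omega,e\rangle>\frac12$, with a little room to spare.

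Such a set exists by compactness. The open caps $U_e:=\{\omega\in\S^{n-1}:\langle\omega,e\rangle>\frac34\}$, $e\in\S^{n-1}$, cover the sphere, so finitely many of them, say $U_{e_1},\dots,U_{e_N}$, already do. We set $\mathscr{E}:=\{\pm e_1,\dots,\pm e_N\}$; adding the antipodes keeps the covering property and gives the required symmetry.

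The first claim follows from the identity $|\omega-e|^2=2-2\langle\omega,e\rangle$ for unit vectors. If $\langle\omega,e\rangle\ge\frac12$, then $|\omega-e|\le 1$. Scaling by $|\xi|$ gives $|\xi-|\xi|e|\le|\xi|$, and $\langle\xi,e\rangle\ge\frac{|\xi|}{2}$ is immediate. Conversely, the bound $|\xi-|\xi|e|\le|\xi|$ already forces $\langle\xi,e\rangle\ge|\xi|/2$, so the two formulations agree.

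For the partition of unity, we choose for each $e\in\mathscr{E}$ a smooth $\psi_e:\S^{n-1}\to[0,1]$ with $\psi_e\equiv1$ on $\{\langle\omega,e\rangle\ge\frac34\}$ and $\supp\psi_e\subset\{\langle\omega,e\rangle\ge\frac12\}$. Concretely, take $\psi_e(\omega)=\eta(\langle\omega,e\rangle)$ for a smooth cutoff $\eta$ that vanishes on $(-\infty,\frac12+\delta]$ and equals $1$ on $[\frac34,\infty)$. The closed caps $\{\langle\omega,e\rangle\ge\frac34\}$ still cover the sphere, so $\Psi:=\sum_{e\in\mathscr{E}}\psi_e\ge1$ everywhere, and $\Psi$ is smooth. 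Hence
\[
\tilde\vartheta_e:=\frac{\psi_e}{\Psi}
\]
is smooth, has support in $\{\langle\omega,e\rangle\ge\frac12\}$, and the family sums to $1$. Setting $\vartheta_e(\xi)=\tilde\vartheta_e(\xi/|\xi|)$ gives the stated identity on $\R^n\setminus\{0\}$. It is homogeneous of degree zero, which explains the remark that $\vartheta_e$ is singular at $0$.

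There is no real obstacle here. The only point needing care is to use the margin between the thresholds $\frac34$ and $\frac12$, so that the sum $\Psi$ is bounded below by $1$ and the quotient is well defined and smooth.
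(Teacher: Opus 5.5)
Your proof is correct and complete. The finite cover by the caps $\{\langle\omega,e\rangle>\frac34\}$, the symmetrization, the identity $|\omega-e|^2=2-2\langle\omega,e\rangle$, and the normalization $\psi_e/\Psi$ with $\Psi\ge 1$ deliver everything the lemma claims.

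The paper states this lemma without proof, as a standard geometric observation, and your construction is the standard way to fill it in. The one thing you leave implicit is that $\vartheta_e$ really is non-smooth at $0$. This holds because $\tilde\vartheta_e(e)>0=\tilde\vartheta_e(-e)$, so $\vartheta_e$ is a non-constant function homogeneous of degree zero and is therefore not even continuous at the origin.
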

Let $j > 0, k \in \Z$ then define the following set 
$$
D_{k,j} : = \{ (\xi,\tau) \in \R^{n+1} : \vert \xi \vert \in [2^{k-1},2^{k+1}], \vert \vert \xi \vert^{2s} + \tau \vert \in [2^{j-1} ,2^{j+1}] \}.
$$
And for $ j =0$ , and $k \in \Z$ we define
$$
D_{k,0}:= \{ (\xi,\tau) \in \R^{n+1} : \vert \xi \vert \in [2^{k-1},2^{k+1}], \vert \vert \xi \vert^{2s} + \tau \vert \in [0,2] \}.
$$
Moreover, 
$$
D_{  k , \leq j } : = \bigcup_{j'=0}^{j} D_{k,j'},
$$
and in particular 
$$
D_{k,\infty} : = \bigcup_{j'=0}^{\infty} D_{k,j'} 
$$
we define $ D_{k, \geq j }$, $D_{ \leq k , j }$ in the same way.

Further, for any vector $ e \in \S^{n-1}$ consider the following set 
$$
D^{e,k'}_{k,j} : = \{ (\xi,\tau) \in D_{k,j} : \langle \xi ,e \rangle \in [2^{k'-1},2^{k'+1}] \}. 
$$
With this notation we define the following resolution spaces; 
for $f: \R^{n+1} \to \C$ and $k \in \Z$ we define the $X_k$-spaces as follows
\begin{equation}\label{eq:Xdef}
 \|f\|_{X_k} := \begin{cases}
                                       \sum_{j=0}^\infty  2^{\frac{j}{2}} \|Q_{j} f\|_{L^{2}_{t,x}}  \quad &\text{if $\supp \hat{f} \subset D_{k,\infty} $ } \\
                                       \infty \quad \text{otherwise}.
                                      \end{cases}
\end{equation}
Also, for $k \geq 100$, $k' \in \N \cap [1,k+1]$ and $e \in \mathscr{E}$ we define the $Y_{k,k'}^{e}$ on maps whose frequency support is in the cone 
\[
 \|f\|_{Y_{k,k'}^{e}} = \begin{cases}
                                      2^{-k'\frac{2s-1}{2}} \gamma_{k,k'} \|(\i \partial_t - (-\Delta)^s+\i) f \|_{L^1_e L^2_{t,e^\perp}} \quad &\text{if }   \supp \hat{f} \subset D^{e,k'}_{k,\leq 2sk+10} 
                                      \\
                                       \infty \quad \text{otherwise}
                                      \end{cases}
\]
where $ \gamma_{k,k'} : =  2^{2n ( k -k')} $.
We define the space
\[
 Z_{k} := X_{k} + \sum_{e \in \mathscr{E}} \sum_{k'=1}^{k+1} Y_{k,k'}^e
\]
equipped with the norm
\[
 \|f\|_{Z_{k}} = \inf_{f = f_1 + \sum_{e,k'} f_{e,k'}} \brac{\|f_1\|_{X_{k}} + \sum_{e \in \mathscr{E},k'=1,...,k+1}\|f^{k'}_e\|_{Y_{k,k'}^e}}.
\]

For $\sigma \in \R $ the main resolution space $F^\sigma$ is induced by the norm
\begin{equation}\label{eq:Fsigma}
\|u\|_{F^\sigma} := \sum_{k = -\infty}^\infty 2^{k\sigma} \|\Delta_{k} u\|_{Z_k} ,
\end{equation}
and the space for the right-hand side is induced by the norm
\begin{equation}\label{eq:Nsigma}
\|F\|_{N^\sigma} := \sum_{k=- \infty}^\infty 2^{k\sigma} \|(\i \partial_t -(-\Delta)^s + \i)^{{-1}} \Delta_{k} F\|_{Z_k}.
\end{equation}

In order to distinguish the spaces $Y^{e}_{k,k'}$ when $k'$ is small or large, we make the following definition
\begin{definition}
Let $k \geq 100$, $k' \in \N \cap[1,k+1]$. Then set $ q(k,k') = \frac{k'}{k}$. And set $T_{k} : = [\frac{4k}{5 }, k+1]$. 
\end{definition}
If $ k,k'$ are clear from the context, we will simply write $q$. 
The following lemma summarizes various embeddings for the resolution spaces, for a proof see \cite{subcritical}. 
\begin{lemma}\label{embeddings}
\begin{enumerate}
    \item Let $k \in \Z$ and $j \geq 0$, $f \in Z_{k}$ then we have the following 
    \begin{equation}\label{embd1}
    \begin{split}
        \Vert Q_{j}(f) \Vert_{X_{k}} \lesssim \Vert f \Vert_{Z_{k}}.
    \end{split}
    \end{equation}
    Moreover, if $ f $ has Fourier transform supported in $ \{ \langle \xi, e \rangle \geq 2^{k' -c } \}$ for some $ e \in \S^{n-1}$ and $k' \in T_{k}$ then we have 
    \begin{equation}\label{embd2}
    \begin{split}
    & \Vert Q_{j}(f) \Vert_{X_{k}} \lesssim  \min\{ 2^{(k-k')(1-s)}\gamma_{k,k'}^{-1/2}, \gamma_{k,k'}^{-1/2} 2^{(2sk' -j)/2} \} \Vert f \Vert_{Y^{e}_{k,k'}}
    \end{split}
    \end{equation}
    In particular, 
    $$
    \Vert Q_{j}(f) \Vert_{X_{k}}  \lesssim \min\{ 1,  2^{(2sk' -j)/2} \} \Vert f  \Vert_{Y^{e}_{k,k'}}.
    $$
    \item 
     given $ k \in \Z$, $j \geq 0$ and $ f \in Y^{e}_{k,k'}$ with $ k' \notin T_{k}$ we have the following 
    \begin{equation}\label{embd3}
        \Vert f \Vert_{X_{k}} \lesssim \Vert f \Vert_{Y^{e}_{k,k'}}
    \end{equation}
    Moreover, if $ k' \in T_{k}$ then we have 
    \begin{equation}\label{embd4}
     \Vert \sum_{ j \geq 2k( s+q(k,k') -1)} Q_{j}(f) \Vert_{X_{k}} \lesssim \Vert f \Vert_{Y_{k,k'}^{e}}.
    \end{equation}
    \item  for any $j \geq 0$ and $k \in \Z$ we have
    \begin{equation}\label{embd5}
    \Vert Q_{\geq j} (f) \Vert_{L^{2}(\R^{n+1})} \lesssim 2^{-j/2} \Vert f \Vert_{Z_{k}} 
    \end{equation}

\end{enumerate}

\end{lemma}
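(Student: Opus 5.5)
The natural route is to prove each estimate separately from the definitions of $X_k$ and $Y^e_{k,k'}$. The central tool is a one-dimensional computation in the direction $e$: if $f$ has Fourier support in $D^{e,k'}_{k,\le 2sk+10}$, write $g=(\i\partial_t-(-\Delta)^s+\i)f$. Then $\hat f=-\hat g/(\tau+|\xi|^{2s}-\i)$. Fix $(\tau,\xi')$ with $\xi=(\xi_1,\xi')$, $\xi_1=\langle\xi,e\rangle$.

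For (1), a Plancherel bound in $(\tau,\xi')$ together with Cauchy--Schwarz in $\xi_1$ gives, for the $Q_j$ piece,
\[
\|Q_j f\|_{L^2}^2\lesssim \|g\|_{L^1_eL^2}^2\sup_{\tau,\xi'}\int \frac{\varphi_j(\tau+|\xi|^{2s})^2}{|\tau+|\xi|^{2s}-\i|^2}\,d\xi_1 .
\]
Here $\hat g(\cdot,\tau,\xi')$ is bounded in $L^\infty_{\xi_1}$ by the $L^1_e$ norm. On the support, $\partial_{\xi_1}|\xi|^{2s}=2s|\xi|^{2s-2}\xi_1\approx 2^{(2s-2)k+k'}$. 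So the set of $\xi_1$ with modulation $\approx 2^j$ has length at most $\min\{2^{j}2^{(2-2s)k-k'},2^{k'}\}$. The integral is therefore at most $2^{-2j}\min\{2^{j+(2-2s)k-k'},2^{k'}\}$.

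Multiplying by $2^{j}$ gives
\[
2^{j/2}\|Q_jf\|_{L^2}\lesssim \min\{2^{(1-s)k-k'/2},\,2^{k'/2-j/2}\}\,\|g\|_{L^1_eL^2}.
\]
Inserting the normalization $2^{k'(2s-1)/2}\gamma_{k,k'}^{-1}$ gives
\[
\min\{2^{(k-k')(1-s)},\,2^{(2sk'-j)/2}\}\,\gamma_{k,k'}^{-1}\,\|f\|_{Y^e_{k,k'}}.
\]
Since $\gamma_{k,k'}^{-1}\le\gamma_{k,k'}^{-1/2}$ (indeed $\gamma\ge1$), this yields \eqref{embd2}. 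The crude bound $2^{(k-k')(1-s)}\gamma_{k,k'}^{-1/2}\le1$ gives the "in particular". For a general $Z_k$ decomposition $f=f_1+\sum f_{e,k'}$, \eqref{embd1} follows: the $X_k$ part is trivial because $Q_j$ is bounded on $X_k$ with finitely overlapping modulations. For $Y$ pieces with $k'\notin T_k$, the same computation gives a uniform bound $2^{(k-k')(1-s)}\gamma^{-1}\lesssim1$, even without the restriction to $T_k$. For pieces supported only in $\{\langle\xi,e\rangle\ge 2^{k'-c}\}$, the same lower bound on $\partial_{\xi_1}|\xi|^{2s}$ holds.

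For (2), sum over $j$ rather than taking one $j$. Take the minimum of the two bounds. The crossover is at $2^j\approx 2^{2k'-(2-2s)k}$, i.e. $j\approx 2k(q+s-1)$. The sum $\sum_j$ is then geometric on both sides, giving $\lesssim 2^{(k-k')(1-s)}\gamma^{-1}$ times the logarithmic factor from $j\le 2sk+10$ at most. If $k'\notin T_k$, then $k-k'\ge k/5$ and $\gamma^{-1}=2^{-2n(k-k')}$ absorbs any such factor, giving \eqref{embd3}. If $k'\in T_k$, restrict to $j\ge 2k(s+q-1)$, where the bound $2^{(2sk'-j)/2}\cdot 2^{\dots}$ reduces to the decaying branch. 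The geometric sum then converges to a constant, giving \eqref{embd4}. One must check that on that range the effective bound is $\lesssim 2^{k'/2-j/2}2^{k'(2s-1)/2}\gamma^{-1}$ relative to the $Y$ norm. This reduces the exponent to $2^{(2sk'-j)/2}$; combined with $2^{(1-s)(k-k')}$ at the crossover, the sum is $O(1)$.

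For (3), use Cauchy--Schwarz-free dyadic summation. On the $X_k$ part, $\|Q_{\ge j}f\|_{L^2}\le\sum_{j'\ge j}\|Q_{j'}f\|\le 2^{-j/2}\sum 2^{j'/2}\|Q_{j'}f\|$. On $Y$ pieces, apply the per-$j'$ bound from (1), $2^{j'/2}\|Q_{j'}f\|\lesssim\|f\|_{Y}$ uniformly, and sum $\sum_{j'\ge j}2^{-j'/2}\lesssim 2^{-j/2}$.

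The main obstacle I expect is the bookkeeping in (2) for $k'\in T_k$. There one must verify that the modulation threshold $2k(s+q-1)$ exactly matches the crossover of the two branches, so no logarithmic loss appears. This relies on the precise lower bound for $\partial_{\xi_1}|\xi|^{2s}$, and on $\hat f$ being supported where $\xi_1\approx 2^{k'}$, not merely $\xi_1\ge 2^{k'-c}$.
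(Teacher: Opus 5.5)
Your argument is correct. The paper only cites \cite{subcritical} for this lemma, but your mechanism is the one it borrows from \cite[(3.8)]{subcritical} in \eqref{lklkl}: bound $\hat g$ in $L^\infty_{\xi_1}$ via Minkowski in $x_1$, then use $|\{\xi_1\approx 2^{k'}: |\tau+|\xi|^{2s}|\lesssim 2^j\}|\lesssim 2^{j-k(2s-2+q)}$. So this is essentially the same route, and you in fact obtain $\gamma_{k,k'}^{-1}$ in place of $\gamma_{k,k'}^{-1/2}$.

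One small slip in part (2): below the crossover $j\approx 2k(s+q-1)$ the per-$j$ bound is constant in $j$, not geometric. For $k'\notin T_k$ you therefore pay a factor $\lesssim k$, which $\gamma_{k,k'}^{-1}=2^{-2n(k-k')}\le 2^{-2nk/5}$ absorbs, as you indicate.
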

 Since the spaces $Z_{k}$ are unstable under complex conjugation, we consider the following space 
$$
\bar{Z}_{k} : = \{ f \in L^{2}(\R^{n+1}) : \bar{f} \in Z_{k} \},
$$
with the norm
$$
\Vert f \Vert_{\bar{Z}_{k}} := \Vert \bar{f} \Vert_{Z_{k}}.
$$
And we make the following definition
\begin{definition}\label{tildezk}
Let $k \in \Z$, then we define $ \tilde{Z}_{k}$ as follows
$$
\tilde{Z}_{k} : = Z_{k} + \bar{Z}_{k},
$$
with the infimum norm. Moreover, 
$$
\Vert f \Vert_{\tilde{F}^{\sigma}}: = \sum_{k \in \Z} 2^{\sigma k} \Vert \Delta_{k} f \Vert_{\tilde{Z}_{k}}.
$$
\end{definition}
Lastly, we will need a truncated version of the spaces $ \tilde{F}^{\sigma}, F^{\sigma}$ and $ N^{\sigma}$. Given $ r \in [0,\infty]$ we define 
\begin{equation}\label{nrdef}
\Vert f \Vert_{N^{\sigma}(r)} : = \Vert \ind_{[-r,r]} f \Vert_{N^{\sigma}}.
\end{equation}
As for $F(r)$, $ \tilde{F}(r)$ we cannot cutoff by $ \ind_{[-r,r]}$ since that might ruin the linear estimates. Instead, 
\begin{equation}\label{frdef}
    \Vert f \Vert_{F^{\sigma}(r)} : = \inf \{ \Vert F \Vert_{F^{\sigma}} : F= f \text{ on } [-r,r] \}. 
\end{equation}
In other words, we take the infimum over all possible extensions of $ \ind_{[-r,r]} f$. We define $ \tilde{F}^{\sigma}(r)$ analogously.

\subsection{Estimates for resolution spaces}
The following estimates were proven in \cite{subcritical},  for the inhomogeneous version of the spaces above. The proof for the homogeneous version is identical. 
\begin{lemma}\label{summaryres}
Let $ n \geq 3$ , $ s \in (1/2,1)$ then the following holds
\begin{enumerate}
    \item for any $k \in \Z$, $ f \in Z_{k}$, and any $ e \in \S^{n-1}$ we have
    \begin{equation}\label{local_smoothing}
        \Vert \mathcal{F}^{-1}( \vartheta_{e} \hat{f} )\Vert_{L^{\infty}_{e}L^{2}_{e^{\perp},t}} \lesssim 2^{-k(2s-1)/2} \Vert f \Vert_{Z_{k}}
    \end{equation}
    where $ \tilde{f} \in \{f, \bar{f} \}$.
    \item for any $k \in \Z$, $ f\in Z_{k}$ we have
    \begin{equation}\label{maximal_Estimate}
        \Vert \tilde{f} \Vert_{L^{2}_{e}L^{\infty}_{e^{\perp},t}} \lesssim 2^{k(n-1)/2} \Vert f \Vert_{Z_{k}}
    \end{equation}
    where $ \tilde{f} \in \{ f , \bar{f} \}$.
    \item for any $ k \in \Z$ and $ f \in Z_{k}$ we have
    \begin{equation}\label{energy}
         \Vert \tilde{f} \Vert_{L^{\infty}_{t} L^{2}_{x}} \lesssim \Vert f \Vert_{Z_{k}}.
     \end{equation}
     \item  for any $m \in L^{\infty}(\R^{n}) $ with $ \mathcal{F}^{-1}(m) \in L^{1}(\R^{n})$ we have the following 
     \begin{equation}\label{multilpierlemma}
         \Vert \mathcal{F}^{-1} ( m \hat{f} ) \Vert_{Z_{k}} \lesssim \Vert \mathcal{F}^{-1}(m) \Vert_{L^{1}} \Vert f \Vert_{Z_{k}}
     \end{equation}
     \item And more generally, we have for any $m \in L^{\infty}(\R^{n})$ with $ \mathcal{F}^{-1}(m) \in L^{1}(\R^{n})$ the following 
 \begin{equation}\label{multiplierint}
     \Vert \mathcal{F}^{-1} (m \hat{f}) \Vert_{L^{p}_{e} L^{q}_{e^{\perp},t}} \lesssim \Vert \mathcal{F}^{-1}(m) \Vert_{L^{1}(\R^{n})} \Vert f \Vert_{L^{p}_{e} L^{q}_{e^{\perp},t}},
 \end{equation}
 for any $p,q \in [1,\infty]$.
 \item Moreover, for any $m_{1} , m_{2} \in L^{\infty}(\R^{n})$ with $ \mathcal{F}^{-1}(m_{1}) , \mathcal{F}^{-1}(m_{2}) \in L^{1}(\R^{n})$ consider the following operator, defined via Fourier transform,  
$$
\mathcal{F}(H(f,g,h))(\xi,t) := \int_{\R^{n} \times \R^{n}} m_{1} (\xi-\eta) m_{2}(\kappa) \hat{f}(\xi -\kappa,t) \hat{h}(\kappa-\eta,t) \hat{g}(\eta,t) d \eta d \kappa.
$$
Then for any $p,q \in [1,\infty]$ and any $p_{1},p_{2},p_{3}, q_{1},q_{2},q_{3} \in [1,\infty]$ with 
$$
\frac{1}{p_{1}} + \frac{1}{p_{2}} + \frac{1}{p_{3}} =\frac{1}{p}
$$
and $$
\frac{1}{q_{1}} + \frac{1}{q_{2}} + \frac{1}{q_{3}}= \frac{1}{q}
$$
we have 
\begin{equation}\label{multimultiplier}
\Vert H(f,g,h) \Vert_{L^{p}_{e} L^{q}_{e^{\perp},t}} \lesssim \Vert \mathcal{F}^{-1}(m_{1}) \Vert_{L^{1}} \Vert \mathcal{F}^{-1}(m_{2}) \Vert_{L^{1}}  \Vert f \Vert_{L^{p_{1}}_{e} L^{q_{1}}_{e^{\perp},t}} \Vert g \Vert_{L^{p_{2}}_{e} L^{q_{2}}_{e^{\perp},t}} \Vert h \Vert_{L^{p_{3}}_{e} L^{q_{3}}_{e^{\perp},t}}.
\end{equation}
     \item  for any $ e \in \S^{n-1}$, $k \in \Z$ and any $p,q \in [1,\infty]$ and $ \beta \in \R$ one has 
     \begin{equation}\label{multiplier2}
         \Vert \Delta_{k} ( \vert \nabla \vert^{\beta} f ) \Vert_{L^{p}_{e}L^{q}_{e^{\perp},t}} \lesssim 
2^{k \beta} \Vert \Delta_{ \approx k} f  \Vert_{L^{p}_{e}L^{q}_{e^{\perp},t}}     
     \end{equation}
    \item For any $ j \geq 0$ and $f \in Z_{k}$ we have the following inequality 
    \begin{equation}\label{multiplier3}
        \Vert Q_{ \leq j} (f) \Vert_{Z_{k}} \lesssim \Vert f \Vert_{Z_{k}}.
    \end{equation}
    \item  for any $f \in Z_{k} $ we have 
    \begin{equation}\label{passingtol1l2}
    \Vert f \Vert_{Z_{k}} \lesssim 2^{-k(2s-1)/2} \max_{e \in \mathscr{E}} \Vert ( \i \partial_{t} - ( - \Delta)^{s} +\i) f \Vert_{L^{1}_{e} L^{2}_{e^{\perp},t}}
    \end{equation}
    
\end{enumerate}
\end{lemma}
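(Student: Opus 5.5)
Each item is linear in $f$ and the $Z_k$-norm is an infimum over decompositions $f = f_1 + \sum_{e,k'} f^{k'}_e$. So I would prove every estimate separately for $f_1 \in X_k$ and for $f^{k'}_e \in Y^e_{k,k'}$, and then sum. For conjugates, $\bar f$ has frequency support reflected to $(-\xi,-\tau)$; the mixed norms $L^p_eL^q_{e^\perp,t}$ are invariant under conjugation. Hence the statements for $\tilde f$ follow once the bounds are proved for $f$ (using $\vartheta_{-e}$ where needed, which is why $\mathscr E = -\mathscr E$). Since the statement is the homogeneous analogue of results in \cite{subcritical}, the plan is to repeat those arguments and check that only $|\xi|\approx 2^k$ is used, never $k\ge 0$.

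\textbf{The $X_k$ pieces.} For $X_k$ atoms I would write $Q_j f$ as a superposition of free solutions. Concretely,
\[
Q_jf(x,t)=\int_{|\rho|\approx 2^j} e^{\i t\rho}\, e^{-\i t(-\Delta)^s} g_\rho \,d\rho ,
\]
with $\int \|g_\rho\|_{L^2}\,d\rho \lesssim 2^{j/2}\|Q_jf\|_{L^2}$ by Cauchy--Schwarz. This reduces (1)--(3) to the corresponding free estimates.
\begin{enumerate}
\item Energy (3) is Plancherel.
\item Local smoothing (1): on the cone $\langle\xi,e\rangle\gtrsim|\xi|$, change variables $\xi_e \mapsto |\xi|^{2s}$. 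The Jacobian is $\approx 2^{k(2s-1)}$, and Plancherel in $(e^\perp,t)$ gives the factor $2^{-k(2s-1)/2}$.
\item Maximal estimate (2): use the $TT^*$ argument with the dispersive/oscillatory decay of the kernel in the $e$-direction, as in Ionescu--Kenig. Scaling fixes the exponent $2^{k(n-1)/2}$.
\end{enumerate}

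\textbf{The $Y^e_{k,k'}$ pieces.} Here I would use Duhamel along the $e$-direction. With $h=(\i\partial_t-(-\Delta)^s+\i)f \in L^1_eL^2$, write $f$ as an integral over $x_e=y$ of solutions of the "evolution in $x_e$" problem. By the symbol factorization $\tau+|\xi|^{2s}+\i$ in $\xi_e$ on the support where $\xi_e\approx 2^{k'}$, $\partial_{\xi_e}|\xi|^{2s}\approx 2^{k'}2^{k(2s-2)}$.

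This yields the $L^\infty_eL^2$ bound with a gain compared to the norm, and the weight $\gamma_{k,k'}$ absorbs the losses $2^{(k-k')\cdot}$. The factors $2^{k'(2s-1)/2}$ combined with $\gamma$ give (1).

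For (2) I would pass back to $X_k$ via \Cref{embeddings} where $k'\notin T_k$, using (embd3). Where $k'\in T_k$, I would handle low modulation via the $L^1_e$ Duhamel representation and Minkowski, with the free maximal estimate. The energy estimate (3) follows by the same Duhamel representation.

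\textbf{Remaining items.} Items (4)--(6) and (7) are soft.
\begin{itemize}
\item A multiplier with $L^1$ kernel in $x$ commutes with $Q_j$, with $(\i\partial_t-(-\Delta)^s+\i)$ and with the support conditions (up to slight enlargement, handled by the $\approx$ supports). Minkowski's inequality on the convolution in $x$ gives (4) and (5).
\item (6) is the bilinear/trilinear version: write $H$ as an integral of translates against $\check m_1,\check m_2$, then apply Hölder and Minkowski.
\item (7) follows because $Q_{\le j}$ is a bounded multiplier in $\tau$ of uniformly bounded $L^1_t$ kernel, preserving all the norms.
\item (8) holds since $\|f\|_{Z_k}\le \sum_{e,k'}\|\vartheta_e\varphi_{k'}(\langle\xi,e\rangle) f\|_{Y}$ by decomposing with the partition of unity. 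The factor $\gamma_{k,k'}2^{-k'(2s-1)/2}$ is, however, largest for small $k'$. But the cone $\vartheta_e$ forces $\langle \xi,e\rangle \approx 2^k$, so only $k'\approx k$ occurs and $\gamma\approx1$. The piece with modulation $\gtrsim 2^{2sk+10}$ is put into $X_k$ directly, using $L^1_eL^2\to L^2$ on the frequency-localized function via Bernstein in $x_e$.
\end{itemize}

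\textbf{Main obstacle.} The main obstacle is the maximal estimate (2) for $Y$ atoms with $k'\in T_k$ small relative to $k$, where neither embedding helps. I would first try to prove the free $L^2_eL^\infty$ estimate uniformly for data supported in $\langle\xi,e'\rangle\approx2^{k'}$ for every direction $e'$, then integrate Duhamel in $x_e$ using Minkowski.
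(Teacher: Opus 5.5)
Your proof of \eqref{multimultiplier} matches the paper's. You write $H(f,g,h)(x)=\iint \mathcal{F}^{-1}(m_1)(u)\,\mathcal{F}^{-1}(m_2)(v)\,f(x+u)\,g(x+v)\,h(x+u+v)\,du\,dv$ and apply Minkowski and H\"older. That is the only item the paper actually proves; every other item is quoted from \cite{model} ($k\le 0$) and \cite{subcritical} ($k>0$). Your reconstruction of the quoted items follows the Ionescu--Kenig scheme of those references, and the $X_k$ part is fine. Your item numbering is off by one: \eqref{multiplier2} is never treated, although it follows from \eqref{multiplierint}. More importantly, the following steps do not go through as written.

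\textbf{The $Q_{\le j}$ estimate \eqref{multiplier3}.} $Q_{\le j}$ is not a multiplier in $\tau$ alone; its symbol is $\varphi_{\le j}(\tau+|\xi|^{2s})$. Equivalently, $Q_{\le j}g(\cdot,t)=\int_{\R}\psi_j(r)\,e^{-\i r(-\Delta)^s}g(\cdot,t-r)\,dr$ with $\psi_j=\mathcal{F}^{-1}_{\R}\varphi_{\le j}$. The propagator $e^{-\i r(-\Delta)^s}$ is not uniformly bounded on $L^1_eL^2_{e^\perp,t}$: at frequency $2^k$ its norm grows like $(1+|r|2^{2sk})^{1/2}$. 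So for $j<2sk$ your argument does not control the $Y^e_{k,k'}$-norm. What does work is to Fourier transform in $(x_{e^\perp},t)$. On $\xi_e\approx 2^{k'}$, $|\xi|\approx 2^k$, the map $\xi_e\mapsto|\xi|^{2s}$ has derivative $\approx 2^{k'}2^{k(2s-2)}$, so $\xi_e\mapsto\varphi_{\le j}(\tau+|\xi|^{2s})$ is a bump. One then checks that its $x_e$-kernels satisfy $\int\sup_{\xi',\tau}|K_{\xi',\tau}(z)|\,dz\lesssim 1$ uniformly in $j$.

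\textbf{Cross-direction local smoothing \eqref{local_smoothing}.} This is the more substantial gap. For a $Y^{e'}_{k,k'}$-atom the estimate must hold in every direction $e$, not just $e'$. For $k'\le k-4$ the atom is killed by $\vartheta_{e'}$, so the own-direction bound your Duhamel argument produces is vacuous, and all the content is cross-directional.

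Near the characteristic surface (the rest goes to $X_k$ by Lemma~\ref{embeddings}) you have $f=J^{-1}\int\kappa(x_{e'}-y)\,G_y\,dy$. Here $|\kappa|\lesssim 1$, $J\approx 2^{k'}2^{k(2s-2)}$, $h=(\i\partial_t-(-\Delta)^s+\i)f$, and the $G_y$ are free waves with data norm $J^{1/2}\|h(y)\|_{L^2}$. This pointwise representation gives \eqref{maximal_Estimate} and \eqref{energy} in every direction by Minkowski, with loss $2^{(k-k')(1-s)}\gamma_{k,k'}^{-1}\le 1$. So the ``main obstacle'' you name is not one.

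Local smoothing, however, needs the Fourier cutoff $\vartheta_e$. It does not commute with multiplication by $\kappa(x_{e'}-y)$, and without it the free estimate fails near $\xi\cdot e=0$. One fix is to expand $\vartheta_e(\xi)=\vartheta_e(Ne'+\xi')+(\xi\cdot e'-N)R$ with $|R|\lesssim 2^{-k}$.
\begin{itemize}
\item The first term is $\vartheta_e$ applied to the $G_y$, so free local smoothing applies.
\item In the second term, $\xi\cdot e'-N$ cancels the resolvent singularity. A crude $L^2_{\xi_{e^\perp},\tau}L^1_{\xi_e}$ bound then gives $J^{-1}2^{(k'-k)/2}\|h\|_{L^1_{e'}L^2}$.
\end{itemize}
Both terms again lose only $2^{(k-k')(1-s)}\gamma_{k,k'}^{-1}$.

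\textbf{\eqref{passingtol1l2} for $k<100$.} For $k<100$, in particular the homogeneous range $k\le 0$, there are no $Y$-atoms and $Z_k=X_k$. So \eqref{passingtol1l2} must be proved in $X_k$ directly. Use $|\{\xi_e:|\tau+|\xi|^{2s}|\approx 2^j\}|\lesssim\min\{2^k,2^{j-k(2s-1)}\}$ on each cone; this yields the bound with constant $1+\max\{k,0\}$, which is acceptable because $k<100$.
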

\begin{proof}
Let $k \leq 0$. Then proofs of \eqref{local_smoothing}, \eqref{maximal_Estimate}, \eqref{energy}, \eqref{multilpierlemma}, \eqref{multiplierint}, \eqref{multiplier3} \eqref{multiplier2}, \eqref{passingtol1l2} is in \cite{model}. If $ k > 0$ then the proofs for \eqref{local_smoothing}, \eqref{maximal_Estimate}, \eqref{energy}, \eqref{multilpierlemma}, \eqref{multiplierint}, \eqref{multiplier3} \eqref{multiplier2}, \eqref{passingtol1l2} can be found in \cite{subcritical}. Therefore, we only need to prove \eqref{multimultiplier}. 
\\
\underline{Proof of \eqref{multimultiplier}} We write $H(f,g,h)$ in physical space (we are suppressing the time variable $t$, for better readability)
\begin{equation*}
\begin{split}
& H(f,g,h)(x) = \int_{\R^{n}} \int_{\R^{n} \times \R^{n}} \int_{\R^{n} \times \R^{n}} e^{\i \xi \cdot x } e^{i ( \xi - \eta) \cdot u} e^{\i \kappa \cdot v} \mathcal{F}^{-1}(m_{1})(u) \mathcal{F}^{-1}(m_{2})(v)
\\
& \times \hat{f}(\xi - \kappa)  \hat{h}( \kappa- \eta) \hat{g}(\eta) d \eta d \kappa d u dv d \xi
\\
&= \int_{\R^{n} \times \R^{n}} \int_{\R^{n} \times \R^{n} }  e^{ -\i  \eta \cdot u} e^{\i \kappa \cdot v} \hat{h}(\kappa - \eta) \hat{g}(\eta) \mathcal{F}^{-1}(m_{1})(u) \mathcal{F}^{-1}(m_{2})(v)
\\
& \times \left( \int_{\R^{n}} e^{\i  \xi \cdot (x+u)} \hat{f}(\xi - \kappa ) d\xi \right) d \kappa d \eta du dv 
\\
& = \int_{\R^{n} \times \R^{n}} \int_{\R^{n} \times \R^{n} }  e^{ -\i  \eta \cdot u} \hat{h}(\kappa - \eta) \hat{g}(\eta) f(x+u) e^{\i \kappa \cdot ( x+u+v)}
\\
& \times  \mathcal{F}^{-1}(m_{1})(u) \mathcal{F}^{-1}(m_{2})(v)  d\eta d\kappa du dv
\\
& = \int_{\R^{n} \times \R^{n}} \int_{\R^{n} \times \R^{n} }  e^{ \i  \eta \cdot (v+x)} \hat{h}(\kappa - \eta) \hat{g}(\eta) f(x+u) e^{\i (\kappa- \eta) \cdot ( x+u+v)} 
\\
& \times \mathcal{F}^{-1}(m_{1})(u) \mathcal{F}^{-1}(m_{2})(v)  d\eta d\kappa du dv
\\
& = \int_{\R^{n} \times \R^{n}} \mathcal{F}^{-1}(m_{1})(u) \mathcal{F}^{-1}(m_{2})(v) h(x+u+v) g(x+v) f(x+u) dudv.
\end{split}
\end{equation*}
Set 
$$
F_{u,v}(x,t) : = h(x+u+v,t) g(x+v,t) f(x+u,t)
$$
Then by Minkowski's inequality we obtain
\begin{equation*}
\begin{split}
& \Vert H(f,g,h) \Vert_{L^{p}_{e}L^{q}_{e^{\perp},t}}
\\
& \lesssim \Vert \mathcal{F}^{-1}(m_{1}) \Vert_{L^{1}(\R^{n})} \Vert \mathcal{F}^{-1}(m_{2}) \Vert_{L^{1}(\R^{n})} \sup_{u,v} \Vert F_{u,v} \Vert_{L^{p}_{e} L^{q}_{e^{\perp},t}}
\\
& \lesssim \Vert \mathcal{F}^{-1}(m_{1}) \Vert_{L^{1}} \Vert \mathcal{F}^{-1}(m_{2}) \Vert_{L^{1}}  \Vert f \Vert_{L^{p_{1}}_{e} L^{q_{1}}_{e^{\perp},t}} \Vert g \Vert_{L^{p_{2}}_{e} L^{q_{2}}_{e^{\perp},t}} \Vert h \Vert_{L^{p_{3}}_{e} L^{q_{3}}_{e^{\perp},t}}.
\end{split}
\end{equation*}
\end{proof}

\subsection{Localized maximal estimate}
The following estimate was proven in \cite{model} for the spaces $Y^{e}_{k,k}$ and $X_{k}$. The proof for $ Y^{e}_{k,k'}$ is almost identical. In fact it is implicitly contained in the arguments made in \cite[Lemma 4.2]{subcritical} and \cite[Lemma 3.8]{model}, which were direct fractional adaptations of \cite[Lemma 4.1]{cmp}. But for completeness we sketch the proof here. 
\begin{lemma}
Let $ n \geq 3$, $ s\in (1/2,1)$. Let $ k, k_{1} \in \Z$ with $ 2^{k_{1}} \lesssim 2^{k}$. Then for any $ f \in Z_{k}$ we have
\begin{equation}\label{eq:max:Xkest:loc}
  \brac{\sum_{\tn \in 2^{k_1} \Z^n} \|P_{k_1,\tn} \tilde{f}\|_{L^{2}_e L^{\infty}_{e^\perp,t}}^2}^{\frac{1}{2}} \aleq
 2^{k_{1} (n-1)/2} 2^{(k -k_{1})/2} (1+|k-k_{1}|)\|f\|_{Z_{k}}.
\end{equation}
where $ \tilde{f} \in \{ f, \bar{f} \} $, and the projections $P_{k_{1},\tn}$ are defined in \eqref{eq:Pktell}.
\end{lemma}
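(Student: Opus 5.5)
By the definition of $Z_k$ as an infimum norm, it suffices to prove \eqref{eq:max:Xkest:loc} separately for $f\in X_k$ and for $f\in Y^e_{k,k'}$ with $e\in\mathscr{E}$, $k'\in[1,k+1]$, and then sum the finitely many directions and the $k'$-pieces. The case $\tilde f=\bar f$ reduces to $\tilde f=f$: conjugation reflects the frequency support, which maps the family $\{P_{k_1,\tn}\}$ to itself up to relabelling $\tn\mapsto-\tn$, and the $L^2_eL^\infty_{e^\perp,t}$ norm is conjugation invariant. The core estimate is a localized maximal bound for free solutions. Let $g(x,t)=\int e^{\i x\cdot\xi - \i t|\xi|^{2s}}\hat g_0(\xi)\,d\xi$ with $\hat g_0$ supported in $\{|\xi|\approx 2^k\}\cap\supp\chi_{k_1,\tn}$, that is, in a cube of side $\approx 2^{k_1}$. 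I will show
\[
\|g\|_{L^2_eL^\infty_{e^\perp,t}}\aleq 2^{k_1(n-1)/2}2^{(k-k_1)/2}(1+|k-k_1|)\|g_0\|_{L^2}.
\]
Square-summing over $\tn$ and using almost orthogonality of the $P_{k_1,\tn}$ then gives the claim for free waves.

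The free-wave bound is proved by a $TT^*$ argument, following \cite[Lemma 4.1]{cmp} with the symbol $|\xi|^2$ replaced by $|\xi|^{2s}$. The problem reduces to the kernel bound
\[
\int_\R\sup_{y',t}|K(x_1,y',t)|\,dx_1\aleq 2^{k_1(n-1)}2^{k-k_1}(1+|k-k_1|)^2,
\qquad K=\int e^{\i x\cdot\xi-\i t|\xi|^{2s}}\chi^2_{k_1,\tn}(\xi)\varphi_k^2(\xi)\,d\xi.
\]
Trivially $|K|\aleq 2^{nk_1}$. For large $|x_1|$ I would use stationary phase and integration by parts in the transversal variables. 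The group velocity $2s|\xi|^{2s-2}\xi$ has size $2^{k(2s-1)}$, and over the cube its transversal spread is governed by the Hessian, which has size $2^{k(2s-2)}$ in all directions because $s<1$ keeps it nondegenerate. Balancing the trivial bound against the $(n-1)$-dimensional decay from the transversal directions $e^\perp$ gives the kernel estimate, with the factor $2^{k-k_1}$ and the logarithmic loss coming from dyadically summing the regions in $|x_1|$.

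The passage to general elements uses the standard atomic decomposition. For $f\in X_k$, write $Q_jf$ as an average over $\tau$ of modulated free waves, $e^{\i t\tau}$ times a solution, with $L^2$ data. The $\tau$-integral costs $2^{j/2}$ via Cauchy–Schwarz, which is exactly the weight in $X_k$. Multiplying by $e^{\i t\tau}$ does not change the $L^\infty_t$ norm. For $f\in Y^e_{k,k'}$, write $f$ via Duhamel as an $L^1_e$-superposition along the direction $e$ of free solutions. Their data are controlled by local smoothing: the dual of \eqref{local_smoothing} gives $2^{k'(2s-1)/2}$ times the $L^2_{t,e^\perp}$ norm of $(\i\partial_t-(-\Delta)^s+\i)f$ on each slice, which matches the normalization $2^{-k'(2s-1)/2}\gamma_{k,k'}$ in the $Y$-norm, since $\gamma_{k,k'}\ge1$ only helps. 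Minkowski's inequality in the slice variable then finishes the argument. The damping term $+\i$ is handled exactly as in \cite{subcritical}.

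The main obstacle is the $TT^*$ kernel estimate when $k'$ is small, where $\langle\xi,e\rangle\approx 2^{k'}\ll 2^k$. There, if the slicing direction is kept as $e$, the phase is degenerate in the $e$-direction. I would try first to perform the maximal estimate in the direction $e$ of the stated norm independently of the $Y$ direction, and to use \eqref{embd3} to dispose of the case $k'\notin T_k$ by embedding into $X_k$. This leaves only $k'\in T_k$, where $\langle \xi,e\rangle\approx|\xi|$ and the geometry matches the case $k'=k$ treated in \cite{model}.
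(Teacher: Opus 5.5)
\textbf{Gap in the $Y^{e}_{k,k'}$ step.} Your reduction to $X_k$ and $Y^{e}_{k,k'}$ pieces, the conjugation argument, and the $X_k$ case (writing $Q_jf$ as a $\tau$-average of modulated free waves) are fine; the paper simply quotes \cite[(3.34)]{model} for $X_k$. The problem is your treatment of $Y^{e}_{k,k'}$: ``write $f$ via Duhamel as an $L^1_e$-superposition of free solutions and use Minkowski.''

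The slice pieces $(\i\partial_t-(-\Delta)^s+\i)^{-1}\bigl(\delta_{x_1=y_1}\otimes F(y_1,\cdot)\bigr)$ are not free waves. After the $\xi_1$-integration they are free waves $e^{\i N(\xi',\tau)x_1}$ cut off by a bounded function of $x_1-y_1$ (essentially a half-line truncation), plus non-resonant parts where $|\xi_1-N(\xi',\tau)|$ is large. The cutoff is harmless for the $L^2_eL^\infty_{e^\perp,t}$ norm, since it is a pointwise bound. But $P_{k_1,\tn}$ is also a multiplier in $\xi_1$, so it does not commute with that cutoff. Hence $P_{k_1,\tn}$ applied to a slice piece is not a cube-localized free wave, and your free-wave bound cannot be applied slice by slice.

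\textbf{The missing step.} The paper handles this as follows.
\begin{enumerate}
\item Use \eqref{embd3} to reduce to $k'\in T_k$, and \eqref{embd4} to reduce to modulation $\le 2^{2k(s+q-1)-c}$.
\item Measure modulation by $|\xi_1-N(\xi',\tau)|$ via the projections $Q'_j$; the error lies in $X_k$ by \eqref{claim1}.
\item Put $\sum_{k_1-c<i\le k'-c}Q'_if$ into $X_k$ using \eqref{embd2}. This, not the kernel estimate, is where $(1+|k-k_1|)$ comes from.
\item Only for $Q'_{\le k_1-c}f$, where $|\xi_1-N|\ll 2^{k_1}$, replace $P_{k_1,\tn}$ by the boundedly overlapping $A_{k_1,\tn}$, whose symbols depend only on $(\xi',N(\xi',\tau))$.
\end{enumerate}
These $A_{k_1,\tn}$ pass through the $\xi_1$-integral of the representation formula \cite[Lemma 3.7]{subcritical}. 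The change of variables $\tau=-(\mu^2+|\xi'|^2)^s$ then yields a genuinely cube-localized free wave, to which \cite[Appendix B]{model} applies. Your proposal has no substitute for this step.

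\textbf{Error in your last paragraph.} Since $T_k=[\frac{4k}{5},k+1]$, the condition $k'\in T_k$ only gives $\langle\xi,e\rangle\approx 2^{k'}$. This can be as small as $2^{4k/5}$, so the geometry does not reduce to $k'=k$. For such $k'$ the frequencies lie outside the cone of $e$, so \eqref{local_smoothing} in direction $e$ is unavailable. The $e$-component of the group velocity is $\approx 2^{k'+k(2s-2)}$, so local smoothing along $e$ loses $2^{(1-s)(k-k')}$ against the normalization $2^{-k'(2s-1)/2}$; compare the factor $2^{2(1-s)(k-k')}$ in \eqref{redrepform2}. It is $\gamma_{k,k'}$ that absorbs this loss, so the normalization does not simply ``match''.

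Conversely, the obstacle you single out is not one. For the maximal-function kernel, a small group velocity along the slicing direction only improves $\int\sup_{x',t}|K|\,dx_1$, because for fixed $x_1$ the relevant times are larger. The real difficulty is the non-commutation of $P_{k_1,\tn}$ with the slice representation described above.
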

\begin{proof}
If $ f \in X_{k}$ then the result was proven in \cite[(3.34)]{model}. Assume $k \geq 100$ and $ f \in Y^{e'}_{k,k'}$.
In view of \eqref{embd3} we may assume $ k' \in [\frac{4}{5} k , k+1] \cap \N$. Further,
in light of \eqref{embd4}, we may assume $ f $ has Fourier transform supported in $D^{e'}_{k, \leq 2k(s+q(k,k')-1)-c}$. In other words,
$$
f = Q_{ \leq 2k(s+q(k,k')-1) -c }(f).
$$

Now write $ \xi = \xi_{1} e' + \xi' $ where $ \xi' \in e'^{\perp}$, and define $ N(\xi',\tau) : = \left( ( - \tau)^{1/s} - |\xi'|^{2} \right)^{1/2}$ whenever $ \tau < 0$ and $ | \xi'|^{2} \leq (-\tau)^{1/s}$. Notice that for any $ ( \xi,\tau) \in \text{supp}(\hat{f})$ we have that $ N(\xi',\tau)$ is well defined and  $ N(\xi',\tau) \approx 2^{k'}$, see \cite[Lemma 3.3]{subcritical} for a proof. 
Now consider the modified projection $ Q'_{j}$ given by 
$$
\mathcal{F}(Q'_{j}(f)) := \varphi_{j} ( \xi_{1} - N(\xi',\tau)) \hat{f}(\xi,\tau).
$$
We wish to replace $ Q_{\leq 2k( s + q(k,k')-1)} (f)$ with $ Q'_{\leq k' -c} (f)$. To that end, we write 
$$
f = Q'_{\leq k' -c} (f) + E(f)
$$
where $ E(f) =Q_{\leq 2k( s + q(k,k')-1)-c} (f) -  Q'_{\leq k' -c} (f)$.\\
we first prove the following; let $ E(f)$ be as above, then we have 
\begin{equation}\label{claim1}
\begin{split}
& \Vert E(f) \Vert_{X_{k}} \lesssim \Vert f \Vert_{Y^{e'}_{k,k'}}.
\end{split}
\end{equation}
\underline{Proof of \eqref{claim1}. } By Lemma \cite[Lemma 3.3]{subcritical} we have that 
\begin{equation}\label{Nmodcompare}
\frac{  \tau + \vert \xi \vert^{2s} }{ 2^{2k( s + q(k,k') -1)}} \alpha(\xi,\tau) = \frac{ | \xi_{1} - N(\xi',\tau)}{2^{k'}}
\end{equation}
for some $ \alpha$ satisfying $ | \alpha(\xi,\tau) | \approx 1$. Then using this we obtain
\begin{equation}\label{pointwiseEbound}
\begin{split}
& | \varphi_{ \leq 2k(s+q(k,k')-1)-c}( \tau + | \xi |^{2s}) - \varphi_{\leq k' -c} ( \xi_{1} - N(\xi',\tau)) | 
\\
& \lesssim 2^{-2k(s+q-1)} | \tau + | \xi |^{2s} |.
\end{split}
\end{equation}
Therefore, we have proven that in the frequency space we actually have
\begin{equation*}
\begin{split}
& \mathcal{F}(E(f)) (\xi,\tau)= A(\xi,\tau) \hat{f}(\xi,\tau),
\end{split}
\end{equation*}
where $ |A(\xi,\tau) | \lesssim \min \{ 1,2^{-2k(s+q-1)}  | \tau + \vert \xi \vert^{2s} | \} $. Now to prove \eqref{claim1} we argue as follows; for any $ j \leq 2k(s+q(k,k')-1) -c$ 
\begin{equation}\label{lklkl}
\begin{split}
& 2^{j/2} \Vert \varphi_{j}( \tau + | \xi |^{2s} )\mathcal{F}(E(f)) \Vert_{L^{2}( \R^{n+1})} 
\\
& \lesssim  2^{-2k(s+q-1)}  2^{j/2} \Vert \varphi_{j}(\tau+ | \xi |^{2s} ) ( \tau + \vert \xi \vert^{2s})\hat{f} \Vert_{L^{2}(\R^{n+1})}
\\
& \lesssim  2^{-2k(s+q-1)}  2^{j/2} \Vert \varphi_{j} \varphi_{j}(\tau+ \vert \xi \vert^{2s}) \mathcal{F}_{\R^{n+1}} \left( ( \i \partial_{t} - ( - \Delta)^{s} + \i) f \right) \Vert_{L^{2}(\R^{n})}
\\
& \lesssim  2^{-2k(s+q-1)}  2^{j/2} \Vert \varphi_{j} \varphi_{j}(\tau+ \vert \xi \vert^{2s}) \int_{\R} e^{\i x_{1} \xi_{1}} \mathcal{F}_{e^{\perp},t}\left( ( \i \partial_{t} - ( - \Delta)^{s} +\i) f \right)  dx_{1} \Vert_{L^{2}(\R^{n+1})}
\\
& \overset{\text{Minkowski's}}{ \lesssim }2^{-2k(s+q-1)}  2^{j/2} \Vert ( \i \partial_{t} - ( - \Delta)^{s} +\i) f \Vert_{L^{1}_{e} L^{2}_{e^{\perp},t}}  \times | \{ \xi_{1} \approx 2^{k'} : | \tau + | \xi |^{2s} | \lesssim 2^{j} \} |^{1/2}
\\
& \lesssim 2^{-2k(s+q-1)}  2^{j/2} \Vert ( \i \partial_{t} - ( - \Delta)^{s} +\i) f \Vert_{L^{1}_{e} L^{2}_{e^{\perp},t}} 2^{j/2} 2^{-k(2s-2+q(k,k')/2}
\end{split}
\end{equation}
where in the last inequality we used \cite[(3.8)]{subcritical}. Then notice 
\begin{equation*}
\begin{split}
2^{-2k(s+q-1)} 2^{-k(2s-2+q)/2} \sum_{j=0}^{2k(s-1+q(k,k'))}2^{j}  \lesssim  2^{-k(2s-2+q)/2}
\end{split}
\end{equation*}
Plugging this back into \eqref{lklkl} we obtained
\begin{equation*}
\begin{split}
& \sum_{j =0}^{2k(s-1+q(k,k')} 2^{j/2} \Vert \varphi_{j}( \tau + | \xi |^{2s} )\mathcal{F}(E(f)) \Vert_{L^{2}( \R^{n+1})} 
\\
& \lesssim 2^{-k(2s-2+q)/2} \gamma^{-1}_{k,k'} 2^{k'(2s-1)/2} \Vert f \Vert_{Y^{e}_{k,k'}}
\end{split}
\end{equation*}
and \eqref{claim1} follows from the fact that 
$$
2^{-k(2s-2+q)/2} \gamma^{-1}_{k,k'} 2^{k'(2s-1)/2} \lesssim 1.
$$
This proves \eqref{claim1}. 
Therefore, we may assume  
$$
f = Q'_{\leq k'-c} (f).
$$
Now we split
$$
f = Q'_{\leq k_{1} -c} (f) + \sum_{i=k_{1}-c+1}^{k'-c} Q'_{i}(f)
$$
with the understanding that the second term is $0$ if $ k_{1} \geq k'$. Now notice that we have 
$$
\Vert \sum_{i =k_{1}-c+1}^{k'-c} Q'_{i}(f) \Vert_{X_{k}} \lesssim (1+|k-k_{1}|) \Vert f \Vert_{Y^{e}_{k,k'}}.
$$
Indeed, this follows easily from \eqref{embd2} and \eqref{Nmodcompare}. Then we may assume
\begin{equation}\label{lstredmax}
f = Q'_{\leq k_{1}-c}(f) 
\end{equation}
and to conclude the result in this case, it suffices to prove 
\begin{equation}\label{eqlocmx}
  \brac{\sum_{\tn \in 2^{k_1} \Z^n} \|P_{k_1,\tn}Q'_{\leq k_{1}-c}(f)\|_{L^{2}_e L^{\infty}_{e^\perp,t}}^2}^{\frac{1}{2}} \aleq
 2^{k_{1} (n-1)/2} 2^{(k -k_{1})/2} (1+|k-k_{1}|)\|f\|_{Y^{e'}_{k,k'}}.
\end{equation}
Next, we wish to replace the projections $P_{k_{1},\tn}$ appearing on the left hand side of \eqref{eqlocmx} with projections that, in the frequency space, depend on $ (\xi',N(\xi',\tau))$ but not on $ \xi_{1}$. To that end, let
\[(\xi,\tau) \in \supp  \mathcal{F} P_{k_{1},\tn} Q'_{\leq k_{1} -c} f. \]
then we have 
\begin{itemize}
 \item $\abs{\tau + |\xi|^{2s}} \leq 2^{2k(s+q-1)-c}$ 
 \item  $|\xi| \aeq 2^k$ 
 \item $|\xi_{1} - N(\xi',\tau)| \ll \min\{ 2^{k'},2^{k_1} \} \aleq 2^{k}$
 \item $|\xi_{1}| \aeq |N(\xi_{e}',\tau)| \aeq 2^{k'}$
 \item $|\xi^i-\tn^i| \aeq 2^{k_1}$, for each component $i=1,\ldots,n$ 
\end{itemize}
We split $\xi = \xi_{1} e' + \xi'$ and $ \tn = \tn_{1} e' + \tn' $ (notice that $\tn_{1}$ might not be an integer) then we certainly have
\begin{itemize}
    \item $ | \xi' - \tn' |\aleq  2^{k_{1}}$ and
    \item $ | \xi_{1} -\tn_{1} |\aleq  2^{k_{1}}$
\end{itemize}
Consequently,
$$
| N(\xi',\tau)- \tn_{1} |\leq | N(\xi',\tau)- \xi_{1} | + | \xi_{1}- \tn_{1} | \aleq  2^{k_{1}}
$$
Therefore, we may write
\[
\begin{split}
 \mathcal{F}\brac{P_{k_{1},\tn} Q'_{\leq k_{1} -c} f}(\xi,\tau) = \pi_{k_{1},\tn}(\xi_{e}',N(\xi_{e}',\tau)) \mathcal{F}\brac{ P_{k_{1},\tn} Q'_{\leq k_{1} -c} f }(\xi,\tau)
 \end{split}
\]
where for $ ( \xi_{e}',R) \in e'^{\perp} \times \R$. 
$$
\pi_{k_{1},\tn}( \xi',R) = \chi( \frac{R-\tn_{1}}{2^{k_{1}+C}}) \, \eta( \frac{ ( \xi' -\tn')}{2^{k_{1}+C}}),
$$
and $ \chi : \mathbb{R} \to \mathbb{R}$ is defined in \eqref{IntegFunc}.

We denote for $\tilde{f}$ with suitable support
\[
 A_{k_{1},\tn} \tilde{f}  := \mathcal{F}^{-1} \brac{ \pi_{k_{1},\tn}(\xi_{e}',N(\xi_{e}',\tau)) \mathcal{F} \tilde{f}}
\]
and arrive at 
\[
\begin{split}
 P_{k_{1},\tn} Q'_{\leq k_{1} -c} f= A_{k_{1},\tn} {P_{k_{1},\tn} Q'_{\leq k_{1} -c} f }.
 \end{split}
\]
Observe that by \cite[Lemma B.2]{model} we have 
\begin{equation}\label{eq:onlyfinite}
\begin{split}
& | \sum_{\tn \in 2^{k_{1}} \Z^{n}} \pi_{k_{1},\tn}(\xi_{e}',N(\xi_{e}',\tau)) |\aleq  1.
\end{split}
\end{equation}
Now using \eqref{eq:onlyfinite} we see that \eqref{eqlocmx} follows once we prove 
\begin{equation}\label{eqlocmx22}
  \| A_{k_{1},\tn} P_{k_1,\tn}Q'_{\leq k_{1}-c}(f)\|_{L^{2}_e L^{\infty}_{e^\perp,t}} \aleq
 2^{k_{1} (n-1)/2} 2^{(k -k_{1})/2} (1+|k-k_{1}|)\|f\|_{Y^{e}_{k,k'}}.
\end{equation}
And by using \eqref{multilpierlemma}, we reduce matters to proving 
 \begin{equation}\label{eqlocmx3}
  \| A_{k_{1},\tn} Q'_{\leq k_{1}-c}(f)\|_{L^{2}_e L^{\infty}_{e^\perp,t}} \aleq
 2^{k_{1} (n-1)/2} 2^{(k -k_{1})/2} (1+|k-k_{1}|)\|f\|_{Y^{e'}_{k,k'}}.
\end{equation}
Equivalently, we want to prove
\begin{equation}
\begin{split}
  &  \|  \int_{\R \times e^{\perp} \times \R } e^{\i \xi' \cdot x'} e^{\i \xi_{1} x_{1}} e^{\i t \tau }  \pi_{k_{1},\tn}(\xi',N )\varphi_{\leq k_{1}-c}(\xi_{1}-N) \hat{f}(\xi,\tau) d\xi d\tau  \|_{L^{2}_e L^{\infty}_{e^\perp,t}}
  \\
  &\aleq
 2^{k_{1} (n-1)/2} 2^{(k -k_{1})/2} (1+|k-k_{1}|)\|f\|_{Y^{e'}_{k,k'}}. 
\end{split}
\end{equation}

To that end, 
we use \cite[Lemma 3.7]{subcritical}  and \eqref{lstredmax}, to write $  (f)$ as follows 
\begin{equation}\label{redrepform}
\begin{split}
& \mathcal{F}_{\R^{n+1}} ( f ) (\xi,\tau)
\\
& = \varphi_{\leq k_{1}-c} ( \xi_{1} -N) \frac{  \varphi_{\leq k'-c}(\xi_{1} -N) }{ \xi_{1} -N + \frac{\i}{2^{k(2s-2+q)}}} \int_{\R} e^{i \xi_{1} x_{1}} h(x_{1} , \xi', \tau) d x_{1},
\end{split}
\end{equation}
where  $ h $ is supported in $ \R \times \{( \xi',\tau) : N(\xi',\tau) \in [2^{k'-10}, 2^{k'+10}], \vert \xi' \vert \leq 2^{k+2} \}.$ And $h $ satisfy 
\begin{equation}\label{redrepform2}
\begin{split}
\Vert h \Vert_{L^{1}_{x_{1}} L^{2}_{\xi',\tau}} \lesssim 2^{2(1-s) ( k -k')} \gamma^{-1}_{k,k'} 2^{-k'(2s-1)/2} \Vert f \Vert_{Y^{e}_{k,k'}}.
\end{split}
\end{equation}
Then using Minkowki's inequality and this representation formula, we reduce matters to proving 
\begin{equation}
\begin{split}
 2^{-k'(2s-1)/2} &  \|  \int_{\R \times e'^{\perp} \times \R } e^{\i \xi' \cdot x'} e^{\i \xi_{1} x_{1}} e^{\i t \tau }  \pi_{k_{1},\tn}(\xi',N )\varphi_{\leq k_{1}-c}(\xi_{1}-N) 
  \\
  & \times \frac{\varphi_{\leq k' -c} ( \xi_{1} -N)}{ \xi_{1} -N +\frac{\i}{2^{k(2s-2+q)}}} h(\xi',\tau) d\xi d\tau  \|_{L^{2}_e L^{\infty}_{e^\perp,t}}
  \\
  &\aleq
2^{-2(1-s)(k-k')}\gamma_{k,k'} 2^{k_{1} (n-1)/2} 2^{(k -k_{1})/2} \|h\|_{L^{2}_{\xi',\tau}}, 
\end{split}
\end{equation}
for any $ h $ supported in $ \{ \vert \xi' \vert \lesssim 2^{k} \}$.
Carrying the integral in $ \xi_{1}$ we reduced matters into proving
\begin{equation*}
\begin{split}
&   2^{-k'(2s-1)/2}\|  \int_{ e'^{\perp} \times \R } e^{\i \xi' \cdot x'} e^{\i N x_{1}} e^{\i t \tau }  \pi_{k_{1},\tn}(\xi',N )  h(\xi',\tau) d\xi' d\tau  \|_{L^{2}_e L^{\infty}_{e^\perp,t}}
  \\
  &\aleq
 2^{-2(1-s)(k-k')}\gamma_{k,k'} 2^{k_{1} (n-1)/2} 2^{(k -k_{1})/2} \|h\|_{L^{2}_{\xi',\tau}}.
\end{split}
\end{equation*}
Consider the change of variable  $ \tau = -( \mu^{2} + | \xi'|^{2})^{s}$, then the Jacobian is $ \alpha(\mu,\xi') = -2s \mu(\mu^{2} + | \xi'|^{2})^{s-1} \approx 2^{k'} 2^{k(s-1)}$. And set $ h'(\xi',\mu) : = 2^{-k'(2s-1)/2} \alpha(\mu,\xi') h(\xi', - ( \mu^{2} + |\xi'|^{2})^{s} )$ then notice 
$$
\Vert h' \Vert_{L^{2}_{\mu,\xi'}} \lesssim 2^{(k-k')(s-1)} \Vert h \Vert_{L^{2}_{\xi',\tau}}.
$$
and since
$$
2^{(k-k')(s-1)} \lesssim 2^{-2(1-s)(k-k')} \gamma_{k,k'} 
$$
Then we reduced matters to proving 
\begin{equation}
\begin{split}
&  \|  \int_{ e'^{\perp} \times \R } e^{\i \xi' \cdot x'} e^{\i \mu x_{1}} e^{- \i t ( \mu^{2} + |\xi|^{2})^{s}  }  \pi_{k_{1},\tn}(\xi',N )  h(\xi',\mu) d\xi' d\mu  \|_{L^{2}_e L^{\infty}_{e^\perp,t}}
\\
& \lesssim 2^{k_{1} \frac{n-1}{2}} 2^{\frac{k-k_{1}}{2}} \Vert h \Vert_{L^{2}_{\mu,\xi'}}.
\end{split}
\end{equation}
for any $ h \in L^{2}_{\xi',\mu}$ supported in $ ( | \xi'|^{2} + \mu^{2} ) \approx 2^{2k}$. 
This was already proven in \cite[Appendix B]{model}. Lastly, the result for the complex conjugate follows from the fact that the left hand side of \eqref{eq:max:Xkest:loc} is invariant under complex conjugation.

\end{proof}

\subsection{Linear estimates} Let $\phi : [-2,2] \to [0,1]$ be some smooth function with $ \phi(t) = 1 $ on $ [-1,1]$. Then the following was proven in \cite{subcritical} and \cite{model}.
\begin{lemma}\label{LinearEstimates}
Let $ \sigma \in \R $ then the following linear estimates hold
\begin{enumerate}
    \item for any $ f_{0} \in \dot{B}^{\sigma}_{2,1}$ one has
    $$
    \Vert \phi(t) e^{ -\i t (-\Delta)^s} f_{0} \Vert_{F^{\sigma}} \lesssim \Vert f_{0} \Vert_{\dot{B}^{\sigma}_{2,1}}
    $$
    \item Let $ f(x,t) : = \i \phi(t) \int_{0}^{t} e^{-\i(t-r) (-\Delta)^s} F(x,r) dr $ then we have
    $$
    \Vert f \Vert_{F^{\sigma}} \lesssim \Vert F \Vert_{N^{\sigma}}
    $$
\end{enumerate}
\end{lemma}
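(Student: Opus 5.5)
Both estimates decouple over dyadic frequencies, because $\Delta_k$ commutes with the free propagator, with the Duhamel operator, and with multiplication by $\phi(t)$. Indeed $\phi(t)$ acts only in time and is a convolution in $\tau$, so it preserves the $\xi$-support of $\hat f$. Since $F^\sigma$, $N^\sigma$ and $\dot B^\sigma_{2,1}$ are all $\ell^1$-sums over $k$ with the common weight $2^{k\sigma}$, it suffices to prove, uniformly in $k\in\Z$,
\[
\|\phi(t)e^{-\i t(-\Delta)^s}\Delta_k f_0\|_{Z_k}\lesssim \|\Delta_k f_0\|_{L^2},\qquad
\Big\|\phi(t)\int_0^t e^{-\i(t-r)(-\Delta)^s}\Delta_k F(r)\,dr\Big\|_{Z_k}\lesssim \|(\i\partial_t-(-\Delta)^s+\i)^{-1}\Delta_k F\|_{Z_k}.
\]
The constants must not depend on $k$. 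For $k\le 0$ this is the content of \cite{model}, and for $k>0$ of \cite{subcritical}, where the spaces are literally the same. I would nevertheless sketch the argument.

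For (1) I estimate directly in $X_k$. The space-time Fourier transform is $\widehat{\phi}(\tau+|\xi|^{2s})\widehat{\Delta_k f_0}(\xi)$, which is supported in $D_{k,\infty}$. Hence $\|Q_j(\cdot)\|_{L^2_{t,x}}\lesssim \|\widehat\phi\,\varphi_j\|_{L^2_\tau}\|\Delta_k f_0\|_{L^2}$. The Schwartz decay of $\widehat\phi$ makes $\sum_j 2^{j/2}\|\widehat\phi\,\varphi_j\|_{L^2}$ finite, so the $X_k$ norm is bounded by $C\|\Delta_kf_0\|_{L^2}$. Summing with the weights $2^{k\sigma}$ gives (1).

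For (2), set $u_k=(\i\partial_t-(-\Delta)^s+\i)^{-1}\Delta_kF$, so that $\Delta_kF=(\i\partial_t-(-\Delta)^s+\i)u_k$. The Duhamel integral of $\Delta_k F$ equals $u_k(t)-e^{-\i t(-\Delta)^s}u_k(0)$ minus the Duhamel integral of $\i u_k$. After multiplying by $\phi$, the term $\phi u_k$ is bounded in $Z_k$ by $\|u_k\|_{Z_k}$. This is because multiplication by $\phi$ is harmless on $X_k$ (convolution in $\tau$ with a Schwartz function, using $2^{j/2}\lesssim 2^{j'/2}\langle \tau-\tau'\rangle^{1/2}$). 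On $Y^e_{k,k'}$, the commutator $[\i\partial_t,\phi]=\i\phi'$ produces a term whose $L^1_eL^2_{e^\perp,t}$ norm can be controlled. The free term with data $u_k(0)$ is handled by (1) together with the energy estimate \eqref{energy}, which gives $\|u_k(0)\|_{L^2}\lesssim\|u_k\|_{Z_k}$. The remaining term is $\phi\int_0^t e^{-\i(t-r)(-\Delta)^s}\i u_k(r)\,dr$. Since $\phi$ localizes to $|t|\le 2$, this has $L^\infty_tL^2_x$ size $\lesssim\|u_k\|_{L^\infty_tL^2_x}$ and can be placed in $X_k$ with the standard bound via $\|\phi\, G\|_{X_k}\lesssim\|G\|_{L^1_tL^2_x}$-type estimates, after splitting at low and high modulation.

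The main obstacle is the $Y^e_{k,k'}$ part. The cutoff $\phi(t)$ and the truncation $\int_0^t$ do not preserve the modulation restriction $\supp\hat f\subset D^{e,k'}_{k,\le 2sk+10}$, so the output must be projected with $Q_{\le 2sk+10}$ and the remainder put in $X_k$. There I would use \eqref{embd5} and \eqref{multiplier3}, and for $k'\in T_k$ the refined embeddings \eqref{embd2}, \eqref{embd4}, paying attention to the weights $\gamma_{k,k'}$. I would follow exactly the treatment in \cite{subcritical}, which handles this bookkeeping.
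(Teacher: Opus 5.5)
Your part (1) is the paper's argument. Your three-term decomposition in (2) is the paper's operator $T$ written in physical space. The paper writes $\mathcal{F}(\Delta_k f)$ as an operator acting on $h=u_k$ with kernel $\frac{r+|\xi|^{2s}+\i}{r+|\xi|^{2s}}\bigl(\hat\phi(r-\tau)-\hat\phi(\tau+|\xi|^{2s})\bigr)$. The splitting $\frac{r+|\xi|^{2s}+\i}{r+|\xi|^{2s}}=1+\frac{\i}{r+|\xi|^{2s}}$ produces, up to sign conventions, exactly your terms $\phi u_k$, $\phi(t)e^{-\i t(-\Delta)^s}u_k(0)$, and the Duhamel integral of $\i u_k$. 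The paper does not split. It cites \cite[(4.2)]{model} for $h\in X_k$ and \cite[Lemma 9.1]{subcritical} for $h\in Y^e_{k,k'}$. Your deferral of the $Y^e_{k,k'}$ bookkeeping to \cite{subcritical} (the $\phi' u_k$ term, the loss of the support condition $D^{e,k'}_{k,\le 2sk+10}$) is therefore on par with the paper.

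The step that fails as written is the third term. Knowing $u_k\in L^\infty_tL^2_x$, i.e.\ $\ind_{[-2,2]}u_k\in L^1_tL^2_x$, does not put $\phi(t)\int_0^te^{-\i(t-r)(-\Delta)^s}G(r)\,dr$ into $X_k$. The reason is that $X_k$ is an $\ell^1$ sum over modulations with weight $2^{j/2}$. For a counterexample, take $G=\epsilon^{-1}\ind_{[r_0,r_0+\epsilon]}(t)\,g(x)$ with $g=\Delta_k g$ and $\phi(r_0)\neq0$. The output has a near-jump in time at $r_0$. Its space-time Fourier transform has size $\approx|\phi(r_0)|\,|\hat g(\xi)|\,|\tau+|\xi|^{2s}|^{-1}$ for $1\ll|\tau+|\xi|^{2s}|\ll\epsilon^{-1}$. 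Hence $2^{j/2}\|Q_j(\cdot)\|_{L^2}\approx\|g\|_{L^2}$ throughout that range, and the $X_k$ norm is $\approx\log(1/\epsilon)\|g\|_{L^2}$, while $\|G\|_{L^1_tL^2_x}=\|g\|_{L^2}$.

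What saves the term is the modulation decay of $u_k$, not its size. By \eqref{embd5}, $\|Q_ju_k\|_{L^2}\lesssim2^{-j/2}\|u_k\|_{Z_k}$. So $w:=(\i\partial_t-(-\Delta)^s+\i)^{-1}(\i u_k)$ satisfies
\[
\|w\|_{X_k}\lesssim\sum_{j\ge0}2^{-j/2}\|Q_ju_k\|_{L^2}\lesssim\sum_{j\ge0}2^{-j}\|u_k\|_{Z_k}\lesssim\|u_k\|_{Z_k}.
\]
The Duhamel integral of $\i u_k=(\i\partial_t-(-\Delta)^s+\i)w$ is then controlled by the $X_k$-input case \cite[(4.2)]{model} applied to $w$.

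This $X_k$-input Duhamel bound is genuinely needed. In the $X_k$ case your decomposition only trades the Duhamel integral of $\Delta_kF$ for that of $\i u_k$. So the bound must be quoted or proved, with input measured in $\sum_j2^{-j/2}\|Q_j\cdot\|_{L^2}$; it cannot be replaced by an $L^1_tL^2_x$ estimate.
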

\begin{proof}
First, we prove (1)
\begin{equation*}
\begin{split}
& \Vert \phi(t) e^{ -\i t (-\Delta)^s} f_{0} \Vert_{F^{\sigma}}
\\
& = \sum_{k=-\infty}^{\infty} 2^{ \sigma k} \Vert \Delta_{k} ( \phi(t) e^{-\i t (-\Delta)^s } f_{0} )\Vert_{Z_{k}}
\\
& \lesssim \sum_{k=-\infty}^{\infty} 2^{ \sigma k} \Vert \Delta_{k} ( \phi(t) e^{-\i t (-\Delta)^s } f_{0} )\Vert_{X_{k}}
\\
& \lesssim_{\phi} \sum_{k=-\infty}^{\infty } 2^{ \sigma k } \Vert  \Delta^{k} f_{0} \Vert_{L^{2}}.
\end{split}
\end{equation*}
Next, we proceed to prove (2). Notice that we have 
\begin{equation*}
\begin{split}
& \mathcal{F}_{\R^{n+1}}( \Delta_{k} f )
\\
& = \mathcal{F}_{\R^{n+1}} (  \Delta_{k} \i \phi(t) \int_{0}^{t} e^{-\i (t-r) (-\Delta)^s} F(x,r) dr )
\\
& = \i \int_{\R} \frac{ \mathcal{F}_{\R^{n+1}} \left( (\i \partial_{t} - (-\Delta)^{s}   + \i)^{-1} \Delta_{k} F \right)(\xi,r) ( r + \vert \xi \vert^{2s} + \i) }{ r + \vert \xi \vert^{2s}} \left( \hat{\phi}(r -\tau) - \hat{\phi}(\tau + \vert \xi\vert^{2s}) \right) dr.
\end{split}
\end{equation*}
Therefore, for $h \in Z_{k}$ set 
$$
T(h) := \mathcal{F}^{-1}_{\R^{n+1}} \left(  \i \int_{\R} \frac{ \hat{h}(\xi,r) ( r + \vert \xi \vert^{2s} + \i)}{ r + \vert \xi \vert^{2s}} \left( \hat{\phi}(r -\tau) - \hat{\phi}(\tau + \vert \xi\vert^{2s}) \right) dr \right)
$$
and to prove (2) it suffices to prove 
\begin{equation}
\Vert T(h) \Vert_{Z_{k}} \lesssim \Vert h \Vert_{Z_{k}}
\end{equation}
If $ h \in X_{k}$ then the result was already proven in \cite[(4.2)]{model}. If $k \geq 100$, $ h \in Y^{e}_{k,k'}$ then the result was proven in \cite[Lemma 9.1]{subcritical}.

\end{proof}

\section{Algebra properties of $F^{\sigma}$ spaces}\label{s:algebraprops}
In this section we prove that the spaces $ F^{n/2}$ are closed under multiplication. This result was already proven in \cite{subcritical} for the inhomogeneous version of $F^{\sigma}$. However, we include the proof here, since we need a version of \cite[Theorem 6.3]{subcritical} that accounts for the complex conjugate. The proof is the same as the one done in \cite[Theorem 6.3]{subcritical}, which was a fractional adaptation of the proof done in \cite[Lemma 5.2]{cmp}. We start with the following, which was proven in \cite[Lemma 6.2]{subcritical}
\begin{lemma}\label{es1}
Let $ j_{1},j_{2}\geq 0 $, $ k_{1} ,k_{2} \in \Z$ with $ k_{1} \leq k_{2} + C $. Assume that $ f_{k_{1}} \in Z_{k_{1}}$, and $ f_{k_{2}} \in Z_{k_{2}}$. Then we have the following inequality 
\begin{equation*}
\begin{split}
 & \Vert \tilde{f}_{k_{1}, \geq j_{1}}   \tilde{f}_{k_{2}, \geq j_{2}} \Vert_{L^{2}} 
 \\
 & \lesssim \left( 2^{j_{2}/2} + 2^{k_{1}/2 +(k_{2})(2s-1)/2} \right)^{-1} 2^{nk_{1}/2} \Vert f_{k_{1}} \Vert_{Z_{k_{1}}} \Vert f_{k_{2}} \Vert_{Z_{k_{2}}} 
\end{split}
\end{equation*}
where $ \tilde{f} \in \{ f , \bar{f} \}.$
\end{lemma}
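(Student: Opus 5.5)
The bound is the minimum of two estimates, and I will prove them separately. Here $\tilde f_{k,\geq j}$ means $Q_{\geq j}\tilde f_k$, or the conjugate of $Q_{\geq j} f_k$. Since $|\tilde f| = |f|$ pointwise, every $L^2$ or mixed-norm bound for $Q_{\geq j}f_k$ transfers to its conjugate. All the estimates of Lemma \ref{summaryres} and \eqref{embd5} are already stated for $\tilde f \in \{f,\bar f\}$, so the conjugate case needs no separate argument.

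\emph{First estimate (gain $2^{-j_2/2}$).} I use H\"older with $L^\infty_{t,x}$ on the low-frequency factor and $L^2$ on the high-frequency factor:
\[
\|\tilde f_{k_1,\geq j_1}\tilde f_{k_2,\geq j_2}\|_{L^2}\le \|\tilde f_{k_1,\geq j_1}\|_{L^\infty_{t,x}}\|Q_{\geq j_2} f_{k_2}\|_{L^2}.
\]
\begin{itemize}
\item By \eqref{embd5}, the second factor is bounded by $2^{-j_2/2}\|f_{k_2}\|_{Z_{k_2}}$.
\item For the first factor, Bernstein in $x$ (frequency support $|\xi|\approx 2^{k_1}$) combined with the energy estimate \eqref{energy} gives $\|\tilde f_{k_1,\geq j_1}\|_{L^\infty}\lesssim 2^{nk_1/2}\|Q_{\geq j_1}f_{k_1}\|_{L^\infty_tL^2_x}$.
\item It remains to bound $\|Q_{\geq j_1}f_{k_1}\|_{L^\infty_tL^2_x}\lesssim \|f_{k_1}\|_{Z_{k_1}}$. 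Write $Q_{\geq j_1}=I-Q_{< j_1}$. The piece $Q_{<j_1}$ is handled by \eqref{multiplier3} followed by \eqref{energy}, and the identity piece by \eqref{energy} directly.
\end{itemize}

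\emph{Second estimate (gain $2^{-k_1/2-k_2(2s-1)/2}$).} This is the usual bilinear smoothing argument, pairing local smoothing on the high-frequency factor with the maximal estimate on the low-frequency factor.
\begin{itemize}
\item Decompose $f_{k_2}=\sum_{e\in\mathscr E}\mathcal F^{-1}(\vartheta_e\hat f_{k_2})$; the set $\mathscr{E}$ is finite.
\item For fixed $e$, H\"older in $L^2_eL^2_{e^\perp,t}$ gives
\[
\|\tilde f_{k_1,\geq j_1}\,\mathcal F^{-1}(\vartheta_e\hat{\tilde f}_{k_2,\geq j_2})\|_{L^2}\le \|\tilde f_{k_1,\geq j_1}\|_{L^2_eL^\infty_{e^\perp,t}}\,\|\mathcal F^{-1}(\vartheta_e \hat{\tilde f}_{k_2,\geq j_2})\|_{L^\infty_eL^2_{e^\perp,t}}.
\]
\item The local smoothing estimate \eqref{local_smoothing} bounds the second factor by $2^{-k_2(2s-1)/2}\|f_{k_2}\|_{Z_{k_2}}$; first move $Q_{\geq j_2}$ inside via \eqref{multiplier3}.
\item The maximal estimate \eqref{maximal_Estimate} bounds the first factor by $2^{k_1(n-1)/2}\|f_{k_1}\|_{Z_{k_1}}$.
\item Multiplying gives $2^{nk_1/2}\,2^{-k_1/2-k_2(2s-1)/2}$, which is exactly the claimed factor.
\end{itemize}

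\emph{Main obstacle.} The delicate point is commuting $Q_{\geq j}$, and the sector multiplier $\vartheta_e$, past the $Z_k$ structure. Neither $\vartheta_e$ (singular at the origin) nor the sharp modulation cutoff is obviously an $L^1$-kernel multiplier on $Y^e_{k,k'}$ pieces. I handle $\vartheta_e$ by replacing it with $\vartheta_e\varphi_{[k-1,k+1]}$, which is smooth on the frequency support and has an $L^1$ kernel uniformly in $k$ by scaling, so \eqref{multilpierlemma} and \eqref{multiplierint} apply. For the modulation cutoffs I rely only on \eqref{multiplier3}, taking the complement $I-Q_{<j}$, and on \eqref{embd5}, rather than on direct mixed-norm bounds for $Q_{\geq j}$.

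Finally, combining the two estimates gives the bound with $\min\{2^{-j_2/2},\,2^{-k_1/2-k_2(2s-1)/2}\}\approx(2^{j_2/2}+2^{k_1/2+k_2(2s-1)/2})^{-1}$, as claimed.
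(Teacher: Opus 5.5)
Your proof is correct. The paper does not reprove this lemma; it only cites \cite[Lemma 6.2]{subcritical}, and your argument is the standard one for such bounds: H\"older with $L^\infty_{t,x}\times L^2$ and \eqref{embd5} for the $2^{-j_2/2}$ gain, and the maximal estimate \eqref{maximal_Estimate} on the $k_1$ factor paired with local smoothing \eqref{local_smoothing} on the angular sectors of the $k_2$ factor for the other gain, with \eqref{multiplier3} keeping $Q_{\geq j}f_k$ in $Z_k$. The workaround in your ``main obstacle'' paragraph is unnecessary, since \eqref{local_smoothing} is already stated with $\vartheta_e$ for arbitrary $f\in Z_k$ and the modulation cutoffs are smooth rather than sharp, so nothing actually has to be commuted past the $Z_k$ structure.
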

Next, we recall the following commutator estimate. See \cite[Lemma 5.2]{subcritical} for a proof.

\begin{lemma}\label{comest}
Let $ k_{1}, k_{2} \in \Z $ with $ k_{1} \leq k_{2}+10$. Let $f_{k_{1}} \in Z_{k_{1}}$ and $f_{k_{2}} \in Z_{k_{2}}$. Then the following estimate holds
\begin{equation}\label{a}
\begin{split}
& \Vert  H_{(-\Delta)^{s}}(  \tilde{f}_{k_{1}}, \tilde{f}_{k_{2}}) \Vert_{L^{2}(\R^{n+1})} 
\\
& \lesssim 2^{(k_{1} + (2s-1)k_{2})/2} 2^{nk_{1}/2} \Vert f_{k_{1}} \Vert_{Z_{k_{1}}} \Vert f_{k_{2}} \Vert_{Z_{k_{2}}},
\end{split}
\end{equation}
where $ \tilde{f} \in \{ f , \bar{f} \}$ and $$
H_{(-\Delta)^{s}} (f,g) := (-\Delta)^{s} ( fg) - g (-\Delta)^{s} (f) - f (-\Delta)^{s}(g). 
$$
\end{lemma}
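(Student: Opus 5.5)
The plan is to reduce \eqref{a} to pieces that \Cref{es1} already controls, using the fact that the symbol of the commutator is small when one frequency is much lower than the other. Write $m(\xi,\eta) = |\xi+\eta|^{2s} - |\xi|^{2s} - |\eta|^{2s}$, where $\xi$ is the frequency of $\tilde f_{k_1}$ ($|\xi|\approx 2^{k_1}$) and $\eta$ that of $\tilde f_{k_2}$ ($|\eta|\approx 2^{k_2}$). Complex conjugation only changes the sign of the space-time frequency and the modulation symbol, so the case $\tilde f\in\{f,\bar f\}$ costs nothing extra. \Cref{es1} supplies exactly the needed $L^2$ product bounds with conjugates already included.

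\emph{Case $k_1 \geq k_2 - 20$ (comparable frequencies).} Here I would not use any cancellation. Expand $H_{(-\Delta)^s}$ into its three terms and use Bernstein to bound $(-\Delta)^s$ on each by $2^{2sk_2}$ (the output frequency is $\lesssim 2^{k_2}$). Then apply \Cref{es1} with $j_1=j_2=0$. This gives
\[
2^{2sk_2}\bigl(2^{k_1/2+k_2(2s-1)/2}\bigr)^{-1}2^{nk_1/2}.
\]
Since $k_1\approx k_2$, this is $\approx 2^{(k_1+(2s-1)k_2)/2}2^{nk_1/2}$, which is the claimed bound.

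\emph{Case $k_1 < k_2-20$ (the main case).} Expand the symbol in the small variable $\xi$ by Taylor's theorem:
\[
|\xi+\eta|^{2s}-|\eta|^{2s} = 2s|\eta|^{2s-2}\eta\cdot\xi + R(\xi,\eta),
\]
with $|\partial^\alpha R|\lesssim 2^{(2s-2)k_2}2^{2k_1}$ after the natural rescaling. The remaining term $-|\xi|^{2s}$ has size $2^{2sk_1}$. Each piece is a smooth symbol on the dyadic supports, localised by cutoffs $\varphi_{\approx k_1}(\xi)\varphi_{\approx k_2}(\eta)$. I would write each piece as a finite sum (or a rapidly convergent Fourier series in the rescaled variables) of products $m_1(\xi)m_2(\eta)$ whose kernels $\mathcal F^{-1}m_i$ have $L^1$ norms bounded by the appropriate powers. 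Then \eqref{multiplierint}, or its bilinear analogue in the spirit of \eqref{multimultiplier}, lets me treat $H$ as a sum of products $\tilde f'_{k_1}\tilde f'_{k_2}$ of multiplier-modified functions. By \eqref{multilpierlemma} these modified functions remain in $Z_{k_i}$ with comparable norms.

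\emph{Combining the pieces.} The leading piece contributes a factor $2^{k_1}2^{(2s-1)k_2}$, which is the largest, followed by $2^{2k_1}2^{(2s-2)k_2}$ and $2^{2sk_1}$. Multiplying by the \Cref{es1} bound $(2^{k_1/2+(2s-1)k_2/2})^{-1}2^{nk_1/2}$ gives
\[
2^{k_1/2}2^{(2s-1)k_2/2}2^{nk_1/2},
\]
which is again exactly \eqref{a}. The other two pieces are smaller because $k_1<k_2$ and $s\in(1/2,1)$.

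The main obstacle I expect is the separation-of-variables step: producing a representation of the Taylor symbol with uniform $L^1$ kernel bounds that is compatible with the $Z_k$ structure. This matters because $Y^e_{k,k'}$ is not stable under arbitrary bilinear multipliers, only under the linear multipliers of \eqref{multilpierlemma}. I would first try the standard trick: expand $\varphi_{\approx k_1}(\xi)\varphi_{\approx k_2}(\eta)m(\xi,\eta)$ in a Fourier series on a cube of side $\approx 2^{k_2}$ in $\eta$ and $\approx 2^{k_1}$ in $\xi$. Smoothness gives summable coefficients, and each term factors into one modulation per function.
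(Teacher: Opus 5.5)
Your argument is correct. It is essentially the standard proof of this lemma; the paper itself only cites \cite[Lemma 5.2]{subcritical}. In both, comparable frequencies need no cancellation, and for $k_1<k_2-20$ one expands the symbol to extract the gain $2^{k_1}2^{(2s-1)k_2}$ and then bounds the resulting products in $L^2$. Your Fourier-series separation of variables plays the role of the explicit Taylor series in Lemma \ref{Taylorcalc} and Lemma \ref{Taylorstuff}, and Lemma \ref{es1} packages the local smoothing and maximal function pairing the paper uses for such terms elsewhere. The factor $(2^{k_1/2+(2s-1)k_2/2})^{-1}$ you quote is a legitimate weakening of Lemma \ref{es1} at $j_2=0$, since $(1+x)^{-1}\le x^{-1}$, so your bookkeeping is valid for all $k_1,k_2\in\Z$, including negative frequencies.
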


Further, the following elementary estimate was proven in \cite[Lemma 6.1]{subcritical}
\begin{lemma}\label{cl}
Let $ j_{1},j_{2} ,j \geq 0$, $ k, k_{1},k_{2} \in \Z$. Take $ g_{k_{1,j_{1}}}, g_{k_{2},j_{2}} \in L^{2}(\R^{n+1}) $ with the property that $ \text{supp} (\hat{g}_{k_{i},j_{i}} ) \subset D_{k_{i},j_{i}}$ for $i=1,2$. Then the following holds 
$$
\Vert \ind_{D_{k,j}} . \hat{\tilde{g}}_{k_{1},j_{1}} * \hat{\tilde{g}}_{k_{2},j_{2}} \Vert_{L^{2}(\R^{n+1})} \lesssim 2^{ n \min \{ k, k_{1},k_{2} \}/2 } 2^{ \min \{ j, j_{1} , j_{2} \}/2 } \Vert g_{k_{1},j_{1}} \Vert_{L^{2}(\R^{n+1})} \Vert g_{k_{2},j_{2}} \Vert_{L^{2}(\R^{n+1})},
$$ 
where $ \tilde{g} \in \{ g , \bar{g} \}$. 
\end{lemma}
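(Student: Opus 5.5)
We prove the estimate by working entirely on the Fourier side and reducing it to a fibrewise measure bound. Write $G_i := \mathcal{F}\tilde g_{k_i,j_i}$. Since $\mathcal{F}\bar g(\xi,\tau) = \overline{\hat g(-\xi,-\tau)}$, the case $\tilde g = \bar g$ only replaces $D_{k_i,j_i}$ by its reflection $-D_{k_i,j_i}$, which has the same geometry: frequency $\approx 2^{k_i}$, with $\tau$ lying in an interval of length $\approx 2^{j_i}$ around $+|\xi|^{2s}$ instead of $-|\xi|^{2s}$. The argument below only uses that each fibre is an interval of length $\lesssim 2^{j_i}$ and that the $\xi$-support is a ball of radius $\lesssim 2^{k_i}$. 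Hence all four sign combinations are treated identically.

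The bound is symmetric under swapping the roles of the three sets, up to the usual duality. Indeed, by duality,
\[
\Vert \ind_{D_{k,j}} (G_1 * G_2)\Vert_{L^2} = \sup_{\Vert H\Vert_{L^2}\le 1,\ \supp H\subset D_{k,j}} \Big| \int\!\!\int G_1(\zeta_1)\, G_2(\zeta_2)\, H(\zeta_1+\zeta_2)\, d\zeta_1\, d\zeta_2 \Big|,
\]
where $\zeta_i = (\xi_i,\tau_i)$. After the change of variables $\zeta_2 \mapsto \zeta - \zeta_1$, this trilinear form $I(G_1,G_2,H)$ can equally be written as $\langle G_1 * G_2, \bar H\rangle$, as $\langle \tilde H * \tilde G_2, G_1\rangle$, or as $\langle \tilde H * \tilde G_1, G_2 \rangle$ (with reflections). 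So it suffices to show
\[
|I| \lesssim 2^{n k_{\min}/2}\, 2^{j_{\min}/2}\, \Vert G_1\Vert_{L^2} \Vert G_2\Vert_{L^2}\Vert H\Vert_{L^2},
\]
and we are free to choose which two functions are convolved and which is paired.

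The key step is a Schur/Cauchy--Schwarz bound for a convolution of two $L^2$ functions, one of which has small support in each variable. For $A,B\in L^2$ we use $|A*B(\zeta)|\le \Vert A\Vert_{L^2}\Vert B\Vert_{L^2}$. More sharply, we aim for
\[
\Vert \ind_E (A*B)\Vert_{L^2}\le \sup_{\zeta}|E\cap(\zeta - \supp B)|^{1/2}\cdot(\dots),
\]
and the cleanest form comes from writing the integral over $(\xi_1,\tau_1)$ and applying Cauchy--Schwarz in both variables. We proceed in two stages.

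First, for fixed $\xi$-variables, we handle $\tau$: the $\tau$-fibres of the supports are intervals of length $\lesssim 2^{j_1}$, $2^{j_2}$ and $2^{j}$. A one-dimensional convolution of $L^2$ functions supported on intervals $I_1,I_2$, restricted to $I$, satisfies $\Vert \ind_I(a*b)\Vert_{L^2(\R)}\le \min(|I|,|I_1|,|I_2|)^{1/2}\Vert a\Vert_{L^2}\Vert b\Vert_{L^2}$. This follows by using $\Vert a*b\Vert_{L^\infty}\le\Vert a\Vert\Vert b\Vert$ with $|I|$, or Young's inequality $\Vert a*b\Vert_{L^2}\le \Vert a\Vert_{L^1}\Vert b\Vert_{L^2}\le |I_1|^{1/2}\Vert a\Vert_{L^2}\Vert b\Vert_{L^2}$. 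This produces the factor $2^{j_{\min}/2}$, uniformly in the $\xi$-variables.

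Second, we treat the $\xi$-variables in the same way with $n$-dimensional balls of radius $\lesssim 2^{k}, 2^{k_1}, 2^{k_2}$, yielding $2^{nk_{\min}/2}$ via the $L^1$--$L^2$ Young bound, or the $L^\infty$ bound when the target set is smallest.

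The main obstacle is that the $\tau$-fibres depend on $\xi$, so the two stages do not literally factor. We handle this by applying Minkowski's and Young's inequalities in the mixed norm $L^2_\xi L^2_\tau$. Concretely, set $a_{\xi_1}(\tau) = G_1(\xi_1,\tau)$. Then
\[
\ind_{D_{k,j}}(G_1*G_2)(\xi,\cdot) = \int \ind_I\,\big(a_{\xi_1} *_\tau b_{\xi-\xi_1}\big)\, d\xi_1 .
\]
We bound its $L^2_\tau$-norm by $\int 2^{j_{\min}/2}\Vert a_{\xi_1}\Vert_{L^2_\tau}\Vert b_{\xi-\xi_1}\Vert_{L^2_\tau}\, d\xi_1$, which uses that the 1D bound holds uniformly over the fibres. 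This reduces the estimate to the $\xi$-convolution of the nonnegative functions $\Vert G_i(\xi,\cdot)\Vert_{L^2_\tau}$, restricted to the annulus $|\xi|\approx 2^k$. The $n$-dimensional step then gives $2^{nk_{\min}/2}$, which completes the proof.
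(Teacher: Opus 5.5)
Your proof is correct. The paper does not prove this lemma itself; it only cites \cite[Lemma 6.1]{subcritical}, and your fibrewise scheme is the standard proof of such estimates. That scheme applies the one-dimensional bound $\|\ind_I(a*b)\|_{L^2(\R)}\le\min(|I|,|I_1|,|I_2|)^{1/2}\|a\|_{L^2}\|b\|_{L^2}$ uniformly in $(\xi,\xi_1)$, then Minkowski in $\xi_1$, then the same bound in $\R^n$ for the convolution of the fibre norms $\|G_i(\xi,\cdot)\|_{L^2_\tau}$ restricted to $|\xi|\approx 2^k$.

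This argument already gives $\min\{j,j_1,j_2\}$ and $\min\{k,k_1,k_2\}$ independently, so you never use the duality reduction or the preliminary ``more sharply'' display, and both can be deleted. Only the measure of the $\tau$-fibres enters. For $j\geq 1$ the fibre of $D_{k,j}$ is a union of two intervals rather than a single interval, but it lies in one interval of length $2^{j+2}$, so your argument is unaffected.
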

The main result of this section, is the following bilinear estimate 

\begin{theorem}\label{Bilin1}
Let $k_{1},k_{2},k  \in \Z $ with $ k_{1} \leq k_{2}+10 $. And $f_{k_{1}} \in Z_{k_{1}} $ and $ f_{k_{2}} \in Z_{k_{2}}$ then we have the following 
$$
2^{nk/2} \Vert \Delta_{k} (  \tilde{f}_{k_{1}} f_{k_{2}}) \Vert_{Z_{k}} \lesssim 2^{- \frac{n}{2}\vert k - k_{2} \vert} ( 2^{nk_{1}/2} \Vert f_{k_{1}} \Vert_{Z_{k_{1}}} ) ( 2^{nk_{2}/2} \Vert f_{k_{2}} \Vert_{Z_{k_{2}}} ),
$$
where $ \tilde{f}_{k_{1}} \in \{ f_{k_{1}}, \bar{f}_{k_{1}} \}$.
\end{theorem}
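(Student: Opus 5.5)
Since $k_1 \le k_2+10$, the product $\tilde f_{k_1} f_{k_2}$ has spatial frequency $\lesssim 2^{k_2+C}$. So only $k \le k_2 + C$ contributes. The plan splits into two regimes, high--high to low ($k \le k_2 - 20$) and high--low to high ($|k-k_2|\le 20$). In each regime I further split the inputs into modulation pieces $Q_{j_1}$, $Q_{j_2}$ and the output into $Q_j$. I will work mostly in the $X_k$ part of $Z_k$, placing the output in $X_k$ whenever possible.

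\textbf{Case $|k-k_2|\le 20$, output modulation high.} Set $g=\Delta_k(\tilde f_{k_1} f_{k_2})$. I decompose $f_{k_2} = Q_{\le j_0} f_{k_2} + Q_{>j_0} f_{k_2}$ with the threshold $j_0 \approx k_1 + (2s-1)k_2$. This threshold is the resonance scale: it is exactly the size by which $\tau+|\xi|^{2s}$ can change when a frequency $\approx 2^{k_1}$ is added to one of size $2^{k_2}$, since $|\xi+\eta|^{2s}-|\eta|^{2s}\approx 2^{k_1}2^{(2s-1)k_2}$.

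For the pieces where some input has modulation $\ge 2^{j_0}$, I would place the output in $X_k$. I bound $2^{j/2}\|Q_j g\|_{L^2}$ using Lemma~\ref{es1} (with its $(2^{j_2/2}+2^{k_1/2+k_2(2s-1)/2})^{-1}$ gain) together with \eqref{embd5}. For the terms with output modulation $j$ much larger than both input modulations and $j_0$, I use Lemma~\ref{cl} and the support constraint $j \le \max(j_1,j_2,j_0)+C$. The remaining summations over $j$ are geometric.

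\textbf{Case $|k-k_2|\le 20$, low modulations.} This is the delicate term. Here both inputs have modulation $\lesssim 2^{j_0}$, so the output modulation is also $\lesssim 2^{j_0}$. The $X_k$ norm then costs $2^{j_0/2}$, and I would not expect to recover that loss from $L^2$ bounds alone. Instead I would use the $(\i\partial_t-(-\Delta)^s+\i)$ route via \eqref{passingtol1l2}. I write
\[
(\i\partial_t-(-\Delta)^s)(\tilde f_{k_1} f_{k_2}) = \big((\i\partial_t-(-\Delta)^s)\tilde f_{k_1}\big) f_{k_2} + \tilde f_{k_1}\,(\i\partial_t-(-\Delta)^s) f_{k_2} - H_{(-\Delta)^s}(\tilde f_{k_1},f_{k_2}) + (\text{conjugation correction}),
\]
and estimate each term in $L^1_eL^2_{e^\perp,t}$. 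For the product terms I use Hölder with the maximal estimate \eqref{maximal_Estimate} on the low-frequency factor, or its localized version \eqref{eq:max:Xkest:loc}, paired with the local smoothing \eqref{local_smoothing} and the $L^1_eL^2$ structure of $Y$ on the high-frequency factor. The $2^{-k(2s-1)/2}$ in \eqref{passingtol1l2} matches the gain in \eqref{local_smoothing}. When $f_{k_2}$ lies in a $Y^{e}_{k_2,k'}$ atom, I keep it in the $Y^{e}_{k,k'}$ space and use $L^\infty_{t,x}$ (Bernstein, $2^{nk_1/2}$ via \eqref{energy}) on $\tilde f_{k_1}$. The weights $\gamma_{k,k'}$ are unchanged up to constants because $k\approx k_2$ and $\langle\xi,e\rangle$ shifts by only $2^{k_1}$. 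The cone localization must survive the product, which is why I would localize $\tilde f_{k_1}$ with $P_{k_1,\tn}$ and use \eqref{eq:max:Xkest:loc}. The commutator term is handled by Lemma~\ref{comest}, which exactly matches $2^{j_0/2}$. I expect this step to be the main obstacle, namely the $Y$-atom bookkeeping when $k'\in T_k$, and I would follow the structure of \cite[Theorem 6.3]{subcritical}. The conjugate case $\tilde f=\bar f$ only changes the modulation of the low factor to $\tau-|\xi|^{2s}$, and this is harmless because Lemmas~\ref{es1}, \ref{comest}, \ref{cl} and the maximal and local smoothing estimates are stated for $\tilde f$.

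\textbf{Case $k\le k_2-20$.} Here $k_1\approx k_2$, so the claimed bound needs a $2^{-n(k_2-k)/2}$ decay. I place the output in $X_k$ and bound $2^{j/2}\|Q_j\Delta_k(\tilde f_{k_1}f_{k_2})\|_{L^2}$ by Lemma~\ref{cl} with $\min\{k,k_1,k_2\}=k$. This gives a factor $2^{nk/2}2^{\min(j,j_1,j_2)/2}$ against $2^{j_1/2}\|Q_{j_1}\cdot\|\,2^{j_2/2}\|Q_{j_2}\cdot\|$, and hence $2^{nk/2}\|f_{k_1}\|_{Z}\|f_{k_2}\|_{Z}$ after summation. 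Multiplying by $2^{nk/2}$ then yields $2^{nk}=2^{-n(k_2-k)}2^{nk_1/2}2^{nk_2/2}$, which is even better than required.

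Two points need care. First, the summation over $j$ requires $j\lesssim\max(j_1,j_2, 2sk_2)$. Second, the $Y$ components must be converted to $X$ via \eqref{embd1}–\eqref{embd4}. Where this conversion loses a logarithm or a power, the surplus factor $2^{-n(k_2-k)/2}$ absorbs it, and the $L^2$ control from \eqref{embd5} handles the high modulations.
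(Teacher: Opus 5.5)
There is a genuine gap, and it sits in the step you single out as the main obstacle, whose premise is false. At output modulation $\lesssim 2^{\tilde K}$, with $\tilde K=\max\{k_1+(2s-1)k_2,0\}+C'$ (your $j_0$), the $2^{\tilde K/2}$ cost of $X_k$ \emph{is} recovered from an $L^2$ bound. Lemma~\ref{es1} with $j_1=j_2=0$ gives $\|\tilde f_{k_1}f_{k_2}\|_{L^2}\lesssim 2^{-\tilde K/2}2^{nk_1/2}\|f_{k_1}\|_{Z_{k_1}}\|f_{k_2}\|_{Z_{k_2}}$ for arbitrary $Z$ inputs, $Y$ atoms included. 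Hence $2^{nk/2}\|Q_{\le\tilde K-1}\Delta_k(\tilde f_{k_1}f_{k_2})\|_{X_k}\lesssim 2^{n(k-k_2)/2}\,2^{nk_1/2}\|f_{k_1}\|_{Z_{k_1}}\,2^{nk_2/2}\|f_{k_2}\|_{Z_{k_2}}$ for every $k\le k_2+20$; this one line is the paper's first step.

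The Leibniz route you substitute does not close in this regime. Only the term containing $(\i\partial_t-(-\Delta)^s+\i)f_{k_2}$ for a $Y$ atom can be put in $L^1_eL^2_{e^\perp,t}$, paired with $\|\tilde f_{k_1}\|_{L^\infty_{t,x}}$. For $H_{(-\Delta)^s}(\tilde f_{k_1},f_{k_2})$ and $f_{k_2}(\i\partial_t-(-\Delta)^s)\tilde f_{k_1}$ the high-frequency factor has low modulation, so it is neither in $L^2_{t,x}$ nor in $L^1_eL^2$. You only have $L^2_{t,x}$ bounds of size $2^{\tilde K/2}2^{nk_1/2}\|f_{k_1}\|_{Z_{k_1}}\|f_{k_2}\|_{Z_{k_2}}$ (Lemma~\ref{comest}, Lemma~\ref{es1}). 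Inverting $\i\partial_t-(-\Delta)^s+\i$ into $X_k$ gains only $2^{-j/2}$ at output modulation $2^j$, which offsets $2^{\tilde K/2}$ only when $j\gtrsim\tilde K$. In your low-modulation case these terms therefore lose $2^{\tilde K/2}$, and the claimed ``exact match'' with Lemma~\ref{comest} does not occur.

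The same missing lemma breaks your case $k\le k_2-20$. For $\tilde f_{k_1}=f_{k_1}$ the resonance $|\xi|^{2s}-|\xi-\eta|^{2s}-|\eta|^{2s}$ has size $2^{2sk_2}$, so output modulations up to $2^{\tilde K}\approx 2^{2sk_2}$ arise from inputs with $j_1,j_2\approx 0$. Lemma~\ref{cl} then loses $2^{(j-\max\{j_1,j_2\})/2}$, up to $2^{sk_2}$. The surplus $2^{-n(k_2-k)/2}$ cannot absorb this when $k_2-k=O(1)$, nor the loss of order $k_2$ from splitting $Y$ atoms into modulation pieces.

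Conversely, the genuinely delicate regime is the high-output-modulation one, and your tools do not cover it. Take $f_{k_2}\in Y^e_{k_2,k'}$ with $k'\in T_{k_2}$, its high modulations removed by \eqref{embd4}. Take $\tilde f_{k_1}=\tilde g_{k_1,j_1}$ with $j_1\ge\tilde K-C$, which Lemma~\ref{prod1} forces when $k'\le k_1+10$. You then need
$\|\hat f_{k_2}*\hat{\tilde g}_{k_1,j_1}\|_{L^2}\lesssim 2^{nk_1/2}\|g_{k_1,j_1}\|_{L^2}\|f_{k_2}\|_{Y^e_{k_2,k'}}$, and none of your tools gives it:
\begin{itemize}
\item Lemma~\ref{es1} gains only $2^{-\tilde K/2}$, not $2^{-j_1/2}$, since its gain comes from the modulation of the high-frequency factor.
\item Estimate \eqref{embd5} is irrelevant, because $f_{k_2}$ has low modulation.
\item Lemma~\ref{cl}, applied to the modulation pieces of the atom, loses their number.
\end{itemize}
The paper proves this bound as \eqref{M1} (Case 5), via the representation formula of \cite[Lemma 3.7]{subcritical} and the $L^2$-boundedness of the Hilbert transform in $\xi_1$. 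When $k'\ge k_1+10$ it also splits $f_{k_2}$ at modulation $2^{j_1}$. On the part above $2^{j_1}$, and when $\tilde f_{k_1}$ has modulation $\le 2^{\tilde K}$, it runs your Leibniz decomposition (Cases 6--7). There $((\i\partial_t-(-\Delta)^s+\i)f_{k_2})\,\tilde f_{k_1}$ goes into $Y^e_{k,k''}$, and the other two terms go into $X_k$ at output modulation $\ge 2^{\tilde K}$. That high output modulation is exactly what makes the Leibniz trick close.
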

\begin{proof}
First, notice that if $k >  k_{2} +20$ then we must have that $ \Delta_{k} ( \tilde{f}_{k_{1}} f_{k_{2}}) = 0 $. Therefore, assume that $ k \leq k_{2} + 20$.
Define $ \tilde{K} :=  \max\{k_{1} + (2s-1) k_{2},0 \} + C' $ for sufficiently large $C'$. Then write 
$$
\Delta_{k} ( \tilde{f}_{k_{1}} f_{k_{2}}) = Q_{\geq \tilde{K}} ( \Delta_{k}(\tilde{f}_{k_{1}} f_{k_{2}})) + Q_{\leq \tilde{K}-1} ( \Delta_{k}(\tilde{f}_{k_{1}} f_{k_{2}})).
$$
We estimate the second term. Using Lemma \ref{es1} ( with $j_{1} = 0 = j_{2} $ ) we get
\begin{equation}
\begin{split}
& 2^{nk/2} \Vert  Q_{\leq \tilde{K}-1} ( \Delta_{k}( \tilde{f}_{k_{1}} f_{k_{2}})) \Vert_{Z_{k}}
\\
& \leq 2^{nk/2} \Vert  Q_{\leq \tilde{K}-1} ( \Delta_{k}(\tilde{f}_{k_{1}} f_{k_{2}})) \Vert_{X_{k}}
\\
& \lesssim 2^{nk/2} 2^{\tilde{K}/2} \Vert \Delta_{k}(\tilde{f}_{k_{1}} f_{k_{2}}) \Vert_{L^{2}} 
\\
& \lesssim 2^{nk/2} 2^{\tilde{K}/2} \left( 2^{-\tilde{K}/2} 2^{nk_{1}/2} \Vert f_{k_{1}} \Vert_{Z_{k_{1}}} \Vert f_{k_{2}} \Vert_{Z_{k_{2}}} \right)
\\
& = 2^{n(k-k_{2})/2}  2^{nk_{1}/2} 2^{nk_{2}/2} \Vert f_{k_{1}} \Vert_{Z_{k_{1}}} \Vert f_{k_{2}} \Vert_{Z_{k_{2}}}
\end{split}
\end{equation}
which is the desired result. It remains to prove the following
\begin{equation}\label{Q6}
2^{nk/2} \Vert Q_{\geq \tilde{K}} ( \Delta_{k}(\tilde{f}_{k_{1}} f_{k_{2}})) \Vert_{Z_{k}} \lesssim 2^{- \frac{n}{2}\vert k - k_{2} \vert } 2^{k_{2}n/2} 2^{k_{1}n/2} \Vert f_{k_{1}} \Vert_{Z_{k_{1}}} \Vert f_{k_{2}}\Vert_{Z_{k_{2}}}. 
\end{equation} 
 First consider the case if $k_{2} <100$.  Then by virtue of $k_{1} \leq k_{2} +10 \lesssim 1 $, we have $\Vert f_{k_{i}} \Vert_{X_{k_{i}}} \lesssim \Vert f_{k_{i}} \Vert_{Z_{k_{i}}} $ for each $i=1,2$. Hence, it is enough to prove the following 
\begin{equation}\label{Q5}
\begin{split}
& 2^{j/2} 2^{nk/2} \Vert Q_{\geq \tilde{K}} \ind_{D_{k,j}} (\tilde{g}_{k_{1},j_{1}} * g_{k_{2}j_{2}}) \Vert_{L^{2}} 
\\
& \lesssim 2^{-\frac{n}{2}\vert k - k_{2} \vert } 2^{nk_{1}/2} 2^{j_{1}/2} \Vert g_{k_{1}j_{1}} \Vert_{L^{2}} 2^{j_{2}/2} 2^{k_{2}n/2} \Vert g_{k_{2}j_{2}} \Vert_{L^{2}}
\end{split}
\end{equation}
for any $g_{k_{i},j_{i}}$ supported in $ D_{k_{i},j_{i}}$. For future use, we prove this without the restriction that $k_{2} < 100$. This follows from Lemma \ref{cl}. Indeed, notice that $j < \max \{ j_{1},j_{2} \} +C$ for some constant $C$. Because otherwise we will have $0$ on the left hand side of \eqref{Q5}, see Lemma \ref{prod1}. Using this, and Lemma \ref{cl} we obtain
\begin{equation*}
\begin{split}
&  2^{j/2} 2^{nk/2} \Vert \ind_{D_{k,j}} (\tilde{g}_{k_{1},j_{1}} * g_{k_{2}j_{2}}) \Vert_{L^{2}} 
\\
& \lesssim 2^{\max \{ j_{1},j_{2} \}/2} 2^{nk/2} 2^{k_{1}n/2} 2^{\min \{j_{1},j_{2}\}/2} \Vert g_{k_{1},j_{1}} \Vert_{L^{2} } \Vert g_{k_{2},j_{2}} \Vert_{L^{2}} 
\\
& \lesssim 2^{n(k-k_{2})/2} 2^{nk_{2}/2}   2^{n k_{1}/2} 2^{j_{1}/2} 2^{j_{2}/2} \Vert g_{k_{1},j_{1}} \Vert_{L^{2} } \Vert g_{k_{2},j_{2}} \Vert_{L^{2}}
\end{split}
\end{equation*}
this gives \eqref{Q5} since $ k \leq k_{2} +20$. And this proves the result under the assumption that $ k_{2} < 100$.  \\
Assume in what follows that $k_{2} \geq 100$. 
To prove \eqref{Q6}, we write 
$$
f_{k_{1}} = \sum_{j \geq  0 } g_{k_{1},j} + \sum_{e \in \mathscr{E} ,k' \in [1,...,k_{1}+1]} f^{e}_{k_{1},k'}, 
$$
where each $ g_{k_{1},j} $ has Fourier transform supported in $ D_{k_{1},j}$ and each $ f^{e}_{k_{1},k'} \in Y^{e}_{k_{1},k'}$. We decompose $f_{k_{2}} $ similarly.  We consider several cases
\\
\textbf{Case 1:} $ f_{k_{1}} = g_{k_{1},j_{1}}$ and $f_{k_{2}} = g_{k_{2},j_{2}}$, this case was already proven in \eqref{Q5}. \\
\textbf{Case 2:} $f_{k_{1}} = f^{e}_{k_{1},k'} \in Y^{e}_{k_{1},k'}$ and $ f_{k_{2}} = g_{k_{2},j_{2}}$. In view of case $1$ we may assume that $ \mathcal{F}f^{e}_{k_{1},k'}$ is supported in $ D^{e,k'}_{k_{1}, \leq 2sk_{1}-80}$ and $ k' \in T_{k_{1}}$.  We claim that $j_{2} \geq \tilde{K}-C$ or the left hand side of \eqref{Q6} is $0$. Indeed, this follows immediately from Lemma \ref{prod1} because $ 2sk_{1} -80 < \tilde{K} - C' - 70$. Therefore, by Lemma \ref{prod1} we may assume that $ j_{2} \geq \tilde{K} -C $. We estimate
\begin{equation*}
\begin{split}
&2^{nk/2} \Vert Q_{\geq \tilde{K}} ( \Delta_{k}(\tilde{f}_{k_{1}}  g_{k_{2},j_{2}}) ) \Vert_{Z_{k}}
\\
& \leq2^{nk/2} \Vert Q_{\geq \tilde{K}} ( \Delta_{k}(\tilde{f}_{k_{1}}  g_{k_{2},j_{2}}) \Vert_{X_{k}}
\\
&  \lesssim2^{nk/2} \sum_{j=  j_{2} -C}^{j_{2}+C} 2^{j/2} \Vert Q_{j}( Q_{\geq \tilde{K}}( \Delta_{k}(\tilde{f}_{k_{1}} g_{k_{2},j_{2}} )) \Vert_{L^{2}}
\\
& \lesssim 2^{nk/2} 2^{j_{2}/2} \Vert \Delta_{k}(\tilde{f}_{k_{1}} g_{k_{2},j_{2}} )\Vert_{L^{2}} 
\\
& \lesssim 2^{nk/2} 2^{j_{2}/2} \Vert g_{k_{2},j_{2}} \Vert_{L^{2}} \Vert \tilde{f}_{k_{1}} \Vert_{L^{\infty}}
\\
& \lesssim 2^{nk/2} 2^{j_{2}/2} \Vert g_{k_{2},j_{2}} \Vert_{L^{2}}  2^{nk_{1}/2} \Vert f_{k_{1}} \Vert_{Z_{k_{1}}}.
\end{split}
\end{equation*}
\textbf{Case 3:} $ f_{k_{2}} \in Y^{e}_{k_{2},k'}$ and $ f_{k_{1}} = g_{k_{1},j_{1}}$. With $k_{1} + 10 \geq k_{2}$ ( This implies that $k_{1} $ is essentially the same as $k_{2}$). In this case we repeat the argument from case 2.  
\begin{equation*}
\begin{split}
& 2^{nk/2} \Vert Q_{\geq \tilde{K}} \left( \Delta_{k} ( f_{k_{2}} \tilde{g}_{k_{1},j_{1}} \right)\Vert_{Z_{k}}
\\
& \lesssim 2^{nk/2} \Vert Q_{\geq \tilde{K}} \left( \Delta_{k} ( f_{k_{2}} \tilde{g}_{k_{1},j_{1}} \right) \Vert_{X_{k}}
\\
& \lesssim 2^{nk/2} 2^{j_{1}/2} \Vert f_{k_{2}} \tilde{g}_{k_{1},j_{1}} \Vert_{L^{2}(\R^{n+1})} 
\\
& \lesssim 2^{nk/2} 2^{j_{1}/2} \Vert g_{k_{1},j_{1}} \Vert_{L^{2}(\R^{n+1})} 
\Vert f_{k_{2}} \Vert_{L^\infty(\R^{n+1})} 
\\
& \lesssim 2^{nk/2} \Vert g_{k_{1},j_{1}} \Vert_{X_{k_{1}}} 2^{nk_{2}/2} \Vert f_{k_{2}} \Vert_{Z_{k_{2}}} 
\end{split}
\end{equation*}
and the result follows because $ 2^{k_{1}} \approx 2^{k_{2}}$ in this case.\\
\textbf{Case 4:} $f_{k_{1}} \in Y^{e'}_{k_{1}}$ and $f_{k_{2}} \in Y^{e}_{k_{2},k'}$ and $ k_{1} +10 \geq k'$. Then in light of case 2 we may assume that $ \hat{f}_{k_{2}} $ is supported in $D_{k_{2}, \leq 2sk'}$.  $ \hat{f}_{k_{1}}$ is supported in $ D_{k_{1},\leq 2sk_{1}+10}$ by definition. So, the modulation of both functions is bounded by $2^{2sk_{1}+10}$. On the other hand, we have $Q_{\geq \tilde{K}}$. Therefore, by Lemma \ref{prod1} we get $ Q_{ \geq \tilde{K}} ( \tilde{f}_{k_{1}} f_{k_{2}}) = 0 $.
\\
These cases imply the result when $k_{1} \geq k_{2} -10$. So assume in the remaining cases that $k_{1} \leq k_{2} -10$. Then we must have $ k \geq k_{2} -c'$ for some constant $c'$. Indeed, if $ 2^{k_{1}} \ll 2^{k_{2}}$ and $ 2^{k} \ll 2^{k_{2}}$ then we will have $ \Delta_{k} (\tilde{f}_{k_{1}} f_{k_{2}}) = 0 $ and this gives the result trivially. So assume in what follows that $ | k - k_{2} | \lesssim 1 $ and $ 2^{k_{1}} \ll 2^{k_{2}}$.\\
\textbf{Case 5 :} $ f_{k_{2},k'} \in Y^{e}_{k_{2},k'}$ with $k'\in T_{k_{2}}$ and $ f_{k_{1}} = g_{k_{1},j_{1}}$ with $k' \leq k_{1} +10$.
In view of case $1$ we may assume that the Fourier transform of $ f_{k_{2}}$ is supported in $D^{e,k'}_{k_{2}, \leq 2k_{2}(s+q-1)-80}$. In particular, by Lemma \ref{prod1} we may assume that $ j_{1} \geq \tilde{K} - C$ for some constant $C$. Then 
\begin{equation*}
\begin{split}
& 2^{nk/2} \Vert \mathcal{F}^{-1} \left( \varphi_{k} (\xi) \varphi_{\geq \tilde{K}} (\tau+\vert \xi \vert^{2s}) ( \hat{f}_{k_{2},k'} * \hat{\tilde{g}}_{k_{1},j_{1}} \right) \Vert_{Z_{k}}
\\
& \leq 2^{nk/2} \Vert \mathcal{F}^{-1} \left( \hat{f}_{k_{2}, k'} * \hat{\tilde{g}}_{k_{1},j_{1}} \right) \Vert_{X_{k}} 
\\
& \lesssim 2^{nk/2} 2^{j_{1}/2} \Vert \hat{f}_{k_{2}, k'} * \hat{\tilde{g}}_{k_{1},j_{1}} \Vert_{L^{2}}.
\end{split}
\end{equation*}
So it suffices to prove 
\begin{equation}\label{M1}
\Vert \hat{f}_{k_{2}} * \hat{\tilde{g}}_{k_{1},j_{1}} \Vert_{L^{2}} \lesssim 2^{nk_{1}/2} \Vert g_{k_{1},j_{1}} \Vert_{L^{2}} \Vert f_{k_{2}} \Vert_{Y^{e}_{k_{2},k'}}.
\end{equation}
whenever $\hat{f}_{k_{2}}$ is supported in $D^{e,k'}_{k_{2},\leq 2k_{2}(s+q -1)-80}$. 
For future use, we prove \eqref{M1} without the restriction that $k' \leq k_{1} +10$. Since $ \hat{f}_{k_{2}}$ is supported $D^{e,k'}_{k_{2},\leq 2k_{2}(s+q-1) -80}$, then using \cite[Lemma 3.7]{subcritical} we may assume
$$
\hat{f}_{k_{2}} = \frac{\varphi_{\leq k' -80} (\xi_{1} - N)}{ \xi_{1} -N + \i/2^{k(2s-2+q)}} h(\xi',\tau)
$$
with 
$$ 
\Vert h \Vert_{L^{2}_{\xi',\tau}} \lesssim 2^{2(1-s)(k-k')} \gamma^{-1}_{k,k'} 2^{-k'(2s-1)/2} \Vert f_{k_{2}} \Vert_{Y^{e}_{k_{2},k'}}, 
$$ and $h$ is supported in $ \{ \vert \xi' \vert \lesssim 2^{k_{2}}, N(\xi',\tau) \approx 2^{k'}, N^{2 } + \vert \xi' \vert^{2} \approx 2^{2k} \}$. In particular,
$$
\Vert h \Vert_{L^{2}_{\xi',\tau}} \lesssim 2^{-k'(2s-1)/2} \Vert f_{k_{2}} \Vert_{Y^{e}_{k,k'}}.
$$
(Notice that in view of case 1, we are ignoring the $X_{k}$ error term coming from \cite[Lemma 8]{subcritical}).\\
Then \eqref{M1} is reduced to showing 
\begin{equation}\label{M2}
\Vert \hat{f}_{k_{2}} * \hat{\tilde{g}}_{k_{1},j_{1}} \Vert_{L^{2}} \lesssim 2^{(2s-1)k'/2} 2^{nk_{1}/2} \Vert h \Vert_{L^{2}_{\xi',\tau}} \Vert g_{k_{1},j_{1}} \Vert_{L^{2}} 
\end{equation}
using duality we obtain that the left hand side of \eqref{M2} is equal to 
\begin{equation}\label{M3}
\begin{split}
& \Vert \hat{f}_{k_{2}} * \hat{\tilde{g}}_{k_{1},j_{1}} \Vert_{L^{2}}
\\
& \Vert \int_{\R \times e^{\perp} \times \R} \hat{\tilde{g}}_{k_{1},j_{1}}(\eta_{1}e+ \eta',\beta) \hat{f}_{k_{2}}((\xi_{1}-\eta_{1})e+ \xi'- \eta', \tau - \beta) d\eta_{1} d \eta' d \beta \Vert_{L^{2}} 
\\
&
= \sup_{a \in L^{2}, \Vert a \Vert =1} \vert \int_{\R \times e^{\perp} \times \R} \int_{\R \times e^{\perp} \times \R}  \hat{\tilde{g}}_{k_{1},j_{1}}(\eta,\beta) \hat{f}_{k_{2}}(\xi,\tau) a( \xi_{1} + \eta_{1}, \xi'+ \eta', \tau+ \beta) d \xi d \tau d \eta d \beta \vert
\end{split}
\end{equation}
fix $ a $ with norm 1. Then notice 
\begin{equation}\label{M4}
\begin{split}
& \int_{\R \times e^{\perp} \times \R} \hat{f}_{k_{2}} (\xi,\tau) a( \xi_{1} + \eta_{1}, \xi'+ \eta', \tau+ \beta) d \xi_{1} d \xi' d \tau
\\
& = \int_{\R \times e^{\perp} } h(\xi',\tau) \int_{\R} \frac{ \varphi_{\leq k'- 80}(\xi_{1} - N)}{ \xi_{1} - N + \frac{i}{2^{k(2s-2+q)}}} a(\xi_{1}+ \eta_{1}, \xi'+\eta', \tau+ \beta) d\xi_{1} d \tau d \xi' 
\end{split}
\end{equation}
use the change of variable $ \tau = - ( N^{2} + \vert \xi' \vert^{2})^{s} $ then the Jacobian would be $c_{s} N ( N^{2} + \vert \xi' \vert^{2})^{s-1} $ and define $ \tilde{h}(\xi',N) = h(\xi', -(N^{2}+ \vert \xi' \vert^{2})^{s}) N ( N^{2}+\vert \xi' \vert^{2})^{s-1}$. Then using this change of variable, \eqref{M4} becomes
\begin{equation}\label{M5}
\int_{\R \times e^{\perp} } \tilde{h}( \xi',N) \int_{\R} \frac{ \eta_{\leq k'-80}(\xi_{1} -N)}{\xi_{1} -N + \frac{i}{2^{k(2s-2+q)}}} a(\xi_{1} + \eta_{1}, \xi'+ \eta', -( N^{2}+ \vert \xi' \vert^{2})^{s} + \beta) d \xi_{1} d N d \xi'
\end{equation}
plug \eqref{M5} into \eqref{M3} to get 
\begin{equation}\label{M6}
\begin{split}
& \vert \int_{\R \times e^{\perp} \times \R}  \hat{\tilde{g}}_{k_{1},j_{1}} (\eta_{1}e + \eta',\beta) \int_{e^{\perp}} \int_{\R} \tilde{h}(\xi',N) \\
& \times \left( \int_{\R} \frac{ \varphi_{\leq k'-80}(\xi_{1} -N)}{\xi_{1} -N + \frac{i}{2^{(k(2s-2+q)}}} a(\xi_{1}+ \eta_{1}, \xi'+\eta', -(N^{2}+\vert \xi'\vert^{2})^{s}+\beta) d \xi_{1} \right) d N d \xi' d \beta d \eta \vert.
\end{split}
\end{equation}
Apply H\"{o}lder in the variable $ \beta$ to obtain that \eqref{M6} is bounded by 
\begin{equation}\label{M7}
\begin{split}
& \vert \int_{ \R \times e^{\perp}} \Vert \hat{\tilde{g}}_{k_{1},j_{1}}( \eta_{1} e+ \eta',\cdot ) \Vert_{L^{2}_{\beta}} \int_{e^{\perp} \times \R} \tilde{h}(\xi',N) \Vert H( a( \cdot+ \eta_{1}, \xi'+ \eta', \beta )(N) \Vert_{L^{2}_{\beta}} d N d \xi' d \eta_{1} d \eta' \vert,
\end{split}
\end{equation}
where $H$ is the Hilbert transform. 
Next, apply H\"{o}lder in the variable $N$ and using boundedness of the Hilbert transform on $L^{2} $ we bound \eqref{M7} by 
\begin{equation}\label{M8}
\begin{split}
& \vert \int_{ \R \times e^{\perp}} \Vert \hat{\tilde{g}}_{k_{1},j_{1}}( \eta_{1} e+ \eta',\cdot ) \Vert_{L^{2}_{\beta}} \int_{e^{\perp} } \Vert \tilde{h}(\xi',\cdot)\Vert_{L^{2}_{N}} \Vert a( \cdot, \xi'+ \eta', \cdot ) \Vert_{L^{2}_{\xi_{1}, \beta}} d \xi' d \eta_{1} d \eta' \vert
\end{split}
\end{equation}
Lastly, we apply H\"{o}lder in the variable $ \xi'$ and obtain that \eqref{M8} is bounded by 
\begin{equation}
\begin{split}
& \Vert \tilde{h} \Vert_{L^{2}_{\xi',N}} \int_{\R \times e^{\perp}} \Vert \hat{\tilde{g}}_{k_{1},j_{1}} ( \eta_{1}e+\eta',\cdot) \Vert_{L^{2}_{\beta}} 
\\
& \lesssim 2^{k_{1}n/2} \Vert \tilde{h} \Vert_{L^{2}_{\xi',N}} \Vert g_{k_{1},j_{1}} \Vert_{L^{2}}
\end{split}
\end{equation}
and notice that $ \Vert \tilde{h} \Vert_{L^{2}} \lesssim 2^{(2s-1)k'/2} \Vert h \Vert_{L^{2}}$. Thus, we have proven \eqref{M2}. Which proves the result in this case.
\\
\textbf{Case 6: } $f_{k_{2}} \in Y^{e}_{k_{2},k'}$, $k'\in T_{k_{2}}$ and $ f_{k_{1}} = g_{k_{1},j_{1}}$ , $k' \geq k_{1} +10$ and $j_{1} \geq \tilde{K}$. In light of case $1$ we may assume that $f_{k_{2}}$ is supported in $D^{e,k'}_{k_{2}, \leq 2k_{2}(s+q-1)-80} $. Then we want to prove 
\begin{equation}\label{Q9}
2^{nk/2} \Vert \mathcal{F}^{-1} \left( \varphi_{k}(\xi) \varphi_{\geq \tilde{K}} (\tau+\vert \xi \vert^{2s}) ( \hat{f}_{k_{2}} * \hat{\tilde{g}}_{k_{1},j_{1}}) \right) \Vert_{Z_{k}} \lesssim 2^{j_{1}/2} 2^{nk_{1}/2} \Vert g_{k_{1},j_{1}} \Vert_{L^{2}} 2^{k_{2}n/2} \Vert f_{k_{2}} \Vert_{Z_{k_{2}}}.
\end{equation}
To that end, write $ f_{k_{2}} = f_{k_{2},\leq j_{1}-c} + f_{k_{2}, \geq j_{1}+c} + f_{k_{2}, [j_{1}-c,j_{1}+c]} $ then 
\begin{equation}\label{sxcv}
\begin{split}
& 2^{nk/2} \Vert \mathcal{F}^{-1} \left( \varphi_{k}(\xi) \varphi_{\geq \tilde{K}} (\tau+\vert \xi \vert^{2s}) ( \hat{f}_{k_{2}} * \hat{\tilde{g}}_{k_{1},j_{1}}) \right) \Vert_{Z_{k}}
\\
& \leq 2^{nk/2} \Vert  \mathcal{F}^{-1} \left( \varphi_{k}(\xi) \varphi_{\geq \tilde{K}} (\tau+\vert \xi \vert^{2s}) ( \hat{f}_{k_{2},\leq j_{1}-c} * \hat{\tilde{g}}_{k_{1},j_{1}} ) \right) \Vert_{Z_{k}} 
\\
& +  2^{nk/2} \Vert \mathcal{F}^{-1} \left( \varphi_{k}(\xi) \varphi_{\geq \tilde{K}} (\tau+\vert \xi \vert^{2s}) ( \hat{f}_{k_{2}, \geq j_{1}+c} * \hat{\tilde{g}}_{k_{1},j_{1}})  \right) \Vert_{Z_{k}} 
\\
&+ 
2^{nk/2} \Vert \mathcal{F}^{-1} \left( \varphi_{k}(\xi) \varphi_{\geq \tilde{K}} (\tau+\vert \xi \vert^{2s}) ( \hat{f}_{k_{2},[j_{1}-c,j_{1}+c]} * \hat{\tilde{g}}_{k_{1},j_{1}} ) \right) \Vert_{Z_{k}}.
\end{split}
\end{equation}
For the last term we use the embedding of $X_{k} $ into $Y^{e}_{k_{2},k'}$ and case 1. It remains to estimate the first two terms. To that end, we start with the first term on the right hand side of \eqref{sxcv}
\begin{equation*}
\begin{split}
& 2^{nk/2} \Vert \mathcal{F}^{-1} \left( \varphi_{k}(\xi) \varphi_{\geq \tilde{K}} (\tau+\vert \xi \vert^{2s}) ( \hat{f}_{k_{2},\leq j_{1}-c} * \hat{\tilde{g}}_{k_{1},j_{1}} ) \right) \Vert_{Z_{k}}
\\
& \leq 2^{nk/2} \Vert \mathcal{F}^{-1} \left( \varphi_{\geq \tilde{K}}(\tau + \vert \xi \vert^{2s}) (\hat{f}_{k_{2}, \leq j_{1}-c} * \hat{\tilde{g}}_{k_{1},j_{1}} ) \right) \Vert_{X_{k}} 
\\
& \lesssim 2^{nk/2} 2^{j_{1}/2} \Vert \hat{f}_{k_{2}, \leq j_{1} -c} * \hat{\tilde{g}}_{k_{1},j_{1}} \Vert_{L^{2}}
\end{split}
\end{equation*}
and we want to prove 
\begin{equation}\label{Q10}
\Vert \hat{f}_{k_{2}} * \hat{\tilde{g}}_{k_{1},j_{1}} \Vert_{L^{2}} \lesssim 2^{nk_{1}/2} \Vert g_{k_{1},j_{1}} \Vert_{L^{2}} \Vert f_{k_{2}} \Vert_{Y^{e}_{k_{2},k'}}
\end{equation}
But this follows from \eqref{M1}. To conclude case 6, we need to prove the estimate for the second term in \eqref{Q9}. In other words, we want to prove
\begin{equation}\label{Q18}
2^{nk/2} \Vert \Delta_{k} Q_{\geq \tilde{K}} ( {f}_{k_{2}, \geq j_{1} + c} \tilde{g}_{k_{1},j_{1}}) \Vert_{Z_{k}} \lesssim 2^{nk_{1}/2}  2^{nk_{2}/2} \Vert f_{k_{2}} \Vert_{Z_{k_{2}}} \Vert g_{k_{1},j_{1}} \Vert_{X_{k_{1}}}.
\end{equation}
To that end, write
\begin{equation}\label{Q19}
\begin{split}
& ( \i \partial_{t} -  (-\Delta)^s + \i ) (f_{k_{2},\geq j_{1} +c} \tilde{g}_{k_{1},j_{1}} )
\\
 & = ( \i \partial_{t} -  (-\Delta)^s + \i )(f_{k_{2},\geq j_{1}+c}). \tilde{g}_{k_{1},j_{1}} 
 \\
 & + f_{k_{2}, \geq j_{1}+c} ( \i \partial_{t} -  (-\Delta)^s ) \tilde{g}_{k_{1},j_{1}} 
 \\
 & - H_{(-\Delta)^{s}}(f_{k_{2}, \geq j_{1}+c } ,  \tilde{g}_{k_{1},j_{1}}) 
 \end{split}
\end{equation}
where the last term is the commutator of the fractional laplacian. Then we bound the left hand side of \eqref{Q18} by 
\begin{equation}\label{Q20}
\begin{split}
& 2^{nk/2} \Vert \Delta_{k} Q_{\geq \tilde{K}}  \left(\frac{   ( \i \partial_{t} -  (-\Delta)^s + \i ) (f_{k_{2},\geq j_{1} +c} ) . \tilde{g}_{k_{1},j_{1}}}{ \i \partial_{t} -   (-\Delta)^s +\i}  \right) \Vert_{Z_{k}}
\\
& + 2^{nk/2} \Vert \Delta_{k} Q_{\geq \tilde{K}} \left( \frac{  f_{k_{2}, \geq j_{1}+c} ( \i \partial_{t} -  (-\Delta)^s ) \tilde{g}_{k_{1},j_{1}}}{\i \partial_{t} - (-\Delta)^s +\i} \right) \Vert_{Z_{k}}
\\
& + 2^{nk/2} \Vert \Delta_{k} Q_{\geq \tilde{K}} \left( \frac{  H_{(-\Delta)^{s}}(f_{k_{2}, \geq j_{1}+c } ,  \tilde{g}_{k_{1},j_{1}})}{\i \partial_{t} -  (-\Delta)^s + \i} \right) \Vert_{Z_{k}}
\end{split}
\end{equation}
if we bound each term  in \eqref{Q20} with the right hand side of \eqref{Q18} then this concludes case 6. Starting with the first term, notice that since $2^{k_{1}} \ll 2^{k'}$ then the support of $ \hat{f}_{k_{2} \geq j_{1}+c} * \hat{\tilde{g}}_{k_{1},j_{1}} $ is contained in $\{ (\xi,\tau): \xi. e \approx 2^{k'} \}$. Therefore, we bound the first term as follows 
\begin{equation*}
\begin{split}
& 2^{nk/2} \Vert \Delta_{k} Q_{\geq \tilde{K}}  \left(\frac{   ( \i \partial_{t} - (-\Delta)^s + \i ) (f_{k_{2},\geq j_{1} +c} ) . \tilde{g}_{k_{1},j_{1}}}{ \i \partial_{t} -   (-\Delta)^s +\i}  \right) \Vert_{Z_{k}}
\\
& \sum_{k'' = k'-c}^{\min\{k'+c,k+1\}} \Vert \Delta_{k} Q_{\geq \tilde{K}}  \left(\frac{   ( \i \partial_{t}  - (-\Delta)^s + \i ) (f_{k_{2},\geq j_{1} +c} ) . \tilde{g}_{k_{1},j_{1}}}{ \i \partial_{t} -  (-\Delta)^s +\i}  \right) \Vert_{Y^{e}_{k,k''}}
\\
& \lesssim 2^{nk/2} \gamma_{k,k'} 2^{-k'(2s-1)/2} \Vert ( \i \partial_{t} - (-\Delta)^s + \i ) (f_{k_{2},\geq j_{1} +c} ) . \tilde{g}_{k_{1},j_{1}} \Vert_{L^{1}_{e}L^{2}_{e^{\perp},t}}
\\
& \leq 2^{nk/2} \Vert g_{k_{1},j_{1}} \Vert_{L^{\infty}} \gamma_{k,k'} 2^{-k'(2s-1)/2} \Vert ( \i \partial_{t} -(-\Delta)^s + \i ) (f_{k_{2},\geq j_{1} +c} ) \Vert_{L^{1}_{e}L^{2}_{e^{\perp},t}}
\\
& \lesssim  2^{nk/2} \Vert f_{k_{2}} \Vert_{Z_{k_{2}}} \Vert g_{k_{1},j_{1}} \Vert_{L^{\infty}}
\\
& \lesssim 2^{nk_{2}/2} \Vert f_{k_{2}} \Vert_{Z_{k_{2}}} 2^{nk_{1}/2} \Vert g_{j_{1},k_{1}} \Vert_{X_{k_{1}}}
\end{split}
\end{equation*}
which is the right hand side of \eqref{Q18}. Next, we do the same with second term in \eqref{Q20}. Using Lemma \ref{prod2} we obtain
\begin{equation*}
\begin{split}
& 2^{nk/2} \Vert \Delta_{k} Q_{\geq \tilde{K}} \left( \frac{  f_{k_{2}, \geq j_{1}+c} ( \i \partial_{t} -  (-\Delta)^s ) \tilde{g}_{k_{1},j_{1}}}{\i \partial_{t} - (-\Delta)^s +\i} \right) \Vert_{Z_{k}}
\\
& \lesssim 2^{nk/2} \Vert \Delta_{k} Q_{\geq \tilde{K}} \left( \frac{  f_{k_{2}, \geq j_{1}+c} ( \i \partial_{t} -   (-\Delta)^s ) \tilde{g}_{k_{1},j_{1}}}{\i \partial_{t} - (-\Delta)^s +\i} \right) \Vert_{X_{k}}
\\
& \lesssim 2^{nk/2} \Vert f_{k_{2}, \geq j_{1}+c}   ( \i \partial_{t} - (-\Delta)^s ) \tilde{g}_{k_{1},j_{1}} \Vert_{L^{2}} \sum_{j=j_{1}-c}^{\infty} 2^{-j/2} 
\\
& \lesssim 2^{nk/2} 2^{-j_{1}/2} \Vert f_{k_{2}, \geq j_{1}+c}   ( \i \partial_{t} -   (-\Delta)^s ) \tilde{g}_{k_{1},j_{1}} \Vert_{L^{2}}
\end{split}
\end{equation*}
using Lemma \ref{es1} we bound this by 
\begin{equation*}
2^{nk/2} \Vert f_{k_{2}} \Vert_{Z_{k_{2}}} 2^{nk_{1}/2} \Vert g_{k_{1},j_{1}} \Vert_{X_{k_{1}}}
\end{equation*}
which gives the result since $ 2^{k } \approx 2^{k_{2}}$.  It remains to prove the estimate for the third term in \eqref{Q20}. To that end, we estimate as above
\begin{equation*}
\begin{split}
& 2^{nk/2} \Vert \Delta_{k}Q_{\geq \tilde{K}} \left( \frac{  H_{(-\Delta)^{s}}(f_{k_{2}, \geq j_{1}+c } ,  \tilde{g}_{k_{1},j_{1}})}{\i \partial_{t} - (-\Delta)^s + \i} \right) \Vert_{Z_{k}}
\\
& \lesssim 2^{nk/2} \Vert \Delta_{k} Q_{\geq \tilde{K}} \left( \frac{  H_{(-\Delta)^{s}}(f_{k_{2}, \geq j_{1}+c } ,  \tilde{g}_{k_{1},j_{1}})}{\i \partial_{t} - (-\Delta)^s + \i} \right) \Vert_{X_{k}}
\\
& \lesssim 2^{nk/2} \Vert H_{(-\Delta)^{s}} ( f_{k_{2}, \geq j_{1}+c} , \tilde{g}_{k_{1,j_{1}}} )\Vert_{L^{2}} \sum_{j= \tilde{K} }^{\infty} 2^{-j/2} 
\\
& \lesssim 2^{nk/2} 2^{-\tilde{K}/2} \Vert H_{(-\Delta)^{s}}(f_{k_{2},\geq j_{1}+c} , \tilde{g}_{k_{1},j_{1}}) \Vert_{L^{2}}
\\
& \lesssim 2^{nk/2} 2^{-\tilde{K} /2} 2^{\tilde{K}/2} 2^{nk_{1}/2} \Vert f_{k_{2}} \Vert_{Z_{k_{2}}} \Vert g \Vert_{Z_{k_{1}}}
\end{split}
\end{equation*}
where we used Lemma \ref{comest} in the last line. Using that $ 2^{k_{2}} \approx 2^{k}$ we obtain
\begin{equation*}
\begin{split}
& 2^{nk/2} \Vert \Delta_{k} Q_{\geq \tilde{K}} \left( \frac{  H_{(-\Delta)^{s}}(f_{k_{2}, \geq j_{1}+c } ,  \tilde{g}_{k_{1},j_{1}})}{\i \partial_{t} - (-\Delta)^s + \i} \right) \Vert_{Z_{k}}
\\
& \lesssim 2^{nk_{2}/2} \Vert f_{k_{2}} \Vert_{Z_{k_{2}}} . 2^{nk_{1}/2} \Vert g_{k_{1},j_{1}} \Vert_{Z_{k_{1}}}
\end{split}
\end{equation*}
this concludes case 6. 
\\
\textbf{Case 7:} the last case to consider is $ f_{k_{2}} \in Y^{e}_{k_{2},k'}$ and $ f_{k_{1} \leq \tilde{K}} \in Z_{k_{1}}$ with $ 2^{k_{1}} \leq 2^{k'-10}$, and $k' \in T_{k_{2}}$, and $f_{k_{1},\leq \tilde{K}}$ has modulation controlled by $2^{\tilde{K}}$.  Using  \eqref{Q20} we estimate 
\begin{equation}\label{Q21}
\begin{split}
& 2^{nk/2} \Vert \Delta_{k} Q_{\geq \tilde{K}} ( \tilde{f}_{k_{1}, \leq \tilde{K}} f_{k_{2}}) \Vert_{Z_{k}}
\\
& \lesssim  2^{nk/2} \Vert \Delta_{k} Q_{\geq \tilde{K}}  \left(\frac{   ( \i \partial_{t} - (-\Delta)^s + \i ) (f_{k_{2}} ) . \tilde{f}_{k_{1} , \leq \tilde{K}}}{ \i \partial_{t} -  (-\Delta)^s +\i}  \right) \Vert_{Z_{k}}
\\
& + 2^{nk/2} \Vert \Delta_{k} Q_{\geq \tilde{K}} \left( \frac{  f_{k_{2}} ( \i \partial_{t} -   (-\Delta)^s ) \tilde{f}_{k_{1}, \leq \tilde{K}}}{\i \partial_{t} - (-\Delta)^s +\i} \right) \Vert_{Z_{k}}
\\
& + 2^{nk/2} \Vert \Delta_{k} Q_{\geq \tilde{K}} \left( \frac{  H_{(-\Delta)^{s}}(f_{k_{2} } ,  f_{k_{1}, \leq \tilde{K}})}{\i \partial_{t} - (-\Delta)^s + \i} \right) \Vert_{Z_{k}}
\end{split}
\end{equation}
and it is enough to estimate each term in the right hand side of \eqref{Q21} by 
$$
2^{nk_{1}/2} \Vert f_{k_{1}, \leq \tilde{K}} \Vert_{Z_{k_{1}}} . 2^{nk_{2}/2} \Vert f_{k_{2}} \Vert_{Z_{k_{2}}}.
$$
To that end, we start with the first term.  Notice that the support of $ \mathcal{F} (f_{k_{2}} \tilde{f}_{k_{1}, \leq \tilde{K}})$ is contained in $ \{ (\xi,\tau) : \langle \xi , e\rangle \approx 2^{k'} \} $. Hence, 

\begin{equation*}
\begin{split}
& 2^{nk/2} \Vert \Delta_{k} Q_{\geq \tilde{K}}  \left(\frac{   ( \i \partial_{t} - (-\Delta)^s + \i ) (f_{k_{2}} ) . \tilde{f}_{k_{1}, \leq \tilde{K}}}{ \i \partial_{t} -  (-\Delta)^s +\i}  \right) \Vert_{Z_{k}}
\\
& \leq 2^{nk/2} \sum_{k''=k'-c}^{\min \{ k'+c,k+1 \} } \Vert \Delta_{k} Q_{\geq \tilde{K}}  \left(\frac{   ( \i \partial_{t} - (-\Delta)^s + \i ) (f_{k_{2}} ) . \tilde{f}_{k_{1}, \leq \tilde{K}}}{ \i \partial_{t} -   (-\Delta)^s +\i}  \right) \Vert_{Y^{e}_{k,k''}}
\\
& \lesssim 2^{nk/2} \gamma_{k_{2},k'} 2^{-k'(2s-1)/2} \Vert ( \i \partial_{t} -  (-\Delta)^s + \i ) (f_{k_{2}} ) . \tilde{f}_{k_{1}, \leq \tilde{K}} \Vert_{L^{1}_{e}L^{2}_{e^{\perp},t}}
\\
& \leq 2^{nk/2} \Vert f_{k_{1}, \leq \tilde{K}} \Vert_{L^{\infty}} \gamma_{k_{2},k'} 2^{-k'(2s-1)/2} \Vert ( \i \partial_{t} - (-\Delta)^s + \i ) (f_{k_{2}} ) \Vert_{L^{1}_{e}L^{2}_{e^{\perp},t}}
\\
& \lesssim  2^{nk/2} \Vert f_{k_{2}} \Vert_{Z_{k_{2}}} \Vert f_{k_{1}, \leq \tilde{K}} \Vert_{L^{\infty}}
\\
& \lesssim 2^{nk_{2}/2} \Vert f_{k_{2}} \Vert_{Z_{k_{2}}} 2^{nk_{1}/2} \Vert f_{k_{1}, \leq \tilde{K}} \Vert_{Z_{k_{1}}}.
\end{split}
\end{equation*}
Next, we estimate the second term on the right hand side of \eqref{Q21}
\begin{equation*}
\begin{split}
& 2^{nk/2} \Vert \Delta_{k} Q_{\geq \tilde{K}} \left( \frac{  f_{k_{2}} ( \i \partial_{t} -  (-\Delta)^s ) \tilde{f}_{k_{1}, \leq \tilde{K}}}{\i \partial_{t} -(-\Delta)^s +\i} \right) \Vert_{Z_{k}}
\\
& \lesssim 2^{nk/2} \Vert \Delta_{k} Q_{\geq \tilde{K}} \left( \frac{  f_{k_{2}} ( \i \partial_{t} -  (-\Delta)^s)  \tilde{f}_{k_{1}, \leq \tilde{K}}}{\i \partial_{t} - (-\Delta)^s +\i} \right) \Vert_{X_{k}}
\\
& \lesssim 2^{nk/2} \Vert f_{k_{2}}   ( \i \partial_{t} - (-\Delta)^{s}   ) \tilde{f}_{k_{1}, \leq \tilde{K}} \Vert_{L^{2}} \sum_{j=\tilde{K}}^{\infty} 2^{-j/2} 
\\
& \lesssim 2^{nk/2} 2^{-\tilde{K}/2} \Vert f_{k_{2}}   ( \i \partial_{t} - (-\Delta)^s ) \tilde{f}_{k_{1}, \leq \tilde{K}} \Vert_{L^{2}}
\end{split}
\end{equation*}
using Lemma \ref{es1} and the fact that the modulation of $f_{k_{1}, \leq \tilde{K}}$ is controlled by $ 2^{\tilde{K}}$ we obtain the bound
\begin{equation*}
\begin{split}
& 2^{nk/2} 2^{-\tilde{K}/2} \Vert f_{k_{2}}   ( \i \partial_{t} -   (-\Delta)^s)  \tilde{f}_{k_{1}, \leq \tilde{K}} \Vert_{L^{2}}
\\
& \lesssim 2^{-\tilde{K}/2}  2^{nk_{2}/2} \Vert f_{k_{2}} \Vert_{Z_{k_{2}}} 2^{n k_{1}/2} \Vert f_{k_{1}, \leq \tilde{K}} \Vert_{Z_{k_{1}}}  2^{\tilde{K}} 2^{-\tilde{K}/2}
\end{split}
\end{equation*}
this gives the desired bound. Lastly, we estimate the third term in the right hand side of \eqref{Q21}. 
\begin{equation*}
\begin{split}
& 2^{nk/2} \Vert \Delta_{k} Q_{\geq \tilde{K}} \left( \frac{  H_{(-\Delta)^{s}}(f_{k_{2} } ,  \tilde{f}_{k_{1}, \leq \tilde{K}})}{\i \partial_{t} -(-\Delta)^s + \i} \right) \Vert_{Z_{k}}
\\
& \lesssim 2^{nk/2} \Vert \Delta_{k} Q_{\geq \tilde{K}} \left( \frac{  H_{(-\Delta)^{s}}(f_{k_{2} } ,  \tilde{f}_{k_{1}, \leq \tilde{K}})}{\i \partial_{t} - (-\Delta)^s + \i} \right) \Vert_{X_{k}}
\\
& \lesssim 2^{nk/2} \Vert H_{(-\Delta)^{s}} ( f_{k_{2}} , \tilde{f}_{k_{1}, \leq \tilde{K}} )\Vert_{L^{2}} \sum_{j= \tilde{K} }^{\infty} 2^{-j/2} 
\\
& \lesssim 2^{nk/2} 2^{-\tilde{K}/2} \Vert H_{(-\Delta)^{s}}(f_{k_{2}} , \tilde{f}_{k_{1}, \leq \tilde{K}}) \Vert_{L^{2}}
\\
& \lesssim 2^{nk/2} 2^{-\tilde{K} /2} 2^{nk_{1}/2} 2^{\tilde{K}/2} \Vert f_{k_{1}, \leq \tilde{K}} \Vert_{Z_{k_{1}}} \Vert f_{k_{2}} \Vert_{Z_{k_{2}}}
\end{split}
\end{equation*}
where we used Lemma \ref{comest} in the last line. 
This finishes the proof of case 7.

\end{proof}
As consequence of Theorem \ref{Bilin1} we obtain the following corollaries.
\begin{corollary}\label{AlgebraF}
Let $ n \geq 3 $ and $ s \in (1/2,1)$ then the following holds
\begin{enumerate}
    \item For any $ \sigma \in (0, \frac{n}{2} ]$ and any $ f,g \in F^{\frac{n}{2}} \cap F^{\sigma}$ the following inequality holds
    $$
    \Vert f g \Vert_{F^{\sigma}} \lesssim \Vert f \Vert_{F^{\frac{n}{2}}} \Vert g \Vert_{F^{\sigma}}.
    $$
    In particular, we have 
    $$
    \Vert f g \Vert_{F^{\frac{n}{2}}} \lesssim \Vert f \Vert_{F^{\frac{n}{2}}} \Vert g \Vert_{F^{\frac{n}{2}}}.
    $$
    \item For any $ \sigma \geq \frac{n}{2}$ we have the following
    $$
    \Vert f g \Vert_{F^{\sigma}} \lesssim \Vert f \Vert_{F^{\sigma}} \Vert g \Vert_{F^{\frac{n}{2}}} + \Vert f \Vert_{F^{\frac{n}{2}}} \Vert g \Vert_{F^{\sigma}}.
    $$
\item for any $ \sigma >0$ we have 
        $$
        \Vert f | \nabla|^{-1}g \Vert_{{F}^{\sigma}} \lesssim \Vert f \Vert_{{F}^{\frac{n-2}{2}}} \Vert g \Vert_{{F}^{\sigma}} + \Vert f \Vert_{{F}^{\sigma}} \Vert g \Vert_{F^{\frac{n-2}{2}}}.
        $$
\item If $ n \geq 5$, $ \sigma \geq \frac{n-2}{2}$ then
$$
\Vert | \nabla|^{-1} (fg) \Vert_{F^{\sigma}} \lesssim \Vert f \Vert_{F^{\frac{n-2}{2}}} \Vert g \Vert_{F^{\sigma}} + \Vert f \Vert_{F^{\sigma}} \Vert g \Vert_{F^{\frac{n-2}{2}}}.
$$
\item for any $ \sigma >0$ the following asymmetric inequality holds 
$$
\Vert f g \Vert_{F^{\sigma}} \lesssim \Vert f \Vert_{F^{\frac{n-2}{2}}} \Vert g \Vert_{F^{\sigma +1}} + \Vert f \Vert_{F^{\sigma}} \Vert g \Vert_{F^{\frac{n}{2}}}.
$$
\item and more generally, for any $ \sigma > 0 $, and any $ \epsilon > 0$ with $ \epsilon < \frac{n}{2}$ we have 
$$
\Vert fg \Vert_{{F}^{\sigma
}} \lesssim \Vert f \Vert_{{F}^{\frac{n-2\epsilon}{2}}} \Vert g \Vert_{{F}^{\sigma+\epsilon}} + \Vert f \Vert_{{F}^{\sigma}} \Vert g \Vert_{{F}^{\frac{n}{2}}}.
$$
\end{enumerate}
\end{corollary}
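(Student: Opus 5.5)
The plan is to reduce every item of Corollary \ref{AlgebraF} to Theorem \ref{Bilin1} through a Littlewood--Paley paraproduct decomposition, and then to sum the resulting dyadic bounds with Schur-type (discrete Young) estimates. We write $f=\sum_{k_1}\Delta_{k_1}f$ and $g=\sum_{k_2}\Delta_{k_2}g$. By symmetry each product $fg$ is split into the region $k_1\le k_2+10$ and the region $k_2\le k_1+10$, with the overlap counted twice. Applying Theorem \ref{Bilin1} with $f_{k_i}=\Delta_{k_i}(\cdot)$ gives
\[
2^{k\sigma}\|\Delta_k(\Delta_{k_1}f\,\Delta_{k_2}g)\|_{Z_k}\lesssim 2^{(k-k_2)(\sigma-\frac n2)}2^{-\frac n2|k-k_2|}\,\bigl(2^{\frac n2 k_1}\|\Delta_{k_1}f\|_{Z_{k_1}}\bigr)\bigl(2^{\sigma k_2}\|\Delta_{k_2}g\|_{Z_{k_2}}\bigr).
\]
In this bound $k\le k_2+20$; the complex conjugate versions are not needed here. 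Summation in $k_1$ produces $\|f\|_{F^{n/2}}$.

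For the sum over $k$ the two sides behave differently. When $k\ge k_2-C$ the factor is bounded. When $k<k_2$ it is $2^{(k-k_2)\sigma}$, which is summable precisely when $\sigma>0$. This gives (1) and, more generally, (2). For (2) with $\sigma\ge n/2$, in the region $k_2\le k_1+10$ we swap the roles and get $\|f\|_{F^\sigma}\|g\|_{F^{n/2}}$.

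Items (3)--(6) follow the same scheme, with extra powers moved between the factors. In (3), $|\nabla|^{-1}\Delta_{k_2}g$ costs $2^{-k_2}$ by \eqref{multilpierlemma}/\eqref{multiplier2}. In the region $k_2\le k_1$ the low-frequency factor is measured at regularity $n/2$, which after the $2^{-k_2}$ gain is exactly the $F^{\frac{n-2}{2}}$ norm of $g$. In the region $k_1\le k_2$ we instead trade $2^{k_1}\le 2^{k_2}$, so that $2^{\frac n2k_1}2^{-k_2}\le 2^{\frac{n-2}{2}k_1}$. Item (5) is the same with no inverse derivative: for $k_1\le k_2$ we write $2^{\frac n2k_1}=2^{\frac{n-2}{2}k_1}2^{k_1}\le2^{\frac{n-2}2k_1}2^{k_2}$ and absorb $2^{k_2}$ into $\|g\|_{F^{\sigma+1}}$; the other region gives $\|f\|_{F^\sigma}\|g\|_{F^{n/2}}$. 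Item (6) interpolates this with exponent $\epsilon$ in place of $1$, using $2^{\epsilon k_1}\le2^{\epsilon k_2}$.

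The main obstacle is item (4), where $|\nabla|^{-1}$ acts on the output frequency $k$. Here we gain $2^{-k}$, and this is harmful in the high-high interaction $k\ll k_2\approx k_1$. For $k\ge k_2-C$ the factor $2^{-k}\approx2^{-k_2}$ is moved onto the lower-frequency input as in (3). For $k<k_2-C$ we have $k_1\approx k_2$, and Theorem \ref{Bilin1} provides the decay $2^{-\frac n2(k_2-k)}$. We then need to control
\[
2^{k(\sigma-1)}2^{-\frac n2(k_2-k)}\,2^{\frac n2 k_1}\|\Delta_{k_1}f\|\,\|\Delta_{k_2}g\|.
\]
Rewriting this as $2^{(k-k_2)(\sigma-1+\frac n2)}\cdot 2^{\frac{n-2}{2}k_1}\|\Delta_{k_1}f\|\cdot 2^{\sigma k_2}\|\Delta_{k_2}g\|$, with $2^{k_1}\approx 2^{k_2}$ used to move one power, the sum over $k<k_2$ converges once $\sigma-1+\frac n2>0$. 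The hypotheses $\sigma\ge\frac{n-2}{2}$ and $n\ge5$ make this comfortably true; the dimension restriction presumably also ensures $|\nabla|^{-1}$ is harmless at low frequency. I would verify this exponent bookkeeping carefully, since it is the only place where the restrictions on $n$ and $\sigma$ enter.
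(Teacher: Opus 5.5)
\textbf{Overall.} Your route is the paper's: decompose both factors dyadically, apply Theorem~\ref{Bilin1} to each pair $(k_1,k_2)$, and sum over the output frequency $k$. Items (1)--(3), (5), (6) go through as you describe. One omission in (1): in the region $k_2\le k_1+10$ you get $2^{\sigma k_1}2^{\frac n2 k_2}$, and you must convert it to $2^{\frac n2 k_1}2^{\sigma k_2}$. This uses $\sigma\le\frac n2$ together with $k_1\ge k_2-10$, and you should say so.

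\textbf{The error in (4).} The bookkeeping there is wrong, and it is exactly the step you flagged. For $k<k_2$ and $k_1\approx k_2$, Theorem~\ref{Bilin1} gives
\[
\|\Delta_k(\Delta_{k_1}f\,\Delta_{k_2}g)\|_{Z_k}\lesssim 2^{-\frac n2 k}\,2^{-\frac n2(k_2-k)}\,2^{\frac n2 k_1}\,2^{\frac n2 k_2}\,\|\Delta_{k_1}f\|_{Z_{k_1}}\|\Delta_{k_2}g\|_{Z_{k_2}}=2^{\frac n2 k_1}\|\Delta_{k_1}f\|_{Z_{k_1}}\|\Delta_{k_2}g\|_{Z_{k_2}}.
\]
So the factor $2^{-\frac n2(k_2-k)}$ is entirely consumed by the normalizations $2^{\frac n2 k}$ and $2^{\frac n2 k_2}$, and there is no decay in $k$. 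Your own first display, read with $\sigma-1$ in place of $\sigma$, says the same thing: the factor there is $2^{(k-k_2)(\sigma-1)}$. You counted the decay a second time. The correct quantity to sum is
\[
2^{k(\sigma-1)}2^{\frac n2 k_1}\|\Delta_{k_1}f\|_{Z_{k_1}}\|\Delta_{k_2}g\|_{Z_{k_2}}\approx 2^{(k-k_2)(\sigma-1)}\,2^{\frac{n-2}{2}k_1}\|\Delta_{k_1}f\|_{Z_{k_1}}\,2^{\sigma k_2}\|\Delta_{k_2}g\|_{Z_{k_2}},
\]
and the geometric sum over $k<k_2$ converges if and only if $\sigma>1$. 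Your condition $\sigma-1+\frac n2>0$ is much weaker: it holds for every $n\ge3$ and every $\sigma\ge\frac{n-2}{2}$. That should have been a warning sign, since your computation never actually used $n\ge5$.

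\textbf{Where $n\ge5$ enters.} This is exactly the role of the hypothesis: $n\ge5$ together with $\sigma\ge\frac{n-2}{2}$ gives $\sigma\ge\frac32>1$. For $n=4$, $\sigma=1$, every low output frequency contributes $O(1)$, and the bound from Theorem~\ref{Bilin1} is no longer summable. Your guess that $n\ge5$ is about $|\nabla|^{-1}$ at low frequency in some other sense is not the reason.

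With this correction your argument coincides with the paper's. The paper sums $2^{(\sigma-1)k}$ over all $k\le\max\{k_1,k_2\}+C$, which needs $\sigma>1$, and then uses $2^{\min\{k_1,k_2\}-\max\{k_1,k_2\}}\le1$ to move one derivative onto the lower frequency.
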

\begin{proof}
Notice that $ (3) $ follows from $(5)$ and $(5)$ follows from $(6)$. So we first begin by proving $ (6)$. It suffices to prove 
\begin{equation}\label{reddalg1}
\begin{split}
& \sum_{k =- \infty}^{\max \{ k_{1},k_{2} \} +C} 2^{k \sigma} \Vert f_{k_{1}} g_{k_{2}} \Vert_{Z_{k}} 
\\
& \lesssim  2^{k_{1} \frac{ n-2 \epsilon}{2}} \Vert f_{k_{1}} \Vert_{Z_{k_{1}}} 2^{k_{2} ( \sigma +\epsilon)} \Vert g_{k_{2}} \Vert_{Z_{k_{2}}} + 2^{\sigma k_{1}} \Vert f_{k_{1}} \Vert_{Z_{k_{1}}} 2^{k_{2} \frac{n}{2}} \Vert g_{k_{2}} \Vert_{Z_{k_{2}}}.
\end{split}
\end{equation}
If $ \max \{ k_{1},k_{2} \} =k_{1}$, then by applying Theorem \ref{Bilin1} we obtain
\begin{equation*}
\sum_{k =- \infty}^{\max \{ k_{1},k_{2} \} +C} 2^{k \sigma} \Vert f_{k_{1}} g_{k_{2}} \Vert_{Z_{k}} \lesssim  2^{\sigma k_{1}} \Vert f_{k_{1}} \Vert_{Z_{k_{1}}} 2^{k_{2} \frac{n}{2}} \Vert g_{k_{2}} \Vert_{Z_{k_{2}}}.
\end{equation*}
If on the other hand $ \max \{ k_{1},k_{2} \} =k_{2}$ then again we apply Theorem \ref{Bilin1} to obtain
\begin{equation*}
\begin{split}
& \sum_{k =- \infty}^{\max \{ k_{1},k_{2} \} +C} 2^{k \sigma} \Vert f_{k_{1}} g_{k_{2}} \Vert_{Z_{k}} 
\\
& \lesssim  2^{\frac{n}{2} k_{1}} \Vert f_{k_{1}} \Vert_{Z_{k_{1}}} 2^{k_{2} \sigma} \Vert g_{k_{2}} \Vert_{Z_{k_{2}}}
\\
& \leq  2^{-\epsilon k_{2} } 2^{\frac{n}{2} k_{1}} \Vert f_{k_{1}} \Vert_{Z_{k_{1}}} 2^{k_{2} (\sigma+\epsilon)} \Vert g_{k_{2}} \Vert_{Z_{k_{2}}}
\\
& \lesssim  2^{k_{1} \frac{ n-2 \epsilon}{2}} \Vert f_{k_{1}} \Vert_{Z_{k_{1}}} 2^{k_{2} ( \sigma +\epsilon)} \Vert g_{k_{2}} \Vert_{Z_{k_{2}}},
\end{split}
\end{equation*}
where in the last line we used the assumption that $k_{1} \leq k_{2}$. This proves \eqref{reddalg1}, which implies $(6)$ and hence proves $(5),(3)$. Next, we prove $(1)$. It suffices to prove 
\begin{equation}\label{reddalg2}
\sum_{k=-\infty}^{\max \{ k_{1},k_{2} \} +C } 2^{\sigma k} \Vert f_{k_{1}} g_{k_{2}} \Vert_{Z_{k}} \lesssim 2^{k_{1}\frac{n}{2}} \Vert f_{k_{1}} \Vert_{Z_{k_{1}}} 2^{k_{2} \sigma} \Vert g_{k_{2}} \Vert_{Z_{k_{2}}}.
\end{equation}
If $ \max\{ k_{1},k_{2} \} = k_{2}$ then
by Theorem \ref{Bilin1} we obtain \eqref{reddalg2} directly. If on the other hand $ \max\{ k_{1},k_{2} \} = k_{1}$ then by Theorem \ref{Bilin1} we obtain 
\begin{equation*}
\begin{split}
& \sum_{k=-\infty}^{ k_{1} +C } 2^{\sigma k} \Vert f_{k_{1}} g_{k_{2}} \Vert_{Z_{k}} \lesssim 2^{\sigma k_{1}} \Vert f_{k_{1}} \Vert_{Z_{k_{1}}} 2^{k_{2} \frac{n}{2}} \Vert g_{k_{2}} \Vert_{Z_{k_{2}}}
\\
& \lesssim 2^{(k_{1}-k_{2})(\sigma - \frac{n}{2})} 2^{k_{1}\frac{n}{2}} \Vert f_{k_{1}} \Vert_{Z_{k_{1}}} 2^{k_{2} \sigma} \Vert g_{k_{2}} \Vert_{Z_{k_{2}}}
\\
& \lesssim 2^{k_{1}\frac{n}{2}} \Vert f_{k_{1}} \Vert_{Z_{k_{1}}} 2^{k_{2} \sigma} \Vert g_{k_{2}} \Vert_{Z_{k_{2}}}
\end{split}
\end{equation*}
where in the last line we used that $ k_{1} \geq k_{2}$ and $ 0< \sigma \leq \frac{n}{2}$. This proves $(1)$. As for $(2)$ it follows directly from applying Theorem \ref{Bilin1}. It remains to prove $(4)$. It suffices to prove 
\begin{equation}\label{reddalge4}
\begin{split}
& \sum_{k=-\infty}^{\max \{k_{1},k_{2} \}+C} 2^{(\sigma-1)k} \Vert f_{k_{1}} g_{k_{2}} \Vert_{Z_{k}} \lesssim 2^{ \min\{k_{1},k_{2} \} \frac{n-2}{2} } 2^{ \max \{ k_{1},k_{2} \} \sigma } \Vert f_{k_{1}} \Vert_{Z_{k_{1}}} \Vert g_{k_{2}} \Vert_{Z_{k_{2}}}.
\end{split}
\end{equation}
But this follows directly from Theorem \ref{Bilin1}. Indeed, by Theorem \ref{Bilin1} we obtain
\begin{equation*}
\begin{split}
& \sum_{k=-\infty}^{\max \{k_{1},k_{2} \}+C} 2^{(\sigma-1)k} \Vert f_{k_{1}} g_{k_{2}} \Vert_{Z_{k}} \lesssim 2^{ \max \{ k_{1},k_{2} \} (\sigma-1) } 2^{\min \{ k_{1} ,k_{2} \} \frac{n}{2}} \Vert f_{k_{1}} \Vert_{Z_{k_{1}}} \Vert g_{k_{2}} \Vert_{Z_{k_{2}}}.
\end{split}
\end{equation*}
Then \eqref{reddalge4} follows from the following 
\begin{equation*}
2^{ \max \{ k_{1},k_{2} \} (\sigma-1) } 2^{\min \{ k_{1} ,k_{2} \} \frac{n}{2}} \lesssim 2^{ \max \{ k_{1},k_{2} \} ( \sigma )}  2^{ \min \{ k_{1},k_{2} \} \frac{n-2}{2}}.
\end{equation*}

\end{proof}
We note that the same algebra properties hold for the spaces $ \tilde{F}^{\sigma}$.
\begin{corollary}\label{algebraforftilderr}
    Recall the space  $\tilde{F}^{\sigma}$ defined in Definition \ref{tildezk}. Then the following inequalities hold
    \begin{enumerate}
        \item for any $ \sigma \in (0,\frac{n}{2}]$
        $$
        \Vert f g \Vert_{\tilde{F}^{\sigma}} \lesssim \Vert f \Vert_{\tilde{F}^{\frac{n}{2}}} \Vert g \Vert_{\tilde{F}^{\sigma}}
        $$
        \item for any $ \sigma \geq \frac{n}{2}$ we have 
        $$
        \Vert fg \Vert_{\tilde{F}^{\sigma}} \lesssim \Vert f \Vert_{\tilde{F}^{\frac{n}{2}}} \Vert g \Vert_{\tilde{F}^{\sigma}} + \Vert f \Vert_{\tilde{F}^{\sigma}} \Vert g \Vert_{\tilde{F}^{\frac{n}{2}}}.
        $$
        \item for any $ \sigma >0$ we have 
        $$
        \Vert f | \nabla|^{-1}g \Vert_{\tilde{F}^{\sigma}} \lesssim \Vert f \Vert_{\tilde{F}^{\frac{n-2}{2}}} \Vert g \Vert_{\tilde{F}^{\sigma}} + \Vert f \Vert_{\tilde{F}^{\sigma}} \Vert g \Vert_{\tilde{F}^{\frac{n-2}{2}}}.
        $$
\item If $ n \geq 5$, $ \sigma \geq \frac{n-2}{2}$ then
$$
\Vert | \nabla|^{-1} (fg) \Vert_{\tilde{F}^{\sigma}} \lesssim \Vert f \Vert_{\tilde{F}^{\frac{n-2}{2}}} \Vert g \Vert_{\tilde{F}^{\sigma}}+ \Vert f \Vert_{\tilde{F}^{\sigma}} \Vert g \Vert_{\tilde{F}^{\frac{n-2}{2}}}.
$$   
\item for any $ \sigma >0$ the following asymmetric inequality holds
$$
\Vert f g \Vert_{\tilde{F}^{\sigma}} \lesssim \Vert f \Vert_{\tilde{F}^{\frac{n-2}{2}}} \Vert g \Vert_{ \tilde{F}^{\sigma +1}} + \Vert f \Vert_{\tilde{F}^{\sigma}} \Vert g \Vert_{\tilde{F}^{\frac{n}{2}}}
$$
\item and more generally, for any $ \sigma > 0 $, and any $ \epsilon > 0$ with $ \epsilon < \frac{n}{2}$ we have 
$$
\Vert fg \Vert_{\tilde{F}^{\sigma
}} \lesssim \Vert f \Vert_{\tilde{F}^{\frac{n-2\epsilon}{2}}} \Vert g \Vert_{\tilde{F}^{\sigma+\epsilon}} + \Vert f \Vert_{\tilde{F}^{\sigma}} \Vert g \Vert_{\tilde{F}^{\frac{n}{2}}}.
$$
    \end{enumerate}
    
\end{corollary}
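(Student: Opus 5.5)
The plan is to reduce every inequality in this corollary to the dyadic bilinear estimate for $\tilde Z_k$ pieces, in the same way Corollary \ref{AlgebraF} was deduced from Theorem \ref{Bilin1}. First I will prove the $\tilde Z$-analogue of Theorem \ref{Bilin1}. Let $k_1 \le k_2+10$, $f_{k_1}\in\tilde Z_{k_1}$ and $f_{k_2}\in\tilde Z_{k_2}$. The claim is
\[
2^{nk/2}\Vert \Delta_k(f_{k_1}f_{k_2})\Vert_{\tilde Z_k}\lesssim 2^{-\frac n2|k-k_2|}\bigl(2^{nk_1/2}\Vert f_{k_1}\Vert_{\tilde Z_{k_1}}\bigr)\bigl(2^{nk_2/2}\Vert f_{k_2}\Vert_{\tilde Z_{k_2}}\bigr).
\]
To see this, take near-optimal decompositions $f_{k_i}=a_i+\bar b_i$ with $a_i,b_i\in Z_{k_i}$ and $\Vert a_i\Vert_{Z_{k_i}}+\Vert b_i\Vert_{Z_{k_i}}\le 2\Vert f_{k_i}\Vert_{\tilde Z_{k_i}}$. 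The product then splits into four terms $a_1a_2$, $\bar b_1 a_2$, $a_1\bar b_2$ and $\bar b_1\bar b_2$.

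The terms $a_1a_2$ and $\bar b_1a_2$ are exactly covered by Theorem \ref{Bilin1}, since that theorem allows $\tilde f_{k_1}\in\{f_{k_1},\bar f_{k_1}\}$. They land in $Z_k\subset\tilde Z_k$.

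For $\bar b_1\bar b_2=\overline{b_1b_2}$, I use that $\Delta_k$ commutes with conjugation: its symbol $\varphi_k(|\xi|)$ is real and even. Hence $\Vert\Delta_k(\overline{b_1b_2})\Vert_{\tilde Z_k}\le\Vert\Delta_k(b_1b_2)\Vert_{Z_k}$, and Theorem \ref{Bilin1} with $\tilde f=f$ applies.

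The term $a_1\bar b_2$ is the one needing care, because the conjugated factor now sits at the high frequency $k_2$. I write $a_1\bar b_2=\overline{\bar a_1 b_2}$, which gives $\Vert\Delta_k(a_1\bar b_2)\Vert_{\tilde Z_k}\le\Vert\Delta_k(\bar a_1 b_2)\Vert_{Z_k}$. Theorem \ref{Bilin1} then applies with $\tilde f_{k_1}=\bar a_1$ and $f_{k_2}=b_2$. So conjugation can always be moved onto the low-frequency factor or onto the whole product.

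The only subtlety is that Theorem \ref{Bilin1} requires $k_1\le k_2+10$. When the higher frequency is carried by the first function, I first swap the roles of $f$ and $g$; the four-term argument is symmetric under this swap. I expect this conjugation bookkeeping to be the main point to verify. Everything else follows because $\Vert\bar h\Vert_{\tilde Z_k}=\Vert h\Vert_{\tilde Z_k}$, by definition of the infimum norm, and because the Fourier support condition in \eqref{eq:Xdef} is preserved under conjugation: $D_{k,\infty}$ is reflected, but conjugation is absorbed into $\bar Z_k$.

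With this dyadic estimate in hand, items (1)--(6) follow verbatim from the proof of Corollary \ref{AlgebraF}. Write $f=\sum f_{k_1}$ and $g=\sum g_{k_2}$ with $f_{k_1}=\Delta_{k_1}f$, and note $\Vert\Delta_{k_1}f\Vert_{\tilde Z_{k_1}}$ is the quantity summed in $\Vert f\Vert_{\tilde F^\sigma}$. One then proves \eqref{reddalg1}, \eqref{reddalg2} and \eqref{reddalge4} with $Z$ replaced by $\tilde Z$ through the same case split $\max\{k_1,k_2\}=k_1$ or $k_2$. Summing over $k_1,k_2$ uses the geometric factors exactly as before: $2^{(k_1-k_2)(\sigma-n/2)}$ for (1), $2^{-\epsilon k_2}$ for (6), and $|\nabla|^{-1}$ contributing $2^{-k}$ for (3)--(4). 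In (4), the summability of $\sum_{k\le k_2}2^{(\sigma-1)k}2^{\frac n2(k-k_2)}$ needs $\sigma-1+\frac n2>0$, which holds since $\sigma\ge\frac{n-2}{2}$ and $n\ge5$. Finally, (3) and (5) are deduced from (6) as before.
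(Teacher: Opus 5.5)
Your route is the paper's. Theorem \ref{Bilin1} was proved with a possibly conjugated low-frequency factor exactly so that the $\tilde F^{\sigma}$ version would follow as in Corollary \ref{AlgebraF}. Your four-term splitting $f_{k_i}=a_i+\bar b_i$, which moves the conjugation onto the low-frequency factor or onto the whole product, is a correct way to supply the details the paper leaves implicit. That move is legitimate because $\Delta_k$ and $|\nabla|^{-1}$ have real even symbols.

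One step is wrong as written: the exponent bookkeeping in (4). The factor $2^{-\frac n2|k-k_2|}$ in Theorem \ref{Bilin1} is not a gain at low output frequency. For $k\le k_2$ it is cancelled exactly by the weight $2^{nk/2}$ on the left and $2^{nk_2/2}$ on the right. What the dyadic estimate actually gives is
\[
\Vert \Delta_k(f_{k_1}g_{k_2})\Vert_{\tilde Z_k}\lesssim 2^{\frac n2\min\{k_1,k_2\}}\Vert f_{k_1}\Vert_{\tilde Z_{k_1}}\Vert g_{k_2}\Vert_{\tilde Z_{k_2}},
\]
uniformly in $k\le\max\{k_1,k_2\}+C$, and the left side vanishes for larger $k$.

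So in (4) you must control $\sum_{k\le\max\{k_1,k_2\}+C}2^{(\sigma-1)k}$, not $\sum_{k\le k_2}2^{(\sigma-1)k}2^{\frac n2(k-k_2)}$. This sum is $\lesssim 2^{(\sigma-1)\max\{k_1,k_2\}}$ only if $\sigma>1$. That is precisely where $n\ge5$ is used: $\sigma\ge\frac{n-2}{2}\ge\frac32$.

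Your condition $\sigma-1+\frac n2>0$ already holds for every $n\ge3$. With your accounting the hypothesis $n\ge5$ would therefore be superfluous, which signals that a factor was counted twice. For $n=4$ and $\sigma=1$ the $k$-sum diverges and the argument fails. For the same reason, the $k$-sums in (1), (2) and (6) converge only because $\sigma>0$, not because of any decay in $k$.

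A smaller point: in (3) the operator $|\nabla|^{-1}$ falls on $g$, so it contributes $2^{-k_2}$ rather than $2^{-k}$. Deducing (3) from (5) via $\Vert|\nabla|^{-1}g\Vert_{\tilde F^{\sigma+1}}\lesssim\Vert g\Vert_{\tilde F^{\sigma}}$, as you propose, is fine. With these corrections the proposal goes through.
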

We close this section by mentioning that the same algebra properties hold for $ \tilde{F}^{\sigma}(r)$, $ F^{\sigma}(r)$. Indeed, this follows easily from Corollary \ref{AlgebraF}, and Corollary \ref{algebraforftilderr}.
\begin{corollary}\label{algebraforftilde}
Let $ r \in [0,\infty]$. In what follows let $ X^{\sigma}(r)$ denote either $ F^{\sigma}(r)$ or $ \tilde{F}^{\sigma}(r)$.  Then the following inequalities hold
    \begin{enumerate}
        \item for any $ \sigma \in (0,\frac{n}{2}]$
        \begin{equation}\label{eqalgebraic1}
             \Vert f g \Vert_{X^{\sigma}(r)} \lesssim \Vert f \Vert_{X^{\frac{n}{2}}(r)} \Vert g \Vert_{X^{\sigma}(r)}
        \end{equation}
        
        \item for any $ \sigma \geq \frac{n}{2}$ we have 
        \begin{equation}\label{eqalgebraic2}
            \Vert fg \Vert_{X^{\sigma}(r)} \lesssim \Vert f \Vert_{X^{\frac{n}{2}}(r)} \Vert g \Vert_{X^{\sigma}(r)} + \Vert f \Vert_{X^{\sigma}(r)} \Vert g \Vert_{X^{\frac{n}{2}}(r)}.
        \end{equation}
        \item for any $ \sigma >0$ we have 
        \begin{equation}\label{eqalgebraic3}
            \Vert f | \nabla|^{-1}g \Vert_{X^{\sigma}(r)} \lesssim \Vert f \Vert_{X^{\frac{n-2}{2}}(r)} \Vert g \Vert_{X^{\sigma}(r)} + \Vert f \Vert_{X^{\sigma}(r)} \Vert g \Vert_{X^{\frac{n-2}{2}}(r)}.
        \end{equation}
\item If $ n \geq 5$, $ \sigma \geq \frac{n-2}{2}$ then
\begin{equation}\label{eqalgebraic4}
\Vert | \nabla|^{-1} (fg) \Vert_{X^{\sigma}(r)} \lesssim \Vert f \Vert_{X^{\frac{n-2}{2}}(r)} \Vert g \Vert_{X^{\sigma}(r)}+ \Vert f \Vert_{X^{\sigma}(r)} \Vert g \Vert_{X^{\frac{n-2}{2}}(r)}.
\end{equation}
\item for any $ \sigma >0$ the following asymmetric inequality holds
\begin{equation}\label{eqalgebraic5}
    \Vert f g \Vert_{X^{\sigma}(r)} \lesssim \Vert f \Vert_{X^{\frac{n-2}{2}}(r)} \Vert g \Vert_{ X^{\sigma +1}(r)} + \Vert f \Vert_{X^{\sigma}(r)} \Vert g \Vert_{X^{\frac{n}{2}}(r)}
\end{equation}
\item and more generally, for any $ \sigma > 0 $, and any $ \epsilon > 0$ with $ \epsilon < \frac{n}{2}$ we have 
\begin{equation}\label{eqalgebraic6}
    \Vert fg \Vert_{X^{\sigma
}(r)} \lesssim \Vert f \Vert_{X^{\frac{n-2\epsilon}{2}}(r)} \Vert g \Vert_{X^{\sigma+\epsilon}(r)} + \Vert f \Vert_{X^{\sigma}(r)} \Vert g \Vert_{X^{\frac{n}{2}}(r)}.
\end{equation}

    \end{enumerate}
    
\end{corollary}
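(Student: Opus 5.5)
The plan is to reduce every inequality in Corollary \ref{algebraforftilde} to the corresponding statement on the whole line, Corollary \ref{AlgebraF} for $X=F$ and Corollary \ref{algebraforftilderr} for $X=\tilde{F}$, by passing to extensions. The key observation is that all six inequalities concern the product $fg$ (or $f|\nabla|^{-1}g$, or $|\nabla|^{-1}(fg)$), and the restricted norm \eqref{frdef} is an infimum over all global functions that agree with the given one on $[-r,r]$. So if $F$ extends $f$ and $G$ extends $g$, then $FG$ is an admissible extension of $fg$. This gives
\[
\Vert fg\Vert_{X^{\sigma}(r)} \leq \Vert FG\Vert_{X^{\sigma}},
\]
and we then apply the global inequality to $FG$.

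The first complication is that the right-hand sides involve several norms of the same function, for example $\Vert f\Vert_{X^{n/2}(r)}$ and $\Vert f\Vert_{X^{\sigma}(r)}$, and a near-optimal extension for one norm need not be near-optimal for the other. To handle this, fix $\delta>0$ and choose extensions $F_1, F_2$ of $f$ that are within $\delta$ of optimal for the two relevant exponents, and similarly $G_1, G_2$ for $g$. I will then argue frequency-piece by frequency-piece. The proof of Corollary \ref{AlgebraF} reduces to the dyadic estimate Theorem \ref{Bilin1} applied to pairs $(\Delta_{k_1}F,\Delta_{k_2}G)$, and in each summand only \emph{one} weight is used on each factor: $2^{k_1 n/2}$ or $2^{k_1\sigma}$, depending on whether $k_1\le k_2$ or $k_1>k_2$. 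So I split the product according to $k_1\le k_2$ versus $k_1>k_2$, use the extension $F_i$ adapted to the weight that actually appears for $f$, and do the same for $g$.

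The main obstacle is that the mixed product $\sum_{k_1\le k_2}\Delta_{k_1}F_1\,\Delta_{k_2}G_2+\sum_{k_1>k_2}\Delta_{k_1}F_2\,\Delta_{k_2}G_1$ is not literally an extension of $fg$. Cross terms such as $\Delta_{k_1}(F_1-F_2)$ do not vanish on $[-r,r]$, because $\Delta_{k_1}$ is nonlocal in time only through $\tau$-frequency. In fact $\Delta_k$ acts only in $\xi$, so it commutes with restriction to a time slab. Hence $\Delta_{k_1}F_1=\Delta_{k_1}F_2$ on $[-r,r]$, and the mixed sum does equal $fg$ there. It is therefore a legitimate extension, and its $X^{\sigma}$-norm is bounded by the triangle inequality together with the dyadic bounds. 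Letting $\delta\to0$ yields \eqref{eqalgebraic1}--\eqref{eqalgebraic6}.

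The estimates with $|\nabla|^{-1}$ are handled the same way, since $|\nabla|^{-1}$ is also a spatial multiplier. For $X=\tilde{F}$ the same argument applies, using that complex conjugation is also local in time.
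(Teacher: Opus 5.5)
Your argument is correct, and it is a more careful version of what the paper does. The paper disposes of this corollary in one sentence: it ``follows easily'' from Corollaries \ref{AlgebraF} and \ref{algebraforftilderr}. The implicit argument is to multiply extensions and apply the global estimates as black boxes. That is enough for \eqref{eqalgebraic1}, where each factor carries a single norm.

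For the two-term inequalities \eqref{eqalgebraic2}--\eqref{eqalgebraic6}, however, the black-box route tacitly needs one extension of $f$ that is near-optimal for two different exponents at once. The definition \eqref{frdef} does not provide such an extension. Your route closes this gap by reopening the proofs at the level of Theorem \ref{Bilin1}:
\begin{itemize}
\item In each of the regions $k_1\le k_2$ and $k_1>k_2$, only one weight falls on each factor, so you can use an extension adapted to that region.
\item The glued sum is still an extension, because $\Delta_k$ and $|\nabla|^{-1}$ have symbols independent of $\tau$. They therefore commute with multiplication by $\ind_{[-r,r]}(t)$.
\end{itemize}
The cost is that you use the dyadic bound directly rather than the stated corollaries. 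The alternative repair would be to show $\Vert f\Vert_{X^{\sigma}(r)}\approx\sum_k 2^{k\sigma}\inf\{\Vert \Delta_k H\Vert_{Z_k}: H=f \text{ on } [-r,r]\}$, which yields a $\sigma$-independent near-optimal extension. That requires almost-orthogonality of neighbouring $Z_k$ blocks, including the $Y^{e}_{k,k'}$ pieces; your approach sidesteps it.

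Two things to fix in the write-up.

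First, the sentence asserting that $\Delta_{k_1}(F_1-F_2)$ does not vanish on $[-r,r]$ is false and contradicts the very next sentence; delete it.

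Second, for $X=\tilde{F}$ the locality of complex conjugation is not the point. What you need is the $\tilde{Z}$ version of the dyadic bound. Split each factor into its $Z$ and $\bar{Z}$ parts, and conjugate the whole product where necessary, using $\Delta_k\bar{h}=\overline{\Delta_k h}$. Then every term has a conjugate on the lower-frequency factor at most, which is the case Theorem \ref{Bilin1} allows. After that, the commutation of $\Delta_k$ with time cutoffs works verbatim.
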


\section{Reduction to a modified Schr\"odinger equation}\label{s:reduction}
Throughout this section, we fix $ u_{0} :\R^{n} \to \S^{2}$ with $ \Vert u_{0} \Vert_{\dot{B}^{n/2}_{Q}} \ll 1$.  And take $ u : \R^{n+1} \to \S^{2}$, $ u \in L^{\infty}([-T,T];B_{Q}^{ \sigma})$, that solves \eqref{SSchrodingereq} on a time interval $ [-T,T]$, where $T > 0$ depends on the norm of the initial date $ u_{0}$ and $ \sigma \geq \frac{n}{2} +10$. The existence of such a solution is guaranteed by Proposition \ref{localexistencethm}. By shrinking $T$, if necessary, we may assume $|u(x,t) - Q| \leq \frac{1}{2}$ for any $ (x, t) \in \R^{n} \times [-T,T]$.  Further, we will use the following notation;
 for any $i \in \{1,..,n \}$ we write $ \nabla^{2s-1}_{i}$ to mean the operator with symbol $ \i \vert \xi \vert^{2s-2} \xi_{i}$. And sometimes, to simplify the notation, for any $F: \R^{n+1} \to \C^{n}$ we write 
 $$
 \nabla^{2s-1}_{x} (F) : = \sum_{i =1}^{n} \nabla^{2s-1}_{i} F_{i}. 
 $$
 Then notice the following identity 
 $$
  (- \Delta)^{s}(h) =- \sum_{i=1}^{n} \nabla^{2s-1}_{i} \partial_{i} h.
 $$
The plan is to use frames $ e_{1} , e_{2}$ and study equations in terms of 
$$
V_{i} : = \langle e_{1} , \partial_{i} u \rangle + \i \langle e_{2} , \partial_{i} u \rangle,
$$
for $ i \in \{1,..., n \}$. And 
$$
V_{0} : = \langle e_{1} , \partial_{t} u \rangle + \i \langle e_{2} , \partial_{t} u \rangle.
$$
To that end, define $L : \S^{2} \to \C$ to be the stereographic projection. And let $f_{1} + \i f_{2}= f : = L(u)$. And take $ e'_{i} $ to be 
$$
e'_{i} : = \frac{ \partial_{f_{i}} L^{-1} (f)}{ \vert\partial_{f_{i}} L^{-1} (f) \vert}
$$
for $i \in \{1,2\}$. 
Then notice that we have 
$$
\langle \partial_{t} u , e'_{i} \rangle = \frac{2}{1+ \vert f \vert^{2}} \partial_{t} f_{i}
$$
for $i \in \{1,2\}$. This implies 
\begin{equation}\label{eqq1}
\partial_{t} f = \frac{1+\vert f \vert^{2}}{2} \left( \langle e'_{1} \partial_{t} u \rangle + \i \langle e'_{2}, \partial_{t} u \rangle \right).
\end{equation}
Similarly, we obtain, for any $i \in \{1,..., n \}$, 
\begin{equation}\label{eqq2}
\partial_{i} f =  \frac{1+\vert f \vert^{2}}{2} \left( \langle e'_{1} \partial_{i} u \rangle + \i \langle e'_{2}, \partial_{i} u \rangle \right).
\end{equation}
Set $$ e_{1} : = cos(\theta)e'_{1} - sin(\theta) e'_{2}$$
and $$ e_{2} : = sin(\theta) e'_{1} + cos(\theta) e'_{2}$$ where $ \theta : \R^{n} \to \R$ is to be chosen later. Then we have
\begin{equation}\label{eqq3}
e^{\i \theta} \partial_{t} f = \frac{1 + \vert f \vert^{2}}{2} \left( \langle e_{1} , \partial_{t} u \rangle + \i \langle e_{2} , \partial_{t} u \rangle \right).
\end{equation}
Similarly, we have 
\begin{equation}\label{eqq4}
e^{\i \theta} \partial_{i} f = \frac{1 + \vert f \vert^{2}}{2} \left( \langle e_{1} , \partial_{i} u \rangle + \i \langle e_{2} , \partial_{i} u \rangle \right),
\end{equation}
for $i \in \{1,...,n \}$. Next, we define 
\begin{equation}\label{Vxdef}
V_{i} : = \langle e_{1} , \partial_{i} u \rangle + \i \langle e_{2} , \partial_{i} u \rangle
\end{equation}
for $i \in \{1,..,n \}$. And 
\begin{equation}\label{VtDef}
V_{0} : = \langle e_{1} , \partial_{t} u \rangle + \i \langle e_{2} , \partial_{t} u \rangle.
\end{equation}
Now we show that the $V_{i}'$s, for $i \in \{1,..,n\},$ solve a system of Schr\"odinger type equations. 
First, notice that by the fact that $u$ solves the s-Schr\"odinger map, we obtain
\begin{equation}\label{eqq5}
\begin{split}
& \langle e_{1}, \partial_{t} u\rangle
\\
& = - \langle e_{1} , u \wedge ( - \Delta)^{s} u \rangle
\\
& = \langle e_{2} , (-\Delta)^{s} u \rangle
\\
& = -\langle e_{2} , \sum_{i=1}^{n} \nabla^{2s-1}_{i} \left( \langle \partial_{i} u, {e}_{1} \rangle e_{1} + \langle \partial_{i} u , e_{2} \rangle e_{2}\right) \rangle.
\end{split}
\end{equation}
For each $i \in \{1,..,n\}$ define the following commutator acting on scalar functions as follows 
$$
[\nabla^{2s-1}_{i}, e_{2}](h) : = \nabla^{2s-1}_{i}(h e_{2}) - \nabla_{i}^{2s-1}(h) e_{2}.
$$
Then we obtain, using \eqref{eqq5}, and the above notation,
\begin{equation*}
\begin{split}
& \langle e_{1}, \partial_{t} u\rangle
\\
& = -\langle e_{2} , \sum_{i=1}^{n} [ \nabla^{2s-1}_{i} , e_{1} ] ( \langle \partial_{i} u , e_{1} \rangle) \rangle
\\
& - \langle e_{2} , \sum_{i=1}^{n} [ \nabla^{2s-1}_{i} , e_{2} ] ( \langle \partial_{i} u , e_{2} \rangle) \rangle
\\
& - \sum_{i=1}^{n} \nabla^{2s-1}_{i} (  \langle \partial_{i} u , e_{2} \rangle ) 
\end{split}
\end{equation*}
Similarly, 
\begin{equation*}
\begin{split}
& \i \langle e_{2}, \partial_{t} u\rangle
\\
& =- \i \langle e_{2} , u \wedge ( - \Delta)^{s} u \rangle
\\
& = -\i \langle e_{1} , (-\Delta)^{s} u \rangle
\\
& = \i \langle e_{1} , \sum_{i=1}^{n} \nabla^{2s-1}_{i} \left( \langle \partial_{i} u ,e_{1} \rangle e_{1} + \langle \partial_{i} u , e_{2} \rangle e_{2} \right) \rangle
\\
&= \i \langle e_{1} , \sum_{i=1}^{n} [ \nabla^{2s-1}_{i},e_{2}] ( \langle \partial_{i} u , e_{2} \rangle) \rangle 
\\
&+ \i \langle e_{1} , \sum_{i=1}^{n} [ \nabla^{2s-1}_{i},e_{1}] ( \langle \partial_{i} u , e_{1} \rangle) \rangle
\\
&+ \i \sum_{i =1}^{n} \nabla^{2s-1}_{i} ( \langle \partial_{i} u , e_{1} \rangle).
\end{split}
\end{equation*}
Combining the above, we arrive at the following
\begin{equation}\label{eqq6}
\begin{split}
& V_{0} - \i \sum_{i=1}^{n} \nabla^{2s-1}_{i} (V_{i}) 
\\
& = - \langle e_{2} , \sum_{i=1}^{n} [ \nabla^{2s-1}_{i} , e_{1} ] ( \langle \partial_{i} u , e_{1} \rangle) \rangle
\\
& - \langle e_{2} , \sum_{i=1}^{n} [ \nabla^{2s-1}_{i} , e_{2} ] ( \langle \partial_{i} u , e_{2} \rangle) \rangle
\\
&+ \i \langle e_{1} , \sum_{i=1}^{n} [ \nabla^{2s-1}_{i},e_{2}] ( \langle \partial_{i} u , e_{2} \rangle) \rangle 
\\
&+ \i \langle e_{1} , \sum_{i=1}^{n} [ \nabla^{2s-1}_{i},e_{1}] ( \langle \partial_{i} u , e_{1} \rangle) \rangle.
\end{split}
\end{equation}
Next, consider the following operator acting on scalar valued functions $h_{1},....,h_{n}$ as follows
\begin{equation}
\begin{split}
& \mathscr{H}( e_{1} ,e_{2})(h_{1},..,h_{n}) : = \langle e_{2}, \sum_{i=1}^{n} [ \nabla^{2s-1}_{i} , e_{1}](h_{i}) \rangle + \langle e_{1} , \sum_{i=1}^{n} [ \nabla^{2s-1}_{i} , e_{2}] (h_{i}) \rangle
\\
& = \langle e_{2} , \sum_{i=1}^{n} \nabla^{2s-1}_{i}( h_{i} e_{1}) \rangle + \langle e_{1} , \sum_{i=1}^{n} \nabla^{2s-1}_{i} ( h_{i} e_{2}) \rangle. 
\end{split}
\end{equation}
Notice that when $ s = 1 $ then 
$$
\mathscr{H} ( e_{1} , e_{2}) = 0.
$$
Using this notation, we rewrite \eqref{eqq6} as follows
\begin{equation}\label{eqqpre}
\begin{split}
&  V_{0} - \i \sum_{i=1}^{n} \nabla^{2s-1}_{i} (V_{i})
\\
& = -\langle e_{2}, \sum_{i=1}^{n} [ \nabla^{2s-1}_{i}, e_{2}] \left( \langle \partial_{i} u , e_{2} \rangle \right) \rangle 
\\
& +\i \langle e_{1} ,\sum_{i=1}^{n} [ \nabla^{2s-1}_{i}, e_{1}] ( \langle \partial_{i} u , e_{1} \rangle)
\\
& - \mathscr{H}(e_{1}, e_{2} )( \langle \partial_{1} u, e_{1}\rangle,..., \langle \partial_{n} u, e_{1}\rangle )
\\
& + \langle e_{1} , \sum_{i=1}^{n} [ \nabla^{2s-1}_{i}, e_{2} ] ( \langle \partial_{i} u , e_{1} \rangle) \rangle
\\
& + \i \langle e_{1} , \sum_{ i =1}^{n} [ \nabla^{2s-1}_{i}, e_{2}] ( \langle \partial_{i} u, e_{2} \rangle ) \rangle
\\
& = -\langle e_{2}, \sum_{i=1}^{n} [ \nabla^{2s-1}_{i}, e_{2}] \left( \langle \partial_{i} u , e_{2} \rangle \right) \rangle 
\\
& + \i \langle e_{1} ,\sum_{i=1}^{n} [ \nabla^{2s-1}_{i}, e_{1}] ( \langle \partial_{i} u , e_{1} \rangle)
\\
& - \mathscr{H}(e_{1}, e_{2} )( \langle \partial_{1} u, e_{1}\rangle,..., \langle \partial_{n} u, e_{1}\rangle )
\\
& + \langle e_{1} , \sum_{i =1}^{n} [ \nabla^{2s-1}_{i},e_{2}] ( V_{i} ) \rangle.
\end{split}
\end{equation}
Now to get a Schr\"odinger equation, we use the following trick; fix any $ i,j \in \{1,...,n\}$ then notice
\begin{equation}\label{eqq7}
\begin{split}
& \partial_{j} V_{i} - \partial_{i} V_{j} 
\\
& = \partial_{j} \left( \langle \partial_{i} u , e_{1} \rangle + \i \langle \partial_{i} u , e_{2} \rangle \right) - \partial_{i} \left( \langle \partial_{j} u , e_{1} \rangle + \i \langle \partial_{j} u , e_{2} \rangle \right)
\\
& = \langle \partial_{i} u , \partial_{j} e_{1} \rangle + \i \langle \partial_{i} u , \partial_{j} e_{2} \rangle - \langle \partial_{j} u , \partial_{i} e_{1} \rangle - \i \langle \partial_{j}u, \partial_{i} e_{2} \rangle
\\
& = \langle \partial_{i} u , e_{2} \rangle \langle e_{2}, \partial_{j} e_{1} \rangle + \i \langle \partial_{i} u, e_{1} \rangle \langle e_{1}, \partial_{j} e_{2} \rangle
\\
& - \langle \partial_{j} u , e_{2} \rangle \langle e_{2}, \partial_{i} e_{1} \rangle - \i \langle \partial_{j} u , e_{1} \rangle \langle e_{1}, \partial_{i} e_{2} \rangle
\\
& = - \langle \partial_{i} u, e_{2} \rangle \langle e_{1} , \partial_{j} e_{2} \rangle + \i \langle \partial_{i} u , e_{1} \rangle \langle e_{1}, \partial_{j} e_{2} \rangle
\\
& + \langle \partial_{j} u , e_{2} \rangle \langle \partial_{i} e_{2} , e_{1} \rangle - \i \langle \partial_{j} u , e_{1} \rangle \langle e_{1}, \partial_{i} e_{2} \rangle
\\
& = \i V_{i} \langle e_{1} , \partial_{j} e_{2} \rangle - \i V_{j} \langle \partial_{i} e_{2} , e_{1} \rangle.
\end{split}
\end{equation}
Similarly, for any $ i \in \{1,..,n \}$ we have
\begin{equation}\label{eqq8}
\begin{split}
& \partial_{i} V_{0} - \partial_{t} V_{i} 
\\
& = \i V_{0} \langle {e}_{1}, \partial_{i} {e}_{2} \rangle - \i V_{i} \langle \partial_{t} {e}_{2}, {e}_{1} \rangle.
\end{split}
\end{equation}
We use the identities \eqref{eqq7}, and \eqref{eqq8} to obtain the following; for any fixed $ i \in \{1,...,n \}$ we write
\begin{equation*}
\begin{split}
& \partial_{t} V_{i} + \i ( - \Delta)^{s} V_{i} 
\\
& = \partial_{i} V_{0} - \i \sum_{j=1}^{n} \nabla^{2s-1}_{j} \partial_{j} V_{i} 
\\
& + \i V_{i} \langle \partial_{t} e_{2}, e_{1} \rangle - \i V_{0} \langle e_{1} , \partial_{i} e_{2} \rangle
\\
& = \partial_{i} V_{0} - \i \partial_{i} \sum_{j=1}^{n} \nabla^{2s-1}_{j} V_{j} 
\\
& + \i V_{i} \langle \partial_{t} e_{2}, e_{1} \rangle - \i V_{0} \langle e_{1} , \partial_{i} e_{2} \rangle
\\
& +  \sum_{j=1}^{n} \nabla^{2s-1}_{j} \left( V_{i} \langle e_{1} , \partial_{j} e_{2}\rangle \right) -  \sum_{j=1}^{n} \nabla^{2s-1}_{j} \left(  V_{j} \langle \partial_{i} e_{2}, e_{1} \rangle \right).
\end{split}
\end{equation*}
Using the equation \eqref{eqqpre} we arrive at the following
\begin{equation}\label{recudedSchro}
\begin{split}
& \partial_{t} V_{i} + \i ( - \Delta)^{s} V_{i} 
\\
& =- \partial_{i} \big( \langle e_{2}, \sum_{j=1}^{n} [ \nabla^{2s-1}_{j}, e_{2}] \left( \langle \partial_{j} u , e_{2} \rangle \right) \rangle \big)
\\
& + \partial_{i} \big( \i \langle e_{1} ,\sum_{j=1}^{n} [ \nabla^{2s-1}_{j}, e_{1}] ( \langle \partial_{j} u , e_{1} \rangle) \big)
\\
& - \partial_{i} \big( \mathscr{H}(e_{1}, e_{2} )( \langle \partial_{1} u, e_{1}\rangle,..., \langle \partial_{n} u, e_{1}\rangle ) \big)
\\
& +\partial_{i} \big(  \langle e_{1} , \sum_{j =1}^{n} [ \nabla^{2s-1}_{j},e_{2}] ( V_{j} ) \rangle \big)
\\
&
+ \i V_{i} \langle \partial_{t} e_{2}, e_{1} \rangle - \i V_{0} \langle e_{1} , \partial_{i} e_{2} \rangle
\\
& + \sum_{j=1}^{n} \nabla^{2s-1}_{j}  \left( V_{i} \langle e_{1} , \partial_{j} e_{2}\rangle \right) -  \sum_{j=1}^{n} \nabla^{2s-1}_{j} \left( V_{j} \langle \partial_{i} e_{2}, e_{1} \rangle \right).
\end{split}
\end{equation}
for any $ i \in \{1,...,n \}$. 
\begin{proposition}\label{modifiedsschrodeq}
Let $ u : \R^{n+1} \to \S^{2}$ be a solution of \eqref{SSchrodingereq} on a time interval $ [-T,T]$. Then on the same interval $[-T,T]$, the functions $ \{ V_{i} \}_{i=1}^{n}$ defined above solve the following equation 
$$
\partial_{t} V_{i} + \i ( - \Delta)^{s} V_{i} = \mathcal{N} ( V_{0},V_{1},...,V_{n},e_{1},e_{2} )(V_{i})
$$
where $\mathcal{N}(V_{0},V_{1},...,V_{n}, e_{1},e_{2}) (V_{i})$ is the right hand side of \eqref{recudedSchro}. 
\end{proposition}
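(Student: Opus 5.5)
The plan is to make rigorous the formal computation leading to \eqref{recudedSchro}, checking that every manipulation is justified for the local solution $u \in L^{\infty}([-T,T];B^{\sigma}_{Q})$ with $\sigma \geq \frac n2+10$. First, regularity: since $|u-Q|\le \frac12$, the stereographic image $f=L(u)$ is smooth and bounded in terms of $u$. Hence $e_1',e_2'$ are smooth functions of $f$, and $e_1,e_2$ lie in the same regularity class as long as $\theta$ is (the gauge is fixed later, but only its smoothness matters here). Then $V_i$, $V_0$ and the commutators $[\nabla^{2s-1}_i,e_l](h)$ are well-defined classical objects; here I use $\partial_t u = -u\wedge(-\Delta)^s u$ together with the Besov regularity of $u$. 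Throughout, the orthonormality relations $\langle e_a,e_b\rangle=\delta_{ab}$, $\langle e_a,u\rangle=0$ and their derivatives $\langle \partial e_1,e_1\rangle=0$, $\langle \partial e_1,e_2\rangle=-\langle e_1,\partial e_2\rangle$ are used pointwise.

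Next comes the algebraic core. Because $(e_1,e_2,u)$ is an oriented orthonormal frame (up to orientation convention), $u\wedge e_1=e_2$ and $u\wedge e_2=-e_1$. This gives $\langle e_1, u\wedge w\rangle=-\langle e_2,w\rangle$ and $\langle e_2,u\wedge w\rangle=\langle e_1,w\rangle$, which produces the first lines of \eqref{eqq5} and its companion. I then expand $\partial_i u=\langle\partial_i u,e_1\rangle e_1+\langle\partial_i u,e_2\rangle e_2$, which is valid since $\partial_i u\perp u$. Using $(-\Delta)^s=-\sum_i\nabla^{2s-1}_i\partial_i$ and splitting into commutator plus main term gives \eqref{eqq6}. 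Regrouping with $\mathscr H$ gives \eqref{eqqpre}; the key check is that the $\langle e_1,[\nabla^{2s-1}_i,e_2](\cdot)\rangle$ terms combine into $V_i$ by linearity in the scalar argument.

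Then I verify the curl identities \eqref{eqq7} and \eqref{eqq8}. After differentiating, the terms $\langle\partial_j\partial_i u,e_l\rangle$ cancel by symmetry of mixed derivatives. The remaining terms are expanded in the frame, with the component along $u$ killed because $\langle\partial_i u,u\rangle=0$. For \eqref{eqq8}, this needs $\partial_t\partial_i u=\partial_i\partial_t u$, which holds in the distributional sense and in fact classically at this regularity. Finally, to get \eqref{recudedSchro}, I write $\i(-\Delta)^sV_i=-\i\sum_j\nabla^{2s-1}_j\partial_jV_i$ and substitute $\partial_jV_i=\partial_iV_j+(\text{curl terms})$ from \eqref{eqq7}. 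Since $\nabla^{2s-1}_j$ commutes with $\partial_i$, this yields $-\i\partial_i\sum_j\nabla^{2s-1}_jV_j$. Combining with $\partial_tV_i=\partial_iV_0-(\text{terms from }\eqref{eqq8})$ and inserting \eqref{eqqpre} for $V_0-\i\sum_j\nabla^{2s-1}_jV_j$ gives exactly the right-hand side $\mathcal N$.

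The main obstacle is not the algebra but justifying that the nonlocal operators $\nabla^{2s-1}_j$ can be applied to products like $V_i\langle e_1,\partial_je_2\rangle$ and $h e_l$, and that they commute with $\partial_i$. Here $e_l$ does not decay: $e_l\to$ constant frame at infinity, since $u\to Q$. So I will write $e_l=e_l(\infty)+(e_l-e_l(\infty))$, where the second part lies in $B^{\sigma}_{2,1}$ by composition estimates in Besov spaces, applied to the smooth maps $f\mapsto e_l'$. This makes all products lie in $L^2$-based spaces where the Fourier-multiplier definitions apply. Since $2s-1\in(0,1)$, the symbol $|\xi|^{2s-2}\xi_j$ is locally integrable and bounded by $|\xi|^{2s-1}$, so it acts on these $H^{\sigma-1}$-type functions without low-frequency issues.
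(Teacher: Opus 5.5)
Your proposal is correct and follows the paper's own route: the frame identities $\langle e_1,u\wedge w\rangle=-\langle e_2,w\rangle$ and $\langle e_2,u\wedge w\rangle=\langle e_1,w\rangle$, the commutator splitting giving \eqref{eqq6} and \eqref{eqqpre}, the curl identities \eqref{eqq7}--\eqref{eqq8}, and the substitution into $-\i\sum_j\nabla^{2s-1}_j\partial_jV_i$ are exactly the computation preceding the proposition, which the paper carries out formally without your regularity bookkeeping. Two minor caveats: the orientation $u\wedge e_1=e_2$ is not a convention but is forced by the construction (at $f=0$ the frame $(e_1',e_2',u)$ is the standard basis), and it is what fixes the sign of the dispersive term; and you do not actually need $e_l-e_l(\infty)\in B^{\sigma}_{2,1}$ here, because $e_l$ has bounded derivatives and the scalar factors $h$ already lie in $L^2$-based spaces, so $he_l$ does too (for $e_l$, unlike $e_l'$, that decay would anyway require control of $\theta$ as in Lemma \ref{thetainteg}, not mere smoothness).
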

Now we make a choice for $ \theta : \R^{n+1} \to \R$ as follows; we choose $ \theta$ so that 
$$
\sum_{i=1}^{n} \partial_{i} \left( \langle  e_{1}, \partial_{i} e_{2} \rangle \right)=0.
$$
This can be done by choosing 
$$
\theta : = -\Delta^{-1} \sum_{i=1}^{n} \partial_{i} \left( \langle  e'_{1} , \partial_{i} e'_{2} \rangle \right).
$$
Notice that a direct computation gives that $ \theta \in C ( [-T,T] : B^{\sigma}_{2,1} )$. In fact we have 
\begin{lemma}\label{thetainteg}
Let $ \theta : \R^{n+1} \to \R$ be given as above, $ \sigma \geq \frac{n}{2}$. Then we have
\begin{equation}\label{theta1}
    \Vert \theta ( \cdot,t) \Vert_{\dot{B}^{\sigma}_{2,1}} \lesssim 
 C ( \Vert f(\cdot,t) \Vert_{L_{x}^{\infty}})\Vert u(\cdot,t) \Vert_{\dot{B}^{\sigma}_{Q}}.
    \end{equation}
    for any $ t \in [-T,T]$. In particular, if we know that $ \Vert f(\cdot, t) \Vert_{L_{x}^{\infty}} \ll 1$ then we obtain 
    \begin{equation}\label{theta0}
    \Vert \theta ( \cdot,t) \Vert_{\dot{B}^{\sigma}_{2,1}} \lesssim \Vert u(\cdot,t) \Vert_{\dot{B}^{\sigma}_{Q}}.
    \end{equation}
Moreover, for any $t \in [-T,T]$ the following holds
\begin{equation}\label{theta2}
\Vert \theta(t) \Vert_{{B}^{\sigma}_{2,1}} <\infty.
\end{equation}
\end{lemma}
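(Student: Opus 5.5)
The plan is to write $\theta$ explicitly in terms of $f=L(u)$ and reduce everything to Besov algebra/composition estimates at fixed time $t$. Since $e'_1,e'_2$ are the normalized coordinate derivatives of $L^{-1}$ at $f$, and the stereographic parametrization is conformal, we have $e'_i = \frac{1+|f|^2}{2}\partial_{f_i}L^{-1}(f)$. Then
\[
\langle e'_1,\partial_i e'_2\rangle = \sum_{m=1}^2 A_m(f)\,\partial_i f_m
\]
for explicit smooth (indeed rational) functions $A_m:\C\to\R$, with $A_m(0)=0$. For the standard projection one computes $\langle e'_1,\partial_i e'_2\rangle = f_2\partial_i f_1 - f_1\partial_i f_2$, up to a constant factor and sign. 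Consequently
\[
\theta = -\Delta^{-1}\sum_i \partial_i\Big(\sum_m A_m(f)\partial_i f_m\Big),
\]
and since $\Delta^{-1}\partial_i\partial_j$-type operators are order zero and $\Delta^{-1}\partial_i$ has order $-1$, we get
\[
\|\theta\|_{\dot B^{\sigma}_{2,1}}\lesssim \sum_{i,m}\|A_m(f)\partial_i f_m\|_{\dot B^{\sigma-1}_{2,1}}.
\]
This holds because homogeneous Fourier multipliers of degree $-1$ map $\dot B^{\sigma-1}_{2,1}$ to $\dot B^{\sigma}_{2,1}$ block by block.

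The key step is a product estimate in homogeneous Besov spaces. For $\sigma-1>0$ I will use
\[
\|gh\|_{\dot B^{\sigma-1}_{2,1}}\lesssim \|g\|_{L^\infty}\|h\|_{\dot B^{\sigma-1}_{2,1}}+\|g\|_{\dot B^{\sigma-1}_{2,1}\cap\,\text{appropriate}}\ \|h\|_{\dots}.
\]
More cleanly, I take $g=A_m(f)$ and $h=\nabla f$ and use the paraproduct decomposition. Low–high interactions are bounded by $\|g\|_{L^\infty}\|\nabla f\|_{\dot B^{\sigma-1}_{2,1}}$. High–low and high–high interactions are bounded by $\|g\|_{\dot B^{\sigma-1}_{2,1}}\|\nabla f\|_{L^\infty}$ together with $\dot B^{n/2}_{2,1}\hookrightarrow L^\infty$. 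Here $\|\nabla f\|_{L^\infty}$ is controlled by $\|f\|_{\dot B^{n/2+1}_{2,1}}$, which is not acceptable at $\sigma=n/2$. Instead, I interpolate: the high–low piece $\sum_k 2^{k(\sigma-1)}\|\Delta_k g\,\Delta_{<k}\nabla f\|_{L^2}$ is bounded using $\|\Delta_{<k}\nabla f\|_{L^\infty}\lesssim 2^{k}\|f\|_{\dot B^{n/2}_{2,1}}$. This gives $\|g\|_{\dot B^{\sigma}_{2,1}}\|f\|_{\dot B^{n/2}_{2,1}}$, which is fine.

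Next I need a composition estimate, $\|A_m(f)\|_{\dot B^{\sigma}_{2,1}}\le C(\|f\|_{L^\infty})(1+\|f\|_{\dot B^{n/2}_{2,1}})^{\lceil\sigma\rceil}\|f\|_{\dot B^{\sigma}_{2,1}}$, valid because $A_m(0)=0$. I will quote this from the elementary Besov facts in Appendix \ref{s:elementary}. Then I need $\|f\|_{\dot B^{\sigma}_{2,1}}\le C(\|f\|_\infty)\|u-Q\|_{\dot B^{\sigma}_{2,1}}$ for $\sigma\ge n/2$, which holds since $f=L(u)$ is a smooth function of $u-Q$ vanishing at $0$ on $|u-Q|\le 1/2$. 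Because $\|u\|_{\dot B^{n/2}_Q}$ is small (or just bounded, absorbed into $C$), combining these gives \eqref{theta1}. The bound \eqref{theta0} follows by fixing $\|f\|_\infty\le 1$, so the constant is uniform.

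For \eqref{theta2} I also need $L^2$ control of the low frequencies, since $\dot B$ already controls the high part. The low frequencies satisfy $\|\Delta_{\le 0}\theta\|_{L^2}\lesssim \|\Delta_{\le0}|\nabla|^{-1}(A(f)\nabla f)\|_{L^2}$. I will bound this via Hardy–Littlewood–Sobolev by $\|A(f)\nabla f\|_{L^{2n/(n+2)}}\lesssim \|f\|_{L^{2n}}\|\nabla f\|_{L^{2}}$, using $|A(f)|\lesssim|f|$. Both factors are finite because $u-Q\in B^{\sigma}_{2,1}$, and this needs $n\ge3$. The main obstacle is the critical endpoint $\sigma=n/2$ in the product estimate, where the $L^\infty$ norm of $\nabla f$ is not available. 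I handle it with the frequency-localized Bernstein bound above, which is the point where the Besov ($\ell^1$) summability is genuinely needed.
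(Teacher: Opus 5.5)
Your proposal is correct and is essentially the paper's argument. You use the explicit connection form, which is $\frac{2}{1+|f|^2}(f_1\partial_i f_2-f_2\partial_i f_1)$; the prefactor is not a constant, but your $A_m(f)$ formulation already covers this. You reduce via the order $-1$ multiplier to a $\dot B^{\sigma-1}_{2,1}$ product bound, combine it with the composition bounds of Lemma \ref{Besovstuff}, and apply Hardy--Littlewood--Sobolev to the low frequencies in \eqref{theta2}, all as the paper does.

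Two small corrections. First, the paper takes the product bound from Lemma \ref{elemantaryestbesov}, using $\|\Delta_{\le j}f\|_{L^\infty}\lesssim\|f\|_{L^\infty}$ instead of $\|f\|_{\dot B^{n/2}_{2,1}}$. Together with the $C(\|f\|_{L^\infty})$ constant in Lemma \ref{Besovstuff}, this makes the constant depend on $\|f\|_{L^\infty}$ alone, as \eqref{theta1} states, so there is no real endpoint difficulty at $\sigma=\frac{n}{2}$. Second, in the Hardy--Littlewood--Sobolev step the H\"older exponent is off: the correct bound is $\|A(f)\nabla f\|_{L^{2n/(n+2)}}\lesssim\|f\|_{L^{n}}\|\nabla f\|_{L^2}$, not $\|f\|_{L^{2n}}\|\nabla f\|_{L^2}$, and it is still finite since $f\in L^2\cap L^\infty$.
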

\begin{proof}
by definition of $\theta$, it suffices to estimate terms of the form
$$
\Delta^{-1} \partial_{i} \langle e'_{1} , \partial_{i} e'_{2} \rangle.
$$
A direct computation gives that 
\begin{equation}\label{iuie}
\langle e'_{1} , \partial_{i} e'_{2} \rangle = \frac{2}{1+|f|^{2}} ( f_{1} \partial_{i} f_{2} - f_{2} \partial_{i} f_{1}).
\end{equation}
Then we claim the following is true for any $t \in [-T,T]$ and any $ \sigma \geq \frac{n}{2}$ 
\begin{equation}\label{iop}
\Vert |\nabla|^{-1} ( \frac{1}{1+|f(\cdot,t)|^{2}} f_{2}(\cdot,t) \partial_{i} f_{1}(\cdot,t) ) \Vert_{\dot{B}^{\sigma}_{2,1}} \lesssim C( \Vert f(\cdot,t) \Vert_{L^{\infty}(\R^{n})}) \Vert f(\cdot,t) \Vert_{\dot{B}^{\sigma}_{2,1}}.
\end{equation}
To prove \eqref{iop}, fix $t \in [-T,T]$ and set $F = (\frac{1}{1+|f|^{2}} - 1 )$. Then we estimate the left hand side of \eqref{iop} with ( for better readability, we stop writing $t$ in the argument) 
\begin{equation}\label{iopp}
\Vert |\nabla|^{-1} ( \frac{1}{1+|f|^{2}} f_{2} \partial_{i} f_{1} ) \Vert_{\dot{B}^{\sigma}_{2,1}} \lesssim
\Vert |\nabla|^{-1}  (F f_{2} \partial_{i} f_{1})  \Vert_{\dot{B}^{\sigma}_{2,1}} + \Vert |\nabla|^{-1} (f_{2} \partial_{i} f_{1}) \Vert_{\dot{B}^{\sigma}_{2,1}}.
\end{equation}
To prove \eqref{iop} we estimate each term on the right hand side of \eqref{iopp}. Starting with the second term, we have from Lemma \ref{elemantaryestbesov} 
\begin{equation}\label{theta4}
\Vert |\nabla|^{-1} (f_{2} \partial_{i} f_{1}) \Vert_{\dot{B}^{\sigma}_{2,1}} \lesssim C( \Vert f(\cdot,t) \Vert_{L^{\infty}}) \Vert f \Vert_{\dot{B}^{\sigma}_{2,1}}.
\end{equation}
We also claim the following holds
 \begin{equation}\label{theta55}
\Vert |\nabla|^{-1}  (F f_{2} \partial_{i} f_{1})  \Vert_{\dot{B}^{\sigma}_{2,1}} \lesssim C( \Vert f(\cdot,t) \Vert_{L^{\infty}}) \Vert f \Vert_{\dot{B}^{\sigma}_{2,1}}
\end{equation}
Indeed, to see this we again use Lemma \ref{elemantaryestbesov} and the fact that $ \Vert F \Vert_{L^{\infty}} \lesssim 1$ to obtain
\begin{equation*}
\begin{split}
& \Vert |\nabla|^{-1}  (F f_{2} \partial_{i} f_{1})  \Vert_{\dot{B}^{\sigma}_{2,1}}
\\
& \lesssim C( \Vert f ( \cdot,t) \Vert_{L^{\infty}}) \left( \Vert f \Vert_{\dot{B}^{\sigma}_{2,1}} + \Vert F \Vert_{\dot{B}^{\sigma}_{2,1}} \right)
\end{split}
\end{equation*}
now using Lemma \ref{Besovstuff} we have
$$
 \Vert F \Vert_{\dot{B}^{\sigma}_{2,1}} \lesssim C( \Vert f(\cdot,t) \Vert_{L^{\infty}}) \Vert f \Vert_{\dot{B}^{\sigma}_{2,1}}
$$
This prove \eqref{theta55}. Combining \eqref{theta4}, \eqref{theta55} and Lemma \ref{Besovstuff} we obtain \eqref{theta1}. It remains to prove \eqref{theta2}. Notice that by \eqref{theta1}, we only need to control low frequencies of $ |\nabla|^{-1} \langle e'_{1} ( \cdot,t) , \partial_{i} e'_{2}(\cdot,t) \rangle$. That is, it suffices to show 
\begin{equation*}
\Vert \Delta_{\leq 0} |\nabla|^{-1} \langle e'_{1} ( \cdot,t) , \partial_{i} e'_{2}(\cdot,t) \rangle \Vert_{L^{2}(\R^{n})} < \infty
\end{equation*}
for any $t \in [-T,T]$. To that end, using the identity \eqref{iuie}, we need to prove 
\begin{equation*}
\Vert \Delta_{\leq 0} | \nabla |^{-1} G(\cdot,t) \partial_{i} f_{1}(\cdot,t) \Vert_{L^{2}(\R^{n})} < \infty,
\end{equation*}
where $ G := \frac{f_{2}}{1+|f|^{2}}$. We use boundedness of the Riesz potential to obtain
\begin{equation*}
\begin{split}
& \Vert | \nabla|^{-1} \left( G( \cdot,t) \partial_{i} f_{1}(\cdot,t) \right) \Vert_{L^{2}(\R^{n})}
\\
& \lesssim \Vert G(\cdot,t) \partial_{i} f_{1}(\cdot,t) \Vert_{L^{\frac{2n}{n+2}}(\R^{n})}
\\
& \lesssim \Vert G(\cdot,t) \Vert_{L^{n}(\R^{n})} \Vert \nabla f(\cdot,t) \Vert_{L^{2}(\R^{n})}
\\
& \lesssim \Vert f(\cdot,t) \Vert_{L^{n}(\R^{n})} \Vert \nabla f(\cdot,t) \Vert_{L^{2}(\R^{n})} <\infty.
\end{split}
\end{equation*}
This concludes the proof of \eqref{theta2}.
\end{proof}

Next, we prove the following quantitative estimate regrading the functions $V_{i}$
\begin{proposition}\label{controlvu}
Let $ \sigma \geq \frac{n-2}{2}$. And let $u, V_{i}'s$ be given as above for $i=1,...,n$. Assume $ \Vert u|_{t=0} \Vert_{\dot{B}^{\frac{n}{2}}_{Q}} \leq \epsilon \ll 1$. Then we have
\begin{enumerate}
    \item 
    \begin{equation}\label{cmcm1}
    \Vert V_{i}( \cdot,t) \Vert_{\dot{B}^{\sigma }_{2,1}} \lesssim C(\Vert f(\cdot,t) \Vert_{L^{\infty}} \Vert) \Vert u(\cdot,t) \Vert_{\dot{B}^{\sigma+1}_{Q}}.
    \end{equation}
    In particular, 
    $$
    \Vert V_{i}|_{t =0} \Vert_{\dot{B}^{\sigma }_{2,1}} \lesssim \Vert u|_{t=0} \Vert_{\dot{B}^{\sigma+1}_{Q}}
    $$
\item Similarly, we have 
\begin{equation}\label{cmcm}
\Vert V_{i} (\cdot,t) \Vert_{{B}^{\sigma }_{2,1}} \lesssim C( \Vert f( \cdot, t) \Vert_{L^\infty}) \Vert u ( \cdot, t) \Vert_{{B}^{\sigma+1}_{Q}}.
\end{equation}
In particular, 
$$
\Vert V_{i}|_{t =0} \Vert_{{B}^{\sigma }_{2,1}} \lesssim \Vert u|_{t=0} \Vert_{{B}^{\sigma+1}_{Q}}.
$$
\end{enumerate}
\end{proposition}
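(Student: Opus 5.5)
The plan is to express $V_i$ directly in terms of the stereographic coordinate $f=L(u)$ and the gauge angle $\theta$, and then apply the Besov product and composition estimates of Appendix \ref{s:elementary} (Lemma \ref{elemantaryestbesov}, Lemma \ref{Besovstuff}). By \eqref{eqq4} we have
\[
V_i = \frac{2}{1+|f|^2}\, e^{\i\theta}\, \partial_i f ,
\]
so $V_i$ is a product of three factors: the bounded function $G:=\frac{2}{1+|f|^2}$ with $G-2$ vanishing at $f=0$, the unimodular factor $e^{\i\theta}$, and $\partial_i f$. We write $G=2+(G-2)$ and $e^{\i\theta}=1+(e^{\i\theta}-1)$. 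This reduces the estimate to bounding $\partial_i f$ and products of $\partial_i f$ with functions that are $0$ at the origin of $f$ or $\theta$.

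\textbf{Step 1 (the map $u\mapsto f$).} Since $|u-Q|\le \frac12$, the stereographic projection from the pole $-Q$ is smooth near $Q$ and vanishes at $u=Q$. Hence $f=\Phi(u-Q)$ for a smooth $\Phi$ with $\Phi(0)=0$, and $\|f\|_{L^\infty}\lesssim 1$. Lemma \ref{Besovstuff} then gives
\[
\|f\|_{\dot B^{\tau}_{2,1}}\le C(\|f\|_{L^\infty})\,\|u\|_{\dot B^{\tau}_Q}
\qquad\text{for } \tau=\sigma+1\ge \tfrac n2 ,
\]
and the same bound holds for $\tau=\frac n2$. It follows that $\|\partial_i f\|_{\dot B^{\sigma}_{2,1}}\lesssim \|f\|_{\dot B^{\sigma+1}_{2,1}}$. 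Likewise $\|G-2\|_{\dot B^{\tau}_{2,1}}\le C\|f\|_{\dot B^{\tau}_{2,1}}$, again by Lemma \ref{Besovstuff}. For $\theta$, Lemma \ref{thetainteg} gives $\|\theta\|_{\dot B^{\tau}_{2,1}}\lesssim C\|u\|_{\dot B^{\tau}_Q}$ for $\tau\ge\frac n2$. Applying Lemma \ref{Besovstuff} to $e^{\i\theta}-1$ and using $\|\theta\|_{L^\infty}\lesssim \|\theta\|_{\dot B^{n/2}_{2,1}}$, we get
\[
\|e^{\i\theta}-1\|_{\dot B^{\tau}_{2,1}}\lesssim C\,\|u\|_{\dot B^{\tau}_Q}.
\]

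\textbf{Step 2 (products).} We use the Besov product estimate
\[
\|gh\|_{\dot B^{\sigma}_{2,1}}\lesssim \|g\|_{L^\infty}\|h\|_{\dot B^{\sigma}_{2,1}}+\|g\|_{\dot B^{\sigma}_{2,1}}\|h\|_{L^\infty},\qquad \sigma>0,
\]
together with the $\dot B^{n/2}_{2,1}$-type variant from Lemma \ref{elemantaryestbesov}. This is applied with $h=\partial_i f$ and $g\in\{G-2,\ e^{\i\theta}-1,\ (G-2)(e^{\i\theta}-1)\}$.

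The main obstacle is the term $\|g\|_{\dot B^{\sigma}_{2,1}}\|\partial_i f\|_{L^\infty}$. The factor $\|\partial_i f\|_{L^\infty}$ is not controlled by $\|u\|_{\dot B^{n/2}_Q}$; it needs $\dot B^{n/2+1}$. To avoid this loss, we instead use the asymmetric paraproduct estimate
\[
\|g\,\partial_i f\|_{\dot B^{\sigma}_{2,1}}\lesssim \|g\|_{\dot B^{n/2}_{2,1}}\|\partial_i f\|_{\dot B^{\sigma}_{2,1}}+\|g\|_{\dot B^{\sigma+1}_{2,1}}\|\partial_i f\|_{\dot B^{n/2-1}_{2,1}} ,
\]
which is the Besov analogue of Corollary \ref{AlgebraF}(5) and (6). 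It follows from the same frequency-by-frequency bookkeeping, using $\|\Delta_{k_1}g\,\Delta_{k_2}h\|_{L^2}\lesssim 2^{\frac n2\min(k_1,k_2)}\|\Delta_{k_1}g\|_{L^2}\|\Delta_{k_2}h\|_{L^2}$ and summing as in that proof. It needs $\sigma>-\frac n2$ (or at least $\sigma+\frac n2-1>0$), which holds for $\sigma\ge \frac{n-2}{2}$ because $n\ge 3$.

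Inserting the bounds from Step 1 gives
\[
\|V_i\|_{\dot B^{\sigma}_{2,1}}\lesssim C\big(1+\|u\|_{\dot B^{n/2}_Q}\big)\,\|u\|_{\dot B^{\sigma+1}_Q}.
\]
The factor $\|u\|_{\dot B^{n/2}_Q}$ can be absorbed into $C(\|f\|_{L^\infty})$ when it is bounded, which is the case under our smallness and local-existence assumptions (alternatively, it can be absorbed into the constant). This yields \eqref{cmcm1}. For $\sigma+1<\frac n2$, Lemma \ref{thetainteg} is not directly available. In that range we estimate $\theta$ via the $\dot B^{n/2}$ norm in $L^\infty$ only, using $\|g\|_{\dot B^{n/2}_{2,1}}$ in the first term of the paraproduct, and we handle the second term by interpolating with the $\dot B^{n/2}$ bound.

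\textbf{Step 3 (consequences).} The $t=0$ statement follows because $\|f(0)\|_{L^\infty}\lesssim\|u_0\|_{\dot B^{n/2}_Q}\le\epsilon$. For the inhomogeneous bound \eqref{cmcm} we add the low-frequency $L^2$ norm. Since $|G|,|e^{\i\theta}|\lesssim1$, we have
\[
\|V_i\|_{L^2}\lesssim\|\nabla f\|_{L^2}\lesssim \|u\|_{B^{1}_Q}\le\|u\|_{B^{\sigma+1}_Q},
\]
and combining this with the homogeneous bound gives \eqref{cmcm}, with \eqref{theta2} ensuring that every quantity involved is finite.
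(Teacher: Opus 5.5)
Your argument is correct, but your Step 2 takes a different route from the paper's, and you have misread the tool the paper uses. Lemma \ref{elemantaryestbesov}, i.e.\ \eqref{adad}, is not a $\dot B^{n/2}_{2,1}$-type estimate. It reads $\|g\,\partial_i f\|_{\dot B^{\sigma}_{2,1}}\lesssim \|f\|_{L^\infty}\|g\|_{\dot B^{\sigma+1}_{2,1}}+\|g\|_{L^\infty}\|f\|_{\dot B^{\sigma+1}_{2,1}}$, and it removes exactly the obstacle you single out. In the interaction where $\partial_i f$ sits at low frequency, one bounds $\|\Delta_{\le j-C}\partial_i f\|_{L^\infty}\lesssim 2^{j}\|f\|_{L^\infty}$ and hands the factor $2^j$ to $\Delta_j g$, so $\|\partial_i f\|_{L^\infty}$ never appears. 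With this lemma the paper's proof is short. It writes $\frac{1}{1+|f|^2}=1-\frac{|f|^2}{1+|f|^2}$ and $F=\frac{1}{1+|f|^2}(e^{\i\theta}-1)$. It then applies \eqref{adad} to $\frac{|f|^2}{1+|f|^2}\partial_i f$ and to $F\partial_i f$; both coefficients are bounded in $L^\infty$. Finally it bounds $\|F\|_{\dot B^{\sigma+1}_{2,1}}$ by product and composition estimates together with \eqref{theta1}.

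Your asymmetric paraproduct, the Besov analogue of Corollary \ref{AlgebraF}(5), is also valid here, and your dyadic bookkeeping is right. The cost is an extra factor $1+\|u(\cdot,t)\|_{\dot B^{n/2}_Q}$, so at a general time $t$ you do not literally obtain a constant depending only on $\|f(\cdot,t)\|_{L^\infty}$. This does no harm. The proposition is only used at $t=0$ (Lemmas \ref{smallnessofvx} and \ref{higherregularityfcontrolofv}), where $\|u_0\|_{\dot B^{n/2}_Q}\le\epsilon$, or qualitatively (Lemma \ref{continuityofL}). Moreover, the paper's own bound for $e^{\i\theta}-1$ in $\dot B^{\sigma+1}_{2,1}$ goes through Lemma \ref{Besovstuff}(1) applied to $\cos\theta-1$ and $\sin\theta$. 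That constant depends on $\|\theta\|_{L^\infty}\lesssim\|\theta\|_{\dot B^{n/2}_{2,1}}\lesssim C(\|f\|_{L^\infty})\|u\|_{\dot B^{n/2}_Q}$, so both routes end up with the same kind of dependence.

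Two smaller points. Your paragraph on the range $\sigma+1<\frac n2$ is vacuous, since $\sigma\ge\frac{n-2}{2}$ forces $\sigma+1\ge\frac n2$. Your inhomogeneous step, which adds $\|V_i\|_{L^2}\lesssim\|\nabla f\|_{L^2}\lesssim\|u\|_{B^{1}_Q}$, is fine and more explicit than the paper's ``follows almost verbatim''.
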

\begin{proof}
We prove \eqref{cmcm1} since the proof of \eqref{cmcm} follows almost verbatim. 
Using the identity \eqref{eqq4} we know that for each $i \in \{ 1,...,n\}$ we have the equality 
$$
V_{i} = \frac{2}{1+ \vert f \vert^{2}} e^{\i \theta} \partial_{i} f 
$$
where $ f = L(u)$ and $L: \S^{2} \to \C$ is the stereographic projection. Fix $t \in [-T,T]$. Then for better readability, we omit writing $t$ in the argument. 
In view of Lemma \ref{Besovstuff}, and  \eqref{theta1} it suffices to prove 
\begin{equation}\label{modvu1}
\begin{split}
& \Vert (\frac{1}{1+ \vert f \vert^{2}} e^{\i \theta} \partial_{i} f)  \Vert_{\dot{B}^{\sigma}_{2,1}} \lesssim C ( \Vert f(\cdot,t) \Vert_{L^{\infty}}) \left( \Vert f \Vert_{\dot{B}^{\sigma+1}_{2,1}} + \Vert \theta \Vert_{\dot{B}^{\sigma+1}_{2,1}} \right).
\end{split}
\end{equation}
To that end,
set $ F : = \frac{1}{1+|f|^{2}}( e^{\i \theta} -1)$. Then we have
\begin{equation}\label{fgfg1}
\begin{split}
& \Vert V_{i} \Vert_{\dot{B}^{\sigma}_{2,1}} \lesssim \Vert \frac{1}{1+|f|^{2}} \partial_{i} f \Vert_{\dot{B}^{\sigma}_{2,1}} + \Vert F \partial_{i} f \Vert_{\dot{B}^{\sigma}_{2,1}}.
\end{split}
\end{equation}
We estimate each term on the right hand side of \eqref{fgfg1}. For first term, using the algebra properties of Besov spaces, we have
\begin{equation*}
\begin{split}
& \Vert \frac{1}{1+|f|^{2}} \partial_{i} f \Vert_{\dot{B}^{\sigma}_{2,1}}
\\
& \lesssim \Vert f \Vert_{\dot{B}^{\sigma +1}_{2,1}} + \Vert \frac{|f|^{2}}{1+|f|^{2}} \partial_{i} f \Vert_{\dot{B}^{\sigma}_{2,1}}
\\
& \overset{\eqref{Besovalgest1}, \eqref{adad}}{\lesssim}C (\Vert f( \cdot,t) \Vert_{L^{\infty}}) \Vert f \Vert_{\dot{B}^{\sigma +1}_{2,1}}.
\end{split}
\end{equation*}
Similarly, for the second term on the right hand side of \eqref{fgfg1} we have  
\begin{equation*}
\begin{split}
& \Vert F \partial_{i} f \Vert_{\dot{B}^{\sigma}_{2,1}}
\\
& \overset{\eqref{adad}}{\lesssim} \Vert f \Vert_{\dot{B}^{\sigma+1}_{2,1}} + \Vert F \Vert_{\dot{B}^{\sigma+1}_{2,1}}
\end{split}
\end{equation*}
and using the definition of $F$ and the algebra properties of Besov spaces and Lemma \ref{Besovstuff}, we obtain
$$
\Vert F \Vert_{\dot{B}^{\sigma+1}_{2,1}} \lesssim C( \Vert f(\cdot,t) \Vert_{L^{\infty}}) \left( \Vert f \Vert_{\dot{B}^{\sigma+1}_{2,1}} + \Vert \theta  \Vert_{\dot{B}^{\sigma +1}_{2,1}} \right).
$$
This completes the estimate for the second term on the right hand side of \eqref{fgfg1} and hence completes the proof.

\end{proof}

We close this section with integrability properties of $e_{i}$'s. Notice that, locally, $e_{i}$'s are as smooth as $f$. However, in the nonlinear estimates we will need identities of the form $ e_{i} = \sum_{k \in \Z} \Delta_{k} e_{i} $. For example, if $ e_{i} \in C( [-T,T] : L^{2}) $ then such an identity holds. However, we clearly have $ e_{i} \not\in L^{2}$ for any time $t \in [-T,T]$. This is because $ e_{i}$ maps into $ \S^{2}$. With that being said, the next lemma shows the existence of vectors $ Q_{i}$ so that $ e_{i} - Q_{i} \in C([-T,T]: L^{2})$.  
\begin{lemma}\label{eintegra}
Let $ e_{i} : \R^{n+1} \to \S^{2}$ be as above. Then there exists $ Q_{i} \in \S^{2}$ so that $ e_{i} - Q_{i} \in C(  [-T,T]: B^{\sigma}_{2,1})$ 
\end{lemma}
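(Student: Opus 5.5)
We will write $e_i$ explicitly in terms of $f=L(u)$ and $\theta$, then read off the constant vectors $Q_i$ from the values at $f=0$, $\theta=0$. Since $Q=(0,0,1)$ and $L$ is the stereographic projection (say from the south pole, so that $L(Q)=0$), the map $L^{-1}(f)$ is an explicit rational function of $f_1,f_2$. Hence $\partial_{f_i}L^{-1}(f)/|\partial_{f_i}L^{-1}(f)|=:E_i(f)$ is a smooth function of $(f_1,f_2)$ on the region $|f|\lesssim 1$; this region contains the range of $f$ because $|u-Q|\le \frac12$ on $[-T,T]$. Set $Q'_i:=E_i(0)$, which are the standard vectors $(1,0,0)$ and $(0,1,0)$. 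We then take $Q_1:=Q'_1$ and $Q_2:=Q'_2$, and write
\[
e_1-Q_1=\cos\theta\,(e'_1-Q'_1)-\sin\theta\,(e'_2-Q'_2)+(\cos\theta-1)Q'_1-\sin\theta\, Q'_2 ,
\]
with the analogous expression for $e_2$.

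The first step is to show that $e'_i-Q'_i=E_i(f)-E_i(0)\in B^{\sigma}_{2,1}$ for each fixed $t$, with norm bounded by $C(\|f\|_{L^\infty})\|f\|_{B^{\sigma}_{2,1}}$. Since $E_i-E_i(0)$ is smooth and vanishes at $0$, this follows from the composition estimate Lemma~\ref{Besovstuff}, applied in the inhomogeneous version. We also need $f\in B^{\sigma}_{2,1}$, and this holds because $f=L(u)$ with $u-Q\in B^{\sigma}_Q$, again by Lemma~\ref{Besovstuff} applied to the smooth map $L$ near $Q$.

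The second step uses \eqref{theta2}, which gives $\theta(t)\in B^{\sigma}_{2,1}$. Then $\sin\theta$ and $\cos\theta-1$ are smooth functions of $\theta$ vanishing at $0$, so Lemma~\ref{Besovstuff} places them in $B^{\sigma}_{2,1}$. The products $\cos\theta\,(e'_1-Q'_1)=(e'_1-Q'_1)+(\cos\theta-1)(e'_1-Q'_1)$ and $\sin\theta\,(e'_2-Q'_2)$ are handled by the algebra property of $B^{\sigma}_{2,1}$ for $\sigma>n/2$, which is recalled in Appendix~\ref{s:elementary}. This gives $e_i(t)-Q_i\in B^{\sigma}_{2,1}$ for every $t\in[-T,T]$.

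The main obstacle is continuity in time, not mere boundedness. The inputs satisfy $u\in C([-T,T];B^{\sigma}_Q)$, which we take from Proposition~\ref{localexistencethm}; if only $L^\infty$ in time is available there, we would use a slightly smaller $\sigma$ and interpolate. The plan is to use the Lipschitz form of the composition and product estimates: $\|G(a)-G(b)\|_{B^{\sigma}_{2,1}}\le C(\|a\|_{B^{\sigma}_{2,1}},\|b\|_{B^{\sigma}_{2,1}})\|a-b\|_{B^{\sigma}_{2,1}}$ for smooth $G$ with $G(0)=0$. We obtain this by writing $G(a)-G(b)=(a-b)\int_0^1G'(b+\lambda(a-b))\,d\lambda$ and applying the algebra property to the product. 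This reduces continuity of $e_i-Q_i$ to continuity of $f$ and of $\theta$ in $B^{\sigma}_{2,1}$. Continuity of $f$ follows the same way from continuity of $u$. For $\theta$, linearity of $\Delta^{-1}\partial_i$ allows us to repeat the estimates of Lemma~\ref{thetainteg} with $f(t)-f(t')$ in place of $f$, and so obtain continuity. The low-frequency part is handled by the Riesz-potential bound in that lemma, using $L^n$ and $\dot H^1$ continuity of $f$, both of which follow from the $B^{\sigma}_{2,1}$ continuity.
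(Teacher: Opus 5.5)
Your argument is correct and is essentially the paper's proof. It uses the same choice $Q_1=(1,0,0)$, $Q_2=(0,1,0)$, the same splitting $e_1-Q_1=\cos\theta\,(e'_1-Q_1)+(\cos\theta-1)Q_1-\sin\theta\,(e'_2-Q_2)-\sin\theta\,Q_2$, and the same composition and algebra properties of $B^{\sigma}_{2,1}$, together with $\theta(t)\in B^{\sigma}_{2,1}$ from Lemma~\ref{thetainteg}.

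Your explicit treatment of time continuity, via the Lipschitz form of the composition estimate, is more careful than the paper, which simply asserts it. Your caveat is also warranted: Proposition~\ref{localexistencethm} only gives $u\in C([-T,T];B^{q}_{Q})$ for $q$ strictly below the top regularity.
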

\begin{proof}
Set $ Q_{1} : = (1,0,0) $ and $ Q_{2} : = (0,1,0)$. We first show that $ e'_{i} - Q_{i} \in C(  [-T,T]: B^{\sigma}_{2,1})$. Indeed, this follows directly from the definition of $ e'_{i}$. We do the case $i=1$ , the case $ i=2$ is identical. By definition of $ e'_{1}$ we have 
$$
e'_{1} = \frac{1}{1+ |f|^{2}} ( 1+ |f|^{2} - 2 f^{2}_{1} , -2f_{1} f_{2} , -2 f_{1} ),
$$
then  
$$
e'_{1} - Q_{1} = \frac{1}{1+ |f|^{2}} (  - 2 f^{2}_{1} , -2f_{1} f_{2} , -2f_{1})
$$
and this is clearly in $C ([-T,T]:B^{\sigma}_{2,1})$ by the algebra properties of Besov spaces. Next, we have from Lemma \ref{thetainteg} that  $ \theta \in C([-T,T] : B^{\sigma}_{2,1})$.  Now by definition of $ e_{1}$ we have
\begin{equation}\label{stiii}
\begin{split}
& e_{1} -Q_{1} = cos(\theta) e'_{1} - sin(\theta)e'_{2} - Q_{1}
\\
& = cos(\theta) ( e'_{1} - Q_{1} ) + ( cos(\theta) - 1) Q_{1} - sin(\theta) (e'_{2} - Q_{2}) - sin(\theta) Q_{2}
\end{split}
\end{equation}
since $ e'_{i} -Q_{i} \in C( [-T,T] : B^{\sigma}_{2,1})$ and $ \theta \in C([-T,T]: B^{\sigma}_{2,1} )$. Then
each term on the right hand side of \eqref{stiii} is in $ C ( [-T,T]: {B}^{\sigma}_{2,1})$ which readily implies that $ e_{1} -Q_{1} \in C( [-T,T]:B^{\sigma}_{2,1} )$. The same proof yields that $ e_{2} -Q_{2} \in C([-T,T]:B^{\sigma}_{2,1} )$.
\end{proof}
One consequence of the above lemma is that whenever we have a term of the form $ e_{i}(x) -e_{i}(y)$ then we can replace it with $ |\nabla|^{-1} ( |\nabla| e_{i}(x) - |\nabla|e_{i}(y))$, which then allows us to measure such certain commutators in the resolution spaces $F^{\sigma}$.

\section{Nonlinear estimates I }
Throughout this section let $u , V_{i}$, $ e_{1},e_{2}$, and $T> 0$ be as in Proposition \ref{modifiedsschrodeq}.

In this section we prove the nonlinear estimates for the first three terms on the right hand side of \eqref{recudedSchro}.  First we prove the following abstract trilinear estimate 
\begin{proposition}\label{trilineares1}
Let $ n\geq 3 $, $s \in (1/2,1), \sigma \geq \frac{n-2}{2}$ and $ f,g,h \in F^{\frac{n-2}{2}}$. Then define for any $i \in \{ 1,...,n \}$ the following operator
\begin{equation}\label{trilinearoperator1}
\begin{split}
& T^{i}_{1}(f,g,h)(x,t)  
\\
& : = \int_{\R^{n}} \frac{ ( |\nabla|^{-1}h(x,t) - | \nabla|^{-1}h(y,t))( |\nabla |^{-1} g(x,t) - |\nabla|^{-1}g(y,t) ) f(y,t)  (x-y)_{i}}{ | x-y|^{n+2s}} dy
\end{split} 
\end{equation}
then we have
\begin{equation}\label{trlnes1}
\begin{split}
& \Vert \ind_{[-T',T']} \partial_{j} T_{1}^{i} ( \tilde{f},\tilde{g},\tilde{h}) \Vert_{N^{\sigma}}
\\
& \lesssim \Vert f \Vert_{F^{\sigma}} \Vert g \Vert_{F^{\frac{n-2}{2}}} \Vert h \Vert_{F^{\frac{n-2}{2}}}
\\
& + \Vert f \Vert_{F^{\frac{n-2}{2}}} \Vert g \Vert_{F^{\sigma}} \Vert h \Vert_{F^{\frac{n-2}{2}}}
\\
& + \Vert f \Vert_{F^{\frac{n-2}{2}}} \Vert g \Vert_{F^{\frac{n-2}{2}}} \Vert h \Vert_{F^{\sigma}}
\end{split}
\end{equation}
for any $ j \in \{1,..,n \}$ and any $T' \in [0,\infty]$. Here $ \tilde{f} \in \{ f, \bar{f} \}$.
\end{proposition}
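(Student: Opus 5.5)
We will reduce the trilinear operator to a sum of products of frequency-localized pieces to which the bilinear machinery of \Cref{s:algebraprops} applies, and then close with the $L^1_eL^2$ structure of $N^\sigma$.

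\textbf{Step 1: algebraic reduction.} Set $H=|\nabla|^{-1}h$ and $G=|\nabla|^{-1}g$. Expanding the two differences gives
\[
T_1^i = HG\,K_i(f) - H\,K_i(Gf) - G\,K_i(Hf) + K_i(HGf),
\qquad K_i(\phi)(x)=\int \frac{\phi(y)(x-y)_i}{|x-y|^{n+2s}}\,dy .
\]
Here $K_i$ is, up to a constant, $\nabla_i^{2s-1}$, i.e.\ the Fourier multiplier with symbol $\i|\xi|^{2s-2}\xi_i$. Each term is individually singular at low frequency, so we do not estimate them separately. Instead $T_1^i$ should be viewed as a double commutator $[[K_i,H],G](f)$. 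In frequency variables it is a trilinear multiplier with symbol
\[
m(\xi,\eta,\zeta)=\mathcal{K}(\xi+\eta+\zeta)-\mathcal{K}(\xi+\eta)-\mathcal{K}(\xi+\zeta)+\mathcal{K}(\xi),
\]
where $\xi,\eta,\zeta$ are the frequencies of $f,H,G$ and $\mathcal{K}$ is the symbol of $K_i$.

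We Littlewood--Paley decompose $f,g,h$ at frequencies $2^{k_1},2^{k_2},2^{k_3}$. A double Taylor expansion (\Cref{s:taylor}) in $\eta,\zeta$ around $\xi$, done when $k_2,k_3\le k_1$, writes the localized symbol as
\[
2^{k_1(2s-3)}\,2^{k_2}2^{k_3}\, m_1(\xi-\eta)\,m_2(\kappa),
\]
with $\|\mathcal{F}^{-1}m_\ell\|_{L^1}\lesssim 1$. This is exactly the form covered by \eqref{multimultiplier}. The factors $2^{k_2},2^{k_3}$ cancel the $|\nabla|^{-1}$ on $g,h$. In the other regimes, for instance $k_2\ge k_1$, one uses only a single difference and the expansion is taken around the largest frequency.

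\textbf{Step 2: estimating the pieces.} After the extra $\partial_j$, each localized piece is, up to harmless multipliers, $2^{k(2s-2)}2^{k}\cdot(\text{product of } f_{k_1}, g_{k_2}, h_{k_3})$ at output frequency $2^k$. There is an effective loss of $2s-1$ derivatives relative to a pure product; this loss is exactly what the $N^\sigma$ norm gains through \eqref{passingtol1l2}, namely $2^{-k(2s-1)/2}$ on the output.

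The plan is then to bound $\|\ind_{[-T',T']}\,\cdot\,\|_{N^\sigma}$ via \eqref{passingtol1l2} by
\[
2^{-k(2s-1)/2}\,\|\cdot\|_{L^1_eL^2_{e^\perp,t}},
\]
and place the factors using Hölder in mixed norms:
\begin{itemize}
\item the highest-frequency factor in $L^\infty_eL^2$ via the local smoothing estimate \eqref{local_smoothing}, which gains $2^{-k_{\max}(2s-1)/2}$;
\item the other two factors in $L^2_eL^\infty$ via the maximal estimate \eqref{maximal_Estimate}, which costs $2^{k(n-1)/2}$ each.
\end{itemize}
The two $2^{-(2s-1)/2}$ gains together absorb the $2s-1$ loss. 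The remaining powers give $2^{k_\ell (n-1)/2}\cdot 2^{-k_\ell/2}\cdot$(stuff), i.e.\ the $F^{\frac{n-2}{2}}$ scaling on the two low factors and $F^\sigma$ on the high one, with a summable gap factor. Summation over $k,k_1,k_2,k_3$ mirrors the proof of \Cref{AlgebraF}. The cutoff $\ind_{[-T',T']}$ is harmless because all norms used are $L^p_eL^q_{e^\perp,t}$ type.

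\textbf{Main obstacle.} The difficult regime is high$\times$high$\to$low output, $k\ll k_1\approx k_{\max}$. Here the output weight $2^{-k(2s-1)/2}$ is bad and the low-frequency divergence of $|\nabla|^{-1}$ also matters. The local smoothing gain lives at frequency $2^{k_1}$, not at $2^k$, so one must check that the Taylor gain in Step 1 leaves enough decay. If it does not, the fallback is the cruder route: use the $L^2$ bounds \eqref{energy} together with \eqref{Q5}-type modulation arguments and the $X_k$ norm directly, trading $2^{j/2}$ for $2^{nk/2}$ as in the proof of \Cref{Bilin1}. The requirement $\sigma\ge\frac{n-2}{2}$ enters precisely in making this high-high summation converge.
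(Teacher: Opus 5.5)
Your double Taylor expansion is sound where it applies. For $k_2,k_3\le k_1-C$, the mixed second difference of the symbol, expanded around the frequency of $f$, has size $2^{k_1(2s-3)}2^{k_2+k_3}$ in tensor-product form. With $\partial_j$, \eqref{passingtol1l2}, local smoothing on $f_{k_1}$ and maximal bounds on $g_{k_2},h_{k_3}$, it closes with surplus $2^{(k_2+k_3)/2-k_1}$. That is cleaner than the paper's five-line regrouping in its Case 1c.

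The gap is in the other regimes, starting with the one you flag and leave open. When all three frequencies are comparable, or when $g,h$ share the top frequency and $f$ is lower, there is no separation of frequencies to expand in. So ``checking that the Taylor gain leaves enough decay'' has nothing to check. Your fallback also does not close. Going through $L^2_{t,x}$ bounds, as in the $X_k$/modulation argument, gives up the factor $2^{-k(2s-1)/2}$ of \eqref{passingtol1l2} and forces one factor into $L^\infty_{t,x}$ instead of $L^2_eL^\infty_{e^\perp,t}$. Already for $k=k_1-C$, the term $H\,\nabla^{2s-1}_i(fG)$ then loses a factor of order $2^{sk_1}$.

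What closes these cases is the outer $\partial_j$. It contributes $2^k$ at the output. Meanwhile the two factors $|\nabla|^{-1}$, measured by smoothing and maximal bounds, leave a surplus of at least $2^{-k_{\max}}$ over the $F^{\frac{n-2}{2}}$ scaling. Hence each of the four terms can be estimated \emph{separately} via \eqref{passingtol1l2}, with summable factor $2^{(k-k_{\max})(3-2s)/2}$. This includes the terms where $\nabla^{2s-1}_i$ falls on a factor at frequency $2^{k_{\max}}$; there the symbol is of size $2^{k_{\max}(2s-1)}$, not $2^{k(2s-1)}$ as your Step~2 asserts. This is how the paper treats Cases 1a and 2a. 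So your reason for never splitting the four terms is a misdiagnosis: after dyadic localization they are individually harmless, and grouping is needed only to recover derivatives when frequencies separate.

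In these cases the output cone direction $e$ is also unrelated to the cone direction $e'$ of the top factor, so your single-direction H\"older step is unavailable. One pairs $L^\infty_{e'}L^2_{e'^\perp,t}$ with $L^2_{e'}L^\infty_{e'^\perp,t}$ into $L^2_{t,x}$ and places the third factor in $L^2_eL^\infty_{e^\perp,t}$.

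The unbalanced case $k_3\le k_1-C$, $k_2\approx k_1$ (the paper's Case 1b) is not handled by ``a single difference expanded around the largest frequency'' either. Write $\xi_1,\xi_2,\xi_3$ for the frequencies of $f,G,H$. A gain in $\xi_3$ is indispensable here: the term $GH\,\nabla^{2s-1}_if$ alone loses $2^{(k_1-k_3)/2}$ when $k\approx k_1$. The single difference splits as $[\mathcal{K}(\xi_1+\xi_2+\xi_3)-\mathcal{K}(\xi_1+\xi_2)]-[\mathcal{K}(\xi_1+\xi_3)-\mathcal{K}(\xi_1)]$. The first bracket must be expanded around $\xi_1+\xi_2$, the frequency of $F=fG$, which is not bounded below; for $|\xi_1+\xi_2|\lesssim 2^{k_3}$ the expansion breaks down.

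The missing idea is the paper's regrouping $T^i_1=[\nabla^{2s-1}_i,H](F)-G\,[\nabla^{2s-1}_i,H](f)$, together with the splitting $F=\Delta_{\le k_3+C}F+\sum_{l>k_3+C}\Delta_lF$:
\begin{itemize}
\item The low piece produces output frequency $\lesssim 2^{k_3}$, so $\nabla^{2s-1}_i$ costs only $2^{k_3(2s-1)}$ and the piece is estimated directly.
\item Each $\Delta_lF$ gets the commutator gain $2^{l(2s-2)}2^{k_3}$ from Lemma~\ref{Taylorstuff2} and contributes $2^{(l-k_1)(2s-1)/2}$, which is summable in $l$.
\item The second commutator $G\,[\nabla^{2s-1}_i,H](f)$ is expanded around $\xi_1$.
\end{itemize}
Likewise, for $k_3\ll k_1\ll k_2$ the commutator $G\,[\nabla^{2s-1}_i,H](f)$ is expanded around the middle frequency $2^{k_1}$, not the largest. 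Finally, $\sigma\ge\frac{n-2}{2}$ plays no role in these summations; only a lower bound such as $\sigma\ge 0$ is used.
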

\begin{proof}[Proof of Proposition \ref{trilineares1}]
To prove \eqref{trlnes1} it suffices to prove the following 
\begin{equation}\label{trl1}
\begin{split}
&  \sum_{k \in \Z} 2^{\sigma k} \Vert \Delta_{k}( \i \partial_{t} - ( - \Delta)^{s} + \i)^{-1} \ind_{[-T',T']} \partial_{j} T_{1}^{i}( \tilde{f}_{k_{1}} , \tilde{g}_{k_{2}} , \tilde{h}_{k_{3}}) \Vert_{Z_{k}}
\\
& \lesssim  2^{ \sigma k_{1}}\Vert f_{k_{1}} \Vert_{Z_{k_{1}}}  2^{ \frac{n-2}{2} k_{2}}\Vert g_{k_{2}} \Vert_{Z_{k_{2}}}  2^{ \frac{n-2}{2} k_{3}}\Vert h_{k_{3}} \Vert_{Z_{k_{3}}}
\\
& + 2^{ \frac{n-2}{2} k_{1}}\Vert f_{k_{1}} \Vert_{Z_{k_{1}}}  2^{ \sigma k_{2}}\Vert g_{k_{2}} \Vert_{Z_{k_{2}}}  2^{ \frac{n-2}{2} k_{3}}\Vert h_{k_{3}} \Vert_{Z_{k_{3}}}
\\
& + 2^{ \frac{n-2}{2} k_{1}}\Vert f_{k_{1}} \Vert_{Z_{k_{1}}}  2^{ \frac{n-2}{2} k_{2}}\Vert g_{k_{2}} \Vert_{Z_{k_{2}}}  2^{ \sigma k_{3}}\Vert h_{k_{3}} \Vert_{Z_{k_{3}}}
\end{split}
\end{equation}
with the understanding that $ f_{k} : = \Delta_{k} f$. To prove \eqref{trl1} we need to consider several cases, depending on which function has the highest frequency. 
\\
\textbf{Case 1:} If $ \max\{ k_{1},k_{2},k_{3} \} = k_{1}$. In view of \eqref{multilpierlemma} we may assume that $ f_{k_{1}}$ has Fourier transform supported in $ \{ \langle \xi, e' \rangle \geq 2^{k_{1}-c} \}$ for some $ e' \in \mathscr{E}$. Then we consider three sub-cases
\\
\textbf{Case 1a:} if $2^{\min\{ k_{2},k_{3} \} } \approx 2^{\max\{ k_{2},k_{3} \} } \approx 2^{k_{1}}$ then to prove \eqref{trl1} it suffices to prove the following 
\begin{equation}\label{trl2}
\begin{split}
& \sum_{k \leq k_{1} +C} 2^{k \sigma} \Vert \Delta_{k} ( \i \partial_{t} - ( - \Delta)^{s} + \i)^{-1} \ind_{[-T',T']}\partial_{j} T_{1}^{i}( \tilde{f}_{k_{1}} , \tilde{g}_{k_{2}} , \tilde{h}_{k_{3}}) \Vert_{Z_{k}}
\\
& \lesssim 2^{ \sigma k_{1}}\Vert f_{k_{1}} \Vert_{Z_{k_{1}}}     2^{ \frac{n-2}{2} k_{2}}\Vert g_{k_{2}} \Vert_{Z_{k_{2}}}  2^{ \frac{n-2}{2} k_{3}}\Vert h_{k_{3}} \Vert_{Z_{k_{3}}}.
\end{split}
\end{equation}
To that end, notice that $T_{1}^{i}( \tilde{f}_{k_{1}},\tilde{g}_{k_{2}},\tilde{h}_{k_{3}})$ can be written as follows
\begin{equation}\label{trl3}
\begin{split}
& T_{1}^{i}(\tilde{f}_{k_{1}}, \tilde{g}_{k_{2}},\tilde{h}_{k_{3}})(x,t)
\\
& = \nabla^{2s-1}_{i} ( \tilde{f}_{k_{1}} | \nabla |^{-1} ( \tilde{g}_{k_{2}}) | \nabla |^{-1}( \tilde{h}_{k_{3}} ) )
\\
& - |\nabla|^{-1}\tilde{h}_{k_{3}}\nabla^{2s-1}_{i} ( \tilde{f}_{k_{1}} |\nabla|^{-1} \tilde{g}_{k_{2}} )
\\
& - | \nabla |^{-1} \tilde{g}_{k_{2}} \nabla^{2s-1}_{i} ( \tilde{f}_{k_{1}} |\nabla|^{-1} \tilde{h}_{k_{3}}) 
\\
& + |\nabla|^{-1} \tilde{g}_{k_{2}} |\nabla|^{-1}\tilde{h}_{k_{3}} \nabla^{2s-1}_{i} ( \tilde{f}_{k_{1}}). 
\end{split}
\end{equation}
This follows directly from Lemma \ref{pointwiseidentitycomm}. 
We estimate each term on the right hand side of \eqref{trl3}. By \eqref{multilpierlemma} we may assume $ T^{i}_{1} ( \tilde{f}_{k_{1}}, \tilde{g}_{k_{2}},\tilde{h}_{k_{3}})$ has Fourier transform supported in $ \{ \xi : \langle \xi, e \rangle \geq 2^{k -c} \}$ for some $ e \in \mathscr{E}$.  Then we start with the first term on the right hand side of \eqref{trl3}
\begin{equation*}
\begin{split}
& \Vert \Delta_{k} ( \i \partial_{t} - ( - \Delta)^{s} + \i)^{-1} \ind_{[-T',T']} \partial_{j} 
\nabla^{2s-1}_{i} ( \tilde{f}_{k_{1}} | \nabla |^{-1} ( \tilde{g}_{k_{2}}) | \nabla |^{-1}( \tilde{h}_{k_{3}} ) )
\Vert_{Z_{k}}
\\
& \lesssim 2^{k} 2^{ -(2s-1)k/2} \Vert \nabla^{2s-1}_{i} ( \tilde{f}_{k_{1}} | \nabla |^{-1} ( \tilde{g}_{k_{2}}) | \nabla |^{-1}( \tilde{h}_{k_{3}} ) ) \Vert_{L^{1}_{e}L^{2}_{e^{\perp},t}}
\\
& \overset{\eqref{multiplierint}}{\lesssim} 2^{k} 2^{-(2s-1)k/2} 2^{k(2s-1)} \Vert \tilde{f}_{k_{1}} \Vert_{L^{\infty}_{e'} L^{2}_{e'^{\perp},t}} 2^{-k_{2}} \Vert \tilde{g}_{k_{2}} \Vert_{L^{2}_{e'}L^{\infty}_{e'^{\perp},t}} 2^{-k_{3}} \Vert \tilde{h}_{k_{3}} \Vert_{L^{2}_{e}L^{\infty}_{e^{\perp},t}}
\end{split}
\end{equation*}
applying the smoothing, and maximal estimates and using the assumption that $ 2^{k_{1}} \approx 2^{k_{2}} \approx 2^{k_{3}}$ we obtain
\begin{equation*}
\begin{split}
& \Vert \Delta_{k} ( \i \partial_{t} - ( - \Delta)^{s} + \i)^{-1} \ind_{[-T',T']}\partial_{j} 
\nabla^{2s-1}_{i} ( \tilde{f}_{k_{1}} | \nabla |^{-1} ( \tilde{g}_{k_{2}}) | \nabla |^{-1}( \tilde{h}_{k_{3}} ) )
\Vert_{Z_{k}}
\\
& \lesssim   \Vert f_{k_{1}} \Vert_{Z_{k_{1}}}  2^{k_{2} \frac{n-2}{2}} \Vert g_{k_{2}} \Vert_{Z_{k_{2}}} 2^{k_{3} \frac{n-2}{2}} \Vert h_{k_{3}} \Vert_{Z_{k_{3}}}.
\end{split}
\end{equation*}
Multiplying by $ 2^{\sigma k} $ and summing over $k \leq k_{1} +C$ yields \eqref{trl2}

The remaining terms on the right hand side of of \eqref{trl3} are handled in exactly the same way. Namely, we only use the smoothing and maximal inequalities, together with the assumption that $ 2^{k_{1}} \approx 2^{k_{2}} \approx 2^{k_{3}}$.
\\
\textbf{Case 1b:} If $ \min\{ k_{2},k_{3} \} \leq \max\{ k_{2},k_{3} \} - C  $ for sufficiently large $C$, and $ 2^{\max\{ k_{2},k_{3} \}} \approx 2^{k_{1}}$. Without loss of generality assume $ k_{3} = \min \{ k_{2},k_{3} \}$. In this case, we make use of the identity \eqref{trl3} again, to write
\begin{equation*}
\begin{split}
& T^{i}_{1} ( \tilde{f}_{k_{1}}, \tilde{g}_{k_{2}} , \tilde{h}_{k_{3}} ) 
\\
& = \nabla^{2s-1}_{i} ( \tilde{f}_{k_{1}} | \nabla |^{-1} ( \tilde{g}_{k_{2}}) | \nabla |^{-1}( \tilde{h}_{k_{3}} ) )  - |\nabla|^{-1}\tilde{h}_{k_{3}}\nabla^{2s-1}_{i} ( \tilde{f}_{k_{1}} |\nabla|^{-1} \tilde{g}_{k_{2}} )
\\
& + |\nabla|^{-1} \tilde{g}_{k_{2}} |\nabla|^{-1}\tilde{h}_{k_{3}} \nabla^{2s-1}_{i} ( \tilde{f}_{k_{1}})
- | \nabla |^{-1} \tilde{g}_{k_{2}} \nabla^{2s-1}_{i} ( \tilde{f}_{k_{1}} |\nabla|^{-1} \tilde{h}_{k_{3}}) 
\\
& : = I +II
\end{split}
\end{equation*}
\underline{Estimate for $II$:}
we estimate as follows 
\begin{equation}\label{trl4}
\begin{split}
& \Vert \Delta_{k} ( \i \partial_{t} - ( - \Delta)^{s} + \i )^{-1} \ind_{[-T',T']}\partial_{j} II \Vert_{Z_{k}}
\\
& \lesssim \max_{ e \in \mathscr{E} } 2^{k} 2^{-(2s-1)k/2} \Vert II \Vert_{L^{1}_{e} L^{2}_{e^{\perp},t} }
\\
& \lesssim 2^{k} 2^{-(2s-1)k/2} \max_{e \in \mathscr{E}} \Vert |\nabla|^{-1} \tilde{g}_{k_{2}} \Vert_{L^{2}_{e} L^{\infty}_{e^{\perp},t}} \Vert [\nabla^{2s-1}_{i} , |\nabla|^{-1} \tilde{h}_{k_{3}} ] ( \tilde{f}_{k_{1}}) \Vert_{L^{2}}.
\end{split}
\end{equation}
We need to estimate the last factor on the right hand side of \eqref{trl4}. To that end, we apply Lemma \ref{Taylorstuff2}, which is possible since $ s \in (1/2,1)$ and $ k_{1} \geq k_{3} +C$, to obtain
\begin{equation}\label{trl5}
\begin{split}
& \Vert [\nabla^{2s-1}_{i} , |\nabla|^{-1} \tilde{h}_{k_{3}} ] ( \tilde{f}_{k_{1}}) \Vert_{L^2}
\\
& \lesssim \sum_{ | \alpha | =1}^{\infty} \frac{ a_{|\alpha|}}{\alpha !} 2^{k_{1} ( 2s-1 - \vert \alpha \vert)} \Vert \tilde{f}_{k_{1}} D^{\alpha}( |\nabla|^{-1}\tilde{h}_{k_{3}} ) \Vert_{L^2}
\\
& \lesssim \sum_{ | \alpha | =1}^{\infty} \frac{ a_{| \alpha|}}{ \alpha !} 2^{k_{1} ( 2s-1 - | \alpha|)} \Vert \tilde{f}_{k_{1}} \Vert_{L^{\infty}_{e'} L^{2}_{e'^{\perp},t}} 2^{ | \alpha  | k_{3}} \Vert |\nabla|^{-1}\tilde{h}_{k_{3}} \Vert_{L^{2}_{e'} L^{\infty}_{e'^{\perp},t}}
\\
& \overset{\eqref{local_smoothing},\eqref{maximal_Estimate}}{\lesssim} 2^{-(2s-1)k_{1}/2} \Vert f_{k_{1}} \Vert_{Z_{k_{1}}} 2^{(n-1)k_{3}/2} 2^{-k_{3}} \Vert h_{k_{3}} \Vert_{Z_{k_{3}}} \sum_{| \alpha|=1}^{\infty} \frac{ a_{|\alpha|}}{\alpha !} 2^{k_{1} ( 2s-1 -|\alpha|)} 2^{k_{3} |\alpha|}
\\
& \lesssim 2^{-(2s-1)k_{1}/2} \Vert f_{k_{1}} \Vert_{Z_{k_{1}}} 2^{(n-1)k_{3}/2} 2^{-k_{3}} \Vert h_{k_{3}} \Vert_{Z_{k_{3}}}   2^{k_{3}} 2^{(2s-2)k_{1}}  \sum_{| \alpha|=1}^{\infty} \frac{2^{-C  | \alpha|} a_{|\alpha|}}{\alpha !} 
\\
& \overset{\eqref{qpppa}}{\lesssim} 2^{-(2s-1)k_{1}/2 } \Vert f_{k_{1}} \Vert_{Z_{k_{1}}} 2^{(n-1)k_{3}/2} 2^{-k_{3}} \Vert h_{k_{3}} \Vert_{Z_{k_{3}}} 2^{k_{3}} 2^{(2s-2)k_{1}}. 
\end{split}
\end{equation}
Plugging the bound \eqref{trl5} back into \eqref{trl4}, and using the maximal estimate for the norm of $g_{k_{2}}$, and the assumption that $ 2^{k_{3}} \lesssim 2^{k_{2}}$ we obtain  
\begin{equation*}
\begin{split}
& \Vert \Delta_{k} ( \i \partial_{t} - ( - \Delta)^{s} + \i )^{-1} \partial_{j} II \Vert_{Z_{k}}
\\
& \lesssim 2^{k} 2^{-(2s-1)k/2} 2^{-k_{1}(2s-1)/2}  2^{k_{1}(2s-2)} \Vert f_{k_{1}} \Vert_{Z_{k_{1}}}  2^{k_{2} \frac{n-2}{2}}\Vert g_{k_{2}} \Vert_{Z_{k_{2}}} 2^{k_{3} \frac{n-2}{2}} \Vert h_{k_{3}} \Vert_{Z_{k_{3}}}.
\end{split}
\end{equation*}
Multiplying by $ 2^{k \sigma}$ and summing over $ k \leq k_{1} +C$ gives \eqref{trl12}. \\
\underline{Estimate for $I$:}
Set $ F : = \tilde{f}_{k_{1}} | \nabla |^{-1} \tilde{g}_{k_{2}}$. Then we would like to use the same trick we used for $II$; however $ F $ may have frequency close to $ 2^{k_{3}}$ and hence the convergence in Lemma \ref{Taylorstuff2} may fail. To overcome this, we rewrite $ I $ as follows
\begin{equation}\label{trl6}
\begin{split}
& I = \nabla^{2s-1}_{i} ( F | \nabla |^{-1}( \tilde{h}_{k_{3}} ) )  - |\nabla|^{-1}\tilde{h}_{k_{3}}\nabla^{2s-1}_{i} ( F )
\\
& =  \nabla^{2s-1}_{i}( F_{ \leq k_{3} +C} | \nabla |^{-1}( \tilde{h}_{k_{3}} ) )  - |\nabla|^{-1}\tilde{h}_{k_{3}}\nabla^{2s-1}_{i} ( F_{\leq k_{3} +C} ) 
\\
& +\sum_{ k_{3} +C  < l \leq k_{1} +C } \nabla^{2s-1}_{i} ( F_{l} | \nabla |^{-1}( \tilde{h}_{k_{3}} ) )  - |\nabla|^{-1}\tilde{h}_{k_{3}}\nabla^{2s-1}_{i} ( F_{l} )
\end{split}
\end{equation}
 where $F_{l} : = \Delta_{l} (F)$. We begin by handling the first term on the right hand side of \eqref{trl6}
 \begin{equation}\label{trl7}
    \begin{split}
& \Vert \Delta_{k} ( \i\partial_{t} - ( - \Delta)^{s} + \i)^{-1} \ind_{[-T',T']}\partial_{j} \left( \nabla^{2s-1}_{i}( F_{\leq k_{3} +C} | \nabla |^{-1}( \tilde{h}_{k_{3}} ) )  - |\nabla|^{-1}\tilde{h}_{k_{3}}\nabla^{2s-1}_{i} ( F_{\leq k_{3} +C} ) \right) \Vert_{Z_{k}}
\\
& \lesssim 2^{k } 2^{-k(2s-1)/2}\max_{e \in \mathscr{E}} \Vert \left( \nabla^{2s-1}_{i}( F_{\leq k_{3} + C} | \nabla |^{-1}( \tilde{h}_{k_{3}} ) )  - |\nabla|^{-1}\tilde{h}_{k_{3}}\nabla^{2s-1}_{i} ( F_{\leq k_{3 } +C} ) \right) \Vert_{L^{1}_{e} L^{2} _{e^{\perp},t}}
\end{split}
 \end{equation}
 and for any $ e \in \mathscr{E}$ we have (recall that $F : = \tilde{f}_{k_{1}} | \nabla |^{-1} \tilde{g}_{k_{2}})$
 \begin{equation}\label{trl8}
\begin{split}
&  \Vert \left( \nabla^{2s-1}_{i}( F_{\leq k_{3} +C} | \nabla |^{-1}( \tilde{h}_{k_{3}} ) )  - |\nabla|^{-1}\tilde{h}_{k_{3}}\nabla^{2s-1}_{i} ( F_{\leq k_{3} +C} ) \right) \Vert_{L^{1}_{e} L^{2} _{e^{\perp},t}}
\\
& \overset{\eqref{multiplierint}}{\lesssim}2^{k_{3} (2s-1)} \Vert \tilde{f}_{k_{1}} \Vert_{L^{\infty}_{e'} L^{2}_{e'^{\perp},t}} \Vert | \nabla |^{-1} \tilde{g}_{k_{2}} \Vert_{L^{2}_{e'} L^{\infty}_{e'^{\perp},t}} \Vert | \nabla |^{-1} \tilde{h}_{k_{3}} \Vert_{L_{e}^{2} L^{\infty}_{e^{\perp},t}}
\\
&\overset{\eqref{local_smoothing}, \eqref{maximal_Estimate}}{\lesssim }2^{k_{3} (2s-1)}  2^{-(2s-1)k_{1}/2} 2^{-k_{1}/2} \Vert f_{k_{1}} \Vert_{Z_{k_{1}}} 2^{k_{2} \frac{n-2}{2}} \Vert g_{k_{2}} \Vert_{Z_{k_{2}}} \Vert 2^{k_{3} \frac{n-1}{2} -k_{3}} \Vert h_{k_{3}} \Vert_{Z_{k_{3}}}. 
\end{split}
 \end{equation}
We plug the bound \eqref{trl8} back into \eqref{trl7}, to obtain
\begin{equation}
\begin{split}
& \Vert \Delta_{k} ( \i\partial_{t} - ( - \Delta)^{s} + \i)^{-1} \ind_{[-T',T']}\partial_{j} \left( \nabla^{2s-1}_{i}( F_{\leq k_{3} + C} | \nabla |^{-1}( \tilde{h}_{k_{3}} ) )  - |\nabla|^{-1}\tilde{h}_{k_{3}}\nabla^{2s-1}_{i} ( F_{\leq k_{3} +C} ) \right) \Vert_{Z_{k}}
\\
& \lesssim 2^{k_{3} ( 2s-1) } 2^{-(2s-1)k/2} 2^{-(2s-1)k_{1}/2} \Vert f_{k_{1}} \Vert_{Z_{k_{1}}} 2^{k_{2} \frac{n-2}{2}} \Vert g_{k_{2}} \Vert_{Z_{k_{2}}} 2^{k_{3} \frac{n-2}{2}} \Vert h_{k_{3}} \Vert_{Z_{k_{3}}}.
\end{split}
\end{equation}
Multiplying by $ 2^{k \sigma}$ and summing over $ k \leq k_{3} + C$ yields the result. To conclude case 1b, we need to estimate the last sum on the right hand side of \eqref{trl6}. For this term we can use Lemma \ref{Taylorstuff2}. Indeed, fix $l \in [ k_{3} +C , k_{1} +C ] \cap \Z$, then we have
\begin{equation}\label{trl9}
\begin{split}
& \Vert \Delta_{k} ( \i \partial_{t} - ( - \Delta)^{s} + \i)^{-1} \ind_{[-T',T']} \partial_{j} \left( \nabla^{2s-1}_{i} ( F_{l} | \nabla |^{-1}( \tilde{h}_{k_{3}} ) )  - |\nabla|^{-1}\tilde{h}_{k_{3}}\nabla^{2s-1}_{i} ( F_{l} )\right) \Vert_{Z_{k}}
\\
& \lesssim 2^{l} 2^{-(2s-1)l/2} \max_{e \in \mathscr{E}} \Vert  \nabla^{2s-1}_{i} ( F_{l} | \nabla |^{-1}( \tilde{h}_{k_{3}} ) )  - |\nabla|^{-1}\tilde{h}_{k_{3}}\nabla^{2s-1}_{i} ( F_{l} )\Vert_{L^{1}_{e}L^{2}_{e^{\perp},t}}
\end{split}
\end{equation}
and notice that for any $ e \in \mathscr{E}$ we have by Lemma \ref{Taylorstuff2} 
\begin{equation}\label{trl10}
\begin{split}
& \Vert  \nabla^{2s-1}_{i} ( F_{l} | \nabla |^{-1}( \tilde{h}_{k_{3}} ) )  - |\nabla|^{-1}\tilde{h}_{k_{3}}\nabla^{2s-1}_{i} ( F_{l} )\Vert_{L^{1}_{e}L^{2}_{e^{\perp},t}}
\\
& \lesssim \sum_{ \vert \alpha \vert=1}^{\infty} \frac{a_{|\alpha|}}{\alpha !} 2^{l (2s-1 - | \alpha|)} \Vert F_{l} D^{\alpha} |\nabla |^{-1}\tilde{h}_{k_{3}} \Vert_{L^{1}_{e} L^{2}_{e^{\perp},t}}
\\
& \lesssim \sum_{ \vert \alpha \vert=1}^{\infty} \frac{a_{|\alpha|}}{\alpha !} 2^{l (2s-1 - | \alpha|)} 2^{k_{3} \vert \alpha \vert} \Vert  \vert \nabla\vert^{-1}\tilde{h}_{k_{3}} \Vert_{L^{2}_{e} L^{\infty}_{e^{\perp},t}} \Vert F_{l} \Vert_{L^{2}}
\\
& \lesssim 2^{l ( 2s-2)} 2^{k_{3}} \Vert \tilde{f}_{k_{1}} \Vert_{L^{\infty}_{e'}L^{2}_{e'^{\perp},t}} \Vert \vert \nabla \vert^{-1} \tilde{g}_{k_{2}} \Vert_{L^{2}_{e'}L^{\infty}_{e'^{\perp},t}} 2^{k_{3} \frac{n-1}{2}} \Vert \vert \nabla \vert^{-1} h_{k_{3}} \Vert_{Z_{k_{3}}}
\\
& \lesssim 2^{l(2s-2)} 2^{-k_{1} (2s-1)/2} \Vert f_{k_{1}} \Vert_{Z_{k_{1}}} 2^{k_{2} \frac{n-2}{2}} \Vert g_{k_{2}} \Vert_{Z_{k_{2}}} 2^{k_{3} \frac{n-2}{2}} \Vert h_{k_{3}} \Vert_{Z_{k_{3}}}
\end{split}
\end{equation}
Plugging \eqref{trl10} back into \eqref{trl9} yields 
\begin{equation*}
\begin{split}
& \Vert \Delta_{k} ( \i \partial_{t} - ( - \Delta)^{s} + \i)^{-1} \ind_{[-T',T']} \partial_{j} \left( \nabla^{2s-1}_{i} ( F_{l} | \nabla |^{-1}( \tilde{h}_{k_{3}} ) )  - |\nabla|^{-1}\tilde{h}_{k_{3}}\nabla^{2s-1}_{i} ( F_{l} )\right) \Vert_{Z_{k}}
\\
& \lesssim 2^{(l-k_{1})(2s-1)/2}  \Vert f_{k_{1}} \Vert_{Z_{k_{1}}} 2^{k_{2} \frac{n-2}{2} } \Vert g_{k_{2}} \Vert_{Z_{k_{2}}} 2^{k_{3} \frac{n-2}{2}} \Vert h_{k_{3}} \Vert_{Z_{k_{3}}}
\end{split}
\end{equation*}
summing over $l \in [k_{3}+ C , k_{1} +C] $ and then multiplying by $ 2^{k \sigma}$ and summing over $ k \leq k_{1} +C$ gives the result. This concludes case 1b.
\\
\textbf{Case 1c:} If $ \max \{ k_{2} , k_{3} \} \leq k_{1} - C$ for sufficently large $C$. Assume without loss of generality that $k_{2} \geq k_{3}$. In this case we make use of Lemma \ref{Taylorstuff3}. More precisely, recall the identity \eqref{trl3}  
\begin{equation*}
\begin{split}
&  T_{1}^{i}(\tilde{f}_{k_{1}}, \tilde{g}_{k_{2}},\tilde{h}_{k_{3}})(x,t)
\\
& = \nabla^{2s-1}_{i} ( \tilde{f}_{k_{1}} | \nabla |^{-1} ( \tilde{g}_{k_{2}}) | \nabla |^{-1}( \tilde{h}_{k_{3}} ) )
\\
& - |\nabla|^{-1}\tilde{h}_{k_{3}}\nabla^{2s-1}_{i} ( \tilde{f}_{k_{1}} |\nabla|^{-1} \tilde{g}_{k_{2}} )
\\
& - | \nabla |^{-1} \tilde{g}_{k_{2}} \nabla^{2s-1}_{i} ( \tilde{f}_{k_{1}} |\nabla|^{-1} \tilde{h}_{k_{3}}) 
\\
& + |\nabla|^{-1} \tilde{g}_{k_{2}} |\nabla|^{-1}\tilde{h}_{k_{3}} \nabla^{2s-1}_{i} ( \tilde{f}_{k_{1}}) 
\end{split}
\end{equation*}
take the derivative $ \partial_{j}$ of both sides to we obtain
\begin{equation}\label{trl11}
\begin{split}
& \partial_{j} T^{i}_{1} ( \tilde{f}_{k_{1}}, \tilde{g}_{k_{2}} ,  h_{k_{3}} ) 
\\
& = \partial_{j} \left( \nabla^{2s-1}_{i} ( \tilde{f}_{k_{1}} | \nabla |^{-1} ( \tilde{g}_{k_{2}}) | \nabla |^{-1}( \tilde{h}_{k_{3}} ) ) \right) - | \nabla |^{-1} \tilde{h}_{k_{3}} \partial_{j} \left( \nabla^{2s-1}_{i}(\tilde{f}_{k_{1}} |\nabla|^{-1} \tilde{g}_{k_{2}} ) \right)
\\
& + \partial_{j} |\nabla|^{-1} \tilde{h}_{k_{3}} \left( |\nabla|^{-1} \tilde{g}_{k_{2}} \nabla^{2s-1}_{i} ( \tilde{f}_{k_{1}}) - \nabla^{2s-1}_{i} ( \tilde{f}_{k_{1}} |\nabla|^{-1}\tilde{g}_{k_{2}}) \right) 
\\
& + |\nabla|^{-1} \tilde{g}_{k_{2}} \left( |\nabla|^{-1}\tilde{h}_{k_{3}} \partial_{j}\nabla^{2s-1}_{i}(\tilde{f}_{k_{1}}) - \partial_{j} \nabla^{2s-1}_{i} (|\nabla|^{-1} \tilde{h}_{k_{3}} \tilde{f}_{k_{1}} ) \right)
\\
& - \partial_{j} | \nabla|^{-1} \tilde{g}_{k_{2}} \left( \nabla^{2s-1}_{i} ( \tilde{f}_{k_{1} } | \nabla|^{-1} \tilde{h}_{k_{3}} ) - | \nabla|^{-1} \tilde{h}_{k_{3}} \nabla^{2s-1}_{i} ( \tilde{f}_{k_{1}}) \right).
\end{split}
\end{equation}
The idea is to use Lemma \ref{Taylorstuff3} for the first line, and third line on the right hand side of \eqref{trl11} and Lemma \ref{Taylorstuff2} for second, and fourth lines on the right hand side of \eqref{trl11}.  Define $ F : = \tilde{f}_{k_{1}} |\nabla|^{-1} \tilde{g}_{k_{2}}$, and define $ \nabla^{2s-1}_{i,j}:= \partial_{j} \nabla^{2s-1}_{i}$. Then the first line on the right hand side \eqref{trl11} can be written as
\begin{equation*}
\begin{split}
& \partial_{j} \left( \nabla^{2s-1}_{i} ( \tilde{f}_{k_{1}} | \nabla |^{-1} ( \tilde{g}_{k_{2}}) | \nabla |^{-1}( \tilde{h}_{k_{3}} ) ) \right) - | \nabla |^{-1} \tilde{h}_{k_{3}} \partial_{j} \left( \nabla^{2s-1}_{i}(\tilde{f}_{k_{1}} |\nabla|^{-1} \tilde{g}_{k_{2}} ) \right)
\\
& = \nabla^{2s-1}_{i,j} ( F |\nabla|^{-1} \tilde{h}_{k_{3}} ) - |\nabla|^{-1} \tilde{h}_{k_{3}} \nabla^{2s-1}_{i,j} (F).
\end{split}
\end{equation*}
Therefore, by Lemma \ref{Taylorstuff3}, the assumption that $ k_{2} \geq k_{3},$ we obtain
\begin{equation*}
\begin{split}
& \Vert \Delta_{k} ( \i \partial_{t} - ( -\Delta)^{s} +\i)^{-1} \ind_{[-T',T']} \left(   \nabla^{2s-1}_{i,j} ( F |\nabla|^{-1} \tilde{h}_{k_{3}} ) - |\nabla|^{-1} \tilde{h}_{k_{3}} \nabla^{2s-1}_{i,j} (F) \right) \Vert_{Z_{k}}
\\
& \lesssim 2^{-k_{1} (2s-1)/2} \Vert \nabla^{2s-1}_{i,j} ( F |\nabla|^{-1} \tilde{h}_{k_{3}} ) - |\nabla|^{-1} \tilde{h}_{k_{3}} \nabla^{2s-1}_{i,j} (F) \Vert_{L^{1}_{e'}L^{2}_{e'^{\perp},t}}
\\
& \lesssim 2^{-k_{1}(2s-1)/2} 2^{k_{1}(2s-1)} 2^{k_{3}} \Vert F \Vert_{L^{2}} \Vert |\nabla|^{-1} \tilde{h}_{k_{3}} \Vert_{L^{2}_{e'}L^{\infty}_{e'^{\perp},t}}
\\
& \lesssim \Vert f_{k_{1}} \Vert_{Z_{k_{1}}} 2^{k_{2} \frac{n-1}{2}} 2^{-k_{2}} \Vert g_{k_{2}} \Vert_{Z_{k_{2}}} 2^{ k_{3} \frac{n+1}{2}} 2^{-k_{3}} \Vert h_{k_{3}} \Vert_{Z_{k_{3}}}
\\
& \lesssim \Vert f_{k_{1}} \Vert_{Z_{k_{1}}} 2^{ k_{2} \frac{n-2}{2}} \Vert g_{k_{2}} \Vert_{Z_{k_{2}}} 2^{ k_{3} \frac{n-2}{2}} \Vert h_{k_{3}} \Vert_{Z_{k_{3}}}.
\end{split}
\end{equation*}
Which readily implies the estimates for the first term on the right hand side of \eqref{trl11}. Next, we deal with the second term on the right hand side of \eqref{trl11}. For this we use Lemma \ref{Taylorstuff2}
\begin{equation}\label{trl12}
\begin{split}
& \Vert \Delta_{k} ( \i \partial_{t} - ( - \Delta)^{s} + \i)^{-1}\ind_{[-T',T']} \partial_{j} |\nabla|^{-1} \tilde{h}_{k_{3}} \left( |\nabla|^{-1} \tilde{g}_{k_{2}} \nabla^{2s-1}_{i} ( \tilde{f}_{k_{1}}) - \nabla^{2s-1}_{i} ( \tilde{f}_{k_{1}} |\nabla|^{-1}\tilde{g}_{k_{2}}) \right) \Vert_{Z_{k}}
\\
& \lesssim 2^{-k_{1} (2s-1)/2} \Vert \partial_{j} |\nabla|^{-1} \tilde{h}_{k_{3}} \left( |\nabla|^{-1} \tilde{g}_{k_{2}} \nabla^{2s-1}_{i} ( \tilde{f}_{k_{1}}) - \nabla^{2s-1}_{i} ( \tilde{f}_{k_{1}} |\nabla|^{-1}\tilde{g}_{k_{2}}) \right) \Vert_{L^{1}_{e'}L^{2}_{e'^{\perp},t}}
\\
& \lesssim 2^{-k_{1} (2s-1)/2} 2^{k_{3} \frac{n-1}{2}} \Vert h_{k_{3}} \Vert_{Z_{k_{3}}} \Vert |\nabla|^{-1} \tilde{g}_{k_{2}} \nabla^{2s-1}_{i} ( \tilde{f}_{k_{1}}) - \nabla^{2s-1}_{i} ( \tilde{f}_{k_{1}} |\nabla|^{-1}\tilde{g}_{k_{2}}) \Vert_{L^{2}}
\\
& \lesssim 2^{-k_{1} (2s-1)/2} 2^{ k_{3} \frac{n-1}{2}} \Vert h_{k_{3}} \Vert_{Z_{k_{3}}}  2^{k_{1} (2s-2)} \Vert \tilde{f}_{k_{1}} \Vert_{L^{\infty}_{e'}L^{2}_{e'^{\perp},t}} \Vert \tilde{g}_{k_{2}} \Vert_{L^{2}_{e'}L^{\infty}_{e'^{\perp},t}}
\\
& \lesssim \Vert f_{k_{1}} \Vert_{Z_{k_{1}}} 2^{ k_{2} \frac{n-2}{2}} \Vert g_{k_{2}} \Vert_{Z_{k_{2}}} 2^{ k_{3} \frac{n-2}{2}} \Vert h_{k_{3}} \Vert_{Z_{k_{3}}}.
\end{split}
\end{equation}
 Next, we deal with the third line on the right hand side of \eqref{trl11}.
 By Lemma \ref{Taylorstuff3} we have
 \begin{equation*}
\begin{split}
& \Vert \Delta_{k} ( \i\partial_{t} - ( -\Delta)^{s} +\i)^{-1} \ind_{[-T',T']}\left(  |\nabla|^{-1} \tilde{g}_{k_{2}} \left( |\nabla|^{-1}\tilde{h}_{k_{3}} \partial_{j}\nabla^{2s-1}_{i}(\tilde{f}_{k_{1}}) - \partial_{j} \nabla^{2s-1}_{i} (|\nabla|^{-1} \tilde{h}_{k_{3}} \tilde{f}_{k_{1}} ) \right) \right)\Vert_{Z_{k}}
\\
& \lesssim 2^{-k_{1} (2s-1)/2} \max_{ e \in \mathscr{E}} \Vert  |\nabla|^{-1} \tilde{g}_{k_{2}} \left( |\nabla|^{-1}\tilde{h}_{k_{3}} \partial_{j}\nabla^{2s-1}_{i}(\tilde{f}_{k_{1}}) - \partial_{j} \nabla^{2s-1}_{i} (|\nabla|^{-1} \tilde{h}_{k_{3}} \tilde{f}_{k_{1}} ) \right) \Vert_{L^{1}_{e} L^{2}_{e^{\perp},t}}
\\
& \lesssim 2^{-k_{1}(2s-1)/2} 2^{k_{2} \frac{n-1}{2}} 2^{-k_{2}} \Vert g_{k_{2}} \Vert_{Z_{k_{2}}} \Vert |\nabla|^{-1}\tilde{h}_{k_{3}} \partial_{j}\nabla^{2s-1}_{i}(\tilde{f}_{k_{1}}) - \partial_{j} \nabla^{2s-1}_{i} (|\nabla|^{-1} \tilde{h}_{k_{3}} \tilde{f}_{k_{1}} ) \Vert_{L^{2}(\R^{n+1})} 
\\
& \lesssim 2^{-k_{1}(2s-1)/2} 2^{k_{2} \frac{n-1}{2}} 2^{-k_{2}} \Vert g_{k_{2}} \Vert_{Z_{k_{2}}} 2^{k_{1} (2s-1)} \Vert \tilde{f}_{k_{1}} \Vert_{L^{\infty}_{e'} L^{2}_{e'^{\perp},t}} 2^{k_{3}} \Vert |\nabla |^{-1}\tilde{h}_{k_{3}} \Vert_{L^{2}_{e'}L^{\infty}_{e'^{\perp},t}}
\\
& \lesssim 2^{k_{2} \frac{n-1}{2}} 2^{-k_{2}} \Vert g_{k_{2}} \Vert_{Z_{k_{2}}} \Vert f_{k_{1}} \Vert_{Z_{k_{1}}} 2^{k_{3} \frac{n+1}{2}} 2^{-k_{3}} \Vert h_{k_{3}} \Vert_{Z_{k_{3}}} 
\\
& \lesssim \Vert f_{k_{1}} \Vert_{Z_{k_{1}}} 2^{ k_{2} \frac{n-2}{2}} \Vert g_{k_{2}} \Vert_{Z_{k_{2}}} 2^{ k_{3} \frac{n-2}{2}} \Vert h_{k_{3}} \Vert_{Z_{k_{3}}}.
\end{split}
 \end{equation*}
Lastly, we deal with the fourth line on the right hand side of \eqref{trl11}.
 By Lemma \ref{Taylorstuff2}
 \begin{equation*}
\begin{split}
& \Vert \Delta_{k} ( \i \partial_{t} - ( - \Delta)^{s} + \i)^{-1} \ind_{[-T',T']} \partial_{j} | \nabla|^{-1} \tilde{g}_{k_{2}} \left( \nabla^{2s-1}_{i} ( \tilde{f}_{k_{1} } | \nabla|^{-1} \tilde{h}_{k_{3}} ) - | \nabla|^{-1} \tilde{h}_{k_{3}} \nabla^{2s-1}_{i} ( \tilde{f}_{k_{1}}) \right) \Vert_{Z_{k}}
\\
& \lesssim 2^{-k_{1} (2s-1)/2} \max_{e \in \mathscr{E}} \Vert \partial_{j} | \nabla|^{-1} \tilde{g}_{k_{2}} \left( \nabla^{2s-1}_{i} ( \tilde{f}_{k_{1} } | \nabla|^{-1} \tilde{h}_{k_{3}} ) - | \nabla|^{-1} \tilde{h}_{k_{3}} \nabla^{2s-1}_{i} ( \tilde{f}_{k_{1}}) \right) \Vert_{L^{1}_{e}L^{2}_{e^{\perp},t}}
\\
& \lesssim 2^{-k_{1}(2s-1)/2} \max_{e \in \mathscr{E}}\Vert \tilde{g}_{k_{2}} \Vert_{L^{2}_{e} L^{\infty}_{e^{\perp},t}} \Vert  \nabla^{2s-1}_{i} ( \tilde{f}_{k_{1} } | \nabla|^{-1} \tilde{h}_{k_{3}} ) - | \nabla|^{-1} \tilde{h}_{k_{3}} \nabla^{2s-1}_{i} ( \tilde{f}_{k_{1}}) \Vert_{L^{2}(\R^{n+1})}
\\
& \lesssim 2^{-k_{1} (2s-1)/2} 2^{k_{2} \frac{n-1}{2}} \Vert g_{k_{2}} \Vert_{Z_{k_{2}}} 2^{k_{1} (2s-2)} \Vert \tilde{f}_{k_{1}} \Vert_{L^{\infty}_{e'}L^{2}_{e'^{\perp},t}} \Vert \tilde{h}_{k_{3}} \Vert_{L^{2}_{e'}L^{\infty}_{e'^{\perp},t}} 
\\
& \lesssim \Vert f_{k_{1}} \Vert_{Z_{k_{1}}} 2^{ k_{2} \frac{n-2}{2}} \Vert g_{k_{2}} \Vert_{Z_{k_{2}}} 2^{ k_{3} \frac{n-2}{2}} \Vert h_{k_{3}} \Vert_{Z_{k_{3}}}.
\end{split}
 \end{equation*}
 This concludes case 1c, and hence case 1.
 \\
 \textbf{Case 2:} If $ \max\{ k_{1},k_{2} ,k_{3} \} \in \{ k_{2} ,k_{3} \}$. By symmetry, we may assume $ k_{2} \geq k_{3}$. Further, we may assume $ g_{k_{2}}$ has Fourier transform supported in $ \{ \langle \xi , e' \rangle \geq 2^{k_{2} -c} \}$, for some $ e' \in \mathscr{E}$. Then to prove \eqref{trl1} we consider two sub-cases. 
 \\
  \textbf{Case 2a:} If $2^{k_{1}} \lesssim 2^{k_{3}}$ Then recall the identity \eqref{trl3} 
\begin{equation}\label{trl13}
\begin{split}
&  T_{1}^{i}(\tilde{f}_{k_{1}}, \tilde{g}_{k_{2}},\tilde{h}_{k_{3}})(x,t)
\\
& = \nabla^{2s-1}_{i} ( \tilde{f}_{k_{1}} | \nabla |^{-1} ( \tilde{g}_{k_{2}}) | \nabla |^{-1}( \tilde{h}_{k_{3}} ) )
\\
& - |\nabla|^{-1}\tilde{h}_{k_{3}}\nabla^{2s-1}_{i} ( \tilde{f}_{k_{1}} |\nabla|^{-1} \tilde{g}_{k_{2}} )
\\
& - | \nabla |^{-1} \tilde{g}_{k_{2}} \nabla^{2s-1}_{i} ( \tilde{f}_{k_{1}} |\nabla|^{-1} \tilde{h}_{k_{3}}) 
\\
& + |\nabla|^{-1} \tilde{g}_{k_{2}} |\nabla|^{-1}\tilde{h}_{k_{3}} \nabla^{2s-1}_{i} ( \tilde{f}_{k_{1}}) 
\end{split}
\end{equation}
we handle each term on the right hand side of \eqref{trl13} separately. For the first term we have
\begin{equation*}
\begin{split}
& \Vert \Delta_{k} ( \i \partial_{t} - ( -\Delta)^{s} + \i)^{-1} \ind_{[-T',T']}\partial_{j} \left(     \nabla^{2s-1}_{i} ( \tilde{f}_{k_{1}} | \nabla |^{-1} ( \tilde{g}_{k_{2}}) | \nabla |^{-1}( \tilde{h}_{k_{3}} ) ) \right) \Vert_{Z_{k}}
\\
& \lesssim 2^{-k (2s-1)/2} 2^{k} \max_{e \in \mathscr{E}} \Vert \Delta_{k} \nabla^{2s-1}_{i} ( \tilde{f}_{k_{1}} | \nabla |^{-1} ( \tilde{g}_{k_{2}}) | \nabla |^{-1}( \tilde{h}_{k_{3}} ) ) \Vert_{L^{1}_{e} L^{2}_{e^{\perp},t}}
\\
& \lesssim 2^{-k(2s-1)/2} 2^{k} 2^{k(2s-1)} \max_{e \in \mathscr{E}} \Vert \tilde{f}_{k_{1}} \Vert_{L^{2}_{e} L^{\infty}_{e^{\perp},t}} 2^{-k_{3}} \Vert \tilde{h}_{k_{3}} \Vert_{L^{2}_{e'}L^{\infty}_{e'^{\perp},t}} 2^{-k_{2} } \Vert \tilde{g}_{k_{2}} \Vert_{L^{\infty}_{e'} L^{2}_{e'^{\perp},t}}
\\
& \lesssim 2^{k(2s-1)/2} 2^{-k_{2}(2s-1)/2} 2^{k} 2^{k_{1} \frac{n-1}{2}} \Vert f_{k_{1}} \Vert_{Z_{k_{1}}} 2^{-k_{3}} 2^{k_{3} \frac{n-1}{2}} \Vert h_{k_{3}} \Vert_{Z_{k_{3}}} 2^{-k_{2}} \Vert g_{k_{2}} \Vert_{Z_{k_{2}}} 
\\
& \lesssim  2^{k(2s-1)/2} 2^{-k_{2}(2s-1)/2} 2^{k-k_{2}} 2^{k_{1} \frac{n-2}{2}} \Vert f_{k_{1}} \Vert_{Z_{k_{1}}}  2^{k_{3} \frac{n-2}{2}} \Vert h_{k_{3}} \Vert_{Z_{k_{3}}} \Vert g_{k_{2}} \Vert_{Z_{k_{2}}}
\end{split}
\end{equation*}
multiplying by $ 2^{k \sigma}$ and summing over $ k \leq k_{2} +C$ yields the result. The remaining terms on the right hand side of \eqref{trl13} are handled in the same way. Namely, using only the maximal, smoothing estimates and the assumption that $k_{3} \geq k_{1} -C$.
\\
\textbf{Case 2b:} if $k_{3} \leq k_{1} -C$. Then in view of case 1b we may assume that $ k_{1} \leq k_{2} -C.$ In this case we make use of Lemma \eqref{Taylorstuff2}. Namely using \eqref{trl13}
\begin{equation}\label{trl14}
\begin{split}
& \Vert \Delta_{k} (\i \partial_{t} - ( -\Delta)^{s} + \i)^{-1} \ind_{[-T',T']} \partial_{j} T^{i}_{1}(\tilde{f}_{k_{1}} ,\tilde{g}_{k_{2}} , \tilde{h}_{k_{3}} ) \Vert_{Z_{k}}
\\
& \lesssim 2^{-k_{2} ( 2s-1)/2} 2^{k_{2}}  \Vert  T^{i}_{1}(\tilde{f}_{k_{1}} ,\tilde{g}_{k_{2}} , \tilde{h}_{k_{3}} ) \Vert_{L^{1}_{e'} L^{2}_{e'^{\perp},t}}
\\
& \lesssim 2^{-k_{2} ( 2s-1)/2} 2^{k_{2}}   \Vert \nabla^{2s-1}_{i} ( \tilde{f}_{k_{1}} | \nabla |^{-1} ( \tilde{g}_{k_{2}}) | \nabla |^{-1}( \tilde{h}_{k_{3}} ) ) - |\nabla|^{-1}\tilde{h}_{k_{3}}\nabla^{2s-1}_{i} ( \tilde{f}_{k_{1}} |\nabla|^{-1} \tilde{g}_{k_{2}} ) \Vert_{L^{1}_{e'} L^{2}_{e'^{\perp},t}} 
\\
& +2^{-k_{2} ( 2s-1)/2} 2^{k_{2}}  \Vert   | \nabla |^{-1} \tilde{g}_{k_{2}} \nabla^{2s-1}_{i} ( \tilde{f}_{k_{1}} |\nabla|^{-1} \tilde{h}_{k_{3}}) -  |\nabla|^{-1} \tilde{g}_{k_{2}} |\nabla|^{-1}\tilde{h}_{k_{3}} \nabla^{2s-1}_{i} ( \tilde{f}_{k_{1}})  \Vert_{L^{1}_{e'}L^{2}_{e'^{\perp},t}}
\end{split}
\end{equation}
We estimate each term on the right hand side of \eqref{trl14} separately. Starting with the first term, we have by Lemma \ref{Taylorstuff2}
\begin{equation}\label{trl15}
\begin{split}
& \Vert \nabla^{2s-1}_{i} ( \tilde{f}_{k_{1}} | \nabla |^{-1} ( \tilde{g}_{k_{2}}) | \nabla |^{-1}( \tilde{h}_{k_{3}} ) ) - |\nabla|^{-1}\tilde{h}_{k_{3}}\nabla^{2s-1}_{i} ( \tilde{f}_{k_{1}} |\nabla|^{-1} \tilde{g}_{k_{2}} ) \Vert_{L^{1}_{e'} L^{2}_{e^{\perp},t}} 
\\
& \lesssim \sum_{ | \alpha | =1}^{\infty} \frac{ a_{ \vert \alpha \vert}}{\alpha !} 2^{k_{2} (2s-1 - \vert \alpha \vert)} \Vert \tilde{f}_{k_{1}} |\nabla|^{-1}\tilde{g}_{k_{2}} | \nabla |^{-1} D^{\alpha} \tilde{h}_{k_{3}} \Vert_{L^{1}_{e'} L^{2}_{e'^{\perp},t}}
\\
& \lesssim 2^{k_{2} (2s-2)} \Vert |\nabla|^{-1}\tilde{g}_{k_{2}} \Vert_{L^{\infty}_{e'} L^{2}_{e'^{\perp},t}} \Vert \tilde{f}_{k_{1}} \Vert_{L^{2}_{e'}L^{\infty}_{e'^{\perp},t}} 2^{k_{3}} \Vert |\nabla|^{-1} \tilde{h}_{k_{3}} \Vert_{L^{2}_{e'}L^{\infty}_{e'^{\perp},t}} \sum_{ |\alpha|=1}^{\infty} \frac{a_{ \vert \alpha \vert}}{\alpha !} 2^{ - C \vert \alpha \vert}
\\
& \lesssim 2^{k_{2}(2s-1)/2} 2^{-k_{2}} \Vert g_{k_{2}} \Vert_{Z_{k_{2}}} 2^{k_{1} \frac{n-2}{2} } \Vert f_{k_{1}} \Vert_{Z_{k_{1}}} 2^{k_{3} \frac{n-2}{2}} \Vert h_{k_{3}} \Vert_{Z_{k_{3}}}. 
\end{split}
\end{equation}
Similarly, we estimate the last term on the right hand side of \eqref{trl14}. By Lemma \ref{Taylorstuff2} again, we obtain
\begin{equation}\label{trl16}
\begin{split}
& \Vert   | \nabla |^{-1} \tilde{g}_{k_{2}} \nabla^{2s-1}_{i} ( \tilde{f}_{k_{1}} |\nabla|^{-1} \tilde{h}_{k_{3}}) -  |\nabla|^{-1} \tilde{g}_{k_{2}} |\nabla|^{-1}\tilde{h}_{k_{3}} \nabla^{2s-1}_{i} ( \tilde{f}_{k_{1}})  \Vert_{L^{1}_{e'}L^{2}_{e'^{\perp},t}}
\\
& \lesssim 2^{-k_{2}} \Vert \tilde{g}_{k_{2}} \Vert_{L^{\infty}_{e'}L^{2}_{e'^{\perp},t}} \sum_{ \vert \alpha \vert =1}^{\infty} \frac{a_{\vert \alpha \vert}}{\alpha !} 2^{ (2s-1 - \vert \alpha \vert) k_{1}} \Vert \tilde{f}_{k_{1}} D^{\alpha} |\nabla |^{-1} \tilde{h}_{k_{3}} \Vert_{L^{1}_{e'}L^{\infty}_{e'^{\perp},t}}
\\
& \lesssim 2^{-k_{2}} 2^{ - k_{2} ( 2s-1)/2} \Vert g_{k_{2}} \Vert_{Z_{k_{2}}} 2^{(2s-2)k_{1}} \Vert \tilde{f}_{k_{1}} \Vert_{L^{2}_{e'}L^{\infty}_{e'^{\perp},t}} \Vert \tilde{h}_{k_{3}} \Vert_{L^{2}_{e'}L^{\infty}_{e'^{\perp},t}} \sum_{|\alpha|=1}^{\infty} \frac{a_{\vert \alpha \vert}}{\alpha !} 2^{-C |\alpha |}
\\
& \overset{\eqref{local_smoothing},\eqref{maximal_Estimate}}{\lesssim} 2^{-k_{2}} \Vert g_{k_{2}} \Vert_{Z_{k_{2}}} 2^{k_{1} (2s-1)/2} 2^{k_{1} \frac{n-2}{2}} \Vert f_{k_{1}} \Vert_{Z_{k_{1}}} 2^{k_{3} \frac{n-2}{2}} \Vert h_{k_{3}} \Vert_{Z_{k_{3}}}.
\end{split}
\end{equation}
Plugging the bounds \eqref{trl15} and \eqref{trl16} back into \eqref{trl14} we obtain
\begin{equation*}
\begin{split}
& \Vert \Delta_{k} (\i \partial_{t} - ( -\Delta)^{s} + \i)^{-1} \partial_{j} T^{i}_{1}(\tilde{f}_{k_{1}} ,\tilde{g}_{k_{2}} , \tilde{h}_{k_{3}} ) \Vert_{Z_{k}}
\\
& \lesssim \Vert g_{k_{2}} \Vert_{Z_{k_{2}}} 2^{k_{1} \frac{n-2}{2}} \Vert f_{k_{1}} \Vert_{Z_{k_{1}}} 2^{k_{3} \frac{n-2}{2}} \Vert h_{k_{3}} \Vert_{Z_{k_{3}}}
\end{split}
\end{equation*}
which concludes case 2b.

\end{proof}
Now we see how the trilinear estimate \eqref{trlnes1} implies the nonlinear estimates for the first two terms on the right hand side of \eqref{recudedSchro}.

\begin{proposition}\label{Firstnonlinearestimate}
Let $n \geq 3$, $s \in (\frac{1}{2},1)$, $ \sigma \geq \frac{n-2}{2}$. Then we have the following inequality for any $ i, j \in \{ 1,...,n \}$. 
\begin{equation}\label{firstnonlines1}
\begin{split}
& \Vert  \partial_{j} \left( \langle e , [ \nabla^{2s-1}_{i} , e ](f) \rangle \right) \Vert_{N^{\sigma}(T')}
\\
& \lesssim \Vert f\Vert_{\tilde{F}^\sigma} \Vert |\nabla | e \Vert^{2}_{\tilde{F}^{\frac{n-2}{2}}(T')} 
\\
& + \Vert f \Vert_{\tilde{F}^{\frac{n-2}{2}}} \Vert | \nabla| e \Vert_{\tilde{F}^{\frac{n-2}{2}}(T')} \Vert |\nabla| e \Vert_{\tilde{F}^{\sigma}(T')}.
\end{split}
\end{equation}
where $e \in \{ e_{1},e_{2} \}$ and $ T'\in [0,T]$. Here $[-T,T]$ is the interval for which the local solution $u$ exists. 
\end{proposition}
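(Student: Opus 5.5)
The plan is to reduce \eqref{firstnonlines1} to the trilinear estimate of Proposition \ref{trilineares1} by exploiting the pointwise constraint $|e|=1$. First, I would write the commutator in kernel form: $\nabla^{2s-1}_i$ has symbol $\i|\xi|^{2s-2}\xi_i$, so it is (up to a constant $c_{n,s}$) the singular integral with kernel $\frac{(x-y)_i}{|x-y|^{n+2s}}$. Hence
\[
[\nabla^{2s-1}_i,e](f)(x,t)=c_{n,s}\int_{\R^n}\frac{(e(y,t)-e(x,t))f(y,t)(x-y)_i}{|x-y|^{n+2s}}\,dy .
\]
This integral converges because $s<1$ and $e$ is smooth in $x$ for each $t$.

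Next I would take the inner product with $e(x)$ and use $|e(x)|=|e(y)|=1$. This gives the identity $\langle e(x),e(y)-e(x)\rangle=-\tfrac12|e(x)-e(y)|^2$, so
\[
\langle e,[\nabla^{2s-1}_i,e](f)\rangle(x,t)=-\frac{c_{n,s}}{2}\sum_{l=1}^3\int_{\R^n}\frac{(e^l(x)-e^l(y))^2 f(y)(x-y)_i}{|x-y|^{n+2s}}\,dy .
\]
By Lemma \ref{eintegra}, $e-Q_e\in C([-T,T];B^{\sigma}_{2,1})$ for a constant vector $Q_e$, and the constant cancels in the differences. Therefore $e^l(x)-e^l(y)=|\nabla|^{-1}(|\nabla|e^l)(x)-|\nabla|^{-1}(|\nabla|e^l)(y)$, as remarked after Lemma \ref{eintegra}. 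The expression is then exactly $-\frac{c_{n,s}}{2}\sum_l T^i_1(f,|\nabla|e^l,|\nabla|e^l)$, with $T^i_1$ as in \eqref{trilinearoperator1}.

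Then I would pass to the norms. Since $\|\ind_{[-T',T']}F\|_{N^\sigma}$ only depends on $F$ on $[-T',T']$, I can replace $|\nabla|e^l$ by any extension $E^l$ agreeing with it on $[-T',T']$ and take the infimum afterwards. I would choose a near-optimal extension for the $\tilde F^{\frac{n-2}{2}}(T')$ and $\tilde F^{\sigma}(T')$ norms; one extension nearly optimal for the sum of the two norms suffices, since both terms in \eqref{firstnonlines1} are then recovered. Next I would decompose each dyadic piece in $\tilde Z_k=Z_k+\bar Z_k$, writing $\Delta_k f=a_k+\bar b_k$ with $\|a_k\|_{Z_k}+\|b_k\|_{Z_k}\lesssim\|\Delta_kf\|_{\tilde Z_k}$, and similarly for $E^l$. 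The trilinear estimate \eqref{trl1} is stated at the dyadic level and allows $\tilde{f}\in\{f,\bar f\}$ in each slot. Applying it to the $2^3$ combinations of pieces with $j$ fixed and summing over $k_1,k_2,k_3$ yields
\[
\|f\|_{\tilde F^\sigma}\|E\|_{\tilde F^{\frac{n-2}{2}}}^2+\|f\|_{\tilde F^{\frac{n-2}{2}}}\|E\|_{\tilde F^{\frac{n-2}{2}}}\|E\|_{\tilde F^{\sigma}},
\]
where the two symmetric terms coming from $g=h$ are merged. Taking the infimum over extensions gives \eqref{firstnonlines1}.

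The step I expect to be the main obstacle is the rigorous justification of the identity with $T^i_1$ at the level of dyadic pieces. Here I must check that expanding $|\nabla|^{-1}(|\nabla| e^l)=\sum_k|\nabla|^{-1}\Delta_k|\nabla|e^l$ inside the singular integral is legitimate and that no constant or low-frequency mode is lost. The nonlinear identity $\langle e(x),e(y)-e(x)\rangle=-\tfrac12|e(x)-e(y)|^2$ only holds for the full $e$, and it is applied before any frequency decomposition. I would handle this using the $B^{\sigma}_{2,1}$ integrability of $e-Q_e$ from Lemma \ref{eintegra}, which gives $L^2$ convergence of the Littlewood--Paley sums, together with the convergence of the kernel integral near the diagonal for $s\in(\frac12,1)$ since the squared difference gains $|x-y|^2$. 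The remaining bookkeeping, namely the conjugates and the extension argument, is routine.
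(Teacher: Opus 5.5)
Your proposal is correct and follows the paper's proof: derive $\langle e,[\nabla^{2s-1}_i,e](f)\rangle = c\int \frac{|e(x)-e(y)|^2 f(y)(x-y)_i}{|x-y|^{n+2s}}\,dy$ from $|e|=1$, then apply \eqref{trlnes1} with $g=h$ an extension of $|\nabla|e$; your dyadic splitting in $\tilde Z_k=Z_k+\bar Z_k$ spells out what the paper leaves implicit. One correction: an extension that is near-optimal for the \emph{sum} of the two norms does not by itself give $\|E\|_{\tilde F^{\frac{n-2}{2}}}\lesssim \||\nabla|e\|_{\tilde F^{\frac{n-2}{2}}(T')}$, which the first term of \eqref{firstnonlines1} needs. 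Since $\Delta_k$ acts only in $x$, you can instead extend each $\Delta_k|\nabla|e$ separately in $\tilde Z_k$ and sum; this gives a single extension that is near-optimal, up to constants, for every $\tilde F^{\sigma'}(T')$ at once (the paper passes over this point too).
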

\begin{proof} 
Notice the following identity
\begin{equation}\label{eqz1}
\begin{split}
& \langle e , [ \nabla^{2s-1}_{i} , e ](f) \rangle
\\
& =c \int_{\R^{n}} \frac{ | e(x,t) - e(y,t) |^{2} f(y,t) ( x-y )_{i}}{|x-y|^{n +2s}}
\end{split}
\end{equation}
Indeed, this follows from the fact that 
$$
[ \nabla^{2s-1}_{i} , e ](f)(x,t) = c \int_{\R^{n}} \frac{(x-y)_{i} f(y,t) ( e(x,t) - e(y,t))}{|x-y|^{n+2s}}
$$
and the fact that 
$$
\langle e(x,t) , e(x,t) -e(y,t) \rangle = \frac{1}{2} | e(x,t) - e(y,t) |^{2}
$$
combining these two identities yield \eqref{eqz1}. Then  \eqref{firstnonlines1} follows directly from applying \eqref{trlnes1} with $ g = E_{T'} (|\nabla| e) = h$ where $ E_{T'}(|\nabla| e)$ is any extension of $ \ind_{[-T',T']} |\nabla| e$. 
\end{proof}
Next, we show how \eqref{trlnes1} implies the nonlinear estimates for the term concerning $ \mathscr{H}(e_{1},e_{2})(V_{1},...,V_{n})$

\begin{proposition}\label{errnonlinears}
Let $ n \geq 3 $, $ s \in (1/2,1)$. Let $ e_{1},e_{2} : \R^{n+1} \to \S^{2}$ and $V_{i}'s$ be as in section \ref{s:reduction}.  Then the following inequality holds for any $j \in \{ 1,...,n \}$
\begin{equation*}
\begin{split}
& \Vert \ \partial_{j} \mathscr{H}(e_{1},e_{2})(V_{1},....,V_{n}) \Vert_{N^{\sigma}(T')} \\
& \lesssim  \Vert  | \nabla |e_{1}  \Vert_{\tilde{F}^{\frac{n-2}{2}}(T')} \Vert |\nabla| e_{2} \Vert_{\tilde{F}^{\frac{n-2}{2}}(T')} \Vert V_{x} \Vert_{\tilde{F}^{\sigma}(T')}
\\
& + \Vert  | \nabla |e_{1}  \Vert_{\tilde{F}^{\sigma}(T')} \Vert |\nabla| e_{2} \Vert_{\tilde{F}^{\frac{n-2}{2}}(T')} \Vert V_{x} \Vert_{\tilde{F}^{\frac{n-2}{2}}(T')}
\\
& + \Vert  | \nabla |e_{1}  \Vert_{\tilde{F}^{\frac{n-2}{2}}(T')} \Vert |\nabla| e_{2} \Vert_{\tilde{F}^{\sigma}} \Vert V_{x} \Vert_{\tilde{F}^{\frac{n-2}{2}}(T')}
\end{split}
\end{equation*}
where $ V_{x} : = ( V_{1},....,V_{n})$, $ T' \in [0,T]$. 
\end{proposition}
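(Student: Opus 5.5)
The plan is to reduce the estimate to the trilinear bound \eqref{trlnes1} of Proposition \ref{trilineares1}, in the same way Proposition \ref{Firstnonlinearestimate} was obtained. The first step is a pointwise kernel identity for $\mathscr{H}(e_1,e_2)$. Write $\nabla^{2s-1}_{i}$ as the singular integral with kernel $c\,(x-y)_i|x-y|^{-n-2s}$. Then
\[
\mathscr{H}(e_1,e_2)(h_1,\dots,h_n)(x) = c\sum_{i=1}^n \int_{\R^n} \frac{h_i(y)\big(\langle e_2(x),e_1(y)\rangle + \langle e_1(x),e_2(y)\rangle\big)(x-y)_i}{|x-y|^{n+2s}}\,dy .
\]
Because $e_1\perp e_2$ pointwise, $\langle e_2(x),e_1(x)\rangle = \langle e_1(y),e_2(y)\rangle =0$. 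This gives
\[
\langle e_2(x),e_1(y)\rangle + \langle e_1(x),e_2(y)\rangle = -\langle e_1(x)-e_1(y),\, e_2(x)-e_2(y)\rangle .
\]
Hence $\mathscr{H}(e_1,e_2)(h)$ is, up to a constant, $\sum_i\sum_{l=1}^3$ of expressions
\[
\int \frac{(e_1^l(x)-e_1^l(y))(e_2^l(x)-e_2^l(y))h_i(y)(x-y)_i}{|x-y|^{n+2s}}\,dy ,
\]
with $e^l_m$ the components. This is exactly the structure of $T^i_1$ in \eqref{trilinearoperator1}.

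The second step puts these terms into the form of $T^i_1$. By Lemma \ref{eintegra}, $e_m-Q_m\in C([-T,T];B^\sigma_{2,1})$, so $e_m^l = Q_m^l + |\nabla|^{-1}|\nabla| e_m^l$. The constants cancel in the differences, giving $e_m^l(x)-e_m^l(y) = |\nabla|^{-1}(|\nabla|e_m^l)(x) - |\nabla|^{-1}(|\nabla|e_m^l)(y)$. Therefore
\[
\mathscr{H}(e_1,e_2)(V_1,\dots,V_n) = c\sum_{i,l} T^i_1\big(V_i,\,|\nabla| e_2^l,\,|\nabla| e_1^l\big)
\]
on $[-T',T']$. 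The entries $e^l_m$ are real, so no conjugation issue arises for them. The $V_i$ are complex, and their conjugates are handled through the $\tilde{F}$ norms.

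The third step handles the localisation to $[-T',T']$. The $N^\sigma(T')$ norm only sees $\ind_{[-T',T']}\partial_j\mathscr{H}$. I take arbitrary extensions $E(|\nabla|e_1^l)$, $E(|\nabla|e_2^l)$ and $E(V_i)$ of the restrictions to $[-T',T']$. The operator $T^i_1$ acts pointwise in time, so $\ind_{[-T',T']}T^i_1$ is unchanged when the inputs are replaced by these extensions. I then apply \eqref{trlnes1} with $f=E(V_i)$, $g=E(|\nabla|e_2^l)$, $h=E(|\nabla|e_1^l)$ and $T'$. The three terms on its right-hand side correspond exactly to the three terms in the claimed bound. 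Taking the infimum over extensions, as in \eqref{frdef}, gives the $\tilde F^{\cdot}(T')$ norms.

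The main obstacle is the conjugation bookkeeping. Proposition \ref{trilineares1} is stated for $\tilde f\in\{f,\bar f\}$ with $f\in F$, while the statement uses $\tilde F$ norms. I would write each input as $a+\bar b$ with $a,b\in F$ nearly attaining the $\tilde F$ infimum. Trilinearity then expands $T^i_1$ into eight terms, each covered by \eqref{trlnes1}. This is legitimate because the proof of \eqref{trlnes1} uses only \eqref{local_smoothing}, \eqref{maximal_Estimate} and the commutator lemmas, which all hold for $\tilde f$ componentwise. Summing over $i,l$ changes only the constants.
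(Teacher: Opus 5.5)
Your proposal is correct and follows the paper's proof. Both arguments use Lemma \ref{eintegra} to write $e_m-Q_m=|\nabla|^{-1}(|\nabla|e_m)$, reduce $\mathscr{H}(e_1,e_2)(V_x)$ to $\sum_{i,l}T^i_1(V_i,|\nabla|e_2^l,|\nabla|e_1^l)$, and then apply \eqref{trlnes1} to arbitrary extensions on $[-T',T']$.

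The difference is in how you reach the $T^i_1$ form. The paper goes through the commutator identities for $H_{\nabla^{2s-1}_i}$ and $\tilde H_{\nabla^{2s-1}_i}$ together with $gH_{\nabla^{2s-1}_i}(h,f)-H_{\nabla^{2s-1}_i}(fg,h)=-T^i_1(f,|\nabla|g,|\nabla|h)$. You read it off directly from the principal-value kernel and the pointwise orthogonality $e_1\perp e_2$, just as the paper does for \eqref{eqz1}. Your conjugation bookkeeping makes explicit a step the paper leaves tacit; it is best carried out dyadically, since $\tilde F^\sigma$ is defined through $\tilde Z_k$.
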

\begin{proof}
Recall that by definition we have 
$$
\mathscr{H}(e_{1},e_{2}) (V_{x}) : = \langle e_{2} , \sum_{i=1}^{n} \nabla^{2s-1}_{i}( V_{i} e_{1} ) \rangle + \langle e_{1} , \sum_{i=1}^{n} \nabla^{2s-1}_{i} (V_{i} e_{2} ) \rangle.
$$
Further, notice the following identity 
\begin{equation}\label{w1}
\begin{split}
& 2 \langle e_{2} , \sum_{i=1}^{n} \nabla^{2s-1}_{i}( V_{i} e_{1} ) \rangle + 2 \langle e_{1} , \sum_{i=1}^{n} \nabla^{2s-1}_{i} (V_{i} e_{2} ) \rangle
\\
& = \langle e_{1}, \sum_{i=1}^{n} \nabla^{2s-1}_{i} ( e_{2} V_{i} ) \rangle - \langle e_{1} , \sum_{i=1}^{n} \nabla^{2s-1}_{i} ( e_{2} )V_{i} \rangle 
\\
& + \langle e_{1} , \sum_{i=1}^{n} \nabla^{2s-1}_{i} ( e_{2}) V_{i} \rangle + \langle e_{2} , \sum_{i=1}^{n} \nabla^{2s-1}_{i} ( e_{1} V_{i} ) \rangle
\\
& + \langle e_{2} , \sum_{i=1}^{n} \nabla^{2s-1}_{i} ( e_{1} V_{i} ) \rangle  - \langle e_{2} , \sum_{i=1}^{n} \nabla^{2s-1}_{i} ( e_{1} ) V_{i} \rangle
\\
& + \langle e_{2}, \sum_{i=1}^{n} \nabla^{2s-1}_{i} (e_{1} ) V_{i} \rangle + \langle e_{1} , \sum_{i=1}^{n} \nabla^{2s-1}_{i}(e_{2} V_{i}) \rangle
\end{split}
\end{equation}
For the first term on the right hand side of \eqref{w1} we write it as follows
\begin{equation*}
\begin{split}
& \langle e_{1}, \sum_{i=1}^{n} \nabla^{2s-1}_{i} ( e_{2} V_{i} ) \rangle - \langle e_{1} , \sum_{i=1}^{n} \nabla^{2s-1}_{i} ( e_{2} )V_{i} \rangle
\\
& = \langle e_{1} , \sum_{i=1}^{n} H_{\nabla^{2s-1}_{i}} ( e_{2} , V_{i} ) \rangle
\end{split}
\end{equation*}
where 
$$
H_{\nabla^{2s-1}_{i}}(e , h ): = \nabla^{2s-1}_{i}( e h) - \nabla^{2s-1}_{i} (e ) h - \nabla^{2s-1}_{i}(h) e.
$$
Similarly, the third term on the right hand side of \eqref{w1} is equal to 
$$
\langle e_{2} , \sum_{i=1}^{n} H_{\nabla^{2s-1}_{i}} ( e_{1} ,V_{i} ) \rangle. 
$$
Moreover, we write the second term on the right hand side of \eqref{w1} as 
\begin{equation*}
\begin{split}
& \langle e_{1}, \sum_{i=1}^{n} \nabla^{2s-1}_{i} ( e_{2}) V_{i}  \rangle + \langle e_{2} , \sum_{i=1}^{n} \nabla^{2s-1}_{i} ( e_{1} V_{i}) \rangle 
\\
& = - \sum_{i=1}^{n} \tilde{H}_{\nabla^{2s-1}_{i}}( V_{i} e_{1} , e_{2}) 
\end{split}
\end{equation*}
where for any $ u, v : \R^{n} \to \R^{3}$ we define 
$$
\tilde{H}_{\nabla^{2s-1}_{i}} ( u,v) = \sum_{j=1}^{3} H_{\nabla^{2s-1}_{i}}( u_{j} , v_{j} ) 
$$
The same identity holds for the fourth term on the right hand side of \eqref{w1}. Putting all of these identities together, we arrive at 
\begin{equation*}
\begin{split}
& 2 \mathscr{H}(e_{1} ,e_{2} ) (V_{x} ) 
\\
& = \sum_{i=1}^{n} \left( \langle e_{1} , H_{\nabla^{2s-1}_{i}} ( e_{2},V_{i} ) \rangle  - \tilde{H}_{\nabla^{2s-1}_{i}} ( V_{i} e_{1} ,e_{2} ) \right) + \sum_{i=1}^{n} \left( \langle e_{2} , H_{\nabla^{2s-1}_{i} }(e_{1} , V_{i} ) \rangle - \tilde{H}_{\nabla^{2s-1}_{i}} ( e_{1} , V_{i} e_{2} ) \right).
\end{split}
\end{equation*}
Fix an arbitrary $ i\in \{ 1,...,n \}$. Then the result follows if we estimate
\begin{equation}\label{w2}
\begin{split}
& \Vert \ind_{[-T',T']} \partial_{j} \left( \langle e_{1} , H_{\nabla^{2s-1}_{i}} ( e_{2} , V_{i} ) \rangle - \tilde{H}_{\nabla^{2s-1}_{i}}( V_{i} e_{1} ,e_{2} ) \right) \Vert_{N^{\sigma}} 
\\
& + \Vert \ind_{[-T',T']} \partial_{j} \left( \langle e_{2} , H_{\nabla^{2s-1}_{i} }(e_{1} , V_{i} ) \rangle - \tilde{H}_{\nabla^{2s-1}_{i}} ( e_{1} , V_{i} e_{2} ) \right) \Vert_{N^{\sigma}}
\\
& \lesssim \Vert  | \nabla |e_{1}  \Vert_{\tilde{F}^{\frac{n-2}{2}}(T')} \Vert |\nabla| e_{2} \Vert_{\tilde{F}^{\frac{n-2}{2}}(T')} \Vert V_{x} \Vert_{\tilde{F}^{\sigma}(T')}
\\
& + \Vert  | \nabla |e_{1}  \Vert_{\tilde{F}^{\sigma}(T')} \Vert |\nabla| e_{2} \Vert_{\tilde{F}^{\frac{n-2}{2}}(T')} \Vert V_{x} \Vert_{\tilde{F}^{\frac{n-2}{2}}(T')}
\\
& + \Vert  | \nabla |e_{1}  \Vert_{\tilde{F}^{\frac{n-2}{2}}(T')} \Vert |\nabla| e_{2} \Vert_{\tilde{F}^{\sigma}} \Vert V_{x} \Vert_{\tilde{F}^{\frac{n-2}{2}}(T')}
\end{split}
\end{equation}
This follows from \eqref{trlnes1}. Indeed, notice that for any functions $ f,g,h : \mathbb{R}^{n+1} \to \C$ we have 
\begin{equation*}
\begin{split}
& g H_{\nabla^{2s-1}_{i}} (h,f) - H_{\nabla^{2s-1}_{i}}(fg,h ) 
\\
& =g \nabla^{2s-1}_{i} ( f h ) - g h \nabla^{2s-1}_{i} (f) - gf \nabla^{2s-1}_{i}(h) \\
& - \nabla^{2s-1}_{i} (fgh) + fg \nabla^{2s-1}_{i} (h) + h \nabla^{2s-1}_{i} (fg)
\\
& = - \nabla^{2s-1}_{i} ( fgh) + h \nabla^{2s-1}_{i} (fg)  + g \nabla^{2s-1}(fh) - gh \nabla^{2s-1} (f) 
\\
& \overset{\eqref{trl3}}{=} -T^{i}_{1} (f, |\nabla|g , | \nabla| h ). 
\end{split}
\end{equation*}
So we apply \eqref{trlnes1} with $ f = E_{T'}(V_{x})$ , $ g = E_{T'}( |\nabla| e_{1} )$, $ h = E_{T'}( |\nabla| e_{2})$, where $ E_{T'}(V_{x})$ , $  E_{T'}( |\nabla| e_{1} )$, $E_{T'}( |\nabla| e_{2})$ are any extensions of $\ind_{[-T',T']} V_{x}$, $ \ind_{[-T',T']} | \nabla | e_{1}$, $ \ind_{[-T',T']} |\nabla| e_{2}$ .

\end{proof}

\section{Nonlinear estimates II}
Throughout this section let $ u, V_{i}, e_{1},e_{2}$, and $T>0$ be as in Proposition \ref{modifiedsschrodeq}.\\
In this section we prove the nonlinear estimates concerning the  fourth term on the right hand side of \eqref{recudedSchro}. It consists of terms of the form 
$$
\partial_{j} \langle e_{1} , [ \nabla^{2s-1}_{i},e_{2} ](V_{j}) \rangle 
$$
where we recall that the frames constructed in section \ref{s:reduction} satisfy $ \sum_{i=1}^{n} \partial_{i} ( \langle  e_{1} , \partial_{i} e_{2} \rangle ) =0$. Therefore, it suffices to prove the following nonlinear estimate 
\begin{proposition}\label{trlCgauge}
Let $ n \geq 4$ and $ s \in (1/2,1)$, $ \sigma \geq \frac{n-2}{2}$ and $ T' \in [0,T]$. Let $ e_{1},e_{2} : \R^{n+1} \to \S^{2}$ be given as in section \ref{s:reduction}, namely 
$$
\text{Div} ( \langle e_{1} , \nabla e_{2} \rangle ) =0.
$$
Then we have the following estimate for any $ f \in F^{\sigma}$
\begin{equation}
\begin{split}
& \Vert  \partial_{j} \langle e_{1} , [ \nabla^{2s-1}_{i},e_{2} ](f) \rangle \Vert_{N^{\sigma}(T')} 
\\
& \lesssim \Vert | \nabla | e_{1} \Vert_{\tilde{F}^{\sigma}(T')} \Vert | \nabla | e_{2} \Vert_{\tilde{F}^{\frac{n-2}{2}}(T')} \Vert f \Vert_{\tilde{F}^{\frac{n-2}{2}}(T')}
\\
& + \Vert | \nabla | e_{1} \Vert_{\tilde{F}^{\frac{n-2}{2}}(T')} \Vert | \nabla | e_{2} \Vert_{\tilde{F}^{\sigma}(T')} \Vert f \Vert_{\tilde{F}^{\frac{n-2}{2}}(T')}
\\
&+ \Vert | \nabla | e_{1} \Vert_{\tilde{F}^{\frac{n-2}{2}}(T')} \Vert | \nabla | e_{2} \Vert_{\tilde{F}^{\frac{n-2}{2}}(T')} \Vert f \Vert_{\tilde{F}^{\sigma}(T')}.
\end{split}
\end{equation}
\end{proposition}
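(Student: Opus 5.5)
The plan is to reduce the commutator to the trilinear operator $T^{i}_{1}$ of Proposition \ref{trilineares1} plus one genuinely new ``magnetic'' term, and to use the Coulomb condition $\text{Div}(\langle e_{1},\nabla e_{2}\rangle)=0$ only on that term.

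\textbf{Step 1: splitting the pointwise identity.} Write the commutator as
\[
[\nabla^{2s-1}_{i},e_{2}](f)(x)=c\int_{\R^{n}}\frac{(x-y)_{i}\,f(y)\,(e_{2}(x)-e_{2}(y))}{|x-y|^{n+2s}}\,dy.
\]
Since $\langle e_{1},e_{2}\rangle\equiv 0$, split
\[
\langle e_{1}(x),e_{2}(x)-e_{2}(y)\rangle=\langle e_{1}(x)-e_{1}(y),\,e_{2}(x)-e_{2}(y)\rangle+\langle e_{1}(y),\,e_{2}(x)-e_{2}(y)\rangle .
\]
The first piece has two difference factors. Summing over the three components, and using Lemma \ref{eintegra} to write $e_{l}-Q_{l}=|\nabla|^{-1}|\nabla|e_{l}$, it is exactly $\sum_{m}T^{i}_{1}(f,|\nabla|e^{m}_{1},|\nabla|e^{m}_{2})$. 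Applying \eqref{trlnes1} with extensions $E_{T'}$ of $\ind_{[-T',T']}f$ and $\ind_{[-T',T']}|\nabla|e_{l}$ then gives the claimed bound for this piece, as in Proposition \ref{Firstnonlinearestimate}.

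\textbf{Step 2: extracting the connection in the second piece.} In the second piece, Taylor expand $e_{2}(x)-e_{2}(y)$ around $y$. The first-order term gives
\[
\sum_{m}\int K_{im}(x-y)\,A_{m}(y)\,f(y)\,dy,\qquad A_{m}:=\langle e_{1},\partial_{m}e_{2}\rangle,\quad K_{im}(z)=\frac{z_{i}z_{m}}{|z|^{n+2s}},
\]
which is a Fourier multiplier of order $2s-2$ applied to $A_{m}f$. The remainder is of second order in differences and is handled exactly as $T^{i}_{1}$, using the case analysis of Proposition \ref{trilineares1} with Lemmas \ref{Taylorstuff2} and \ref{Taylorstuff3}.

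\textbf{Step 3: the Coulomb condition makes $A$ quadratic.} Because $\text{Div}\,A=0$,
\[
A_{m}=\Delta^{-1}\sum_{l}\partial_{l}\bigl(\partial_{l}A_{m}-\partial_{m}A_{l}\bigr)=\Delta^{-1}\sum_{l}\partial_{l}\bigl(\langle\partial_{l}e_{1},\partial_{m}e_{2}\rangle-\langle\partial_{m}e_{1},\partial_{l}e_{2}\rangle\bigr),
\]
so $A=|\nabla|^{-1}\mathcal{R}(\text{product of }|\nabla|e_{1},|\nabla|e_{2})$ with Riesz-type multipliers $\mathcal{R}$. Corollary \ref{algebraforftilde}\eqref{eqalgebraic4} then gives
\[
\|A\|_{\tilde F^{\sigma}(T')}\lesssim \||\nabla|e_{1}\|_{\tilde F^{\frac{n-2}{2}}(T')}\||\nabla|e_{2}\|_{\tilde F^{\sigma}(T')}+(1\leftrightarrow 2),
\]
and likewise at the level $\sigma=\frac{n-2}{2}$. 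So $A$ behaves like an element of $F^{\frac{n-2}{2}}$, i.e.\ like $|\nabla|^{-1}$ of an $F^{\frac n2}$ function, and the new term $\partial_{j}K_{im}*(A_{m}f)$ becomes trilinear. (The low-frequency part of $A$ can alternatively be handled via the $L^{\frac{2n}{n+2}}$ Riesz potential argument of Lemma \ref{thetainteg} if needed.)

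\textbf{Step 4: frequency cases for $\partial_{j}K_{im}*(A_{m}f)$.} Decompose dyadically as in \eqref{trl1}.
\begin{itemize}
\item[(a)] If $A$ sits at low frequency $k_{A}\ll k_{f}$, place the output in $L^{1}_{e}L^{2}_{e^{\perp},t}$ via \eqref{passingtol1l2}, gaining $2^{-k(2s-1)/2}$. The multiplier costs $2^{k}\cdot 2^{k(2s-2)}$. Use the local smoothing estimate \eqref{local_smoothing} on $f_{k}$, gaining another $2^{-k(2s-1)/2}$, and the maximal estimate \eqref{maximal_Estimate} on $A_{k_{A}}$, which costs $2^{k_{A}(n-1)/2}$. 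The powers of $2^{k}$ cancel exactly: $1+(2s-2)-(2s-1)=0$. This leaves $2^{k_{A}(n-1)/2}\|A_{k_{A}}\|_{Z}$, which is one half-derivative more than the $F^{\frac{n-2}{2}}$ control of $A$ provides. I would try to recover this half-derivative from the quadratic structure of $A$, e.g.\ using the localized maximal estimate \eqref{eq:max:Xkest:loc} on the $|\nabla|e$ factors.
\item[(b)] If $A$ is at high frequency (comparable to or above $f$), the multiplier of order $2s-1$ lands on a frequency no larger than $A$'s. Then the algebra properties of Corollary \ref{algebraforftilde}, together with $2^{-k(2s-1)/2}$ from \eqref{passingtol1l2} and smoothing on the highest-frequency factor, suffice, much like Case 2 of Proposition \ref{trilineares1}.
\end{itemize}

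\textbf{Main obstacle.} Case (a), the low--high magnetic interaction, is where the argument is tight. This is exactly the interaction that forces the use of the Coulomb gauge, and I expect the precise bookkeeping of the half-derivative, with the quadratic representation of $A$ feeding into the maximal estimates, to be the hardest part of the proof.
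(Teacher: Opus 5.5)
Your Step 1 is correct: the double-difference piece is exactly $\sum_m T^i_1(f,|\nabla|e_1^m,|\nabla|e_2^m)$, so Proposition \ref{trilineares1} applies to it. The gap is in Step 2, where you claim the Taylor remainder
\[
\int_{\R^n}\frac{(x-y)_i}{|x-y|^{n+2s}}\,f(y)\,\bigl\langle e_1(y),\,e_2(x)-e_2(y)-(x-y)\cdot\nabla e_2(y)\bigr\rangle\,dy
\]
is handled exactly as $T^i_1$. It is not of that type.

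In $T^i_1$ both frame factors enter through differences. Here only $e_2$ is differenced, and $e_1$ enters pointwise. Write $e_1=Q_1+(e_1-Q_1)$.

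\emph{The $Q_1$-part.} It is a nonzero bilinear operator in $f$ and $\langle Q_1,e_2\rangle$. No $T^i_1$-type bound can control it, since such bounds vanish when one frame factor is constant. Making it trilinear requires the gauge condition again, but you used the gauge only on the first Taylor coefficient $\langle e_1,\partial_m e_2\rangle$.

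\emph{The $(e_1-Q_1)$-part.} This fails too. Put $e_1$ at frequency $2^{k_2}$, $e_2$ at $2^{k_3}$ and $f$ at $2^{k_1}$, with $k_2\ll k_3\ll k_1$. The remainder symbol has size $2^{k_1(2s-2)}2^{2k_3}$. Apply \eqref{passingtol1l2}, local smoothing for $f$, and maximal estimates for $|\nabla|^{-1}|\nabla|e_1$ and $|\nabla|^{-1}|\nabla|e_2$. You are left with a factor $2^{k_3-k_1}2^{(k_3-k_2)/2}$, which is unbounded as $k_2\to-\infty$. This is exactly the low-frequency-$e_1$ interaction the Coulomb gauge is meant to cure, and it persists at every Taylor order.

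Moreover, once the frequency of $e_2$ is comparable to or above the output frequency, the first-order term and the remainder are each larger than their sum.

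The paper avoids this by working with the exact trilinear symbol. It splits $1$ along $I-\frac{\kappa}{|\kappa|}\otimes\frac{\kappa}{|\kappa|}$ and $\frac{\kappa}{|\kappa|}\otimes\frac{\kappa}{|\kappa|}$, and uses $(I-\frac{\kappa}{|\kappa|}\otimes\frac{\kappa}{|\kappa|})(\kappa-\eta)=-(I-\frac{\kappa}{|\kappa|}\otimes\frac{\kappa}{|\kappa|})\eta$. It then applies the exact Coulomb identity \eqref{columbfourier} to subtract the full symbol $B_{i,j}(\xi,0,\kappa)$, not just its linearization. As a result, in each of $T^{i,j}_2,T^{i,j}_3,T^{i,j}_4$ both frames enter only through $|\nabla|e_1$ and $|\nabla|e_2$. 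No term with an undifferentiated low-frequency $e_1$ ever appears.

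Conversely, the step you flag as the main obstacle is not one. In Step 4(a) your H\"older bookkeeping is inconsistent: smoothing for $f$ times the maximal estimate for $A$ lands in $L^\infty_eL^2\cdot L^2_eL^\infty\subset L^2_{t,x}$, not in $L^1_eL^2_{e^\perp,t}$. Instead, keep $A=|\nabla|^{-1}\mathcal{R}(ab)$ as a product and put both $a$ and $b$ in maximal spaces (localized ones for high-high-to-low interactions). Then $A\in L^1_eL^\infty_{e^\perp,t}$ with no half-derivative loss. Your first-order term is then essentially of the form $T^{i,j}_{10}$, which Proposition \ref{lasttrilinearguy} handles in all frequency configurations for $n\ge4$. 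Finally, \eqref{eqalgebraic4}, which your Step 3 relies on, holds only for $n\ge5$, whereas the statement is for $n\ge4$.
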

Since $ e_{1} \neq e_{2}$ we cannot hope to reduce the nonlinearity in \eqref{trlCgauge} to $T^{i}_{1}$. However, we use the Coulomb gauge condition to reduce the nonlinearity into a sum of manageable trilinear operators. To make the calculations more readable, we do all the calculations in frequency space.
\\
Define $ \psi : \R^{n} \to \R^{n} $ by $ \psi(\zeta) : = \zeta |\zeta|^{2s-2}$. Then
take the spatial Fourier transform of $ \partial_{j} \langle e_{1} , [ \nabla^{2s-1}_{i},e_{2} ] (f) $ to obtain ( we ignore the time variable $t$ for better readability)
\begin{equation}\label{gaugecal1}
\begin{split}
& \mathcal{F} \left( \partial_{j} \langle e_{1} , [ \nabla^{2s-1}_{i},e_{2} ] (f) \right)(\xi)
\\
& = \int_{\R^{n} \times \R^{n}} \xi_{j} \big[ \psi(\xi - \eta) - \psi(\xi -\kappa)    \big]_{i} f( \xi -\kappa) \langle \hat{e}_{2} ( \kappa - \eta) , \hat{e}_{1} ( \eta) \rangle_{\R^{3}} d \eta d \kappa
\end{split}
\end{equation}
We simplify the right hand side of \eqref{gaugecal1} further; notice that ( for $ \kappa \neq \eta$)
\begin{equation}\label{gaugecal2}
\begin{split}
& 1 = \frac{ \langle \kappa - \eta, \kappa - \eta \rangle}{ | \kappa - \eta |^{2}}
\\
& = \frac{ \langle \kappa- \eta , ( I - \frac{ \kappa}{|\kappa|} \otimes \frac{\kappa}{|\kappa|} ) (\kappa - \eta) \rangle }{| \kappa - \eta |^{2}} + \frac{ \langle \kappa- \eta ,  \frac{ \kappa}{|\kappa|} \otimes \frac{\kappa}{|\kappa|} ) (\kappa - \eta) \rangle }{| \kappa - \eta |^{2}}.
\end{split}
\end{equation}
Set 
\begin{equation}\label{defbij}
B_{i,j}(\xi,\eta,\kappa) : =  \xi_{j}  [ \psi( \xi- \eta) - \psi( \xi - \kappa)]_{i}.
\end{equation}
Then using \eqref{gaugecal2} we obtain that \eqref{gaugecal1} becomes
\begin{equation*}
\begin{split}
& \int_{\R^{n} \times \R^{n}} \xi_{j} \big[ \psi(\xi - \eta) - \psi(\xi -\kappa) \big]_{i} f( \xi - \kappa) \langle \hat{e}_{2} ( \kappa - \eta) , \hat{ e}_{1} ( \eta) \rangle_{\mathbb{R}^{3}} d \eta d \kappa
\\
& = \sum_{l,a=1}^{n} \int_{\R^{n} \times \R^{n}} B_{i,j} (\xi,\eta,\kappa) f(\xi-\kappa) \frac{ ( \kappa - \eta)_{l}}{|\kappa - \eta|^{2}} (  I - \frac{ \kappa}{|\kappa|} \otimes \frac{\kappa}{|\kappa|} )_{al} ( \kappa - \eta)_{a} \langle \hat{e}_{2} (  \kappa- \eta) , \hat{e}_{1}( \eta) \rangle_{\R^{3}} 
\\
& + \sum_{l=1}^{n} \int_{\R^{n} \times \R^{n}} B_{i,j}(\xi,\eta,\kappa) f(\xi -\kappa) \frac{ (\kappa- \eta)_{l}}{|\kappa - \eta|^{2}} \frac{\kappa_{l}}{|\kappa|} \langle \frac{\kappa}{|\kappa|} , \kappa- \eta \rangle \langle \hat{e}_{2} ( \kappa- \eta), \hat{e}_{1} (\eta) \rangle_{\R^{3}}.
\end{split}
\end{equation*}
Define 
\begin{equation}\label{Gamma1guy}
    \Gamma_{1} : = \sum_{l,a=1}^{n} \int_{\R^{n} \times \R^{n}} B_{i,j}(\xi,\eta,\kappa) f(\xi- \kappa) \frac{ ( \kappa - \eta)_{l}}{|\kappa - \eta|^{2}} (  I - \frac{ \kappa}{|\kappa|} \otimes \frac{\kappa}{|\kappa|} )_{al} ( \kappa - \eta)_{a} \langle \hat{e}_{2} (  \kappa- \eta) , \hat{e}_{1}( \eta) \rangle_{\R^{3}}, 
    \end{equation}
and 
\begin{equation}\label{Gamma2guy}
\Gamma_{2} : =\sum_{l=1}^{n} \int_{\R^{n} \times \R^{n}} B_{i,j}(\xi,\eta,\kappa)f(\xi-\kappa) \frac{ (\kappa- \eta)_{l}}{|\kappa - \eta|^{2}} \frac{\kappa_{l}}{|\kappa|} \langle \frac{\kappa}{|\kappa|} , \kappa- \eta \rangle \langle \hat{e}_{2} ( \kappa- \eta), \hat{e}_{1} (\eta) \rangle_{\R^{3}}.
\end{equation}
Therefore, we end up with the following 
\begin{equation*}
\begin{split}
& \mathcal{F}_{\R^{n}} \left( \partial_{j} \langle e_{1} , [ \nabla^{2s-1}_{i},e_{2} ] (f) \right) = \Gamma_{1} + \Gamma_{2}. 
\end{split}
\end{equation*}
And it suffices to prove the nonlinear estimates for each $ \mathcal{F}^{-1}(\Gamma_{i}) $ with $ i \in \{ 1,2 \}$. 
\subsection{Nonlinear estimates for $\Gamma_{1}$}
We start with $\Gamma_{1}$,the estimates for $ \Gamma_{1}$ will follow from the following trilinear estimate
\begin{proposition}\label{GaugeTrl1}
Let $ n \geq 4$, $ s \in (1/2,1)$ and $T' \in [0,\infty]$. Then define the trilinear operator $T^{i,j}_{2}$ via the Fourier transform (we are suppressing the $t$ dependence for readability)
\begin{equation}
\begin{split}
& \mathcal{F}_{\R^{n}} ( T^{i,j}_{2} (f,g,h) )(\xi) : = \sum_{l ,a=1}^{n} \int_{\R^{n} \times \R^{n}} B_{i,j} ( \xi, \eta, \kappa) \hat{f}(\xi-\kappa) 
\\
& 
\times \frac{ ( \kappa - \eta)_{l}}{|\kappa - \eta|^{2}} ( I- \frac{ \kappa}{|\kappa|} \otimes \frac{\kappa}{|\kappa|})_{al} ( \kappa- \eta)_{a} \frac{\hat{h}(\kappa- \eta)}{|\kappa-\eta|} \frac{\hat{g}(\eta)}{|\eta|} d \eta d \kappa.
\end{split}
\end{equation}
Then we have the following estimate for any $ \sigma \geq \frac{n-2}{2}$
\begin{equation*}
\begin{split}
& \Vert \ind_{[-T',T']} T^{i,j}_{2} ( \tilde{f} ,  \tilde{g} ,  \tilde{h} ) \Vert_{N^{\sigma}} 
\\
& \lesssim \Vert f \Vert_{F^{\sigma}} \Vert g \Vert_{F^{\frac{n-2}{2}}} \Vert h \Vert_{F^{\frac{n-2}{2}}}
\\
& + \Vert f \Vert_{F^{\frac{n-2}{2}}} \Vert g \Vert_{F^{\sigma}} \Vert h \Vert_{F^{\frac{n-2}{2}}}
\\
& + \Vert f \Vert_{F^{\frac{n-2}{2}}} \Vert g \Vert_{F^{\frac{n-2}{2}}} \Vert h \Vert_{F^{\sigma}}.
\end{split}
\end{equation*}

\end{proposition}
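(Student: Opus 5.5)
We follow the scheme of Proposition \ref{trilineares1}. We reduce to dyadic pieces: setting $f_{k_1}=\Delta_{k_1}f$, $g_{k_2}=\Delta_{k_2}g$, $h_{k_3}=\Delta_{k_3}h$, it suffices to prove the analogue of \eqref{trl1} with $\partial_j T^i_1$ replaced by $T^{i,j}_2$. By \eqref{multilpierlemma}, we may further assume that the highest-frequency input, and the output $\Delta_k$, are localized to cones $\{\langle\xi,e\rangle\ge 2^{k-c}\}$ with $e\in\mathscr{E}$. Each piece is then estimated through \eqref{passingtol1l2}: we bound the $L^1_eL^2_{e^\perp,t}$ norm of the product using one local smoothing factor \eqref{local_smoothing} on the highest-frequency function and maximal estimates \eqref{maximal_Estimate} on the other two.

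The key structural step is to expand the symbol. The factor
\[
\frac{(\kappa-\eta)_l}{|\kappa-\eta|^{2}}\Big(I-\tfrac{\kappa}{|\kappa|}\otimes\tfrac{\kappa}{|\kappa|}\Big)_{al}(\kappa-\eta)_a\frac{1}{|\kappa-\eta|\,|\eta|}
\]
is a product of a zero-order multiplier in $\kappa-\eta$ (Riesz-type), a zero-order multiplier in $\kappa$, and $|\nabla|^{-1}$ applied to $h$ and to $g$. On dyadic annuli we can write it, after Littlewood--Paley localization of $\kappa$, as a sum of products $m_1(\kappa-\eta)\,m_2(\kappa)$ times $|\kappa-\eta|^{-1}|\eta|^{-1}$, with $\|\mathcal F^{-1}m_i\|_{L^1}\lesssim 1$. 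Here $\kappa$ is the frequency of the product $g\cdot h$.

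The symbol $B_{i,j}(\xi,\eta,\kappa)=\xi_j[\psi(\xi-\eta)-\psi(\xi-\kappa)]_i$ is handled by the mean value theorem or Taylor expansion, exactly as in Lemma \ref{Taylorstuff2} and Lemma \ref{Taylorstuff3}. The difference is $\xi-\eta$ versus $\xi-\kappa$, that is, a shift by $\kappa-\eta$, the frequency of $h$. When $|\kappa-\eta|\ll|\xi-\kappa|$ (high frequency on $f$) we expand
\[
\psi(\xi-\eta)-\psi(\xi-\kappa)=\sum_{|\alpha|\ge1}\frac{\partial^\alpha\psi(\xi-\kappa)}{\alpha!}(\kappa-\eta)^\alpha .
\]
This gains $2^{k_3(|\alpha|)}2^{k_1(2s-1-|\alpha|)}$, which compensates $|\nabla|^{-1}h$ and loses nothing. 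When $f$ is not the highest frequency, we use the trivial bound $|B_{i,j}|\lesssim|\xi|\,(|\xi-\eta|^{2s-1}+|\xi-\kappa|^{2s-1})$ and split into two bilinear-type terms. Each resulting term is then a trilinear expression of the form handled by \eqref{multimultiplier}. It is bounded in $L^1_eL^2_{e^\perp,t}$ by $\|f\|_{L^\infty_eL^2}\|\,|\nabla|^{-1}g\|_{L^2_eL^\infty}\|\,|\nabla|^{-1}h\|_{L^2_eL^\infty}$ or by permutations.

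The case analysis mirrors Cases 1a to 1c and 2a to 2b: $f$ the highest frequency (comparable to, or separated from, the others), or $g$ or $h$ the highest. The counting of powers is as in Proposition \ref{trilineares1}: $2^{k}\,2^{-k(2s-1)/2}\cdot 2^{k_{\max}(2s-1)}\cdot 2^{-k_{\max}(2s-1)/2}$ combined with $2^{(n-1)k_i/2-k_i}$. Summing over $k\le k_{\max}+C$ with weight $2^{\sigma k}$ gives the claim.

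The main obstacle is the projection factor $\big(I-\frac{\kappa}{|\kappa|}\otimes\frac{\kappa}{|\kappa|}\big)$ when $|\kappa|\ll|\kappa-\eta|\approx|\eta|$, i.e. high-high interaction of $g,h$ into low output $\kappa$. There $m_2(\kappa)$ is not localized relative to the inputs, and the Taylor expansion of $B$ in $\kappa-\eta$ fails. We would treat this by decomposing $\kappa$ dyadically ($2^l$, $l\le k_2+C$) as done for $F_l$ in Case 1b. On each piece, $m_2$ is a uniformly bounded multiplier acting on the product $g h$. We then expand $B$ instead around $\xi$, using $|\kappa|\lesssim 2^l$, to gain $2^{(l-k_1)}$-type factors. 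The sum over $l$ should converge because of the extra gain $2^{(l-k_2)\cdot(\text{positive})}$ coming from $n\ge4$ in the bilinear $L^2$ estimate of Lemma \ref{es1} for $g_{k_2}h_{k_3}$ localized at frequency $2^l$. If that fails, we would fall back on the divergence-free structure more directly.
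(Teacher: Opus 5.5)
Your skeleton matches the paper's: dyadic reduction, cone localization via \eqref{multilpierlemma}, passage to $L^1_eL^2_{e^\perp,t}$ via \eqref{passingtol1l2}, local smoothing on the top input and maximal estimates on the other two, and a Taylor expansion of $B_{i,j}$ in $\kappa-\eta$ when $2^{k_3}\ll 2^{k_1}$. But two ingredients the paper's proof depends on are missing, and without them the power counting does not close.

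First, you treat the symbol as zero-order multipliers times $|\nabla|^{-1}g\,|\nabla|^{-1}h$ and never use the identity \eqref{identitycolumbguy1},
\[
\Big(I-\tfrac{\kappa}{|\kappa|}\otimes\tfrac{\kappa}{|\kappa|}\Big)(\kappa-\eta)=-\Big(I-\tfrac{\kappa}{|\kappa|}\otimes\tfrac{\kappa}{|\kappa|}\Big)\eta ,
\]
which moves the derivative carried by $h$ onto $g$. This is indispensable when $h$ sits well above $g$ in frequency, e.g.\ $2^{k_2}\ll 2^{k_1}\approx 2^{k_3}$ (the paper's Case 1b) or $2^{k_2}<2^{k_1}\ll 2^{k_3}$ (Case 3). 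In these regimes $B_{i,j}$ gives no Taylor gain, because $\xi-\eta$ and $\xi-\kappa$ differ by the top frequency $\kappa-\eta$. With your counting, the factor $|\kappa-\eta|^{-1}|\eta|^{-1}\approx 2^{-k_2-k_3}$ leaves a loss of $2^{(k_1-k_2)/2}$, which is unbounded as $k_2\to-\infty$. After applying the identity this factor becomes $2^{-2k_3}$ and the estimate closes. So your claim that the expansion ``loses nothing'' and that the remaining cases follow from the trivial bound on $B_{i,j}$ is false here. Relatedly, the trivial bound also fails when $2^{k_3}\ll 2^{k_1}\ll 2^{k_2}$, where the paper still Taylor-expands. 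The criterion for expanding is $k_3\le k_1-C_0$, not whether $f$ is the largest input.

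Second, you correctly single out the obstacle $2^{k_2}\approx 2^{k_3}$, where $\kappa$ can be near $0$, but what you propose does not resolve it. After splitting $|\kappa|\approx 2^r$, every piece costs the same, since $\varphi_r(\kappa)\big(I-\tfrac{\kappa}{|\kappa|}\otimes\tfrac{\kappa}{|\kappa|}\big)$ has an $r$-independent kernel norm. You therefore need a gain that is summable over infinitely many $r$. Expanding $B_{i,j}$ about $\xi$ only improves the $\psi(\xi-\kappa)$ part; $\psi(\xi-\eta)$ with $|\eta|\approx|\xi|$ is untouched (Case 1a). Lemma \ref{es1} is an $L^2_{t,x}$ bound with no localization of the output frequency of $g_{k_2}h_{k_3}$, so it neither decays in $r$ nor fits the $L^1_eL^2_{e^\perp,t}$ scheme in which you spend the local smoothing of the top input.

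The tool the paper uses is the localized maximal estimate \eqref{eq:max:Xkest:loc}. At scale $2^r$, decompose $g_{k_2}$ and $h_{k_3}$ into cubes $P_{r,\tn}$. By \eqref{cardn1n2}, only $O(1)$ pairs $(\tn_1,\tn_2)$ with $|\tn_1+\tn_2|\lesssim 2^r$ contribute. Then \eqref{maximalestcardinality} and \eqref{eq:max:Xkest:loc} bound the pair by $2^{r(n-1)}2^{k_2-r}(1+|k_2-r|)^2\|g_{k_2}\|_{Z_{k_2}}\|h_{k_3}\|_{Z_{k_3}}$ instead of $2^{k_2(n-1)}\|g_{k_2}\|_{Z_{k_2}}\|h_{k_3}\|_{Z_{k_3}}$. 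That is a gain of $2^{(r-k_2)(n-2)}(1+|k_2-r|)^2$, summable in $r$. The paper uses this in Cases 1a, 1c (when $2^{k_2}\approx 2^{k_3}$) and 2a.
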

Before we start the proof, we explain the idea formally. We examine the nonlinearity at a dyadic level; let $f_{k_{1}} \in Z_{k_{1}}$ , $g_{k_{2}} \in Z_{k_{2}} $, and $h_{k_{3}} \in Z_{k_{3}}$. For simplicity assume that $2^{k_{1}} \gg 2^{\max\{ k_{2}, k_{3} \} } $. Then, in light of Lemma \ref{Taylorstuff3}, $B_{i,j}(\xi,\eta,\kappa)$ is practically placing $ (2s-1)$ derivatives on $ f_{k_{1}}$ and $ h_{k_{3}}$ gets one derivative. Moreover, notice that, for each fixed $a \in \{1,...,n \}$ we have 
\begin{equation}\label{identitycolumbguy1}
\sum_{l=1}^{n}  ( I- \frac{ \kappa}{|\kappa|} \otimes \frac{\kappa}{|\kappa|})_{al} ( \kappa- \eta)_{l} = -\sum_{l=1}^{n} ( I- \frac{ \kappa}{|\kappa|} \otimes \frac{\kappa}{|\kappa|})_{al} ( \eta)_{l}.
\end{equation} 
Therefore, using \eqref{multimultiplier}, the nonlinearity is essentially of the form (recall that we have $|\nabla|^{-1}  $ applied to $g_{k_{2}}$ and $h_{k_{3}}$)
$$
|\nabla|^{2s-1} f_{k_{1}} |\nabla|^{-1/2} g_{k_{2}} | \nabla|^{-1/2} h_{k_{3}}
$$
 and this is easy to handle since we can place $f_{k_{1}}$ in smoothing space while the remaining functions are placed in maximal space. Of course, in order for us to use Lemma \ref{Taylorstuff2} and Lemma \ref{Taylorstuff3} we must have that the frequencies are far apart ( i.e $k_{1} \geq k_{3} +C $ for large $C$). Moreover, if $k_{2}$ is close to $k_{3}$ then $ \frac{\kappa}{|\kappa|}$ might be singular and so we cannot hope to apply \eqref{multimultiplier} or \eqref{multiplierint}.  However, as we will see below, one can use the localized maximal estimate to handle the case when $ 2^{k_{2}} \approx 2^{k_{3}}$.

 Throughout this section, we will make use of the following elementary fact; given  $ r \in \Z$  we have the following
 \begin{equation}\label{cardn1n2}
     \# \{ \tn_{2} \in 2^{r} \Z^{n} :| \tn_{1} + \tn_{2} | \lesssim 2^{r} \} \lesssim 1 \text{  , } \forall \tn_{1} \in 2^{r} \Z.
 \end{equation}
The advantage of this is that whenever we have $ 2^{k_{2}} \approx 2^{k_{3}}$ then we obtain 
\begin{equation}\label{maximalestcardinality}
\begin{split}
& \sum_{ \tn_{1} ,\tn_{2} \in 2^{r} \Z^{n}} \Vert  \Delta_{r} \left( P_{r,\tn_{1}} g_{k_{2}} P_{r,\tn_{2}} h_{k_{3}} \right) \Vert_{L^{1}_{e} L^{\infty}_{e^{\perp},t}}
\\
& \lesssim \sum_{ \tn_{1} ,\tn_{2} \in 2^{r}\Z^{n} : | \tn_{1} + \tn_{2} | \lesssim 2^{r} } \Vert P_{r, \tn_{1}} g_{k_{2}} \Vert_{L^{2}_{e} L^{\infty}_{e^{\perp},t}} \Vert P_{r, \tn_{2}} h_{k_{3}} \Vert_{L^{2}_{e} L^{\infty}_{e^{\perp},t}} 
\\
& \lesssim \left( \sum_{\tn \in 2^{r} \Z^{n}} \Vert P_{r,\tn} g_{k_{2}} \Vert^{2}_{L^{2}_{e} L^{\infty}_{e^{\perp},t}}\right)^{1/2} \left( \sum_{\tn \in 2^{r} \Z^{n}} \Vert P_{r,\tn} h_{k_{3}} \Vert^{2}_{L^{2}_{e} L^{\infty}_{e^{\perp},t}}\right)^{1/2}, 
\end{split}
\end{equation}
where in the last inequality we used \eqref{cardn1n2}.

\begin{proof}[Proof of Proposition \ref{GaugeTrl1}]
It suffices to prove the following 
\begin{equation}\label{aszx1}
\begin{split}
& \sum_{k =-\infty}^{ \max \{ k_{1} ,k_{2},k_{3} \} +C} 2^{ k \sigma} \Vert \Delta_{k}( \i \partial_{t} - ( -\Delta)^{s} +\i)^{-1} \ind_{[-T',T']} T^{i,j}_{2} ( \tilde{f}_{k_{1}},  \tilde{g}_{k_{2}} ,  \tilde{h}_{k_{3}} ) \Vert_{Z_{k}}
\\
& \lesssim 2^{k_{1} \sigma } \Vert f_{k_{1}} \Vert_{Z_{k_{1}}} 2^{k_{2} \frac{n-2}{2}} \Vert g_{k_{2}} \Vert_{Z_{k_{2}}} 2^{k_{3} \frac{n-2}{2}} \Vert h_{k_{3}} \Vert_{Z_{k_{3}}}
\\
& + 2^{k_{1} \frac{n-2}{2} } \Vert f_{k_{1}} \Vert_{Z_{k_{1}}} 2^{k_{2}\sigma } \Vert g_{k_{2}} \Vert_{Z_{k_{2}}} 2^{k_{3} \frac{n-2}{2}} \Vert h_{k_{3}} \Vert_{Z_{k_{3}}}
\\
& + 2^{k_{1} \frac{n-2}{2} } \Vert f_{k_{1}} \Vert_{Z_{k_{1}}} 2^{k_{2} \frac{n-2}{2}} \Vert g_{k_{2}} \Vert_{Z_{k_{2}}} 2^{k_{3} \sigma } \Vert h_{k_{3}} \Vert_{Z_{k_{3}}}.
\end{split}
\end{equation}
To prove \eqref{aszx1}, we consider several cases depending on which frequency is the highest. 
\\
\textbf{Case 1:} If $ \max\{ k_{1},k_{2},k_{3} \} = k_{1}$. Then in view of \eqref{multilpierlemma}, we may assume that $ f_{k_{1}}$ has Fourier transform supported in the set $ \{ (\xi, \tau) : \langle \xi , e' \rangle \geq 2^{k_{1} -2} \}$ for some $ e' \in \mathscr{E}$. Next, we consider three sub-cases, depending on whether $k_{2},k_{3}$ are close to $k_{1}$ or not. 
\\
\textbf{Case 1a: } If $ 2^{k_{1}} \approx 2^{k_{2}} \approx 2^{k_{3}}$. Then in this case we wish to place $ f_{k_{1}}$ in smoothing space, while the remaining functions in the maximal space and use \eqref{multimultiplier}. However, if $k_{2}$ is  close to $k_{3}$ then the product could have low frequency. In this case, we cannot apply \eqref{multimultiplier}. Therefore, we instead apply the localized maximal estimate. More precisely
we write 
\begin{equation}\label{editedcase1a}
\begin{split}
& \mathcal{F}_{\R^{n}}(T^{i,j}_{2}(\tilde{f}_{k_{1}},  \tilde{g}_{k_{2}} , \tilde{h}_{k_{3}} ))(\xi,t)
\\
& = \sum_{l ,a=1}^{n} \int_{\R^{n} \times \R^{n}} B_{i,j} ( \xi, \eta, \kappa) \hat{f}(\xi-\kappa,t) 
\\
& 
\times \frac{ ( \kappa - \eta)_{l}}{|\kappa - \eta|^{2}} ( I- \frac{ \kappa}{|\kappa|} \otimes \frac{\kappa}{|\kappa|})_{al} ( \kappa- \eta)_{a} | \kappa - \eta|^{-1} \hat{h}_{k_{3}}(\kappa- \eta,t) |\eta|^{-1} \hat{g}_{k_{2}}(\eta,t) d \eta d \kappa
\\
& = \sum_{r \leq k_{1} +C} \sum_{l ,a=1}^{n} \int_{\R^{n} \times \R^{n}} B_{i,j} ( \xi, \eta, \kappa) \hat{f}(\xi-\kappa,t) 
\\
& 
\times \frac{ ( \kappa - \eta)_{l}}{|\kappa - \eta|^{2}} ( I- \frac{ \kappa}{|\kappa|} \otimes \frac{\kappa}{|\kappa|})_{al} ( \kappa- \eta)_{a} | \kappa - \eta|^{-1} \varphi_{r}(\kappa) \hat{h}_{k_{3}}(\kappa- \eta,t) |\eta|^{-1} \hat{g}_{k_{2}}(\eta,t) d \eta d \kappa.
\end{split}
\end{equation}
To handle the right hand side of \eqref{editedcase1a}, we split the sum in $r$ to when $ 2^{r} \ll 2^{k_{1}}$ and when $ 2^{r} \approx 2^{k_{1}}$. 
To that end, define $ H^{1}_{r} (\tilde{f}_{k_{1}},\tilde{g}_{k_{2}}, \tilde{h}_{k_{3}}) $ via the Fourier transform
\begin{equation}\label{hrdef}
\begin{split}
& \mathcal{F}_{\R^{n}}(H^{1}_{r} ( \tilde{f}_{k_{1}} , \tilde{g}_{k_{2}} ,\tilde{h}_{k_{3}} ) (\xi,t) 
\\
& := \sum_{l ,a=1}^{n} \int_{\R^{n} \times \R^{n}} B_{i,j} ( \xi, \eta, \kappa) \hat{f}(\xi-\kappa,t) 
\\
& \times
\frac{ ( \kappa - \eta)_{l}}{|\kappa - \eta|^{2}} ( I- \frac{ \kappa}{|\kappa|} \otimes \frac{\kappa}{|\kappa|})_{al} ( \kappa- \eta)_{a}
| \kappa - \eta|^{-1} \varphi_{r}(\kappa)  \hat{h}_{k_{3}}(\kappa- \eta,t) |\eta|^{-1} \hat{g}_{k_{2}}(\eta,t) d \eta d \kappa
\end{split}
\end{equation}
Fix $r \leq k_{1} -C$ for sufficiently large $C$. Then write 
\begin{equation*}
\begin{split}
& \mathcal{F}_{\R^{n}}(H^{1}_{r} ( \tilde{f}_{k_{1}} , \tilde{g}_{k_{2}} ,\tilde{h}_{k_{3}} ) (\xi,t) 
\\
& = \sum_{\tn_{1} \tn_{2} \in 2^{r} \Z} \sum_{l ,a=1}^{n} \int_{\R^{n} \times \R^{n}} B_{i,j} ( \xi, \eta, \kappa) \hat{f}_{k_{1}} (\xi-\kappa,t) 
 \frac{ ( \kappa - \eta)_{l}}{|\kappa - \eta|^{2}} ( I- \frac{ \kappa}{|\kappa|} \otimes \frac{\kappa}{|\kappa|})_{al} ( \kappa- \eta)_{a} 
 \\
 & \times
 | \kappa - \eta|^{-1} \varphi_{r}(\kappa)   \chi_{r,\tn_{2}}(\kappa-\eta)  \hat{h}_{k_{3}}(\kappa- \eta,t) |\eta|^{-1}\chi_{r,\tn_{1}}(\eta) \hat{g}_{k_{2}}(\eta,t) d \eta d \kappa
\end{split}
\end{equation*}
Notice for each fixed $ \tn_{1} \in 2^{r} \Z$, the sum in $ \tn_{2}$ is finite, see \eqref{cardn1n2}. 
 Now we estimate $H^{1}_{r}$ as follows, using \eqref{multimultiplier} and the assumption that $ 2^{k_{1}} \approx 2^{k_{2}} \approx 2^{k_{3}}$ we obtain 
\begin{equation*}
\begin{split}
&  \Vert \Delta_{k}( \i \partial_{t} - ( -\Delta)^{s} + \i)^{-1} \ind_{[-T',T']} H^{1}_{r}( \tilde{f}_{k_{1}}, \tilde{g}_{k_{2}}, \tilde{h}_{k_{3}}) \Vert_{Z_{k}}
\\
& \lesssim \sum_{\tn_{1},\tn_{2} \in 2^{r} \Z} \Vert \Delta_{k}( \i \partial_{t} - ( -\Delta)^{s} + \i)^{-1} \ind_{[-T',T']} H_{r}( \tilde{f}_{k_{1}}, P_{r,\tn} \tilde{g}_{k_{2}}, P_{r,\tn} \tilde{h}_{k_{3}}) \Vert_{Z_{k}}
\\
& \lesssim 2^{-k_{1}(2s-1)/2} \sum_{\tn_{1},\tn_{2} \in 2^{r} \Z^{n}} \Vert H^{1}_{r} ( \tilde{f}_{k_{1}}, P_{r,\tn_{1}} \tilde{g}_{k_{2}}, P_{r,\tn_{2}} \tilde{h}_{k_{3}} ) \Vert_{L^{1}_{e'}L^{2}_{e'^{\perp},t}}
\\
& \overset{\eqref{multimultiplier}}{\lesssim} 2^{-k_{1}(2s-1)/2} \Vert \tilde{f}_{k_{1}} \Vert_{L^{\infty}_{e'} L^{2}_{e'^{\perp},t}} \sum_{\tn_{1},\tn_{2} } \Vert P_{r,\tn_{1}}\tilde{g}_{k_{2}} \Vert_{L^{2}_{e'}L^{\infty}_{e'^{\perp},t}} \Vert P_{r,\tn_{2}} \tilde{h}_{k_{3}} \Vert_{L^{2}_{e'}L^{\infty}_{e'^{\perp},t}} 2^{k_{1}} 2^{k_{1}(2s-1)} 2^{-2k_{1}}
\\
& \overset{\eqref{maximalestcardinality}}{\lesssim}2^{-k_{1}(2s-1)/2} \Vert \tilde{f}_{k_{1}} \Vert_{L^{\infty}_{e'} L^{2}_{e'^{\perp},t}} \left( \sum_{\tn \in 2^{r} \Z^{n}} \Vert P_{r,\tn} \tilde{g}_{k_{2}} \Vert_{L^{2}_{e'}L^{\infty}_{e'^{\perp},t}}^{2} \right)^{1/2}
\\
& \times \left( \sum_{\tn \in 2^{r} \Z^{n}} 
  \Vert P_{r,\tn} \tilde{g}_{k_{3}} \Vert_{L^{2}_{e'}L^{\infty}_{e'^{\perp},t}}^{2} \right)^{1/2}2^{k_{1}} 2^{k_{1}(2s-1)} 2^{-2k_{1}}
\\
& \overset{\eqref{local_smoothing}, \eqref{eq:max:Xkest:loc}}{\lesssim} 2^{-k_{1}(2s-1)} \Vert f_{k_{1}} \Vert_{Z_{k_{1}}} 2^{r \frac{n-1}{2}} 2^{(k_{2}-r)/2} \Vert g_{k_{2}} \Vert_{Z_{k_{2}}} 2^{ r \frac{n-1}{2}} 2^{(k_{3} -r)/2} \Vert h_{k_{3}} \Vert_{Z_{k_{3}}} (1+|k_{1} -r|)^{2} 2^{k_{1}} 2^{k_{1}(2s-1)} 2^{-2k_{1}}
\\
& \approx (1+ |k_{1} -r|)^{2} 2^{r (n-2)} \Vert f_{k_{1}} \Vert_{Z_{k_{1}}} \Vert g_{k_{2}} \Vert_{Z_{k_{2}}} \Vert h_{k_{3}} \Vert_{Z_{k_{3}}}
\end{split}
\end{equation*}
summing over $ r \leq k_{1} -C$, and multiplying by $2^{k \sigma} \approx 2^{k_{1} \sigma}$ we obtain
\begin{equation}\label{locar1}
\begin{split}
& \sum_{r \leq k_{1} -C}   2^{k_{1} \sigma} \Vert \Delta_{k}( \i \partial_{t} - ( -\Delta)^{s} + \i)^{-1}\ind_{[-T',T']} H^{1}_{r}( \tilde{f}_{k_{1}}, \tilde{g}_{k_{2}}, \tilde{h}_{k_{3}}) \Vert_{Z_{k}}
\\
& \lesssim 2^{k_{1} \sigma} \Vert f_{k_{1}} \Vert_{Z_{k_{1}}} 2^{k_{2} \frac{n-2}{2}} \Vert g_{k_{2}} \Vert_{Z_{k_{2}}} 2^{k_{3} \frac{n-2}{2}} \Vert h_{k_{3}} \Vert_{Z_{k_{3}}}.
\end{split}
\end{equation}
Next, take $r \in [k_{1} -C,k_{1} +C]$. Then in this case we can apply the formal argument alluded to at the beginning of the proof without any issues. This is because we have that $ \kappa$ is localized at $ 2^{k_{1}}$, so we can apply \eqref{multimultiplier}. More precisely, for any $r \in [k_{1} -C,k_{1} +C]$ we have 
\begin{equation}
\begin{split}
& \Vert \Delta_{k} ( \i \partial_{t} - ( - \Delta)^{s} + \i)^{-1} \ind_{[-T',T']} H^{1}_{r} (\tilde{f}_{k_{1}},   \tilde{g}_{k_{2}} ,  \tilde{h}_{k_{3}} ) \Vert_{Z_{k}}
\\
& \lesssim 2^{-k (2s-1)/2} \max_{e \in \mathscr{E}} \Vert \Delta_{k} H^{1}_{r} (\tilde{f}_{k_{1}} ,  \tilde{g}_{k_{2}} ,  \tilde{h}_{k_{3}} ) \Vert_{L^{1}_{e}L^{2}_{e^{\perp},t}}
\\
& \overset{\eqref{multimultiplier}}{\lesssim} 2^{-k(2s-1)/2} 2^{k} 2^{k_{1} (2s-1)} \max_{e \in \mathscr{E}} \Vert f_{k_{1}} \Vert_{L^{\infty}_{e'}L^{2}_{e'^{\perp},t}} \Vert |\nabla|^{-1} h_{k_{3}} \Vert_{L^{2}_{e'}L^{\infty}_{e'^{\perp},t}} \Vert |\nabla|^{-1} g_{k_{2}} \Vert_{L^{2}_{e}L^{\infty}_{e^{\perp},t}}
\\
& \overset{\eqref{local_smoothing},\eqref{maximal_Estimate}}{\lesssim} 2^{-k (2s-1)/2} 2^{k} 2^{k_{1}(2s-1)/2} 2^{- k_{1}}  \Vert f_{k_{1}} \Vert_{Z_{k_{1}}} 2^{k_{2} \frac{n-2}{2}} \Vert g_{k_{2}} \Vert_{Z_{k_{2}}} 2^{k_{3} \frac{n-2}{2}} \Vert h_{k_{3}} \Vert_{Z_{k_{3}}}
\end{split}
\end{equation}
multiplying by $ 2^{k \sigma}$ and summing over $k \leq k_{1} +C$ gives us (for $r \in [k_{1}-C,k_{1}+C]$)
\begin{equation}\label{locar2}
\begin{split}
&\sum_{k \leq k_{1}+C} 2^{k \sigma} \Vert \Delta_{k} ( \i \partial_{t} - ( - \Delta)^{s} + \i)^{-1} \ind_{[-T',T']} H^{1}_{r} (\tilde{f}_{k_{1}},   \tilde{g}_{k_{2}} ,  \tilde{h}_{k_{3}} ) \Vert_{Z_{k}}
\\
& \lesssim 2^{k_{1} \sigma } \Vert f_{k_{1}} \Vert_{Z_{k_{1}}} 2^{k_{2} \frac{n-2}{2}} \Vert g_{k_{2}} \Vert_{Z_{k_{2}}} 2^{k_{3} \frac{n-2}{2}} \Vert h_{k_{3}} \Vert_{Z_{k_{3}}}.
\end{split}
\end{equation}
Combining \eqref{locar1} and \eqref{locar2} together with the obvious identity
$$
T^{i,j}_{2}(\tilde{f}_{k_{1}}, \tilde{g}_{k_{2}}, \tilde{h}_{k_{3}} ) = \sum_{r \leq k_{1} +C} H^{1}_{r} ( \tilde{f}_{k_{1}},\tilde{g}_{k_{2}},\tilde{h}_{k_{3}})
$$
yields
\eqref{aszx1} in this case. 
\\
\textbf{Case 1b:} If $  2^{k_{2}} \ll 2^{k_{1} }$ while $ 2^{k_{3}} \approx 2^{k_{1}}$. Then we make use of the identity \eqref{identitycolumbguy1} to place the derivatives on  $\tilde{g}_{k_{2}}$. In other words, in the Fourier space we have 
\begin{equation*}
\begin{split}
& \mathcal{F}_{\R^{n}}(T^{i,j}_{2}(\tilde{f}_{k_{1}}, \tilde{g}_{k_{2}} , \tilde{h}_{k_{3}} ))(\xi,t)
\\
&   = - \sum_{l ,a=1}^{n} \int_{\R^{n} \times \R^{n}} B_{i,j} ( \xi, \eta, \kappa) \hat{f}_{k_{1}}(\xi-\kappa,t) 
\\
& 
\times \frac{ ( \kappa - \eta)_{l}}{|\kappa - \eta|^{2}} ( I- \frac{ \kappa}{|\kappa|} \otimes \frac{\kappa}{|\kappa|})_{al} ( \eta)_{a} \frac{\hat{h}_{k_{3}}(\kappa- \eta,t)}{|\kappa-\eta|} \frac{ \hat{g}_{k_{2}}(\eta,t)}{|\eta|} d \eta d \kappa.
\end{split}
\end{equation*}
Now we use \eqref{multimultiplier} to obtain 
\begin{equation*}
\begin{split}
& \Vert \Delta_{k} ( \i \partial_{t} - ( -\Delta)^{s} +\i)^{-1} \ind_{[-T',T']} T^{2}_{i,j} (\tilde{f}_{k_{1}}, \tilde{g}_{k_{2}},\tilde{h}_{k_{3}}) \Vert_{Z_{k}}
\\
& \lesssim  2^{-k(2s-1)/2} \max_{e \in \mathscr{E}} \Vert T^{2}_{i,j} \Vert_{L^{1}_{e} L^{2}_{e^{\perp},t}}
\\
& \overset{\eqref{multimultiplier}}{ \lesssim} 2^{k} 2^{-k(2s-1)/2} 2^{k_{1}(2s-1)}  \Vert g_{k_{2}} \Vert_{L^{2}_{e}L^{\infty}_{e^{\perp},t}} \Vert f_{k_{1}} \Vert_{L^{\infty}_{e'} L^{2}_{e'^{\perp},t}} 2^{-2k_{3}} \Vert h_{k_{3}} \Vert_{L^{2}_{e'}L^{\infty}_{e'^{\perp},t}}
\\
& \overset{\eqref{local_smoothing}, \eqref{maximal_Estimate}}{\lesssim} 2^{k} 2^{-k(2s-1)/2} 2^{k_{1} (2s-1)} 2^{-2k_{3}} 2^{-k_{1}(2s-1)/2} \Vert f_{k_{1}} \Vert_{Z_{k_{1}}} 2^{k_{2} \frac{n-1}{2}} \Vert g_{k_{2}} \Vert_{Z_{k_{2}}} 2^{k_{3} \frac{n-1}{2}} \Vert h_{k_{3}} \Vert_{Z_{k_{3}}}
\\
& \overset{2^{k_{1}} \approx 2^{k_{3}}}{ \lesssim } 2^{k -k_{1}} 2^{( k_{1} -k)(2s-1)/2} \Vert f_{k_{1}} \Vert_{Z_{k_{1}}} 2^{k_{2} \frac{n-2}{2}} \Vert g_{k_{2}} \Vert_{Z_{k_{2}}} 2^{k_{3} \frac{n-2}{2}} \Vert h_{k_{3}} \Vert_{Z_{k_{3}}}.
\end{split}
\end{equation*}
Multiplying by $ 2^{k \sigma }$ and summing over $k \leq k_{1} +C$ yields \eqref{aszx1} in this case. 
\\
\textbf{Case 1c:} if $ 2^{ k_{3} } \ll 2^{k_{1}} $. Assume without loss of generality that $\min \{ k_{2} , k_{3} \} =k_{3}$ (The argument is almost identical for the other case).
In this case we make use of Lemma \ref{Taylorcalc} to write 
\begin{equation*}
\begin{split}
& (\xi-\eta)_{i} | \xi -\eta|^{2s-2} - (\xi -\kappa)_{i} | \xi -\kappa|^{2s-2}
\\
& = \sum_{|\alpha| =1}^{\infty} \frac{ m_{\alpha} ( \xi -\eta) }{\alpha !} |\xi-\eta|^{2s-1-|\alpha|} (\kappa -\eta)^{\alpha} 
\end{split}
\end{equation*}
where $ m_{\alpha}$ satisfy \eqref{qpppa}. Fix any multi-index $ \alpha$. Then define $H^{2}_{\alpha}$ as follows 
\begin{equation}\label{Halpha}
\begin{split}
& \mathcal{F}_{\R^{n}} ( H^{2}_{\alpha} ( \tilde{f}_{k_{1}},\tilde{g}_{k_{2}},\tilde{h}_{k_{3}})(\xi,t)
\\
& = \frac{1}{\alpha !} \sum_{a,l=1}^{n} \int_{\R^{n} \times \R^{n}} m_{\alpha} ( \xi - \eta)|\xi -\eta|^{(2s-1)-|\alpha|}  (\kappa - \eta)^{\alpha} \hat{f}(\xi - \kappa,t)   
\\
& \times \frac{ ( \kappa - \eta)_{l}}{|\kappa - \eta|^{2}} ( I- \frac{ \kappa}{|\kappa|} \otimes \frac{\kappa}{|\kappa|})_{al} ( \kappa-\eta)_{a} \frac{\hat{h}_{k_{3}}(\kappa- \eta,t)}{|\kappa-\eta|} \frac{ \hat{g}_{k_{2}}(\eta,t)}{|\eta|} d \eta d \kappa.
\end{split}
\end{equation}
In view Lemma \ref{Taylorcalc} and the definition of $T^{i,j}_{2}$ it suffices to prove the following 
\begin{equation}\label{aszx2}
\begin{split}
& \sum_{k\leq k_{1}+C} 2^{k \sigma } \sum_{|\alpha|=1}^{\infty} \Vert \Delta_{k} ( \i \partial_{t} - ( -\Delta)^{s} +\i)^{-1} \ind_{[-T',T']} \partial_{j} H^{2}_{\alpha} ( \tilde{f}_{k_{1}},\tilde{g}_{k_{2}},\tilde{h}_{k_{3}} ) \Vert_{Z_{k}}
\\
& \lesssim  2^{k_{1} \sigma } \Vert f_{k_{1}} \Vert_{Z_{k_{1}}} 2^{k_{2} \frac{n-2}{2}} \Vert g_{k_{2}} \Vert_{Z_{k_{2}}} 2^{k_{3} \frac{n-2}{2}} \Vert h_{k_{3}} \Vert_{Z_{k_{3}}}.
\end{split}
\end{equation}
To that end, fix any multi-index $ \alpha$ and assume first that $ 2^{ k_{3}  } \ll 2^{ k_{2},}$. Then set 
$$
a_{\alpha} : = \Vert \mathcal{F}^{-1}_{\R^{n}} ( \varphi_{[k_{1}-C,k_{1}+C]} m_{\alpha}) \Vert_{L^{1}}. 
$$
Then we estimate, using \eqref{multimultiplier}, 
\begin{equation*}
\begin{split}
& \Vert \Delta_{k} ( \i \partial_{t} - ( -\Delta)^{s} +\i)^{-1} \ind_{[-T',T']} \partial_{j} H^{2}_{\alpha} ( \tilde{f}_{k_{1}}, \tilde{g}_{k_{2}},\tilde{h}_{k_{3}} ) \Vert_{Z_{k}}
\\
& \lesssim 2^{-k(2s-1)/2} 2^{k} \max_{e \in \mathscr{E}} \Vert H^{2}_{\alpha}( \tilde{f}_{k_{1}}, \tilde{g}_{k_{2}}, \tilde{h}_{k_{3}} ) \Vert_{L^{1}_{e} L^{2}_{e^{\perp},t}} 
\\
& \overset{\eqref{multimultiplier}}{ \lesssim} 2^{-k(2s-1)/2}2^{k}  2^{k_{1}(2s-1 - | \alpha| ) } 2^{ | \alpha| k_{3}} \frac{a_{\alpha}}{\alpha !} 2^{-k_{2} - k_{3}} \Vert \tilde{f}_{k_{1}} \Vert_{L^{\infty}_{e'} L^{2}_{e'^{\perp},t}} \Vert \tilde{g}_{k_{2}} \Vert_{L^{2}_{e'}L^{\infty}_{e'^{\perp},t}} \max_{e \in \mathscr{E}} \Vert \tilde{h}_{k_{3}} \Vert_{L^{2}_{e} L^{\infty}_{e^{\perp},t}}
\\
& \overset{\eqref{local_smoothing}, \eqref{maximal_Estimate}}{ \lesssim} 2^{-k(2s-1)/2}2^{k}  2^{k_{1}(2s-1 - | \alpha| ) } 2^{ | \alpha| k_{3}} \frac{a_{\alpha}}{\alpha !} 2^{-k_{2} -k_{3}} 2^{-k_{1} ( 2s-1)/2} 2^{k_{2} \frac{n-1}{2}} 2^{k_{3} \frac{n-1}{2}} \Vert f_{k_{1}} \Vert_{Z_{k_{1}}} \Vert g_{k_{2}} \Vert_{Z_{k_{2}}} \Vert  h_{k_{3}} \Vert_{Z_{k_{3}}}
\end{split}
\end{equation*}
Now we use the assumption that $ k_{3} \leq k_{1} -C$ and $k_{3} \leq k_{2} -C$ to obtain
\begin{equation*}
\begin{split}
& 2^{-k(2s-1)/2}2^{k}  2^{k_{1}(2s-1 - | \alpha| ) } 2^{ | \alpha| k_{3}} a_{\alpha} 2^{-k_{2} -k_{3}} 2^{-k_{1} ( 2s-1)/2} 2^{k_{2} \frac{n-1}{2}} 2^{k_{3} \frac{n-1}{2}} 
\\
& \lesssim 2^{ ( k_{3} -k_{1} ) ( |\alpha | -1) } 2^{( k -k_{1} ) } 2^{(k_{1} -k)(2s-1)/2} 2^{k_{2} \frac{n-2}{2}} 2^{k_{3} \frac{n-2}{2}} 
\\
& \lesssim 2^{ -C  |\alpha|} 2^{( k -k_{1} ) } 2^{(k_{1} -k)(2s-1)/2} 2^{k_{2} \frac{n-2}{2}} 2^{k_{3} \frac{n-2}{2}} 
\end{split}
\end{equation*}
And therefore, we end up with 
\begin{equation*}
\begin{split}
& \Vert \Delta_{k} ( \i \partial_{t} - ( -\Delta)^{s} +\i)^{-1} \ind_{[-T',T']}\partial_{j} H^{2}_{\alpha} ( \tilde{f}_{k_{1}}, \tilde{g}_{k_{2}},\tilde{h}_{k_{3}} ) \Vert_{Z_{k}}
\\
& \lesssim \frac{a_{\alpha} 2^{-C |\alpha|}}{\alpha !} 2^{( k -k_{1} ) } 2^{(k_{1} -k)(2s-1)/2} 2^{k_{2} \frac{n-2}{2}} 2^{k_{3} \frac{n-2}{2}}\Vert f_{k_{1}} \Vert_{Z_{k_{1}}} \Vert g_{k_{2}} \Vert_{Z_{k_{2}}} \Vert  h_{k_{3}} \Vert_{Z_{k_{3}}}
\end{split}
\end{equation*}
Summing over $ |\alpha|$ we obtain, using the convergence properties of $ a_{\alpha}$ stated in Lemma \ref{Taylorcalc}, 
\begin{equation*}
\begin{split}
& \sum_{|\alpha|=1}^{\infty} \Vert \Delta_{k} ( \i \partial_{t} - ( -\Delta)^{s} +\i)^{-1} \ind_{[-T',T']} \partial_{j}H^{2}_{\alpha} ( \tilde{f}_{k_{1}}, \tilde{g}_{k_{2}},\tilde{h}_{k_{3}} ) \Vert_{Z_{k}}
\\
& \lesssim  2^{( k -k_{1} ) } 2^{(k_{1} -k_)(2s-1)/2} 2^{k_{2} \frac{n-2}{2}} 2^{k_{3} \frac{n-2}{2}}\Vert f_{k_{1}} \Vert_{Z_{k_{1}}} \Vert g_{k_{2}} \Vert_{Z_{k_{2}}} \Vert  h_{k_{3}} \Vert_{Z_{k_{3}}}
\end{split}
\end{equation*}
Multiplying by $ 2^{k \sigma}$ and summing over $k \leq k_{1} +C$ yields \eqref{aszx2} under the assumption that $ 2^{k_{3}  } \ll 2^{k_{2}  }$. \\
If $ 2^{k_{2}} \approx 2^{k_{3}}$ then we cannot use \eqref{multimultiplier} directly since $ \frac{1}{|\kappa|}$ might be singular. We again use the localized maximal estimate instead. To that end, define 
\begin{equation*}
\begin{split}
& \mathcal{F}_{\R^{n}} ( H^{2}_{\alpha,r} ( \tilde{f}_{k_{1}},\tilde{g}_{k_{2}},\tilde{h}_{k_{3}})(\xi,t)
\\
& := \frac{1}{\alpha !} \sum_{a,l=1}^{n} \int_{\R^{n} \times \R^{n}} m_{\alpha} ( \xi - \eta)|\xi -\eta|^{(2s-1)-|\alpha|}  (\kappa - \eta)^{\alpha} \hat{f}(\xi - \kappa,t)   
\\
& \times \frac{ ( \kappa - \eta)_{l}}{|\kappa - \eta|^{2}} ( I- \frac{ \kappa}{|\kappa|} \otimes \frac{\kappa}{|\kappa|})_{al} ( \kappa-\eta)_{a} \frac{\hat{h}_{k_{3}}(\kappa- \eta,t)}{|\kappa-\eta|} \frac{ \hat{g}_{k_{2}}(\eta,t)}{|\eta|} \varphi_{r}(\kappa) d \eta d \kappa.
\end{split}
\end{equation*}
Then to obtain \eqref{aszx2}, it suffices to prove 
\begin{equation}\label{newaszx}
\begin{split}
& \sum_{k \leq k_{1} +C} 2^{k \sigma } \sum_{| \alpha|=1}^{\infty} \sum_{r \leq k_{2} +C} \Vert \Delta_{k} ( \i \partial_{t} - ( -\Delta)^{s} + \i)^{-1} \ind_{[-T',T']} \partial_{j} H^{2}_{\alpha,r} ( \tilde{f}_{k_{1}} , \tilde{g}_{k_{2}} , \tilde{h}_{k_{3}} ) \Vert_{Z_{k}}
\\
& \lesssim 2^{k_{1} \sigma} \Vert f_{k_{1}} \Vert_{Z_{k_{1}}} 2^{k_{2} \frac{n-2}{2}} \Vert g_{k_{2}} \Vert_{Z_{k_{2}}} 2^{k_{3} \frac{n-2}{2}} \Vert h_{k_{3}} \Vert_{Z_{k_{3}}}.
\end{split}
\end{equation}
To that end, fix any $r,\alpha$ then we estimate using \eqref{multimultiplier}, the localized maximal estimate, and the assumption that $ 2^{k_{2}} \approx 2^{k_{3}}$ 
\begin{equation*}
\begin{split}
& \Vert \Delta_{k} ( \i \partial_{t} - ( -\Delta)^{s} + \i)^{-1} \ind_{[-T',T']} \partial_{j} H^{2}_{\alpha ,r}(\tilde{f}_{k_{1}},\tilde{g}_{k_{2}}, \tilde{h}_{k_{3}} ) \Vert_{Z_{k}}
\\
& \lesssim 2^{-k_{1} (2s-1)/2} 2^{k_{1}} \Vert H^{2}_{\alpha ,r} (\tilde{f}_{k_{1}},\tilde{g}_{k_{2}} ,\tilde{h}_{k_{3}} ) \Vert_{L^{1}_{e'} L^{2}_{e'^{\perp},t}}
\\
& \overset{ \eqref{multimultiplier}}{\lesssim} 2^{k_{1} (2s-1)/2} \frac{a _{ \alpha} 2^{ - C |\alpha |}}{\alpha !}  2^{-k_{3}} \Vert f_{k_{1}} \Vert_{L^{\infty}_{e'} L^{2}_{e'^{\perp},t}} 
\\
& \times \left( \sum_{\tn} \Vert P_{r,\tn} \tilde{g}_{k_{2}} \Vert^{2} _{L^{2}_{e'}L^{\infty}_{e'^{\perp},t}} \right)^{1/2}
\left( \sum_{\tn} \Vert P_{r,\tn} \tilde{h}_{k_{3}} \Vert^{2} _{L^{2}_{e'}L^{\infty}_{e'^{\perp},t}} \right)^{1/2}
\\
& \overset{\eqref{local_smoothing}, \eqref{eq:max:Xkest:loc}}{\lesssim} \frac{a_{\alpha} 2^{-C  | \alpha|}}{\alpha !} \Vert f_{k_{1}} \Vert_{Z_{k_{1}}} 2^{r \frac{n-1}{2}} 2^{(k_{2} -r)/2} \Vert g_{k_{2}} \Vert_{Z_{k_{2}}} 2^{r \frac{n-1}{2}} 2^{(k_{3} -r)/2} \Vert h_{k_{3}} \Vert_{Z_{k_{3}}}  2^{-k_{3}}(1+|k_{2}-r|)^{2}.
\end{split}
\end{equation*}
This readily implies \eqref{newaszx} ( since $2^{k_{2}} \approx 2^{k_{3}})$.
This concludes case 1. 
\\
\textbf{Case 2:} If $ \max \{k_{1} , k_{2} ,k_{3} \} =k_{2}$. In view of \eqref{multilpierlemma}, we may assume that $g_{k_{2}}$ has Fourier transform supported in $ \{ ( \xi, \tau) : \langle \xi, e' \rangle \geq 2^{k_{2} -2}\}$ for some $ e' \in \mathscr{E}$. Again we consider several sub-cases. The case in which $ 2^{k_{1}} \approx 2^{k_{2}} \approx 2^{k_{3}}$ is covered in case 1a above. And the case in which $ 2^{k_{2}} \approx 2^{k_{1}}$ while $ 2^{k_{3}} \ll 2^{k_{2}}$ is covered in case 1c. The two cases that needs to be done are when $ 2^{\max \{k_{1},k_{3} \}} \ll 2^{k_{2}}$ and when $ 2^{k_{2}} \approx 2^{k_{3}}$ but $ 2^{k_{1}} \ll 2^{k_{2}}$. 
\\
\textbf{Case 2a:} If $ 2^{k_{2}} \approx 2^{k_{3}}$ and $2^{k_{1}} \ll 2^{k_{2}}.$ As was done in case 1a, we need to be careful when we apply \eqref{multimultiplier}. This is because the support in variable $ \kappa$ could include $0$ when $ 2^{k_{2}} \approx 2^{k_{3}}$. To that end, we use the operator $H^{1}_{r}$, defined in \eqref{hrdef}, to write 
\begin{equation*}
\begin{split}
& T^{i,j}_{2} ( \tilde{f}_{k_{1}},\tilde{g}_{k_{2}},\tilde{h}_{k_{3}} ) = \sum_{r \leq k_{2} +C} H^{1}_{r}(\tilde{f}_{k_{1}},\tilde{g}_{k_{2}},\tilde{h}_{k_{3}} ). 
\end{split}
\end{equation*}
And repeating the argument leading to \eqref{locar1} we easily obtain, 
\begin{equation}\label{case2r1}
\begin{split}
&\sum_{k \leq k_{1} + C} 2^{k \sigma} \sum_{r \leq k_{1}-C} \Vert \Delta_{k}( \i \partial_{t} - ( - \Delta)^{s} + \i)^{-1} \ind_{[-T',T']}  H^{1}_{r} ( \tilde{f}_{k_{1}}, \tilde{g}_{k_{2}} , \tilde{h}_{k_{3}}) \Vert_{Z_{k}}
\\
& \lesssim 2^{k_{1} \sigma } \Vert f_{k_{1}} \Vert_{Z_{k_{1}}} 2^{k_{2} \frac{n-2}{2}} \Vert g_{k_{2}} \Vert_{Z_{k_{2}}} 2^{k_{3} \frac{n-2}{2}} \Vert h_{k_{3}} \Vert_{Z_{k_{3}}}. 
\end{split}
\end{equation}
Now fix $ r \in [k_{1} -C , k_{2} +C]$. Then we estimate using \eqref{multimultiplier}, and the assumption that $2^{k_{2}} \approx 2^{k_{3}}$
\begin{equation*}
\begin{split}
& \Vert \Delta_{k}( \i \partial_{t} - ( - \Delta)^{s} + \i)^{-1} \ind_{[-T',T']} H^{1}_{r} ( \tilde{f}_{k_{1}}, \tilde{g}_{k_{2}} , \tilde{h}_{k_{3}}) \Vert_{Z_{k}}
\\
& \lesssim 2^{-k(2s-1)/2} \max_{e \in \mathscr{E}} \Vert H_{r} ( \tilde{f}_{k_{1}}, \tilde{g}_{k_{2}} , \tilde{h}_{k_{3}}) \Vert_{L^{1}_{e} L^{2}_{e^{\perp},t}}
\\
& \overset{\eqref{multimultiplier}}{ \lesssim} 2^{-k(2s-1)/2} 2^{r} 2^{k_{2}(2s-1)} 2^{-k_{2} -k_{3}} \Vert g_{k_{2}} \Vert_{L^{\infty}_{e'} L^{2}_{e'^{\perp},t}} \Vert  h_{k_{3}} \Vert_{L^{2}_{e'} L^{\infty}_{e'^{\perp},t}} \max_{e \in \mathscr{E}} \Vert f_{k_{1}} \Vert_{L^{2}_{e} L^{\infty}_{e^{\perp},t}}
\\
& \overset{\eqref{local_smoothing}, \eqref{maximal_Estimate}}{\lesssim}  2^{-k(2s-1)/2} 2^{r} 2^{k_{2}(2s-1)/2} 2^{-k_{2}}   2^{k_{1} \frac{n-2}{2}} \Vert f_{k_{1}} \Vert_{Z_{k_{1}}} \Vert g_{k_{2}} \Vert_{Z_{k_{2}}} 2^{k_{3} \frac{n-2}{2}} \Vert h_{k_{3}} \Vert_{Z_{k_{3}}}.
\end{split}
\end{equation*}
This readily implies 
\begin{equation}\label{case2r2}
\begin{split}
& \sum_{k \leq k_{2} +C} 2^{k \sigma } \sum_{r = k_{1}-C}^{k_{2}+C} \Vert \Delta_{k}( \i \partial_{t} - ( - \Delta)^{s} + \i)^{-1} \ind_{[-T',T']} H^{1}_{r} ( \tilde{f}_{k_{1}}, \tilde{g}_{k_{2}} , \tilde{h}_{k_{3}}) \Vert_{Z_{k}}
\\
& \lesssim 2^{k_{1} \frac{n-2}{2}} \Vert f_{k_{1}} \Vert_{Z_{k_{1}}} 2^{k_{2} \sigma } \Vert g_{k_{2}} \Vert_{Z_{k_{2}}} 2^{k_{3} \frac{n-2}{2}} \Vert h_{k_{3}} \Vert_{Z_{k_{3}}}. 
\end{split}
\end{equation}
Combining \eqref{case2r1} and \eqref{case2r2} we obtain 
\begin{equation*}
\begin{split}
& \sum_{k \leq k_{2} +C} 2^{k \sigma } \sum_{r \leq k_{2}+C} \Vert \Delta_{k}( \i \partial_{t} - ( - \Delta)^{s} + \i)^{-1} \ind_{[-T',T']}  H^{1}_{r} ( \tilde{f}_{k_{1}}, \tilde{g}_{k_{2}} , \tilde{h}_{k_{3}}) \Vert_{Z_{k}}
\\
& \lesssim 2^{k_{1} \sigma } \Vert f_{k_{1}} \Vert_{Z_{k_{1}}} 2^{k_{2} \frac{n-2}{2}} \Vert g_{k_{2}} \Vert_{Z_{k_{2}}} 2^{k_{3} \frac{n-2}{2}} \Vert h_{k_{3}} \Vert_{Z_{k_{3}}}
\\
& + 2^{k_{1} \frac{n-2}{2} } \Vert f_{k_{1}} \Vert_{Z_{k_{1}}} 2^{k_{2} \sigma } \Vert g_{k_{2}} \Vert_{Z_{k_{2}}} 2^{k_{3} \frac{n-2}{2}} \Vert h_{k_{3}} \Vert_{Z_{k_{3}}}
\end{split}
\end{equation*}
Which implies \eqref{aszx1} in this case. 
\\
\textbf{Case 2b:} If $ 2^{k_{1}} \ll 2^{k_{2}}$ and $ 2^{k_{3}} \ll 2^{k_{2}}$. Then recall the definition of $T^{i,j}_{2}$
\begin{equation*}
\begin{split}
& \mathcal{F}_{\R^{n}} ( T^{2}_{i,j}( \tilde{f}_{k_{1}}, \tilde{g}_{k_{2}},\tilde{h}_{k_{3}} ) )(\xi,t)
\\
& =  \sum_{l ,a=1}^{n} \int_{\R^{n} \times \R^{n}} B_{i,j} ( \xi, \eta, \kappa) \hat{f}_{k_{1}}(\xi-\kappa,t) 
\\
& 
\times \frac{ ( \kappa - \eta)_{l}}{|\kappa - \eta|^{2}} ( I- \frac{ \kappa}{|\kappa|} \otimes \frac{\kappa}{|\kappa|})_{al} ( \kappa - \eta)_{a} \frac{\hat{h}_{k_{3}}(\kappa- \eta,t)}{|\kappa-\eta|} \frac{ \hat{g}_{k_{2}}(\eta,t)}{|\eta|} d \eta d \kappa.
\end{split}
\end{equation*}
If $ 2^{k_{1}} \lesssim 2^{k_{3}}$ 
then we use \eqref{multimultiplier} to obtain
\begin{equation*}
\begin{split}
& \Vert \Delta_{k} ( \i \partial_{t} - ( - \Delta)^{s} + \i)^{-1} \ind_{[-T',T']} T^{2}_{i,j} ( \tilde{f}_{k_{1}},\tilde{g}_{k_{2}},\tilde{h}_{k_{3}}) \Vert_{Z_{k}}
\\
& \lesssim 2^{-k_{2} (2s-1)/2} \Vert T^{2}_{i,j} ( \tilde{f}_{k_{1}},\tilde{g}_{k_{2}},\tilde{h}_{k_{3}}) \Vert_{L^{1}_{e'} L^{2}_{e'^{\perp},t}}
\\
& \overset{\eqref{multimultiplier}}{ \lesssim } 2^{-k_{2} (2s-1)/2} 2^{k_{2} (2s-1)} 2^{-k_{2}} \Vert f_{k_{1}} \Vert_{L^{2}_{e'} L^{\infty}_{e'^{\perp},t}} \Vert h_{k_{3}} \Vert_{L^{2}_{e'} L^{\infty}_{e'^{\perp},t}}  \Vert g_{k_{2}} \Vert_{L^{\infty}_{e'} L^{2}_{e'^{\perp},t}}
\\
& \overset{2^{k_{1}} \lesssim 2^{k_{3}}}{\lesssim} 2^{k_{1} \frac{n-2}{2}} \Vert f_{k_{1}} \Vert_{Z_{k_{1}}} 2^{k_{3} \frac{n-2}{2}} \Vert h_{k_{3}} \Vert_{Z_{k_{3}}} \Vert g_{k_{2}} \Vert_{Z_{k_{2}}}.
\end{split}
\end{equation*}
Multiplying by $ 2^{k \sigma}$ and summing over $ k \leq k_{2} +C$ yields \eqref{aszx1} in this case. On the other hand, if $ 2^{k_{3}} \ll 2^{k_{1}} $ ( i.e $ k_{3} \leq k_{1} - C_{0}$ with $C_{0}$ as in Lemma \ref{Taylorcalc}). Then we use the operator $H^{2}_{\alpha}$, defined in \eqref{Halpha}, to obtain
\begin{equation*}
\begin{split}
& T^{i,j}_{2} ( \tilde{f}_{k_{1}} , \tilde{g}_{k_{2}},\tilde{h}_{k_{3}} ) = \sum_{ | \alpha|=1}^{\infty} \partial_{j} H^{2}_{\alpha}(\tilde{f}_{k_{1}},\tilde{g}_{k_{2}},\tilde{h}_{k_{3}} )
\end{split}
\end{equation*}
Therefore, it suffices to prove 
\begin{equation}\label{case2reduced}
\begin{split}
& \sum_{\vert \alpha \vert=1}^{\infty} 2^{k_{2}} \Vert \Delta_{k} ( \i \partial_{t} - (-\Delta)^{s} +\i)^{-1} \ind_{[-T',T']} H^{2}_{\alpha} ( \tilde{f}_{k_{1}},\tilde{g}_{k_{2}},\tilde{h}_{k_{3}}) \Vert_{Z_{k}}
\\
& \lesssim 2^{k_{1} \frac{n-2}{2}} \Vert f_{k_{1}} \Vert_{Z_{k_{1}}} 2^{k_{3} \frac{n-2}{2}} \Vert h_{k_{3}} \Vert_{Z_{k_{3}}} \Vert g_{k_{2}} \Vert_{Z_{k_{2}}}
\end{split}
\end{equation}
To that end, fix any multi-index $ \alpha$, then as was done in case 1c, we make use of Lemma \ref{Taylorcalc}, and \eqref{multimultiplier} to obtain
\begin{equation*}
\begin{split}
& 2^{k_{2}} \Vert \Delta_{k} ( \i \partial_{t} - ( - \Delta)^{s} +\i)^{-1} \ind_{[-T',T']} H^{2}_{\alpha} (\tilde{f}_{k_{1}},\tilde{g}_{k_{2}},\tilde{h}_{k_{3}}) \Vert_{Z_{k}}
\\
& \lesssim 2^{-k_{2} (2s-1)/2} 2^{k_{2}} \Vert H^{2}_{\alpha}( \tilde{f}_{k_{1}},\tilde{g}_{k_{2}},\tilde{h}_{k_{3}}) \Vert_{L^{1}_{e'} L^{2}_{e'^{\perp},t}}
\\
& \overset{\eqref{multimultiplier}}{\lesssim} \frac{a_{\alpha} 2^{- |\alpha| C }}{\alpha !} 2^{-k_{2}(2s-1)/2}  2^{(2s-2)k_{1}} \Vert g_{k_{2}} \Vert_{L^{\infty}_{e'}L^{2}_{e'^{\perp},t}} \Vert f_{k_{1}} \Vert_{L^{2}_{e' } L^{\infty}_{e'^{\perp},t}}  \Vert h_{k_{3}} \Vert_{L^{2}_{e' } L^{\infty}_{e'^{\perp},t}} 
\\
& \overset{\eqref{local_smoothing}, \eqref{maximal_Estimate}}{\lesssim}\frac{a_{\alpha} 2^{- |\alpha| C }}{\alpha !} 2^{k_{1} \frac{n-2}{2}} \Vert f_{k_{1}} \Vert_{Z_{k_{1}}} 2^{k_{3} \frac{n-2}{2}} \Vert h_{k_{3}} \Vert_{Z_{k_{3}}} \Vert g_{k_{2}} \Vert_{Z_{k_{2}}}
\end{split}
\end{equation*}
Summing over $ |\alpha|$ and using the convergence stated in Lemma \ref{Taylorcalc} gives \eqref{case2reduced}. 
\\
\textbf{Case 3:} If $ \max \{ k_{1},k_{2} , k_{3} \} = k_{3}$. In view of the analysis above, we may assume that $ \max\{ k_{1} ,k_{2} \} \leq k_{3} -C$ for sufficiently large $ C$. Then in view of \eqref{multilpierlemma} we may assume $ h_{k_{3}}$ has Fourier transform supported in $ \{ (\xi,\tau) : \langle \xi, e \rangle \geq 2^{k_{3} -2} \}$ for some $ e \in \mathscr{E}$. We make use of the identity \eqref{identitycolumbguy1} to write 
\begin{equation*}
\begin{split}
& \mathcal{F}_{\R^{n}}(T^{2}_{i,j}(\tilde{f}_{k_{1}}, \tilde{g}_{k_{2}} , \tilde{h}_{k_{3}} ))(\xi,t)
\\
&   = - \sum_{l ,a=1}^{n} \int_{\R^{n} \times \R^{n}} B_{i,j} ( \xi, \eta, \kappa) \hat{f}_{k_{1}}(\xi-\kappa,t) 
\\
& 
\times \frac{ ( \kappa - \eta)_{l}}{|\kappa - \eta|^{2}} ( I- \frac{ \kappa}{|\kappa|} \otimes \frac{\kappa}{|\kappa|})_{al} ( \eta)_{a} \frac{\hat{h}_{k_{3}}(\kappa- \eta,t)}{|\kappa-\eta|} \frac{ \hat{g}_{k_{2}}(\eta,t)}{|\eta|} d \eta d \kappa.
\end{split}
\end{equation*}
Then using \eqref{multimultiplier} we obtain
\begin{equation*}
\begin{split}
&  \Vert \Delta_{k} ( \i \partial_{t} - ( -\Delta)^{s} +\i)^{-1} \ind_{[-T',T']} T^{2}_{i,j} (\tilde{f}_{k_{1}}, \tilde{g}_{k_{2}},\tilde{h}_{k_{3}}) \Vert_{Z_{k}}
\\
& \lesssim  2^{-k_{3}(2s-1)/2} \Vert T^{2}_{i,j} \Vert_{L^{1}_{e} L^{2}_{e^{\perp},t}}
\\
& \overset{\eqref{multimultiplier}}{\lesssim} 2^{k_{3}(2s-1)/2} 2^{-k_{3}}  \Vert f_{k_{1}} \Vert_{L^{2}_{e} L^{\infty}_{e}} \Vert g_{k_{2}} \Vert_{L^{2}_{e} L^{\infty}_{e}} \Vert h_{k_{3}} \Vert_{L^{\infty}_{e} L^{2}_{e^{\perp},t}}
\\
& \overset{\eqref{local_smoothing},\eqref{maximal_Estimate}}{ \lesssim} 2^{k_{1} \frac{n-2}{2}} \Vert f_{k_{1}} \Vert_{Z_{k_{1}}} 2^{k_{2} \frac{n-2}{2}} \Vert g_{k_{2}} \Vert_{Z_{k_{2}}} \Vert h_{k_{3}} \Vert_{Z_{k_{3}}}.
\end{split}
\end{equation*}
This concludes case 3, and hence proves the proposition.
\end{proof}
As a corollary we obtain the estimates concerning the $\Gamma_{1}$ term. 
\begin{corollary}
Let $ \Gamma_{1}$ be as in \eqref{Gamma1guy}. Then we have the following estimate 
\begin{equation*}
\begin{split}
& \Vert \ind_{[-T',T']} \mathcal{F}_{\R^{n}}^{-1}\Gamma_{1} \Vert_{N^{\sigma}}
\\
& \lesssim \Vert f \Vert_{\tilde{F}^{\sigma}} \Vert |\nabla | e_{1} \Vert_{\tilde{F}^{\frac{n-2}{2}}} \Vert |\nabla| e_{2} \Vert_{\tilde{F}^{\frac{n-2}{2}}}
\\
& + \Vert f \Vert_{\tilde{F}^{\frac{n-2}{2}}} \Vert |\nabla | e_{1} \Vert_{\tilde{F}^{\sigma}} \Vert |\nabla| e_{2} \Vert_{\tilde{F}^{\frac{n-2}{2}}}
\\
& + \Vert f \Vert_{\tilde{F}^{\frac{n-2}{2}}} \Vert |\nabla | e_{1} \Vert_{\tilde{F}^{\frac{n-2}{2}}} \Vert |\nabla| e_{2} \Vert_{\tilde{F}^{\sigma}}.
\end{split}
\end{equation*}
\end{corollary}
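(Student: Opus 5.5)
The plan is to recognize $\mathcal{F}^{-1}_{\R^{n}}\Gamma_{1}$ as an instance of the trilinear operator $T^{i,j}_{2}$ from Proposition \ref{GaugeTrl1} and then apply that proposition with suitable extensions of the input functions.

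Comparing \eqref{Gamma1guy} with the definition of $T^{i,j}_{2}$, the only difference is that the factor $\langle \hat{e}_{2}(\kappa-\eta),\hat{e}_{1}(\eta)\rangle_{\R^{3}}$ must be rewritten as
\[
\sum_{m=1}^{3} \frac{\widehat{|\nabla| e_{2}^{m}}(\kappa-\eta)}{|\kappa-\eta|}\,\frac{\widehat{|\nabla| e_{1}^{m}}(\eta)}{|\eta|}.
\]
This rewriting is legitimate because $\kappa-\eta\neq 0$ and $\eta\neq 0$ almost everywhere. It only sees the nonconstant parts $e_{i}-Q_{i}$, since the constant vectors $Q_{i}$ from Lemma \ref{eintegra} contribute only at zero frequency, where the multiplier $B_{i,j}$ together with the factor $(\kappa-\eta)_{a}$ annihilates them. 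Lemma \ref{eintegra} also guarantees that $|\nabla| e_{i} = |\nabla|(e_{i}-Q_{i})$ makes sense with $e_{i}=Q_{i}+\sum_{k}\Delta_{k}e_{i}$.

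It follows that
\[
\mathcal{F}^{-1}\Gamma_{1} = \sum_{m=1}^{3} T^{i,j}_{2}\bigl(f,\,|\nabla| e_{1}^{m},\,|\nabla| e_{2}^{m}\bigr),
\]
with $g=|\nabla| e_{1}^{m}$ and $h=|\nabla| e_{2}^{m}$, which are real-valued.

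Next I apply Proposition \ref{GaugeTrl1} to each summand, allowing each input to be either the function or its conjugate. To pass to the $\tilde{F}$ norms, I decompose each input as $f=f^{(1)}+\overline{f^{(2)}}$ with $f^{(1)},f^{(2)}\in F$, nearly attaining the infimum in $\tilde{Z}_{k}$ dyadically. Because $T^{i,j}_{2}$ is trilinear, Proposition \ref{GaugeTrl1} (which allows $\tilde{\cdot}$ on every entry) then bounds each of the $8$ resulting pieces. Summing gives the stated estimate with $\tilde{F}$ norms, the three terms corresponding to which input carries $\sigma$.

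For the time cutoff, I replace $|\nabla| e_{i}$ and $f$ by arbitrary extensions of their restrictions to $[-T',T']$. This does not change $\ind_{[-T',T']}T^{i,j}_{2}$, because the operator is local in time. Taking the infimum over extensions yields the $\tilde{F}(T')$ version if desired.

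I expect the main obstacle to be justifying the zero-frequency and constant-vector bookkeeping, that is, that the inner product in Fourier variables really splits into componentwise products of $|\nabla|e_{i}$ divided by moduli. I would handle this via Lemma \ref{eintegra} and the vanishing of the symbol at $\kappa=\eta$ and at $\eta=0$ (using identity \eqref{identitycolumbguy1}). The rest is a direct application of the proposition.
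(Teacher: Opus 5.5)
Your proposal is correct and is the paper's proof: replace $\hat e_i$ by $\widehat{|\nabla| e_i}/|\cdot|$ in \eqref{Gamma1guy}, then apply Proposition \ref{GaugeTrl1} with $g=|\nabla|e_1$ and $h=|\nabla|e_2$. Note who kills which constant: the delta from $Q_2$ at $\kappa=\eta$ is removed by the vanishing of $B_{i,j}$ there (equivalently $[\nabla^{2s-1}_i,Q_2]=0$), while the delta from $Q_1$ at $\eta=0$ is removed by the projection identity \eqref{identitycolumbguy1}, not by $B_{i,j}$, which is nonzero at $\eta=0$. Your extra bookkeeping for the $\tilde F$ norms (splitting into $F$ and $\bar F$ parts and using trilinearity) and for the time cutoff (time-locality of $T^{i,j}_2$) fills in details the paper leaves implicit.
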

\begin{proof}
In the definition of $ \Gamma_{1}$, we may replace $ e_{i}$ with $ |\nabla|^{-1} ( | \nabla| e_{i} )$ this is because of the identity \eqref{identitycolumbguy1}, which kills any dirac delta coming from taking the Fourier transform of $e_{i}$. Now we apply Proposition \ref{GaugeTrl1} with $ g = | \nabla| e_{1}$ and $ h = | \nabla |e_{2}$. 
\end{proof}

\subsection{Nonlinear estimates for $\Gamma_{2}$} Recall that we reduced the proof of \eqref{trlCgauge} to proving  the nonlinear estimates for $\Gamma_{1}$ and $\Gamma_{2}$, where $\Gamma_{1},\Gamma_{2}$ are defined in \eqref{Gamma1guy} and \eqref{Gamma2guy}. The nonlinear estimates for $\Gamma_{1}$  followed from Proposition \ref{GaugeTrl1}. For $\Gamma_{2}$ we need to use the Coulomb gauge condition to rewrite $\Gamma_{2}$ as a sum of two manageable trilinear operators. First, recall the definition of $ \Gamma_{2}$
\begin{equation*}
\begin{split}
& \Gamma_{2}(\xi,t) = \sum_{l=1}^{n} \int_{\R^{n} \times \R^{n}} 
 B_{i,j}(\xi,\eta,\kappa) \hat{f}(\xi-\kappa,t) 
 \\
 & \times \frac{ (\kappa- \eta)_{l}}{|\kappa - \eta|^{2}} \frac{\kappa_{l}}{|\kappa|} \langle \frac{\kappa}{|\kappa|} , \kappa- \eta \rangle
  \langle \hat{e}_{2} ( \kappa- \eta,t), \hat{e}_{1} (\eta,t) \rangle_{\R^{3}} d \eta d \kappa
\end{split}
\end{equation*}
where $B_{i,j} $ was defined in \eqref{defbij}. 
Then notice that the Coulomb gauge condition is equivalent to the following
\begin{equation}\label{columbfourier}
0 = \int_{\R^{n}} \langle \kappa, \kappa-\eta \rangle \langle \hat{e}_{2}(\kappa-\eta,t) ,\hat{e}_{1}(\eta,t) \rangle_{\R^{3}} d \eta.
\end{equation}
Using this, we rewrite $\Gamma_{2}$ as follows 
\begin{equation*}
\begin{split}
& \Gamma_{2}(\xi,t)
\\
& =  \int_{\R^{n} \times \R^{n}}  B_{i,j}(\xi,\eta,\kappa) \hat{f}(\xi -\kappa,t) \frac{ (\langle \kappa - \eta , \kappa \rangle)^{2}}{| \kappa - \eta|^{2} | \kappa|^{2}} \langle \hat{e}_{2}(\kappa-\eta,t) ,\hat{e}_{1}(\eta,t) \rangle_{\R^{3}} d\eta d\kappa
\\
& =  \int_{\R^{n} \times \R^{n}}  B_{i,j}(\xi,\eta,\kappa) \hat{f}(\xi -\kappa,t) \frac{\langle \kappa - \eta, \kappa \rangle }{ | \kappa |^{2}} (\frac{ (\langle \kappa - \eta , \kappa \rangle)}{| \kappa - \eta|^{2} } -1) \langle \hat{e}_{2}(\kappa-\eta,t) ,\hat{e}_{1}(\eta,t) \rangle_{\R^{3}} d\eta d\kappa
\\
& + \int_{\R^{n} \times \R^{n}}  B_{i,j}(\xi,\eta,\kappa) \hat{f}(\xi -\kappa,t) \frac{\langle \kappa - \eta,\kappa \rangle }{ | \kappa|^{2}} \hat{e}_{2}(\kappa-\eta,t) ,\hat{e}_{1}(\eta,t) \rangle_{\R^{3}} d\eta d\kappa
\\
& \overset{\eqref{columbfourier}}{=} \int_{\R^{n} \times \R^{n}}  B_{i,j}(\xi,\eta,\kappa) \hat{f}(\xi -\kappa,t) \frac{\langle \kappa - \eta, \kappa \rangle }{ | \kappa |^{2}} (\frac{ (\langle \kappa - \eta , \kappa \rangle)}{| \kappa - \eta|^{2} } -1) \langle \hat{e}_{2}(\kappa-\eta,t) ,\hat{e}_{1}(\eta,t) \rangle_{\R^{3}} d\eta d\kappa
\\
& + \int_{\R^{n} \times \R^{n}} ( B_{i,j}(\xi,\eta,\kappa) - B(\xi,0,\kappa)) \hat{f}(\xi -\kappa,t)     \frac{\langle \kappa - \eta,\kappa \rangle }{ | \kappa|^{2}}   \langle \hat{e}_{2}(\kappa-\eta,t) ,\hat{e}_{1}(\eta,t) \rangle_{\R^{3}} d\eta d\kappa.
\end{split}
\end{equation*}
Define 
\begin{equation}\label{Gamma2aa}
\Gamma^{a}_{2}: = \int_{\R^{n} \times \R^{n}}  B_{i,j}(\xi,\eta,\kappa) \hat{f}(\xi -\kappa,t) \frac{\langle \kappa - \eta, \kappa \rangle }{ | \kappa |^{2}} (\frac{ (\langle \kappa - \eta , \kappa \rangle)}{| \kappa - \eta|^{2} } -1) \langle \hat{e}_{2}(\kappa-\eta,t) ,\hat{e}_{1}(\eta,t) \rangle_{\R^{3}} d\eta d\kappa
\end{equation}
and
\begin{equation}\label{Gamma2bb}
\Gamma^{b}_{2} : = \int_{\R^{n} \times \R^{n}} ( B_{i,j}(\xi,\eta,\kappa) - B_{i,j}(\xi,0,\kappa)) \hat{f}(\xi -\kappa,t)    
     \frac{\langle \kappa - \eta,\kappa \rangle }{ | \kappa|^{2}}   \langle \hat{e}_{2}(\kappa-\eta,t) ,\hat{e}_{1}(\eta,t) \rangle_{\R^{3}} d\eta d\kappa.
\end{equation}
Then we have proven 
$$
\Gamma_{2} = \Gamma^{a}_{2} + \Gamma^{b}_{2},
$$
and it suffices to prove the nonlinear estimates for $ \Gamma^{a}_{2} $ and $ \Gamma^{b}_{2}$. 
\\
We start with $ \Gamma^{a}_{2}$. 
\begin{proposition}\label{Proofforgamma2a}
Let $ n \geq 4$, $ s \in (\frac{1}{2},1)$, $ \sigma \geq \frac{n-2}{2},$ and $T' \in [0,\infty]$. 
Then for any $ f,g,h \in F^{\frac{n-2}{2}} \cap F^{\sigma}$ we have the following 
\begin{equation*}
\begin{split}
& \Vert \ind_{[-T',T']} T^{i,j}_{3}( \tilde{f},\tilde{g} , \tilde{h} ) \Vert_{N^{\sigma}} 
\\
& \lesssim \Vert f \Vert_{F^{\sigma}} \Vert g \Vert_{F^{\frac{n-2}{2}}} \Vert h \Vert_{F^{\frac{n-2}{2}}}
\\
& + \Vert f \Vert_{F^{\frac{n-2}{2}}} \Vert g \Vert_{F^{\sigma}} \Vert h \Vert_{F^{\frac{n-2}{2}}}
\\
&+ \Vert f \Vert_{F^{\frac{n-2}{2}}} \Vert g \Vert_{F^{\frac{n-2}{2}}} \Vert h \Vert_{F^{\sigma}}
\end{split}
\end{equation*}
where $T^{i,j}_{3}$ is defined via the Fourier transform 
\begin{equation*}
\begin{split}
& \mathcal{F}_{\R^{n} } T^{i,j}_{3} ( f,g,h)(\xi,t):=
\\
& \int_{\R^{n} \times \R^{n}} B_{i,j}(\xi,\eta ,\kappa) \hat{f}(\xi - \kappa ,t) \frac{ \langle \kappa - \eta , \kappa \rangle}{|\kappa |^{2}} \left( \frac{ \langle \kappa - \eta, \eta \rangle}{| \kappa - \eta |^{2}} \right) \frac{ \hat{ h }(\kappa - \eta,t)}{ | \kappa - \eta|} \frac{ \hat{g}(\eta ,t)}{ | \eta |} d \eta d\kappa.
\end{split}
\end{equation*}
\end{proposition}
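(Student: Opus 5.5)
The plan is to run the same dyadic case analysis as in the proof of Proposition \ref{GaugeTrl1}. The symbol of $T^{i,j}_{3}$ differs from that of $T^{i,j}_{2}$ only in the angular factor
\[
m(\eta,\kappa) := \frac{\langle \kappa-\eta,\kappa\rangle}{|\kappa|^{2}} \cdot \frac{\langle \kappa-\eta,\eta\rangle}{|\kappa-\eta|^{2}},
\]
which is homogeneous of degree zero in each frequency variable. It replaces the projection factor $(I-\frac{\kappa}{|\kappa|}\otimes\frac{\kappa}{|\kappa|})$. So it suffices to prove the dyadic bound \eqref{aszx1} with $T^{i,j}_{2}$ replaced by $T^{i,j}_{3}$ for $f_{k_1},g_{k_2},h_{k_3}$. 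We split by which of $k_1,k_2,k_3$ is maximal, and within each case by which frequencies are comparable.

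The structural input is twofold. When $\kappa$ is localized away from $0$ (say $|\kappa|\approx 2^{r}$) and $|\eta|\approx 2^{k_2}$, $|\kappa-\eta|\approx 2^{k_3}$, the factor $m$ is a product of smooth bounded symbols after dyadic localization. Hence \eqref{multimultiplier} applies with $L^1$-bounded kernels, exactly as for $T^{i,j}_{2}$.

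The second ingredient replaces \eqref{identitycolumbguy1}. The numerator $\langle \kappa-\eta,\eta\rangle$ satisfies $|\langle\kappa-\eta,\eta\rangle|\lesssim |\kappa-\eta||\eta|$. This lets us always move a full derivative onto the lower of $g_{k_2},h_{k_3}$, bounding $|m|\lesssim \min\{2^{k_2},2^{k_3}\}/\max\{2^{k_2},2^{k_3}\}\cdot(\ldots)$ where needed. Likewise $\langle\kappa-\eta,\kappa\rangle/|\kappa|^2$ is bounded by $2^{k_3-r}$. So in Cases 1b and 3 we gain the same extra factor of the low frequency that the projection identity gave for $T^{i,j}_{2}$. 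We then bound $B_{i,j}$ by $2^{k}2^{(2s-1)\max}$ in the high--high situations. In the situations where $f$ carries the top frequency and $h$ is much lower (Case 1c and Case 2b), we use the Taylor expansion of Lemma \ref{Taylorcalc} to gain $2^{k_3}2^{(2s-2)k_1}$.

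The order of cases follows the proof of Proposition \ref{GaugeTrl1}. In each estimate the top-frequency function goes in the local smoothing space \eqref{local_smoothing}, and the other two go in the maximal space \eqref{maximal_Estimate}. The output is measured through \eqref{passingtol1l2}. The sums over $k\le \max k_i+C$ are closed using the gains $2^{(k-k_{\max})(1-(2s-1)/2)}$, which is possible since $s<1$. Summability over $|\alpha|$ comes from \eqref{qpppa}.

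The main obstacle is the regime $2^{k_2}\approx 2^{k_3}$ where $\kappa$ may be near $0$. There the factor $\langle\kappa-\eta,\kappa\rangle/|\kappa|^2$ is as large as $2^{k_3-r}$ when $|\kappa|\approx 2^r\ll 2^{k_3}$, and multiplier bounds fail. As in Cases 1a, 1c and 2a, I would decompose $\kappa$ with $\varphi_r(\kappa)$ and use the fine projections $P_{r,\tn}$ with \eqref{cardn1n2} and \eqref{maximalestcardinality}. The localized maximal estimate \eqref{eq:max:Xkest:loc} then gives $2^{r(n-1)}2^{k_2-r}(1+|k_2-r|)^2$ in place of $2^{k_2(n-1)}$. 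Against the loss $2^{k_3-r}$ this leaves a net factor $2^{(r-k_2)(n-3)}(1+|k_2-r|)^2$, up to the fixed powers of $2^{k_2}$. This is summable in $r\le k_2$ precisely because $n\ge 4$. Checking this balance carefully, including the $2^{k}$ from $B_{i,j}$ when the output frequency is $\lesssim 2^r$, is where I expect the real work.
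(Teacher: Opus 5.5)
Your plan matches the paper's proof essentially step for step. It uses the same case split as for $T^{i,j}_{2}$, a direct application of \eqref{multimultiplier} when $\kappa$ is localized at $2^{\max\{k_2,k_3\}}$, and the Taylor expansion of Lemma \ref{Taylorcalc} whenever $2^{k_3}\ll 2^{k_1}$ (in Case 2b as well as Case 1c). In the regime $2^{k_2}\approx 2^{k_3}$ it uses the $\varphi_r(\kappa)$, $P_{r,\tn}$ decomposition with \eqref{eq:max:Xkest:loc}, where the net factor $2^{(r-k_2)(n-3)}$ is summable because $n\geq 4$.

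One correction: the angular factor actually obeys $|m|\lesssim |\eta|/|\kappa|$. So the gain $2^{\min\{k_2,k_3\}-\max\{k_2,k_3\}}$ holds only when $2^{k_2}\ll 2^{k_3}$; for $2^{k_3}\ll 2^{k_2}$ you only have $|m|\lesssim 1$. This is exactly the regime of Cases 1b and 3, where you use the gain, so your argument is unaffected.
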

\begin{proof}
To prove the result, it suffices to prove the following 
\begin{equation}\label{dyadictij3}
\begin{split}
& \sum_{k = - \infty}^{\max \{ k_{1},k_{2},k_{3} \} +C} 2^{k \sigma} \Vert \Delta_{k} ( \i \partial_{t} - ( - \Delta)^{s} + \i)^{-1} \ind_{[-T',T']} T^{i,j}_{3} ( \tilde{f}_{k_{1}} , \tilde{g}_{k_{2}},\tilde{h}_{k_{3}} ) \Vert_{Z_{k}}
\\
& \lesssim 2^{k_{1} \sigma} \Vert f_{k_{1}} \Vert_{Z_{k_{1}}} 2^{k_{2} \frac{n-2}{2}} \Vert g_{k_{2}} \Vert_{Z_{k_{2}}} 2^{k_{3} \frac{n-2}{2}} \Vert h_{k_{3}} \Vert_{Z_{k_{3}}}
\\
& + 2^{k_{1} \frac{n-2}{2}} \Vert f_{k_{1}} \Vert_{Z_{k_{1}}} 2^{k_{2} \sigma} \Vert g_{k_{2}} \Vert_{Z_{k_{2}}} 2^{k_{3} \frac{n-2}{2}} \Vert h_{k_{3}} \Vert_{Z_{k_{3}}}
\\
& + 2^{k_{1} \frac{n-2}{2}} \Vert f_{k_{1}} \Vert_{Z_{k_{1}}} 2^{k_{2} \frac{n-2}{2}} \Vert g_{k_{2}} \Vert_{Z_{k_{2}}} 2^{k_{3} \sigma} \Vert h_{k_{3}} \Vert_{Z_{k_{3}}}.
\end{split}
\end{equation}
To that end, we consider several cases. 
\\
\textbf{Case 1:} If $ \max \{ k_{1} ,k_{2} ,k_{3} \} = k_{1}$. Then in light of \eqref{multilpierlemma}, we may assume that $ f_{k_{1}}$ has Fourier transform supported in $ \{ \langle \xi , e' \rangle \geq 2^{k_{1} -2} \}$ for some $ e' \in \mathscr{E}$. We consider several sub-cases depending on whether $ k_{2},k_{3}$ are close to $k_{1}$ or not. \\
\textbf{Case 1a:} If $ 2^{k_{1}} \approx 2^{k_{2}} \approx 2^{k_{3}}$. Then in this case we wish to make use of \eqref{multimultiplier}. However, the presence of $ \frac{1}{|\kappa|} $ could cause problems when $ 2^{k_{2}} \approx 2^{k_{3}}$. To that end, we write $T^{i,j}_{3}$ in the Fourier space as follows; 
\begin{equation*}
\begin{split}
& \mathcal{F}_{\R^{n}} T^{i,j}_{3} (\tilde{f}_{k_{1}},\tilde{g}_{k_{2}},\tilde{h}_{k_{3}}) (\xi,t)
\\
& = \sum_{r \leq k_{1} +C} \mathcal{F}_{\R^{n}} H^{3}_{r} (\tilde{f}_{k_{1}},\tilde{g}_{k_{2}},\tilde{h}_{k_{3}}) ( \xi,t)
\end{split}
\end{equation*}
where 
\begin{equation}\label{defh3r}
\begin{split}
&\mathcal{F}_{\R^{n}} H^{3}_{r} ( f,g,h) ( \xi,t) 
\\
& := \int_{\R^{n} \times \R^{n}} B_{i,j}(\xi,\eta ,\kappa) \hat{f}(\xi - \kappa ,t) \frac{ \langle \kappa - \eta , \kappa \rangle}{|\kappa |^{2}} \left( \frac{ \langle \kappa - \eta, \eta \rangle}{| \kappa - \eta |^{2}} \right) \frac{ \hat{ h }(\kappa - \eta,t)}{ | \kappa - \eta|} \frac{ \hat{g}(\eta ,t)}{ | \eta |} \varphi_{r}(\kappa) d \eta d\kappa.
\end{split}
\end{equation}
If $ r \in [k_{1}-C,k_{1} +C]$ then we make use of \eqref{multimultiplier} and the assumption that $ 2^{k_{1}} \approx 2^{k_{2}} \approx 2^{k_{3}}$ to obtain for any $e \in \mathscr{E}$
\begin{equation}
\begin{split}
& 2^{-k(2s-1)/2} \Vert H^{3}_{r} ( \tilde{f}_{k_{1}}, \tilde{g}_{k_{2}} , \tilde{h}_{k_{3}} ) \Vert_{L_{e}^{1} L^{2}_{e^{\perp},t}}
\\
& \lesssim 2^{k} 2^{-k(2s-1)/2} 2^{k_{1}(2s-1)} \Vert \tilde{f}_{k_{1}} \Vert_{L^{\infty}_{e'}L^{2}_{e'^{\perp},t}} \Vert \tilde{g}_{k_{2}} \Vert_{L^{2}_{e'}L^{\infty}_{e'^{\perp},t}} \Vert \tilde{h}_{k_{3}} \Vert_{L^{2}_{e} L^{\infty}_{e^{\perp},t}} 2^{-2k_{1}} 
\\
 & \lesssim 2^{k -k_{1}} 2^{(k_{1} -k)(2s-1)/2}  \Vert f_{k_{1}} \Vert_{Z_{k_{1}}} 2^{k_{2} \frac{n-2}{2}} \Vert g_{k_{2}} \Vert_{Z_{k_{2}}} 2^{k_{3} \frac{n-2}{2}} \Vert h_{k_{3}} \Vert_{Z_{k_{3}}}
\end{split}
\end{equation}
This readily implies , for $ r \in [k_{1} -C ,k_{1} +C]$, 
\begin{equation}\label{firstestforh3r}
\begin{split}
& \sum_{k \leq k_{1} +C} 2^{k \sigma} \Vert \Delta_{k} ( \i \partial_{t} - ( - \Delta)^{s} + \i)^{-1} \ind_{[-T',T']} H^{3}_{r} ( \tilde{f}_{k_{1}},\tilde{g}_{k_{2}},\tilde{h}_{k_{3}} ) \Vert_{Z_{k}}
\\
& \lesssim 2^{k_{1} \sigma} \Vert f_{k_{1}} \Vert_{Z_{k_{1}}} 2^{k_{2} \frac{n-2}{2}} \Vert g_{k_{2}} \Vert_{Z_{k_{2}}} 2^{k_{3} \frac{n-2}{2}} \Vert h_{k_{3}} \Vert_{Z_{k_{3}}}.
\end{split}
\end{equation}
On the other hand, if $ r \leq k_{1} -C$ for sufficiently large $C$ then we use the localized maximal estimate and \eqref{multimultiplier} to obtain 
\begin{equation*}
\begin{split}
& \Vert \Delta_{k} ( \i \partial_{t} - ( -\Delta)^{s} + \i)^{-1} \ind_{[-T',T']} H^{3}_{r} ( \tilde{f}_{k_{1}},\tilde{g}_{k_{2}},\tilde{h}_{k_{3}}) \Vert_{Z_{k}}
\\
& \lesssim 2^{-k_{1}(2s-1)/2} \Vert H^{3}_{r} ( \tilde{f}_{k_{1}} , \tilde{g}_{k_{2}} , \tilde{h}_{k_{3}} ) \Vert_{L^{1}_{e'} L^{2}_{e'^{\perp},t}}
\\
& = 2^{-k_{1}(2s-1)/2} \sum_{\tn_{1}, \tn_{2}} \Vert H^{3}_{r} ( \tilde{f}_{k_{1}} ,P_{r,\tn_{1}} \tilde{g}_{k_{2}} , P_{r,\tn_{2}} \tilde{h}_{k_{3}} ) \Vert_{L^{1}_{e'} L^{2}_{e'^{\perp},t}}
\\
& \lesssim 2^{k_{1}(2s-1)/2}  2^{-r} \left( \sum_{\tn \in 2^{r} \Z^{n}} \Vert P_{r,\tn} \tilde{g}_{k_{2}} \Vert^{2}_{L^{2}_{e'} L^{\infty}_{e'^{\perp},t}} \right)^{1/2}
\\
& \times  \left( \sum_{\tn \in 2^{r} \Z^{n}} \Vert P_{r,\tn} \tilde{h}_{k_{3}} \Vert^{2}_{L^{2}_{e'} L^{\infty}_{e'^{\perp},t}} \right)^{1/2} \Vert \tilde{f} \Vert_{L^{\infty}_{e'} L^{2}_{e'^{\perp},t}}
\\
& \lesssim (1+ | k_{1} -r|)^{2} 2^{r (n-1)} 2^{(k_{1} -r)} 2^{-r} \Vert f_{k_{1}} \Vert_{Z_{k_{1}}} \Vert g_{k_{2}} \Vert_{Z_{k_{2}}} \Vert h_{k_{3}} \Vert_{Z_{k_{3}}}
\end{split}
\end{equation*}
summing over $ r \leq k_{1} -C$ we obtain (since $ n \geq 4$) 
\begin{equation*}
\begin{split}
&  \sum_{r \leq k_{1} -C} \Vert \Delta_{k} (\i \partial_{t} - (-\Delta)^{s} + \i)^{-1} \ind_{[-T',T']} H^{3}_{r} ( \tilde{f}_{k_{1}},\tilde{g}_{k_{2}},\tilde{h}_{k_{3}}) \Vert_{Z_{k}}
\\
& \lesssim 2^{k_{2} \frac{n-2}{2}} \Vert g_{k_{2}} \Vert_{Z_{k_{2}}} 2^{k_{3} \frac{n-2}{2}} \Vert h_{k_{3}} \Vert_{Z_{k_{3}}} \Vert f_{k_{1}} \Vert_{Z_{k_{1}}}.
\end{split}
\end{equation*}
This readily implies 
\begin{equation}\label{secindestforh3r}
\begin{split}
    &  \sum_{k \leq k_{1} +C} 2^{k \sigma} \sum_{r \leq k_{1} -C} \Vert \Delta_{k} ( \i \partial_{t} - (-\Delta)^{s} + \i )^{-1} \ind_{[-T',T']} H^{3}_{r} ( \tilde{f}_{k_{1}},\tilde{g}_{k_{2}},\tilde{h}_{k_{3}}) \Vert_{Z_{k}}
\\
& \lesssim 2^{k_{2} \frac{n-2}{2}} \Vert g_{k_{2}} \Vert_{Z_{k_{2}}} 2^{k_{3} \frac{n-2}{2}} \Vert h_{k_{3}} \Vert_{Z_{k_{3}}} 2^{k_{1}\sigma} \Vert f_{k_{1}} \Vert_{Z_{k_{1}}}.
\end{split}
\end{equation}
Combining \eqref{firstestforh3r} and \eqref{secindestforh3r} gives us \eqref{dyadictij3} in this case. \\
\textbf{Case 1b:} If $ 2^{k_{3}} \approx 2^{k_{1}}$ while $ 2^{k_{2}} \ll 2^{k_{3}}$.  In this case, notice that $ \kappa$ is localized at $ 2^{k_{3}}$. Therefore, we can use \eqref{multimultiplier} directly to obtain 
\begin{equation*}
\begin{split}
& \Vert \Delta_{k} ( \i \partial_{t} - ( - \Delta)^{s} + \i)^{-1} \ind_{[-T',T']} T^{i,j}_{3} ( \tilde{f}_{k_{1}} , \tilde{g}_{k_{2}}, \tilde{h}_{k_{3}} ) \Vert_{Z_{k}} 
\\
& \lesssim \max_{e \in \mathscr{E}} 2^{-k (2s-1)/2} \Vert T^{i,j}_{3} (\tilde{f}_{k_{1}},\tilde{g}_{k_{2}}, \tilde{h}_{k_{3}} ) \Vert_{L^{1}_{e} L^{2}_{e^{\perp},t}}
\\
& \overset{\eqref{multimultiplier}}{\lesssim} 2^{ -k (2s-1)/2} 2^{k_{1} (2s-1) } 2^{- k_{3}} \max_{e \in \mathscr{E}} \Vert g_{k_{2}} \Vert_{L^{2}_{e} L^{\infty}_{e^{\perp},t}} \Vert \tilde{f}_{k_{1}} \Vert_{L^{\infty}_{e'} L^{2}_{e'^{\perp},t}} \Vert \tilde{h}_{k_{3}} \Vert_{L^{2}_{e' } L^{\infty}_{e'^{\perp},t}}
\\
& \overset{\eqref{local_smoothing}, \eqref{maximal_Estimate}}{\lesssim}2^{(k_{1} -k)(2s-1)/2} \Vert f_{k_{1}} \Vert_{Z_{k_{1}}} 2^{k_{2} \frac{n-2}{2}} \Vert g_{k_{2}} \Vert_{Z_{k_{2}}} 2^{k_{3} \frac{n-2}{2}} \Vert h_{k_{3}} \Vert_{Z_{k_{3}}}.
\end{split}
\end{equation*}
Multiplying  by $ 2^{k \sigma}$ and summing over $k \leq k_{1} +C$ yields \eqref{dyadictij3} in this case. 
\\
\textbf{Case 1c:} If $ 2^{\min \{ k_{2} ,k_{3} \}} \ll 2^{k_{1}} $. In view of the cases above, we may assume $ k_{3} \leq k_{1} -C$ for sufficiently large $C$. 
In this case we will use Lemma \ref{Taylorcalc}. In other words, we write 
\begin{equation*}
\begin{split}
& (\xi-\eta)_{i} | \xi -\eta|^{2s-2} - (\xi -\kappa)_{i} | \xi -\kappa|^{2s-2}
\\
& = \sum_{|\alpha| =1}^{\infty} \frac{ m_{\alpha} ( \xi -\eta) }{\alpha !} |\xi-\eta|^{2s-|\alpha|-1} (\kappa -\eta)^{\alpha} 
\end{split}
\end{equation*}
where $ m_{\alpha}$ satisfy \eqref{qpppa}. Fix any multi-index $ \alpha$ then define $H^{4}_{\alpha}$ as follows 
\begin{equation}\label{Hfouralphadef}
\begin{split}
& \mathcal{F}_{\R^{n}} H^{4}_{\alpha}(\tilde{f}_{k_{1}},\tilde{g}_{k_{2}} ,\tilde{h}_{k_{3}} )(\xi,t) 
\\
& := \frac{1}{\alpha !}\int_{\R^{n} \times \R^{n}} m_{\alpha}(\xi -\eta) | \xi - \eta|^{2s-1 -|\alpha|}\hat{f}_{k_{1}}(\xi - \kappa ,t) ( \kappa - \eta)^{\alpha} \frac{ \langle \kappa - \eta , \kappa \rangle}{|\kappa |^{2}}
\\
& \times \left( \frac{ \langle \kappa - \eta, \eta \rangle}{| \kappa - \eta |^{2}} \right) \frac{ \hat{ h }_{k_{3}}(\kappa - \eta,t)}{ | \kappa - \eta|} \frac{ \hat{g}_{k_{2}}(\eta ,t)}{ | \eta |} d \eta d\kappa.
\end{split}
\end{equation}
And to prove \eqref{dyadictij3} in this case, it suffices to prove the following 
\begin{equation}\label{aszx09}
\begin{split}
& \sum_{k \leq k_{1} +C} 2^{k \sigma} \sum_{|\alpha |=1}^{\infty} \Vert \Delta_{k} ( \i \partial_{t} - ( - \Delta)^{s} + \i)^{-1} \ind_{[-T',T']} \partial_{j} H^{4}_{\alpha} ( \tilde{f}_{k_{1}} , \tilde{g}_{k_{2}} , \tilde{h}_{k_{3}} ) \Vert_{Z_{k}}
\\
& \lesssim 2^{k_{1} \sigma} \Vert f_{k_{1}} \Vert_{Z_{k_{1}}} 2^{k_{2} \frac{n-2}{2}} \Vert g_{k_{2}} \Vert_{Z_{k_{2}}} 2^{k_{3} \frac{n-2}{2}} \Vert h_{k_{3}} \Vert_{Z_{k_{3}}}
\end{split}
\end{equation}
The proof of \eqref{aszx09} is similar to the proof of \eqref{aszx2}. Set 
$$
a_{\alpha} : = \Vert \mathcal{F}^{-1}_{\R^{n}} \left( \varphi_{[k_{1} -C , k_{1} +C]} m_{\alpha} \right) \Vert_{L^{1}}.
$$
Then to avoid issues when $ \frac{1}{\kappa}$ is singular, assume first that $ 2^{\min \{ k_{2} , k_{3} \} } \ll 2^{ \max \{ k_{2} ,k_{3} \} }$. Then we estimate using \eqref{multimultiplier} ( notice that under the assumption that $   2^{\min \{ k_{2} , k_{3} \} } \ll 2^{ \max \{ k_{2} ,k_{3} \} }  $, we have that $ \kappa$ is localized at $ 2^{\max \{ k_{2} ,k_{3} \} }$)
\begin{equation*}
\begin{split}
& \Vert \Delta_{k} ( \i \partial_{t} - ( - \Delta)^{s} + \i)^{-1} \ind_{[[-T,T]}\partial_{j} H^{4}_{\alpha} ( \tilde{f}_{k_{1}} , \tilde{g}_{k_{2}} , \tilde{h}_{k_{3}} ) \Vert_{Z_{k}}
\\
& \lesssim 2^{-k (2s-1)/2} 2^{k} \max_{e \in \mathscr{E}} \Vert H^{4}_{\alpha} ( \tilde{f}_{k_{1}} , \tilde{g}_{k_{2}},\tilde{h}_{k_{3}} ) \Vert_{L^{1}_{e} L^{2}_{e^{\perp},t}}
\\
& \overset{\eqref{multimultiplier}}{\lesssim} \frac{ a_{\alpha} 2^{- C | \alpha|}}{\alpha ! } 2^{-k (2s-1)/2} 2^{k - k_{1}} 2^{k_{1} (2s-1)}  2^{- \max \{ k_{2} , k_{3} \} } \Vert \tilde{f}_{k_{1}} \Vert_{L^{\infty}_{e'} L^{2}_{e'^{\perp},t}} \Vert \tilde{g}_{k_{2}} \Vert_{L^{2}_{e'} L^{\infty}_{e'^{\perp},t}} \max_{e \in \mathscr{E}}  \Vert \tilde{h}_{k_{3}} \Vert_{L^{2}_{e} L^{\infty}_{e^{\perp},t}} 
\\
& \overset{\eqref{local_smoothing},\eqref{maximal_Estimate}}{\lesssim }\frac{a_{\alpha} 2^{-C |\alpha|}}{\alpha !} 2^{k- k_{1}} 2^{ ( k_{1} - k)(2s-1)/2}  \Vert f_{k_{1}} \Vert_{Z_{k_{1}}} 2^{k_{2} \frac{n-2}{2}} \Vert g_{k_{2}} \Vert_{Z_{k_{2}}} 2^{k_{3} \frac{n-2}{2}} \Vert h_{k_{3}} \Vert_{Z_{k_{3}}}
\end{split}
\end{equation*}
summing over $ \alpha$, and using the convergence stated in Lemma \ref{Taylorcalc}, we obtain 
\begin{equation*}
\begin{split}
& \sum_{|\alpha |=1}^{\infty} \Vert \Delta_{k} ( \i \partial_{t} - ( - \Delta)^{s} + \i)^{-1} \ind_{[-T',T']} \partial_{j} H^{4}_{\alpha} ( \tilde{f}_{k_{1}} , \tilde{g}_{k_{2}} , \tilde{h}_{k_{3}} ) \Vert_{Z_{k}}
\\
& \lesssim 2^{k- k_{1}} 2^{ ( k_{1} - k)(2s-1)/2}  \Vert f_{k_{1}} \Vert_{Z_{k_{1}}} 2^{k_{2} \frac{n-2}{2}} \Vert g_{k_{2}} \Vert_{Z_{k_{2}}} 2^{k_{3} \frac{n-2}{2}} \Vert h_{k_{3}} \Vert_{Z_{k_{3}}}
\end{split}
\end{equation*}
 multiplying by $ 2^{k \sigma}$ and summing over $k \leq k_{1} +C$ yields \eqref{aszx09}. Next, if $ 2^{k_{2}} \approx 2^{k_{3}}$ then it might be the case that $ \frac{1}{|\kappa|}$ is singular and we cannot use \eqref{multimultiplier} directly. Therefore, we use the localized maximal estimate instead. To that end, for each fixed multi-index $ \alpha$, define 
\begin{equation*}
\begin{split}
& \mathcal{F}_{\R^{n}} H^{4}_{\alpha ,r} ( \tilde{f}_{k_{1}},\tilde{g}_{k_{2}},\tilde{h}_{k_{3}})(\xi,t)
\\
& :=  \frac{1}{\alpha !}\int_{\R^{n} \times \R^{n}} m_{\alpha}(\xi -\eta) | \xi - \eta|^{2s-1 -|\alpha|}\hat{f}_{k_{1}}(\xi - \kappa ,t) ( \kappa - \eta)^{\alpha} \frac{ \langle \kappa - \eta , \kappa \rangle}{|\kappa |^{2}}
\\
& \times \left( \frac{ \langle \kappa - \eta, \eta \rangle}{| \kappa - \eta |^{2}} \right) \frac{ \hat{ h }_{k_{3}}(\kappa - \eta,t)}{ | \kappa - \eta|} \frac{ \hat{g}_{k_{2}}(\eta ,t)}{ | \eta |} \varphi_{r}(\kappa)  d \eta d\kappa.
\end{split}
\end{equation*}
Then clearly, we have 
$$
H^{4}_{\alpha}(f,g,h) = \sum_{r \leq k_{3} +C} H^{4}_{\alpha ,r}(f,g,h).
$$
Therefore, to conclude \eqref{aszx09} in the case $ 2^{k_{2}} \approx 2^{k_{3}}$, it suffices to prove the following 
\begin{equation}\label{aszx090}
\begin{split}
& \sum_{k \leq k_{1} +C} 2^{k \sigma} \sum_{|\alpha |=1}^{\infty} \sum_{r \leq k_{3} +C} \Vert \Delta_{k} ( \i \partial_{t} - ( - \Delta)^{s} + \i)^{-1} \ind_{[-T',T']}\partial_{j} H^{4}_{\alpha,r} ( \tilde{f}_{k_{1}} , \tilde{g}_{k_{2}} , \tilde{h}_{k_{3}} ) \Vert_{Z_{k}}
\\
& \lesssim 2^{k_{1} \sigma } \Vert f_{k_{1}} \Vert_{Z_{k_{1}}} 2^{k_{2} \frac{n-2}{2}} \Vert g_{k_{2}} \Vert_{Z_{k_{2}}} 2^{k_{3} \frac{n-2}{2}} \Vert h_{k_{3}} \Vert_{Z_{k_{3}}}.
\end{split}
\end{equation}
Fix any $ r \leq k_{3} +C$ and any multi-index $ \alpha$ then we estimate using the localized maximal estimate and \eqref{multimultiplier} 
\begin{equation*}
\begin{split}
& \Vert \Delta_{k} ( \i \partial_{t} - ( - \Delta)^{s} + \i)^{-1} \ind_{[-T',T']} \partial_{j} H^{4}_{\alpha,r} ( \tilde{f}_{k_{1}} , \tilde{g}_{k_{2}} , \tilde{h}_{k_{3}} ) \Vert_{Z_{k}}
\\
& \lesssim 2^{-k_{1} (2s-1)/2} 2^{k_{1}} \Vert H^{4}_{\alpha ,r} ( \tilde{f}_{k_{1}},\tilde{g}_{k_{2}} ,\tilde{h}_{k_{3}} ) \Vert_{L^{1}_{e'} L^{2}_{e'^{\perp},t}}
\\
& \overset{\eqref{multimultiplier}}{\lesssim} \frac{ a_{\alpha} 2^{- C | \alpha|}}{ \alpha !}  2^{k_{1} (2s-1)/2} 2^{-r} \Vert \tilde{f}_{k_{1}} \Vert_{L^{\infty}_{e'} L^{2}_{e'^{\perp},t}}
\\
& \times \sum_{\tn_{1} ,\tn_{2}} \Vert P_{r,\tn_{1}} \tilde{g}_{k_{2}} \Vert_{L^{2}_{e'} L^{\infty}_{e'^{\perp},t}}  \Vert P_{r,\tn_{2}} \tilde{h}_{k_{3}} \Vert_{L^{2}_{e'} L^{\infty}_{e'^{\perp},t}}
\\
& \overset{\eqref{maximalestcardinality}}{\lesssim}   \frac{ a_{\alpha} 2^{- C | \alpha|}}{ \alpha !}  2^{k_{1} (2s-1)/2} 2^{-r} \Vert \tilde{f}_{k_{1}} \Vert_{L^{\infty}_{e'} L^{2}_{e'^{\perp},t}}
\\
& \times \left( \sum_{\tn \in 2^{r} \Z^{n}} \Vert  P_{r,\tn} \tilde{g}_{k_{2}} \Vert^{2}_{L^{2}_{e'} L^{\infty}_{e'^{\perp},t}}  \right)^{1/2}  \left( \sum_{\tn \in 2^{r} \Z^{n}} \Vert  P_{r,\tn} \tilde{h}_{k_{3}} \Vert^{2}_{L^{2}_{e'} L^{\infty}_{e'^{\perp},t}}  \right)^{1/2} 
\\
& \overset{\eqref{local_smoothing}, \eqref{eq:max:Xkest:loc}}{\lesssim }\frac{ a_{\alpha} 2^{- C | \alpha|}}{ \alpha !} 2^{-r} 2^{r(n-1)} 2^{k_{3}-r} (1 + | r-k_{3}|)^{2} \Vert f_{k_{1}} \Vert
_{Z_{k_{1}}} \Vert g_{k_{2}} \Vert_{Z_{k_{2}}} \Vert h_{k_{3}} \Vert_{Z_{k_{3}}}.
\end{split}
\end{equation*}
Summing over $ r \leq k_{3} +C$ first we obtain 
\begin{equation*}
\begin{split}
&  \sum_{r \leq k_{3} +C} \Vert \Delta_{k} ( \i \partial_{t} - ( - \Delta)^{s} + \i)^{-1} \ind_{[-T',T']} \partial_{j} H^{4}_{\alpha,r} ( \tilde{f}_{k_{1}} , \tilde{g}_{k_{2}} , \tilde{h}_{k_{3}} ) \Vert_{Z_{k}}
\\
& \lesssim \frac{a_{\alpha} 2^{-C  | \alpha |}}{\alpha !} \Vert f_{k_{1}} \Vert_{Z_{k_{1}}} 2^{k_{2} \frac{n-2}{2}} \Vert g_{k_{2}} \Vert_{Z_{k_{2}}} 2^{k_{3} \frac{n-2}{2}} \Vert h_{k_{3}} \Vert_{Z_{k_{3}}}.
\end{split}
\end{equation*}
Summing over $ \alpha $ now we obtain, using the convergence stated in Lemma \ref{Taylorcalc}, 
\begin{equation*}
\begin{split}
& \sum_{|\alpha|=1}^{\infty} \sum_{r \leq k_{3} +C} \Vert \Delta_{k} ( \i \partial_{t} - ( - \Delta)^{s} + \i)^{-1} \ind_{[-T',T']} \partial_{j} H^{4}_{\alpha,r} ( \tilde{f}_{k_{1}} , \tilde{g}_{k_{2}} , \tilde{h}_{k_{3}} ) \Vert_{Z_{k}}
\\
& \lesssim \Vert f_{k_{1}} \Vert_{Z_{k_{1}}} 2^{k_{2} \frac{n-2}{2}} \Vert g_{k_{2}} \Vert_{Z_{k_{2}}} 2^{k_{3} \frac{n-2}{2}} \Vert h_{k_{3}} \Vert_{Z_{k_{3}}}.
\end{split}
\end{equation*}
This readily implies \eqref{aszx090}. This concludes case 1c, and hence completes the proof for case 1. 
\\
\textbf{Case 2:} If $ \max \{ k_{1} ,k_{2} ,k_{3}  \} = k_{2}$. Then in view of \eqref{multilpierlemma} we may assume that $ g_{k_{2}}$ has Fourier transform supported in $ \{ \langle \xi , e' \rangle \geq 2^{k_{2} -2} \}$ for some $ e' \in \mathscr{E}$. In view of the analysis above, the only sub-cases we need to consider is when $ 2^{k_{2}} \approx 2^{k_{3}}$ while $ 2^{k_{1}} \ll 2^{k_{2}}$, and the case when $ 2^{\max \{ k_{1} ,k_{3} \}} \ll 2^{k_{2}}$. 
\\
\textbf{Case 2a:} If $ 2^{k_{2}} \approx 2^{k_{3}}$ while $ 2^{k_{1}} \ll 2^{k_{2}}$. Since $ 2^{k_{2}} \approx 2^{k_{3}}$ then $ \frac{1}{|\kappa|}  $ might be singular, and hence one needs to be careful when applying \eqref{multimultiplier}. We make use of the operator $H^{3}_{r}$ defined in \eqref{defh3r} to write 
$$
T^{i,j}_{3} ( \tilde{f}_{k_{1}}, \tilde{g}_{k_{2}}, \tilde{h}_{k_{3}}) = \sum_{r \leq k_{2} + C} H^{3}_{r} ( \tilde{f}_{k_{1}},\tilde{g}_{k_{2}},\tilde{h}_{k_{3}}).
$$
 For $ r \leq k_{1} -C$ we have that $ H^{3}_{r}( \tilde{f}_{k_{1}},\tilde{g}_{k_{2}},\tilde{h}_{k_{3}})$ has frequency localized around $ 2^{k_{1}}$. In particular, $ \Delta_{k} H^{3}_{r} ( \tilde{f}_{k_{1}},\tilde{g}_{k_{2}},\tilde{h}_{k_{3}}) = 0 $ for $k \geq k_{1} +C$ and $ r \leq k_{1} -C$. Then by the same argument leading to \eqref{secindestforh3r}, we obtain
\begin{equation*}
\begin{split}
& \sum_{k \leq k_{2} +C}2^{k \sigma} \sum_{r \leq k_{1} -C} \Vert \Delta_{k} ( \i \partial_{t} - ( - \Delta)^{s} + \i)^{-1} \ind_{[-T',T']} H^{3}_{r} ( \tilde{f}_{k_{1}}, \tilde{g}_{k_{2}} , \tilde{h}_{k_{3}}) \Vert_{Z_{k}}
\\
& \lesssim 2^{k_{1} \sigma } \Vert f_{k_{1}} \Vert_{Z_{k_{1}}} 2^{k_{2} \frac{n-2}{2}} \Vert g_{k_{2}} \Vert_{Z_{k_{2}}} 2^{k_{3} \frac{n-2}{2}} \Vert h_{k_{3}} \Vert_{Z_{k_{3}}}.
\end{split}
\end{equation*}
Next, fix $ r \in [k_{1} -C , k_{2} +C]$. We use \eqref{multimultiplier} and the assumption that $ 2^{k_{1}} \lesssim 2^{r}$ to obtain
\begin{equation*}
\begin{split}
& \Vert \Delta_{k} ( \i \partial_{t} - ( - \Delta)^{s} + \i)^{-1} \ind_{[-T',T']} H^{3}_{r} ( \tilde{f}_{k_{1}}, \tilde{g}_{k_{2}}, \tilde{h}_{k_{3}} ) \Vert_{Z_{k}}
\\
& \lesssim 2^{-k (2s-1)/2} \max_{e \in \mathscr{E}} \Vert H^{3}_{r} ( \tilde{f}_{k_{1}} , \tilde{g}_{k_{2}} , \tilde{h}_{k_{3}} ) \Vert_{L^{1}_{e} L^{2}_{e^{\perp},t}} 
\\
& \overset{\eqref{multimultiplier}}{\lesssim} 2^{-k(2s-1)/2} 2^{r} 2^{k_{2}(2s-1)} 2^{-r} 2^{-k_{2}} \Vert \tilde{g}_{k_{2}} \Vert_{L^{\infty}_{e'} L^{2}_{e'^{\perp},t}} \Vert \tilde{h}_{k_{3}} \Vert_{L^{2}_{e'} L^{\infty}_{e'^{\perp},t}} \max_{e \in \mathscr{E}} \Vert \tilde{f}_{k_{1}} \Vert_{L^{2}_{e} L^{\infty}_{e^{\perp},t}}
\\
& \overset{\eqref{local_smoothing},\eqref{maximal_Estimate}}{\lesssim} 2^{-k(2s-1)/2} 2^{k_{2}(2s-1)/2} 2^{k_{1} \frac{n-2}{2}} \Vert f_{k_{1}} \Vert_{Z_{k_{1}}} 2^{k_{2} \frac{n-2}{2}} \Vert h_{k_{3}} \Vert_{Z_{k_{3}}} \Vert g_{k_{2}} \Vert_{Z_{k_{2}}}.
\end{split}
\end{equation*}
Since for fixed $ r \in [k_{1} -C, k_{2} +C]$ the sum in $k$ runs up to $ r+C$ we obtain
\begin{equation*}
\begin{split}
&\sum_{k \leq r +C}2^{k \sigma} \Vert \Delta_{k} ( \i \partial_{t} - ( - \Delta)^{s} + \i)^{-1} \ind_{[-T',T']}H^{3}_{r} ( \tilde{f}_{k_{1}}, \tilde{g}_{k_{2}} , \tilde{h}_{k_{3}}) \Vert_{Z_{k}}
\\
& \lesssim 2^{k_{1} \frac{n-2}{2}} \Vert f_{k_{1}} \Vert_{Z_{k_{1}}} 2^{r \sigma} 2^{(k_{2} -r)(2s-1)/2} \Vert g_{k_{2}} \Vert_{Z_{k_{2}}} 2^{k_{3} \frac{n-2}{2}} \Vert h_{k_{3}} \Vert_{Z_{k_{3}}}.
\end{split}
\end{equation*}
This readily implies 
\begin{equation*}
\begin{split}
& \sum_{k \leq k_{2} +C}2^{k \sigma} \sum_{ k_{1} -C \leq r \leq k_{2} +C} \Vert \Delta_{k} ( \i \partial_{t} - ( - \Delta)^{s} + \i)^{-1} \ind_{[-T',T']}H^{3}_{r} ( \tilde{f}_{k_{1}}, \tilde{g}_{k_{2}} , \tilde{h}_{k_{3}}) \Vert_{Z_{k}}
\\
& \lesssim 2^{k_{1} \frac{n-2}{2}} \Vert f_{k_{1}} \Vert_{Z_{k_{1}}} 2^{k_{2} \sigma} \Vert g_{k_{2}} \Vert_{Z_{k_{2}}} 2^{k_{3} \frac{n-2}{2}} \Vert h_{k_{3}} \Vert_{Z_{k_{3}}}.
\end{split}
\end{equation*}
This concludes case 2a. 
\\
\textbf{Case 2b:} If $ 2^{\max \{ k_{1}, k_{3} \} } \ll 2^{k_{2}}$. In this case $ \kappa$ is localized around $ 2^{k_{2}}$ and we can apply \eqref{multimultiplier} directly. Assume first $ 2^{k_{1}} \lesssim 2^{k_{3}} $. Then we apply \eqref{multimultiplier}
\begin{equation*}
\begin{split}
& \Vert \Delta_{k} ( \i \partial_{t} - (- \Delta)^{s} + \i)^{-1} \ind_{[-T',T']}T^{i,j}_{3} ( \tilde{f}_{k_{1}},\tilde{g}_{k_{2}},\tilde{h}_{k_{3}}) \Vert_{Z_{k}}
\\
& \lesssim 2^{-k_{2} ( 2s-1)/2} \Vert T^{i,j}_{3} ( \tilde{f}_{k_{1}},\tilde{g}_{k_{2}},\tilde{h}_{k_{3}}) \Vert_{L^{1}_{e'} L^{2}_{e'^{\perp},t}}
\\
& \overset{\eqref{multimultiplier}}{\lesssim} 2^{-k_{2}(2s-1)/2} \Vert \tilde{g}_{k_{2}} \Vert_{L^{\infty}_{e'} L^{2}_{e'^{\perp},t}} \Vert \tilde{f}_{k_{1}} \Vert_{L^{2}_{e'} L^{\infty}_{e'^{\perp},t}} \Vert \tilde{h}_{k_{3}} \Vert_{L^{2}_{e'} L^{\infty}_{e'^{\perp},t}} 2^{(2s-1)k_{2}} 2^{-k_{3}}
\\
& \lesssim 2^{k_{1} \frac{n-2}{2}} \Vert f_{k_{1}} \Vert_{Z_{k_{1}}} 2^{k_{3} \frac{n-2}{2}} \Vert h_{k_{3}} \Vert_{Z_{k_{3}}} \Vert g_{k_{2}} \Vert_{Z_{k_{2}}},
\end{split}
\end{equation*}
where in the last line we used the maximal estimate, smoothing estimate and the assumption that $ 2^{k_{1}} \lesssim 2^{k_{3}}$. Next, if $ 2^{k_{3}} \ll 2^{k_{1}}$. Then we make use of Lemma \ref{Taylorcalc}. More precisely, we write 
$$
T^{i,j}_{3}( \tilde{f}_{k_{1}},\tilde{g}_{k_{2}},\tilde{h}_{k_{3}}) = \sum_{|\alpha|=1}^{\infty} \partial_{j} H^{4}_{\alpha}( \tilde{f}_{k_{1}},\tilde{g}_{k_{2}},\tilde{h}_{k_{3}})
$$
where $H^{4}_{\alpha}$ was defined in \eqref{Hfouralphadef}. As before, set
$$
a_{\alpha} : = \Vert \mathcal{F}^{-1}_{\R^{n}} \left( \varphi_{[k_{1}-C,k_{1}+C]} m_{\alpha} \right) \Vert_{L^{1}}
$$
Then using \eqref{multimultiplier} we obtain for each fixed $ \alpha$
\begin{equation*}
\begin{split}
& \Vert \Delta_{k} ( \i \partial_{t} - ( - \Delta)^{s} + \i)^{-1} \ind_{[-T',T']} \partial_{j} H^{4}_{\alpha} ( \tilde{f}_{k_{1}} , \tilde{g}_{k_{2}} , \tilde{h}_{k_{3}}) \Vert_{Z_{k}}
\\
& \lesssim 2^{-k_{2} ( 2s-1)/2} 2^{k_{2}} \Vert H^{4}_{\alpha} ( \tilde{f}_{k_{1}}, \tilde{g}_{k_{2}} , \tilde{h}_{k_{3}} ) \Vert_{L^{1}_{e'},L^{2}_{e'^{\perp},t}}
\\
& \overset{\eqref{multimultiplier}}{ \lesssim} 2^{-k_{2} (2s-1)/2}\frac{ a_{ \alpha} 2^{-C | \alpha|}}{ \alpha !} 2^{k_{1} ( 2s-1)} 2^{-k_{1}} \Vert \tilde{g}_{k_{2}} \Vert_{L^{\infty}_{e'} L^{2}_{e'^{\perp},t}} \Vert \tilde{f}_{k_{1}} \Vert_{L^{2}_{e'}L^{\infty}_{e'^{\perp},t}} \Vert \tilde{h}_{k_{3}} \Vert_{L^{2}_{e'}L^{\infty}_{e'^{\perp},t}}
\\
& \lesssim \frac{ 2^{-C |\alpha|} a_{ \alpha}}{\alpha !} 2^{k_{1} \frac{n-2}{2}} \Vert f_{k_{1}} \Vert_{Z_{k_{1}} } 2^{k_{3} \frac{n-2}{2}} \Vert h_{k_{3}} \Vert_{Z_{k_{3}}} \Vert g_{k_{2}} \Vert_{Z_{k_{2}}}.
\end{split}
\end{equation*}
Summing over $ \alpha$ and then multiplying by $ 2^{k \sigma}$ and summing over $ k \leq k_{2} +C$ yields the result. This concludes case 2b and hence case 2. 
\\
\textbf{Case 3:} If $ \max \{ k_{1} , k_{2} ,k_{3} \} = k_{3}$. In view of \eqref{multilpierlemma} we may assume that $h_{k_{3}}$ has Fourier transform supported in $ \{ \langle \xi ,e \rangle \geq 2^{k_{3} -2} \}$ for some $e \in \mathscr{E}$. In view the analysis above, we only need to consider the case in which $ 2^{\max \{ k_{1} , k_{2} \} } \ll 2^{k_{3}}$. Then in this case, notice that $ \kappa$ is localized at $ 2^{k_{3}}$. Then we can apply \eqref{multimultiplier} directly to obtain 
\begin{equation*}
\begin{split}
& \Vert \Delta_{k} ( \i \partial_{t} - ( - \Delta)^{s} + \i)^{-1} \ind_{[-T',T']} T^{i,j}_{3} ( \tilde{f}_{k_{1}} , \tilde{g}_{k_{2}} , \tilde{h}_{k_{3}} ) \Vert_{Z_{k}}
\\
& \lesssim 2^{-k_{3}(2s-1)/2} \Vert T^{i,j}_{3} ( \tilde{f}_{k_{1}},\tilde{g}_{k_{2}}, \tilde{h}_{k_{3}} ) \Vert_{L^{1}_{e} L^{2}_{e^{\perp},t} } 
\\
& \overset{\eqref{multimultiplier}}{ \lesssim } 2^{k_{3} ( 2s-1)/2} 2^{-k_{3}} \Vert \tilde{h}_{k_{3}} \Vert_{L^{\infty}_{e} L^{2}_{e^{\perp},t}} \Vert \tilde{g}_{k_{2}} \Vert_{L^{2}_{e} L^{\infty}_{e^{\perp},t}} \Vert \tilde{f}_{k_{1}} \Vert_{L^{2}_{e} L^{\infty}_{e^{\perp},t}}
\\
& \overset{\eqref{local_smoothing},\eqref{maximal_Estimate}}{\lesssim } 2^{k_{1} \frac{n-2}{2}} \Vert f_{k_{1}} \Vert_{Z_{k_{1}}} 2^{k_{2} \frac{n-2}{2}} \Vert g_{k_{2}} \Vert_{Z_{k_{2}}} \Vert h_{k_{3}} \Vert_{Z_{k_{3}}}
\end{split}
\end{equation*}
this implies the result and concludes case 3. 
\end{proof}
Next, we deal with $ \Gamma^{b}_{2}$. The estimates for $ \Gamma^{b}_{2}$ will follow from the following trilinear estimate
\begin{proposition}\label{proofforgamma2bb}
Let $ n \geq 4$, $ s \in (1/2 ,1)$, $ \sigma \geq \frac{n-2}{2}$, and $T' \in [0,\infty]$. Then the following holds for any $ f,g,h \in F^{\frac{n-2}{2}} \cap F^{\sigma}$
\begin{equation}\label{trtij4}
\begin{split}
& \Vert \ind_{[-T',T']} T^{i,j}_{4} ( \tilde{f}, \tilde{g} , \tilde{h} ) \Vert_{N^{\sigma}}
\\
& \lesssim \Vert f \Vert_{F^{\sigma}} \Vert g \Vert_{F^{\frac{n-2}{2}}} \Vert h \Vert_{F^{\frac{n-2}{2}}}
\\
&+ \Vert f \Vert_{F^{\frac{n-2}{2}}} \Vert g \Vert_{F^{\sigma}} \Vert h \Vert_{F^{\frac{n-2}{2}}}
\\
& + \Vert f \Vert_{F^{\frac{n-2}{2}}} \Vert g \Vert_{F^{\frac{n-2}{2}}} \Vert h \Vert_{F^{\sigma}}
\end{split}
\end{equation}
where $T^{i,j}_{4}$ is defined via the Fourier transform as follows 
\begin{equation*}
\begin{split}
& \mathcal{F}_{\R^{n}} ( T^{i,j}_{4} ( f,g,h) ) (\xi,t) 
\\
& = \int_{\R^{n} \times \R^{n}} \xi_{j} [ \psi( \xi - \eta)  - \psi(\xi) ]_{i} \hat{f}(\xi - \kappa,t) \frac{ \langle \kappa - \eta, \kappa \rangle}{ | \kappa |^{2}}
\\
& \times \frac{ \hat{g}(\eta,t)}{|\eta|} \frac{ \hat{h}(\kappa - \eta,t)}{ | \kappa - \eta|} d \eta d \kappa
\end{split}
\end{equation*}
where recall that $ \psi( \zeta) = \zeta | \zeta|^{2s-2}$. 
\end{proposition}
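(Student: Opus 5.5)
We follow the template of Propositions \ref{GaugeTrl1} and \ref{Proofforgamma2a}. It suffices to prove the dyadic analogue of \eqref{dyadictij3} with $T^{i,j}_{3}$ replaced by $T^{i,j}_{4}$, for $f_{k_1}\in Z_{k_1}$, $g_{k_2}\in Z_{k_2}$, $h_{k_3}\in Z_{k_3}$; the $N^{\sigma}$ bound \eqref{trtij4} then follows by summing over $k_1,k_2,k_3$. Throughout we use the basic bound
\[
\Vert \Delta_k(\i\partial_t-(-\Delta)^s+\i)^{-1}\ind_{[-T',T']}F\Vert_{Z_k}\lesssim 2^{-k(2s-1)/2}\max_{e}\Vert F\Vert_{L^1_eL^2_{e^\perp,t}}
\]
from \eqref{passingtol1l2}. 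We place the highest-frequency function in the local smoothing space \eqref{local_smoothing} and the other two in the maximal space \eqref{maximal_Estimate}. The multipliers are handled through \eqref{multimultiplier}, after restricting the highest-frequency input to a cone $\{\langle\xi,e'\rangle\ge 2^{k_{\max}-2}\}$ using \eqref{multilpierlemma}.

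The key structural observation is that the symbol $\xi_j[\psi(\xi-\eta)-\psi(\xi)]_i$ only sees the difference between $\xi-\eta$ and $\xi$, i.e. the frequency $\eta$ of $g$. When $2^{k_2}\ll 2^{\max(k_1,k_3)}$, Lemma \ref{Taylorcalc} expands
\[
\psi(\xi-\eta)-\psi(\xi)=\sum_{|\alpha|\ge1}\frac{m_\alpha(\xi)}{\alpha!}|\xi|^{2s-1-|\alpha|}(-\eta)^\alpha .
\]
This gains a factor $2^{k_2-k}$ and shows that the symbol effectively puts $2s-1$ derivatives on the output frequency and one derivative on $g_{k_2}$, which cancels the $|\eta|^{-1}$. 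Combined with $|\langle\kappa-\eta,\kappa\rangle|/|\kappa|^2\lesssim 2^{k_3}/|\kappa|$ and the factor $|\kappa-\eta|^{-1}$, the nonlinearity is essentially $|\nabla|^{2s-1}\partial_j(f\,g\,|\nabla|^{-1}\cdot|\nabla|^{-1}h)$ with at most one inverse derivative in total. The weights $2^{k_2(n-1)/2}2^{-k_2}$ and $2^{k_3(n-1)/2}2^{-k_3}$ then match $2^{k\frac{n-2}{2}}$, as in Case 1c of Proposition \ref{GaugeTrl1}. When instead $2^{k_2}\gtrsim 2^{k}$, we simply bound $|\psi(\xi-\eta)-\psi(\xi)|\lesssim 2^{(2s-1)\max(k,k_2)}$ and split the two terms, each of which is a product-type multiplier.

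The case analysis is organized by $\max\{k_1,k_2,k_3\}$.

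\textbf{Case 1: $k_1$ is maximal.} $f_{k_1}$ goes in smoothing. If $2^{k_2}\ll 2^{k_1}$, use the Taylor expansion above and sum the geometric series $\sum_\alpha a_\alpha 2^{-C|\alpha|}/\alpha!$ via \eqref{qpppa}. If $2^{k_2}\approx 2^{k_1}$, apply \eqref{multimultiplier} directly.

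\textbf{Case 2: $k_2$ is maximal.} $g_{k_2}$ goes in smoothing. The output frequency is at most $2^{k_2}$, so the symbol is $\lesssim 2^{k}2^{(2s-1)k_2}$. The factor $2^{k}$ is then paid for by the sum over $k\le k_2+C$ with weight $2^{k\sigma}$, exactly as in Case 2b of Proposition \ref{GaugeTrl1}.

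\textbf{Case 3: $k_3$ is maximal.} $h_{k_3}$ goes in smoothing, and the Taylor expansion in $\eta$ is again available whenever $2^{k_2}\ll 2^{k_3}$.

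The main obstacle is the singular factor $\langle\kappa-\eta,\kappa\rangle/|\kappa|^2$, which is not a bounded smooth multiplier at scale $2^{\max(k_2,k_3)}$ when $2^{k_2}\approx 2^{k_3}$ and $\kappa$ may be small. Here we follow Case 1a of Proposition \ref{Proofforgamma2a}. Decompose $\kappa$ dyadically with $\varphi_r(\kappa)$, $r\le\max(k_2,k_3)+C$, and, for $r$ well below the top frequency, decompose $g$ and $h$ with $P_{r,\tn_1}$ and $P_{r,\tn_2}$. By \eqref{cardn1n2} only boundedly many pairs $(\tn_1,\tn_2)$ interact, so \eqref{maximalestcardinality} together with the localized maximal estimate \eqref{eq:max:Xkest:loc} gives a gain of
\[
(1+|k_{\max}-r|)^2\, 2^{r(n-1)}2^{(k_{\max}-r)}2^{-r}
\]
against the loss $2^{-r}$ from $|\kappa|^{-1}$. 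This is summable in $r$ precisely because $n\ge4$ (so that $n-3>0$). We expect this step to require the most care, particularly in checking that the prefactor $2^{-r}$ rather than $2^{-2r}$ is all that appears: one power of $|\kappa|$ in $|\kappa|^{-2}$ is absorbed by $|\langle\kappa-\eta,\kappa\rangle|\le|\kappa-\eta||\kappa|$.
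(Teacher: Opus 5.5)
Your Case 1 (including the split at $k\approx k_2$ before Taylor expanding), your Case 2 when $2^{\max\{k_1,k_3\}}\ll 2^{k_2}$, and your Case 3 are consistent with the paper. There is, however, a genuine gap in the configuration $2^{k_2}\approx 2^{k_3}\gg 2^{k_1}$ (the paper's Case 2a). There your two prescriptions collide. Case 2 puts $g_{k_2}$ in the local smoothing space. Your treatment of the singular factor instead puts both $g$ and $h$ into the localized maximal space \eqref{eq:max:Xkest:loc} for every $r$ well below the top frequency $k_2$, which forces $f_{k_1}$ into the smoothing space. Neither choice works on the whole range of $r$.

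First take $k_1+C\le r\le k_2-C$. The output lives at frequency $2^{r}$, and the symbol has size $2^{r}\,2^{(2s-1)k_2}\,2^{k_3-r}\,2^{-2k_2}\approx 2^{(2s-2)k_2}$. With $f_{k_1}$ in smoothing and $g,h$ localized, multiplying by $2^{k\sigma}\approx 2^{r\sigma}$ gives $2^{r(\sigma+n-2-\frac{2s-1}{2})}2^{(2s-1)k_2}2^{-\frac{2s-1}{2}k_1}$ up to logarithms. For $r$ just below $k_2$ this exceeds the largest admissible right-hand side $2^{\frac{n-2}{2}k_1}2^{\sigma k_2}2^{\frac{n-2}{2}k_3}$ by $2^{(k_2-k_1)\frac{n+2s-3}{2}}$.

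Suppose instead you keep $g_{k_2}$ in smoothing with the ordinary maximal estimate, treating $\varphi_r(\kappa)\kappa_l|\kappa|^{-2}$ as a multiplier of kernel norm $2^{-r}$. Then for $r\le k_1-C$ the output sits at $2^{k_1}$, so $\xi_j$ contributes $2^{k_1}$ rather than $2^{r}$. The resulting uncompensated $2^{-r}$ makes the sum over $r\to-\infty$ diverge.

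The missing idea is to split the $\kappa$-sum at the frequency of $f$ and switch which function carries local smoothing.

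\emph{Range $r\ge k_1-C$.} Here $|\xi|\lesssim 2^{r}$, so $k\le r+C$ and $|\xi_j|\lesssim 2^{r}$ cancels exactly the $2^{-r}$ coming from $\langle\kappa-\eta,\kappa\rangle|\kappa|^{-2}$. This cancellation, not a loss ``paid for'' by the $k$-sum, is the role of the factor $2^{k}$. Applying \eqref{multimultiplier} with $g_{k_2}$ in \eqref{local_smoothing} and $f_{k_1},h_{k_3}$ in \eqref{maximal_Estimate} gives $2^{k(\sigma-\frac{2s-1}{2})}2^{\frac{2s-1}{2}k_2}2^{\frac{n-2}{2}k_1}2^{\frac{n-2}{2}k_3}$. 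The double sum over $k\le r+C$ and $r\le k_2+C$ is then geometric.

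\emph{Range $r\le k_1-C$.} Only here, where the output is at $2^{k_1}$, do you use \eqref{eq:max:Xkest:loc} for $g$ and $h$, now with the lower-frequency $f_{k_1}$ in the smoothing space. This yields $(1+|k_2-r|)^2\,2^{(k_2-k_1)(2s-1)}\,2^{k_1-r}\,2^{r(n-2)}$, which is summable in $r$ since $n>3$. The leftover $2^{(k_2-k_1)(2s-1)}$, together with the logarithm, is absorbed against the gain $2^{(n-2)(k_2-k_1)}$ only because $n-1-2s>0$. Your count does not see this second condition.

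The same splitting disposes of Case 3 when $2^{k_2}\approx 2^{k_3}$. Note also that in your Case 1 with $2^{k_2}\approx 2^{k_3}\ll 2^{k_1}$, the scale in the localized gain is $k_2$, not $k_{\max}$.
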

\begin{proof}
The proof is very similar to the proof of Proposition \ref{Proofforgamma2a}. Namely, to prove \eqref{trtij4} it suffices to prove the following 
\begin{equation}\label{dyadictij4}
\begin{split}
& \sum_{k \leq \max \{ k_{1}, k_{2} ,k_{3} \} +C } 2^{k \sigma } \Vert \Delta_{k} ( \i \partial_{t} - ( - \Delta)^{s} + \i)^{-1} \ind_{[-T',T']} T^{i,j}_{4} ( \tilde{f}_{k_{1}}, \tilde{g}_{k_{2}}, \tilde{h}_{k_{3}}) \Vert_{Z_{k}}
\\
& \lesssim 2^{k_{1} \sigma } \Vert f_{k_{1}} \Vert_{Z_{k_{1}}} 2^{k_{2} \frac{n-2}{2}} \Vert g_{k_{2}} \Vert_{Z_{k_{2}}} 2^{k_{3} \frac{n-2}{2}} \Vert h_{k_{3}} \Vert_{Z_{k_{3}}}
\\
& + 2^{k_{1} \frac{n-2}{2}} \Vert f_{k_{1}} \Vert_{Z_{k_{1}}} 2^{k_{2}\sigma} \Vert g_{k_{2}} \Vert_{Z_{k_{2}}} 2^{k_{3} \frac{n-2}{2}} \Vert h_{k_{3}} \Vert_{Z_{k_{3}}}
\\
& + 2^{k_{1} \frac{n-2}{2}} \Vert f_{k_{1}} \Vert_{Z_{k_{1}}} 2^{k_{2} \frac{n-2}{2}} \Vert g_{k_{2}} \Vert_{Z_{k_{2}}} 2^{k_{3} \sigma} \Vert h_{k_{3}} \Vert_{Z_{k_{3}}}
\end{split}
\end{equation}
And to prove \eqref{dyadictij4}, we consider several cases depending on which function has the highest frequency. \\
\textbf{Case 1:} If $ \max \{ k_{1},k_{2} ,k_{3} \} = k_{1}$. Then in view of \eqref{multilpierlemma} we may assume that $ f_{k_{1}}$ has Fourier transform supported in the set $ \{ \langle \xi , e' \rangle \geq 2^{k_{1} -2} \}$ for some $e' \in \mathscr{E}$. Then we consider several sub-cases depending on whether the frequencies $2^{k_{2}},2^{k_{3}}$ are close to $2^{k_{1}}$ or not. 
\\
\textbf{Case 1a:} If $ 2^{k_{1}} \approx 2^{k_{2}} \approx 2^{k_{3}}$. Then as was done in the proof of Proposition \ref{Proofforgamma2a}, the idea is to use \eqref{multimultiplier}, the smoothing and maximal estimates and the assumption $ 2^{k_{1}} \approx 2^{k_{2}} \approx 2^{k_{3}}$. However, we have to use the localized maximal estimate since $ \frac{1}{|\kappa|}$ might be singular in this case. To that end, write 
$$
T^{i,j}_{4} ( \tilde{f}_{k_{1}}, \tilde{g}_{k_{2}}, \tilde{h}_{k_{3}}) = \sum_{ r \leq k_{1} +C} H^{5}_{r} ( \tilde{f}_{k_{1}},\tilde{g}_{k_{2}},\tilde{h}_{k_{3}})
$$
where $H^{5}_{r}$ is defined via the Fourier transform by 
\begin{equation}\label{h5rdef}
\begin{split}
& \mathcal{F}_{\R^{n}}( H^{5}_{r} ( f,g,h) )(\xi,t) 
\\
& =  \int_{\R^{n} \times \R^{n}} \xi_{j} [ \psi( \xi - \eta)  - \psi(\xi) ]_{i} \hat{f}(\xi - \kappa,t) \frac{ \langle \kappa - \eta, \kappa \rangle}{ | \kappa |^{2}}
\\
& \times \frac{ \hat{g}(\eta,t)}{|\eta|} \frac{ \hat{h}(\kappa - \eta,t)}{ | \kappa - \eta|} \varphi_{r}( \kappa) d \eta d \kappa.
\end{split}
\end{equation}
Fix $ r \in [k_{1} -C ,k_{1} +C]$ then by \eqref{multimultiplier} and the smoothing, maximal estimates we obtain
\begin{equation*}
\begin{split}
& \Vert \Delta_{k} ( \i \partial_{t} - ( - \Delta)^{s} + \i)^{-1} \ind_{[-T',T']} H^{5}_{r} ( \tilde{f}_{k_{1}},\tilde{g}_{k_{2}},\tilde{h}_{k_{3}}) \Vert_{Z_{k}}
\\
& \lesssim 2^{-k (2s-1)/2} \max_{e \in \mathscr{E}} \Vert H^{5}_{r} ( \tilde{f}_{k_{1}}, \tilde{g}_{k_{2}}, \tilde{h}_{k_{3}}) \Vert_{L^{1}_{e} L^{2}_{e^{\perp},t}}
\\
& \overset{\eqref{multimultiplier}}{\lesssim} 2^{-k(2s-1)/2} 2^{k} 2^{k_{1} ( 2s-1)} 2^{-2k_{1}} \Vert \tilde{f}_{k_{1}} \Vert_{L^{\infty}_{e'} L^{2}_{e'^{\perp},t}} \Vert \tilde{g}_{k_{2}} \Vert_{L^{2}_{e'} L^{\infty}_{e'^{\perp},t}} \max_{e \in \mathscr{E}} \Vert \tilde{h}_{k_{3}} \Vert_{L^{2}_{e} L^{\infty}_{e^{\perp},t}}
\\
&\overset{\eqref{local_smoothing},\eqref{maximal_Estimate}}{ \lesssim} 2^{(k_{1}-k)(2s-1)/2} 2^{k - k_{1}} \Vert f_{k_{1}} \Vert_{Z_{k_{1}}} 2^{k_{2} \frac{n-2}{2}} \Vert g_{k_{2}} \Vert_{Z_{k_{2}}} 2^{k_{3} \frac{n-2}{2}} \Vert h_{k_{3}} \Vert_{Z_{k_{3}}}.
\end{split}
\end{equation*}
Multiplying by $ 2^{k \sigma}$ and summing over $k$ we obtain
\begin{equation}\label{firstestimateforh5r}
\begin{split}
& \sum_{k \leq k_{1} +C} 2^{k \sigma} \sum_{r \in [k_{1} -C,k_{1} +C]} \Vert \Delta_{k} ( \i \partial_{t} - ( - \Delta)^{s} + \i)^{-1} \ind_{[-T',T']} H^{5}_{r} ( \tilde{f}_{k_{1}} , \tilde{g}_{k_{2}} , \tilde{h}_{k_{3}} ) \Vert_{Z_{k}}
\\
& \lesssim 2^{k_{1} \sigma } \Vert f_{k_{1}} \Vert_{Z_{k_{1}}} 2^{k_{2} \frac{n-2}{2}} \Vert g_{k_{2}} \Vert_{Z_{k_{2}}} 2^{k_{3} \frac{n-2}{2}} \Vert h_{k_{3}} \Vert_{Z_{k_{3}}}.
\end{split}
\end{equation}
Next, if $ r \leq k_{1} -C$ for sufficiently large constant $ C $. Then $H^{5}_{r} ( \tilde{f}_{k_{1}},\tilde{g}_{k_{2}},\tilde{h}_{k_{3}} )$ has Fourier transform supported in $ \{ \langle \xi , e' \rangle \geq 2^{k_{1} -c}, |\xi| \approx 2^{k_{1}} \}$. Then we have
\begin{equation*}
\begin{split}
& \Vert \Delta_{k} ( \i \partial_{t} - ( - \Delta)^{s} + \i)^{-1} \ind_{[-T',T']} H^{5}_{r} ( \tilde{f}_{k_{1}}, \tilde{g}_{k_{2}} , \tilde{h}_{k_{3}}) \Vert_{Z_{k}} 
\\
& \lesssim 2^{-k_{1} (2s-1)/2} \Vert H^{5}_{r} ( \tilde{f}_{k_{1}} , \tilde{g}_{k_{2}} , \tilde{h}_{k_{3}} \Vert_{L^{1}_{e'} L^{2}_{e'^{\perp},t}}
\\
& \leq 2^{-k_{1} (2s-1)/2} \sum_{\tn_{1} , \tn_{2}} \Vert H^{5}_{r} (\tilde{f}_{k_{1}}, P_{\tn_{1} ,r} \tilde{g}_{k_{2}} , P_{\tn_{2},r} \tilde{h}_{k_{3}} ) \Vert_{L^{1}_{e'} L^{2}_{e'^{\perp},t}}
\\
& \overset{\eqref{multimultiplier}}{\lesssim} 2^{k_{1} (2s-1)/2} 2^{-k_{1}}  2^{(k_{1}-r)}\Vert \tilde{f}_{k_{1}} \Vert_{L^{\infty}_{e'} L^{2}_{e'^{\perp},t}} \sum_{\tn_{1} , \tn_{2}} \Vert P_{r,\tn_{1}} \tilde{g}_{k_{2}} \Vert_{L^{2}_{e'} L^{\infty}_{e'^{\perp},t}} \Vert P_{r, \tn_{2} } \tilde{h}_{k_{3}} \Vert_{L^{2}_{e'} L^{\infty}_{e'^{\perp},t}} \\
& \overset{\eqref{maximalestcardinality}}{\lesssim} 2^{k_{1} (2s-1)/2} 2^{-r} \Vert \tilde{f}_{k_{1}} \Vert_{L^{\infty}_{e'} L^{2}_{e'^{\perp},t}} 
\\
& \times \left( \sum_{\tn \in 2^{r} \Z^{n}} \Vert P_{r, \tn  } \tilde{g}_{k_{2}} \Vert^{2}_{L^{2}_{e'}L^{\infty}_{e'^{\perp},t}} \right)^{1/2}\left( \sum_{\tn \in 2^{r} \Z^{n}} \Vert P_{r, \tn  } \tilde{h}_{k_{3}} \Vert^{2}_{L^{2}_{e'}L^{\infty}_{e'^{\perp},t}} \right)^{1/2}
\\
& \overset{\eqref{local_smoothing},\eqref{eq:max:Xkest:loc}}{\lesssim} 2^{-r } \Vert f_{k_{1}} \Vert_{Z_{k_{1}}} ( 1+ | k_{1} -r|)^{2} 2^{r(n-1)} 2^{k_{2} - r} \Vert g_{k_{2}} \Vert_{Z_{k_{2}}} \Vert h_{k_{3}} \Vert_{Z_{k_{3}}}
\\
& = \Vert f_{k_{1}} \Vert_{Z_{k_{1}}} 2^{r(n-3)}( 1+|r-k_{1})^{2} 2^{k_{1}} \Vert g_{k_{2}} \Vert_{Z_{k_{2}} } \Vert h_{k_{3}} \Vert_{Z_{k_{3}}}
\end{split}
\end{equation*}
Then this implies the following 
\begin{equation}\label{secondestforh5r}
\begin{split}
& \sum_{k \leq k_{1} + C} 2^{k \sigma } \sum_{r \leq k_{1} -C}  \Vert \Delta_{k} ( \i \partial_{t} - ( - \Delta)^{s} + \i)^{-1} \ind_{[-T',T']} H^{5}_{r} ( \tilde{f}_{k_{1}}, \tilde{g}_{k_{2}}, \tilde{h}_{k_{3}} ) \Vert_{Z_{k}}
\\
& \lesssim 2^{k_{1} \sigma } \Vert f_{k_{1}} \Vert_{Z_{k_{1}}} 2^{k_{2} \frac{n-2}{2}} \Vert g_{k_{2}} \Vert_{Z_{k_{2}}} 2^{k_{3} \frac{n-2}{2}} \Vert h_{k_{3}} \Vert_{Z_{k_{3}}}
\end{split}
\end{equation}
combining \eqref{firstestimateforh5r} and \eqref{secondestforh5r} gives us the result in this case. 
\\
\textbf{Case 1b:} If $ 2^{k_{2}} \approx 2^{k_{1}}$ while $ 2^{k_{3}} \ll 2^{k_{1}}$. Then in this case $ \kappa$ is localized around $ 2^{k_{2}} \approx 2^{k_{1}}$ and hence we can apply \eqref{multimultiplier} directly
\begin{equation*}
\begin{split}
& \Vert \Delta_{k} ( \i \partial_{t} - ( - \Delta)^{s} + \i)^{-1}  \ind_{[-T',T']} T^{i,j}_{4} ( \tilde{f}_{k_{1}}, \tilde{g}_{k_{2}} , \tilde{h}_{k_{3}}) \Vert_{Z_{k}}
\\
& \lesssim 2^{-k (2s-1)/2} \max_{e \in \mathscr{E}} \Vert T^{i,j}_{4} ( \tilde{f}_{k_{1}}, \tilde{g}_{k_{2}} , \tilde{h}_{k_{3}} ) \Vert_{L^{1}_{e } L^{2}_{e^{\perp},t}}
\\
& \overset{\eqref{multimultiplier}}{ \lesssim } 2^{-k ( 2s-1)/2} 2^{k} 2^{k_{1} (2s-1)} \Vert \tilde{f}_{k_{1}} \Vert_{L^{\infty}_{e' } L^{2}_{e'^{\perp},t}}  2^{-2k_{1}} \Vert \tilde{g}_{k_{2}} \Vert_{L^{2}_{e'} L^{\infty}_{e'^{\perp},t}} \max_{e \in \mathscr{E}} \Vert \tilde{h}_{k_{3}} \Vert_{L^{2}_{e} L^{\infty}_{e^{\perp},t}}
\\
& \overset{\eqref{local_smoothing},\eqref{maximal_Estimate}}{\lesssim} 2^{k -k_{1}} 2^{(k_{1}- k)(2s-1)/2} \Vert f_{k_{1}} \Vert_{Z_{k_{1}}} 2^{k_{2} \frac{n-2}{2}} \Vert g_{k_{2}} \Vert_{Z_{k_{2}}} 2^{k_{3} \frac{n-2}{2}} \Vert h_{k_{3}} \Vert_{Z_{k_{3}}}
\end{split}
\end{equation*}
this readily implies \eqref{dyadictij4} in this case. 
\\
\textbf{Case 1c:} If $ 2^{\min \{ k_{2} ,k_{3} \}} \ll 2^{k_{1}} $. In view of the analysis above we may assume that $ k_{2} \leq k_{1} -C$ for sufficiently large $C $. We wish to use Lemma \ref{Taylorcalc}. However, if $ 2^{k_{3}} \approx 2^{k_{1}}$ then it might be the case that $ |\xi| \approx |\eta|$ and hence the convergence stated in Lemma \ref{Taylorcalc} might fail. To overcome this, we split the sum in $k$ for when $k \leq k_{2} +C_{0}$ and when $ k \geq k_{2} +C_{0}$. Fix $ k \leq k_{2} +C_{0}$ then 
$$
\Delta_k T^{i,j}_{4} ( \tilde{f}_{k_{1}} ,\tilde{g}_{k_{2}},\tilde{h}_{k_{3}} ) \neq 0
$$
only if $ 2^{k_{3}} \approx 2^{k_{1}}$. Therefore, $ \kappa$ is localized at $ 2^{k_{3}} \approx 2^{k_{1}}$. Then we apply \eqref{multimultiplier} to obtain
\begin{equation*}
\begin{split}
& \Vert \Delta_{k} ( \i \partial_{t} - ( -\Delta)^{s} + \i)^{-1} \ind_{[-T',T']}T^{i,j}_{4} ( \tilde{f}_{k_{1}}, \tilde{g}_{k_{2}},\tilde{h}_{k_{3}} ) \Vert_{Z_{k}}
\\
& \lesssim 2^{-k(2s-1)/2} \max_{e \in \mathscr{E}} \Vert T^{i,j}_{4} ( \tilde{f}_{k_{1}}, \tilde{g}_{k_{2}} ,\tilde{h}_{k_{3}} ) \Vert_{L^{1}_{e} L^{2}_{e^{\perp},t}}
\\
& \lesssim 2^{-k(2s-1)/2} 2^{k} 2^{k_{2}(2s-1)} 2^{-k_{2}} 2^{-k_{1}} \Vert \tilde{f}_{k_{1}} \Vert_{L^{\infty}_{e'} L^{2}_{e'^{\perp},t}} \Vert \tilde{g}_{k_{2}} \Vert_{L^{2}_{e'} L^{\infty}_{e'^{\perp},t}} \max_{e \in \mathscr{E}} \Vert \tilde{h}_{k_{3}} \Vert_{L^{2}_{e} L^{\infty}_{e^{\perp},t}}
\\
& \lesssim 2^{(k_{2} -k)(2s-1)/2} 2^{k-k_{2}} \Vert f_{k_{1}} \Vert_{Z_{k_{1}}} 2^{k_{2} \frac{n-2}{2}} \Vert g_{k_{2}} \Vert_{Z_{k_{2}}} 2^{k_{3} \frac{n-2}{2}} \Vert h_{k_{3}} \Vert_{Z_{k_{3}}}
\end{split}
\end{equation*}
summing over $k \leq k_{2} +C_{0}$ we obtain 
\begin{equation*}
\begin{split}
& \sum_{k \leq k_{2} +C_{0}} 2^{k \sigma } \Vert \Delta_{k} ( \i \partial_{t} - ( -\Delta)^{s} + \i)^{-1} \ind_{[-T',T']} T^{i,j}_{4} ( \tilde{f}_{k_{1}}, \tilde{g}_{k_{2}},\tilde{h}_{k_{3}} ) \Vert_{Z_{k}}
\\
& \lesssim 2^{k_{1} \sigma } \Vert f_{k_{1}} \Vert_{Z_{k_{1}}} 2^{k_{2} \frac{n-2}{2}} \Vert g_{k_{2}} \Vert_{Z_{k_{2}}} 2^{k_{3} \frac{n-2}{2}} \Vert h_{k_{3}} \Vert_{Z_{k_{3}}}.
\end{split}
\end{equation*}
In view of this estimate, for the remainder of the proof we assume $k \geq k_{2} + C_{0}$. This means that $ | \xi| \gg | \eta|$ and we can use Lemma \ref{Taylorcalc} to write
\begin{equation*}
\begin{split}
& \xi_{j}[ \psi( \xi - \eta) - \psi( \xi)]_{i}
\\
& = \sum_{|\alpha| =1}^{\infty} \frac{ m_{\alpha} ( \xi) | \xi|^{2s-1 - |\alpha|} \xi_{j}}{\alpha !} \eta^{\alpha}
\end{split}
\end{equation*}
Then define $ H^{6}_{\alpha} (f,g,h)$ in the frequency space as follows
\begin{equation*}
\begin{split}
& \mathcal{F}_{\R^{n}} ( H^{6}_{\alpha}( f,g,h) )(\xi,t) 
\\
& = \frac{1}{\alpha !} \int_{\R^{n} \times \R^{n}} m_{\alpha} (\xi) | \xi |^{2s-1- | \alpha|} \xi_{j}  \hat{ {f}}( \xi- \kappa,t) 
\\
& \times \eta^{\alpha} \frac{ \langle \kappa - \eta, \kappa \rangle }{ | \kappa |^{2}} \frac{ \hat{ g} ( \eta,t )}{ | \eta |} \frac{ \hat{h}(\kappa - \eta,t)}{ | \kappa - \eta |} d \eta d \kappa
\end{split}
\end{equation*}
Then we write 
\begin{equation*}
\begin{split}
T^{i,j}_{4} ( \tilde{f}_{k_{1}},\tilde{g}_{k_{2}} ,\tilde{h}_{k_{3}}) = \sum_{| \alpha | =1}^{\infty} H^{6}_{\alpha}( \tilde{f}_{k_{1}}, \tilde{g}_{k_{2}} ,\tilde{h}_{k_{3}} ) 
\end{split}
\end{equation*}
And to conclude this case, it suffices to prove 
\begin{equation}\label{h6alphaestimate}
\begin{split}
&  \sum_{k_{2} + C_{0} \leq k \leq k_{1} +C} 2^{k \sigma } \sum_{| \alpha|=1}^{\infty} \Vert \Delta_{k} ( \i \partial_{t} - ( - \Delta)^{s} + \i)^{-1} \ind_{[-T',T']} H^{6}_{\alpha} ( \tilde{f}_{k_{1}} , \tilde{g}_{k_{2}} , \tilde{h}_{k_{3}} ) \Vert_{Z_{k}} 
\\
& \lesssim 2^{k_{1} \sigma } \Vert f_{k_{1}} \Vert_{Z_{k_{1}}} 2^{k_{2} \frac{n-2}{2}} \Vert g_{k_{2}} \Vert_{Z_{k_{2}}} 2^{k_{3} \frac{n-2}{2}} \Vert h_{k_{3}} \Vert_{Z_{k_{3}}}.
\end{split}
\end{equation}
To prove \eqref{h6alphaestimate} set 
$$
a_{\alpha} : = \Vert \mathcal{F}^{-1}_{\R^{n}} \left( \varphi_{[k-C,k+C]}  m_{\alpha}(\xi) \right) \Vert_{L^{1}(\R^{n})}
$$
Next, fix any multi-index $ \alpha$, then we consider two sub-cases. Assume first that $ 2^{ \min \{ k_{2} ,k_{3} \}} \ll 2^{ \max \{ k_{2} , k_{3} \} }$, then notice that this assumption implies that $ \kappa$ is localized at $ 2^{ \max \{ k_{2} , k_{3} \}}$. Then we use \eqref{multimultiplier} to obtain 
\begin{equation*}
\begin{split}
& \Vert \Delta_{k} ( \i \partial_{t} - ( -\Delta)^{s} + \i)^{-1}\ind_{[-T',T']} H^{6}_{\alpha} ( \tilde{f}_{k_{1}}, \tilde{g}_{k_{2}}, \tilde{h}_{k_{3}} ) \Vert_{Z_{k}}
\\
& \lesssim 2^{-k(2s-1)/2} \max_{e \in \mathscr{E}} \Vert H^{6}_{\alpha}( \tilde{f}_{k_{1}},\tilde{g}_{k_{2}}, \tilde{h}_{k_{3}} ) \Vert_{L^{1}_{e} L^{2}_{e^{\perp},t}}
\\
& \lesssim 2^{-k (2s-1)/2}  \Vert \tilde{f}_{k_{1}} \Vert_{L^{\infty}_{e'} L^{2}_{e'^{\perp},t}} \Vert \tilde{g}_{k_{2}} \Vert_{L^{2}_{e'}L^{\infty}_{e'^{\perp},t}} \max_{e \in \mathscr{E}} \Vert \tilde{h}_{k_{3}} \Vert_{L^{2}_{e} L^{\infty}_{e^{\perp},t}} 2^{- \max \{ k_{2} ,k_{3} \}} 
\\
& \times \frac{ a_{\alpha} 2^{-C_{0} | \alpha|}}{\alpha !} 2^{k(2s-1)}
\\
& \lesssim   \frac{ a_{\alpha} 2^{-C_{0} | \alpha|}}{\alpha !}    2^{ (k -k_{1})(2s-1)/2} \Vert f_{k_{1}} \Vert_{Z_{k_{1}}} 2^{k_{2} \frac{n-2}{2}} \Vert g_{k_{2}} \Vert_{Z_{k_{2}}} 2^{k_{3} \frac{n-2}{2}} \Vert h_{k_{3}} \Vert_{Z_{k_{3}}}
\end{split}
\end{equation*}
Summing over $ \alpha$ and using the convergence stated in Lemma \ref{Taylorcalc} we obtain 
\begin{equation*}
\begin{split}
& \sum_{|\alpha|=1}^{\infty} \Vert \Delta_{k} ( \i \partial_{t} - ( -\Delta)^{s} + \i)^{-1} H^{6}_{\alpha} ( \tilde{f}_{k_{1}}, \tilde{g}_{k_{2}}, \tilde{h}_{k_{3}} ) \Vert_{Z_{k}}
\\
& \lesssim  2^{ (k -k_{1})(2s-1)/2} \Vert f_{k_{1}} \Vert_{Z_{k_{1}}} 2^{k_{2} \frac{n-2}{2}} \Vert g_{k_{2}} \Vert_{Z_{k_{2}}} 2^{k_{3} \frac{n-2}{2}} \Vert h_{k_{3}} \Vert_{Z_{k_{3}}}
\end{split}
\end{equation*}
This holds for every $k \in [ k_{2} + C_{0}, k_{1} +C ]$. Now multiplying $ 2^{k \sigma}$ and summing over $k $ yields \eqref{h6alphaestimate} under the assumption $ 2^{\min \{ k_{2} , k_{3} \}} \ll 2^{ \max \{ k_{2} ,k_{3} \} }$. Next, assume $ 2^{k_{2}} \approx 2^{k_{3}}$.  Then we cannot apply \eqref{multimultiplier} since $ \frac{1}{|\kappa|}$ might be singular. To overcome this, define $H^{6}_{\alpha,r}$ via the Fourier transform 
\begin{equation}\label{h6alhadef}
\begin{split}
& \mathcal{F}_{\R^{n}} ( H^{6}_{\alpha,r} ( \tilde{f}_{k_{1}},\tilde{g}_{k_{2}} ,\tilde{h}_{k_{3}}) ) (\xi,t)
\\
& := \frac{1}{\alpha !} \int_{\R^{n} \times \R^{n}} m_{\alpha} (\xi)\xi_{j} | \xi |^{2s-1- | \alpha|} \hat{ {f}}( \xi- \kappa,t) 
\\
& \times \eta^{\alpha} \frac{ \langle \kappa - \eta, \kappa \rangle }{ | \kappa |^{2}} \frac{ \hat{ g} ( \eta,t )}{ | \eta |} \frac{ \hat{h}(\kappa - \eta,t)}{ | \kappa - \eta |} \varphi_{r} ( \kappa) d \eta d \kappa
\end{split}
\end{equation}
Then to prove \eqref{h6alphaestimate} it suffices to prove 
\begin{equation}\label{case1clastone}
\begin{split}
&  \sum_{|\alpha|=1}^{\infty} \sum_{r \leq k_{2} +C} \Vert \Delta_{k} ( \i \partial_{t} - ( - \Delta)^{s} + \i)^{-1} \ind_{[-T',T']} H^{6}_{\alpha,r} ( \tilde{f}_{k_{1}}, \tilde{g}_{k_{2}} ,\tilde{h}_{k_{3}}) \Vert_{Z_{k}}
\\
& \lesssim  \Vert f_{k_{1}} \Vert_{Z_{k_{1}}} 2^{k_{2} \frac{n-2}{2}} \Vert g_{k_{2}} \Vert_{Z_{k_{2}}} 2^{k_{3} \frac{n-2}{2}} \Vert h_{k_{3}} \Vert_{Z_{k_{3}}}.
\end{split}
\end{equation}
To prove \eqref{case1clastone} we use the localized maximal estimate and \eqref{multimultiplier}
\begin{equation*}
\begin{split}
& \Vert \Delta_{k} ( \i \partial_{t} - ( - \Delta)^{s} + \i)^{-1} \ind_{[-T',T']} H^{6}_{\alpha,r} ( \tilde{f}_{k_{1}} , \tilde{g}_{k_{2}} , \tilde{h}_{k_{3}} ) \Vert_{Z_{k}}
\\
& \lesssim 2^{-k_{1} (2s-1)/2}  \Vert H^{6}_{\alpha ,r} ( \tilde{f}_{k_{1}},\tilde{g}_{k_{2}} ,\tilde{h}_{k_{3}} ) \Vert_{L^{1}_{e'} L^{2}_{e'^{\perp},t}}
\\
& \overset{\eqref{multimultiplier}}{\lesssim} \frac{ a_{\alpha} 2^{- C_{0} | \alpha|}}{ \alpha !}  2^{k_{1} (2s-1)/2} 2^{-r} \Vert \tilde{f}_{k_{1}} \Vert_{L^{\infty}_{e'} L^{2}_{e'^{\perp},t}}
\\
& \times \sum_{\tn_{1} ,\tn_{2}} \Vert P_{r,\tn_{1}} \tilde{g}_{k_{2}} \Vert_{L^{2}_{e'} L^{\infty}_{e'^{\perp},t}}  \Vert P_{r,\tn_{2}} \tilde{h}_{k_{3}} \Vert_{L^{2}_{e'} L^{\infty}_{e'^{\perp},t}}
\\
& \overset{\eqref{maximalestcardinality}}{\lesssim}  \frac{ a_{\alpha} 2^{- C_{0} | \alpha|}}{ \alpha !}  2^{k_{1} (2s-1)/2} 2^{-r} \Vert \tilde{f}_{k_{1}} \Vert_{L^{\infty}_{e'} L^{2}_{e'^{\perp},t}}
\\
& \times \left( \sum_{\tn \in 2^{r} \Z^{n}} \Vert  P_{r,\tn} \tilde{g}_{k_{2}} \Vert^{2}_{L^{2}_{e'} L^{\infty}_{e'^{\perp},t}}  \right)^{1/2}  \left( \sum_{\tn \in 2^{r} \Z^{n}} \Vert  P_{r,\tn} \tilde{h}_{k_{3}} \Vert^{2}_{L^{2}_{e'} L^{\infty}_{e'^{\perp},t}}  \right)^{1/2} 
\\
& \lesssim \frac{ a_{\alpha} 2^{-C_{0} | \alpha|}}{ \alpha !} 2^{-r} 2^{r(n-1)} 2^{k_{2}-r} (1 + | r-k_{2}|)^{2} \Vert f_{k_{1}} \Vert
_{Z_{k_{1}}} \Vert g_{k_{2}} \Vert_{Z_{k_{2}}} \Vert h_{k_{3}} \Vert_{Z_{k_{3}}}
\end{split}
\end{equation*}
Summing over $ r \leq k_{2} +C$ and using that $ 2^{k_{2}} \approx 2^{k_{3}}$ we obtain
\begin{equation*}
\begin{split}
& \sum_{r \leq k_{2} +C} \Vert \Delta_{k} ( \i \partial_{t} - ( - \Delta)^{s} + \i)^{-1} \ind_{[-T',T']}  H^{6}_{\alpha,r} ( \tilde{f}_{k_{1}} , \tilde{g}_{k_{2}} , \tilde{h}_{k_{3}} ) \Vert_{Z_{k}}
\\
& \lesssim \frac{ a_{\alpha} 2^{- C_{0} | \alpha|}}{ \alpha !} \Vert f_{k_{1}} \Vert_{Z_{k_{1}}} 2^{k_{2} \frac{n-2}{2}} \Vert g_{k_{2}} \Vert_{Z_{k_{2}}} 2^{k_{3} \frac{n-2}{2}} \Vert h_{k_{3}} \Vert_{Z_{k_{3}}}. 
\end{split}
\end{equation*}
Summing over $ \alpha$ and using the convergence stated in Lemma \ref{Taylorcalc} we obtain \eqref{case1clastone}. This concludes Case 1c and hence Case 1. 
\\
\textbf{Case 2} If $ \max \{ k_{1},k_{2} ,k_{3} \} = k_{2}$. Then in view of the analysis above, the only two sub cases we need to consider are when $ 2^{k_{1}} \ll 2^{k_{2}}$ while $ 2^{k_{3}} \approx 2^{k_{2}}$ and the case in which $ 2^{ \max \{ k_{1} , k_{3} \}} \ll 2^{k_{2}}$.
\\
\textbf{Case 2a:} If $ 2^{k_{1}} \ll 2^{k_{2}}$ and $ 2^{k_{2}} \approx 2^{k_{3}}$. In this case $ \frac{1}{| \kappa|}$ might be singular. Therefore, we need to use the localized maximal estimate. To that end, we write 
$$
T^{i,j}_{4}( \tilde{f}_{k_{1}} , \tilde{g}_{k_{2}} ,\tilde{h}_{k_{3}}) = \sum_{r = - \infty}^{k_{2} +C} H^{5}_{r} ( \tilde{f}_{k_{1}} ,\tilde{g}_{k_{2}} , \tilde{h}_{k_{3}} ).
$$
Where $H^{5}_{r}$ was defined in \eqref{h5rdef}.
In view of the definitions, it suffices to prove 
\begin{equation}\label{qsgha1}
\begin{split}
& \sum_{k \leq k_{2} +C} 2^{k \sigma} \sum_{r \leq k_{2} +C} \Vert \Delta_{k} ( \i \partial_{t} - ( - \Delta)^{s} + \i)^{-1} \ind_{[-T',T']} H^5_{r} ( \tilde{f}_{k_{1}},\tilde{g}_{k_{2}},\tilde{h}_{k_{3}}) \Vert_{Z_{k}}
\\
& \lesssim 2^{k_{1} \frac{n-2}{2}} \Vert f_{k_{1}} \Vert_{Z_{k_{1}}} 2^{k_{2} \sigma} \Vert g_{k_{2}} \Vert_{Z_{k_{2}}} 2^{k_{3} \frac{n-2}{2}} \Vert h_{k_{3}} \Vert_{Z_{k_{3}}}
\\
& + 2^{k_{1} \sigma} \Vert f_{k_{1}} \Vert_{Z_{k_{1}}} 2^{k_{2} \frac{n-2}{2}} \Vert g_{k_{2}} \Vert_{Z_{k_{2}}} 2^{k_{3} \frac{n-2}{2}} \Vert h_{k_{3}} \Vert_{Z_{k_{3}}}.
\end{split}
\end{equation}
 Fix $ r \in [k_{1} -C ,k_{2} +C]$ then $ k \leq r +C$ (otherwise the summand on the left hand side of \eqref{qsgha1} is $0$) and hence 
\begin{equation*}
\begin{split}
& 2^{k \sigma } \Vert \Delta_{k} ( \i \partial_{t} - ( - \Delta)^{s} + \i)^{-1} \ind_{[-T',T']} H^{5}_{r} ( \tilde{f}_{k_{1}} , \tilde{g}_{k_{2}} , \tilde{h}_{k_{3}} ) \Vert_{Z_{k}}
\\
& \lesssim 2^{k \sigma} 2^{-k (2s-1)/2} \max_{e \in \mathscr{E}} \Vert H^{5}_{r} ( \tilde{f}_{k_{1}} , \tilde{g}_{k_{2}} , \tilde{h}_{k_{3}} ) \Vert_{L^{1}_{e} L^{2}_{e^{\perp},t}}
\\
& \overset{\eqref{multimultiplier}}{\lesssim} 2^{ k \sigma} 2^{-k(2s-1)/2} 2^{r} 2^{k_{2}(2s-1)} 2^{-r}  2^{-k_{2}} \Vert \tilde{f}_{k_{1}} \Vert_{L^{2}_{e'} L^{\infty}_{e'^{\perp},t}} \Vert \tilde{g}_{k_{2}} \Vert_{L^{\infty}_{e'} L^{2}_{e'^{\perp},t}} \max_{e \in \mathscr{E}} \Vert \tilde{h}_{k_{3}} \Vert_{L^{2}_{e} L^{\infty}_{e^{\perp},t}}
\\
& \overset{\eqref{local_smoothing},\eqref{maximal_Estimate}}{\lesssim} 2^{k \sigma } 2^{-k(2s-1)/2} 2^{k_{2} (2s-1)/2} 2^{k_{1} \frac{n-2}{2}} \Vert f_{k_{1}} \Vert_{Z_{k_{1}}} 2^{k_{3} \frac{n-2}{2}} \Vert h_{k_{3}} \Vert_{Z_{k_{3}}} \Vert g_{k_{2}} \Vert_{Z_{k_{2}}}
\end{split}
\end{equation*}
Summing over $k \leq r +C$ we obtain 
\begin{equation*}
\begin{split}
& \sum_{k \leq r +C} 2^{k \sigma} \Vert \Delta_{k} ( \i \partial_{t} - ( - \Delta)^{s} + \i)^{-1} \ind_{[-T',T']} H^{5}_{r} ( \tilde{f}_{k_{1}} , \tilde{g}_{k_{2}} , \tilde{h}_{k_{3}} ) \Vert_{Z_{k}}
\\
& \lesssim 2^{r \sigma } 2^{-r (2s-1)/2} 2^{k_{2} (2s-1)/2}   
2^{k_{1} \frac{n-2}{2}} \Vert f_{k_{1}} \Vert_{Z_{k_{1}}} 2^{k_{3} \frac{n-2}{2}} \Vert h_{k_{3}} \Vert_{Z_{k_{3}}} \Vert g_{k_{2}} \Vert_{Z_{k_{2}}}
\end{split}
\end{equation*}
summing over $ r \leq k_{2} +C$ yields 
\begin{equation}\label{poasa}
\begin{split}
& \sum_{k \leq k_{2} +C} 2^{k \sigma} \sum_{k_{1} -C \leq r \leq k_{2} +C} \Vert \Delta_{k} ( \i \partial_{t} - ( - \Delta)^{s} + \i)^{-1} \ind_{[-T',T']} H^{5}_{r} ( \tilde{f}_{k_{1}} ,\tilde{g}_{k_{2}},\tilde{h}_{k_{3}}) \Vert_{Z_{k}}
\\
& \lesssim 2^{k_{1} \frac{n-2}{2}} \Vert f_{k_{1}} \Vert_{Z_{k_{1}}} 2^{k_{2} \sigma} \Vert g_{k_{2}} \Vert_{Z_{k_{2}} } 2^{k_{3} \frac{n-2}{2}} \Vert h_{k_{3}} \Vert_{Z_{k_{3}}} 
\end{split}
\end{equation}
Next, fix $ r \leq k_{1} -C$ for sufficiently large constant $ C$. Notice that in this case, the sum in $k$ runs only for when $ 2^{k} \approx 2^{k_{1}}$. Further, in view of \eqref{multilpierlemma} we may assume $ \tilde{f}_{k_{1}}$ has Fourier transform supported in $ \{ \langle \xi, e\rangle \geq 2^{k_{1} -c} \}$. Then we estimate 
\begin{equation*}
\begin{split}
& \Vert \Delta_{k} ( \i \partial_{t} - ( -\Delta)^{s} + \i)^{-1} \ind_{[-T',T']} H^{5}_{r} ( \tilde{f}_{k_{1}}, \tilde{g}_{k_{2}} , \tilde{h}_{k_{3}} ) \Vert_{Z_{k}}
\\
& \lesssim 2^{-k_{1} (2s-1)/2} \Vert H^{5}_{r} ( \tilde{f}_{k_{1}}, \tilde{g}_{k_{2}} , \tilde{h}_{k_{3}} ) \Vert_{L^{1}_{e} L^{2}_{e^{\perp},t}}
\\
& \lesssim 2^{-k_{1}(2s-1)/2} \sum_{\tn_{1} ,\tn_{2} } \Vert H^{5}_{r}( \tilde{f}_{k_{1}} , P_{r,\tn_{1}} \tilde{g}_{k_{2}}, P_{r,\tn_{2}} \tilde{h}_{k_{3}} ) \Vert_{L^{1}_{e} L^{2}_{e^{\perp},t}}
\\
& \overset{\eqref{multimultiplier}}{\lesssim} 2^{-k_{1} (2s-1)/2} 2^{k_{2} (2s-1)} 2^{k_{1}} 2^{-r} 2^{-k_{2}} \Vert \tilde{f}_{k_{1}} \Vert_{L^{\infty}_{e} L^{2}_{e^{\perp},t}} 
\\
& \times \sum_{\tn_{1} , \tn_{2}} \Vert P_{r,\tn_{1}} \tilde{g}_{k_{2}} \Vert_{L^{2}_{e} L^{\infty}_{{e}^{\perp},t}} \Vert P_{r,\tn_{2}} \tilde{h}_{k_{3}} \Vert_{L^{2}_{e} L^{\infty}_{e^{\perp},t}}
\\
& \overset{\eqref{maximalestcardinality}}{\lesssim} 2^{-k_{1} (2s-1)/2}2^{k_{2}(2s-1)}  2^{-r}  \Vert \tilde{f}_{k_{1}} \Vert_{L^{\infty}_{e} L^{2}_{e^{\perp},t}} 
\\
& \times \left( \sum_{  \tn \in 2^{r} \Z^{n}} \Vert P_{r,\tn} \tilde{g}_{k_{2}} \Vert^{2}_{L^{2}_{e} L^{\infty}_{{e}^{\perp},t}} \right)^{1/2}
\left( \sum_{  \tn \in 2^{r} \Z^{n}} \Vert P_{r,\tn} \tilde{h}_{k_{3}} \Vert^{2}_{L^{2}_{e} L^{\infty}_{{e}^{\perp},t}} \right)^{1/2}
\\
& \overset{\eqref{local_smoothing},\eqref{eq:max:Xkest:loc}}{\lesssim} ( 1+ | k_{2} -r|)^{2} 
 2^{(k_{2} -k_{1})(2s-1)} 2^{(k_{1} -r)} 2^{r(n-2)} \Vert f_{k_{1}} \Vert_{Z_{k_{1}}} \Vert g_{k_{2}} \Vert_{Z_{k_{2}}} \Vert h_{k_{3}} \Vert_{Z_{k_{3}}}
\end{split}
\end{equation*}
Summing over $ r \leq k_{1} -C$ and then over $ k$ we obtain ( since $ n -1 -2s >0 )$
\begin{equation}\label{poasa2}
\begin{split}
& \sum_{k \leq k_{2} +C} 2^{k \sigma } \sum_{ r \leq k_{1} -C} \Vert \Delta_{k} ( \i \partial_{t} - ( - \Delta)^{s} + \i)^{-1} \ind_{[-T',T']} H^{5}_{r} ( \tilde{f}_{k_{1}} ,\tilde{g}_{k_{2}},\tilde{h}_{k_{3}}) \Vert_{Z_{k}}
\\
& \lesssim 2^{k_{1} \sigma } \Vert f_{k_{1}} \Vert_{Z_{k_{1}}} 2^{k_{2} \frac{n-2}{2}} \Vert g_{k_{2}} \Vert_{Z_{k_{2}} } 2^{k_{3} \frac{n-2}{2}} \Vert h_{k_{3}} \Vert_{Z_{k_{3}}}.
\end{split}
\end{equation}
Combining \eqref{poasa} and \eqref{poasa2} we arrive at \eqref{qsgha1}. Which concludes the proof of this case.
\\
\textbf{Case 2b:} If $ 2^{ \max\{ k_{1},k_{3} \}} \ll 2^{k_{2}} $. In this case notice that $ \kappa$ is localized at $ 2^{k_{2}} $ and so we can apply \eqref{multimultiplier} directly to obtain 
\begin{equation*}
\begin{split}
& \Vert \Delta_{k} ( \i \partial_{t} - ( -\Delta)^{s} + \i)^{-1} \ind_{[-T',T']} T^{i,j}_{4}( \tilde{f}_{k_{1}}, \tilde{g}_{k_{2}}, \tilde{h}_{k_{3}} \Vert_{Z_{k}}
\\
& \lesssim 2^{-k_{2}(2s-1)/2} \Vert T^{i,j}_{4} ( \tilde{f}_{k_{1}}, \tilde{g}_{k_{2}}, \tilde{h}_{k_{3}}) \Vert_{L^{1}_{e'} L^{2}_{e'^{\perp},t}}
\\
& \overset{\eqref{multimultiplier}}{\lesssim} 2^{k_{2} (2s-1)/2} 2^{-k_{2}} \Vert \tilde{g}_{k_{2}} \Vert_{L^{\infty}_{e'} L^{2}_{e'^{\perp},t}} \Vert \tilde{f}_{k_{1}} \Vert_{L^{2}_{e'}{L^{\infty}_{e'^{\perp},t}}} \Vert \tilde{h}_{k_{3}} \Vert_{L^{2}_{e'}{L^{\infty}_{e'^{\perp},t}}} 
\\
& \overset{\eqref{local_smoothing},\eqref{maximal_Estimate}}{\lesssim} \Vert g_{k_{2}} \Vert_{Z_{k_{2}}} 2^{k_{1} \frac{n-2}{2}} \Vert f_{k_{1}} \Vert_{Z_{k_{1}}} 2^{k_{3} \frac{n-2}{2}} \Vert h_{k_{3}} \Vert_{Z_{k_{3}}}.
\end{split}
\end{equation*}
\\
\textbf{Case 3:} If $ \max \{ k_{1} ,k_{2} ,k_{3} \} =k_{3}$. In view of the cases above, we may assume $ 2^{\max \{ k_{1} , k_{2} \} } \ll 2^{k_{3}}$. Further, in view of \eqref{multilpierlemma} we may assume $ h_{k_{3}}$ has Fourier transform supported in $ \{  \langle \xi , e \rangle \geq 2^{k_{3} -2} \}$ for some $ e \in \mathscr{E}$. First, consider the sub-case in which $ 2^{k_{1}} \lesssim 2^{k_{2}}$ then we apply \eqref{multimultiplier} together with the local smoothing and maximal estimate to obtain 
\begin{equation*}
\begin{split}
& \Vert \Delta_{k} ( \i \partial_{t} - ( - \Delta)^{s} +\i)^{-1} \ind_{[-T',T']} T^{i,j}_{4} ( \tilde{f}_{k_{1}} , \tilde{g}_{k_{2}} , \tilde{h}_{k_{3}} ) \Vert_{Z_{k}}
\\
& \lesssim 2^{-k_{3} (2s-1)/2} \Vert T^{i,j}_{4} ( \tilde{f}_{k_{1}} ,\tilde{g}_{k_{2}} , \tilde{h}_{k_{3}} ) \Vert_{Z_{k}}
\\
& \lesssim 2^{k_{3} (2s-1)/2} 2^{-k_{2}} \Vert \tilde{h}_{k_{3}} \Vert_{L^{\infty}_{e} L^{2}_{e^{\perp},t}} \Vert \tilde{f}_{k_{1}} \Vert_{L^{2}_{e} L^{\infty}_{e^{\perp},t}} \Vert \tilde{g}_{k_{2}} \Vert_{L^{2}_{e} L^{\infty}_{e^{\perp},t}}
\\
& \lesssim \Vert h_{k_{3}} \Vert_{Z_{k_{3}}} 2^{k_{2} \frac{n-2}{2}} \Vert g_{k_{2}} \Vert_{Z_{k_{2}}} 2^{k_{1} \frac{n-2}{2}} \Vert f_{k_{1}} \Vert_{Z_{k_{1}}}
\end{split}
\end{equation*}
Multiplying by $ 2^{k \sigma} $ and summing over $ k \leq k_{3} +C$ yields the result under the assumption $ 2^{k_{1}} \lesssim 2^{k_{2}}$. Assume next that $ 2^{k_{2}} \ll 2^{k_{1}}$. Then we use Lemma \ref{Taylorcalc} to write 
\begin{equation*}
\begin{split}
& \xi_{j}[ \psi( \xi - \eta) - \psi( \xi)]_{i}
\\
& = \sum_{|\alpha| =1}^{\infty} \frac{ m_{\alpha} ( \xi) | \xi|^{2s-1 - |\alpha|} \xi_{j}}{\alpha !} \eta^{\alpha}
\end{split}
\end{equation*}
Then we use the operator $H^{6}_{\alpha}$ defined in \eqref{h6alhadef} to reduce matters into proving 
\begin{equation*}
\begin{split}
& \sum_{ | \alpha|=1}^{\infty} \Vert \Delta
_{k} ( \i \partial_{t} - ( - \Delta)^{s} + i)^{-1} \ind_{[-T',T']} H^{6}_{\alpha}(\tilde{f}_{k_{1}}, \tilde{g}_{k_{2}} ,\tilde{h}_{k_{3}}) \Vert_{Z_{k}}
\\
& \lesssim \Vert h_{k_{3}} \Vert_{Z_{k_{3}}} 2^{k_{2} \frac{n-2}{2}} \Vert g_{k_{2}} \Vert_{Z_{k_{2}}} 2^{k_{1} \frac{n-2}{2}} \Vert f_{k_{1}} \Vert_{Z_{k_{1}}}.
\end{split}
\end{equation*}
To that end, fix $ \alpha $ and define 
$$
a_{\alpha}: = \Vert \mathcal{F}^{-1} \left( m_{\alpha}(\xi) \varphi_{[k_{3}-c,k_{3}+c]} (\xi
) \right) \Vert_{L^{1}}.
$$
Then we estimate using \eqref{multimultiplier}
\begin{equation*}
\begin{split}
& \Vert \Delta_{k} ( \i \partial_{t} - ( -\Delta)^{s} +\i)^{-1} \ind_{[-T',T']} H^{6}_{\alpha}( \tilde{f}_{k_{1}}, \tilde{g}_{k_{2}}, \tilde{h}_{k_{3}} ) \Vert_{Z_{k}}
\\
& \lesssim 2^{-k_{3} (2s-1)/2} \Vert H^{6}_{\alpha}( \tilde{f}_{k_{1}}, \tilde{g}_{k_{2}} , \tilde{h}_{k_{3}} ) \Vert_{L^{1}_{e} L^{2}_{e^{\perp},t}}
\\
& \lesssim \frac{ a_{\alpha} 2^{- C | \alpha|}}{ \alpha !} 2^{k_{1}(2s-1)/2} 2^{-k_{1}} \Vert \tilde{f}_{k_{1}} \Vert_{L^{2}_{e} L^{\infty}_{e^{\perp},t}} \Vert \tilde{g}_{k_{2}} \Vert_{L^{2}_{e} L^{\infty}_{e^{\perp},t}} \Vert h_{k_{3}} \Vert_{L^{\infty}_{e} L^{2}_{e^{\perp},t}}
\\
& \lesssim \frac{ a_{\alpha} 2^{- C | \alpha|}}{ \alpha !}  2^{k_{1} \frac{n-2}{2}} \Vert f_{k_{1}} \Vert_{Z_{k_{1}}} 2^{k_{2} \frac{n-2}{2}} \Vert g_{k_{2}} \Vert_{Z_{k_{2}}} \Vert h_{k_{3}} \Vert_{Z_{k_{3}}}
\end{split}
\end{equation*}
summing over $ \alpha$ and using the convergence stated in Lemma \ref{Taylorcalc} we obtain
\begin{equation*}
\begin{split}
& \sum_{ | \alpha|=1}^{\infty} \Vert \Delta
_{k} ( \i \partial_{t} - ( - \Delta)^{s} + i)^{-1}\ind_{[-T',T']} H^{6}_{\alpha}(\tilde{f}_{k_{1}}, \tilde{g}_{k_{2}} ,\tilde{h}_{k_{3}}) \Vert_{Z_{k}}
\\
& \lesssim \Vert h_{k_{3}} \Vert_{Z_{k_{3}}} 2^{k_{2} \frac{n-2}{2}} \Vert g_{k_{2}} \Vert_{Z_{k_{2}}} 2^{k_{1} \frac{n-2}{2}} \Vert f_{k_{1}} \Vert_{Z_{k_{1}}}.
\end{split}
\end{equation*}
This concludes case 3 and hence proves the proposition. 
\end{proof}

\section{Nonlinear estimates III}
In this section we prove the nonlinear estimates regarding the time derivatives. 
Namely, we deal with the term
\begin{equation}\label{timederivative}
\begin{split}
& \i V_{i} \langle \partial_{t} e_{2} , e_{1} \rangle.
\end{split}
\end{equation}

\begin{proposition}\label{timederivativenonlinearity}
Let $ n \geq 4$, $ s \in (1/2,1), \sigma \geq \frac{n-2}{2}$, and $T' \in [0,T]$ then for any $i \in \{ 1,...,n \} $ we have the following ( Here denote $ 
E : = \Vert |\nabla| e_{1} \Vert_{\tilde{F}^{\frac{n-2}{2}}(T')} +\Vert |\nabla| e_{2} \Vert_{\tilde{F}^{\frac{n-2}{2}}(T')} ) $
\begin{equation*}
\begin{split}
& \Vert \ind_{[-T',T']} V_{i} \langle \partial_{t} e_{2},e_{1} \rangle \Vert_{N^{\sigma}} 
\\
& \lesssim  \Vert V_{x} \Vert_{\tilde{F}^{\sigma}(T')} \Vert V_{x} \Vert_{\tilde{F}^{\frac{n-2}{2}}(T')}   \left( \Vert V_{x}\Vert_{\tilde{F}^{\frac{n-2}{2}}(T')} +E^{2} + E  \right)
\\
& + \Vert \vert \nabla \vert e_{1}\Vert_{\tilde{F}^{\sigma}(T')} \Vert V_{x} \Vert^{2}_{\tilde{F}^{\frac{n-2}{2}}(T')} \left(  1 +  \Vert V_{x} \Vert_{\tilde{F}^{\frac{n-2}{2}}(T')}  +   E \right) 
\\
& + \Vert \vert \nabla \vert e_{2}\Vert_{\tilde{F}^{\sigma}(T')}  \Vert V_{x} \Vert^{2}_{\tilde{F}^{\frac{n-2}{2}}(T')} \left(  1 + \Vert V_{x} \Vert_{\tilde{F}^{\frac{n-2}{2}}(T')}+   E \right).
\end{split}
\end{equation*}
\end{proposition}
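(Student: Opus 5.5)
The idea is to rewrite the connection coefficient $A_0 := \langle \partial_t e_2, e_1\rangle$ in terms of spatial quantities using the Coulomb gauge, and then estimate the product $V_i A_0$ with the algebra properties.

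\emph{Elliptic equation for $A_0$.} Differentiating $A_0$ in $x_i$ and subtracting $\partial_t \langle \partial_i e_2, e_1\rangle$ gives the curvature identity
\[
\partial_i A_0 - \partial_t A_i = \langle \partial_t e_2, \partial_i e_1\rangle - \langle \partial_i e_2, \partial_t e_1\rangle = \operatorname{Im}(\bar V_0 V_i)
\]
(up to sign), where $A_i := \langle \partial_i e_2, e_1\rangle$. Summing over $i$ and using the gauge condition $\sum_i \partial_i A_i = 0$ yields
\[
A_0 = \Delta^{-1}\sum_i \partial_i \operatorname{Im}(\bar V_0 V_i),
\]
schematically $A_0 = |\nabla|^{-1}(V_0 V_x)$.

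Next we express $V_0$. From \eqref{eqqpre}, $V_0 = \i \nabla^{2s-1}_x V_x + \mathcal{R}$, where $\mathcal{R}$ collects the commutator terms $\langle e, [\nabla^{2s-1}_j, e](\cdot)\rangle$, $\mathscr{H}(e_1,e_2)$ and $\langle e_1, [\nabla^{2s-1}_j, e_2]V_j\rangle$. Each of these is trilinear: schematically it is $|\nabla|^{2s-1}$ applied to $V \cdot |\nabla|^{-1}(|\nabla| e) \cdot |\nabla|^{-1}(|\nabla| e)$, or bilinear in $V$ and $|\nabla| e$ with $2s-1$ derivatives. This produces the structure of the claimed bound: a cubic-in-$V$ term, a term $V \cdot V \cdot E$, and a term $V \cdot V \cdot E^2$.

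\emph{Estimates.} For the main part $V_i\, |\nabla|^{-1}\bigl(V_x\, \nabla^{2s-1} V_x\bigr)$ we work dyadically, as in Proposition \ref{trilineares1}. Take $N^\sigma$ with the bound $2^{-k(2s-1)/2}\Vert\cdot\Vert_{L^1_eL^2_{e^\perp,t}}$ from \eqref{passingtol1l2}. Place the highest-frequency factor in the smoothing space \eqref{local_smoothing}; this absorbs the $2^{-k(2s-1)/2}$ loss and the $2s-1$ derivatives. Place the two remaining factors in the maximal space \eqref{maximal_Estimate}, using \eqref{multimultiplier} for the multipliers. When the two $V$'s inside $|\nabla|^{-1}$ have comparable frequencies and low output frequency $2^r$, the factor $|\nabla|^{-1}$ is singular. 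There we decompose via $P_{r,\tn}$ and use the localized maximal estimate \eqref{eq:max:Xkest:loc} with \eqref{maximalestcardinality}. This gives a factor $2^{r(n-3)}(1+|k-r|)^2$, which is summable because $n\ge 4$.

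The remainder terms in $\mathcal{R}$ are handled by reusing the decompositions from Propositions \ref{trilineares1} and \ref{GaugeTrl1}, together with \eqref{eqalgebraic3} and \eqref{eqalgebraic4} for the outer $|\nabla|^{-1}$ and products. Throughout we work with extensions in $\tilde F(T')$, as in Proposition \ref{Firstnonlinearestimate}, and distribute $\sigma$ onto whichever factor has the highest frequency.

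\emph{Main obstacle.} The hardest part is the high$\times$high$\to$low interaction inside $|\nabla|^{-1}(V_0 V_x)$ when $V_i$ is at low frequency. In that configuration the derivative gain must come from the smoothing norm of the high-frequency pair, while $\nabla^{2s-1}$ falls on a high-frequency factor. I would first try the $H_r$-type decomposition in the output frequency $\kappa$ of the inner product, as in Case 1a of Proposition \ref{GaugeTrl1}. If the losses do not balance, I would instead use that $V_0 - \i\nabla^{2s-1}_x V_x$ has modulation structure, and treat $\nabla^{2s-1}V$ via $\partial_t$-free bounds from the $X_k$ part through Lemma \ref{es1}.
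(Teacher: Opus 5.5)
Your reduction matches the paper's. The curvature identity with the Coulomb condition is Lemma \ref{ceoffidentities}; inserting \eqref{eqq6} gives \eqref{Timederivativeformulavi}; and the main term is the form $T^{i,j}_{5}(f,g,h)=f|\nabla|^{-1}R_j(g\nabla^{2s-1}_ih)$ of Proposition \ref{TriTij5}. However, the case you leave open is not closed by the mechanism you describe.

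\textbf{The high$\times$high$\to$low case.} Take the pair inside $|\nabla|^{-1}R_j$ at $2^{k_2}\approx 2^{k_3}$ with output frequency $2^r$, and $V_i$ at $2^{k_1}\ll 2^{k_2}$. Here your rule ``highest frequency into the smoothing norm'' fails, and the gain does not come from smoothing of the high pair.

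For $r\le k_1-C$ the output lives at $2^k\approx 2^{k_1}$, in the cone of $V_i$. So the \emph{low} factor $V_i$ goes into $L^\infty_eL^2_{e^\perp,t}$ via \eqref{local_smoothing}, in the output direction. \emph{Both} high factors go into the localized maximal norm \eqref{eq:max:Xkest:loc} through \eqref{maximalestcardinality}. Putting one high factor into the smoothing norm instead breaks both steps: its smoothing direction is unrelated to the output direction, and \eqref{maximalestcardinality} needs the whole pair in the localized maximal norm.

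One then simply accepts the loss $2^{(k_2-k_1)(2s-1)}$ from $\nabla^{2s-1}$ on a high factor. The resulting dyadic bound is $(1+|r-k_2|)^2\,2^{-k_1(2s-1)}2^{2sk_2}2^{r(n-3)}$. Summing over $r\le k_1-C$ leaves $2^{(k_1-k_2)(n-2-2s-\delta)}$ for small $\delta>0$, which decays exactly because $n\ge 4>2+2s$.

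For $k_1-C\le r\le k_2+C$, the unlocalized smoothing/maximal bound with the factor $2^{-r}$ suffices after summing over $k\le r+C$. No modulation argument for $V_0-\i\nabla^{2s-1}_xV_x$ and no Lemma \ref{es1} is needed.

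\textbf{The remainder terms.} Propositions \ref{trilineares1} and \ref{GaugeTrl1} are the wrong tools here. They exploit commutator (Taylor) structure to absorb an outer $\partial_j$, which is absent inside $V_0$. They also bound expressions that lack the outer factor $V_i|\nabla|^{-1}R_j(\bar V_j\,\cdot\,)$. Nor can \eqref{eqalgebraic4} handle that outer $|\nabla|^{-1}R_j$: it is an $F$-space bound, not an $N^\sigma$ bound, and it requires $n\ge 5$.

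The missing observation is that no commutator structure is needed at all. Write $e_l=Q_l+|\nabla|^{-1}(|\nabla|e_l)$; the constants drop out of $[\nabla^{2s-1}_m,e_l]$, and you simply expand. Every resulting term is then literally $T^{i,j}_{5}(V_i,G,h)$, with
\[
G\in\{\bar V_j,\ \bar V_j|\nabla|^{-1}(|\nabla|e_a),\ \bar V_j|\nabla|^{-1}(|\nabla|e_a)|\nabla|^{-1}(|\nabla|e_b)\},\qquad h\in\{V_m,\ |\nabla|^{-1}(|\nabla|e_a)V_m\}.
\]
These are the forms $T^{i,j}_{6},\dots,T^{i,j}_{9}$. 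Estimate $G$ and $h$ in $\tilde F^{\frac{n-2}{2}}(T')$ and $\tilde F^{\sigma}(T')$ by \eqref{eqalgebraic1} and \eqref{eqalgebraic3}, which hold for $n\ge 3$, and then apply Proposition \ref{TriTij5}. No new dyadic analysis is required.

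This also corrects your count: the frame contributions have degree $V^3E$ and $V^3E^2$, not $V^2E$ and $V^2E^2$.
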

The main idea is to rewrite $ \langle \partial_{t} e_{2} , e_{1} \rangle$ in terms of $ V_{i}'$s. To that end, we first prove the following lemma 
\begin{lemma}\label{ceoffidentities}
Let $ A_{i} : = \langle \partial_{i} e_{2} ,e_{1} \rangle $ for $i \in \{1,..,n
\}$ and let $ A_{0} : = \langle \partial_{t} e_{2} ,e_{1} \rangle$. Then we have the following identity for any $ i \in \{0,...,n \}$
$$
A_{i} = c \sum_{j =1}^{n} |\nabla|^{-1} R_{j} ( \Re( \i \bar{V}_{j} {V}_{i} )).
$$
\end{lemma}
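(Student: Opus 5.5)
The plan is to derive a div--curl system for the connection coefficients $A_i$ and invert it using the Coulomb gauge. We write $\partial_\alpha$ for $\partial_t$ when $\alpha=0$ and $\partial_i$ when $\alpha=i\in\{1,\dots,n\}$.

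\textbf{Step 1 (curvature).} For $\alpha,\beta\in\{0,1,\dots,n\}$ we compute $\partial_\alpha A_\beta-\partial_\beta A_\alpha$. Differentiating $A_\beta=\langle\partial_\beta e_2,e_1\rangle$, the second-derivative terms cancel by symmetry, and we are left with
$$
\partial_\alpha A_\beta-\partial_\beta A_\alpha=\langle\partial_\beta e_2,\partial_\alpha e_1\rangle-\langle\partial_\alpha e_2,\partial_\beta e_1\rangle .
$$
Since $(e_1,e_2,u)$ is an orthonormal frame, we expand $\partial_\alpha e_1=-\langle \partial_\alpha e_2,e_1\rangle e_2-\langle\partial_\alpha u,e_1\rangle u$ and $\partial_\alpha e_2=\langle\partial_\alpha e_2,e_1\rangle e_1-\langle\partial_\alpha u,e_2\rangle u$. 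The $e_1,e_2$ components pair to zero. Only the $u$-components survive, giving
$$
\partial_\alpha A_\beta-\partial_\beta A_\alpha=\langle\partial_\beta u,e_2\rangle\langle\partial_\alpha u,e_1\rangle-\langle\partial_\alpha u,e_2\rangle\langle\partial_\beta u,e_1\rangle .
$$
Using $V_\gamma=\langle e_1,\partial_\gamma u\rangle+\i\langle e_2,\partial_\gamma u\rangle$, this equals $\pm\Im(\bar V_\alpha V_\beta)=\mp\Re(\i\bar V_\alpha V_\beta)$. We need to fix the sign carefully; it is absorbed into $c$.

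\textbf{Step 2 (invert using the gauge).} The gauge choice gives $\sum_j\partial_jA_j=0$. For $i\ge1$ this yields
$$
\Delta A_i=\sum_j\partial_j(\partial_jA_i-\partial_iA_j)=c\sum_j\partial_j\Re(\i\bar V_jV_i),
$$
hence $A_i=c\,\Delta^{-1}\sum_j\partial_j\Re(\i\bar V_jV_i)=c'\sum_j|\nabla|^{-1}R_j\Re(\i\bar V_jV_i)$, since $\Delta^{-1}\partial_j=-|\nabla|^{-1}R_j$ up to the normalization of the Riesz transforms. For $i=0$ the same computation applies to $\partial_jA_0-\partial_tA_j$: the identity $\sum_j\partial_t\partial_jA_j=0$ gives $\Delta A_0=\sum_j\partial_j(\partial_jA_0-\partial_tA_j)$, and Step 1 with $\alpha=j,\beta=0$ gives the same formula with $V_0$.

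\textbf{Main obstacle.} The difficulty is justifying the inversion of the Laplacian, i.e.\ ruling out a nonzero harmonic part. This requires $A_i$ and the right-hand side to lie in a space where $\Delta$ is injective, for example $L^2$ or $\dot B$ with decay. We will use Lemma~\ref{eintegra} and Lemma~\ref{thetainteg}: $e_i-Q_i\in C([-T,T];B^\sigma_{2,1})$ and $\partial_t u$ is bounded in $B^{\sigma-2s}$, so $A_\beta\in L^2_x$ at each time. A harmonic $L^2$ function vanishes, so $\Delta^{-1}$ is legitimate.
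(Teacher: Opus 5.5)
Your proposal is correct and is essentially the paper's proof: the same orthonormal-frame expansions of $\partial_\alpha e_1,\partial_\alpha e_2$ give the curl identity $\partial_jA_i-\partial_iA_j=-\Im(V_j\bar V_i)$, the Coulomb condition $\sum_j\partial_jA_j=0$ turns its divergence into $\Delta A_i$, and one then inverts $\Delta$; your treatment of $i=0$ via $\sum_j\partial_t\partial_jA_j=0$ spells out what the paper leaves implicit. The paper does not justify the inversion at all, so your $L^2$ argument is an addition, but for $\beta=0$ it needs more than you cite: $A_0=\partial_t\theta+\langle\partial_te_2',e_1'\rangle$, so $A_0\in L^2_x$ requires $\partial_t\theta\in L^2_x$, which follows from the regularity of $f$ and $\partial_tf$ rather than from the bound on $\partial_tu$ alone.
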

\begin{proof}
By definition of $ V_{i}$ and the fact that $ u, e_{1},e_{2}$ form an orthonormal frame we obtain 
\begin{equation}\label{id1}
\partial_{i} u = \Re(V_{i}) e_{1} + \Im(V_{i}) e_{2}.
\end{equation}
Next, notice that 
\begin{equation}\label{id2}
\langle \partial_{i} e_{1} , u \rangle = - \langle e_{1}, \partial_{i} u  \rangle = -\Re(V_{i})
\end{equation}
moreover, 
\begin{equation}\label{id3}
 \langle \partial_{i} e_{1}, e_{2} \rangle = - \langle e_{1} , \partial_{i} e_{2} \rangle = - A_{i}.
\end{equation}
And since $ | e_{1} | = 1$ then combining \eqref{id2} and \eqref{id3} we obtain
\begin{equation}\label{id4}
\partial_{i} e_{1} = - \Re(V_{i}) u - A_{i} e_{2}.
\end{equation}
Similarly, 
\begin{equation}\label{id4e}
\partial_{i} e_{2} = -\Im(V_{i}) u + A_{i} e_{1}
\end{equation}
Now by definition of $ A_{i}$ we have 
$$
\partial_{j} A_{i} = \langle \partial_{j} e_{1} , \partial_{i} e_{2}\rangle + \langle e_{1} , \partial_{j} \partial_{i} e_{2} \rangle
$$
while
$$
\partial_{i} A_{j} = \langle \partial_{i} e_{1}, \partial_{j}e_{2} \rangle + \langle e_{1}, \partial_{j} \partial_{i} e_{2} \rangle.
$$
This implies 
\begin{equation}\label{id5}
\partial_{j} A_{i}- \partial_{i} A_{j}=\langle \partial_{j} e_{1} , \partial_{i} e_{2}\rangle-\langle \partial_{i} e_{1}, \partial_{j}e_{2} \rangle
\end{equation}
using \eqref{id4} and \eqref{id4e} we obtain
\begin{equation}
\begin{split}
& \langle \partial_{j} e_{1} , \partial_{i} e_{2} \rangle
\\
& = \langle -\Re(V_{j}) u - A_{j} e_{2} , -\Im(V_{i}) u + A_{i} e_{1} \rangle
\\
& = \Re(V_{j}) \Im(V_{i}).
\end{split} 
\end{equation}
We plug this back into \eqref{id5} to obtain
\begin{equation}
\partial_{j} A_{i} - \partial_{i} A_{j} = \Re(V_{j}) \Im(V_{i}) - \Re(V_{i}) \Im(V_{j}).
\end{equation}
Then  
\begin{equation}\label{keyidee}
\partial_{j} A_{i} - \partial_{i} A_{j}= -\Im( V_{j} \bar{V}_{i} ).
\end{equation}
Notice that the Coulomb condition is equivalent to 
$$
\sum_{j=1}^{n} \partial_{j} A_{j} =0. 
$$
Therefore, fix $i \in \{0,1,..,n \}$ and then taking the divergence of both sides in \eqref{keyidee} we obtain
\begin{equation}
\sum_{j=1} \partial^{2}_{j} A_{i} - \partial_{i} \sum_{j=1}^{n} \partial_{j} A_{j} =-\sum_{j=1}^{n} \partial_{j} \Im(V_{j} \bar{V}_{i} )
\end{equation}
which translates to 
\begin{equation}
\Delta A_{i} =- \sum_{j=1}^{n} \partial_{j} \Im(V_{j} \bar{V}_{i} )
\end{equation}
and by inverting $ \Delta$, and using that $ \Im ( V_{j} \bar{V}_{i}) = \Re(\i \bar{V}_{j} V_{i})$, we obtain the result.

\end{proof}
From Lemma \ref{ceoffidentities} we obtain 
$$
V_{i} \langle \partial_{t} e_{2} ,e_{1} \rangle =c V_{i} \sum_{j =1}^{n} | \nabla |^{-1} R_{j} ( \Re (\i \bar{V}_{j} {V}_{0}))
$$
and recall that we have a formula for $V_{0}$, which is \eqref{eqq6}. Therefore, 
\begin{equation}\label{Timederivativeformulavi}
\begin{split}
& V_{i} \langle \partial_{t} e_{2},e_{1} \rangle 
\\
& = -V_{i} \sum_{j,m=1}^{n} |\nabla|^{-1} R_{j} \left( \Re (  \bar{V}_{j} \nabla^{2s-1}_{m}  {V}_{m} ) \right)
\\
& - V_{i} \sum_{j,m=1}^{n} |\nabla |^{-1} R_{j} \left( \Re ( \i \bar{V}_{j} \langle e_{2} , [\nabla^{2s-1}_{m},e_{1}] ( \Re (V_{m})) \rangle \right)
\\
& - V_{i} \sum_{j,m=1}^{n} | \nabla|^{-1} R_{j} \left( \Re ( \i \bar{V}_{j} \langle e_{2}, [ \nabla^{2s-1}_{m} e_{2} ]( \Im (V_{m}) )) \rangle \right) 
\\
& - V_{i} \sum_{j,m=1}^{n} | \nabla|^{-1} R_{j} \Re  \left(  \bar{V}_{j} \langle e_{1} [ \nabla^{2s-1}_{m} ,e_{2}] ( \Im ( V_{m} ) ) \rangle \right) 
\\
& -  V_{i} \sum_{j,m=1}^{n} | \nabla|^{-1} R_{j} \Re \left(   \bar{V}_{j} \langle e_{1} , [ \nabla^{2s-1}_{m},e_{1} ] (\Re(V_{m})) \rangle \right) 
\end{split}
\end{equation}
We will see below that all the nonlinear estimates for the terms on the right hand side of \eqref{Timederivativeformulavi} will follow from the following trilinear estimate 
\begin{proposition}\label{TriTij5}
Let $n \geq 4$ , $ s \in (1/2,1)$ then define 
$$
T^{i,j}_{5}(f,g,h) : = f | \nabla|^{-1} R_{j} ( g  \nabla_{i}^{2s-1} h ) 
$$
Then we have the following estimate for any $ \sigma \geq \frac{n-2}{2}$ and any $T' \in [0,\infty]$
\begin{equation}\label{Tij5}
\begin{split}
& \Vert \ind_{[-T',T']} T^{i,j}_{5} ( \tilde{f},\tilde{g},\tilde{h}) \Vert_{N^{\sigma}}
\\
& \lesssim \Vert f \Vert_{F^{\sigma}} \Vert g \Vert_{F^{\frac{n-2}{2}}} \Vert h \Vert_{F^{\frac{n-2}{2}}}
\\
& + \Vert f \Vert_{F^{\frac{n-2}{2}}} \Vert h \Vert_{F^{\sigma}} \Vert g \Vert_{F^{\frac{n-2}{2}}}
\\
& + \Vert f \Vert_{F^{\frac{n-2}{2}}} \Vert h \Vert_{F^{\frac{n-2}{2}}} \Vert g \Vert_{F^{\sigma}}.
\end{split}
\end{equation}
\end{proposition}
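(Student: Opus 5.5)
We follow the scheme of Propositions \ref{trilineares1}, \ref{GaugeTrl1} and \ref{Proofforgamma2a} and reduce \eqref{Tij5} to the dyadic estimate
\begin{equation*}
\sum_{k} 2^{k\sigma}\Vert \Delta_k(\i\partial_t-(-\Delta)^s+\i)^{-1}\ind_{[-T',T']}T^{i,j}_5(\tilde f_{k_1},\tilde g_{k_2},\tilde h_{k_3})\Vert_{Z_k}
\end{equation*}
being bounded by the sum of the three products in which the highest of $k_1,k_2,k_3$ carries the weight $2^{\sigma\cdot}$ and the other two carry $2^{\frac{n-2}{2}\cdot}$. At every frequency $k$ we bound the $Z_k$ norm through \eqref{passingtol1l2}, i.e. by $2^{-k(2s-1)/2}\max_{e}\Vert\cdot\Vert_{L^1_eL^2_{e^\perp,t}}$. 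The function with the highest frequency goes into the local smoothing space \eqref{local_smoothing}; the other two go into the maximal space \eqref{maximal_Estimate}. Multipliers are removed with \eqref{multiplierint}/\eqref{multiplier2}. When one input is already high frequency, we first use \eqref{multilpierlemma} to restrict its Fourier support to a cone $\{\langle\xi,e'\rangle\ge 2^{k_i-2}\}$.

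Set $P:=g_{k_2}\nabla^{2s-1}_i h_{k_3}$. Since $|\nabla|^{-1}R_j$ acts only on $P$, we also decompose $P=\sum_r \Delta_r P$. The operator then contributes a factor $2^{-r}$, and the cases are as follows.

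\textbf{Case A} ($k_1$ is the maximal frequency, so the output has $2^k\lesssim 2^{k_1}$). Place $f_{k_1}$ in $L^\infty_{e'}L^2$. The factor $|\nabla|^{-1}R_j P$ is placed in $L^1_{e'}L^\infty$. If $2^{k_2}\not\approx 2^{k_3}$, then $r\approx\max(k_2,k_3)$ and
\begin{equation*}
\Vert |\nabla|^{-1}R_jP\Vert_{L^1L^\infty}\lesssim 2^{-\max(k_2,k_3)}\,2^{(2s-1)k_3}\,2^{k_2\frac{n-1}{2}}\,2^{k_3\frac{n-1}{2}}\,\Vert g_{k_2}\Vert_{Z_{k_2}}\Vert h_{k_3}\Vert_{Z_{k_3}}.
\end{equation*}
Combined with the prefactor $2^{-k(2s-1)/2}2^{-k_1(2s-1)/2}$ and $k\le k_1+C$, this gives the claim with a spare factor $2^{(2s-1)(k_3-k_1)}$, up to a harmless loss.

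\textbf{Case B} ($k_2$ or $k_3$ is the maximal frequency).
\begin{itemize}
\item If that frequency exceeds all others, the product $|\nabla|^{-1}R_jP$ has frequency $2^{\max}$. We put the top function in smoothing space; $2^{-\max}$ and the $2^{(2s-1)k_3}$ from $\nabla^{2s-1}_i$ are then absorbed as in Case 2b of Proposition \ref{GaugeTrl1}.
\item If $2^{k_1}\approx 2^{\max}$, it is handled by the symmetric argument.
\end{itemize}

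\textbf{Main obstacle} ($2^{k_2}\approx2^{k_3}$ while $P$ has output frequency $2^r\ll 2^{k_2}$). Here $|\nabla|^{-1}$ gives $2^{-r}$, which is unbounded as $r\to-\infty$. We treat this exactly as in Cases 1a/2a of Proposition \ref{GaugeTrl1}. Write
\begin{equation*}
\Delta_rP=\sum_{\tn_1,\tn_2\in 2^r\Z^n}\Delta_r\bigl(P_{r,\tn_1}g_{k_2}\,\nabla^{2s-1}_iP_{r,\tn_2}h_{k_3}\bigr),
\end{equation*}
so only pairs with $|\tn_1+\tn_2|\lesssim 2^r$ survive, as in \eqref{cardn1n2}. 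Then \eqref{maximalestcardinality} and the localized maximal estimate \eqref{eq:max:Xkest:loc} give
\begin{equation*}
2^{-r}\,2^{(2s-1)k_3}\,2^{r(n-1)}\,2^{k_2-r}\,(1+|k_2-r|)^2\,\Vert g_{k_2}\Vert_{Z_{k_2}}\Vert h_{k_3}\Vert_{Z_{k_3}}.
\end{equation*}
The power $2^{r(n-3)}$ makes this summable over $r$ as long as $n\ge4$. For small $r$, the factor $f$ at frequency $k_1$ determines the output frequency, and we place $f$ in smoothing space if $k_1$ dominates and in maximal space otherwise.

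The delicate point is the bookkeeping of the remaining powers $2^{(2s-1)k_3}2^{-k(2s-1)/2}$ after the $k$-summation. In the subcase $k_1\ll r$, the output frequency is $k\approx r$ and $h$ is in smoothing space, so $2^{(2s-1)k_3}$ is cancelled by $2^{-k_3(2s-1)/2}$ from smoothing, together with $2^{-k(2s-1)/2}$ and the geometric gain $2^{(k_3-r)\cdot}$. We need to check that the resulting factor $2^{(k_3-k)(2s-1)/2}$ is summable against $2^{k\sigma}\le 2^{k_3\sigma}$. It is, since $\sigma\ge\frac{n-2}{2}>\frac{2s-1}{2}$ for $n\ge4$.
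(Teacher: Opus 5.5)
Your architecture is the paper's. The dyadic reduction, the case split by the top frequency, and the cube decomposition with \eqref{maximalestcardinality} and \eqref{eq:max:Xkest:loc} when $2^{k_2}\approx 2^{k_3}$, $2^r\ll 2^{k_2}$ all match. However, the high--high--low configuration $2^{k_1}\ll 2^{k_2}\approx 2^{k_3}$ with $r\le k_1-C$ is not closed by what you wrote.

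There the output sits at $2^k\approx 2^{k_1}$, and the cardinality argument forces both $P_{r,\tn_1}g_{k_2}$ and $P_{r,\tn_2}h_{k_3}$ into $L^2_eL^\infty_{e^\perp,t}$. If you then put $f$ in maximal space, as ``in maximal space otherwise'' suggests, H\"older gives no $L^1_eL^2_{e^\perp,t}$ bound. So $f_{k_1}$ must go into $L^\infty_eL^2_{e^\perp,t}$, after restricting it to a cone via \eqref{multilpierlemma}.

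But then the factor $2^{(2s-1)k_3}$ produced by $\nabla^{2s-1}_i$ on the high-frequency $h$ is no longer cancelled. Indeed, \eqref{passingtol1l2} and \eqref{local_smoothing} only supply $2^{-k(2s-1)/2}2^{-k_1(2s-1)/2}\approx 2^{-k_1(2s-1)}$. This leaves a loss $2^{(2s-1)(k_2-k_1)}$, which the summability of $2^{r(n-3)}$ does not address.

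The missing step is that the $r$-sum is now cut off at $k_1$:
\[
\sum_{r\le k_1-C}2^{r(n-3)}\,2^{k_2}\,(1+|k_2-r|)^2\lesssim 2^{(k_2-k_1)\delta}\,2^{k_1(n-3)}\,2^{k_2}.
\]
This gains $2^{(k_1-k_2)(n-3-\delta)}$ over the target $2^{k_2(n-2)}$. The net factor is $2^{(k_1-k_2)(n-2-2s-\delta)}$, which is bounded because $n>2+2s$. This is a second, independent use of $n\geq 4$, and it is exactly how the paper treats the first half of its Case 2a; your proposal never identifies this balance.

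There is also a smaller bookkeeping slip in the range $r\ge k_1-C$. There one of $g,h$ is in smoothing space and the other two functions are in plain maximal space. The factor $2^{-r}$ is then a loss against the surplus $2^{(k_1+k_2)/2}$ of the two maximal norms, not a geometric gain. After summing over $k\le r+C$, the factor relative to the target is $2^{(r-k_3)(\sigma-s)}\,2^{(k_1-r)/2}$. So the condition you actually need is $\sigma\geq s$, which holds since $\sigma\ge\frac{n-2}{2}\ge 1$, rather than merely $\sigma>\frac{2s-1}{2}$.
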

\begin{proof}
To prove \eqref{Tij5} it suffices to prove the following 
\begin{equation}\label{dyadictij5}
\begin{split}
& \sum_{k =-\infty}^{\max\{ k_{1} ,k_{2} ,k_{3} \} +C } 2^{ \sigma k} \Vert \Delta_{k} ( \i \partial_{t} - ( - \Delta)^{s} + \i)^{-1} \ind_{[-T',T']} T^{i,j}_{5}(\tilde{f}_{k_{1}}, \tilde{g}_{k_{2}} , \tilde{h}_{k_{3}} ) \Vert_{Z_{k}} 
\\ 
& \lesssim 2^{ k_{1} \sigma } \Vert f_{k_{1}} \Vert_{Z_{k_{1}}} 2^{k_{2} \frac{n-2}{2}} \Vert g_{k_{2}} \Vert_{Z_{k_{2}}} 2^{k_{3} \frac{n-2}{2}} \Vert h_{k_{3}} \Vert_{Z_{k_{3}}}
\\
& + 2^{k_{1} \frac{n-2}{2}} \Vert f_{k_{1}} \Vert_{Z_{k_{1}}} 2^{k_{2} \sigma } \Vert g_{k_{2}} \Vert_{Z_{k_{2}}} 2^{k_{3} \frac{n-2}{2}} \Vert h_{k_{3}} \Vert_{Z_{k_{3}}}
\\
& + 2^{k_{1} \frac{n-2}{2}} \Vert f_{k_{1}} \Vert_{Z_{k_{1}}} 2^{k_{2} \frac{n-2}{2}} \Vert g_{k_{2}} \Vert_{Z_{k_{2}}} 2^{k_{3} \sigma} \Vert h_{k_{3}} \Vert_{Z_{k_{3}}}.
\end{split}
\end{equation}
To that end, we consider several cases depending on which function has the highest frequency 
\\
\textbf{Case 1} If $ \max \{ k_{1} ,k_{2} ,k_{3} \} =k_{1}$. Then in view of \eqref{multilpierlemma} we may assume $ f_{k_{1}}$ is supported in the set $ \{ \langle e' , \xi \rangle \geq 2^{k_{1} -2} \}$ for some $ e' \in \mathscr{E}$. Then we consider several sub-cases
\\
\textbf{Case 1a:} If $2^{\min \{ k_{2},k_{3} \} } \ll 2^{ \max \{ k_{2} , k_{3} \} } $. Then in this case the product $ \tilde{g}_{k_{2}} \tilde{h}_{k_{3}}$ has frequency localized at $ 2^{ \max \{ k_{2} ,k_{3} \}}$ and so we can apply \eqref{multimultiplier} to obtain 
\begin{equation*}
\begin{split}
& \Vert \Delta_{k} ( \i \partial_{t} - ( -\Delta)^{s} + \i)^{-1} \ind_{[-T',T']} T^{i,j}_{5} ( \tilde{f}_{k_{1}},\tilde{g}_{k_{2}},\tilde{h}_{k_{3}}) \Vert_{Z_{k}}
\\
& \lesssim 2^{-k (2s-1)/2} \max_{e \in \mathscr{E}} \Vert T^{i,j}_{5} ( \tilde{f}_{k_{1}} , \tilde{g}_{k_{2}} , \tilde{h}_{k_{3}} ) \Vert_{L^{1}_{e} L^{2}_{e^{\perp},t}}
\\
& \overset{\eqref{multimultiplier}}{\lesssim} 2^{-k(2s-1)/2} 2^{k_{3}(2s-1)} 2^{ - \max \{ k_{2} ,k_{3} \} } \Vert \tilde{f}_{k_{1}} \Vert_{L^{\infty}_{e'} L^{2}_{e'^{\perp},t}} \Vert \tilde{g}_{k_{2}} \Vert_{L^{2}_{e'}L^{\infty}_{e'^{\perp},t}} \max_{e \in \mathscr{E}} \Vert \tilde{h}_{k_{3}} \Vert_{L^{2}_{e } L^{\infty}_{e^{\perp},t}}
\\
& \overset{\eqref{local_smoothing},\eqref{maximal_Estimate}}{\lesssim} 2^{(k_{1} -k)(2s-1)/2} \Vert f_{k_{1}} \Vert_{Z_{k_{1}}} 2^{k_{2} \frac{n-2}{2}} \Vert g_{k_{2}} \Vert_{Z_{k_{2}}} 2^{k_{3} \frac{n-2}{2}} \Vert h_{k_{3}} \Vert_{Z_{k_{3}}}
\end{split}
\end{equation*}
multiplying by $ 2^{k \sigma}$ and summing over $k \leq k_{1} +C$ yields
\begin{equation*}
\begin{split}
& \sum_{k \leq k_{1} +C} 2^{k \sigma}  \Vert \Delta_{k} ( \i \partial_{t} - ( -\Delta)^{s} + \i)^{-1} \ind_{[-T',T']} T^{i,j}_{5} ( \tilde{f}_{k_{1}},\tilde{g}_{k_{2}},\tilde{h}_{k_{3}}) \Vert_{Z_{k}}
\\
& \lesssim 2^{k_{1} \sigma} \Vert f_{k_{1}} \Vert_{Z_{k_{1}}} 2^{k_{2} \frac{n-2}{2}} \Vert g_{k_{2}} \Vert_{Z_{k_{2}}} 2^{k_{3} \frac{n-2}{2}} \Vert h_{k_{3}} \Vert_{Z_{k_{3}}}.
\end{split}
\end{equation*}
This gives \eqref{dyadictij5} in this case. 
\\
\textbf{Case 1b:} If $ 2^{k_{2}} \approx 2^{k_{3}}$. Then we write 
$$
T^{i,j}_{5} ( \tilde{f}_{k_{1}} ,\tilde{g}_{k_{2}}, \tilde{h}_{k_{3}}) = \sum_{r \leq k_{2} +C} \tilde{f}_{k_{1}} \Delta_{r} | \nabla|^{-1} R_{j} ( \tilde{g}_{k_{2}} \nabla^{2s-1}_{i} \tilde{h}_{k_{3}} ). 
$$
Fix $ r \in [ k_{2} -C , k_{2} +C]$. Then we estimate as before 
\begin{equation*}
\begin{split}
& \Vert \Delta_{k} ( \i \partial_{t} - ( -\Delta)^{s} + \i)^{-1} \ind_{[-T',T']} \tilde{f}_{k_{1}} \Delta_{r} | \nabla|^{-1} R_{j} ( \tilde{g}_{k_{2}} \nabla^{2s-1}_{i} \tilde{h}_{k_{3}} ) \Vert_{Z_{k}}
\\
& \lesssim 2^{-k (2s-1)/2} \max_{e \in \mathscr{E}} \Vert 
  \tilde{f}_{k_{1}} \Delta_{r} | \nabla|^{-1} R_{j} ( \tilde{g}_{k_{2}} \nabla^{2s-1}_{i} \tilde{h}_{k_{3}} )  \Vert_{L^{1}_{e} L^{2}_{e^{\perp},t}}
\\
& \overset{\eqref{multimultiplier}}{\lesssim} 2^{-k(2s-1)/2} 2^{k_{3}(2s-1)} 2^{ - k_{2} } \Vert \tilde{f}_{k_{1}} \Vert_{L^{\infty}_{e'} L^{2}_{e'^{\perp},t}} \Vert \tilde{g}_{k_{2}} \Vert_{L^{2}_{e'}L^{\infty}_{e'^{\perp},t}} \max_{e \in \mathscr{E}} \Vert \tilde{h}_{k_{3}} \Vert_{L^{2}_{e } L^{\infty}_{e^{\perp},t}}
\\
& \overset{\eqref{local_smoothing},\eqref{maximal_Estimate}}{\lesssim} 2^{(k_{1} -k)(2s-1)/2} \Vert f_{k_{1}} \Vert_{Z_{k_{1}}} 2^{k_{2} \frac{n-2}{2}} \Vert g_{k_{2}} \Vert_{Z_{k_{2}}} 2^{k_{3} \frac{n-2}{2}} \Vert h_{k_{3}} \Vert_{Z_{k_{3}}}
\end{split}
\end{equation*}
multiplying by $ 2^{k \sigma}$ and summing over $k \leq k_{1} +C$ yields 
\begin{equation}\label{restimatecase1b}
\begin{split}
& \sum_{k \leq k_{1} +C} 2^{\sigma k_{}} \sum_{r \in [k_{2} -C,k_{2} +C]} \Vert \Delta_{k} ( \i \partial_{t} - ( -\Delta)^{s} + \i)^{-1} \ind_{[-T',T']} \tilde{f}_{k_{1}} \Delta_{r} | \nabla|^{-1} R_{j} ( \tilde{g}_{k_{2}} \nabla^{2s-1}_{i} \tilde{h}_{k_{3}} ) \Vert_{Z_{k}}
\\
& \lesssim 2^{\sigma k_{1} } \Vert f_{k_{1}} \Vert_{Z_{k_{1}}} 2^{k_{2} \frac{n-2}{2}} \Vert g_{k_{2}} \Vert_{Z_{k_{2}}} 2^{k_{3} \frac{n-2}{2}} \Vert h_{k_{3}} \Vert_{Z_{k_{3}}}
\end{split}
\end{equation}
Next, fix $ r \leq k_{2} -C$ then $ 2^{r} \ll 2^{k_{1}}$ and here we use the localized maximal estimate  
\begin{equation*}
\begin{split}
& \Vert \Delta_{k} ( \i \partial_{t} -(-\Delta)^{s} + \i)^{-1} \ind_{[-T',T']} \left(   \tilde{f}_{k_{1}} \Delta_{r} | \nabla|^{-1} R_{j} ( \tilde{g}_{k_{2}} \nabla^{2s-1}_{i} \tilde{h}_{k_{3}} )\right) \Vert_{Z_{k}}
\\
& \lesssim 2^{-k_{1} (2s-1)/2} \Vert \tilde{f}_{k_{1}} \Delta_{r} | \nabla|^{-1} R_{j} ( \tilde{g}_{k_{2}} \nabla^{2s-1}_{i} \tilde{h}_{k_{3}} ) \Vert_{L^{1}_{e'} L^{2}_{e'^{\perp},t}}
\\
& \overset{\eqref{multimultiplier}}{\lesssim} 2^{k_{1} (2s-1)/2} 2^{-r} \Vert \tilde{f}_{k_{1}} \Vert_{L^{\infty}_{e'} L^{2}_{e'^{\perp},t}} \sum_{\tn_{1},\tn_{2}}  \Vert P_{\tn_{1}} \tilde{g}_{k_{2}} \Vert_{L^{2}_{e'}L^{\infty}_{e'^{\perp},t}} \Vert P_{\tn_{2}} \tilde{h}_{k_{3}} \Vert_{L^{2}_{e'}L^{\infty}_{e'^{\perp},t}}
\\
& \overset{\eqref{maximalestcardinality}}{\lesssim }
2^{k_{1} (2s-1)/2} 2^{-r} \Vert \tilde{f}_{k_{1}} \Vert_{L^{\infty}_{e'} L^{2}_{e'^{\perp},t}} \left( \sum_{\tn \in 2^{r} \Z^{n}} \Vert P_{r,\tn} \tilde{g}_{k_{2}} \Vert^{2}_{L^{2}_{e'}L^{\infty}_{e'^{\perp},t}} \right)^{1/2} 
\\
& \times \left( \sum_{\tn \in 2^{r} \Z^{n}} \Vert P_{r,\tn} \tilde{h}_{k_{3}} \Vert^{2}_{L^{2}_{e'}L^{\infty}_{e'^{\perp},t}} \right)^{1/2}
\\
& \overset{\eqref{local_smoothing},\eqref{eq:max:Xkest:loc}}{\lesssim} ( 1+ | k_{2} -r|)^{2} 2^{r(n-3)} 2^{k_{2}} \Vert f_{k_{1}} \Vert_{Z_{k_{1}}} \Vert g_{k_{2}} \Vert_{Z_{k_{2}}} \Vert h_{k_{3}} \Vert_{Z_{k_{3}}}
\end{split}
\end{equation*}
summing over $ r \leq k_{2} -C$ yields 
\begin{equation}\label{case1restimate2}
\begin{split}
& \sum_{k \leq k_{1} +C} 2^{k \sigma} \sum_{r \leq k_{2} -C} \Vert \Delta_{k} ( \i \partial_{t} -(-\Delta)^{s} + \i)^{-1} \ind_{[-T',T']} \left(   \tilde{f}_{k_{1}} \Delta_{r} | \nabla|^{-1} R_{j} ( \tilde{g}_{k_{2}} \nabla^{2s-1}_{i} \tilde{h}_{k_{3}} )\right) \Vert_{Z_{k}}
\\
& \lesssim 2^{\sigma k_{1} } \Vert f_{k_{1}} \Vert_{Z_{k_{1}}} 2^{k_{2} \frac{n-2}{2}} \Vert g_{k_{2}} \Vert_{Z_{k_{2}}} 2^{k_{3} \frac{n-2}{2}} \Vert h_{k_{3}} \Vert_{Z_{k_{3}}}.
\end{split}
\end{equation}
Combining \eqref{restimatecase1b} and \eqref{case1restimate2} gives
\begin{equation}\label{zxcv}
\begin{split}
&  \sum_{k \leq k_{1} +C} 2^{k \sigma} \sum_{r \leq k_{2} +C} \Vert \Delta_{k} ( \i \partial_{t} -(-\Delta)^{s} + \i)^{-1} \ind_{[-T',T']} \left(   \tilde{f}_{k_{1}} \Delta_{r} | \nabla|^{-1} R_{j} ( \tilde{g}_{k_{2}} \nabla^{2s-1}_{i} \tilde{h}_{k_{3}} )\right) \Vert_{Z_{k}}
\\
& \lesssim 2^{\sigma k_{1} } \Vert f_{k_{1}} \Vert_{Z_{k_{1}}} 2^{k_{2} \frac{n-2}{2}} \Vert g_{k_{2}} \Vert_{Z_{k_{2}}} 2^{k_{3} \frac{n-2}{2}} \Vert h_{k_{3}} \Vert_{Z_{k_{3}}}.
\end{split}
\end{equation}
which proves the result in this case. 
\\
\textbf{Case 2:} If $ \max \{ k_{1},k_{2} ,k_{3} \} =k_{2}$. By \eqref{multilpierlemma} we may assume that $ g_{k_{2}}$ has Fourier transform supported in $ \{ \langle \xi , e' \rangle \geq 2^{k_{2} -2} \}$ for some $ e' \in \mathscr{E}$. Then we consider two separate sub-cases. 
\\
\textbf{Case 2a:} if $ 2^{k_{3}} \approx 2^{k_{2}}$ then this is similar to what we did in Case 1b. Namely, to prove \eqref{dyadictij5} in this case, it suffices to prove the following 
\begin{equation}\label{case2btij5}
\begin{split}
&  \sum_{k \leq k_{2} +C} 2^{k \sigma} \sum_{r \leq k_{2} +C} \Vert \Delta_{k} ( \i \partial_{t} -(-\Delta)^{s} + \i)^{-1} \ind_{[-T',T']} \left(   \tilde{f}_{k_{1}} \Delta_{r} | \nabla|^{-1} R_{j} ( \tilde{g}_{k_{2}} \nabla^{2s-1}_{i} \tilde{h}_{k_{3}} )\right) \Vert_{Z_{k}}
\\
& \lesssim 2^{\frac{n-2}{2} k_{1} } \Vert f_{k_{1}} \Vert_{Z_{k_{1}}} 2^{k_{2} \sigma } \Vert g_{k_{2}} \Vert_{Z_{k_{2}}} 2^{k_{3} \frac{n-2}{2}} \Vert h_{k_{3}} \Vert_{Z_{k_{3}}}
\\
&+ 2^{\sigma k_{1} } \Vert f_{k_{1}} \Vert_{Z_{k_{1}}} 2^{k_{2} \frac{n-2}{2}} \Vert g_{k_{2}} \Vert_{Z_{k_{2}}} 2^{k_{3} \frac{n-2}{2}} \Vert h_{k_{3}} \Vert_{Z_{k_{3}}}.
\end{split}
\end{equation}
To that end, we split the sum in $r$. If $ r \leq k_{1} -C$ for sufficiently large constant $ C$, then $2^{k}$ is comparable to $2^{k_{1}}$. Further, by \eqref{multilpierlemma}, we may assume that $ f_{k_{1}}$ has Fourier transform supported in $ \{ \langle \xi , e \rangle \geq 2^{k_{1} -c} \}$ for some $ e \in \mathscr{E}$.
\begin{equation*}
\begin{split}
& \Vert \Delta_{k}( \i \partial_{t} - ( - \Delta)^{s} + \i)^{-1} \ind_{[-T',T']} \left( \tilde{f}_{k_{1}} | \nabla|^{-1} \Delta_{r}R_{j} (\tilde{g}_{k_{2}} \nabla^{2s-1}_{i} \tilde{h}_{k_{3}} ) \right) \Vert_{Z_{k}}
\\
& \lesssim 2^{-k_{1}(2s-1)/2} \Vert  \tilde{f}_{k_{1}} | \nabla|^{-1} \Delta_{r}  R_{j} ( \tilde{g}_{k_{2}} \nabla^{2s-1}_{i} \tilde{h}_{k_{3}} ) \Vert_{L^{1}_{e} L^{2}_{e^{\perp},t}}
\\
& \overset{\eqref{multimultiplier},\eqref{maximalestcardinality}}{\lesssim} 2^{-k_{1}(2s-1)/2} 2^{k_{3} (2s-1)} 2^{-r} \Vert \tilde{f}_{k_{1}} \Vert_{L^{\infty}_{\tilde{e}} L^{2}_{e^{\perp},t}}
\\
& \times  \left( \sum_{\tn \in 2^{r} \Z^{n}} \Vert P_{r,\tn} \tilde{g}_{k_{2}} \Vert^{2}_{L^{2}_{e}L^{\infty}_{e^{\perp},t}} \right)^{1/2} \left( \sum_{\tn \in 2^{r} \Z^{n}} \Vert P_{r,\tn} \tilde{h}_{k_{3}} \Vert^{2}_{L^{2}_{e}L^{\infty}_{e^{\perp},t}} \right)^{1/2}
\\
& \overset{\eqref{local_smoothing},\eqref{eq:max:Xkest:loc}}{\lesssim} (1+|r-k_{2}|)^{2}2^{-k_{1}(2s-1)}  2^{2s k_{2}}  2^{r(n-3)} \Vert f_{k_{1}} \Vert_{Z_{k_{1}}} \Vert g_{k_{2}} \Vert_{Z_{k_{2}}} \Vert h_{k_{3}} \Vert_{Z_{k_{3}}}
\end{split}
\end{equation*}
Since $ (1+ | r -k_{2} |)^{2} \lesssim 2^{( k_{2} - r ) \delta}$ for $ \delta > 0 $ to be chosen later. Then we proved 
\begin{equation*}
\begin{split}
&  \Vert \Delta_{k}( \i \partial_{t} - ( - \Delta)^{s} + \i)^{-1}\ind_{[-T',T']} \left( \tilde{f}_{k_{1}} | \nabla|^{-1} \Delta_{r} R_{j} ( \tilde{g}_{k_{2}} \nabla^{2s-1}_{i} \tilde{h}_{k_{3}} ) \right) \Vert_{Z_{k}}
\\
& \lesssim  2^{(k_{2} - r) \delta} 2^{-k_{1}(2s-1)}  2^{2s k_{2}}  2^{r(n-3)} \Vert f_{k_{1}} \Vert_{Z_{k_{1}}} \Vert g_{k_{2}} \Vert_{Z_{k_{2}}} \Vert h_{k_{3}} \Vert_{Z_{k_{3}}}
\end{split}
\end{equation*}
summing over $ r \leq k_{1}-C$ ( this is summable since for small $ \delta> 0$ we have $ n > 3 + \delta$) we obtain
\begin{equation*}
\begin{split}
& \sum_{r \leq k_{1} -C}  \Vert \Delta_{k}( \i \partial_{t} - ( - \Delta)^{s} + \i)^{-1} \ind_{[-T',T']} \left( \tilde{f}_{k_{1}} \Delta_{r} | \nabla|^{-1} R_{j} ( \tilde{g}_{k_{2}} \nabla^{2s-1}_{i} \tilde{h}_{k_{3}} ) \right) \Vert_{Z_{k}}
\\
& \lesssim 2^{(k_{2} -k_{1})\delta} 2^{-k_{1} (2s-1)} 2^{2s k_{2}} 2^{k_{1}(n-3)} \Vert f_{k_{1}} \Vert_{Z_{k_{1}} } \Vert g_{k_{2}} \Vert_{Z_{k_{2}}} \Vert h_{k_{3}} \Vert_{Z_{k_{3}}}
\\
& \lesssim 2^{(k_{2}- k_{1} ) \delta} 2^{k_{1}(n-2-2s)} 2^{-k_{2} (n-2-2s)} 
\\
& \times \Vert f_{k_{1}} \Vert_{Z_{k_{1}}} 2^{k_{2} \frac{n-2}{2}} \Vert g_{k_{2}} \Vert_{Z_{k_{2}}} 2^{k_{3} \frac{n-2}{2}} \Vert h_{k_{3}}\Vert_{Z_{k_{3}}} 
\end{split}
\end{equation*}
If we choose $ \delta >0 $ so that $ n -2 -2s - \delta > 0 $ ( which is possible since $ n \geq 4$ and $ s < 1$) then the first factor is 
$$
2^{(k_{1} -k_{2} ) ( n-2-2s - \delta)} \lesssim 1
$$
and we conclude the following 
\begin{equation}\label{zzzi}
\begin{split}
& \sum_{k \leq k_{1}+C} 2^{\sigma k} \sum_{r \leq k_{1} -C}  \Vert \Delta_{k}( \i \partial_{t} - ( - \Delta)^{s} + \i)^{-1} \ind_{[-T',T']} \left( \tilde{f}_{k_{1}} \Delta_{r}| \nabla|^{-1}  R_{j} ( \tilde{g}_{k_{2}} \nabla^{2s-1}_{i} \tilde{h}_{k_{3}} ) \right) \Vert_{Z_{k}}
\\
& \lesssim 2^{\sigma k_{1}} \Vert f_{k_{1}} \Vert_{Z_{k_1}} 2^{k_{2} \frac{n-2}{2}} \Vert g_{k_{2}} \Vert_{Z_{k_{2}}} 2^{k_{3} \frac{n-2}{2}} \Vert h_{k_{3}}\Vert_{Z_{k_{3}}}.
\end{split}
\end{equation}
Next, fix $ r \in [k_{1} -C , k_{2} +C]$ then the sum in $k$ runs up to $ r+C$, and  we have 
\begin{equation*}
\begin{split}
& 2^{ \sigma k} \Vert \Delta_{k} ( \i \partial_{t} -(-\Delta)^{s} + \i)^{-1} \ind_{[-T',T']} \left(   \tilde{f}_{k_{1}} \Delta_{r} | \nabla|^{-1} R_{j} ( \tilde{g}_{k_{2}} \nabla^{2s-1}_{i} \tilde{h}_{k_{3}} )\right) \Vert_{Z_{k}}
\\
& \lesssim 2^{k \sigma} 2^{-k(2s-1)/2} 2^{-r} 2^{k_{2} (2s-1)} \max_{e \in \mathscr{E}} \Vert \tilde{f}_{k_{1}} \Vert_{L^{2}_{e}L^{\infty}_{e^{\perp},t}} \Vert \tilde{g}_{k_{2}} \Vert_{L^{\infty}_{e'} L^{2}_{e'^{\perp},t}} \Vert \tilde{h}_{k_{3}} \Vert_{L^{2}_{e'} L^{\infty}_{e'^{\perp},t}}
\\
& \lesssim 2^{k \sigma} 2^{-k(2s-1)/2} 2^{-r} 2^{k_{2} (2s-1)/2} \Vert g_{k_{2}} \Vert_{Z_{k_{2}}} 2^{k_{1} \frac{n-1}{2}} \Vert f_{k_{1}}\Vert_{Z_{k_{1}}} 2^{k_{3} \frac{n-1}{2}} \Vert h_{k_{3}} \Vert_{Z_{k_{3}}}
\end{split}
\end{equation*}
summing over $k \leq r$ first, then summing over $ r \leq k_{2} +C$ yields 
\begin{equation}\label{iiiiz}
\begin{split}
&  \sum_{k \leq k_{2} +C} 2^{k \sigma} \sum_{ k_{1} -C \leq r \leq k_{2} +C} \Vert \Delta_{k} ( \i \partial_{t} -(-\Delta)^{s} + \i)^{-1} \ind_{[-T',T']} \left(   \tilde{f}_{k_{1}} \Delta_{r} | \nabla|^{-1} R_{j} ( \tilde{g}_{k_{2}} \nabla^{2s-1}_{i} \tilde{h}_{k_{3}} )\right) \Vert_{Z_{k}}
\\
& \lesssim 2^{\frac{n-2}{2} k_{1} } \Vert f_{k_{1}} \Vert_{Z_{k_{1}}} 2^{k_{2} \sigma } \Vert g_{k_{2}} \Vert_{Z_{k_{2}}} 2^{k_{3} \frac{n-2}{2}} \Vert h_{k_{3}} \Vert_{Z_{k_{3}}}
\\
&+ 2^{\sigma k_{1} } \Vert f_{k_{1}} \Vert_{Z_{k_{1}}} 2^{k_{2} \frac{n-2}{2}} \Vert g_{k_{2}} \Vert_{Z_{k_{2}}} 2^{k_{3} \frac{n-2}{2}} \Vert h_{k_{3}} \Vert_{Z_{k_{3}}}.
\end{split}
\end{equation}
Combining \eqref{zzzi}, and \eqref{iiiiz} we obtain
\eqref{case2btij5}. This concludes this case.
\\
\textbf{Case 2b:} If $ 2^{k_{3}} \ll 2^{k_{2}}$ then here we apply \eqref{multimultiplier} directly to get 
\begin{equation*}
\begin{split}
& \Vert \Delta_{k} ( \i \partial_{t} - ( - \Delta)^{s} + \i)^{-1} \ind_{[-T',T']} T^{i,j}_{5} ( \tilde{f}_{k_{1}}, \tilde{g}_{k_{2}} , \tilde{h}_{k_{3}}) \Vert_{Z_{k}}
\\
& \lesssim 2^{-k (2s-1)/2} \max_{e \in \mathscr{E}} \Vert T^{i,j}_{5} ( \tilde{f}_{k_{1}}, \tilde{g}_{k_{2}}, \tilde{h}_{k_{3}} ) \Vert_{L^{1}_{e} L^{2}_{e^{\perp},t}}
\\
& \overset{\eqref{multimultiplier}}{\lesssim} 2^{-k(2s-1)/2} 2^{-k_{2}} \max_{e \in \mathscr{E}} \Vert \tilde{f}_{k_{1}} \Vert_{L^{2}_{e} L^{\infty}_{e^{\perp},t}} \Vert \tilde{g}_{k_{2}} \Vert_{L^{\infty}_{e'} L^{2}_{e'^{\perp},t}} 2^{k_{3} (2s-1)} \Vert \tilde{h}_{k_{3}} \Vert_{L^{2}_{e'} L^{\infty}_{e'^{\perp},t}}
\\
& \overset{\eqref{local_smoothing},\eqref{maximal_Estimate}}{\lesssim} 2^{-k(2s-1)/2} 2^{k_{2} (2s-1)/2} \Vert g_{k_{2}} \Vert_{Z_{k_{2}}} 2^{k_{3} \frac{n-2}{2}} \Vert h_{k_{3}} \Vert_{Z_{k_{3}}} 2^{k_{1} \frac{n-2}{2}} \Vert f_{k_{1}} \Vert_{Z_{k_{1}}}
\end{split}
\end{equation*}
Multiplying by $ 2^{\sigma k} $ and summing over $k \leq k_{2} +C $ yields the result in this case. 
\\
\textbf{Case 3:} If $ \max \{ k_{1},k_{2} ,k_{3} \} = k_{3}$. In view of the analysis above, we may assume that $ 2^{k_{2}} \ll 2^{k_{3}}$. Further, in view of \eqref{multilpierlemma} we may assume that $ h_{k_{3}}$ has Fourier transform supported in the set $ \{ \langle \xi,e' \rangle \geq 2^{k_{3} - c} \}$ for some $ e' \in \mathscr{E}$. Then we estimate using \eqref{multimultiplier} 
\begin{equation*}
\begin{split}
& \Vert \Delta_{k} ( \i \partial_{t} - (-\Delta)^{s} + \i)^{-1} \ind_{[-T',T']} T^{i,j}_{5}(\tilde{f}_{k_{1}},\tilde{g}_{k_{2}},\tilde{h}_{k_{3}}) \Vert_{Z_{k}}
\\
& \lesssim 2^{-k(2s-1)/2} \max_{e \in \mathscr{E}} \Vert T^{i,j}_{5}( \tilde{f}_{k_{1}}, \tilde{g}_{k_{2}}, \tilde{h}_{k_{3}} ) \Vert_{L^{1}_{e}L^{2}_{e^{\perp},t}}
\\
& \lesssim 2^{-k(2s-1)/2} 2^{-k_{3}} 2^{k_{3}(2s-1)} \Vert \tilde{f}_{k_{1}} \Vert_{L^{2}_{e'}L^{\infty}_{e'^{\perp},t}} \Vert \tilde{h}_{k_{3}} \Vert_{L^{\infty}_{e'}L^{2}_{e'^{\perp},t}} \max_{e \in \mathscr{E}} \Vert \tilde{g}_{k_{2}} \Vert_{L^{2}_{e} L^{\infty}_{e^{\perp},t}}
\\
& \lesssim 2^{(k_{3}-k)(2s-1)/2} \Vert h_{k_{3}} \Vert_{Z_{k_{3}}} 2^{k_{1} \frac{n-2}{2}} \Vert f_{k_1} \Vert_{Z_{k_{1}}} 2^{k_{2} \frac{n-2}{2}} \Vert g_{k_{2}} \Vert_{Z_{k_{2}}}. 
\end{split}
\end{equation*}
This concludes the third case and hence the proposition. 
\end{proof}
Notice that Proposition \ref{TriTij5} gives us the nonlinear estimates for the first term on the right hand side of \eqref{Timederivativeformulavi}. Namely, we have 
\begin{equation*}
\begin{split}
& \Vert V_{i} |\nabla|^{-1} R_{j} \left( \Re (\i V_{j} \nabla^{2s-1}_{m}  \bar{V}_{m} ) \right \Vert_{N^{\sigma}(T')}
\\
& \lesssim \Vert V_{x} \Vert_{{F}^{\sigma}(T')} \Vert V_{x} \Vert^{2}_{{F}^{\frac{n-2}{2}}(T')}
\end{split}
\end{equation*}
Next, to handle the remaining terms, we first replace each $ e_{i}$ with $ e_{i} -Q_{i} + Q_{i}$ where $ Q_{i}$ was defined in Lemma \ref{eintegra}, and then we may replace $ e_{i} - Q_{i}$ with $ | \nabla|^{-1} | \nabla| e_{i}$. Then we reduce proving the nonlinear estimates for the remaining terms on the right hand side of \eqref{Timederivativeformulavi} into proving the following four  multi-linear estimates 
\begin{equation}\label{firstmultilinearest}
\begin{split}
& \Vert T^{i,j}_{6} ( \tilde{f}_{1}, \tilde{f}_{2},\tilde{f}_{3},\tilde{f}_{4},\tilde{f}_{5})\Vert_{N^{\sigma}(T')} \\
& \lesssim  \sum_{j=1}^{5} \Vert f_{j} \Vert_{F^{\sigma}(T')}  \prod_{i \neq j} \Vert f_{i} \Vert_{F^{\frac{n-2}{2}}(T')}
\end{split}
\end{equation}
here $ T^{i,j}_{6}$ is defined as 
$$
T^{i,j}_{6} (f_{1},f_{2},f_{3},f_{4},f_{5}) : = f_{1} |\nabla|^{-1} R_{j} \left( f_{2}  (|\nabla|^{-1}f_{3}) (|\nabla|^{-1}f_{4}) \nabla^{2s-1}_{i} f_{5} \right).
$$
And we need to prove 
\begin{equation}\label{secondmultilinearest}
\begin{split}
& \Vert T^{i,j}_{7} ( \tilde{f}_{1}, \tilde{f}_{2},\tilde{f}_{3},\tilde{f}_{4},\tilde{f}_{5})\Vert_{N^{\sigma}(T')} \\
& \lesssim  \sum_{j=1}^{5} \Vert f_{j} \Vert_{F^{\sigma}(T')}  \prod_{i \neq j} \Vert f_{i} \Vert_{F^{\frac{n-2}{2}}(T')}
\end{split}
\end{equation}
where $T^{i,j}_{7} $ is defined as follows 
$$
T^{i,j}_{7} (f_{1},f_{2},f_{3},f_{4},f_{5}) : = f_{1} |\nabla|^{-1} R_{j} \left( f_{2} (|\nabla|^{-1} f_{3})  \nabla^{2s-1}_{i}( |\nabla|^{-1}f_{4} f_{5}) \right).
$$
And we also we need to prove 
\begin{equation}\label{thirdmultilinearest}
\begin{split}
& \Vert T^{i,j}_{8} ( \tilde{f}_{1}, \tilde{f}_{2},\tilde{f}_{3},\tilde{f}_{4})\Vert_{N^{\sigma}(T')} \\
& \lesssim  \sum_{j=1}^{4} \Vert f_{j} \Vert_{F^{\sigma}(T')}  \prod_{i \neq j} \Vert f_{i} \Vert_{F^{\frac{n-2}{2}}(T')}
\end{split}
\end{equation}
where $T^{i,j}_{8}$ is defined as follows 
$$
T^{i,j}_{8} (f_{1},f_{2},f_{3},f_{4}) : = f_{1} |\nabla|^{-1} R_{j} \left( f_{2} (|\nabla|^{-1}f_{3})  \nabla^{2s-1}_{i}( f_{4}) \right)
$$
and lastly we need to prove 
\begin{equation}\label{fourthdmultilinearest}
\begin{split}
& \Vert T^{i,j}_{9} ( \tilde{f}_{1}, \tilde{f}_{2},\tilde{f}_{3},\tilde{f}_{4})\Vert_{N^{\sigma}(T')} \\
& \lesssim  \sum_{j=1}^{4} \Vert f_{j} \Vert_{F^{\sigma}(T')}  \prod_{i \neq j} \Vert f_{i} \Vert_{F^{\frac{n-2}{2}}(T')}
\end{split}
\end{equation}
where $T^{i,j}_{9}$ is defined as follows 
$$
T^{i,j}_{9} (f_{1},f_{2},f_{3},f_{4}) : = f_{1} |\nabla|^{-1} R_{j} \left( f_{2}   \nabla^{2s-1}_{i}(   (|\nabla|^{-1}f_{3}) f_{4}) \right).
$$

All the of the terms appearing on the right hand side of \eqref{Timederivativeformulavi} reduces to one of the above multi-linear estimates. There are two approaches to proving \eqref{firstmultilinearest}, \eqref{secondmultilinearest}, \eqref{thirdmultilinearest}, and \eqref{fourthdmultilinearest}. First approach is to repeat the arguments made in the proof of Proposition \ref{TriTij5} and place any additional low frequencies in $L^{\infty}_{t,x}$. The second approach is to rely on Corollary \ref{algebraforftilde} and Proposition \ref{TriTij5}. We will follow the latter approach.
\begin{corollary}
Let $ n \geq 4$, $ s \in (1/2,1)$ then we have the following estimate
for any $ \sigma \geq \frac{n-2}{2}$ and any $T' \in [0,\infty]$ 
\begin{equation}\label{firstmultilinearest1}
\begin{split}
& \Vert T^{i,j}_{6} ( \tilde{f}_{1}, \tilde{f}_{2},\tilde{f}_{3},\tilde{f}_{4},\tilde{f}_{5})\Vert_{N^{\sigma}(T')} \\
& \lesssim  \sum_{j=1}^{5} \Vert f_{j} \Vert_{F^{\sigma}(T')}  \prod_{i \neq j} \Vert f_{i} \Vert_{F^{\frac{n-2}{2}}(T')}
\end{split}
\end{equation}

\end{corollary}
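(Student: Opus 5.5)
The plan is to reduce \eqref{firstmultilinearest1} to Proposition \ref{TriTij5} combined with the algebra properties in Corollary \ref{algebraforftilde}, as announced just before the statement. Set
$$
G := \tilde{f}_{2}\, (|\nabla|^{-1}\tilde{f}_{3})\, (|\nabla|^{-1}\tilde{f}_{4}),
$$
so that $T^{i,j}_{6}(\tilde{f}_{1},\dots,\tilde{f}_{5}) = T^{i,j}_{5}(\tilde{f}_{1}, G, \tilde{f}_{5})$. The argument has three steps.

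\emph{Step 1 (passing to global extensions).} Work with extensions $E_{T'}(f_{l})$ of $\ind_{[-T',T']}f_{l}$ whose $F^{\sigma}$ and $F^{\frac{n-2}{2}}$ norms are within a factor $2$ of the $F^{\sigma}(T')$, $F^{\frac{n-2}{2}}(T')$ norms. To control both regularities with a single extension, take near-optimal extensions for both norms and note that the estimates are linear in each slot. Since $T^{i,j}_{6}$ is local in time, $\ind_{[-T',T']}T^{i,j}_{6}$ is unchanged when each $f_{l}$ is replaced by its extension. Conjugates are absorbed by working in $\tilde F$, and Proposition \ref{TriTij5} applies to $\tilde{g}$.

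\emph{Step 2 (bounding $G$).} The target is
$$
\|G\|_{\tilde F^{\frac{n-2}{2}}} \lesssim \prod_{l=2}^{4}\|f_{l}\|_{F^{\frac{n-2}{2}}},
\qquad
\|G\|_{\tilde F^{\sigma}} \lesssim \sum_{l=2}^{4}\|f_{l}\|_{F^{\sigma}}\prod_{m\in\{2,3,4\}\setminus\{l\}}\|f_{m}\|_{F^{\frac{n-2}{2}}}.
\tag{$\ast$}
$$
First estimate the product $P:=(|\nabla|^{-1}\tilde f_{3})(|\nabla|^{-1}\tilde f_{4})$ in $\tilde F^{\frac n2}$ and $\tilde F^{\sigma+1}$. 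Since $\sigma+1\ge \frac n2$, \eqref{eqalgebraic2} gives
$$
\|P\|_{\tilde F^{\sigma+1}}\lesssim \|f_3\|_{F^{\frac{n-2}{2}}}\|f_4\|_{F^{\sigma}}+\|f_3\|_{F^{\sigma}}\|f_4\|_{F^{\frac{n-2}{2}}},
$$
using $\||\nabla|^{-1}f\|_{F^{\sigma+1}}=\|f\|_{F^{\sigma}}$, which holds because $|\nabla|^{-1}$ acts as $2^{-k}$ on each dyadic piece by \eqref{multilpierlemma}. Then apply \eqref{eqalgebraic5}:
$$
\|\tilde f_2 P\|_{\tilde F^{\sigma}}\lesssim \|f_2\|_{F^{\frac{n-2}{2}}}\|P\|_{\tilde F^{\sigma+1}}+\|f_2\|_{F^{\sigma}}\|P\|_{\tilde F^{\frac n2}}.
$$
With $\sigma=\frac{n-2}{2}$ this same chain yields the first bound in $(\ast)$.

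\emph{Step 3 (conclusion).} Insert $(\ast)$ into \eqref{Tij5} with $g=G$. This produces exactly the sum over $j$ of one $F^{\sigma}$ factor times four $F^{\frac{n-2}{2}}$ factors, and \eqref{firstmultilinearest1} follows by taking the infimum over extensions.

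The main obstacle is that Proposition \ref{TriTij5} is stated for $g\in F$ or its conjugate, while $G$ only lies in $\tilde F = F+\bar F$. Decompose $G=G_1+\bar G_2$ with $G_1,G_2\in F$. Each piece then fits \eqref{Tij5} after adjusting conjugation (the proof of Proposition \ref{TriTij5} only uses estimates valid for $\tilde{f}\in\{f,\bar f\}$), with norms controlled by $\|G\|_{\tilde F}$ up to a factor $2$.
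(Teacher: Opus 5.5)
Your proposal is correct and follows the paper's argument: you write $T^{i,j}_{6}=T^{i,j}_{5}(\tilde f_{1},G,\tilde f_{5})$, apply Proposition~\ref{TriTij5}, and bound $G$ by Corollary~\ref{algebraforftilde}. The only difference is that the paper groups $\tilde f_{2}|\nabla|^{-1}\tilde f_{3}$ first and nests \eqref{eqalgebraic1} and \eqref{eqalgebraic3}, while you bound $(|\nabla|^{-1}\tilde f_{3})(|\nabla|^{-1}\tilde f_{4})$ via \eqref{eqalgebraic2} and then use \eqref{eqalgebraic5}, which works equally well; your extension and $\tilde{F}$ bookkeeping, which the paper leaves implicit, is also fine in substance, but ``linearity in each slot'' is not what produces a single extension good for both norms. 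Instead, glue near-optimal extensions $E^{a}$ (for $F^{\sigma}(T')$) and $E^{b}$ (for $F^{\frac{n-2}{2}}(T')$) in frequency as $E=\Delta_{\le K}E^{b}+(I-\Delta_{\le K})E^{a}$, with $2^{(\sigma-\frac{n-2}{2})K}\approx\|E^{a}\|_{F^{\sigma}}/\|E^{b}\|_{F^{\frac{n-2}{2}}}$; this still agrees with $f$ on $[-T',T']$ because $\Delta_{\le K}$ acts only in $x$.
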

\begin{proof}
Define $ G : = \tilde{f}_{2} | \nabla|^{-1} \tilde{f}_{3} | \nabla|^{-1} \tilde{f}_{4}$. 
Then we have 
$$
T^{i,j}_{6} (\tilde{f}_{1},\tilde{f}_{2} ,\tilde{f}_{3},\tilde{f}_{4},\tilde{f}_{5} ) = T^{i,j}_{5} ( \tilde{f}_{1} , G , \tilde{f}_{5} ) 
$$
and hence we have from Proposition \ref{TriTij5} the following 
\begin{equation}\label{yuyui0}
\begin{split}
& \Vert T_{6}^{i,j} (\tilde{f}_{1},\tilde{f}_{2} ,\tilde{f}_{3},\tilde{f}_{4},\tilde{f}_{5} ) \Vert_{N^{\sigma}(T')}
\\
& \lesssim \Vert f_{1} \Vert_{F^{\sigma}(T')} \Vert G \Vert_{\tilde{F}^{\frac{n-2}{2}}(T')} \Vert f_{5} \Vert_{F^{\frac{n-2}{2}}(T')}
\\
& + \Vert f_{1} \Vert_{F^{\frac{n-2}{2}}(T')} \Vert G \Vert_{\tilde{F}^{\sigma}(T')} \Vert f_{5} \Vert_{F^{\frac{n-2}{2}}(T')} 
\\
& + \Vert f_{1} \Vert_{F^{\frac{n-2}{2}}(T') } \Vert G \Vert_{\tilde{F}^{\frac{n-2}{2}}(T')} \Vert f_{5} \Vert_{F^{\sigma}(T')}.
\end{split}
\end{equation}
And notice that by Corollary \ref{algebraforftilde} we have 
\begin{equation}\label{yuyui1}
\begin{split}
& \Vert G \Vert_{\tilde{F}^{\frac{n-2}{2}}(T')} 
\\
& \overset{\eqref{eqalgebraic1}}{\lesssim} \Vert \tilde{f}_{2 } |\nabla|^{-1} \tilde{f}_{3}   \Vert_{\tilde{F}^{\frac{n-2}{2}}(T')} \Vert f_{4} \Vert_{F^{\frac{n-2}{2}}(T')}
\\
& \overset{\eqref{eqalgebraic1}}{\lesssim} \Vert f_{2} \Vert_{F^{\frac{n-2}{2}}(T')} \Vert f_{3} \Vert_{F^{\frac{n-2}{2}}(T')} \Vert f_{4} \Vert_{F^{\frac{n-2}{2}}(T')}.
\end{split}
\end{equation}
Similarly, for any $ \sigma \geq \frac{n-2}{2}$ we have 
\begin{equation}\label{yuyui2}
\begin{split}
& \Vert G \Vert_{\tilde{F}^{\sigma}(T')}
\\
& \overset{ \eqref{eqalgebraic3}}{\lesssim} \Vert \tilde{f}_{2} | \nabla|^{-1} \tilde{f}_{3} \Vert_{\tilde{F}^{\frac{n-2}{2}}(T')} \Vert f_{4} \Vert_{F^{\sigma}(T')}+  \Vert \tilde{f}_{2} |\nabla|^{-1} \tilde{f}_{3} \Vert_{\tilde{F}^{\sigma}(T')} \Vert f_{4} \Vert_{F^{\frac{n-2}{2}}(T')}
\\
& \overset{ \eqref{eqalgebraic1}, \eqref{eqalgebraic3}}{\lesssim} \Vert f_{3} \Vert_{F^{\frac{n-2}{2}}(T')} \Vert f_{2} \Vert_{F^{\frac{n-2}{2}}(T')} \Vert f_{4} \Vert_{F^{\sigma}(T')}
\\
&+ \left( \Vert f_{2} \Vert_{F^{\frac{n-2}{2}}(T')} \Vert f_{3} \Vert_{F^{\sigma}(T')} + \Vert f_{2} \Vert_{F^{\sigma}(T')} \Vert f_{3} \Vert_{F^{\frac{n-2}{2}}(T')} \right)\Vert f_{4} \Vert_{F^{\frac{n-2}{2}}(T')}
\end{split}
\end{equation}
Plugging \eqref{yuyui1} and \eqref{yuyui2} back into \eqref{yuyui0} gives \eqref{firstmultilinearest1}. 
\end{proof}
The proofs for \eqref{secondmultilinearest}, \eqref{thirdmultilinearest}, \eqref{fourthdmultilinearest} are identical.
\begin{corollary}
Let $T^{i,j}_{7}$ be as above. Then
\begin{equation}\label{secondmultilinearest111}
\begin{split}
& \Vert T^{i,j}_{7} ( \tilde{f}_{1}, \tilde{f}_{2},\tilde{f}_{3},\tilde{f}_{4},\tilde{f}_{5})\Vert_{N^{\sigma}(T')} \\
& \lesssim  \sum_{j=1}^{5} \Vert f_{j} \Vert_{F^{\sigma}(T')}  \prod_{i \neq j} \Vert f_{i} \Vert_{F^{\frac{n-2}{2}}(T')}
\end{split}
\end{equation}
\end{corollary}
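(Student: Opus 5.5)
We reduce to Proposition \ref{TriTij5} in exactly the same way as for $T^{i,j}_{6}$, grouping the low-regularity factors into a single function measured by the algebra estimates of Corollary \ref{algebraforftilde}. The difference is that the operator $\nabla^{2s-1}_{i}$ now acts on the product $(|\nabla|^{-1}\tilde f_4)\tilde f_5$ rather than on a single function. We therefore set
\[
G := \tilde f_{2}\,|\nabla|^{-1}\tilde f_{3}, \qquad H := (|\nabla|^{-1}\tilde f_{4})\,\tilde f_{5},
\]
so that $T^{i,j}_{7}(\tilde f_1,\dots,\tilde f_5) = T^{i,j}_{5}(\tilde f_1, G, H)$. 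Since Proposition \ref{TriTij5} was proven for $\tilde f\in\{f,\bar f\}$, and its proof only ever uses norms invariant under conjugation together with the $Z_k$/$\bar Z_k$ decomposition, it applies with $G,H$ measured in $\tilde F$. This gives
\[
\Vert T^{i,j}_{7}\Vert_{N^{\sigma}(T')} \lesssim \Vert f_1\Vert_{F^{\sigma}(T')}\Vert G\Vert_{\tilde F^{\frac{n-2}{2}}(T')}\Vert H\Vert_{\tilde F^{\frac{n-2}{2}}(T')} + \Vert f_1\Vert_{F^{\frac{n-2}{2}}(T')}\Vert G\Vert_{\tilde F^{\sigma}(T')}\Vert H\Vert_{\tilde F^{\frac{n-2}{2}}(T')} + \Vert f_1\Vert_{F^{\frac{n-2}{2}}(T')}\Vert G\Vert_{\tilde F^{\frac{n-2}{2}}(T')}\Vert H\Vert_{\tilde F^{\sigma}(T')}.
\]
The truncated versions follow, as before, by applying the estimate to arbitrary extensions and taking the infimum.

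Next we bound $G$ and $H$, which have the same structure (a function times $|\nabla|^{-1}$ of another). At the level $\frac{n-2}{2}$ we use \eqref{eqalgebraic3} with $\sigma=\frac{n-2}{2}$:
\[
\Vert G\Vert_{\tilde F^{\frac{n-2}{2}}(T')}\lesssim \Vert f_2\Vert_{F^{\frac{n-2}{2}}(T')}\Vert f_3\Vert_{F^{\frac{n-2}{2}}(T')},
\]
and the same holds for $H$ with $f_5,f_4$. At a general $\sigma\ge\frac{n-2}{2}>0$ we again use \eqref{eqalgebraic3}:
\[
\Vert G\Vert_{\tilde F^{\sigma}(T')}\lesssim \Vert f_2\Vert_{F^{\frac{n-2}{2}}(T')}\Vert f_3\Vert_{F^{\sigma}(T')}+\Vert f_2\Vert_{F^{\sigma}(T')}\Vert f_3\Vert_{F^{\frac{n-2}{2}}(T')},
\]
and likewise for $H$. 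Substituting these four bounds into the display above puts exactly one factor $f_j$ in $F^{\sigma}(T')$ and all the others in $F^{\frac{n-2}{2}}(T')$. Summing over the placements gives \eqref{secondmultilinearest111}.

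The one point that needs care is that the $\tilde F$ spaces are stable under conjugation, while the algebra estimates in Corollary \ref{algebraforftilde} are stated for products in $\tilde F$. We must check that products of mixed conjugates, such as $\tilde f_2\,|\nabla|^{-1}\tilde f_3$, are covered. Theorem \ref{Bilin1} handles exactly one conjugated factor, and conjugating the whole product reduces the remaining sign configurations to that case.

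A second subtlety is the requirement $\sigma>0$ in \eqref{eqalgebraic3}, which holds because $\frac{n-2}{2}>0$ for $n\ge4$. I expect no real obstacle beyond this bookkeeping, since all the dyadic work was already done in Proposition \ref{TriTij5}.
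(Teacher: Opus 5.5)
Your proposal is correct and follows essentially the same route as the paper, which likewise sets $F_1=\tilde f_2|\nabla|^{-1}\tilde f_3$ and $F_2=(|\nabla|^{-1}\tilde f_4)\tilde f_5$, writes $T^{i,j}_7=T^{i,j}_5(\tilde f_1,F_1,F_2)$, applies Proposition \ref{TriTij5}, and then bounds $F_1,F_2$ in $\tilde F^{\frac{n-2}{2}}(T')$ and $\tilde F^{\sigma}(T')$ via \eqref{eqalgebraic3}. Your remarks on conjugation and on $\sigma>0$ make explicit points that the paper leaves implicit.
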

\begin{proof}
Set $F_{1} =  \tilde{f}_{2} \vert \nabla \vert^{-1} \tilde{f}_{3}$. And $F_{2} : =  | \nabla|^{-1} \tilde{f}_{4} \tilde{f}_{5}$. Then notice 
$$
T^{i,j}_{7} (f_{1},f_{2},f_{3},f_{4},f_{5}) = T^{i,j}_{5}(f_{1}, F_{1},F_{2}).
$$
And so by Proposition \ref{TriTij5} we obtain 
\begin{equation}\label{moredetails1}
\begin{split}
& \Vert T^{i,j}_{7} ( \tilde{f}_{1}, \tilde{f}_{2},\tilde{f}_{3},\tilde{f}_{4},\tilde{f}_{5})\Vert_{N^{\sigma}(T')} 
\\
& \lesssim \Vert f_{1} \Vert_{F^{\sigma}(T')} \Vert F_{1} \Vert_{\tilde{F}^{\frac{n-2}{2}}(T')} \Vert F_{2} \Vert_{\tilde{F}^{\frac{n-2}{2}}(T')}
\\
&+ \Vert f_{1} \Vert_{F^{\frac{n-2}{2}}(T')} \Vert F_{1} \Vert_{F^{\sigma}(T')} \Vert F_{2} \Vert_{F^{\frac{n-2}{2}}(T')}
\\
& + \Vert f_{1} \Vert_{F^{\frac{n-2}{2}}(T')} \Vert F_{1} \Vert_{F^{\frac{n-2}{2}}(T')} \Vert F_{2} \Vert_{F^{\sigma}(T')}.
\end{split}
\end{equation}
But notice that by Corollary \ref{algebraforftilde} we have 
\begin{equation}\label{moredetails2}
\begin{split}
& \Vert F_{1} \Vert_{\tilde{F}^{\frac{n-2}{2}}(T')}
\\
& \overset{\eqref{eqalgebraic3}}{\lesssim} \Vert f_{2} \Vert_{F^{\frac{n-2}{2}}(T')} \Vert f_{3} \Vert_{F^{\frac{n-2}{2}}}.
\end{split}
\end{equation}
And similarly, 
\begin{equation}\label{moredtails3}
\begin{split}
& \Vert F_{2} \Vert_{\tilde{F}^{\frac{n-2}{2}}(T')}
\\
& \overset{\eqref{eqalgebraic3}}{\lesssim} \Vert f_{4} \Vert_{F^{\frac{n-2}{2}}(T')} \Vert f_{5} \Vert_{F^{\frac{n-2}{2}}}.
\end{split}
\end{equation}
Moreover, 
\begin{equation}\label{moredetails4}
\begin{split}
& \Vert F_{1} \Vert_{\tilde{F}^{\sigma}(T')} 
\\
& \overset{\eqref{eqalgebraic3}}{ \lesssim} \Vert f_{2} \Vert_{F^{\frac{n-2}{2}}(T')} \Vert f_{3} \Vert_{F^{\sigma}(T')} + \Vert f_{3} \Vert_{F^{\frac{n-2}{2}}(T')} \Vert f_{2} \Vert_{F^{\sigma}(T')}.
\end{split}
\end{equation}
And similarly, 
\begin{equation}\label{moredetails5}
\begin{split}
& \Vert F_{2} \Vert_{\tilde{F}^{\sigma}(T')} 
\\
& \overset{\eqref{eqalgebraic3}}{ \lesssim} \Vert f_{4} \Vert_{F^{\frac{n-2}{2}}(T')} \Vert f_{5} \Vert_{F^{\sigma}(T')} + \Vert f_{5} \Vert_{F^{\frac{n-2}{2}}(T')} \Vert f_{4} \Vert_{F^{\sigma}(T')}.
\end{split}
\end{equation}
Plugging \eqref{moredetails2},\eqref{moredetails4}, \eqref{moredtails3}, \eqref{moredetails5} back into \eqref{moredetails1} gives the result.
\end{proof}
The estimates for \eqref{fourthdmultilinearest}, \eqref{thirdmultilinearest} follows in exactly the same way.

The above estimates give the proof of Proposition \ref{timederivativenonlinearity}.

\section{Nonlinear estimates IV}
In this section we prove the remaining nonlinear estimates. Namely, 
we handle the terms 
$$
V_{0} \langle e_{1} , \partial_{i} e_{2} \rangle + \sum_{j=1}^{n} \nabla^{2s-1}_{j} \left( V_{i} \langle e_{1} , \partial_{j} e_{2} \rangle \right)- \nabla^{2s-1}_{j} \left( V_{j} \langle \partial_{i} e_{2} , e_{1} \rangle \right).
$$
Similar to what we did in Section 7. We will only need to prove the nonlinear estimates for the following trilinear expressions
$$
\nabla^{2s-1}_{i}\left( f |\nabla|^{-1} R_{j} ( g h) \right), \text{ and } (\nabla^{2s-1}_{i}f) |\nabla|^{-1} R_{j} ( g h) . 
$$
The remaining terms will follow from the algebra properties of $ \tilde{F}^{\sigma}$ spaces stated in Corollary \ref{algebraforftilde}, and the identities in Lemma \ref{ceoffidentities}. 
\begin{proposition}\label{lasttrilinearguy}
Let $n \geq 4$, $ s \in ( 1/2, 1)$, $ \sigma \geq \frac{n-2}{2}$, and $T' \in [0,\infty]$. Define 
$$
T^{i,j}_{10}(f,g,h):= \nabla^{2s-1}_{i} \left( f |\nabla|^{-1} R_{j} ( g h) \right).
$$
Then we have the following
\begin{equation}\label{lasttri}
\begin{split}
& \Vert T^{i,j}_{10}(\tilde{f},\tilde{g},\tilde{h}) \Vert_{N^{\sigma}(T')}
\\
& \lesssim \Vert f \Vert_{F^{\sigma}} \Vert g \Vert_{F^{\frac{n-2}{2}}} \Vert h \Vert_{F^{\frac{n-2}{2}}}
\\
& + \Vert f \Vert_{F^{\frac{n-2}{2}}} \Vert g \Vert_{F^{\sigma}} \Vert h \Vert_{F^{\frac{n-2}{2}}}
\\
& + \Vert f \Vert_{F^{\frac{n-2}{2}}} \Vert g \Vert_{F^{\frac{n-2}{2}}} \Vert h \Vert_{F^{\sigma}}.
\end{split}
\end{equation}
\end{proposition}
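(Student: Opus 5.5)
We follow the template of Proposition \ref{TriTij5}. By the definition of $N^{\sigma}(T')$ and of $F^{\sigma}$, it suffices to prove the dyadic estimate analogous to \eqref{dyadictij5}. That is, for $f_{k_1}=\Delta_{k_1}f$, $g_{k_2}$, $h_{k_3}$ we bound
\[
\sum_{k\le \max\{k_1,k_2,k_3\}+C}2^{k\sigma}\Vert \Delta_k(\i\partial_t-(-\Delta)^s+\i)^{-1}\ind_{[-T',T']}T^{i,j}_{10}(\tilde f_{k_1},\tilde g_{k_2},\tilde h_{k_3})\Vert_{Z_k}
\]
by the three products on the right of \eqref{lasttri} at the dyadic level. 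The outer operator $\nabla^{2s-1}_i$ commutes with $\Delta_k$ and by \eqref{multiplier2} costs exactly $2^{k(2s-1)}$ at output frequency $2^k$. Combined with the gain $2^{-k(2s-1)/2}$ from \eqref{passingtol1l2}, it leaves a net factor $2^{k(2s-1)/2}$. In every case we must absorb this factor by placing the highest-frequency function in the smoothing space \eqref{local_smoothing}, which gains $2^{-k_{\max}(2s-1)/2}$, and using $k\le k_{\max}+C$. The remaining two functions go into the maximal space \eqref{maximal_Estimate}, and the operator $|\nabla|^{-1}R_j$ is handled via \eqref{multimultiplier} whenever its input frequency is localized.

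We split into cases according to which frequency is largest, after restricting via \eqref{multilpierlemma} to a cone $\{\langle\xi,e'\rangle\ge 2^{k_{\max}-2}\}$.

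\textbf{Case 1: $k_1$ is maximal.} If $2^{\min\{k_2,k_3\}}\ll 2^{\max\{k_2,k_3\}}$, then $gh$ is localized at $2^{\max\{k_2,k_3\}}$. Applying \eqref{multimultiplier} gives the bound
\[
2^{(k_1-k)(2s-1)/2}\,2^{-\max\{k_2,k_3\}}\,2^{k_2\frac{n-1}{2}}2^{k_3\frac{n-1}{2}},
\]
which is acceptable and sums over $k\le k_1+C$.

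If $2^{k_2}\approx 2^{k_3}$, we decompose $|\nabla|^{-1}R_j(gh)=\sum_{r\le k_2+C}\Delta_r(\cdots)$ exactly as in Case 1b of Proposition \ref{TriTij5}. For $r\approx k_2$ the previous argument applies. For $r\le k_2-C$ we use the localized maximal estimate \eqref{eq:max:Xkest:loc} together with the cardinality bound \eqref{maximalestcardinality}, which yields a factor $(1+|k_2-r|)^2 2^{r(n-2)}2^{k_2-r}2^{-r}$; this is summable in $r$ since $n\ge4$.

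\textbf{Case 2: $k_2$ (or symmetrically $k_3$) is maximal.} If $2^{k_3}\approx 2^{k_2}$ and $2^{k_1}\ll 2^{k_2}$, the output frequency is $\max(k_1,r)$. The dangerous region is $r\le k_1-C$: there $2^k\approx 2^{k_1}$, so the outer derivative costs only $2^{k_1(2s-1)}$. Put $f_{k_1}$ in the smoothing space, which gives net $2^{k_1(2s-1)/2}$, and put $g,h$ in the localized maximal space. This produces
\[
2^{k_1(2s-1)/2}\,2^{-r}\,(1+|k_2-r|)^2\,2^{r(n-1)}2^{k_2-r}
\]
times the norms. Summing $r\le k_1$ gives $2^{k_1(n-3+(2s-1)/2)}2^{k_2}$ modulo $\delta$-losses. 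Compared to the target $2^{k_1\frac{n-2}{2}\,\text{or}\,\sigma}2^{k_2(n-2)}$, the ratio is $2^{(k_1-k_2)(n-3)}$ up to small losses, which is fine for $n\ge4$ after using the $\delta$-trick of Case 2a of Proposition \ref{TriTij5}.

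For $r\in[k_1-C,k_2+C]$ the output is localized at $2^r$. Put $\tilde g_{k_2}$ in $L^\infty_{e'}L^2$ and $f,h$ in the maximal space. The bound is $2^{r(2s-1)/2}2^{-r}2^{-k_2(2s-1)/2}$ times $2^{k_1\frac{n-1}{2}}2^{k_3\frac{n-1}{2}}$; multiplying by $2^{k\sigma}$ with $k\le r$ and summing in $r\le k_2$ gives the $\Vert g\Vert_{F^\sigma}$ term.

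If $2^{k_3}\ll2^{k_2}$, the product $gh$ is localized at $2^{k_2}$. With $k\le k_2+C$, we directly get
\[
2^{(k_2-k)(2s-1)/2}\,2^{k_1\frac{n-2}{2}}\,2^{k_3\frac{n-2}{2}}\Vert g\Vert_{Z_{k_2}}.
\]
The case where $k_3$ is maximal is symmetric.

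The step I expect to be the main obstacle is the high--high$\to$low interaction in $gh$ when $r\ll k_1\ll k_2\approx k_3$. Here the singular factor $|\nabla|^{-1}$ at frequency $2^r$ must be compensated by the localized maximal estimate alone, and the outer $\nabla^{2s-1}$ must not be spent at the higher scale. This is where $n\ge4$ and the logarithmic losses $(1+|k_2-r|)^2$ enter, and it must be checked carefully that the exponent $n-3-\delta>0$ suffices uniformly in $s\in(1/2,1)$.
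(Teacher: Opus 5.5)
Your proposal is essentially the paper's proof. It uses the same case split by the top frequency and by whether $2^{k_2}\approx 2^{k_3}$, and the same decomposition $|\nabla|^{-1}R_j(gh)=\sum_{r}\Delta_r(\cdots)$, with \eqref{maximalestcardinality} and the localized maximal estimate \eqref{eq:max:Xkest:loc} handling small $r$. It also splits at $r\approx k_1$ when $g$ carries the top frequency, exactly as the paper does. One exponent in Case 1 ($r\le k_2-C$) needs fixing: the two localized maximal factors give $2^{r(n-1)}2^{k_2-r}$, so the summand is $(1+|k_2-r|)^2\,2^{r(n-3)}\,2^{k_2}$, as you correctly wrote in Case 2, and this sums to exactly $2^{k_2(n-2)}$ when $n\ge 4$. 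Your $2^{r(n-2)}$ breaks scaling and diverges for $n=4$. Similarly, in Case 2 with $r\le k_1-C$ the ratio to the target is just $(1+|k_2-k_1|)^2\,2^{(k_1-k_2)(n-3)}\lesssim 1$, so no $\delta$-trick is needed and nothing there depends on $s$.
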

\begin{proof}
To prove \eqref{lasttri}, it suffices to prove the following
\begin{equation}\label{lasttri1}
\begin{split}
& \sum_{k=-\infty}^{\max \{ k_{1},k_{2} ,k_{3} \} +C } 2^{k \sigma} \Vert \Delta_{k}( \i \partial_{t} - ( - \Delta)^{s} + \i)^{-1} \ind_{[-T',T']}  T^{i,j}_{10}(\tilde{f}_{k_{1}},\tilde{g}_{k_{2}}, \tilde{h}_{k_{3}} ) \Vert_{Z_{k}} 
\\
& \lesssim 2^{\sigma k_{1}} \Vert f_{k_{1}} \Vert_{Z_{k_{1}}} 2^{k_{2} \frac{n-2}{2}} \Vert g_{k_{2}} \Vert_{Z_{k_{2}}} 2^{k_{3} \frac{n-2}{2}} \Vert h_{k_{3}} \Vert_{Z_{k_{3}}}
\\
& + 2^{\frac{n-2}{2} k_{1}} \Vert f_{k_{1}} \Vert_{Z_{k_{1}}} 2^{k_{2} \sigma} \Vert g_{k_{2}} \Vert_{Z_{k_{2}}} 2^{k_{3} \frac{n-2}{2}} \Vert h_{k_{3}} \Vert_{Z_{k_{3}}}
\\
& +2^{\frac{n-2}{2} k_{1}} \Vert f_{k_{1}} \Vert_{Z_{k_{1}}} 2^{k_{2} \frac{n-2}{2}} \Vert g_{k_{2}} \Vert_{Z_{k_{2}}} 2^{k_{3} \sigma} \Vert h_{k_{3}} \Vert_{Z_{k_{3}}}.
\end{split}
\end{equation}
To that end, we consider several cases depending on which function has the highest frequency 
\\
\textbf{Case 1:} If $ \max\{ k_{1},k_{2},k_{3} \} = k_{1}$. Then in view of \eqref{multilpierlemma} we may assume that $ f_{k_{1}}$ has Fourier transform supported in the set $ \{ \langle \xi, e' \rangle \geq 2^{k_{1}-c} \}$ for some $ e' \in \mathscr{E}$. To prove \eqref{lasttri1} we consider two sub-cases
\\
\textbf{Case 1a:} If $ 2^{k_{2}} \approx 2^{k_{3}}$. Then in this case we write 
\begin{equation*}
\begin{split}
T^{i,j}_{10} (\tilde{f}_{k_{1}},\tilde{g}_{k_{2}},\tilde{h}_{k_{3}})(x,t) =\sum_{r \leq k_{1}+C} \nabla^{2s-1}_{i} \left( \tilde{f}_{k_{1}} \Delta_{r} \left( |\nabla|^{-1} R_{j} ( \tilde{g}_{k_{2}} \tilde{h}_{k_{3}} \right) \right).
\end{split}
\end{equation*}
Fix $ r \in [k_{2}-C,k_{2}+C]$, then by the local smoothing, and maximal estimate we obtain
\begin{equation}
\begin{split}
& \Vert \Delta_{k} ( \i \partial_{t} - ( -\Delta)^{s} +\i)^{-1} \ind_{[-T',T']} \nabla^{2s-1}_{i}\left( \tilde{f}_{k_{1}} \Delta_{r} \left( |\nabla|^{-1} R_{j} ( \tilde{g}_{k_{2}} \tilde{h}_{k_{3}} \right) \right) \Vert_{Z_{k}}
\\
& \lesssim 2^{-k(2s-1)/2} \max_{e \in \mathscr{E}} \Vert \nabla^{2s-1}_{i} \left(\tilde{f}_{k_{1}} \Delta_{r} \left( |\nabla|^{-1} R_{j} ( \tilde{g}_{k_{2}} \tilde{h}_{k_{3}} \right) \right) \Vert_{L^{1}_{e}L^{2}_{e^{\perp},t}}
\\
& \overset{\eqref{multimultiplier}}{\lesssim} 2^{-k(2s-1)/2} 2^{k_{1}(2s-1)} \Vert \tilde{f}_{k_{1}} \Vert_{L^{\infty}_{e'}L^{2}_{e'^{\perp},t}} 2^{-k_{2}} \Vert \tilde{g}_{k_2} \Vert_{L^{2}_{e'}L^{\infty}_{e'^{\perp},t}} \max_{e \in \mathscr{E}} \Vert \tilde{h}_{k_{3}} \Vert_{L^{2}_{e}L^{\infty}_{e^{\perp},t}}
\\
& \overset{\eqref{local_smoothing},\eqref{maximal_Estimate}}{\lesssim} 2^{(k_{1}-k)(2s-1)/2} \Vert f_{k_{1}} \Vert_{Z_{k_{1}}} 2^{k_{2} \frac{n-2}{2}} \Vert g_{k_{2}} \Vert_{Z_{k_{2}}} 2^{k_{3} \frac{n-2}{2}} \Vert h_{k_{3}} \Vert_{Z_{k_{3}}},
\end{split}
\end{equation}
multiplying by $ 2^{\sigma k}$ and summing over $ k \leq k_{1}+C$ yields 
\begin{equation}\label{ccc1}
\begin{split}
& \sum_{k \leq k_{1}+C} 2^{k \sigma} \Vert \Delta_{k} ( \i \partial_{t} - ( -\Delta)^{s} +\i)^{-1} \ind_{[-T',T']} \nabla^{2s-1}_{i}\left( \tilde{f}_{k_{1}} \Delta_{r} \left( |\nabla|^{-1} R_{j} ( \tilde{g}_{k_{2}} \tilde{h}_{k_{3}} \right) \right) \Vert_{Z_{k}}
\\
& \lesssim 2^{k_{1} \sigma}\Vert f_{k_{1}} \Vert_{Z_{k_{1}}} 2^{k_{2} \frac{n-2}{2}} \Vert g_{k_{2}} \Vert_{Z_{k_{2}}} 2^{k_{3} \frac{n-2}{2}} \Vert h_{k_{3}} \Vert_{Z_{k_{3}}}
\end{split}
\end{equation}
for any $ r\in [k_{2}-C,k_{2}+C]$. 
Next, if $ r \leq k_{2}-C$ for sufficiently large $C$ then we estimate as follows; using \eqref{maximalestcardinality}, \eqref{multimultiplier} we obtain 
\begin{equation}\label{jbv}
\begin{split}
& \Vert \Delta_{k} ( \i \partial_{t} - ( -\Delta)^{s} +\i)^{-1} \ind_{[-T',T']} \nabla^{2s-1}_{i} \left( \tilde{f}_{k_{1}} \Delta_{r} \left( |\nabla|^{-1} R_{j} ( \tilde{g}_{k_{2}} \tilde{h}_{k_{3}} \right) \right) \Vert_{Z_{k}}
\\
& \lesssim 2^{-k_{1}(2s-1)/2}  \Vert \nabla^{2s-1}_{i} \left(\tilde{f}_{k_{1}} \Delta_{r} \left( |\nabla|^{-1} R_{j} ( \tilde{g}_{k_{2}} \tilde{h}_{k_{3}} \right) \right) \Vert_{L^{1}_{e'}L^{2}_{e'^{\perp},t}}
\\
& \overset{\eqref{multimultiplier}, \eqref{maximalestcardinality}}{\lesssim} 2^{k_{1}(2s-1)/2} 2^{-r} \Vert \tilde{f}_{k_{1}} \Vert_{L^{\infty}_{e'}L^{2}_{e'^{\perp},t}} 
\\
& \times \left( \sum_{\tn \in 2^{r} \Z^{n}} \Vert P_{r,\tn} \tilde{g}_{k_{2}} \Vert^{2}_{L^{2}_{e'}L^{\infty}_{e'^{\perp},t}} \right)^{1/2} \left( \sum_{\tn \in 2^{r} \Z^{n}} \Vert P_{r,\tn} \tilde{h}_{k_{3}} \Vert^{2}_{L^{2}_{e'}L^{\infty}_{e'^{\perp},t}} \right)^{1/2}
\\
& \overset{\eqref{local_smoothing},\eqref{eq:max:Xkest:loc}}{\lesssim} \Vert f_{k_{1}} \Vert_{Z_{k_{1}}} 2^{r(n-3)} 2^{k_{2}} (1+|k_{2}-r|)^{2} \Vert g_{k_{2}} \Vert_{Z_{k_{2}}} \Vert h_{k_{3}}\Vert_{Z_{k_{3}}},
\end{split}
\end{equation}
summing over $r \leq k_{2}-C $ we obtain
\begin{equation}\label{ccc2}
\begin{split}
& \sum_{r \leq k_{2}-C} \Vert \Delta_{k} ( \i \partial_{t} - ( -\Delta)^{s} +\i)^{-1} \ind_{[-T',T']} \nabla^{2s-1}_{i} \left( \tilde{f}_{k_{1}} \Delta_{r} \left( |\nabla|^{-1} R_{j} ( \tilde{g}_{k_{2}} \tilde{h}_{k_{3}} \right) \right) \Vert_{Z_{k}}
\\
& \lesssim \Vert f_{k_{1}} \Vert_{Z_{k_{1}}} 2^{k_{2} \frac{n-2}{2}} \Vert g_{k_{2}} \Vert_{Z_{k_{2}}} 2^{k_{3} \frac{n-2}{2}} \Vert h_{k_{3}} \Vert_{Z_{k_{3}}}.
\end{split}
\end{equation}
Combining \eqref{ccc1}, and \eqref{ccc2} we obtain \eqref{lasttri1} in this case. 
\\
\textbf{Case 1b.} If $2^{\max \{ k_{2} , k_{3} \} } \gg 2^{\min \{ k_{2},k_{3} \}}$. Then in this case we use \eqref{multimultiplier} to obtain 
\begin{equation*}
\begin{split}
& \Vert \Delta_{k} ( \i \partial_{t} - ( - \Delta)^{s} + \i)^{-1} \ind_{[-T',T']} T^{i,j}_{10}(\tilde{f}_{k_{1}},\tilde{g}_{k_{2}},\tilde{h}_{k_{3}}) \Vert_{Z_{k}}
\\
& \lesssim 2^{-k(2s-1)/2} \max_{e \in \mathscr{E}} \Vert T^{i,j}_{10}(\tilde{f}_{k_{1}},\tilde{g}_{k_{2}},\tilde{h}_{k_{3}}) \Vert_{L^{1}_{e} L^{2}_{e^{\perp},t}}
\\
& \overset{\eqref{multimultiplier}}{\lesssim} 2^{-k(2s-1)/2} 2^{k_{1}(2s-1)} 2^{- \max \{ k_{2},k_{3} \} } \Vert \tilde{f}_{k_{1}} \Vert_{L^{\infty}_{e'} L^{2}_{e'^{\perp},t}} \Vert \tilde{g}_{k_{2}} \Vert_{L^{2}_{e'} L^{\infty}_{e'^{\perp},t}} \max_{e \in \mathscr{E}} \Vert \tilde{h}_{k_{3}} \Vert_{L^{2}_{e}L^{\infty}_{e^{\perp},t}}
\\
& \overset{\eqref{local_smoothing},\eqref{maximal_Estimate}}{\lesssim} 2^{(k_{1}-k_{})(2s-1)/2} \Vert f_{k_{1}} \Vert_{Z_{k_{1}}} 2^{k_{2} \frac{n-2}{2}} \Vert g_{k_{2}} \Vert_{Z_{k_{2}}} 2^{k_{3} \frac{n-2}{2}} \Vert h_{k_{3}} \Vert_{Z_{k_{3}}}.
\end{split}
\end{equation*}
This proves the result in this case. 
\\
\textbf{Case 2:} If $ \max \{ k_{1},k_{2},k_{3} \} \in \{ k_{2} ,k_{3} \}.$ By symmetry, we may assume without loss of generality that $ \max \{ k_{1},k_{2},k_{3} \} = k_{2}$. In view of \eqref{multilpierlemma} we may assume that $ g_{k_{2}}$ has Fourier transform supported in the set $ \{ \langle \xi, e' \rangle \geq 2^{k_{2} -c} \}$. We consider two sub-cases
\\
\textbf{Case 2a:} If $ 2^{k_{3}} \ll 2^{k_{2}}$ then we obtain using the maximal estimate, smoothing estimate, and \eqref{multimultiplier} the following
\begin{equation*}
\begin{split}
& \Vert \Delta_{k} ( \i \partial_{t} - ( - \Delta)^{s} + \i)^{-1} \ind_{[-T',T']} T^{i,j}_{10}(\tilde{f}_{k_{1}},\tilde{g}_{k_{2}},\tilde{h}_{k_{3}}) \Vert_{Z_{k}}
\\
& \lesssim 2^{-k(2s-1)/2}  \max_{e \in \mathscr{E}} \Vert T^{i,j}_{10}(\tilde{f}_{k_{1}},\tilde{g}_{k_{2}},\tilde{h}_{k_{3}}) \Vert_{L^{1}_{e} L^{2}_{e^{\perp},t}}
\\
& \overset{\eqref{multimultiplier}}{\lesssim} 2^{-k(2s-1)/2} 2^{k_{2}(2s-1)}  2^{-k_{2}} \Vert \tilde{g}_{k_{2}} \Vert_{L^{\infty}_{e'} L^{2}_{e'^{\perp},t}} \Vert \tilde{f}_{k_{1}} \Vert_{L^{2}_{e'}L^{\infty}_{e'^{\perp},t}} \max_{e \in \mathscr{E}} \Vert \tilde{h}_{k_{3}} \Vert_{L^{2}_{e} L^{\infty}_{e^{\perp},t}}
\\
& \overset{\eqref{local_smoothing},\eqref{maximal_Estimate}}{\lesssim} 2^{(k_{2}-k)(2s-1)/2} \Vert g_{k_{2}} \Vert_{Z_{k_{2}}} 2^{k_{1} \frac{n-2}{2}} \Vert f_{k_{1}} \Vert_{Z_{k_{1}}} 2^{k_{3} \frac{n-2}{2}} \Vert h_{k_{3}} \Vert_{Z_{k_{3}}}.
\end{split}
\end{equation*}
This readily implies the result in this case.
\\
\textbf{Case 2b:} If $ 2^{k_{2}} \approx 2^{k_{3}}$. Then we write 
\begin{equation*}
\begin{split}
T^{i,j}_{10} (\tilde{f}_{k_{1}},\tilde{g}_{k_{2}},\tilde{h}_{k_{3}})(x,t) =\sum_{r \leq k_{2}+C} \nabla^{2s-1}_{i} \left(\tilde{f}_{k_{1}} \Delta_{r} \left( |\nabla|^{-1} R_{j} ( \tilde{g}_{k_{2}} \tilde{h}_{k_{3}} \right) \right).
\end{split}
\end{equation*}
And it suffice to prove
\begin{equation}\label{iuop}
\begin{split}
& \sum_{k \leq k_{2} +C} 2^{k \sigma} \sum_{r \leq k_{2} +C} \Vert \Delta_{k} ( \i \partial_{t} - (-\Delta)^{s} +\i)^{-1} \ind_{[-T',T']} \left( \nabla^{2s-1}_{i} \left( \tilde{f}_{k_{1}} \Delta_{r} \left( |\nabla|^{-1} R_{j} ( \tilde{g}_{k_{2}} \tilde{h}_{k_{3}} \right) \right) \right) \Vert_{Z_{k}}
\\
& \lesssim 2^{\sigma k_{1}} \Vert f_{k_{1}} \Vert_{Z_{k_{1}}} 2^{k_{2} \frac{n-2}{2}}\Vert g_{k_{2}}\Vert_{Z_{k_{2}}} 2^{k_{3} \frac{n-2}{2}} \Vert h_{k_{3}} \Vert_{Z_{k_{3}}}
\\
&+ 2^{ k_{1} \frac{n-2}{2}} \Vert f_{k_{1}} \Vert_{Z_{k_{1}}} 2^{k_{2} \sigma }\Vert g_{k_{2}}\Vert_{Z_{k_{2}}} 2^{k_{3} \frac{n-2}{2}} \Vert h_{k_{3}} \Vert_{Z_{k_{3}}}
\end{split}
\end{equation}
Fix $ r \leq k_{1}-C$ for sufficiently large $C$. Then the sum in $k$ on the left hand side of \eqref{iuop} runs only for when $ 2^{k} \approx 2^{k_{1}}$.  Hence, we obtain from \eqref{jbv} 
\begin{equation}\label{ccc22}
\begin{split}
& \sum_{k \leq k_{2} +C} 2^{k\sigma} \sum_{r \leq k_{1}-C} \Vert \Delta_{k} ( \i \partial_{t} - ( -\Delta)^{s} +\i)^{-1} \ind_{[-T',T']} \nabla^{2s-1}_{i}\left( \tilde{f}_{k_{1}} \Delta_{r} \left( |\nabla|^{-1} R_{j} ( \tilde{g}_{k_{2}} \tilde{h}_{k_{3}} \right) \right)\Vert_{Z_{k}}
\\
& \lesssim 2^{\sigma k_{1}} \Vert f_{k_{1}} \Vert_{Z_{k_{1}}} 2^{k_{2} \frac{n-2}{2}} \Vert g_{k_{2}} \Vert_{Z_{k_{2}}} 2^{k_{3} \frac{n-2}{2}} \Vert h_{k_{3}} \Vert_{Z_{k_{3}}}.
\end{split}
\end{equation}
Next, if $ r \in [ k_{1}-C,k_{2} +C]$ then the sum in $k$ on the left hand side of \eqref{iuop} runs only up to $r+C$. And hence we obtain
\begin{equation*}
\begin{split}
& \Vert \Delta_{k} ( \i \partial_{t} - ( -\Delta)^{s} +\i)^{-1} \ind_{[-T',T']} \nabla^{2s-1}_{i} \left( \tilde{f}_{k_{1}} \Delta_{r} \left( |\nabla|^{-1} R_{j} ( \tilde{g}_{k_{2}} \tilde{h}_{k_{3}} \right) \right) \Vert_{Z_{k}}
\\
& \lesssim 2^{-k(2s-1)/2}  \max_{e \in \mathscr{E}} \Vert 
\nabla^{2s-1}_{i} \left( \tilde{f}_{k_{1}} \Delta_{r} \left( |\nabla|^{-1} R_{j} ( \tilde{g}_{k_{2}} \tilde{h}_{k_{3}} \right) \right)
\Vert_{L^{1}_{e} L^{2}_{e^{\perp},t}}
\\
& \lesssim 2^{-k(2s-1)/2} 2^{-r} 2^{k_{2}(2s-1)} 
\Vert \tilde{g}_{k_{2}} \Vert_{L^{\infty}_{e'} L^{2}_{e'^{\perp},t}} \Vert \tilde{f}_{k_{1}} \Vert_{L^{2}_{e'}L^{\infty}_{e'^{\perp},t}} \max_{e \in \mathscr{E}} \Vert \tilde{h}_{k_{3}} \Vert_{L^{2}_{e} L^{\infty}_{e^{\perp},t}}
\\
& \lesssim 2^{(k_{2}-k)(2s-1)/2} 2^{-r}\Vert g_{k_{2}} \Vert_{Z_{k_{2}}} 2^{k_{1} \frac{n-1}{2}} \Vert f_{k_{1}} \Vert_{Z_{k_{1}}} 2^{k_{3} \frac{n-1}{2}} \Vert h_{k_{3}} \Vert_{Z_{k_{3}}}
\end{split}
\end{equation*}
multiplying by $2^{k \sigma}$ and summing over $ k \leq r+C$ and then summing over $ r \in [k_{1}-C , k_{2} +C] \cap \Z $ we obtain
\begin{equation}\label{xcc3}
\begin{split}
& \sum_{k \leq k_{2}+C} 2^{k\sigma} \sum_{k_{1}-C \leq  r \leq k_{2}+C} \Vert \Delta_{k} ( \i \partial_{t} - ( -\Delta)^{s} +\i)^{-1} \ind_{[-T',T']} \nabla^{2s-1}_{i} \left( \tilde{f}_{k_{1}} \Delta_{r} \left( |\nabla|^{-1} R_{j} ( \tilde{g}_{k_{2}} \tilde{h}_{k_{3}} \right) \right) \Vert_{Z_{k}}
\\
& \lesssim 2^{ k_{1} \frac{n-2}{2}} \Vert f_{k_{1}} \Vert_{Z_{k_{1}}} 2^{k_{2} \sigma }\Vert g_{k_{2}}\Vert_{Z_{k_{2}}} 2^{k_{3} \frac{n-2}{2}} \Vert h_{k_{3}} \Vert_{Z_{k_{3}}}.
\end{split}
\end{equation}
Combining \eqref{ccc22}, and \eqref{xcc3} yields \eqref{iuop}.

\end{proof}

\begin{proposition}\label{lasttrilinearguyzzz}
Let $n \geq 4$, $ s \in ( 1/2, 1)$, $ \sigma \geq \frac{n-2}{2}$, and $ T' \in [0,\infty]$. Define 
$$
T^{i,j}_{11}(f,g,h):= \nabla^{2s-1}_{i}  f |\nabla|^{-1} R_{j} ( g h).
$$
Then we have the following
\begin{equation}\label{lasttri77}
\begin{split}
& \Vert T^{i,j}_{11}(\tilde{f},\tilde{g},\tilde{h}) \Vert_{N^{\sigma}(T')}
\\
& \lesssim \Vert f \Vert_{F^{\sigma}} \Vert g \Vert_{F^{\frac{n-2}{2}}} \Vert h \Vert_{F^{\frac{n-2}{2}}}
\\
& + \Vert f \Vert_{F^{\frac{n-2}{2}}} \Vert g \Vert_{F^{\sigma}} \Vert h \Vert_{F^{\frac{n-2}{2}}}
\\
& + \Vert f \Vert_{F^{\frac{n-2}{2}}} \Vert g \Vert_{F^{\frac{n-2}{2}}} \Vert h \Vert_{F^{\sigma}}.
\end{split}
\end{equation}
\end{proposition}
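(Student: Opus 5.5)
The plan is to mirror the proof of Proposition \ref{lasttrilinearguy} almost verbatim. The only difference is that the operator $\nabla^{2s-1}_{i}$ now acts on $f$ alone rather than on the product, so its symbol contributes $2^{k_{1}(2s-1)}$ instead of $2^{k(2s-1)}$, and this is never worse. As before it suffices to prove the dyadic estimate analogous to \eqref{lasttri1}, with $T^{i,j}_{10}$ replaced by $T^{i,j}_{11}(\tilde{f}_{k_{1}},\tilde{g}_{k_{2}},\tilde{h}_{k_{3}})$. We split into cases according to which of $k_{1},k_{2},k_{3}$ is largest. In each case, after using \eqref{multilpierlemma}, we localize the Fourier transform of the highest-frequency factor to a cone $\{\langle \xi,e'\rangle \geq 2^{k_{\max}-c}\}$. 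We then bound the $Z_{k}$ norm via \eqref{passingtol1l2} by $2^{-k(2s-1)/2}\max_{e}\Vert \cdot \Vert_{L^{1}_{e}L^{2}_{e^{\perp},t}}$, or by $2^{-k_{1}(2s-1)/2}\Vert\cdot\Vert_{L^{1}_{e'}L^{2}_{e'^{\perp},t}}$ when the output frequency is forced to be $\approx 2^{k_{1}}$.

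\textbf{Case 1} ($k_{1}$ maximal). If $2^{k_{2}}\approx 2^{k_{3}}$, decompose $|\nabla|^{-1}R_{j}(\tilde g_{k_2}\tilde h_{k_3})=\sum_{r\leq k_2+C}\Delta_r(\cdots)$.
\begin{itemize}
\item For $r\in[k_2-C,k_2+C]$, apply \eqref{multimultiplier}. Put $\tilde f_{k_1}$ in $L^\infty_{e'}L^2$ with gain $2^{k_1(2s-1)}2^{-k_1(2s-1)/2}$ from \eqref{local_smoothing}, and put $g,h$ in maximal spaces with $2^{-k_2}$. This gives the factor $2^{(k_1-k)(2s-1)/2}$, which is summable after multiplying by $2^{k\sigma}$ and summing over $k\leq k_1+C$.
\item For $r\leq k_2-C$, the output frequency is $\approx 2^{k_1}$. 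Use \eqref{maximalestcardinality} with the localized maximal estimate \eqref{eq:max:Xkest:loc}, exactly as in \eqref{jbv}, to get $(1+|k_2-r|)^2 2^{r(n-3)}2^{k_2}$, which sums over $r$ since $n\geq 4$.
\end{itemize}
If instead $2^{\max\{k_2,k_3\}}\gg 2^{\min\{k_2,k_3\}}$, then $\kappa$ is localized at $2^{\max\{k_2,k_3\}}$, and \eqref{multimultiplier} applies directly with the same bookkeeping as Case 1b of Proposition \ref{lasttrilinearguy}.

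\textbf{Case 2} ($k_2$ maximal; $k_3$ maximal is symmetric). Now $\nabla^{2s-1}_i$ falls on the low-frequency $f_{k_1}$. This produces $2^{k_1(2s-1)}\leq 2^{k_2(2s-1)}$, which is better than in Proposition \ref{lasttrilinearguy}.
\begin{itemize}
\item If $2^{k_3}\ll 2^{k_2}$: put $g_{k_2}$ in the smoothing space and $f_{k_1},h_{k_3}$ in maximal spaces. Using $2^{-k_2}\cdot 2^{k_1(2s-1)}\cdot 2^{-k_2(2s-1)/2}\cdot 2^{-k(2s-1)/2}\lesssim 2^{-k_2}\cdot 2^{(k_2-k)(2s-1)/2}$ up to harmless factors, the bound closes after summing over $k\leq k_2+C$.
\item If $2^{k_2}\approx 2^{k_3}$: split in $r$ again. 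For $r\leq k_1-C$ the output frequency is $\approx 2^{k_1}$; we localize $f_{k_1}$ to a cone and use the localized maximal estimate as in \eqref{ccc22}. For $r\in[k_1-C,k_2+C]$ we use \eqref{multimultiplier} with $g$ in the smoothing space, and the sum over $k\leq r+C$ followed by $r\leq k_2+C$ gives the term with $2^{k_2\sigma}$, as in \eqref{xcc3}.
\end{itemize}

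The main obstacle is the subcase $2^{k_2}\approx 2^{k_3}$ with low output frequency $r\leq k_1-C$, where $\Delta_r|\nabla|^{-1}$ is singular. There we must check that the exponent $2^{-k_1(2s-1)/2}2^{k_1(2s-1)}2^{-k_1(2s-1)/2}=1$ leaves $2^{r(n-3)}2^{k_2}(1+|k_2-r|)^2$. In Case 2 we must also absorb $2^{k_1}\lesssim 2^{k_2}$ factors using $n-3>0$, arguing as in Case 2a of Proposition \ref{TriTij5} with a small $\delta$ loss. All the remaining cases are routine repetitions of the estimates above.
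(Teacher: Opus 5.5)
Your proposal is correct and follows the paper's proof essentially line by line. It uses the same case split by the largest of $k_1,k_2,k_3$, the same decomposition in $r$ with the localized maximal estimate \eqref{eq:max:Xkest:loc} when $2^{k_2}\approx 2^{k_3}$, and the same $L^1_eL^2_{e^\perp,t}$ bookkeeping as for $T^{i,j}_{10}$. Your remark that $2^{k_1(2s-1)}$ is ``never worse'' than $2^{k(2s-1)}$ is not literally true when $k<k_1$, but it is harmless: the proof of Proposition \ref{lasttrilinearguy} already used the cruder bound $2^{k_{\max}(2s-1)}$, and your Case 1 computation correctly keeps the resulting factor $2^{(k_1-k)(2s-1)/2}$, which is summable against $2^{k\sigma}$.
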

\begin{proof}
To prove \eqref{lasttri77}, it suffices to prove the following
\begin{equation}\label{lasttri177}
\begin{split}
& \sum_{k=-\infty}^{\max \{ k_{1},k_{2} ,k_{3} \} +C } 2^{k \sigma} \Vert \Delta_{k}( \i \partial_{t} - ( - \Delta)^{s} + \i)^{-1} \ind_{[-T',T']} T^{i,j}_{11}(\tilde{f}_{k_{1}},\tilde{g}_{k_{2}}, \tilde{h}_{k_{3}} ) \Vert_{Z_{k}} 
\\
& \lesssim 2^{\sigma k_{1}} \Vert f_{k_{1}} \Vert_{Z_{k_{1}}} 2^{k_{2} \frac{n-2}{2}} \Vert g_{k_{2}} \Vert_{Z_{k_{2}}} 2^{k_{3} \frac{n-2}{2}} \Vert h_{k_{3}} \Vert_{Z_{k_{3}}}
\\
& + 2^{\frac{n-2}{2} k_{1}} \Vert f_{k_{1}} \Vert_{Z_{k_{1}}} 2^{k_{2} \sigma} \Vert g_{k_{2}} \Vert_{Z_{k_{2}}} 2^{k_{3} \frac{n-2}{2}} \Vert h_{k_{3}} \Vert_{Z_{k_{3}}}
\\
& +2^{\frac{n-2}{2} k_{1}} \Vert f_{k_{1}} \Vert_{Z_{k_{1}}} 2^{k_{2} \frac{n-2}{2}} \Vert g_{k_{2}} \Vert_{Z_{k_{2}}} 2^{k_{3} \sigma} \Vert h_{k_{3}} \Vert_{Z_{k_{3}}}.
\end{split}
\end{equation}
To that end, we consider several cases depending on which function has the highest frequency 
\\
\textbf{Case 1:} If $ \max\{ k_{1},k_{2},k_{3} \} = k_{1}$. Then in view of \eqref{multilpierlemma} we may assume that $ f_{k_{1}}$ has Fourier transform supported in the set $ \{ \langle \xi, e' \rangle \geq 2^{k_{1}-c} \}$ for some $ e' \in \mathscr{E}$. To prove \eqref{lasttri177} we consider two sub-cases
\\
\textbf{Case 1a:} If $ 2^{k_{2}} \approx 2^{k_{3}}$. Then in this case we write 
\begin{equation*}
\begin{split}
T^{i,j}_{11} (\tilde{f}_{k_{1}},\tilde{g}_{k_{2}},\tilde{h}_{k_{3}})(x,t) =\sum_{r \leq k_{1}+C} \nabla^{2s-1}_{i}  \tilde{f}_{k_{1}} \Delta_{r} \left( |\nabla|^{-1} R_{j} ( \tilde{g}_{k_{2}} \tilde{h}_{k_{3}} \right).
\end{split}
\end{equation*}
Fix $ r \in [k_{2}-C,k_{2}+C]$, then by the local smoothing, and maximal estimate we obtain
\begin{equation}
\begin{split}
& \Vert \Delta_{k} ( \i \partial_{t} - ( -\Delta)^{s} +\i)^{-1} \nabla^{2s-1}_{i} \tilde{f}_{k_{1}} \Delta_{r} \left( |\nabla|^{-1} R_{j} ( \tilde{g}_{k_{2}} \tilde{h}_{k_{3}} \right)  \Vert_{Z_{k}}
\\
& \lesssim 2^{-k(2s-1)/2} \max_{e \in \mathscr{E}} \Vert \nabla^{2s-1}_{i} \tilde{f}_{k_{1}} \Delta_{r} \left( |\nabla|^{-1} R_{j} ( \tilde{g}_{k_{2}} \tilde{h}_{k_{3}} \right) \Vert_{L^{1}_{e}L^{2}_{e^{\perp},t}}
\\
& \overset{\eqref{multimultiplier}}{\lesssim} 2^{-k(2s-1)/2} 2^{k_{1}(2s-1)} \Vert \tilde{f}_{k_{1}} \Vert_{L^{\infty}_{e'}L^{2}_{e'^{\perp},t}} 2^{-k_{2}} \Vert \tilde{g}_{k_2} \Vert_{L^{2}_{e'}L^{\infty}_{e'^{\perp},t}} \max_{e \in \mathscr{E}} \Vert \tilde{h}_{k_{3}} \Vert_{L^{2}_{e}L^{\infty}_{e^{\perp},t}}
\\
& \overset{\eqref{local_smoothing},\eqref{maximal_Estimate}}{\lesssim} 2^{(k_{1}-k)(2s-1)/2} \Vert f_{k_{1}} \Vert_{Z_{k_{1}}} 2^{k_{2} \frac{n-2}{2}} \Vert g_{k_{2}} \Vert_{Z_{k_{2}}} 2^{k_{3} \frac{n-2}{2}} \Vert h_{k_{3}} \Vert_{Z_{k_{3}}},
\end{split}
\end{equation}
multiplying by $ 2^{\sigma k}$ and summing over $ k \leq k_{1}+C$ yields 
\begin{equation}\label{ccc177}
\begin{split}
& \sum_{k \leq k_{1}+C} 2^{k \sigma} \Vert \Delta_{k} ( \i \partial_{t} - ( -\Delta)^{s} +\i)^{-1} \ind_{[-T',T']} \nabla^{2s-1}_{i} \tilde{f}_{k_{1}} \Delta_{r} \left( |\nabla|^{-1} R_{j} ( \tilde{g}_{k_{2}} \tilde{h}_{k_{3}} \right) \Vert_{Z_{k}}
\\
& \lesssim 2^{k_{1} \sigma}\Vert f_{k_{1}} \Vert_{Z_{k_{1}}} 2^{k_{2} \frac{n-2}{2}} \Vert g_{k_{2}} \Vert_{Z_{k_{2}}} 2^{k_{3} \frac{n-2}{2}} \Vert h_{k_{3}} \Vert_{Z_{k_{3}}}
\end{split}
\end{equation}
for any $ r\in [k_{2}-C,k_{2}+C]$. 
Next, if $ r \leq k_{2}-C$ for sufficiently large $C$ then we estimate as follows; using \eqref{maximalestcardinality}, \eqref{multimultiplier} we obtain 
\begin{equation}\label{jbv77}
\begin{split}
& \Vert \Delta_{k} ( \i \partial_{t} - ( -\Delta)^{s} +\i)^{-1} \ind_{[-T',T']} \nabla^{2s-1}_{i} \tilde{f}_{k_{1}} \Delta_{r} \left( |\nabla|^{-1} R_{j} ( \tilde{g}_{k_{2}} \tilde{h}_{k_{3}} \right) \Vert_{Z_{k}}
\\
& \lesssim 2^{-k_{1}(2s-1)/2}  \Vert \nabla^{2s-1}_{i} \tilde{f}_{k_{1}} \Delta_{r} \left( |\nabla|^{-1} R_{j} ( \tilde{g}_{k_{2}} \tilde{h}_{k_{3}} \right) \Vert_{L^{1}_{e'}L^{2}_{e'^{\perp},t}}
\\
& \overset{\eqref{multimultiplier}, \eqref{maximalestcardinality}}{\lesssim} 2^{k_{1}(2s-1)/2} 2^{-r} \Vert \tilde{f}_{k_{1}} \Vert_{L^{\infty}_{e'}L^{2}_{e'^{\perp},t}} 
\\
& \times \left( \sum_{\tn \in 2^{r} \Z^{n}} \Vert P_{r,\tn} \tilde{g}_{k_{2}} \Vert^{2}_{L^{2}_{e'}L^{\infty}_{e'^{\perp},t}} \right)^{1/2} \left( \sum_{\tn \in 2^{r} \Z^{n}} \Vert P_{r,\tn} \tilde{h}_{k_{3}} \Vert^{2}_{L^{2}_{e'}L^{\infty}_{e'^{\perp},t}} \right)^{1/2}
\\
& \overset{\eqref{local_smoothing},\eqref{eq:max:Xkest:loc}}{\lesssim} \Vert f_{k_{1}} \Vert_{Z_{k_{1}}} 2^{r(n-3)} 2^{k_{2}} (1+|k_{2}-r|)^{2} \Vert g_{k_{2}} \Vert_{Z_{k_{2}}} \Vert h_{k_{3}}\Vert_{Z_{k_{3}}},
\end{split}
\end{equation}
summing over $r \leq k_{2}-C $ we obtain
\begin{equation}\label{ccc277}
\begin{split}
& \sum_{r \leq k_{2}-C} \Vert \Delta_{k} ( \i \partial_{t} - ( -\Delta)^{s} +\i)^{-1} \ind_{[-T',T']} \nabla^{2s-1}_{i} \tilde{f}_{k_{1}} \Delta_{r} \left( |\nabla|^{-1} R_{j} ( \tilde{g}_{k_{2}} \tilde{h}_{k_{3}} \right) \Vert_{Z_{k}}
\\
& \lesssim \Vert f_{k_{1}} \Vert_{Z_{k_{1}}} 2^{k_{2} \frac{n-2}{2}} \Vert g_{k_{2}} \Vert_{Z_{k_{2}}} 2^{k_{3} \frac{n-2}{2}} \Vert h_{k_{3}} \Vert_{Z_{k_{3}}}.
\end{split}
\end{equation}
Combining \eqref{ccc177}, and \eqref{ccc277} we obtain \eqref{lasttri177} in this case. 
\\
\textbf{Case 1b.} If $2^{\max \{ k_{2} , k_{3} \} } \gg 2^{\min \{ k_{2},k_{3} \}}$. Then in this case we use \eqref{multimultiplier} to obtain 
\begin{equation*}
\begin{split}
& \Vert \Delta_{k} ( \i \partial_{t} - ( - \Delta)^{s} + \i)^{-1} \ind_{[-T',T']} T^{i,j}_{11}(\tilde{f}_{k_{1}},\tilde{g}_{k_{2}},\tilde{h}_{k_{3}}) \Vert_{Z_{k}}
\\
& \lesssim 2^{-k(2s-1)/2} \max_{e \in \mathscr{E}} \Vert T^{i,j}_{11}(\tilde{f}_{k_{1}},\tilde{g}_{k_{2}},\tilde{h}_{k_{3}}) \Vert_{L^{1}_{e} L^{2}_{e^{\perp},t}}
\\
& \overset{\eqref{multimultiplier}}{\lesssim} 2^{-k(2s-1)/2} 2^{k_{1}(2s-1)} 2^{- \max \{ k_{2},k_{3} \} } \Vert \tilde{f}_{k_{1}} \Vert_{L^{\infty}_{e'} L^{2}_{e'^{\perp},t}} \Vert \tilde{g}_{k_{2}} \Vert_{L^{2}_{e'} L^{\infty}_{e'^{\perp},t}} \max_{e \in \mathscr{E}} \Vert \tilde{h}_{k_{3}} \Vert_{L^{2}_{e}L^{\infty}_{e^{\perp},t}}
\\
& \overset{\eqref{local_smoothing},\eqref{maximal_Estimate}}{\lesssim} 2^{(k_{1}-k)(2s-1)/2} \Vert f_{k_{1}} \Vert_{Z_{k_{1}}} 2^{k_{2} \frac{n-2}{2}} \Vert g_{k_{2}} \Vert_{Z_{k_{2}}} 2^{k_{3} \frac{n-2}{2}} \Vert h_{k_{3}} \Vert_{Z_{k_{3}}}.
\end{split}
\end{equation*}
This proves the result in this case. 
\\
\textbf{Case 2:} If $ \max \{ k_{1},k_{2},k_{3} \} \in \{ k_{2} ,k_{3} \}.$ By symmetry, we may assume without loss of generality that $ \max \{ k_{1},k_{2},k_{3} \} = k_{2}$. In view of \eqref{multilpierlemma} we may assume that $ g_{k_{2}}$ has Fourier transform supported in the set $ \{ \langle \xi, e' \rangle \geq 2^{k_{2} -c} \}$. We consider two sub-cases
\\
\textbf{Case 2a:} If $ 2^{k_{3}} \ll 2^{k_{2}}$ then we obtain using the maximal estimate, smoothing estimate, and \eqref{multimultiplier} the following
\begin{equation*}
\begin{split}
& \Vert \Delta_{k} ( \i \partial_{t} - ( - \Delta)^{s} + \i)^{-1} \ind_{[-T',T']} T^{i,j}_{11}(\tilde{f}_{k_{1}},\tilde{g}_{k_{2}},\tilde{h}_{k_{3}}) \Vert_{Z_{k}}
\\
& \lesssim 2^{-k(2s-1)/2}  \max_{e \in \mathscr{E}} \Vert T^{i,j}_{11}(\tilde{f}_{k_{1}},\tilde{g}_{k_{2}},\tilde{h}_{k_{3}}) \Vert_{L^{1}_{e} L^{2}_{e^{\perp},t}}
\\
& \overset{\eqref{multimultiplier}}{\lesssim} 2^{-k(2s-1)/2} 2^{k_{2}(2s-1)} 2^{-k_{2}} \Vert \tilde{g}_{k_{2}} \Vert_{L^{\infty}_{e'} L^{2}_{e'^{\perp},t}} \Vert \tilde{f}_{k_{1}} \Vert_{L^{2}_{e'}L^{\infty}_{e'^{\perp},t}} \max_{e \in \mathscr{E}} \Vert \tilde{h}_{k_{3}} \Vert_{L^{2}_{e} L^{\infty}_{e^{\perp},t}}
\\
& \overset{\eqref{local_smoothing},\eqref{maximal_Estimate}}{\lesssim} 2^{(k_{2}-k)(2s-1)/2} \Vert g_{k_{2}} \Vert_{Z_{k_{2}}} 2^{k_{1} \frac{n-2}{2}} \Vert f_{k_{1}} \Vert_{Z_{k_{1}}} 2^{k_{3} \frac{n-2}{2}} \Vert h_{k_{3}} \Vert_{Z_{k_{3}}}.
\end{split}
\end{equation*}
This readily implies the result in this case.
\\
\textbf{Case 2b:} If $ 2^{k_{2}} \approx 2^{k_{3}}$. Then we write 
\begin{equation*}
\begin{split}
T^{i,j}_{11} (\tilde{f}_{k_{1}},\tilde{g}_{k_{2}},\tilde{h}_{k_{3}})(x,t) =\sum_{r \leq k_{2}+C} \nabla^{2s-1}_{i} \tilde{f}_{k_{1}} \Delta_{r} \left( |\nabla|^{-1} R_{j} ( \tilde{g}_{k_{2}} \tilde{h}_{k_{3}} \right).
\end{split}
\end{equation*}
And it suffice to prove
\begin{equation}\label{iuop77}
\begin{split}
& \sum_{k \leq k_{2} +C} 2^{k \sigma} \sum_{r \leq k_{2} +C} \Vert \Delta_{k} ( \i \partial_{t} - (-\Delta)^{s} +\i)^{-1} \ind_{[-T',T']} \left( \nabla^{2s-1}_{i} \tilde{f}_{k_{1}} \Delta_{r} \left( |\nabla|^{-1} R_{j} ( \tilde{g}_{k_{2}} \tilde{h}_{k_{3}} \right) \right) \Vert_{Z_{k}}
\\
& \lesssim 2^{\sigma k_{1}} \Vert f_{k_{1}} \Vert_{Z_{k_{1}}} 2^{k_{2} \frac{n-2}{2}}\Vert g_{k_{2}}\Vert_{Z_{k_{2}}} 2^{k_{3} \frac{n-2}{2}} \Vert h_{k_{3}} \Vert_{Z_{k_{3}}}
\\
&+ 2^{ k_{1} \frac{n-2}{2}} \Vert f_{k_{1}} \Vert_{Z_{k_{1}}} 2^{k_{2} \sigma }\Vert g_{k_{2}}\Vert_{Z_{k_{2}}} 2^{k_{3} \frac{n-2}{2}} \Vert h_{k_{3}} \Vert_{Z_{k_{3}}}
\end{split}
\end{equation}
Fix $ r \leq k_{1}-C$ for sufficiently large $C$. Then the sum in $k$ on the left hand side of \eqref{iuop77} runs only for when $ 2^{k} \approx 2^{k_{1}}$.  Hence, we obtain from \eqref{jbv77} 
\begin{equation}\label{ccc2277}
\begin{split}
& \sum_{k \leq k_{2} +C} 2^{k\sigma} \sum_{r \leq k_{1}-C} \Vert \Delta_{k} ( \i \partial_{t} - ( -\Delta)^{s} +\i)^{-1}\ind_{[-T',T']} \nabla^{2s-1}_{i} \tilde{f}_{k_{1}} \Delta_{r} \left( |\nabla|^{-1} R_{j} ( \tilde{g}_{k_{2}} \tilde{h}_{k_{3}} )\right) \Vert_{Z_{k}}
\\
& \lesssim 2^{\sigma k_{1}} \Vert f_{k_{1}} \Vert_{Z_{k_{1}}} 2^{k_{2} \frac{n-2}{2}} \Vert g_{k_{2}} \Vert_{Z_{k_{2}}} 2^{k_{3} \frac{n-2}{2}} \Vert h_{k_{3}} \Vert_{Z_{k_{3}}}.
\end{split}
\end{equation}
Next, if $ r \in [ k_{1}-C,k_{2} +C]$ then the sum in $k$ on the left hand side of \eqref{iuop77} runs only up to $r+C$. And hence we obtain
\begin{equation*}
\begin{split}
& \Vert \Delta_{k} ( \i \partial_{t} - ( -\Delta)^{s} +\i)^{-1} \nabla^{2s-1}_{i} \tilde{f}_{k_{1}} \Delta_{r} \left( |\nabla|^{-1} R_{j} ( \tilde{g}_{k_{2}} \tilde{h}_{k_{3}}) \right) \Vert_{Z_{k}}
\\
& \lesssim 2^{-k(2s-1)/2}  \max_{e \in \mathscr{E}} \Vert 
\nabla^{2s-1}_{i} \tilde{f}_{k_{1}} \Delta_{r} \left( |\nabla|^{-1} R_{j} ( \tilde{g}_{k_{2}} \tilde{h}_{k_{3}}) \right) 
 \Vert_{L^{1}_{e} L^{2}_{e^{\perp},t}}
\\
& \lesssim 2^{-k(2s-1)/2} 2^{-r} 2^{k_{2}(2s-1)} 
\Vert \tilde{g}_{k_{2}} \Vert_{L^{\infty}_{e'} L^{2}_{e'^{\perp},t}} \Vert \tilde{f}_{k_{1}} \Vert_{L^{2}_{e'}L^{\infty}_{e'^{\perp},t}} \max_{e \in \mathscr{E}} \Vert \tilde{h}_{k_{3}} \Vert_{L^{2}_{e} L^{\infty}_{e^{\perp},t}}
\\
& \lesssim 2^{(k_{2}-k)(2s-1)/2} 2^{-r}\Vert g_{k_{2}} \Vert_{Z_{k_{2}}} 2^{k_{1} \frac{n-1}{2}} \Vert f_{k_{1}} \Vert_{Z_{k_{1}}} 2^{k_{3} \frac{n-1}{2}} \Vert h_{k_{3}} \Vert_{Z_{k_{3}}}
\end{split}
\end{equation*}
multiplying by $2^{k \sigma}$ and summing over $ k \leq r+C$ and then summing over $ r \in [k_{1}-C , k_{2} +C] \cap \Z $ we obtain
\begin{equation}\label{xcc377}
\begin{split}
& \sum_{k \leq k_{2}+C} 2^{k \sigma} \sum_{k_{1}-C \leq  r \leq k_{2}+C} \Vert \Delta_{k} ( \i \partial_{t} - ( -\Delta)^{s} +\i)^{-1} \ind_{[-T',T']} \nabla^{2s-1}_{i} \tilde{f}_{k_{1}} \Delta_{r} \left( |\nabla|^{-1} R_{j} ( \tilde{g}_{k_{2}} \tilde{h}_{k_{3}}) \right) \Vert_{Z_{k}}
\\
& \lesssim 2^{ k_{1} \frac{n-2}{2}} \Vert f_{k_{1}} \Vert_{Z_{k_{1}}} 2^{k_{2} \sigma }\Vert g_{k_{2}}\Vert_{Z_{k_{2}}} 2^{k_{3} \frac{n-2}{2}} \Vert h_{k_{3}} \Vert_{Z_{k_{3}}}.
\end{split}
\end{equation}
Combining \eqref{ccc2277}, and \eqref{xcc377} yields \eqref{iuop77}.

\end{proof}

In view of Proposition \ref{lasttrilinearguy}, Proposition \ref{lasttrilinearguyzzz} and Corollary \ref{algebraforftilde} we obtain the following 
\begin{proposition}\label{lstnonest}
 Let $ n \geq 4$, $ s\in (1/2,1)$, $ \sigma \geq \frac{n-2}{2}$ and $T' \in [0,T]$. Then for any $ i,j \in \{1,...,n \}$ we have the following 
 \begin{equation*}
\begin{split}
& \Vert V_{0} \langle e_{1} , \partial_{i} e_{2} \rangle \Vert_{N^{\sigma}(T')} + \Vert \nabla^{2s-1}_{j}( V_{i} \langle e_{1} , \partial_{j} e_{2} )\rangle \Vert_{N^{\sigma}(T')} + \Vert \nabla^{2s-1}_{j}( V_{j} \langle e_{1} , \partial_{i} e_{2} )\rangle \Vert_{N^{\sigma}(T')}
\\
& \lesssim \Vert V_{x} \Vert_{\tilde{F}^{\sigma}(T')} \Vert V_{x} \Vert_{\tilde{F}^{\frac{n-2}{2}}(T')}   \left( \Vert V_{x}\Vert_{\tilde{F}^{\frac{n-2}{2}}(T')} +E^{2} + E  \right)
\\
& + \Vert \vert \nabla \vert e_{1}\Vert_{\tilde{F}^{\sigma}(T')} \Vert V_{x} \Vert^{2}_{\tilde{F}^{\frac{n-2}{2}}(T')} \left(  1 +  \Vert V_{x} \Vert_{\tilde{F}^{\frac{n-2}{2}}(T')}  +   E \right) 
\\
& + \Vert \vert \nabla \vert e_{2}\Vert_{\tilde{F}^{\sigma}(T')}  \Vert V_{x} \Vert^{2}_{\tilde{F}^{\frac{n-2}{2}}(T')} \left(  1 + \Vert V_{x} \Vert_{\tilde{F}^{\frac{n-2}{2}}(T')}+   E \right).
\end{split}
 \end{equation*}
 Where we define $ E: =  \Vert |\nabla| e_{1} \Vert_{\tilde{F}^{\frac{n-2}{2}}(T')} + \Vert |\nabla| e_{2} \Vert_{\tilde{F}^{\frac{n-2}{2}}(T')}.$
\end{proposition}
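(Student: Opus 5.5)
The plan is to express every coefficient $\langle e_1,\partial_i e_2\rangle$ through Lemma \ref{ceoffidentities} and then reduce each of the three terms to the trilinear operators $T^{i,j}_{10}$, $T^{i,j}_{11}$ of Propositions \ref{lasttrilinearguy}, \ref{lasttrilinearguyzzz}, or to $T^{i,j}_{5}$ of Proposition \ref{TriTij5}. By Lemma \ref{ceoffidentities}, $\langle e_1,\partial_j e_2\rangle = -A_j = c\sum_m |\nabla|^{-1}R_m(\Re(\i\bar V_m V_j))$. Hence
\[
\nabla^{2s-1}_j\bigl(V_i\langle e_1,\partial_j e_2\rangle\bigr)=c\sum_m T^{j,m}_{10}(V_i,\bar V_m,V_j)+\text{(conjugate variant)}.
\]
Since $\Re(\i\bar V_m V_j)$ is a combination of $\bar V_m V_j$ and $V_m\bar V_j$, Proposition \ref{lasttrilinearguy} applies with $\tilde f\in\{f,\bar f\}$ entries. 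I will take the extensions $E_{T'}$ of $\ind_{[-T',T']}V_x$ and pass to the infimum, which gives the bound $\|V_x\|_{\tilde F^\sigma(T')}\|V_x\|^2_{\tilde F^{\frac{n-2}{2}}(T')}$. This is the ``1'' contribution in the parentheses. The term $\nabla^{2s-1}_j(V_j\langle e_1,\partial_i e_2\rangle)$ is handled identically.

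For $V_0\langle e_1,\partial_i e_2\rangle$, I write $V_0\cdot c\sum_m|\nabla|^{-1}R_m(\Re(\i\bar V_m V_i))$ and substitute the formula \eqref{eqq6} for $V_0$. This gives $V_0=\i\sum_l\nabla^{2s-1}_l V_l+{}$commutator terms of the form $\langle e_a,[\nabla^{2s-1}_l,e_b](\Re V_l \text{ or } \Im V_l)\rangle$. The principal part gives $\sum_{l,m}T^{l,m}_{11}(V_l,\bar V_m,V_i)$, which Proposition \ref{lasttrilinearguyzzz} controls by $\|V_x\|_{\tilde F^\sigma}\|V_x\|^2_{\tilde F^{\frac{n-2}{2}}}$.

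For the commutator parts, I follow the strategy used after Proposition \ref{TriTij5}. I replace $e_a$ by $(e_a-Q_a)+Q_a$, using Lemma \ref{eintegra}; the constant $Q_a$ drops out of the commutator. I then write $e_a-Q_a=|\nabla|^{-1}(|\nabla|e_a)$. The commutator then expands into expressions such as
\[
|\nabla|^{-1}(|\nabla|e_a)\,\nabla^{2s-1}_l\bigl(|\nabla|^{-1}(|\nabla|e_b)\,V_l\bigr),
\qquad
|\nabla|^{-1}(|\nabla|e_a)\,|\nabla|^{-1}(|\nabla|e_b)\,\nabla^{2s-1}_l V_l,
\]
plus terms with $Q_a$ paired against a single $e_b-Q_b$. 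Each product of the form $|\nabla|^{-1}(\cdot)\cdot(\cdot)\cdot\bar V_m V_i$ sitting outside $\nabla^{2s-1}$ I group as the function $f$ in $T_{11}$ (or rearrange into $T_{10}$ form). That grouped factor is then estimated in $\tilde F^{\frac{n-2}{2}}$ and $\tilde F^\sigma$ via Corollary \ref{algebraforftilde}, specifically \eqref{eqalgebraic1}, \eqref{eqalgebraic3} and \eqref{eqalgebraic4}, exactly as in the $T_6,T_7$ corollaries. The terms with one $e$-factor produce the $E$ and $\|\,|\nabla|e_b\|_{\tilde F^\sigma}\|V_x\|^2$ contributions; the terms with two $e$-factors produce $E^2$ and the $\|V_x\|E$ cross terms.

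The main obstacle is that $\nabla^{2s-1}_l$ may act on a product such as $|\nabla|^{-1}(|\nabla|e_b)V_l$ rather than on a single function. Here I would try to regroup as $T^{i,j}_{11}(F,\bar V_m,V_i)$ with $F=|\nabla|^{-1}(|\nabla|e_b)V_l$, multiplied by the extra factor $|\nabla|^{-1}(|\nabla|e_a)$. I would absorb that extra factor by applying the algebra property \eqref{eqalgebraic3} at the level of the resolution spaces after bounding $T_{11}$ in $N^\sigma$. If this is not directly legitimate, because $N^\sigma$ is not an $F$-module, I would instead treat the extra low-regularity factor as in Proposition \ref{TriTij5}, placing it in $L^\infty_{t,x}$, which is controlled by $\tilde F^{\frac{n}{2}}$ through the maximal estimate \eqref{maximal_Estimate}. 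Collecting all terms yields the stated bound.
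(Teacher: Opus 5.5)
Your handling of $\nabla^{2s-1}_j(V_i\langle e_1,\partial_je_2\rangle)$ and $\nabla^{2s-1}_j(V_j\langle e_1,\partial_ie_2\rangle)$ via $T_{10}$ matches the paper. So does your treatment of the principal part $\i\sum_l\nabla^{2s-1}_lV_l$ of $V_0$ via $T_{11}$. The same holds for the terms $\nabla^{2s-1}_l\bigl(|\nabla|^{-1}(|\nabla|e_b)V_l\bigr)\,|\nabla|^{-1}R_m(\bar V_mV_i)$, handled as $T_{11}(F,\bar V_m,V_i)$ with $F=|\nabla|^{-1}(|\nabla|e_b)V_l$.

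The gap is in the remaining commutator terms. These carry one or two extra undifferentiated factors besides the outer $|\nabla|^{-1}R_m(\bar V_mV_i)$, for example
\[
\nabla^{2s-1}_l\bigl(|\nabla|^{-1}(|\nabla|e_b)V_l\bigr)\,|\nabla|^{-1}(|\nabla|e_a)\,|\nabla|^{-1}R_m(\bar V_mV_i),\qquad (\nabla^{2s-1}_lV_l)\,|\nabla|^{-1}(|\nabla|e_a)\,|\nabla|^{-1}R_m(\bar V_mV_i),
\]
and the version of the second with two $e$-factors. None of your three suggestions handles them.
\begin{itemize}
\item Grouping the outside product ``as the function $f$ in $T_{11}$'' uses the wrong slot. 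In $T^{i,j}_{11}(f,g,h)=(\nabla^{2s-1}_if)\,|\nabla|^{-1}R_j(gh)$ the outer factor is rigidly $|\nabla|^{-1}R_j(gh)$, and $f$ is the function under $\nabla^{2s-1}$.
\item Bounding $T_{11}$ in $N^\sigma$ and then multiplying by $Y=|\nabla|^{-1}(|\nabla|e_a)$ needs $\|YW\|_{N^\sigma}\lesssim\|Y\|_{\tilde F^{\frac n2}}\|W\|_{N^\sigma}$. This is proved nowhere (Corollary \ref{algebraforftilde} concerns only $F$ and $\tilde F$), and it is not routine because of the $Y^e_{k,k'}$ components.
\item Placing $Y$ in $L^\infty_{t,x}$ fails when $Y$ carries the top frequency. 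If $\Delta_k|\nabla|e_a$ lies far above the $V$'s, then $\||\nabla|^{-1}\Delta_k|\nabla|e_a\|_{L^\infty_{t,x}}\lesssim 2^{k\frac{n-2}{2}}\|\Delta_k|\nabla|e_a\|_{Z_k}$. After \eqref{passingtol1l2} an unbounded factor $2^{k\frac{n-1-2s}{2}}$ remains. That factor would have to go into the local smoothing space, which means a new quintilinear case analysis that Proposition \ref{TriTij5} does not supply.
\end{itemize}

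The missing idea, which is the paper's argument, is to move the extra factors into the differentiated slot by un-differentiating. Let $X\in\{V_l,\ |\nabla|^{-1}(|\nabla|e_b)V_l\}$, let $Y$ be the product of the extra $|\nabla|^{-1}(|\nabla|e_a)$ factors, and write $(\nabla^{2s-1}_lX)\,Y=|\nabla|^{2s-1}G$ with $G:=|\nabla|^{1-2s}\bigl[(\nabla^{2s-1}_lX)\,Y\bigr]$. The term becomes $(|\nabla|^{2s-1}G)\,|\nabla|^{-1}R_m(\bar V_mV_i)$, which Proposition \ref{lasttrilinearguyzzz} controls; its proof uses only the size of the symbol, so $|\nabla|^{2s-1}$ in place of $\nabla^{2s-1}_i$ is harmless. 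Then $G$ is estimated purely by the algebra properties:
\begin{itemize}
\item $\|G\|_{\tilde F^{\frac{n-2}{2}}}\approx\|(\nabla^{2s-1}_lX)Y\|_{\tilde F^{\frac{n-4s}{2}}}\lesssim\|Y\|_{\tilde F^{\frac n2}}\|X\|_{\tilde F^{\frac{n-2}{2}}}$ by \eqref{eqalgebraic1}. This is admissible because $0<\frac{n-4s}{2}\le\frac n2$ for $n\ge4$.
\item $\|G\|_{\tilde F^\sigma}\approx\|(\nabla^{2s-1}_lX)Y\|_{\tilde F^{\sigma-2s+1}}$ is handled by \eqref{eqalgebraic6} with $\epsilon=2s$, together with the remaining parts of Corollary \ref{algebraforftilde}.
\end{itemize}
This needs neither a module property of $N^\sigma$ nor any new frequency analysis.
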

\begin{proof}
This is an immediate consequence of Proposition \ref{lasttrilinearguy}, Proposition \ref{lasttrilinearguyzzz}, Corollary \ref{algebraforftilde}, and Lemma \ref{ceoffidentities}. Indeed, using Lemma \ref{ceoffidentities} we have that 
$
\nabla^{2s-1}_{j}( V_{i} \langle e_{1} , \partial_{j} e_{2} )
$
consists of terms of the form 
$$
\nabla^{2s-1}_{j} \left( V_{i} | \nabla|^{-1} R_{m} ( \Re ( \i \bar{V}_{m} V_{j} ) ) \right)   ~~ \text{  , } m \in \{1,..,n \}.
$$
And hence using Proposition \ref{lasttrilinearguy} we obtain 
\begin{equation*}
\begin{split}
& \Vert \nabla^{2s-1}_{j}( V_{i} \langle e_{1} , \partial_{j} e_{2} )\rangle \Vert_{N^{\sigma}(T')}
\\
& \lesssim \Vert V_{x} \Vert_{\tilde{F}^{\sigma}(T')} \Vert V_{x} \Vert_{\tilde{F}^{\frac{n-2}{2}}(T')}   \left( \Vert V_{x}\Vert_{\tilde{F}^{\frac{n-2}{2}}(T')} +E^{2} + E  \right)
\\
& + \Vert \vert \nabla \vert e_{1}\Vert_{\tilde{F}^{\sigma}(T')} \Vert V_{x} \Vert^{2}_{\tilde{F}^{\frac{n-2}{2}}(T')} \left(  1 +  \Vert V_{x} \Vert_{\tilde{F}^{\frac{n-2}{2}}(T')}  +   E \right) 
\\
& + \Vert \vert \nabla \vert e_{2}\Vert_{\tilde{F}^{\sigma}(T')}  \Vert V_{x} \Vert^{2}_{\tilde{F}^{\frac{n-2}{2}}(T')} \left(  1 + \Vert V_{x} \Vert_{\tilde{F}^{\frac{n-2}{2}}(T')}+   E \right).
\end{split}
\end{equation*}
Similarly, 
\begin{equation*}
\begin{split}
& \Vert \nabla^{2s-1}_{j}( V_{j} \langle e_{1} , \partial_{i} e_{2} )\rangle \Vert_{N^{\sigma}(T')}
\\
& \lesssim \Vert V_{x} \Vert_{\tilde{F}^{\sigma}(T')} \Vert V_{x} \Vert_{\tilde{F}^{\frac{n-2}{2}}(T')}   \left( \Vert V_{x}\Vert_{\tilde{F}^{\frac{n-2}{2}}(T')} +E^{2} + E  \right)
\\
& + \Vert \vert \nabla \vert e_{1}\Vert_{\tilde{F}^{\sigma}(T')} \Vert V_{x} \Vert^{2}_{\tilde{F}^{\frac{n-2}{2}}(T')} \left(  1 +  \Vert V_{x} \Vert_{\tilde{F}^{\frac{n-2}{2}}(T')}  +   E \right) 
\\
& + \Vert \vert \nabla \vert e_{2}\Vert_{\tilde{F}^{\sigma}(T')}  \Vert V_{x} \Vert^{2}_{\tilde{F}^{\frac{n-2}{2}}(T')} \left(  1 + \Vert V_{x} \Vert_{\tilde{F}^{\frac{n-2}{2}}(T')}+   E \right).
\end{split}
\end{equation*}
Lastly, using Lemma \ref{ceoffidentities} we get that 
$
V_{0} \langle e_{1} , \partial_{i} e_{2} \rangle 
$
consists of terms of the form 
$$
V_{0} |\nabla|^{-1} R_{m} \Re ( \i \bar{V}_{m} V_{i} ) ~~\text{  , } m \in \{1,..,n \}
$$
and for any fixed $m \in \{ 1,...,n \} $ we use the identity \eqref{eqq6}
\begin{equation}\label{tylsteqq}
\begin{split}
& V_{0} |\nabla|^{-1} R_{m} \Re ( \i \bar{V}_{m} V_{i} ) 
\\
& = \i \sum_{l=1}^{n} |\nabla|^{-1} R_{m} \Re ( \i \tilde{V}_{m} V_{i} ) \nabla^{2s-1}_{l} V_{l}  
\\
& -   |\nabla|^{-1} R_{m} \Re ( \i \bar{V}_{m} V_{i} )  \langle e_{2} , \sum_{l=1}^{n} [ \nabla^{2s-1}_{l} ,e_{1}]( \Re V_{l} )
\\ 
& -|\nabla|^{-1} R_{m} \Re ( \i \bar{V}_{m} V_{i} ) \langle e_{2} , \sum_{l=1}^{n} [ \nabla^{2s-1}_{l} ,e_{2}]( \Im V_{l} )
\\
& + \i |\nabla|^{-1} R_{m} \Re ( \i \bar{V}_{m} V_{i} ) \langle e_{1} , \sum_{l=1}^{n} [ \nabla^{2s-1}_{l} ,e_{2}]( \Im V_{l} )
\\
& + \i |\nabla|^{-1} R_{m} \Re ( \i \bar{V}_{m} V_{i} ) \langle e_{1} , \sum_{l=1}^{n} [ \nabla^{2s-1}_{l} ,e_{1}]( \Re V_{l} ).
\end{split}
\end{equation}
The first term on the right hand side of \eqref{tylsteqq} can be handled using Proposition \ref{lasttrilinearguyzzz}. Indeed, for any $l \in \{ 1,..,n \}$ we have 
$$
|\nabla|^{-1} R_{m} \Re ( \i \tilde{V}_{m} V_{i} ) \nabla^{2s-1}_{l} V_{l}= \frac{1}{2} \left( T^{l,m}_{11} (V_{l}, \i \bar{V}_{m},V_{i}) +T^{l,m}_{11} (V_{l}, -\i {V}_{m},\bar{V}_{i}) \right).
$$
and hence we obtain the nonlinear estimate for this term using Proposition \ref{lasttrilinearguyzzz}. For the remaining terms, replace $ e_{i}$ with $ e_{i} -Q_{i} +Q_{i}$ ( where $Q_{i}$ was defined in Lemma \ref{eintegra}) and then replace $ e_{i} -Q_{i} $ with $ |\nabla|^{-1} ( \vert \nabla \vert e_{i})$. Then all of the nonlinear estimates for the terms on the right hand side of \eqref{tylsteqq} are reduced to proving the following four multi-linear estimates   
\begin{equation}\label{12multi}
\begin{split}
& \Vert T^{i,j}_{12} ( \tilde{f}_{1},\tilde{f}_{2},\tilde{f}_{3},\tilde{f}_{4},\tilde{f}_{5} ) \Vert_{N^{\sigma}(T')}
\\
& \lesssim \sum_{i=1}^{5} \Vert f_{i} \Vert_{F^{\sigma}} \prod_{j\neq i} \Vert f_{j}\Vert_{F^{\frac{n-2}{2}}},
\end{split}
\end{equation}
where 
$$
T^{i,j}_{12}(f_{1},f_{2},f_{3},f_{4},f_{5}) = \nabla^{2s-1}_{i}( f_{1} |\nabla|^{-1}f_{2} ) (| \nabla|^{-1} f_{3}) |\nabla|^{-1} R_{j} ( f_{4} f_{5} ).
$$
Moreover, we need to prove 
\begin{equation}\label{13multi}
\begin{split}
& \Vert T^{i,j}_{13} ( \tilde{f}_{1},\tilde{f}_{2},\tilde{f}_{3},\tilde{f}_{4},\tilde{f}_{5} ) \Vert_{N^{\sigma}(T')}
\\
& \lesssim \sum_{i=1}^{5} \Vert f_{i} \Vert_{F^{\sigma}} \prod_{j\neq i} \Vert f_{j}\Vert_{F^{\frac{n-2}{2}}},
\end{split}
\end{equation}
where 
$$
T^{i,j}_{13}(f_{1},f_{2},f_{3},f_{4},f_{5}) = \nabla^{2s-1}_{i}(f_{1}  ) ( | \nabla|^{-1}f_{2}) (| \nabla|^{-1} f_{3}) |\nabla|^{-1} R_{j} ( f_{4} f_{5} ).
$$
Furthermore, we need 
\begin{equation}\label{14multi}
\begin{split}
& \Vert T^{i,j}_{14} ( \tilde{f}_{1},\tilde{f}_{2},\tilde{f}_{3},\tilde{f}_{4} ) \Vert_{N^{\sigma}(T')}
\\
& \lesssim \sum_{i=1}^{4} \Vert f_{i} \Vert_{F^{\sigma}} \prod_{j\neq i} \Vert f_{j}\Vert_{F^{\frac{n-2}{2}}},
\end{split}
\end{equation}
where 
$$
T^{i,j}_{14} (f_{1},f_{2},f_{3},f_{4})= \nabla^{2s-1}_{i}(f_{1} |\nabla|^{-1} f_{2} ) |\nabla|^{-1} R_{j} (f_{3}  f_{4}).
$$
And lastly 
\begin{equation}\label{15multi}
\begin{split}
& \Vert T^{i,j}_{15} ( \tilde{f}_{1},\tilde{f}_{2},\tilde{f}_{3},\tilde{f}_{4} ) \Vert_{N^{\sigma}(T')}
\\
& \lesssim \sum_{i=1}^{4} \Vert f_{i} \Vert_{F^{\sigma}} \prod_{j\neq i} \Vert f_{j}\Vert_{F^{\frac{n-2}{2}}},
\end{split}
\end{equation}
where 
$$
T^{i,j}_{15} (f_{1},f_{2},f_{3},f_{4})= \nabla^{2s-1}_{i}(f_{1} ) ( |\nabla|^{-1} f_{2}
 ) |\nabla|^{-1} R_{j} (f_{3}  f_{4}).
$$
The proofs of \eqref{12multi},\eqref{13multi}, \eqref{14multi}, and \eqref{15multi} follow from Corollary \ref{algebraforftilde}. Similar to what we did in Section 7. To illustrate we first prove \eqref{12multi} in detail. 
\\
\underline{ Proof of \eqref{12multi}:} 
write 
\begin{equation*}
\begin{split}
& \nabla^{2s-1}_{i} ( f_{1} | \nabla|^{-1} f_{2}) |\nabla|^{-1} f_{3} 
 = | \nabla|^{2s-1} \left( | \nabla|^{-(2s-1)} \left(  \nabla^{2s-1}_{i} ( f_{1} | \nabla|^{-1} f_{2}) |\nabla|^{-1} f_{3} \right) \right).
\end{split}
\end{equation*}
Then set 
$$
G : = | \nabla|^{-(2s-1)} \left(  \nabla^{2s-1}_{i} ( f_{1} | \nabla|^{-1} f_{2}) |\nabla|^{-1} f_{3} \right).
$$
Then from Proposition \ref{lasttrilinearguyzzz} we have 
\begin{equation}\label{ggg1}
\begin{split}
& \Vert T^{i,j}_{12} ( \tilde{f}_{1}, \tilde{f}_{2},\tilde{f}_{3},\tilde{f}_{4},\tilde{f}_{5} ) \Vert_{N^{\sigma}(T')}
\\
& \lesssim \Vert G \Vert_{\tilde{F}^{\sigma}} \Vert f_{4} \Vert_{F^{\frac{n-2}{2}}} \Vert f_{5} \Vert_{F^{\frac{n-2}{2}}}
\\
& + \Vert G \Vert_{\tilde{F}^{\frac{n-2}{2}}} \Vert f_{4} \Vert_{F^{\sigma}} \Vert f_{5} \Vert_{F^{\frac{n-2}{2}}}
\\
& + \Vert G \Vert_{\tilde{F}^{\frac{n-2}{2}}} \Vert f_{4} \Vert_{F^{\frac{n-2}{2}}} \Vert f_{5} \Vert_{F^{\sigma}}.
\end{split}
\end{equation}
Now using Corollary \ref{algebraforftilde} we have
\begin{equation}\label{ggg2}
\begin{split}
& \Vert G \Vert_{\tilde{F}^{\frac{n-2}{2}}}
\\
& \lesssim \Vert \nabla^{2s-1}_{i} ( f_{1} | \nabla|^{-1} f_{2}) |\nabla|^{-1} f_{3} \Vert_{F^{\frac{n-4s}{2}}}
\\
& \overset{\eqref{eqalgebraic1}}{\lesssim} \Vert f_{3} \Vert_{F^{\frac{n-2}{2}}} \Vert \nabla^{2s-1}_{i} ( f_{1} | \nabla|^{-1} f_{2} ) \Vert_{F^{\frac{n-4s}{2}}}
\\
& \lesssim \Vert f_{3} \Vert_{F^{\frac{n-2}{2}}} \Vert   f_{1} | \nabla|^{-1} f_{2}  \Vert_{F^{\frac{n-2}{2}}}
\\
& \overset{\eqref{eqalgebraic1}}{\lesssim} \Vert f_{3} \Vert_{F^{\frac{n-2}{2}}} \Vert f_{1} \Vert_{F^{\frac{n-2}{2}}} \Vert f_{2} \Vert_{F^{\frac{n-2}{2}}}.
\end{split}
\end{equation}
Similarly, we have 
\begin{equation}\label{ggg3}
\begin{split}
& \Vert G \Vert_{\tilde{F}^{\sigma}}
\\
& \lesssim \Vert \nabla^{2s-1}_{i} ( f_{1} | \nabla|^{-1} f_{2}) |\nabla|^{-1} f_{3} \Vert_{F^{\sigma - 2s+1}}
\\
& \overset{\eqref{eqalgebraic6}}{ \lesssim} \Vert \nabla^{2s-1}_{i} ( f_{1} | \nabla |^{-1} f_{2}) \Vert_{F^{\frac{n-4s}{2}}} \Vert  f_{3} \Vert_{F^{\sigma}} + \Vert f_{3} \Vert_{F^{\frac{n-2}{2}}} \Vert f_{1} | \nabla|^{-1} f_{2} \Vert_{F^{\sigma}}
\\
& \overset{\eqref{eqalgebraic4}}{\lesssim} 
\Vert f_{1} \Vert_{F^{\sigma}} \Vert f_{2} \Vert_{F^{\frac{n-2}{2}}} \Vert f_{3} \Vert_{F^{\frac{n-2}{2}}} 
\\
& + \Vert f_{1} \Vert_{F^{\frac{n-2}{2}}} \Vert f_{2} \Vert_{F^{\sigma}} \Vert f_{3} \Vert_{F^{\frac{n-2}{2}}}
\\
&+ \Vert f_{1} \Vert_{F^{\frac{n-2}{2}}} \Vert f_{2} \Vert_{F^{\frac{n-2}{2}}} \Vert f_{3} \Vert_{F^{\sigma}}
\end{split}
\end{equation}
Plugging \eqref{ggg2} and \eqref{ggg3} back into \eqref{ggg1} gives \eqref{12multi}.
\\
\underline{Proof of \eqref{13multi}:} Similar to the proof of \eqref{12multi} set
$$
G : = |\nabla|^{-2s+1} \left( \nabla^{2s-1}_{i}\tilde{f}_{1}) ( | \nabla|^{-1} f_{2} )( | \nabla|^{-1} f_{3} ) \right). 
$$
Then using Proposition \ref{lasttrilinearguyzzz} we obtain
\begin{equation}\label{ddetl1}
\begin{split}
& \Vert T^{i,j}_{13} ( \tilde{f}_{1},\tilde{f}_{2},\tilde{f}_{3}, \tilde{f}_{4},\tilde{f}_{5} ) \Vert_{N^{\sigma}(T')}
\\
& \lesssim \Vert G \Vert_{\tilde{F}^{\sigma}} \Vert f_{4} \Vert_{F^{\frac{n-2}{2}}} \Vert f_{5} \Vert_{F^{\frac{n-2}{2}}}
\\
& + \Vert G \Vert_{\tilde{F}^{\frac{n-2}{2}}} \Vert f_{4} \Vert_{F^{\sigma}} \Vert f_{5} \Vert_{F^{\frac{n-2}{2}}}
\\
& + \Vert G \Vert_{\tilde{F}^{\frac{n-2}{2}}} \Vert f_{4} \Vert_{F^{\frac{n-2}{2}}} \Vert f_{5} \Vert_{F^{\sigma}}.
\end{split}
\end{equation}
But by the algebra properties, Corollary \ref{algebraforftilde}, we have 
\begin{equation}\label{ddetl2}
\begin{split}
& \Vert G \Vert_{\tilde{F}^{\frac{n-2}{2}}}
\\
&\lesssim \Vert f_{1} \Vert_{F^{\frac{n-2}{2}}} \Vert f_{2} \Vert_{F^{\frac{n-2}{2}}} \Vert f_{3} \Vert_{F^{\frac{n-2}{2}}}.
\end{split}
\end{equation}
Similarly, 
\begin{equation}\label{ddetl3}
\begin{split}
& \Vert G \Vert_{\tilde{F}^{\sigma}}
\\
& {\lesssim} 
\Vert f_{1} \Vert_{F^{\sigma}} \Vert f_{2} \Vert_{F^{\frac{n-2}{2}}} \Vert f_{3} \Vert_{F^{\frac{n-2}{2}}} 
\\
& + \Vert f_{1} \Vert_{F^{\frac{n-2}{2}}} \Vert f_{2} \Vert_{F^{\sigma}} \Vert f_{3} \Vert_{F^{\frac{n-2}{2}}}
\\
&+ \Vert f_{1} \Vert_{F^{\frac{n-2}{2}}} \Vert f_{2} \Vert_{F^{\frac{n-2}{2}}} \Vert f_{3} \Vert_{F^{\sigma}}.
\end{split}
\end{equation}
Plugging \eqref{ddetl2}, \eqref{ddetl3} back into \eqref{ddetl1} yields \eqref{13multi}.\\
The proofs for \eqref{14multi}, and \eqref{15multi} follow almost verbatim.  
\end{proof}

\section{Proof of Theorem \ref{mainthm}}\label{s:proofofthm}
Throughout this section fix $ \sigma_{0} \geq \frac{n}{2} +10$ and $ u \in L^{\infty}( [-T,T] : B^{\sigma_{0}+1}_{Q} )$ so that $u$ is a solution of \eqref{SSchrodingereq} with initial data $ u_{0} \in B^{\sigma_{0}+1}_{Q}$, on the time interval $ [- T_{0},T_{0}]$ for some $T_{0} : = T_{0}(s, \sigma_{0} , ||u_0||_{B^{\sigma_{0}}_{Q}}) \in (0,1]$. Existence of such a solution is guaranteed by Proposition \ref{localexistencethm}. Assume further that $ \Vert u_{0} \Vert_{\dot{B}^{\frac{n}{2}}_{Q}} \leq \epsilon \ll 1$. 
\\
We recall the following nonlinear estimate which we proved in Proposition \ref{lstnonest}, Proposition \ref{timederivativenonlinearity}, Proposition \ref{trlCgauge}, Proposition  \ref{errnonlinears}, and Proposition \ref{Firstnonlinearestimate}.
\begin{lemma}\label{finalnonlinearestimates}
Let $ n \geq 4$ , $ s \in (1/2,1)$ and $ \sigma \geq \frac{n-2}{2}$. Then for any $ T' \in (0, T_{0}]$ satisfying 
$$
\vert u(x,t) - Q \vert \leq 1/2 \quad \text{ for all } (x,t) \in \R^{n} \times [-T',T']
$$
we have the following
\begin{equation*}
\begin{split}
& \Vert \mathcal{N} ( V_{x} , e_{1},e_{2} ) \Vert_{N^{\sigma}(T')}
\\
& \lesssim \Vert V_{x} \Vert_{F^{\sigma}(T')} \left( \Vert V_{x} \Vert^{2}_{F^{\frac{n-2}{2}}(T')} + \Vert V_{x} \Vert_{F^{\frac{n-2}{2}}(T')} E  + E^{2} \right)
\\
& +\Vert |\nabla |e_{1} \Vert_{\tilde{F}^{\sigma}(T')}  \Vert V_{x} \Vert_{\tilde{F}^{\frac{n-2}{2}}(T')}  \left(\Vert V_{x} \Vert^{2}_{\tilde{F}^{\frac{n-2}{2}}(T')}+ \Vert V_{x} \Vert_{\tilde{F}^{\frac{n-2}{2}}(T')} + ( 1 + \Vert V_{x} \Vert_{\tilde{F}^{\frac{n-2}{2}}(T')})   E +  E^{2} \right).
\\
&+ \Vert |\nabla |e_{2} \Vert_{\tilde{F}^{\sigma}(T')}  \Vert V_{x} \Vert_{\tilde{F}^{\frac{n-2}{2}}(T')}  \left(\Vert V_{x} \Vert^{2}_{\tilde{F}^{\frac{n-2}{2}}(T')}+ \Vert V_{x} \Vert_{\tilde{F}^{\frac{n-2}{2}}(T')} + ( 1 + \Vert V_{x} \Vert_{\tilde{F}^{\frac{n-2}{2}}(T')})   E +  E^{2} \right),
\end{split}
\end{equation*}
where $ E : = \Vert |\nabla| e_{1} \Vert_{\tilde{F}^{\frac{n-2}{2}}(T')} + \Vert |\nabla| e_{2} \Vert_{\tilde{F}^{\frac{n-2}{2}}(T')}$.
\end{lemma}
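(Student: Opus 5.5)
The plan is to assemble the bound term by term from the right-hand side of \eqref{recudedSchro}, since $\mathcal{N}(V_x,e_1,e_2)$ is by Proposition \ref{modifiedsschrodeq} a finite sum of eight groups of terms. First fix $T'\in(0,T_0]$ with $|u-Q|\le 1/2$ on $[-T',T']$. This guarantees that the stereographic projection $f=L(u)$ is bounded on $[-T',T']$, so Lemma \ref{thetainteg}, Proposition \ref{controlvu} and Lemma \ref{eintegra} apply. In particular $e_i-Q_i\in C([-T',T'];B^{\sigma}_{2,1})$, so each $e_i$ may be replaced by $Q_i+|\nabla|^{-1}(|\nabla|e_i)$ inside all commutators. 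The constant parts $Q_i$ drop out of every commutator $[\nabla^{2s-1}_j,\cdot]$ and of every $H$-type expression. The $N^\sigma(T')$ norm is a norm of $\ind_{[-T',T']}\cdot$, so the triangle inequality reduces matters to bounding each group separately.

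Then I would match groups to earlier propositions. The first two terms $\partial_i\langle e_k,[\nabla^{2s-1}_j,e_k](\cdot)\rangle$ with $k=2,1$ are handled by Proposition \ref{Firstnonlinearestimate}, with $f=\langle\partial_j u,e_k\rangle$. This $f$ is $\Re V_j$ or $\Im V_j$, whose $\tilde F$ norms are bounded by those of $V_x$ because $\tilde F$ is conjugation invariant. The result is $\|V_x\|_{\tilde F^\sigma}E^2+\|V_x\|_{\tilde F^{\frac{n-2}{2}}}E\,(\||\nabla|e_1\|_{\tilde F^\sigma}+\||\nabla|e_2\|_{\tilde F^\sigma})$. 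The $\mathscr H$ term is handled by Proposition \ref{errnonlinears}, which gives the same shape. The fourth term uses the Coulomb gauge and is handled by Proposition \ref{trlCgauge}, again with the same trilinear shape. The time-derivative term $\i V_i\langle\partial_te_2,e_1\rangle$ is covered by Proposition \ref{timederivativenonlinearity}, and the remaining three terms by Proposition \ref{lstnonest}. These last two produce the quartic and quintic contributions $\|V_x\|_{\tilde F^\sigma}\|V_x\|_{\tilde F^{\frac{n-2}{2}}}(\|V_x\|_{\tilde F^{\frac{n-2}{2}}}+E+E^2)$ and $\||\nabla|e_i\|_{\tilde F^\sigma}\|V_x\|^2_{\tilde F^{\frac{n-2}{2}}}(1+\|V_x\|_{\tilde F^{\frac{n-2}{2}}}+E)$.

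Finally I would sum these bounds and regroup the products so that every term contains exactly one $\sigma$-norm factor. The terms carrying $\|V_x\|_{F^\sigma}$ go into the first line of the lemma, and those carrying $\||\nabla|e_i\|_{\tilde F^\sigma}$ into the second and third lines. A term like $\||\nabla|e_i\|_{\tilde F^\sigma}\|V_x\|\,E$ fits in $(1+\|V_x\|)E$, and $\||\nabla|e_i\|_{\tilde F^\sigma}\|V_x\|^2$ fits in $\|V_x\|\cdot\|V_x\|$. The term $\|V_x\|_{\tilde F^\sigma}\|V_x\|_{\tilde F^{\frac{n-2}{2}}}E^2$ is absorbed into the $E^2$ term of the first line, if necessary after using the smallness of $E$ and $\|V_x\|$ to dominate higher powers by lower ones.

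The main obstacle I expect is bookkeeping rather than analysis. Proposition \ref{Firstnonlinearestimate} produces a $\||\nabla|e\|_{\tilde F^{\frac{n-2}{2}}(T')}$ factor and an untruncated $\|f\|$ norm, which must be converted to $T'$-truncated norms by taking extensions of $\ind_{[-T',T']}f$ and infimizing. One must also check that the $F$ versus $\tilde F$ norms of $V_x$ match the statement, using conjugation invariance and $\|\cdot\|_{\tilde F}\le\|\cdot\|_F$ where needed. A further check is that the gauge identity $\sum_j\partial_j\langle e_1,\partial_je_2\rangle=0$, needed for Proposition \ref{trlCgauge}, holds on $[-T',T']$ by the choice of $\theta$.
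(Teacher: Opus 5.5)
Your approach is the same as the paper's. The paper proves the lemma by summing the bounds of Propositions \ref{Firstnonlinearestimate}, \ref{errnonlinears}, \ref{trlCgauge}, \ref{timederivativenonlinearity} and \ref{lstnonest} over the terms of \eqref{recudedSchro}. Your handling of $\Re V_j,\Im V_j$ through conjugation invariance of $\tilde F$, of $\Vert\cdot\Vert_{\tilde F}\le\Vert\cdot\Vert_{F}$, and of passing to $T'$-truncated norms via extensions covers what is needed.

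The one step that does not go through as written is absorbing the term $\Vert V_x\Vert_{\tilde F^{\sigma}(T')}\Vert V_x\Vert_{\tilde F^{\frac{n-2}{2}}(T')}E^2$, which comes from Propositions \ref{timederivativenonlinearity} and \ref{lstnonest}.
\begin{itemize}
\item This term is cubic in the low norms. Every $\Vert V_x\Vert_{F^{\sigma}(T')}$-term on the stated right-hand side is only quadratic in them.
\item So it is dominated by $\Vert V_x\Vert_{F^{\sigma}(T')}E^2$ only if $\Vert V_x\Vert_{F^{\frac{n-2}{2}}(T')}\lesssim 1$; smallness of $E$ plays no role here.
\item No such bound is among the lemma's hypotheses, so you cannot invoke smallness at this point.
\end{itemize}
It is the only term that fails to fit. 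For example, $\Vert |\nabla|e_i\Vert_{\tilde F^{\sigma}(T')}\Vert V_x\Vert^2_{\tilde F^{\frac{n-2}{2}}(T')}E$ does sit inside the $(1+\Vert V_x\Vert_{\tilde F^{\frac{n-2}{2}}(T')})E$ part of the second and third lines. The paper's one-line proof passes over the same mismatch.

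The clean fix is one of the following:
\begin{itemize}
\item add the hypothesis $\Vert V_x\Vert_{F^{\frac{n-2}{2}}(T')}\le 1$; or
\item add $\Vert V_x\Vert_{F^{\sigma}(T')}\Vert V_x\Vert_{F^{\frac{n-2}{2}}(T')}E^2$ to the right-hand side.
\end{itemize}
Either version suffices for every later use. In Lemmas \ref{smallnessofvx} and \ref{higherregularityfcontrolofv} the estimate is invoked only after $\Vert V_x\Vert_{F^{\frac{n-2}{2}}(T')}$ and $E$ are already known to be small. Lemma \ref{continuityofL} uses it only qualitatively.
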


The next lemma gives us control of the norm of the frames $ e_1, e_{2}$ in terms of $V_{x}$. This is the only time in which the assumption $n \geq 5$ is needed.
\begin{lemma}\label{EcontroledbyV}
Let $ n \geq 5$, $ s \in (1/2,1)$, and $ T' \in [0,T_{0}]$ so that we have 
$$
\vert u(x,t) - Q \vert \leq 1/2 \quad \text{ for all } (x,t) \in \R^{n} \times [-T',T'].
$$
Then there exists an $ \epsilon_{1}:= \epsilon_{1} (n,s) > 0$ so that whenever 
$$
\Vert V_{x} \Vert_{F^{\frac{n-2}{2}}(T')} \leq \epsilon_{1}
$$
then we have 
$$
\Vert |\nabla| u \Vert_{\tilde{F}^{\frac{n-2}{2}}(T')} + \Vert | \nabla| e_{1} \Vert_{\tilde{F}^{\frac{n-2}{2}}(T')} + \Vert | \nabla| e_{2} \Vert_{\tilde{F}^{\frac{n-2}{2}}(T')}   \lesssim \Vert V_{x} \Vert_{F^{\frac{n-2}{2}}(T')}.
$$
\end{lemma}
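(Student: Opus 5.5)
The plan is to express $|\nabla| u$, $|\nabla| e_1$, $|\nabla| e_2$ algebraically in terms of $V_x$ and the frame, and then close with a bootstrap using the algebra properties of Corollary \ref{algebraforftilde}.

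By \eqref{id1}, \eqref{id4}, \eqref{id4e} we have, for each $i$,
\[
\partial_i u = \Re(V_i) e_1 + \Im(V_i) e_2, \qquad \partial_i e_1 = -\Re(V_i) u - A_i e_2, \qquad \partial_i e_2 = -\Im(V_i) u + A_i e_1 .
\]
By Lemma \ref{ceoffidentities}, $A_i = c\sum_j |\nabla|^{-1}R_j \Re(\i \bar V_j V_i)$. Since $|\nabla| = -\sum_i R_i\partial_i$ and the Riesz transforms are bounded on each $\tilde Z_k$ by \eqref{multilpierlemma}, it suffices to bound $\partial_i u, \partial_i e_1, \partial_i e_2$ in $\tilde F^{\frac{n-2}{2}}(T')$. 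Set $X := \Vert V_x\Vert_{F^{\frac{n-2}{2}}(T')}$ and $Y := \Vert |\nabla| u\Vert_{\tilde F^{\frac{n-2}{2}}(T')} + \Vert |\nabla| e_1\Vert_{\tilde F^{\frac{n-2}{2}}(T')} + \Vert |\nabla| e_2\Vert_{\tilde F^{\frac{n-2}{2}}(T')}$.

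We first treat the products $\Re(V_i) e_1$. Write $e_1 = Q_1 + |\nabla|^{-1}(|\nabla| e_1)$ using Lemma \ref{eintegra}, and similarly $u = Q + |\nabla|^{-1}|\nabla| u$. The constant parts contribute $\Vert V_x\Vert$ directly. For the remaining part, apply \eqref{eqalgebraic3} with $\sigma = \frac{n-2}{2}$:
\[
\Vert f |\nabla|^{-1} g\Vert_{\tilde F^{\frac{n-2}{2}}} \lesssim \Vert f\Vert_{\tilde F^{\frac{n-2}{2}}}\Vert g\Vert_{\tilde F^{\frac{n-2}{2}}},
\]
which gives $\lesssim X Y$. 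The connection terms $A_i e_2$ are handled similarly. By \eqref{eqalgebraic4}, which is where $n\ge5$ enters, $\Vert |\nabla|^{-1}(\bar V_j V_i)\Vert_{\tilde F^{\frac{n-2}{2}}} \lesssim X^2$. Splitting $e_2 = Q_2 + |\nabla|^{-1}|\nabla| e_2$ and applying \eqref{eqalgebraic3} to the nonconstant part then gives a contribution $\lesssim X^2(1+Y)$. Collecting everything,
\[
Y \lesssim X + XY + X^2(1+Y).
\]
Choosing $\epsilon_1$ small absorbs the $XY$ terms into the left-hand side and yields $Y\lesssim X$.

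\textbf{Main obstacle.} The absorption step is only legitimate once $Y$ is known to be finite, and the truncated norms $F(T')$ are defined by infima over extensions, so the identities above have to be applied to suitable extensions. I would handle finiteness via the regularity $u\in L^\infty B^{\sigma_0+1}_Q$ together with Lemma \ref{eintegra}, plus a continuity argument in $T'$ using the continuity of $T'\mapsto\Vert\cdot\Vert_{F(T')}$. For the extension issue, the products of extensions are extensions of the products, and the algebra estimates in Corollary \ref{algebraforftilde} are stated directly for the $X^\sigma(r)$ norms.

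A second subtlety is that \eqref{eqalgebraic4} controls $|\nabla|^{-1}(fg)$ only when $n\ge5$. This is consistent with the statement, but I expect the low-frequency handling of $A_i$ (the $r\ll k$ interactions) to be the delicate part hidden in that corollary.
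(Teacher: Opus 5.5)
Your proposal is correct and follows the paper's proof essentially step for step: the same frame identities, the splitting $e_i = Q_i + |\nabla|^{-1}|\nabla| e_i$ via Lemma \ref{eintegra}, the bound $\Vert A_i\Vert_{\tilde F^{\frac{n-2}{2}}(T')}\lesssim \Vert V_x\Vert^2_{F^{\frac{n-2}{2}}(T')}$ from Lemma \ref{ceoffidentities} and \eqref{eqalgebraic4} (the only place $n\ge 5$ enters), and absorption once $\epsilon_1$ is small. The differences are cosmetic: you cite \eqref{eqalgebraic3} where the paper cites \eqref{eqalgebraic1}, and at $\sigma=\frac{n-2}{2}$ both give the same bound; you also remark explicitly that absorption needs $Y<\infty$, a point the paper leaves implicit, and finiteness alone suffices there, so no continuity argument in $T'$ is required.
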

\begin{proof}
Define 
$$
Z : = \Vert |\nabla| u \Vert_{\tilde{F}^{\frac{n-2}{2}}(T')} + \Vert | \nabla| e_{1} \Vert_{\tilde{F}^{\frac{n-2}{2}}(T')} + \Vert | \nabla| e_{2} \Vert_{\tilde{F}^{\frac{n-2}{2}}(T')}.
$$
Notice that 
$$
\partial_{i} u = \Re(V_{i}) e_{1} + \Im(V_{i}) e_{2}
$$
and 
$$
\partial_{i} e_{1} = -\Re(V_{i}) u - A_{i} e_{2}
$$
where $ A_{i} = \langle \partial_{i} e_{2} ,e_{1} \rangle$. 
\\
Similarly, we have 
$$
\partial_{i} e_{2} = -\Im(V_{i}) u + A_{i} e_{1}.
$$
Then using \eqref{eqalgebraic1} we obtain
\begin{equation}\label{qwe1}
\begin{split}
& \Vert | \nabla| u \Vert_{\tilde{F}^{\frac{n-2}{2}}(T')} 
\\
& 
\lesssim \sum_{i=1}^{n} \Vert \Re(V_{i}) e_{1} + \Im(V_{i}) e_{2} \Vert_{\tilde{F}^{\frac{n-2}{2}}(T') }
\\
& \lesssim \Vert V_{x} \Vert_{F^{\frac{n-2}{2}}(T')} + \sum_{i=1}^{n} \Vert \Re(V_{i}) ( e_{1} - Q_{1}) \Vert_{\tilde{F}^{\frac{n-2}{2}}(T')} + \sum_{i=1}^{n} \Vert \Im(V_{i}) ( e_{2}-Q_{2}) \Vert_{\tilde{F}^{\frac{n-2}{2}}(T')}
\\
& \lesssim \Vert V_{x} \Vert_{F^{\frac{n-2}{2}}(T')} + \sum_{i=1}^{n} \Vert \Re(V_{i}) |\nabla|^{-1} (|\nabla| e_{1} ) \Vert_{\tilde{F}^{\frac{n-2}{2}}(T')} + \sum_{i=1}^{n} \Vert \Im(V_{i}) |\nabla|^{-1} | \nabla|e_{2} \Vert_{\tilde{F}^{\frac{n-2}{2}}(T')}
\\
&
\overset{\eqref{eqalgebraic1}}{\lesssim} \Vert V_{x} \Vert_{F^{\frac{n-2}{2}}(T')} (1+ Z).
\end{split}
\end{equation}
Similarly, 
\begin{equation}\label{qwe2}
\begin{split}
& \Vert |\nabla| e_{j} \Vert_{\tilde{F}^{\frac{n-2}{2}}(T')}
\\
& \lesssim \Vert V_{x} \Vert_{F^{\frac{n-2}{2}(T')}} ( 1+Z) + \Vert A \Vert_{\tilde{F}^{\frac{n-2}{2}}(T')} (1+Z).
\end{split}
\end{equation}
Where $ A : = (A_{1},....,A_{n})$. \\
\textbf{Claim: } For any $ i \in \{ 1,..,n \}$ we have the estimate 
$$
\Vert A_{i} \Vert_{\tilde{F}^{\frac{n-2}{2}}(T')} \lesssim \Vert V_{x} \Vert^{2}_{F^{\frac{n-2}{2}}(T')}.
$$
Once this claim is proven, we can conclude as follows; we take $ \epsilon_{1}$ small enough so that 
$$
C \epsilon_{1}(1+\epsilon_{1}) \leq \frac{1}{2}.
$$
Then combining \eqref{qwe1} and \eqref{qwe2} we obtain
$$
Z \lesssim \Vert V_{x} \Vert_{F^{\frac{n-2}{2}}(T') }
$$
which is the desired result. It remains to prove the claim. 
\\
\underline{Proof of Claim:}
By Lemma \ref{ceoffidentities} we can write
$$
A_{i} = c \sum_{j=1}^{n} | \nabla|^{-1} R_{j} ( \Re ( \i \bar{V_{j}} {V}_{i}))
$$
then we estimate using the algebra properties of $F^{\sigma}(T')$ spaces 
\begin{equation*}
\begin{split}
& \Vert A_{i} \Vert_{\tilde{F}^{\frac{n-2}{2}}(T')}
\\
& \lesssim \sum_{j=1}^{n} \Vert |\nabla|^{-1} \Re(\i V_{i} \bar{V}_{j}) \Vert_{\tilde{F}^{\frac{n-2}{2}}(T')}
\\
& \overset{\eqref{eqalgebraic4}}{\lesssim} \Vert V_{x} \Vert^{2}_{F^{\frac{n-2}{2}}(T')}.
\end{split}
\end{equation*}
\end{proof}
The next lemma gives us higher regularity control of the norm of the frames $ e_{1},e_{2}$ in terms of $V_{x}$
\begin{lemma}\label{EcontrolledVhigherregularity}
Fix $ \sigma \in [ \frac{n-2}{2}, \sigma_{0} +1]$ and $ T' \in [0,T_{0} ]$ satisfying 
$$
\vert u(x,t) - Q \vert \leq 1/2 \quad \text{ for all } (x,t) \in \R^{n} \times [-T',T'].
$$
 Then there exists $ \epsilon_{2} : = \epsilon_{2} (n,s,\sigma) > 0$ so that whenever we have 
$$
\Vert V_{x} \Vert_{F^{\frac{n-2}{2}}(T')} \leq \epsilon_{2}.
$$
Then we have that 
$$
\Vert | \nabla| e_{1} \Vert_{\tilde{F}^{\sigma}(T')} + \Vert | \nabla| e_{2} \Vert_{\tilde{F}^{\sigma}(T')} + \Vert | \nabla| u \Vert_{\tilde{F}^{\sigma}(T')} \lesssim \Vert V_{x} \Vert_{F^{\sigma}(T')}.
$$
\end{lemma}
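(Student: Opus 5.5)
We follow the scheme of Lemma \ref{EcontroledbyV}, now at regularity $\sigma$, using the asymmetric ("tame") algebra estimates of Corollary \ref{algebraforftilde} so that every product contains at most one factor measured in a $\sigma$-norm. Set
\[
Z_\sigma := \Vert |\nabla| u \Vert_{\tilde{F}^{\sigma}(T')} + \Vert |\nabla| e_{1} \Vert_{\tilde{F}^{\sigma}(T')} + \Vert |\nabla| e_{2} \Vert_{\tilde{F}^{\sigma}(T')}, \qquad Z := Z_{\frac{n-2}{2}} .
\]
First we choose $\epsilon_2 \le \epsilon_1$. Then Lemma \ref{EcontroledbyV} gives $Z \lesssim \Vert V_x\Vert_{F^{\frac{n-2}{2}}(T')} \le \epsilon_2$, and the claim in its proof gives $\Vert A\Vert_{\tilde F^{\frac{n-2}{2}}(T')}\lesssim \epsilon_2^2$.

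Next we use the frame identities $\partial_i u = \Re(V_i)e_1+\Im(V_i)e_2$ and $\partial_i e_1 = -\Re(V_i)u - A_i e_2$, $\partial_i e_2 = -\Im(V_i)u + A_ie_1$. In each product we write $e_j = Q_j + |\nabla|^{-1}(|\nabla| e_j)$ (Lemma \ref{eintegra}), and similarly $u = Q + |\nabla|^{-1}(|\nabla| u)$. The constant part contributes $\Vert V_x\Vert_{F^\sigma(T')}$ or $\Vert A\Vert_{\tilde F^\sigma(T')}$. For the remaining part we apply \eqref{eqalgebraic3}:
\[
\Vert V_i |\nabla|^{-1}(|\nabla| e_j)\Vert_{\tilde F^{\sigma}(T')} \lesssim \Vert V_x\Vert_{F^{\frac{n-2}{2}}(T')}Z_\sigma + \Vert V_x\Vert_{F^{\sigma}(T')} Z .
\]
The same bound holds with $A$ in place of $V$. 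Since $\Vert |\nabla| w\Vert_{\tilde F^\sigma}\approx \sum_i\Vert \partial_i w\Vert_{\tilde F^{\sigma-1}}$ is not quite what we want, we work directly with $\partial_i$ at level $\sigma$. That is, we estimate $\Vert |\nabla| w\Vert_{\tilde F^\sigma}\lesssim\sum_i \Vert \partial_i w\Vert_{\tilde F^{\sigma}}$ via the bounded Riesz transforms (multiplier lemma \eqref{multilpierlemma}), exactly as in \eqref{qwe1}. This yields
\[
Z_\sigma \lesssim \Vert V_x\Vert_{F^\sigma(T')}(1+Z) + \Vert A\Vert_{\tilde F^\sigma(T')}(1+Z) + (\Vert V_x\Vert_{F^{\frac{n-2}{2}}(T')}+\Vert A\Vert_{\tilde F^{\frac{n-2}{2}}(T')}) Z_\sigma .
\]

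The key intermediate step is the tame bound for $A$. By Lemma \ref{ceoffidentities}, $A_i = c\sum_j |\nabla|^{-1}R_j\Re(\i\bar V_jV_i)$, and \eqref{eqalgebraic4} (this is where $n\ge5$ enters) gives
\[
\Vert A\Vert_{\tilde F^\sigma(T')}\lesssim \Vert V_x\Vert_{F^{\frac{n-2}{2}}(T')}\Vert V_x\Vert_{F^{\sigma}(T')} .
\]
Substituting this bound, the coefficient of $Z_\sigma$ on the right is $\lesssim \epsilon_2$. Choosing $\epsilon_2(n,s,\sigma)$ small lets us absorb that term into the left, which yields $Z_\sigma\lesssim \Vert V_x\Vert_{F^\sigma(T')}$.

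The main obstacle is that absorption is legitimate only if $Z_\sigma<\infty$ a priori. We would obtain this qualitatively from the smoothness of the local solution: $u\in L^\infty B^{\sigma_0+1}_Q$, and Lemmas \ref{thetainteg} and \ref{eintegra} place $e_j-Q_j$ in $B^{\sigma}_{2,1}$, so their extensions by $\phi(t)$ lie in the resolution spaces on the finite interval $[-T',T']$. This restriction is why $\sigma$ is capped at $\sigma_0+1$. If finiteness is delicate, we would alternatively run the estimate on frequency-truncated norms $\sum_{k\le K}$ and let $K\to\infty$.
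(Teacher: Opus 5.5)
Your argument is the paper's proof. It uses the same frame identities for $\partial_i u$ and $\partial_i e_j$, the same substitution $e_j=Q_j+|\nabla|^{-1}(|\nabla| e_j)$, and the same tame product estimate (the paper cites \eqref{eqalgebraic5}, which after this substitution is equivalent to your \eqref{eqalgebraic3}). It also uses the bound $\Vert A\Vert_{\tilde{F}^{\sigma}(T')}\lesssim \Vert V_x\Vert_{F^{\frac{n-2}{2}}(T')}\Vert V_x\Vert_{F^{\sigma}(T')}$ from \eqref{eqalgebraic4} and absorption for small $\epsilon_2$.

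Your concern about a priori finiteness of $Z_\sigma$ is legitimate, and the paper passes over it. Your justification, however, overreaches. Besov control of $e_j-Q_j$ at each time does not by itself put $|\nabla| e_j$ in $\tilde{F}^{\sigma}(T')$. The $X_k$ norm costs about $s$ derivatives, even after using the time regularity that the equation supplies. So from $u\in L^\infty_t B^{\sigma_0+1}_Q$ this route gives $Z_\sigma<\infty$ only for $\sigma$ up to roughly $\sigma_0-s$. That covers every application in the paper, which uses $\sigma\le\sigma_0-1$. It does not reach $\sigma_0+1$: there, by \eqref{energy}, $Z_\sigma<\infty$ would already force $u-Q\in L^\infty_t\dot{B}^{\sigma_0+2}_{2,1}$.
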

\begin{proof}
Take $ \epsilon_{2} \leq \epsilon_{1}$, then by Lemma \ref{EcontroledbyV} we obtain that 
$$
\Vert |\nabla| u \Vert_{\tilde{F}^{\frac{n-2}{2}}(T')} + \Vert | \nabla| e_{1} \Vert_{\tilde{F}^{\frac{n-2}{2}}(T')} + \Vert | \nabla| e_{2} \Vert_{\tilde{F}^{\frac{n-2}{2}}(T')}   \lesssim \Vert V_{x} \Vert_{F^{\frac{n-2}{2}}(T')}.
$$
Furthermore, we have 
\begin{equation*}
\begin{split}
& \Vert | \nabla| u \Vert_{\tilde{F}^{\sigma}(T')} 
\\
& 
\lesssim \sum_{i=1}^{n} \Vert \Re(V_{i}) e_{1} + \Im(V_{i}) e_{2} \Vert_{\tilde{F}^{\sigma} (T')}
\\
& \lesssim \Vert V_{x} \Vert_{F^{\sigma}(T')} + \sum_{i=1}^{n} \Vert \Re(V_{i}) ( e_{1} - Q_{1}) \Vert_{\tilde{F}^{\sigma}(T')} + \sum_{i=1}^{n} \Vert \Im(V_{i}) ( e_{2}-Q_{2}) \Vert_{\tilde{F}^{\sigma}(T')}
\\
& \lesssim \Vert V_{x} \Vert_{F^{\sigma}(T')} + \sum_{i=1}^{n} \Vert \Re(V_{i}) |\nabla|^{-1} (|\nabla| e_{1} ) \Vert_{\tilde{F}^{\sigma}(T')} + \sum_{i=1}^{n} \Vert \Im(V_{i}) |\nabla|^{-1} | \nabla|e_{2} \Vert_{\tilde{F}^{\sigma}(T')}
\\
& \overset{\eqref{eqalgebraic5}}{\lesssim} \Vert V_{x} \Vert_{F^{\frac{n-2}{2}}(T')} \left( \Vert | \nabla| e_{1} \Vert_{\tilde{F}^{\sigma}(T')} + \Vert | \nabla| e_{2} \Vert_{\tilde{F}^{\sigma}(T')} \right) + \Vert V_{x} \Vert_{F^{\sigma}(T')}
\\
& \leq \epsilon_{2} Z + \Vert V_{x} \Vert_{F^{\sigma}(T')}
\end{split}
\end{equation*}
where we defined
$$
Z:= \Vert | \nabla| e_{1} \Vert_{\tilde{F}^{\sigma}(T')} + \Vert | \nabla| e_{2} \Vert_{\tilde{F}^{\sigma}(T')} + \Vert | \nabla| u \Vert_{\tilde{F}^{\sigma}(T')}.
$$
Similarly, we have ( here $ A: = (A_{1},...,A_{n}),$ where recall that $A_{i} = \langle \partial_{i} e_{2},e_{1} \rangle$)
\begin{equation*}
\begin{split}
& \Vert | \nabla| e_{j} \Vert_{\tilde{F}^{\sigma}(T')}
\\
& \lesssim \Vert V_{x} \Vert_{F^{\sigma}(T') }+ \Vert A \Vert_{\tilde{F}^{\sigma}(T')} + \Vert V_{x} \Vert^{2}_{F^{\frac{n-2}{2}}(T')} \left( \Vert | \nabla| e_{1} \Vert_{\tilde{F}^{\sigma}(T')} + \Vert | \nabla| e_{2} \Vert_{\tilde{F}^{\sigma}(T')} \right)
\\
& \overset{\eqref{eqalgebraic4}}{\lesssim} \Vert V_{x} \Vert_{F^{\sigma}(T') } + \epsilon_{2} (1+ \epsilon_{2}) Z.
\end{split}
\end{equation*}
Combining the above inequalities we obtain, for sufficiently small $ \epsilon_{2}$, 
$$
Z \lesssim \Vert V_{x} \Vert_{F^{\sigma}(T') }.
$$
\end{proof}
Next, we define the following  function 
$$
L^{\sigma}(r) : = \Vert \mathcal{N}(V_{x} ,e_{1},e_{2} ) \Vert_{N^{\sigma}(r)}.
$$
Then we prove the following continuity property of $L^{\sigma
}$. 
\begin{lemma}\label{continuityofL}
Let $ \sigma \in [ \frac{n-2}{2},\sigma_{0} -1]$, and $T' \in [0,T_{0}]$ so that 
$$
\vert u(x,t) - Q \vert \leq 1/2 \quad \text{ for all } (x,t) \in \R^{n} \times [-T',T']
$$
Then the function $L^{\sigma}: [0,T'] \to [0,\infty)$ satisfy the following 
\begin{enumerate}
    \item $$L^{\sigma}(0) = 0$$
    \item $$
    | L^{\sigma}(r_{1} ) -L^{\sigma}(r_{2})| \leq C |r_{1} -r_{2} |^{1/2}.
    $$
    here the constant $C$ may depend on the Besov norm of $V_{x}$, $e_{1},e_{2}$. 
\end{enumerate}
\end{lemma}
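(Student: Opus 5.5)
The plan is to reduce both claims to elementary properties of the $N^{\sigma}$ norm applied to a fixed, smooth function $G:=\mathcal{N}(V_x,e_1,e_2)$. By definition \eqref{nrdef}, $L^{\sigma}(r)=\Vert \ind_{[-r,r]} G\Vert_{N^{\sigma}}$. Claim (1) is then immediate: $\ind_{[-0,0]}G$ vanishes almost everywhere in $\R^{n+1}$, so its $N^{\sigma}$ norm is zero.

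For claim (2), take $r_1<r_2$. The reverse triangle inequality gives
\[
|L^{\sigma}(r_1)-L^{\sigma}(r_2)|\le \Vert \ind_{I}G\Vert_{N^{\sigma}},\qquad I:=[-r_2,-r_1)\cup(r_1,r_2],
\]
so it suffices to bound the $N^\sigma$ norm of $G$ cut off to a time set of measure at most $2|r_1-r_2|$. For this I would use only the $X_k$ component of $Z_k$. For each dyadic block,
\[
\Vert (\i\partial_t-(-\Delta)^s+\i)^{-1}\Delta_k(\ind_I G)\Vert_{Z_k}\le \sum_{j\ge0}2^{j/2}\Vert Q_j(\i\partial_t-(-\Delta)^s+\i)^{-1}\Delta_k(\ind_IG)\Vert_{L^2}\lesssim \sum_{j}2^{-j/2}\Vert Q_j\Delta_k(\ind_IG)\Vert_{L^2}.
\]
Cauchy--Schwarz in $j$ bounds this by $\Vert \Delta_k(\ind_I G)\Vert_{L^2_{t,x}}$, up to a harmless $j$-sum. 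If one only wants to use $\sum 2^{-j/2}\lesssim 1$ and $\Vert Q_j\cdot\Vert_{L^2}\le\Vert\cdot\Vert_{L^2}$, the bound is directly $\lesssim\Vert\Delta_k G\Vert_{L^2(I\times\R^n)}$. That in turn is at most $|I|^{1/2}\sup_{t\in[-T',T']}\Vert \Delta_k G(t)\Vert_{L^2_x}$. Summing with weight $2^{k\sigma}$ yields
\[
|L^{\sigma}(r_1)-L^{\sigma}(r_2)|\lesssim |r_1-r_2|^{1/2}\,\sup_{|t|\le T'}\Vert G(t)\Vert_{\dot B^{\sigma}_{2,1}}.
\]

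It remains to show $\sup_{|t|\le T'}\Vert G(t)\Vert_{\dot B^{\sigma}_{2,1}}<\infty$, with a bound depending on Besov norms of $V_x,e_1,e_2$. This is the main obstacle: it is purely a fixed-time estimate, but $G$ is the long expression \eqref{recudedSchro}. It involves $\partial_i$ of commutators with $\nabla^{2s-1}$, the term $V_0$, and $\partial_te_2$. The counting goes as follows. Each term of $G$ has at most $2s<2$ more derivatives than products of $V_x$ and $e_i$. By Lemma \ref{ceoffidentities} and \eqref{eqq6}, the terms $V_0$ and $\partial_t e_2$ are also expressed through $V_x,e_i$ with one extra $\nabla^{2s-1}$. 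So $G(t)\in\dot B^{\sigma}_{2,1}$ whenever $V_x(t)\in B^{\sigma+2}_{2,1}$ and $e_i-Q_i\in B^{\sigma+3}_{2,1}$.

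These hold uniformly on $[-T',T']$ because $\sigma\le\sigma_0-1$ and $u\in L^\infty B^{\sigma_0+1}_Q$. The required bounds follow from Proposition \ref{controlvu}, Lemma \ref{eintegra} and Lemma \ref{thetainteg}, combined with the Besov algebra and commutator estimates of the appendices and the boundedness of $|\nabla|^{-1}R_j$ on products, handled as in the proof of \eqref{theta2}. Since $|u-Q|\le 1/2$, $\Vert f\Vert_{L^\infty}$ stays bounded, so the constants $C(\Vert f\Vert_{L^\infty})$ in those results are uniform. I expect verifying this derivative count for the commutator terms to be the only nontrivial bookkeeping; it uses crude Besov product bounds, and no smallness is needed.
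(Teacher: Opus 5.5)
Your reduction is the same as the paper's. You use the reverse triangle inequality, bound $\Vert(\i\partial_t-(-\Delta)^s+\i)^{-1}\Delta_k F\Vert_{Z_k}$ by $\Vert\Delta_k F\Vert_{L^2_{t,x}}$ via the $X_k$ component, apply H\"older in time to extract $|r_1-r_2|^{1/2}$, and finish with a fixed-time Besov bound on $G=\mathcal{N}(V_x,e_1,e_2)$. Part (1) and everything up to your last paragraph are fine.

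The gap is the final derivative count. From $u\in L^\infty([-T',T'];B^{\sigma_0+1}_Q)$ you only get $V_x\in L^\infty B^{\sigma_0}_{2,1}$ (Proposition \ref{controlvu}) and $e_i-Q_i\in L^\infty B^{\sigma_0+1}_{2,1}$. So your sufficient conditions $V_x\in B^{\sigma+2}_{2,1}$ and $e_i-Q_i\in B^{\sigma+3}_{2,1}$ fail as soon as $\sigma>\sigma_0-2$: at $\sigma=\sigma_0-1$ they would require $V_x$ to have the regularity of $u$ itself. The sharp crude count does not rescue this either. In the first four lines of \eqref{recudedSchro} the operator $\partial_i\nabla^{2s-1}_j$ falls on $V_j$, so crude Besov product bounds lose $2s$ derivatives on $V_x$ and give only $G\in L^\infty B^{\sigma_0-2s}_{2,1}$. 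Since $s>1/2$, this misses the range $\sigma\in(\sigma_0-2s,\sigma_0-1]$ that the lemma asserts.

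The missing ingredient is the commutator structure. In those four terms $V_x$ enters only through $[\nabla^{2s-1}_j,e](\cdot)$ with $e\in\{e_1,e_2\}$; note that $\mathscr{H}(e_1,e_2)$ is by definition a sum of such commutators. Such a commutator acts on $V_x$ as an operator of order $2s-2$, at the price of one derivative on $e$. This follows from a paraproduct decomposition together with the Taylor expansion of Lemma \ref{Taylorcalc}, as in Lemma \ref{Taylorstuff2}. When $e$ carries the high frequency, it receives at most $2s$ derivatives.

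After the outer $\partial_i$ you therefore lose only $2s-1$ derivatives on $V_x$ and at most $2s$ on $e_i$. This gives $G\in L^\infty([-T',T'];B^{\sigma_0-(2s-1)}_{2,1})$, which is exactly what the paper asserts and what its remark following the lemma explains. Because $\sigma\le\sigma_0-1<\sigma_0-(2s-1)$, the sum $\sum_{k\ge1}2^{\sigma k}\sup_t\Vert\Delta_kG(t)\Vert_{L^2}$ then converges. The low frequencies are controlled by $\sigma>0$ together with $G\in L^\infty L^2$, as in the paper's splitting into $k\le0$ and $k\ge1$. With this replacement your argument closes on the full range of $\sigma$.
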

\begin{proof}
The assertion (1) is clear since $ \Vert 0 \Vert_{L^{2}_{x,t}} =0  $. and since $ \Vert \Delta_{k} ( \i \partial_{t} - ( -\Delta)^{s} +\i)^{-1} f \Vert_{Z_{k}} \lesssim   \Vert \Delta_{k} f \Vert_{L^{2}_{x,t}}$. We need to prove the second assertion. To that end, set $ v = \mathcal{N}(V_{x},e_{1},e_{2})$. Then in view of Lemma \ref{eintegra}, Lemma \ref{controlvu}, and the nonlinear estimates Lemma \ref{finalnonlinearestimates} we have  $ v \in N^{\sigma}(T') \cap L^{\infty}( [-T',T']: B^{\sigma_{0} -(2s-1)}_{2,1})$. Then for  any $ r_{1},r_{2} \in [0,T']$ we have 
\begin{equation*}
\begin{split}
& | \Vert v \Vert_{N^{\sigma}(r_{1})} - \Vert v \Vert_{N^{\sigma}(r_{2} )}|
\\
& \lesssim \Vert E_{r_{1}} v - E_{r_{2}} v \Vert_{N^{\sigma}}
\\
& \lesssim | r_{1} -r_{2} |^{\frac{1}{2}} \sum_{k \in \Z } 2^{\sigma k} \sup_{t \in [-T',T']} \Vert \Delta_{k} v \Vert_{L^{2} (\R^{n})}
\\
& = | r_{1} - r_{2} |^{1/2} \sum_{k  \leq 0} 2^{\sigma k} \sup_{t \in [-T',T']} \Vert \Delta_{k} v \Vert_{L^{2}( \R^{n})} + | r_{1} - r_{2} |^{1/2} \sum_{k \geq 1} 2^{\sigma k} \sup_{t \in [-T',T']} \Vert \Delta_{k} v \Vert_{L^{2}( \R^{n})} 
\\
& \lesssim | r_{1} -r_{2} |^{1/2} \sup_{t \in [-T',T']} \Vert v \Vert_{B^{\sigma_{0} -(2s-1)}_{2,1}} + | r_{1} -r_{2} |^{1/2}   \sup_{t \in [-T',T']} \Vert v \Vert_{B^{\sigma_{0} -(2s-1)}_{2,1}} \sum_{k \geq 1} 2^{- k ( \sigma_{0} -(2s-1) - \sigma)} 
\\
& \lesssim | r_{1} -r_{2} |^{1/2} \sup_{t \in [-T',T']} \Vert v \Vert_{B^{\sigma_{0} -(2s-1)}_{2,1}}.
\end{split}
\end{equation*}
\end{proof}
\begin{remark}
    Notice that in the above Lemma, we cannot hope to get continuity of the function $L^{\sigma}$ for every  $  \sigma \in [ \frac{n-2}{2},\sigma_{0}]$. This is because the nonlinearity $\mathcal{N}(V_{x} ,e_{1} ,e_{2} )$ contain terms of the form $ | \nabla|^{2s-1} (V_{x})$ while $ V_{x} \in \dot{B}^{\sigma_{0}}_{2,1}$. So to run the above argument, we need $ \sigma < \sigma_{0} -(2s-1)$.  
\end{remark}
Now we prove that the functions $ V_{i}'$s are in fact small in the space $ F^{\frac{n-2}{2}}(T')$. This is the content of the next lemma
\begin{lemma}\label{smallnessofvx}
Assume the initial data $ u_{0}$ satisfy 
$$
\Vert u_{0} \Vert_{\dot{B}^{n/2}_{Q}} \leq \epsilon_{3}
$$
for some $ \epsilon_{3}(n,s)>0$ sufficiently small. Then for any $T'\in [0,T_{0}]$ which satisfies 
$$
\vert u(x,t) - Q \vert \leq 1/2 \quad \text{ for all } (x,t) \in \R^{n} \times [-T',T'],
$$
we obtain that 
$$
\Vert V_{x} \Vert_{F^{\frac{n-2}{2}}(T')} \lesssim \epsilon_{3}.
$$
\end{lemma}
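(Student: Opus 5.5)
The plan is a continuity (bootstrap) argument in $T'$, built on the Duhamel formula for the modified equation of Proposition \ref{modifiedsschrodeq}. For $r\in[0,T']$ I set
\[
M(r):=\Vert V_x\Vert_{F^{\frac{n-2}{2}}(r)} .
\]
On $[-r,r]$ the function $\phi(t)V_i$ coincides with
\[
\phi(t)e^{-\i t(-\Delta)^s}V_i(0)+\i\phi(t)\int_0^t e^{-\i(t-\tau)(-\Delta)^s}\bigl(\ind_{[-r,r]}\mathcal{N}\bigr)(\tau)\,d\tau ,
\]
with harmless constant factors. Since $r\le T_0\le 1$ and $\phi\equiv1$ on $[-1,1]$, this gives an admissible extension. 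Lemma \ref{LinearEstimates} then yields
\[
M(r)\lesssim \Vert V_x(0)\Vert_{\dot B^{\frac{n-2}{2}}_{2,1}}+L^{\frac{n-2}{2}}(r).
\]
By Proposition \ref{controlvu}, applied with $\sigma=\frac{n-2}{2}$ and using that $\Vert f(\cdot,0)\Vert_{L^\infty}\lesssim1$ because $|u-Q|\le 1/2$, the initial term is bounded by $\Vert u_0\Vert_{\dot B^{n/2}_Q}\le\epsilon_3$.

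Next I estimate $L^{\frac{n-2}{2}}(r)$ using Lemma \ref{finalnonlinearestimates} with $\sigma=\frac{n-2}{2}$. As long as $M(r)\le\epsilon_1$, Lemma \ref{EcontroledbyV} (this is where $n\ge5$ enters) controls $E$ and $\Vert|\nabla|e_j\Vert_{\tilde F^{\frac{n-2}{2}}(r)}$ by $M(r)$. The $\tilde F$ norm of $V_x$ is dominated by its $F$ norm. Every term on the right-hand side of Lemma \ref{finalnonlinearestimates} is then at least cubic in $M(r)$, giving
\[
L^{\frac{n-2}{2}}(r)\lesssim M(r)^3+M(r)^4+M(r)^5 .
\]
Combining the two bounds, on the set where $M(r)\le\epsilon_1$ we obtain
\[
M(r)\le C\epsilon_3+C\bigl(M(r)^3+M(r)^5\bigr).
\]

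To close the bootstrap I use continuity in $r$. By Lemma \ref{continuityofL} (with $\sigma=\frac{n-2}{2}\le\sigma_0-1$), $L^{\frac{n-2}{2}}$ is continuous on $[0,T']$ and vanishes at $0$. So I do not run continuity on $M$ directly. Instead I run it on
\[
\Lambda(r):=C\epsilon_3+C\,L^{\frac{n-2}{2}}(r),
\]
which is continuous and dominates $M(r)$. I choose $\epsilon_3$ so that $\Lambda(0)=C\epsilon_3<\epsilon_1/2$. On any interval where $\Lambda\le\epsilon_1$ we have $M\le\Lambda\le\epsilon_1$, and therefore
\[
\Lambda\le C\epsilon_3+C'\Lambda^3 .
\]
For $\epsilon_3$ small, the set $\{x\le\epsilon_1 : x\le C\epsilon_3+C'x^3\}$ splits into a component $[0,2C\epsilon_3]$ and a component near $\epsilon_1$. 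Continuity of $\Lambda$ then traps $\Lambda$ in the first component on all of $[0,T']$. Hence $M(T')\le 2C\epsilon_3$, which is the claim.

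The main obstacle I expect is justifying that $M$ (or $\Lambda$) is finite and well behaved. The truncated norm $F^{\sigma}(r)$ is defined as an infimum over extensions, so its continuity is not immediate. I would sidestep this by running the argument on the Duhamel extension above, using only continuity of $L$ from Lemma \ref{continuityofL}. Finiteness at each $r$ comes from the qualitative regularity $V_x\in L^\infty B^{\sigma_0}_{2,1}$ via Proposition \ref{controlvu} and the embedding of Lemma \ref{LinearEstimates}. A secondary point is that Lemma \ref{finalnonlinearestimates} mixes $F$ and $\tilde F$ norms; I handle this by noting $\Vert\cdot\Vert_{\tilde F}\le\Vert\cdot\Vert_F$.
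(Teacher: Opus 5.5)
Your proposal is correct and follows essentially the same route as the paper. The paper also uses the Duhamel extension with Lemma \ref{LinearEstimates} and Proposition \ref{controlvu} for the data term, then Lemma \ref{EcontroledbyV} and Lemma \ref{finalnonlinearestimates} to make the nonlinearity cubic in $\Vert V_x\Vert_{F^{\frac{n-2}{2}}(r)}$, and closes with a continuity argument based on Lemma \ref{continuityofL}. The only difference is cosmetic: you run continuity on $\Lambda=C\epsilon_3+C L^{\frac{n-2}{2}}$ with an explicit trapping argument, while the paper bootstraps directly on $L^{\frac{n-2}{2}}(r)\le C_1\epsilon_3$ and improves it to $C_1\epsilon_3/2$.
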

\begin{proof}
Define the Duhmal extension as follows 
\begin{equation}\label{duhmalextension}
\Tilde{E}_{T'} ( V_{i} ) (x,t): = \phi(t) e^{-\i (-\Delta)^{s} t} V_{i}|_{t=0} + \phi(t) \int_{0}^{t} e^{-\i ( t-r) (-\Delta)^{s}} E_{T'}(\mathcal{N}(V_{x},e_{1},e_{2}))(x,r) dr.
\end{equation}

Then notice that for any $ t \in [-T',T']$ and any $ x \in \R^{n}$ we have 
$$
\tilde{E}_{T'}(V_{i}) (x,t) = V_{i}(x,t).
$$
Therefore, $ \tilde{E}_{T'}(V_{i})$ is an extension for $V_{i}$ and hence
$$
\Vert V_{i} \Vert_{F^{\frac{n-2}{2}}(T')} \leq \Vert \tilde{E}_{T'}(V_{i}) \Vert_{F^{\frac{n-2}{2}}}.
$$
Then we can use the linear estimates, Lemma \ref{LinearEstimates}, for each $ i \in \{ 1,...,n \}$ to obtain
\begin{equation}\label{hgf1}
\begin{split}
& \Vert V_{x} \Vert_{F^{\frac{n-2}{2}}(T')}
\\
& \leq C_{1} \epsilon_{3} + C_{2} \Vert \mathcal{N}(V_{x},e_{1},e_{2}) \Vert_{N^{\frac{n-2}{2}}(T')}
\end{split}
\end{equation}
Now to prove smallness, we assume first that 
$$
\Vert \mathcal{N}(V_{x},e_{1},e_{2}) \Vert_{N^{\frac{n-2}{2}}(T')} \leq C_{1} \epsilon_{3}.
$$
If this is the case, then we obtain that 
$$
\Vert V_{x} \Vert_{F^{\frac{n-2}{2}}(T')} \leq C_{1} ( 1+ C_{2})\epsilon_{3}. 
$$
Let $ \epsilon_{3}$ be small enough so that $ C_{1}(1+C_{2}) \epsilon_{3} \leq \epsilon_{1}$, and hence from Lemma \ref{EcontroledbyV} we get smallness of $ |\nabla|e_{1}$, $ |\nabla|e_{2}$.
Therefore, we can use the nonlinear estimates in Lemma \ref{finalnonlinearestimates} to obtain 
\begin{equation*}
\begin{split}
& \Vert \mathcal{N}(V_{x},e_{1},e_{2} ) \Vert_{N^{\frac{n-2}{2}}(T')} \leq  C_{3} \epsilon^{2}_{3} \Vert V_{x} \Vert_{F^{\frac{n-2}{2}}(T')}
\end{split}
\end{equation*}
using the linear estimates again we obtain
\begin{equation*}
\begin{split}
& \Vert \mathcal{N}(V_{x},e_{1},e_{2} ) \Vert_{N^{\frac{n-2}{2}}(T')}
\\
& \leq C_{3} \epsilon^{2}_{3} \left( C_{1} \epsilon_{3} + C_{2} \Vert \mathcal{N}(V_{x} ,e_{1},e_{2} ) \Vert_{N^{\frac{n-2}{2}}(T')} \right)
\end{split}
\end{equation*}
Hence, if $ \epsilon_{3}$ is taken small enough then we obtain 
$$
\Vert \mathcal{N}(V_{x},e_{1},e_{2} ) \Vert_{N^{\frac{n-2}{2}}(T')} \leq \frac{ C_{1} \epsilon_{3}}{2}.
$$
Now using lemma \ref{continuityofL} we obtain that $L^{\frac{n-2}{2}}(r) \leq C_{1} \epsilon_{3}$ for all $r \in [0,T']$ and hence  
$$
\Vert V_{x} \Vert_{F^{\frac{n-2}{2}}(T')} \lesssim \epsilon_{3}.
$$
\end{proof}
Next, we prove that if we take $ 0 < \epsilon_{4} (\sigma_{0},n,s) \leq \epsilon_{3} $ small enough, then we can obtain an analogous result to Lemma \ref{smallnessofvx} but for any $ \sigma \in [ \frac{n-2}{2},\sigma_{0} -1]$.  
\begin{lemma}\label{higherregularityfcontrolofv}
There exists $ \epsilon_{4} : = \epsilon_{4}(n,s,\sigma_{0}) > 0$ so that whenever
$$
\Vert u_{0} \Vert_{\dot{B}^{\frac{n}{2}}_{Q}} \leq \epsilon_{4}
$$
then
for any $ \sigma \in [ \frac{n-2}{2}, \sigma_{0} -1]$ and any $T' \in [0,T_{0}]$ which satisfies 
$$
\vert u(x,t) - Q \vert \leq 1/2 \quad \text{ for all } (x,t) \in \R^{n} \times [-T',T']
$$
we have 
$$
\Vert V_{x} \Vert_{F^{\sigma}(T')} \lesssim \Vert u|_{t=0} \Vert_{\dot{B}^{\sigma+1}_{Q}}.
$$
\end{lemma}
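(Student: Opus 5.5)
We follow the structure of the proof of Lemma \ref{smallnessofvx}, now at regularity $\sigma\in[\frac{n-2}{2},\sigma_0-1]$, but exploit that the nonlinear estimate is linear in the high-regularity norm. First take $\epsilon_4\le\epsilon_3$ so small that Lemma \ref{smallnessofvx} gives $\Vert V_x\Vert_{F^{\frac{n-2}{2}}(T')}\lesssim\epsilon_4\le\min\{\epsilon_1,\epsilon_2\}$. Here $\epsilon_2$ is the constant of Lemma \ref{EcontrolledVhigherregularity}, taken at the worst $\sigma$; if its constants are not uniform in $\sigma$, we use the fact that $\sigma$ ranges over a compact interval and that the algebra constants in Corollary \ref{algebraforftilde} are locally uniform. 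By Lemma \ref{EcontroledbyV} this gives $E\lesssim \epsilon_4$, and by Lemma \ref{EcontrolledVhigherregularity} it gives $\Vert|\nabla|e_i\Vert_{\tilde F^\sigma(T')}\lesssim\Vert V_x\Vert_{F^\sigma(T')}$.

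Next we use the Duhamel extension \eqref{duhmalextension} together with Lemma \ref{LinearEstimates} at regularity $\sigma$:
\[
\Vert V_x\Vert_{F^\sigma(T')}\le C_1\Vert V_x|_{t=0}\Vert_{\dot B^{\sigma}_{2,1}}+C_2\Vert\mathcal N(V_x,e_1,e_2)\Vert_{N^\sigma(T')}.
\]
By Proposition \ref{controlvu}, the first term is $\lesssim\Vert u|_{t=0}\Vert_{\dot B^{\sigma+1}_Q}$; for this we need $\Vert f(\cdot,0)\Vert_{L^\infty}\lesssim 1$, which follows from $|u-Q|\le 1/2$. For the second term we plug the bounds of the first paragraph into Lemma \ref{finalnonlinearestimates}. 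Every term there carries exactly one high norm ($\Vert V_x\Vert_{F^\sigma}$ or $\Vert|\nabla|e_i\Vert_{\tilde F^\sigma}$) multiplied by at least two low norms, so
\[
\Vert\mathcal N\Vert_{N^\sigma(T')}\le C_3\epsilon_4^2\,\Vert V_x\Vert_{F^\sigma(T')}.
\]
Choosing $\epsilon_4$ with $C_2C_3\epsilon_4^2\le 1/2$, we absorb this into the left-hand side and obtain the claim.

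The main obstacle is that absorbing requires $\Vert V_x\Vert_{F^\sigma(T')}<\infty$ a priori, which is not obvious for the homogeneous resolution space. We plan to handle this as in Lemma \ref{smallnessofvx}, via the continuity of $L^\sigma$ from Lemma \ref{continuityofL}; this is valid because $\sigma\le\sigma_0-1<\sigma_0-(2s-1)$. Finiteness of $L^\sigma(r)$ follows from the proof of that lemma, which gives $L^\sigma(r)\lesssim r^{1/2}\sup_t\Vert v\Vert_{B^{\sigma_0-(2s-1)}_{2,1}}<\infty$ using Lemma \ref{eintegra} and Proposition \ref{controlvu}. Then the linear estimate makes $\Vert V_x\Vert_{F^\sigma(T')}$ finite. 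Alternatively, we can run a bootstrap on the set of $r\in[0,T']$ where $L^\sigma(r)\le 2C_1C_2'\Vert u_0\Vert_{\dot B^{\sigma+1}_Q}$: this set is closed, contains $0$, and by the improved bound $\frac12$ of the threshold together with the $\frac12$-Hölder continuity it is open, hence it is all of $[0,T']$.
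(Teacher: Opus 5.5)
Your proposal is correct and follows the paper's proof: the Duhamel extension with Lemma \ref{LinearEstimates} and Proposition \ref{controlvu}, the estimate of Lemma \ref{finalnonlinearestimates} made small through Lemmas \ref{smallnessofvx} and \ref{EcontrolledVhigherregularity}, and a closing step via Lemma \ref{continuityofL}. Your bootstrap variant is exactly the paper's argument, and your primary direct-absorption route is a harmless streamlining, since the same $r^{1/2}$ bound that proves Lemma \ref{continuityofL} already gives $L^{\sigma}(T')<\infty$ and hence $\Vert V_x\Vert_{F^{\sigma}(T')}<\infty$.
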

\begin{proof}
We use the Duhmal extension $ \tilde{E}_{T'}$, the linear estimates in Lemma \ref{LinearEstimates}, and Lemma \ref{controlvu} to obtain
\begin{equation*}
\begin{split}
& \Vert V_{x} \Vert_{F^{\sigma}(T')} \leq C_{1,\sigma_{0}} \Vert u_{0} \Vert_{\dot{B}^{\sigma+1}_{Q}}    + C_{2,\sigma_{0}} \Vert \mathcal{N}(V_{x},e_{1},e_{2} ) \Vert_{N^{\sigma}(T')}
\end{split}
\end{equation*}
Assume first that 
$$
L^{\sigma}(T') = \Vert \mathcal{N}(V_{x},e_{1},e_{2} ) \Vert_{N^{\sigma}(T')} \leq C_{1,\sigma_{0}} \Vert u_{0} \Vert_{\dot{B}^{\sigma+1}_{Q}}
$$
then we obtain 
$$
\Vert V_{x} \Vert_{F^{\sigma}(T')} \leq ( C_{1,\sigma_{0}} +C_{2,\sigma_{0}}) C_{1,\sigma_{0}} \Vert u_{0} \Vert_{\dot{B}^{\sigma+1}_{Q}} . 
$$
Next, choose $ \epsilon_{4}$ small enough so that the conclusions of Lemma \ref{smallnessofvx}, Lemma \ref{EcontrolledVhigherregularity} holds. Then using the nonlinear estimates we obtain the following 
\begin{equation*}
\begin{split}
& \Vert \mathcal{N}(V_{x},e_{1},e_{2} ) \Vert_{N^{\sigma}(T')}
\\
& \leq C \epsilon^{2}_{4} \Vert V_{x} \Vert_{F^{\sigma}(T')}
\end{split}
\end{equation*}
But applying the linear estimates again we easily obtain
\begin{equation*}
\begin{split}
& \Vert \mathcal{N}(V_{x},e_{1},e_{2} ) \Vert_{N^{\sigma}(T')} \leq \frac{C_{1,\sigma_{0}} \Vert u_{0} \Vert_{\dot{B}^{\sigma+1}_{Q}}}{2}.
\end{split}
\end{equation*}
Hence, by Lemma \ref{continuityofL} we obtain 
$$
 \Vert \mathcal{N}(V_{x},e_{1},e_{2} ) \Vert_{N^{\sigma}(T')} \leq C_{1,\sigma_{0}} \Vert u_{0} \Vert_{\dot{B}^{\sigma+1}_{Q}}.
$$
\end{proof}
As a corollary of the above we obtain the following
\begin{corollary}\label{keyestimatesforfixedpoint}
If the initial data $ u_{0}$ satisfy 
$$
\Vert u_{0} \Vert_{\dot{B}^{\frac{n}{2}}_{Q}} \leq \epsilon_{4}
$$
then for any $ \sigma \in [\frac{n-2}{2},\sigma_{0}-1 ]$ and any $T'$ which satisfies 
$$
\vert u(x,t) - Q \vert \leq 1/2 \quad \text{ for all } (x,t) \in \R^{n} \times [-T',T']
$$
we have 
$$
\sup_{t \in [-T',T']} \Vert V_{x} \Vert_{ \dot{B}^{\sigma}_{2,1}} \lesssim \Vert u_{0} \Vert_{\dot{B}^{\sigma+1}_{Q}}.
$$
\end{corollary}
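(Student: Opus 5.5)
The plan is to deduce the corollary directly from Lemma \ref{higherregularityfcontrolofv}, combined with the energy estimate \eqref{energy} and the definition of the truncated spaces $F^{\sigma}(T')$ in \eqref{frdef}.

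\textbf{Step 1: reduction to an $F^{\sigma}$ bound.} Fix $\sigma \in [\frac{n-2}{2},\sigma_{0}-1]$ and $T'$ as in the statement. By Lemma \ref{higherregularityfcontrolofv}, which applies because $\Vert u_{0}\Vert_{\dot{B}^{n/2}_{Q}}\leq \epsilon_{4}$, we have $\Vert V_{x}\Vert_{F^{\sigma}(T')} \lesssim \Vert u_{0}\Vert_{\dot{B}^{\sigma+1}_{Q}}$. By \eqref{frdef} this means that for every $\delta>0$ there is an extension $W$ of $V_{x}$, that is $W = V_{x}$ on $[-T',T']$, with
\[
\Vert W\Vert_{F^{\sigma}} \leq \Vert V_{x}\Vert_{F^{\sigma}(T')}+\delta .
\]
A concrete choice is the Duhamel extension $\tilde{E}_{T'}$ from \eqref{duhmalextension}. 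So it suffices to prove, for any $W \in F^{\sigma}$,
\[
\sup_{t\in\R}\Vert W(\cdot,t)\Vert_{\dot{B}^{\sigma}_{2,1}} \lesssim \Vert W\Vert_{F^{\sigma}} .
\]

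\textbf{Step 2: the embedding $F^{\sigma}\hookrightarrow L^\infty_t \dot B^\sigma_{2,1}$.} For each fixed $t$, the spatial Littlewood--Paley piece $\Delta_{k}W(\cdot,t)$ is exactly the spatial projection, because $\Delta_{k}$ acts only in $\xi$. Applying \eqref{energy} on each dyadic block gives
\[
\Vert \Delta_{k}W(\cdot,t)\Vert_{L^{2}_{x}} \leq \Vert \Delta_k W\Vert_{L^\infty_t L^2_x} \lesssim \Vert\Delta_{k}W\Vert_{Z_{k}} .
\]
Multiplying by $2^{\sigma k}$ and summing over $k$, then taking the supremum over $t$ (a sum of suprema dominates the supremum of the sum), yields the bound of Step 1. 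Letting $\delta\to0$ and restricting to $t\in[-T',T']$, where $W=V_{x}$, gives the claim.

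\textbf{Main subtlety.} The only delicate point is justifying that $\Delta_{k}$ commutes with evaluation at a fixed time, and that $V_{x}(\cdot,t)=\sum_{k}\Delta_{k}V_{x}(\cdot,t)$ holds in $\dot{B}^{\sigma}_{2,1}$. Both follow because $\Delta_{k}$ is a Fourier multiplier in $\xi$ alone, and because $V_{x}(\cdot,t)\in B^{\sigma_{0}}_{2,1}$ by Proposition \ref{controlvu}, so no low-frequency polynomial ambiguity arises. Continuity in $t$ of $W$ (needed to make pointwise-in-time evaluation meaningful) follows from the energy-space embedding and the regularity of the Duhamel extension.
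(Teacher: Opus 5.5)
Your argument is correct and follows essentially the route the paper intends. The paper gives no separate proof: it presents the corollary as an immediate consequence of Lemma \ref{higherregularityfcontrolofv} together with the dyadic energy estimate \eqref{energy}, which yields $\sup_{t}\Vert W(\cdot,t)\Vert_{\dot{B}^{\sigma}_{2,1}} \lesssim \Vert W\Vert_{F^{\sigma}}$ for any extension $W$ of $V_{x}$; the same embedding is used in the proof of Proposition \ref{shorttimeexs}. Your time-continuity remarks are harmless extras, since the norm bound only needs that $\Delta_{k}$ acts in $\xi$ alone, so that $\Delta_{k}W(\cdot,t)=\Delta_{k}V_{x}(\cdot,t)$ for $t\in[-T',T']$.
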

Now we prove the following apriori estimate
\begin{proposition}\label{shorttimeexs}
Let $n \geq 5$, $ s \in (1/2,1) $. Then for any $ u_{0} \in B^{\sigma_{0} + 1}_{Q}$ with 
$$
\Vert u_{0} \Vert_{\dot{B}^{n/2}_{Q}} \leq \epsilon_{4}
$$
we have the following: given $T \in (0,1]$ and any 
$$
u \in C([-T,T] : B^{\sigma_{0}}_{Q}) \cap L^{\infty} ([-T,T]: \dot{B}^{\sigma_{0}+1)}_{Q})
$$ which solves \eqref{SSchrodingereq} on the time interval $[-T,T]$ with initial data $u_{0}$, we have  
\begin{equation}\label{uniformbound}
\sup_{t \in [-T,T]} \Vert u \Vert_{B^{\sigma_{0}}_{Q}} \lesssim \Vert u_{0} \Vert_{{B}^{\sigma_{0}}_{Q}},
\end{equation}
with constant independent of $T$. Moreover, for any $ \sigma \in [ \frac{n}{2}, \sigma_{0} ]$ we have 
\begin{equation}\label{uniformbound2}
\sup_{t \in [-T,T]} \Vert u \Vert_{\dot{B}^{\sigma}_{Q}} \lesssim \Vert u_{0} \Vert_{\dot{B}^{\sigma_{}}_{Q}},
\end{equation}
with constant independent of $T$. And lastly, for any $ \sigma \in [\frac{n}{2}, \sigma_{0} ]$ we have
$$
u \in C([-T,T] : \dot{B}^{\sigma}_{Q} ).
$$

\end{proposition}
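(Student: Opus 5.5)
The plan is a bootstrap/continuity argument in $T'$, transferring the frame-level bounds (Corollary \ref{keyestimatesforfixedpoint}) back to $u$ via the identity $\partial_i u = \Re(V_i) e_1 + \Im(V_i) e_2$.

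First I deal with the pointwise condition $|u-Q|\le 1/2$, which all the preceding lemmas assume. Let
\[
T^* := \sup\{T'\in[0,T] : |u(x,t)-Q|\le 1/2 \text{ on } \R^n\times[-T',T']\}.
\]
By continuity of $u$ in $C([-T,T]:B^{\sigma_0}_Q)$ and the embedding $\dot B^{n/2}_{2,1}\hookrightarrow L^\infty$, we have $T^*>0$, since $\|u_0-Q\|_{L^\infty}\lesssim\epsilon_4$. On $[-T',T']$ with $T'<T^*$, Corollary \ref{keyestimatesforfixedpoint} gives
\[
\sup_t\|V_x\|_{\dot B^{\sigma}_{2,1}}\lesssim \|u_0\|_{\dot B^{\sigma+1}_Q},\qquad \sigma\in[\tfrac{n-2}{2},\sigma_0-1].
\]
Lemma \ref{EcontroledbyV} and Lemma \ref{EcontrolledVhigherregularity} in their fixed-time versions then transfer this to $u$. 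Alternatively, I would use the energy embedding \eqref{energy}, which gives $\sup_t\|\cdot\|_{\dot B^\sigma_{2,1}}\lesssim\|\cdot\|_{F^\sigma}$ for $|\nabla|u$. This yields
\[
\sup_{t}\|u\|_{\dot B^{\sigma+1}_Q}\lesssim \|u_0\|_{\dot B^{\sigma+1}_Q} \quad\text{for } \sigma+1\in[\tfrac n2,\sigma_0].
\]
In particular $\|u(t)-Q\|_{L^\infty}\lesssim\|u(t)\|_{\dot B^{n/2}_Q}\lesssim \epsilon_4 < 1/4$ for $\epsilon_4$ small. By continuity in time the condition then persists slightly past $T^*$, so $T^*=T$. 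This gives \eqref{uniformbound2}, with constants independent of $T$ because the linear and nonlinear estimates are.

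For the inhomogeneous bound \eqref{uniformbound} I still need $L^2$ control of $u-Q$, i.e. the low-frequency part of the $B^{\sigma_0}_{2,1}$ norm. I would try to get this from the equation. Since $|u|=1$ and $u\cdot\partial_t u=0$, we have $\partial_t|u-Q|^2 = -2\,Q\cdot\partial_t u$, and
\[
Q\cdot\partial_t u = -Q\cdot(u\wedge(-\Delta)^s u) = -(Q\wedge u)\cdot(-\Delta)^s(u-Q) = -((u-Q)\wedge Q)\cdot(-\Delta)^s(u-Q).
\]
Integrating in space and writing this via the Plancherel/commutator structure, the quadratic term $\int (u-Q)\wedge Q\cdot(-\Delta)^s(u-Q)$ vanishes by antisymmetry. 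What remains is a bound by $\|u-Q\|_{L^2}\,\|u\|_{\dot B^{n/2}}\cdots$, so Gronwall on $[0,1]$ gives $\|u(t)-Q\|_{L^2}\lesssim\|u_0-Q\|_{L^2}$, using $T\le1$ to keep the constant uniform. A cleaner alternative is to note that the stereographic quantity $\int |u-Q|^2$ is conserved up to the skew term. This step is the one I expect to be most delicate: one must check that the remaining term really is controlled without losing derivatives, and conservation of $\int (u\cdot Q)$ may be the right route, since $\partial_t(u\cdot Q)$ is a fractional divergence-type expression whose integral vanishes. With that in hand, combining the $L^2$ bound with \eqref{uniformbound2} at $\sigma=\sigma_0$ gives \eqref{uniformbound}.

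Finally, continuity in $C([-T,T]:\dot B^\sigma_Q)$ for $\sigma\in[\frac n2,\sigma_0]$ follows from $u\in C([-T,T]:B^{\sigma_0}_Q)$ together with the embedding $B^{\sigma_0}_{2,1}\hookrightarrow \dot B^\sigma_{2,1}$ for $0<\sigma\le\sigma_0$. This embedding holds because low frequencies are summed with the weight $2^{\sigma k}\le 1$, and high frequencies with $2^{\sigma k}\le 2^{\sigma_0 k}$.
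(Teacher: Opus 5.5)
Your proposal is essentially the paper's proof. It runs a continuity argument on $|u-Q|\le 1/2$, then the chain $\sup_{|t|\le T'}\Vert u(t)\Vert_{\dot B^{\sigma}_Q}\lesssim\Vert |\nabla| u\Vert_{\tilde F^{\sigma-1}(T')}\lesssim \Vert V_x\Vert_{F^{\sigma-1}(T')}\lesssim\Vert u_0\Vert_{\dot B^{\sigma}_Q}$ via \eqref{energy} and Lemmas \ref{EcontrolledVhigherregularity} and \ref{higherregularityfcontrolofv}, and finally $L^2$ conservation for \eqref{uniformbound}. Cite Lemma \ref{higherregularityfcontrolofv} rather than Corollary \ref{keyestimatesforfixedpoint}, since the transfer lemma is stated in $F$-norms and the paper has no fixed-time versions. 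Your continuity argument via $B^{\sigma_0}_{2,1}\hookrightarrow\dot B^{\sigma}_{2,1}$ is valid and more direct than the paper's appeal to $F^\sigma\hookrightarrow C(\R:B^\sigma_{2,1})$.

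The step you flagged as delicate has no remainder at all. Your own identity gives $\frac{d}{dt}\Vert u-Q\Vert_{L^2}^2=-2\int_{\R^n}\bigl((-\Delta)^{s/2}(u-Q)\wedge Q\bigr)\cdot(-\Delta)^{s/2}(u-Q)\,dx=0$ exactly (your intermediate line has a harmless sign slip). This is the paper's computation, so no Gronwall argument and no use of $T\le 1$ is needed.
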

\begin{proof}
First, notice the conservation of energy law 
$$
\Vert u(\cdot,t) -Q \Vert_{L^{2}(\R^{n})} = \Vert u|_{t=0}  -Q \Vert_{L^{2}(\R^{n})},
$$
for any $ t \in [-T,T]$. 
Indeed, this follows from integration by parts
\begin{equation*}
\begin{split}
& \partial_{t} \left( \Vert u(\cdot,t) -Q \Vert^{2}_{L^{2}(\R^{n})}\right)
\\
& = 2 \int_{\R^{n}} \langle u(x,t) - Q , \partial_{t} u(x,t) \rangle dx
\\
& = -2 \int_{\R^{n}} \langle u(x,t) - Q , u(x,t) \wedge (- \Delta)^{s} u(x,t) \rangle dx
\\
& = -2 \int_{\R^{n}} \langle u(x,t) -Q, Q \wedge ( - \Delta)^{s} u(x,t) \rangle dx 
\\
& = 2 \int_{\R^{n}} \langle \left( u(x,t)-Q \right) \wedge Q , (- \Delta)^{s}( u(x,t) -Q) \rangle dx
\\
&= 2 \int_{\R^{n}} \langle (-\Delta)^{s/2} \left( u(x,t) - Q\right) )\wedge Q, (-\Delta)^{s/2}( u(x,t) -Q) \rangle dx
\\
&=0
\end{split}
\end{equation*}
Hence, 
$$
\Vert u(\cdot,t) -Q\Vert_{L^{2}(\R^{n})} = \Vert u|_{t=0} -Q \Vert_{L^{2}(\R^{n})},
$$
for any $ t \in [-T,T]$. Therefore, to prove \eqref{uniformbound} it suffices to prove \eqref{uniformbound2}, namely for any $ \sigma \in [\frac{n}{2},\sigma_{0}]$ we have to prove
\begin{equation}\label{uniformbound1}
\sup_{t \in [-T,T]} \Vert u \Vert_{\dot{B}^{\sigma}_{Q}} \lesssim \Vert u|_{t=0} \Vert_{\dot{B}^{\sigma}_{Q}}.
\end{equation}
First,  assume $ | u(x,t) - Q | \leq 1/2$ for any $ (x,t) \in \R^{n} \times [-T,T]$. Then
in view of Lemma \ref{EcontrolledVhigherregularity} we have 
$$
\Vert |\nabla| u \Vert_{\tilde{F}^{\sigma_{} -1}(T')} \lesssim \Vert V_{x} \Vert_{F^{\sigma_{} -1}(T')}
$$
for any $T' \in [0,T]$. Using Lemma \ref{higherregularityfcontrolofv} we obtain
\begin{equation}\label{cnus}
  \Vert |\nabla| u \Vert_{\tilde{F}^{\sigma_{} -1}(T')} \lesssim \Vert u|_{t=0} \Vert_{\dot{B}_{Q}^{\sigma_{}}},  
\end{equation}
for any $ T' \in [0,T]$.
Now using the energy estimate, \eqref{energy}, we end up with \eqref{uniformbound2} and hence \eqref{uniformbound1}. \\
This proves \eqref{uniformbound2}, and \eqref{uniformbound}  under the assumption that $ | u(x,t) - Q | \leq 1/2$ for any $ (x,t) \in \R^{n} \times [-T,T]$. If this is not the case, then we restrict to a smaller interval $[-\tilde{T},\tilde{T}]$ so that $ | u(x,t) - Q | \leq 1/2$ for any $ (x,t) \in \R^{n} \times [- \tilde{T},\tilde{T}]$. Then repeating the argument on the smaller interval $[-\tilde{T},\tilde{T}]$ we obtain 
$$
\sup_{t \in [-\tilde{T},\tilde{T}]} \Vert u - Q \Vert_{\dot{B}^{\frac{n}{2}}_{2,1}} \leq C \epsilon_{4}
$$
if $ \epsilon_{4} $ is chosen small enough. Then this implies 
$$
\sup_{t \in [-\tilde{T},\tilde{T}]}\Vert u - Q \Vert_{L^\infty_{x}} \leq \frac{1}{4}
$$
this readily implies that $ \vert u(x,t) - Q \vert \leq 1/2$ for any $ (x,t) \in \R^{n} \times [-T,T]$. Which proves \eqref{uniformbound2}, and \eqref{uniformbound}. Lastly, in view of the embedding $ F^{\sigma} \hookrightarrow C ( \R : B^{\sigma}_{2,1} ) $ and \eqref{cnus} we obtain that $ u \in C ( [-T,T ] : \dot{B}^{\sigma}_{Q})$.
\end{proof}
An immediate consequence of the above is the following 
\begin{corollary}\label{suffc}
Let $ n \geq 5$, $ s \in (1/2,1)$. Then there exists $ \epsilon : = \epsilon(n,s, \sigma_{0}) >0$ so that the following holds; for any $ u_{0} \in B^{\sigma_{0} + 1}_{Q}$ with
$$
\Vert u_{0} \Vert_{\dot{B}^{\frac{n}{2}}_{Q}} \leq \epsilon
$$
there exists $u \in C ( [-1,1]: \dot{B}_{Q}^{\sigma_{0}}) \cap L^{\infty} ([-1,1] : \dot{B}^{\sigma_{0}+1}_{Q}) $ which solves \eqref{SSchrodingereq} on the time interval $ [-1,1]$, and satisfies 
$$
\sup_{t \in [-1,1]} \Vert u \Vert_{\dot{B}^{n/2}_{Q}} \lesssim \Vert u_{0} \Vert_{\dot{B}^{n/2}_{Q}}.
$$
Moreover, for any $ \sigma \in [\frac{n}{2},\sigma_{0}]$ we have 
$$
\sup_{t \in [-1,1]} \Vert u \Vert_{\dot{B}^{\sigma}_{Q}} \lesssim \Vert u_{0} \Vert_{\dot{B}^{\sigma}_{Q}}.
$$
\end{corollary}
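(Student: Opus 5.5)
The plan is a continuation argument built on the local existence result (Proposition \ref{localexistencethm}) together with the a priori bounds of Proposition \ref{shorttimeexs}. Take $\epsilon\le\epsilon_4$, and for now only slightly smaller than the $\epsilon_4$ of Proposition \ref{shorttimeexs}; it will be shrunk once more below so that the a priori estimate closes.

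Step 1: local solution. Since $u_0\in B^{\sigma_0+1}_Q$, Proposition \ref{localexistencethm} produces a solution $u\in L^\infty([-T_0,T_0]:B^{\sigma_0+1}_Q)$ with $u\in C([-T_0,T_0]:B^{\sigma_0}_Q)$. The existence time is $T_0=T_0(s,\sigma_0,\|u_0\|_{B^{\sigma_0}_Q})\in(0,1]$. The key point is that $T_0$ depends only on this inhomogeneous norm, and on nothing else about the data.

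Step 2: continuation up to time $1$. Let
\[
T^*=\sup\{T\le 1:\ \text{a solution in the above class exists on }[-T,T]\}.
\]
On any such interval, Proposition \ref{shorttimeexs} gives the uniform bound $\sup_t\|u(t)\|_{B^{\sigma_0}_Q}\le C\|u_0\|_{B^{\sigma_0}_Q}$, with $C$ independent of $T$. It also gives, for every $t$, $\|u(t)\|_{\dot B^{n/2}_Q}\le C\|u_0\|_{\dot B^{n/2}_Q}\le C\epsilon$. Choosing $\epsilon\le\epsilon_4/C$, every time slice $u(t)$ is again admissible data for Proposition \ref{shorttimeexs}. We then restart from $u(\pm(T^*-\delta))$. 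Proposition \ref{localexistencethm} gives a uniform extension time $T_0(s,\sigma_0,C\|u_0\|_{B^{\sigma_0}_Q})$, so choosing $\delta$ smaller than this uniform time pushes the solution beyond $T^*$ unless $T^*=1$.

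Step 3: regularity in the glued class. To restart we also need the time slices to lie in $B^{\sigma_0+1}_Q$. This requires propagation of the top regularity, i.e. a bound on $\|u\|_{L^\infty\dot B^{\sigma_0+1}_Q}$ on each piece. I would get this from Proposition \ref{localexistencethm} itself, which should control the $B^{\sigma_0+1}$ norm for as long as the lower norm $B^{\sigma_0}$ stays bounded, or by a persistence-of-regularity argument on its proof.

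Step 4: the stated estimates. Gluing the pieces gives a solution on $[-1,1]$ in $C([-1,1]:\dot B^{\sigma_0}_Q)\cap L^\infty([-1,1]:\dot B^{\sigma_0+1}_Q)$. The estimates for $\sigma\in[\frac n2,\sigma_0]$ then follow by applying \eqref{uniformbound2} on $[-1,1]$, which is legitimate because its constant is independent of $T$.

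The main obstacle is Step 3, namely avoiding loss of the $B^{\sigma_0+1}$ regularity at each restart and making sure the extension time does not degenerate. If the $B^{\sigma_0+1}$ bound were to grow, I would still expect it to stay finite on $[-1,1]$, because only finitely many restarts are needed, each of uniform length. Uniqueness is used only to glue overlapping pieces consistently; if it is unavailable, the pieces can simply be concatenated at the restart times.
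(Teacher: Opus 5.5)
Your proposal is correct and follows the paper's argument: you take the local solution from Proposition \ref{localexistencethm}, whose existence time depends only on $\|u_0\|_{B^{\sigma_0}_Q}$, extend it by a uniform time step using the $T$-independent bound \eqref{uniformbound}, and read off the estimates on $[-1,1]$ from Proposition \ref{shorttimeexs}. Your Step 3 worry is settled directly by the persistence-of-regularity clause of Proposition \ref{localexistencethm}, which gives the $B^{\sigma_0+1}_Q$ bound on the same existence time; the extra smallness $\epsilon\le\epsilon_4/C$ is unnecessary, because \eqref{uniformbound} is always applied from time $0$ on the glued interval, so $\epsilon=\epsilon_4$ suffices as in the paper.
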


\begin{proof}
Take $ \epsilon = \epsilon_{4}$, where $ \epsilon_{4} $ is the same as in Proposition \ref{shorttimeexs}. Then
using Proposition \ref{localexistencethm} we can find $T_{0}:= T_{0}(\sigma_{0}, \Vert u_0 \Vert_{{B}^{\sigma}_{Q}})> 0$, and $u$ so that $ u $ is a solution of \eqref{SSchrodingereq} on the time interval $[-T_{0},T_{0}]$. Now using \eqref{uniformbound} this solution can be extended to $[-T_{0}-T,T_{0}+T]$ with $T>0$ independent of $T_{0}$. Hence, $u $ can be extended to a solution on $[-1,1]$. Lastly, the uniform estimates follow from Proposition \ref{shorttimeexs}.
\end{proof}

Now we can prove Theorem \ref{mainthm}
\begin{proof}[Proof of Theorem \ref{mainthm}]
In view of Corollary \ref{suffc}, 
it suffices to show that given a solution $u$ of \eqref{SSchrodingereq}  on $ \R^{n} \times [-R,R]$, with $R> 1$ then we have 
\begin{equation}\label{finalpriori}
    \sup_{t \in [-R,R]} \Vert u \Vert_{\dot{B}^{\sigma}_{Q}} \lesssim \Vert u_{0} \Vert_{\dot{B}^{\sigma}_{Q}}
\end{equation}
for any $ \sigma \in [\frac{n}{2},\sigma_{0}]$,
with constants independent of $R$. 
Indeed, once we prove \eqref{finalpriori}, we obtain Theorem \ref{mainthm} as follows; Take $ \epsilon \ll \epsilon_{4}$, and by Corollary \ref{suffc} we can find  $u^{(1)}$ that solves \eqref{SSchrodingereq} on the time interval $ [-1,1]$ and satisfies 
\begin{equation}\label{constc0}
\sup_{t \in [-1,1]} \Vert u \Vert_{\dot{B}^{\frac{n}{2}}_{Q}} \leq C_{0} \epsilon.
\end{equation}
If $ \epsilon > 0$ was chosen so that 
$$
C_{0} \epsilon < \epsilon_{4},
$$
then we have 
$$
\Vert u^{(1)} ( \cdot,1) \Vert_{\dot{B}^{n/2}_{Q}} \leq \epsilon_{4}
$$
then we can apply Corollary \ref{suffc} to the initial data $ u^{(1)}(\cdot,1)$, to obtain $u_{+}^{(2)}$ which is a solution of \eqref{SSchrodingereq} on the time interval $[-1,1]$ but with initial data
$$
u_{+}^{(2)}(\cdot,0) = u^{(1)}(\cdot,1).
$$
We can apply the same argument to obtain a solution $u^{(2)}_{-}$ of \eqref{SSchrodingereq} on the time interval $[-1,1]$ but with initial data $ u^{(1)}(\cdot,-1)$. Now we construct $ u^{(2)}$ as follows
$$
u^{(2)}(x,t) := \begin{cases}
    u^{(1)}(x,t)  ~~\text{  , if } t \in [-1,1]
    \\
    u^{(2)}_{+}(x,t-1) ~~ \text{ , if } t \in [1,2]
    \\
     u^{(2)}_{-}(x,t+1) ~~ \text{  , if } t \in [-2,-1]
\end{cases}
$$
Then notice that $u^{(2)}$ solves \eqref{SSchrodingereq} with initial data $ u_{0}.$ Now assuming we have proven \eqref{finalpriori}, then we obtain
$$
\sup_{t \in [-2,2]} \Vert u^{(2)} (\cdot,t) \Vert_{\dot{B}^{n/2}_{Q}} \leq C \epsilon
$$
And if $ \epsilon >0$ was chosen so that $ C \epsilon < \epsilon_{4}$ then we can repeat the process to obtain a solution $u^{(3)}$ on the time interval $[-3,3]$, which satisfy 
$$
\sup_{t \in [-3,3]} \Vert u^{(3)} (\cdot,t) \Vert_{\dot{B}^{n/2}_{Q}} \leq C \epsilon
$$
 And lastly, since $C$ is the same constant in each iteration, then we can extend $u $ to all of $\R^{n} \times \R$. Thus, it remains to prove \eqref{finalpriori}.
\\
To that end, assume $u$ is a solution of \eqref{SSchrodingereq} on the time interval $[-R,R]$. Then  define $v$ as follows 
$$
v(x,t) : = {u}( R^{1/2s} x , R t)  ~~~\text{  ,for } (x,t) \in \R^{n} \times [-1,1]
$$
Then notice that $ v $ solves \eqref{SSchrodingereq} with initial data $ u_{0}( R^{1/2s}\cdot)$ on the time interval $[-1,1]$. Further, 
notice 
$$
\Vert u_{0}(R^{1/2s} \cdot) \Vert_{\dot{B}^{n/2}_{Q}} \approx \Vert u_{0} \Vert_{\dot{B}^{n/2}_{Q}} \leq \epsilon.
$$
Hence, using Proposition \ref{shorttimeexs} we obtain 
\begin{equation}\label{maay}
\sup_{t \in [-1,1]} \Vert v \Vert_{\dot{B}_{Q}^{\sigma}} \lesssim \Vert u_{0}( R^{1/2s} \cdot) \Vert_{\dot{B}^{\sigma}_{Q}} \approx_{n,s,\sigma} ( R^{1/2s})^{(\sigma - \frac{n}{2})}\Vert u_{0} \Vert_{\dot{B}^{\sigma}_{Q}}
\end{equation}
but for any $t \in [-1,1]$ we have 
\begin{equation}\label{maay2}
 \Vert v(\cdot,t) \Vert_{\dot{B}_{Q}^{\sigma_{}}} \approx (R^{1/2s})^{(\sigma_{} - \frac{n}{2})} \Vert {u}(\cdot,R t) \Vert_{\dot{B}_{Q}^{\sigma_{}}} 
\end{equation}
Combining \eqref{maay} and \eqref{maay2} we obtain 
$$
\sup_{t \in [-R,R]} \Vert u \Vert_{\dot{B}^{\sigma_{}}_{Q}} \lesssim \Vert u_{0} \Vert_{\dot{B}^{\sigma_{}}_{Q}}.
$$
\end{proof}

\appendix 
\section{Taylor estimates}\label{s:taylor}
We start with the following elementary estimates. This is implicitly contained in the proof of \cite[Lemma 5.1]{subcritical} but we include the proof here for completeness.
\begin{lemma}\label{Taylorcalc}
Let $ \xi \in \R^{n} \setminus \{ 0 \}$ be fixed with $ |\xi | \approx 2^{k}$, $k \in \Z$. Then for any $ \eta \in \R^{n} $ with $ | \eta | \ll | \xi|$ we have the following identity 
\begin{equation}\label{Taylorcal111}
\vert \xi \vert^{2s} - \vert \xi - \eta \vert^{2s} = \sum_{ | \alpha | =1}^{\infty} m^{i}_{\alpha}( \xi) | \xi |^{2s - | \alpha|} \frac{\eta^{\alpha}}{\alpha !}
\end{equation}
Similarly, we have, for any $i \in \{1,...,n \}$ 
\begin{equation}\label{Taylorcal22}
\xi_{i} | \xi |^{2s-2} - (\xi - \eta)_{i} | \xi - \eta|^{2s-2} = \sum_{ | \alpha | =1}^{\infty} m^{2}_{\alpha}( \xi) | \xi |^{2s-1 - | \alpha|} \frac{\eta^{\alpha}}{\alpha !}
\end{equation}
and lastly we have, for any $i,j \in \{1,...,n \}$ the following 
\begin{equation}\label{Taylorcal33}
\xi_{i} \xi_{j} | \xi |^{2s-2} - (\xi - \eta)_{i}(\xi-\eta)_{j} | \xi - \eta|^{2s-2} = \sum_{ | \alpha | =1}^{\infty} m^{3}_{\alpha}( \xi) | \xi |^{2s - | \alpha|} \frac{\eta^{\alpha}}{\alpha !}.
\end{equation}
Here $m^{i}_{\alpha} $ satisfy the following; there exists a constant $C_{0} : = C_{0}(n) \geq 1$ so that 
\begin{equation}\label{qpppa}
\sum_{|\alpha|=1}^{\infty} \frac{ \Vert \mathcal{F}^{-1} ( \varphi_{k} m^{i}_{\alpha}) \Vert_{L^{1}}}{ \alpha !} 2^{ -C_{0}  |\alpha |} < \infty.    
\end{equation}

\end{lemma}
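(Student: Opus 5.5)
The plan is to expand each left-hand side in a multivariate Taylor series about $\xi$ in the variable $\eta$, and then use homogeneity to read off the coefficients. Set $P(\zeta) := |\zeta|^{2s}$, $P_i(\zeta) := \zeta_i|\zeta|^{2s-2}$ and $P_{ij}(\zeta) := \zeta_i\zeta_j|\zeta|^{2s-2}$. These are smooth on $\R^n\setminus\{0\}$ and homogeneous of degrees $2s$, $2s-1$ and $2s$ respectively. For $|\eta|\le c|\xi|$ with $c$ small, the segment $\{\xi-\theta\eta\}$ stays in the annulus $|\zeta|\approx|\xi|$, and each of these functions is real-analytic there. Hence
\[
P(\xi)-P(\xi-\eta) = -\sum_{|\alpha|\geq 1}\frac{(-1)^{|\alpha|}\partial^\alpha P(\xi)}{\alpha!}\eta^\alpha ,
\]
and similarly for $P_i$ and $P_{ij}$. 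We then define $m_\alpha(\xi) := -(-1)^{|\alpha|}\,|\xi|^{|\alpha|-2s}\partial^\alpha P(\xi)$, and analogously with $2s-1$ in place of $2s$ for $P_i$. By homogeneity, $\partial^\alpha P$ is homogeneous of degree $2s-|\alpha|$, so each $m_\alpha$ is homogeneous of degree $0$. This yields \eqref{Taylorcal111}, \eqref{Taylorcal22} and \eqref{Taylorcal33} formally.

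Convergence of the series for $|\eta|\ll|\xi|$ should follow from Cauchy estimates for analytic functions. The function $P$ extends holomorphically to a complex neighborhood $\{z\in\C^n : |z-\zeta|<\rho|\zeta|\}$ of each real point $\zeta\neq0$; take for instance $z\mapsto (z\cdot z)^s$ with the principal branch, which is valid since $\Re(z\cdot z)>0$ there. The Cauchy estimates then give
\[
|\partial^\alpha P(\zeta)|\le \alpha!\,(\rho|\zeta|/\sqrt n)^{-|\alpha|} C|\zeta|^{2s},
\]
so $|m_\alpha|\lesssim \alpha!\,A^{|\alpha|}$ with $A=A(n)$.

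The main step is \eqref{qpppa}, which requires control of $\|\mathcal F^{-1}(\varphi_k m_\alpha)\|_{L^1}$ and not merely of $\sup|m_\alpha|$. By scaling invariance of the $L^1$ norm of the inverse Fourier transform, and since $m_\alpha$ is $0$-homogeneous, $\|\mathcal F^{-1}(\varphi_k m_\alpha)\|_{L^1} = \|\mathcal F^{-1}(\varphi_0' m_\alpha)\|_{L^1}$, where $\varphi_0'$ is supported in the annulus $|\xi|\in[1/2,2]$. This quantity is bounded by $C\sum_{|\beta|\le n+1}\|\partial^\beta(\varphi_0' m_\alpha)\|_{L^\infty}$ (Sobolev/Bernstein: $\|\mathcal F^{-1}g\|_{L^1}\lesssim\|g\|_{H^{n}}$ for compactly supported $g$). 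We therefore need $\partial^\beta m_\alpha$ on the annulus for $|\beta|\le n+1$. Since $\partial^\beta\partial^\alpha P = \partial^{\alpha+\beta}P$, the Cauchy estimates give
\[
|\partial^{\alpha+\beta}P|\lesssim (\alpha+\beta)!\,A^{|\alpha|+|\beta|}\lesssim_n \alpha!\,(2A)^{|\alpha|}|\alpha|^{n+1},
\]
and the powers $|\xi|^{|\alpha|-2s}$ and their derivatives contribute at most a further factor $|\alpha|^{n+1}C^{|\alpha|}$. Altogether,
\[
\frac{\|\mathcal F^{-1}(\varphi_k m_\alpha)\|_{L^1}}{\alpha!}\lesssim |\alpha|^{2n+2}B^{|\alpha|},
\]
and the number of multi-indices with $|\alpha|=N$ is at most $(N+1)^n$. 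Choosing $C_0$ with $2^{C_0}>2B$ makes the series in \eqref{qpppa} converge. The same choice of $C_0$, together with the restriction $|\eta|\le 2^{-C_0}|\xi|$ implicit in ``$|\eta|\ll|\xi|$'', also justifies the pointwise convergence used above.

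The only delicate point I anticipate is the bookkeeping of factorials from the $\beta$-derivatives, namely the bound $(\alpha+\beta)!\le \alpha!\,2^{|\alpha|+|\beta|}|\beta|!\,$ combined with polynomial growth, and this is absorbed into the geometric factor $2^{-C_0|\alpha|}$. The identities for $P_i$ and $P_{ij}$ are handled identically, since they are products of polynomials with $|\zeta|^{2s-2}$, which is holomorphic in the same complex neighborhood.
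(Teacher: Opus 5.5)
Your argument is correct, but it controls the coefficients $m_\alpha$ differently from the paper.

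\textbf{The paper's route.} It computes $D^\alpha h(0)$ for $h(\eta)=|\xi+\eta|^{2s}$ in closed form by induction. The result is a sum over partial pairings $P_k$ of the derivative indices, with Kronecker deltas times products of the Riesz-type symbols $R^j=\xi_j/|\xi|$, and explicit coefficients $C^k_{s,|\alpha|}$. It then bounds $\|\mathcal{F}^{-1}(\varphi_k m_\alpha)\|_{L^1}$ by $C^{|\alpha|}\sum_k |C^k_{s,|\alpha|}|\,|P_k|$. This uses that $\mathcal{F}L^1$ is an algebra and that each localized $R^j$ has bounded $\mathcal{F}L^1$ norm. Explicit factorial counting, with $|\alpha|!/\alpha!\le n^{|\alpha|}$, finishes the argument.

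\textbf{Your route.} You never compute the derivatives. The holomorphic extension $(z\cdot z)^s$ and the Cauchy estimates on polydiscs give $|\partial^\gamma P|\lesssim \gamma!\,A^{|\gamma|}$ uniformly on the annulus, for every multi-index $\gamma$. Zero-homogeneity reduces the problem to a fixed annulus. There the elementary bound $\|\mathcal{F}^{-1}g\|_{L^1}\lesssim\sum_{|\beta|\le n+1}\|\partial^\beta g\|_{L^\infty}$ turns the derivative bounds into the $\mathcal{F}L^1$ bound. The only loss is a harmless polynomial factor $|\alpha|^{2n+2}$, which the geometric weight $2^{-C_0|\alpha|}$ absorbs.

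\textbf{Comparison.} Your approach is shorter and treats all three symbols uniformly, indeed any homogeneous symbol that is real-analytic off the origin. The paper's ``same argument'' for \eqref{Taylorcal22} and \eqref{Taylorcal33} would require redoing the combinatorial derivative formula, or combining it with a Leibniz expansion. Your approach also makes two things explicit that the paper leaves implicit: the uniformity in $k$, and the quantitative convergence region $|\eta|<c\,|\xi|$ of the pointwise Taylor series. The paper's approach, in exchange, shows the structure of $m_\alpha$ concretely: explicit polynomials in Riesz symbols with computable coefficients.
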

\begin{proof}
fix such a $ \xi$ and define the function $ h: \R^{n} \to \R$ as follows
$$
h(\eta) : = \vert \xi + \eta \vert^{2s}
$$
since this function (as a function of $\eta$ in the domain $ |\eta | \ll |\xi |$) is real analytic at $0$ we can use Taylor theorem to obtain 
\begin{equation}\label{c2}
\vert \xi \vert^{2s} - \vert \xi - \eta \vert^{2s} = -\sum_{ \vert \alpha \vert = 1}^{\infty} \frac{1}{ \alpha !} D^{\alpha}h(0) (-\eta)^{\alpha}
\end{equation}
We need to compute $ D^{\alpha}h(0)$. 
Write $ D^{\alpha} = \partial_{i_{|\alpha|}} .... \partial_{i_{1}} $ with each $ i_{k} \in \{ 1,...,n\}$. To write $D^{\alpha}h(0)$ in closed form, we need to introduce the following notation:
Let $k \in [ 0,...,[\frac{|\alpha|}{2}] ]$, $[\frac{|\alpha|}{2}]$ is the greatest integer less than $\frac{|\alpha|}{2}$, and consider the set $P_{k}$ as the set of all distinct $k$ pairings of indices of $\{ i_{1},...,i_{|\alpha|} \}$. In other words, 
$$
P_{k}: = \{ \{ \{ i_{t_{1}},i_{r_{1}}\},...,\{i_{t_{k}},i_{r_{k}}\} \} : t_{q} \neq r_{q} \text{ and } \{{t_{q}},{r_{q}}\} \cap \{ {t_{q'}},{r_{q'}} \} = \emptyset \}.
$$
Simple computations give us that the cardinality of $P_{k}$ is 
$$
|P_{k} | = \frac{ |\alpha|!}{(|\alpha|-2k)! 2^{k} k!}.
$$
Next, if $p \in P_{k}$ then $p = \{ \{i_{t_{1}},i_{r_{1}}\},..., \{i_{t_{k}}, i_{r_{k}} \} \}$ with $i_{t},i_{r}$ satisfying the above constraints. Then define   
$$
S(p) = \{ i_{1},...,i_{\vert \alpha \vert} \} \setminus \{ i_{t_{1}} ,i_{r_{1}},....,i_{t_{k}}, i_{r_{k}} \}.
$$
then $S(p)$ has cardinality $ |\alpha| -2k$. Lastly, define for any $j \in S(p)$ the following 
$$
R^{j}: = \frac{\xi_{j}}{\vert \xi \vert}
$$
and for any $p \in P_{k}$ define $ \delta^{p} $ as the product of the corresponding kronecker deltas. More precisely,
$$
\delta^{p} : = \delta_{i_{q_{1}},i_{r_{1}}}.... \delta_{i_{q_{k}},i_{r_{k}}}.
$$
With this notation we claim that $D^{\alpha} h(0)$ is of the following form
\begin{equation}\label{www4}
\begin{split}
& D^{\alpha} h(0)
\\
& = \vert \xi \vert^{2s - |\alpha|} \big[C^{0}_{s,|\alpha|} R^{i_{1}}...R^{i_{|\alpha|}}+ \sum_{k=1}^{ [|\alpha|/2]} C^{k}_{s,|\alpha|} \sum_{p \in P_{k}} \delta^{p}  \prod_{j \in S(p)} R^{j}  \big]
\end{split}
\end{equation}
where
$$
C^{j}_{s,|\alpha|} : = 2s ( 2s-2)(2s-4)....(2s-2(|\alpha|-j-1)).
$$
This formula can be proven via induction. If $ | \alpha | =1 $ then $ D^{\alpha} = \partial_{i} $ for some $ i \in \{1,...,n \}$ and clearly we have 
$$
D^{\alpha} h(0) = 2s \vert  \xi  \vert^{2s-1} \frac{ \xi_{i}}{\vert \xi \vert}
$$
which is consistent with \eqref{www4}. Assume the formula \eqref{www4} holds for any multi-index with $ \vert \alpha \vert = m$. Consider a multi-index $ \alpha $ with $ | \alpha | = m+1$. Then write $ D^{\alpha} = \partial_{i_{m+1}} D^{\alpha'} $ where $ |\alpha' | = m$. Then by the induction hypothesis we have
\begin{equation}\label{www5}
\begin{split}
& D^{\alpha} h(0) 
\\
&= \partial_{i_{m+1}} ( \vert \xi \vert^{2s - m} \big[C^{0}_{s,m} R^{i_{1}}...R^{i_{m}}+ \sum_{k=1}^{ [m/2]} C^{k}_{s,m} \sum_{p \in P_{k}} \delta^{p}  \prod_{j \in S(p)} R^{j}  \big]
 )
 \\
 & = \partial_{i_{m+1}} ( \vert \xi \vert^{2s-2m} C_{s,m}^{0} \prod_{k=1}^{m} \xi_{i_{k}} ) + \partial_{i_{m+1}} ( \vert \xi \vert^{2s-m}  \big[ \sum_{k=1}^{[m/2]} C_{s,m}^{k} \sum_{p \in P_{k}} \delta^{p} \prod_{j \in S(p)} R^{j} \big]
 \\
 & = \vert \xi \vert^{2s-m-1}  C^{0}_{s,m+1} R^{i_{1}}...R^{i_{m+1}}
  + \vert \xi \vert^{2s-m-1} \sum_{ a=1}^{m} C^{1}_{s,m+1} \delta_{i_{m+1},i_{a}} \prod_{ j \in S ( \{ i_{m+1},i_{a} \})} R^{j} 
 \\
 & + \sum_{k=1}^{ [m/2]} C_{s,m}^{k} \sum_{p \in P_{k}} \delta^{p} \partial_{i_{m+1}}(  \vert \xi \vert^{2s-m} \prod_{j \in S(p)} R^{j} ) 
\end{split}
\end{equation}
and notice that for any $ p \in P_{k}$ we have 
\begin{equation}\label{www6}
\begin{split}
& \partial_{i_{m+1}} ( \vert \xi \vert^{2s -m } \prod_{j \in S(p)} R^{j} )
\\
& = \partial_{i_{m+1}} ( \vert \xi \vert^{2s - 2m +2k} \xi_{i_{q_{1}}} .... \xi_{q_{m-2k}} ) 
\\
& = (2s-2m+2k) \vert \xi \vert^{2s-m-1} R^{i_{q_{1}}}...R^{i_{q_{m-2k}}} R^{i_{m+1}}  + \vert \xi \vert^{2s-m-1} \sum_{j=1}^{m-2k} \delta_{i_{m+1}, i_{q_j}} \prod_{t \in S(p)\setminus \{ i_{q_{j}} \} } R^{t}
\end{split}
\end{equation}
plugging \eqref{www6} back into \eqref{www5} yields that $ D^{\alpha} h(0)$ is of the form given in \eqref{www4}. Therefore, the pointwise identity, \eqref{Taylorcal111}, holds with $m_{\alpha}$ being 
$$
m_{\alpha}(\xi) : = (-1)^{|\alpha|+1} \big[C^{0}_{s,|\alpha|} R^{i_{1}}...R^{i_{|\alpha|}}+ \sum_{k=1}^{ [|\alpha|/2]} C^{k}_{s,|\alpha|} \sum_{p \in P_{k}} \delta^{p}  \prod_{j \in S(p)} R^{j}  \big]
$$
Clearly, we have 
$$
\Vert \mathcal{F}^{-1}( \varphi_{k_{1}} m_{\alpha}) \Vert_{L^{1}} \lesssim C^{|\alpha|} a_{|\alpha|} 
$$
for any $k_{1} \in \Z$.
Here
\begin{equation}
a_{|\alpha|} : = \sum_{k=0}^{[|\alpha|/2]} | C_{s,|\alpha|}^{k} | | P_{k}| 
\end{equation}
and it remains to show that the series has the claimed convergence. To that end, we first bound $a_{|\alpha|}$. By definition, we had 
$$
a_{|\alpha|} = \sum_{k=0}^{[|\alpha|/2]} |C^{k}_{s,|\alpha|}| \vert P_{k} \vert
$$
We bound $C^{k}_{s,|\alpha|}$. Recall that 
$C^{k}_{s,|\alpha|} = 2s (2s-2) ....(2s-2(|\alpha|-k-1))$.
an obvious bound is 
$$
| C^{k}_{s,|\alpha|}| \lesssim_{s} 2^{|\alpha|-k} (|\alpha|- k)!
$$
and $|P_{k} |$ was explicit, we had
$$
|P_{k}| = \frac{|\alpha|!}{(|\alpha|-2k)! 2^{k} k!}
$$
therefore,
$$
|C^{k}_{s,|\alpha|}| |P_{k} | \lesssim |\alpha|!  \frac{2^{|\alpha|-2k}}{k!} \frac{ (|\alpha|-k)!}{(|\alpha|-2k)!}
$$
hence,
$$
\frac{a_{|\alpha|} 2^{-C_{0}|\alpha|}}{\alpha !} \lesssim  \frac{|\alpha|!}{\alpha !} 2^{(-C_{0}+1)|\alpha|} \sum_{k=0}^{[|\alpha|/2]} \frac{2^{-2k}}{k!} \frac{(|\alpha|-k)!}{(|\alpha|-2k)!}
$$
using the bound $ \frac{|\alpha|!}{\alpha !} \lesssim (n)^{|\alpha|}$ then we obtain
\begin{equation}
    \frac{a_{|\alpha|} 2^{-C_{0}|\alpha|}}{\alpha !} \lesssim  (n)^{|\alpha|} 2^{(-C_{0}+1)|\alpha|} \sum_{k=0}^{[|\alpha|/2]} \frac{2^{-2k}}{k!} \frac{(|\alpha|-k)!}{(|\alpha|-2k)!}
 \end{equation}
the right hand side can be bounded by 
$$
(n 2^{(-C_{0} +1)} e)^{|\alpha|}.
$$
Thus, we obtain
$$
\frac{ \Vert \mathcal{F}^{-1} ( \varphi_{k} m^{i}_{\alpha}) \Vert_{L^{1}}}{ \alpha !} 2^{ -C_{0}  |\alpha |} \leq (C n 2^{(-C_{0} +1)} e)^{|\alpha|}.
$$
If $C_{0}$ was chosen so that $ C_{0} \gg C n$ then summing $|\alpha|$, the series converges.\\
This prove \eqref{Taylorcal111} with the stated convergence. The proofs for \eqref{Taylorcal22}, and \eqref{Taylorcal33} follow by the same argument.

\end{proof}

The following lemma is an immediate consequence of Lemma \ref{Taylorcalc}, and \eqref{multiplierint}.

\begin{lemma}\label{Taylorstuff}
Let $ f,g \in L^{2}(\R^{n+1}), $ $k_{1} ,k_{2} \in  \Z$ with $ k_{2} \leq k_{1} -C_{0} $, and assume $ \hat{f} $ is supported in $ \{ ( \xi,\tau) : \vert \xi \vert  \in [2^{k_{1}-1},2^{k_{1} +1}] \} \times \R$ and $ \hat{g} $ is supported in $\{ \vert \xi \vert \in [2^{k_{2}-1},2^{k_{2}+1} ]\} \times \R $. 
Then for any $p,q \in [1,\infty], s \in (0,1)$ we have the following
$$
\Vert \Delta_{k_{1}} [(-\Delta)^{s},g](f) \Vert_{L^{p}_{e}L^{q}_{e^{\perp},t}} \lesssim \sum_{\vert \alpha \vert =1}^{\infty} \frac{a_{\vert \alpha \vert}}{\alpha !} 2^{k_{1} (2s -\vert \alpha \vert)} \Vert f D^{\alpha}(g) \Vert_{L^{p}_{e} L^{q}_{e^{\perp},t}}
$$
where the constants $ a_{\vert \alpha \vert}$ satisfy the following
$$
\sum_{ \vert \alpha \vert =1}^{\infty} \frac{ a_{ \vert \alpha \vert}}{\alpha !} 2^{-C_{0}  \vert \alpha \vert} < \infty.
$$
\end{lemma}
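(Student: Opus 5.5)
The plan is to write the commutator on the Fourier side and then feed the Taylor expansion of Lemma \ref{Taylorcalc} into the multiplier bound \eqref{multiplierint}. Suppressing $t$, we have
\[
\mathcal{F}\big([(-\Delta)^{s},g](f)\big)(\xi)=\int_{\R^{n}}\big(|\xi|^{2s}-|\xi-\eta|^{2s}\big)\hat g(\eta)\hat f(\xi-\eta)\,d\eta .
\]
On the support of $\hat g(\eta)\hat f(\xi-\eta)$ we have $|\eta|\aeq 2^{k_{2}}$ and $|\xi-\eta|\aeq 2^{k_{1}}$. Since $k_{2}\le k_{1}-C_{0}$, this gives $|\xi|\aeq 2^{k_{1}}$ and $|\eta|\ll|\xi|$, so the expansion \eqref{Taylorcal111} is available:
\[
|\xi|^{2s}-|\xi-\eta|^{2s}=\sum_{|\alpha|\ge1}m_{\alpha}(\xi)|\xi|^{2s-|\alpha|}\frac{\eta^{\alpha}}{\alpha!}.
\]
The factor $\eta^{\alpha}\hat g(\eta)$ is, up to a harmless power of $\i$, the Fourier transform of $D^{\alpha}g$. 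The $\alpha$-th term is therefore the Fourier transform of $\mathcal{F}^{-1}\big(m_{\alpha}|\xi|^{2s-|\alpha|}\,\mathcal{F}(fD^{\alpha}g)\big)$.

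Next I apply $\Delta_{k_{1}}$ and insert a fattened cutoff $\tilde\varphi_{k_{1}}=\varphi_{[k_{1}-2,k_{1}+2]}$, which equals $1$ on the frequency support of $fD^{\alpha}g$. The $\alpha$-th term then carries the multiplier $\varphi_{k_{1}}(\xi)\,m_{\alpha}(\xi)\,\tilde\varphi_{k_{1}}(\xi)|\xi|^{2s-|\alpha|}$. The factor $\tilde\varphi_{k_{1}}|\xi|^{2s-|\alpha|}$ rescales to a fixed annulus bump times $2^{k_{1}(2s-|\alpha|)}$. Its kernel has $L^{1}$ norm at most $C^{|\alpha|}2^{k_{1}(2s-|\alpha|)}$, with the growth in $|\alpha|$ coming from derivatives of $r^{2s-|\alpha|}$ on $[1/4,4]$. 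The factor $\varphi_{k_{1}}m_{\alpha}$ has kernel norm $\Vert\mathcal{F}^{-1}(\varphi_{k_{1}}m_{\alpha})\Vert_{L^{1}}$, which is scale invariant because $m_{\alpha}$ is $0$-homogeneous. By Young's inequality the product multiplier has kernel norm at most the product of these two, and \eqref{multiplierint} turns this into the bound on $\Vert fD^{\alpha}g\Vert_{L^{p}_{e}L^{q}_{e^{\perp},t}}$. Summing over $\alpha$ with the triangle inequality gives the claim with
\[
a_{|\alpha|}:=C^{|\alpha|}\sup_{k}\Vert\mathcal{F}^{-1}(\varphi_{k}m_{\alpha})\Vert_{L^{1}}.
\]

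The summability of $\sum_{|\alpha|}\frac{a_{|\alpha|}}{\alpha!}2^{-C_{0}|\alpha|}$ follows from \eqref{qpppa}. The extra factor $C^{|\alpha|}$ is absorbed by enlarging $C_{0}$, which is allowed since in the proof of Lemma \ref{Taylorcalc} $C_{0}$ may be taken as large as desired. Interchanging the $\alpha$-sum with the inverse Fourier transform is justified by this absolute convergence, once the terms are estimated in $L^{2}$ for $f,g\in L^{2}$.

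The main obstacle is the uniform control of the multiplier norms in $\alpha$. One must verify that the $L^{1}$ kernel bounds for $\tilde\varphi_{k_{1}}|\xi|^{2s-|\alpha|}$ grow at most geometrically in $|\alpha|$, so that they can be absorbed into $2^{-C_{0}|\alpha|}$. I would check this by bounding $N$ derivatives of $r^{2s-|\alpha|}$ on the annulus by $(C|\alpha|)^{N}$, with $N=n+1$ fixed, which gives polynomial growth in $|\alpha|$ and hence is harmless. The remaining steps are bookkeeping.
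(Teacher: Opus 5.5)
Your proposal is correct and follows the same route as the paper: write the commutator on the Fourier side, expand $|\xi|^{2s}-|\xi-\eta|^{2s}$ via \eqref{Taylorcal111} so that each term is a $\xi$-multiplier applied to $fD^{\alpha}g$, bound it with \eqref{multiplierint}, and sum using \eqref{qpppa}. You are more explicit than the paper about the factor $\tilde\varphi_{k_1}|\xi|^{2s-|\alpha|}$, whose kernel norm is $\lesssim C^{|\alpha|}2^{k_1(2s-|\alpha|)}$ and is absorbed by enlarging $C_0$; one correction to your final remark is that this growth is geometric, not polynomial, since $r^{2s-|\alpha|}$ has size about $4^{|\alpha|}$ at the inner edge of the annulus, so your first claim of a $C^{|\alpha|}$ bound is the right one.
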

\begin{proof}

We rewrite $[(-\Delta)^{s},g](f)$ using the Taylor expansion in Lemma \ref{Taylorcalc}, and using \eqref{multiplierint} we obtain 
\begin{equation}\label{www3}
\begin{split}
& \Vert \Delta_{k_{1}}[(-\Delta)^{s},g](f) \Vert_{L^{p}_{e}L^{q}_{e}}
\\
& \lesssim \sum_{\vert \alpha \vert =1}^{\infty} \frac{a_{\alpha}}{\alpha !} 2^{k_{1}(2s-\vert \alpha \vert )} \Vert f D^{\alpha} g \Vert_{L^{p}_{e} L^{q}_{e^{\perp},t}}
\end{split}
\end{equation}
where 
\begin{equation}
a_{|\alpha|} : =  \Vert \mathcal{F}^{-1} ( \varphi_{k} m^{1}_{\alpha}) \Vert_{L^{1}}
\end{equation}
and we conclude by \eqref{qpppa}.

\end{proof}
Next, we will need the following Taylor estimates for the commutator of the operator $ \nabla^{2s-1}_{i}$ for any $ i \in \{ 1,..,n\}$. 
\begin{lemma}\label{Taylorstuff2}
Let $ s \in (0,1) $ and $ n \geq 3$, and $ f,g $ are as in Lemma \ref{Taylorstuff}. Then we have the following inequality for any $p,q \in [1,\infty]$
\begin{equation*}
\Vert \Delta_{k_{1}} [ \nabla^{s}_{i}, g ](f) \Vert_{L^{p}_{e} L^{q}_{e^{\perp},t}} \lesssim \sum_{| \alpha |=1}^{\infty} \frac{a_{|\alpha|}}{\alpha !} 2^{k_{1} (s - |\alpha|) } \Vert f D^{\alpha} (g) \Vert_{L^{p}_{e} L^{q}_{e^{\perp},t}}
\end{equation*}
where the constants $ a_{|\alpha|}$ satisfy the convergence given in Lemma \ref{Taylorstuff}.
\end{lemma}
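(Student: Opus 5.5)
The proof will follow Lemma \ref{Taylorstuff} almost verbatim, now for the symbol $\psi_s(\zeta) := \i \zeta_i |\zeta|^{s-2}$, which is homogeneous of degree $s-1$ rather than $2s$. Concretely, on the Fourier side
\[
\mathcal{F}\big(\Delta_{k_1}[\nabla^{s}_i,g](f)\big)(\xi) = \varphi_{k_1}(\xi)\int_{\R^n}\big(\psi_s(\xi)-\psi_s(\xi-\eta)\big)\hat f(\xi-\eta)\hat g(\eta)\,d\eta,
\]
with $|\eta|\approx 2^{k_2}$ and $|\xi-\eta|\approx 2^{k_1}$. Since $k_2\le k_1-C_0$, we have $|\xi|\approx 2^{k_1}$ and $|\eta|\ll|\xi-\eta|$. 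It is therefore convenient to expand about $\zeta:=\xi-\eta$ instead, writing $\psi_s(\zeta+\eta)-\psi_s(\zeta)$. This expansion is the analogue of \eqref{Taylorcal22} with $2s$ replaced by $s$.

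\textbf{Step 1 (expansion).} Run the inductive computation from the proof of Lemma \ref{Taylorcalc} for $h(\eta)=(\zeta+\eta)_i|\zeta+\eta|^{s-2}$. Each derivative $D^\alpha h(0)$ equals $|\zeta|^{s-1-|\alpha|}$ times a polynomial in the Riesz quotients $R^j=\zeta_j/|\zeta|$ and Kronecker deltas. Its coefficients are products of $s-2,s-4,\dots$ (plus one extra factor coming from $\zeta_i$). They obey the same combinatorial bound as before: at most $|\alpha|!\,C^{|\alpha|}$ after summing over pairings. This gives
\[
\psi_s(\zeta+\eta)-\psi_s(\zeta)=\sum_{|\alpha|\ge1}\frac{m_\alpha(\zeta)}{\alpha!}|\zeta|^{s-1-|\alpha|}\eta^\alpha,
\]
where the series converges in the sense of \eqref{qpppa} once $C_0$ is large.

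\textbf{Step 2 (multiplier bound).} Since $\zeta$ is localized at frequency $2^{k_1}$, insert $\varphi_{[k_1-C,k_1+C]}(\zeta)$. Then $m_\alpha(\zeta)|\zeta|^{s-1-|\alpha|}$ acting on $f$ is a multiplier whose kernel has $L^1$ norm bounded by $a_{|\alpha|}2^{k_1(s-1-|\alpha|)}$. This follows by scaling, together with the bounds on $\mathcal{F}^{-1}(\varphi_{k}m_\alpha)$. The factor $\eta^\alpha\hat g(\eta)$ is $\mathcal{F}(D^\alpha g)$ up to a unimodular constant. Hence each term is a bilinear expression $(\text{multiplier applied to } f)\cdot D^\alpha g$.

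\textbf{Step 3 (mixed norms).} To move the multiplier off $f$, apply \eqref{multimultiplier} in its bilinear form ($h\equiv$ trivial) or the analogous bilinear version of \eqref{multiplierint}. This bounds the mixed $L^p_eL^q_{e^\perp,t}$ norm by $a_{|\alpha|}2^{k_1(s-1-|\alpha|)}\|f\,D^\alpha g\|$, where the $L^1$-kernel translation has been absorbed by Minkowski. The extra power $2^{-k_1}$ relative to the statement only helps, or equivalently it matches the stated exponent after the normalization $\nabla^s_i\sim|\nabla|^{s}$ used in the paper's convention. Finally, sum over $\alpha$ using the convergence \eqref{qpppa}.

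\textbf{Main obstacle.} The delicate step is Step 3. The multiplier depends on $\zeta=\xi-\eta$, the frequency of $f$ alone, so it factors as an operator on $f$ and no true bilinear symbol remains. One must nevertheless check that the $\Delta_{k_1}$ cutoff on $\xi$ and the cutoff on $\zeta$ commute harmlessly with the Minkowski argument. The combinatorial bound in Step 1 is routine, being identical to Lemma \ref{Taylorcalc}.
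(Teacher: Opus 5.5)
The overall strategy (Taylor-expand the symbol, then use a multiplier bound) is the right one. However, Step 3 does not go through, and the cause is the expansion point you chose in Step 1.

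\textbf{The gap.} Expanding about $\zeta=\xi-\eta$ makes each Taylor coefficient a Fourier multiplier $M_\alpha$ acting on $f$ alone, so the terms are $\Delta_{k_1}\bigl((M_\alpha f)\,D^\alpha g\bigr)$. In physical space
\[
(M_\alpha f)(x)\,D^\alpha g(x)=\int K_\alpha(u)\,f(x-u)\,D^\alpha g(x)\,du,
\]
and Minkowski leaves you with $\|K_\alpha\|_{L^1}\sup_u\|f(\cdot-u)\,D^\alpha g\|_{L^p_eL^q_{e^\perp,t}}$. The translation acts on one factor only. Translation invariance of the mixed norm therefore does not turn this into $\|f\,D^\alpha g\|_{L^p_eL^q_{e^\perp,t}}$.

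Invoking \eqref{multimultiplier} does not help either. Its proof is exactly this Minkowski step followed by H\"older, so it yields $\|f\|_{L^{p_1}_eL^{q_1}_{e^\perp,t}}\|D^\alpha g\|_{L^{p_2}_eL^{q_2}_{e^\perp,t}}$, never the norm of the pointwise product. Your argument thus proves only a H\"older-split variant of the lemma. That variant would suffice for the uses in the proof of Proposition \ref{trilineares1}, where the next step is always such a split, but it is not the stated inequality.

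\textbf{The fix.} Expand about the output frequency $\xi$ instead, as in \eqref{Taylorcal22} of Lemma \ref{Taylorcalc}. On the support of $\varphi_{k_1}$ one has $|\xi|\approx 2^{k_1}$, while $|\eta|\le 2^{k_1-C_0+1}$. Hence
\[
\i\xi_i|\xi|^{s-1}-\i(\xi-\eta)_i|\xi-\eta|^{s-1}=\sum_{|\alpha|\ge 1}\frac{m_\alpha(\xi)}{\alpha!}\,|\xi|^{s-|\alpha|}\,\eta^\alpha ,
\]
with $m_\alpha$ homogeneous of degree $0$. Since $\eta^\alpha\hat g(\eta)$ is a unimodular multiple of $\mathcal{F}(D^\alpha g)(\eta)$, this gives
\[
\Delta_{k_1}[\nabla^s_i,g](f)=\sum_\alpha\frac{c_\alpha}{\alpha!}\,T_\alpha\bigl(f\,D^\alpha g\bigr),\qquad |c_\alpha|=1,
\]
where $T_\alpha$ is the multiplier with symbol $\varphi_{k_1}(\xi)m_\alpha(\xi)|\xi|^{s-|\alpha|}$.

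The multiplier now acts on the single function $f\,D^\alpha g$, so \eqref{multiplierint} applies directly. By scaling, the kernel of $T_\alpha$ has $L^1$ norm $\lesssim C^{|\alpha|}2^{k_1(s-|\alpha|)}$ times the quantity summed in \eqref{qpppa}. The series then converges once $C_0$ is taken large enough. This is the argument of Lemma \ref{Taylorstuff}.

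\textbf{A smaller point.} Your symbol has the wrong order. In the paper's convention the superscript of $\nabla^{\beta}_i$ is its order, with symbol $\i\xi_i|\xi|^{\beta-1}$. So $\nabla^s_i$ has symbol $\i\xi_i|\xi|^{s-1}$, which produces exactly the factor $2^{k_1(s-|\alpha|)}$. With your degree-$(s-1)$ symbol, the extra factor $2^{-k_1}$ does not ``only help'', because $k_1\in\Z$ may be negative.
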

And lastly, we will need the following
\begin{lemma}\label{Taylorstuff3}
Let $ s \in (\frac{1}{2},1) $ , $ n \geq 3$ and $ f,g $ as in Lemma \ref{Taylorstuff}. Then define $ \nabla^{2s-1}_{i,j} : = \partial_{j} \nabla^{2s-1}_{i}$. Then for any $ p,q \in [1,\infty]$ we have the following 
$$
\Vert \Delta_{k_{1}} [ \nabla^{2s-1}_{i,j} ,g ]( f) \Vert_{L^{p}_{e } L^{q}_{e^{\perp},t}} \lesssim \sum_{ \vert \alpha \vert =1}^{\infty} \frac{a_{\vert \alpha \vert}}{\alpha !} 2^{k_{1} (2s -\vert \alpha \vert)} \Vert f D^{\alpha}(g) \Vert_{L^{p}_{e}L^{q}_{e^{\perp},t}}
$$
\end{lemma}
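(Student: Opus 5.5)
The plan is to follow Lemma \ref{Taylorstuff} and Lemma \ref{Taylorstuff2}: expand the symbol in Fourier space, then control each term of the expansion with \eqref{multiplierint}. Writing $g$ at frequency $\eta$ and $f$ at frequency $\xi-\eta$ with $|\eta|\approx 2^{k_2}\ll 2^{k_1}\approx|\xi|$, the commutator $[\nabla^{2s-1}_{i,j},g](f)$ has spatial Fourier transform
\[
\int_{\R^n} \Big( m(\xi) - m(\xi-\eta)\Big)\hat g(\eta)\hat f(\xi-\eta)\,d\eta ,
\]
where $m$ is the symbol of $\partial_j\nabla^{2s-1}_i$, namely (up to a harmless constant) $m(\zeta)=\zeta_i\zeta_j|\zeta|^{2s-2}$. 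The difference of symbols is therefore exactly the quantity expanded in \eqref{Taylorcal33} of Lemma \ref{Taylorcalc}. Since $k_2\le k_1-C_0$, the hypothesis $|\eta|\ll|\xi|$ holds, and we obtain
\[
m(\xi)-m(\xi-\eta)=\sum_{|\alpha|\ge1} m^3_\alpha(\xi)\,|\xi|^{2s-|\alpha|}\frac{\eta^\alpha}{\alpha!}.
\]
The homogeneity is $2s$ rather than $2s-1$ because $m$ is homogeneous of degree $2s$. This explains the factor $2^{k_1(2s-|\alpha|)}$ in the claimed bound.

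Next, each summand is interpreted as an operator. The factor $\eta^\alpha\hat g(\eta)$ is, up to $\i^{|\alpha|}$, the Fourier transform of $D^\alpha g$. What remains is the multiplier $m^3_\alpha(\xi)|\xi|^{2s-|\alpha|}$ acting on the product $f\,D^\alpha g$. After applying $\Delta_{k_1}$, this multiplier may be replaced by $\varphi_{[k_1-C,k_1+C]}(\xi)m^3_\alpha(\xi)|\xi|^{2s-|\alpha|}$, since both $\xi$ and $\xi-\eta$ have modulus $\approx 2^{k_1}$. By scaling, the $L^1$ norm of its inverse Fourier transform is at most $2^{k_1(2s-|\alpha|)}$ times $C^{|\alpha|}\|\mathcal F^{-1}(\varphi_{k}m^3_\alpha)\|_{L^1}$. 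The factor $C^{|\alpha|}$ accounts for the smooth factor $|\xi/2^{k_1}|^{2s-|\alpha|}$ on the annulus; it can be absorbed by enlarging $C_0$.

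With this kernel bound, \eqref{multiplierint} gives, for each $\alpha$,
\[
\|\cdot\|_{L^p_eL^q_{e^\perp,t}}\lesssim \frac{a_{|\alpha|}}{\alpha!}\,2^{k_1(2s-|\alpha|)}\|f D^\alpha g\|_{L^p_eL^q_{e^\perp,t}},
\qquad a_{|\alpha|}:=C^{|\alpha|}\|\mathcal F^{-1}(\varphi_{k}m^3_\alpha)\|_{L^1}.
\]
Summing over $\alpha$ via the triangle inequality yields the stated estimate. The summability $\sum a_{|\alpha|}2^{-C_0|\alpha|}/\alpha!<\infty$ then follows from \eqref{qpppa}.

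The main obstacle is justifying the exchange of the infinite Taylor sum with the operator, together with the uniform kernel bounds. The expansion converges only for $|\eta|\le 2^{-C_0}|\xi|$, so it must be used on the frequency supports fixed above. The geometric decay $2^{(k_2-k_1)|\alpha|}\le 2^{-C_0|\alpha|}$ is what ultimately makes the series converge once the norms are inserted. Convergence of the series in $L^p_eL^q_{e^\perp,t}$ follows from these bounds and Minkowski, as in Lemma \ref{Taylorstuff}.
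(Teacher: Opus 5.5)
Your proposal is correct and is essentially the argument the paper intends. Lemma \ref{Taylorstuff3} is stated without a separate proof, as the analogue of Lemma \ref{Taylorstuff}: insert the expansion \eqref{Taylorcal33} of Lemma \ref{Taylorcalc} for the degree-$2s$ symbol $\xi_i\xi_j|\xi|^{2s-2}$, bound each term acting on $f\,D^{\alpha}g$ via \eqref{multiplierint}, and sum using \eqref{qpppa}. Your explicit handling of the factor $|\xi|^{2s-|\alpha|}$ on the annulus, with the extra $C^{|\alpha|}$ absorbed by enlarging $C_0$, is a detail the paper leaves implicit, and you handle it correctly.
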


\section{Elementary estimates}\label{s:elementary}

\subsection{Elementary facts about Besov spaces}
We start with the following elementary estimate
\begin{lemma}\label{elemantaryestbesov}
Let $ \sigma > 0$. Then we have 
\begin{equation}\label{adad}
  \Vert g \partial_{i} f \Vert_{\dot{B}^{\sigma}_{2,1}} \lesssim \Vert f \Vert_{L^{\infty}}  \Vert g \Vert_{\dot{B}^{\sigma+1}_{2,1}} + \Vert g \Vert_{L^{\infty}} \Vert f \Vert_{\dot{B}^{\sigma+1}_{2,1}}.  
\end{equation}

\end{lemma}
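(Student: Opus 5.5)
We prove \eqref{adad} by a paraproduct (Bony) decomposition of the product $g\,\partial_i f$ into low--high, high--low and high--high interactions, estimating each dyadic piece in $L^2$ and then summing with weight $2^{\sigma k}$. Writing $f=\sum_{k_1}\Delta_{k_1}f$ and $g=\sum_{k_2}\Delta_{k_2}g$, we have
\[
g\,\partial_i f=\sum_{k}\Delta_{\leq k-5}g\,\partial_i\Delta_k f+\sum_{k}\Delta_k g\,\partial_i\Delta_{\leq k-5}f+\sum_{|k_1-k_2|\leq 4}\Delta_{k_2}g\,\partial_i\Delta_{k_1}f=:I+II+III.
\]

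For $I$, the summand has frequency $\approx 2^k$. Using $\Vert\Delta_{\leq k-5}g\Vert_{L^\infty}\lesssim\Vert g\Vert_{L^\infty}$ and Bernstein, $\Vert\partial_i\Delta_kf\Vert_{L^2}\lesssim 2^k\Vert\Delta_kf\Vert_{L^2}$, we get
\[
\Vert I\Vert_{\dot B^\sigma_{2,1}}\lesssim\Vert g\Vert_{L^\infty}\sum_k 2^{(\sigma+1)k}\Vert\Delta_kf\Vert_{L^2}=\Vert g\Vert_{L^\infty}\Vert f\Vert_{\dot B^{\sigma+1}_{2,1}}.
\]
For $II$, the summand also has frequency $\approx 2^k$. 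Here we write $\partial_i\Delta_{\leq k-5}f=\Delta_{\leq k-5}\partial_i f$. The key is to avoid putting the derivative on $f$ in $L^\infty$. Instead, use that the multiplier $\xi_i\varphi_{\leq k-5}(\xi)$ equals $2^{k}$ times an $L^1$-normalized kernel, so $\Vert\partial_i\Delta_{\leq k-5}f\Vert_{L^\infty}\lesssim 2^{k}\Vert f\Vert_{L^\infty}$. (The kernel of $\xi_i\varphi_0(2^{-k+5}\xi)$ has $L^1$ norm $\approx 2^k$ by scaling.) Hence
\[
\Vert II\Vert_{\dot B^\sigma_{2,1}}\lesssim\Vert f\Vert_{L^\infty}\sum_k2^{(\sigma+1)k}\Vert\Delta_kg\Vert_{L^2}=\Vert f\Vert_{L^\infty}\Vert g\Vert_{\dot B^{\sigma+1}_{2,1}}.
\]

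The high--high term $III$ is the main point, and it is where $\sigma>0$ is needed. Each summand has frequency $\lesssim 2^{k_1}$, so $\Delta_k$ of it vanishes unless $k\leq k_1+C$. We estimate
\[
\Vert\Delta_k(\Delta_{k_2}g\,\partial_i\Delta_{k_1}f)\Vert_{L^2}\leq\Vert\Delta_{k_2}g\Vert_{L^2}\Vert\partial_i\Delta_{k_1}f\Vert_{L^\infty}\lesssim 2^{k_1}\Vert f\Vert_{L^\infty}\Vert\Delta_{k_2}g\Vert_{L^2},
\]
using $\Vert\Delta_{k_1}f\Vert_{L^\infty}\lesssim\Vert f\Vert_{L^\infty}$ and Bernstein. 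Summing over $k\leq k_1+C$ with weight $2^{\sigma k}$ gives the geometric factor $\sum_{k\leq k_1+C}2^{\sigma k}\lesssim 2^{\sigma k_1}$, which is finite precisely because $\sigma>0$. Since $k_2\approx k_1$, this yields $\lesssim\Vert f\Vert_{L^\infty}\sum_{k_2}2^{(\sigma+1)k_2}\Vert\Delta_{k_2}g\Vert_{L^2}$.

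Combining the three estimates gives \eqref{adad}. The only subtleties are the Bernstein-type $L^\infty$ bound for $\partial_i\Delta_{\leq k}f$ and the positivity of $\sigma$ in $III$. One should also check that the decomposition converges in the relevant sense. This holds since $f,g\in L^\infty$ and the Besov norms are finite, which justifies interchanging the sums.
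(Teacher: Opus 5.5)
Your argument is correct and is essentially the paper's proof: it uses the same low--high/high--low/high--high decomposition, with $\sigma>0$ needed only to sum $\sum_{k\le k_1+C}2^{\sigma k}\lesssim 2^{\sigma k_1}$ in the high--high term. The one harmless difference is in that term: you put $\partial_i\Delta_{k_1}f$ in $L^\infty$ and $\Delta_{k_2}g$ in $L^2$, obtaining $\Vert f\Vert_{L^\infty}\Vert g\Vert_{\dot{B}^{\sigma+1}_{2,1}}$, while the paper puts $g$ in $L^\infty$ and $\partial_i\Delta_{j_1}f$ in $L^2$, obtaining $\Vert g\Vert_{L^\infty}\Vert f\Vert_{\dot{B}^{\sigma+1}_{2,1}}$; either choice works because $k_1\approx k_2$.
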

\begin{proof}
Notice that
\begin{equation}\label{corr1}
\begin{split}
& \sum_{k \in \Z } 2^{k \sigma} \Vert \Delta_{k} \left( g \partial_{i} f \right) \Vert_{L^{2}}
\\
& \lesssim \sum_{j \in \Z } \sum_{k : 2^{k} \approx 2^{j} } 2^{ k \sigma } \Vert \Delta_{k} \left( \Delta_{j} g \Delta_{\leq j-C} \partial_{i} f \right) \Vert_{L^{2}}
\\
& + \sum_{j \in \Z } \sum_{k : 2^{k} \approx 2^{j} } 2^{ k \sigma } \Vert \Delta_{k} \left( \Delta_{j} \partial_{i} f \Delta_{\leq j-C} g \right) \Vert_{L^{2}}
\\
& + \sum_{| j_{1} - j_{2} | \lesssim 1 } \sum_{k \leq j_{1}+C} 2^{k \sigma} \Vert \Delta_{k} \left( \Delta_{j_{1}} \partial_{i} f \Delta_{j_{2}}g \right) \Vert_{L^{2}}.
\end{split}
\end{equation}
We estimate each term on the right hand side of \eqref{corr1}. For the first term 
\begin{equation*}
\begin{split}
& 2^{\sigma j } \Vert \Delta_{k} \left( \Delta_{j}g  \Delta_{j-C} \partial_{i} f \right) \Vert_{L^{2}}
\\
& \lesssim 2^{(\sigma+1) j} \Vert \Delta_{ \leq j-C} f \Vert_{L^{\infty}} \Vert \Delta_{j} g \Vert_{L^{2}}
\\
& \lesssim \Vert f \Vert_{L^{\infty}} 2^{ (\sigma +1) j} \Vert \Delta_{j} g \Vert_{L^{2}}.
\end{split}
\end{equation*}
Therefore, summing over $j$ we obtained 
\begin{equation}\label{bzx1}
\begin{split}
& \sum_{j \in \Z } \sum_{k : 2^{k} \approx 2^{j} } 2^{ k \sigma } \Vert \Delta_{k} \left( \Delta_{j} g \Delta_{\leq j-C} \partial_{i} f \right) \Vert_{L^{2}}
\\
& \lesssim \Vert f \Vert_{L^{\infty}}  \Vert g \Vert_{\dot{B}^{\sigma+1}_{2,1}}.
\end{split}
\end{equation}
Similarly, we have 
\begin{equation}
\begin{split}
& \sum_{j \in \Z } \sum_{k : 2^{k} \approx 2^{j} } 2^{ k \sigma } \Vert \Delta_{k} \left( \Delta_{j} \partial_{i} f \Delta_{\leq j-C} g \right) \Vert_{L^{2}}
\\
&\lesssim \Vert g \Vert_{L^{\infty}} \Vert f \Vert_{\dot{B}^{\sigma+1}_{2,1}}.
\end{split}
\end{equation}
We need to estimate the third term on the right hand side of \eqref{corr1}. 
\begin{equation*}
\begin{split}
& 2^{\sigma k } \Vert \Delta_{k} \left( \Delta_{j_{2}}g  \Delta_{j_{1}} \partial_{i} f \right) \Vert_{L^{2}}
\\
& \lesssim 2^{\sigma k} 2^{j_{1}}  \Vert g \Vert_{L^{\infty}} \Vert \Delta_{j_{1}} f\Vert_{L^{2}}
\end{split}
\end{equation*}
this easily implies
\begin{equation*}
\begin{split}
&  \sum_{| j_{1} - j_{2} | \lesssim 1 } \sum_{k \leq j_{1}+C} 2^{k \sigma} \Vert \Delta_{k} \left( \Delta_{j_{1}} \partial_{i} f \Delta_{j_{2}}g \right) \Vert_{L^{2}}
\\
& \lesssim \Vert g \Vert_{L^{\infty}} \Vert f \Vert_{\dot{B}^{\sigma +1}_{2,1}}.
\end{split}
\end{equation*}

\end{proof}

We also need the following. The proof is based on \cite[Section 2]{Bahouri}. 
\begin{lemma}\label{Besovstuff}
\begin{enumerate}
    \item Let $ \theta : \R^{n} \to \R$ , 
and $ F : \R \to \R$ be smooth with $F(0) =0$. Then one has the following inequality for any $ \sigma >0$
\begin{equation}\label{Besovalgest1}
\Vert F \circ \theta \Vert_{\dot{B}^{\sigma}_{2,1}} \lesssim C(\Vert \theta \Vert_{L^{\infty}})\Vert \theta \Vert_{\dot{B}^{\sigma}_{2,1}}.
\end{equation}
And the same is true for the inhomogeneous Besov space. Namely, for $ \sigma >0$
\begin{equation}\label{Besovalgest2}
\Vert F \circ \theta \Vert_{B^{\sigma}_{2,1}} \lesssim C(\Vert \theta \Vert_{L^{\infty}} ) \Vert \theta \Vert_{B^{\sigma}_{2,1}}.
\end{equation}
\item Let $ u : \R^{n} \to \S^{2}$ satisfy $ \Vert u \Vert_{\dot{B}^{\frac{n}{2}}_Q} \leq \epsilon \ll 1 $ where $ Q = (0,0,1)$. Then if $ f : = L (u) $, where $ L : \S^{2} \to \C$ is the stereographic projection, then 
\begin{equation}\label{Besovalgest3}
\Vert f \Vert_{\dot{B}^{\sigma}_{2,1}} \lesssim \Vert u \Vert_{\dot{B}^{\sigma}_{Q}},
\end{equation}
for any $ \sigma \geq \frac{n}{2}$.\\
Moreover, the same is true for the inhomogeneous Besov space. Namely, 
\begin{equation}\label{Besovalgest4}
\Vert f \Vert_{B^{\sigma}_{2,1}} \lesssim \Vert u \Vert_{B_{Q}^{\sigma}},
\end{equation}
for any $ \sigma \geq \frac{n}{2}$.
\item For any $ \sigma \geq \frac{n}{2}$ we have
$$
\Vert fg \Vert_{{B}^{\sigma}_{2,1}} \lesssim \Vert f \Vert_{{B}^{\frac{n}{2}}_{2,1}} \Vert g \Vert_{{B}^{\sigma}_{2,1}} + \Vert g \Vert_{{B}^{\frac{n}{2}}_{2,1}} \Vert f \Vert_{{B}^{\sigma}_{2,1}} 
$$
In particular, 
$$
\Vert fg \Vert_{B^{\sigma}_{2,1}} \lesssim \Vert f \Vert_{B^{\sigma}_{2,1}} \Vert g \Vert_{B^{\sigma}_{2,1}}.
$$
and for the homogeneous Besov spaces 
$$
\Vert fg \Vert_{\dot{B}^{\sigma}_{2,1}} \lesssim \Vert f \Vert_{\dot{B}^{\frac{n}{2}}_{2,1}} \Vert g \Vert_{\dot{B}^{\sigma}_{2,1}} + \Vert g \Vert_{\dot{B}^{\frac{n}{2}}_{2,1}} \Vert f \Vert_{\dot{B}^{\sigma}_{2,1}} 
$$
\end{enumerate}

\end{lemma}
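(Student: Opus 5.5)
The statement is Lemma \ref{Besovstuff}, which has three parts. I would prove the product estimate (3) first, because (1) and (2) reduce to it.

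For (3) I would use Bony's paraproduct decomposition $fg = T_f g + T_g f + R(f,g)$, organized exactly as in the proof of Lemma \ref{elemantaryestbesov}. For the low--high piece, $\Delta_{\le j-C} f\,\Delta_j g$ has frequency $\approx 2^j$. Bounding $\|\Delta_{\le j-C}f\|_{L^\infty}\lesssim \sum_{l\le j}2^{ln/2}\|\Delta_l f\|_{L^2}\lesssim \|f\|_{\dot B^{n/2}_{2,1}}$ by Bernstein gives a contribution bounded by $\|f\|_{\dot B^{n/2}_{2,1}}\|g\|_{\dot B^\sigma_{2,1}}$. The high--low piece is symmetric. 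For the high--high piece, $\Delta_{j_1}f\Delta_{j_2}g$ with $|j_1-j_2|\lesssim1$, I would use Bernstein on the output: $\|\Delta_k(\cdot)\|_{L^2}\lesssim 2^{kn/2}\|\Delta_{j_1}f\|_{L^2}\|\Delta_{j_2}g\|_{L^2}$. Summing over $k\le j_1+C$ with weight $2^{k\sigma}$ converges since $\sigma+n/2>0$ and gives $2^{j_1(\sigma+n/2)}$, hence the bound $\|f\|_{\dot B^{n/2}_{2,1}}\|g\|_{\dot B^\sigma_{2,1}}$. The inhomogeneous version follows by the same argument with $\Delta_{\le 0}$ replacing the low blocks. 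The particular case follows from the embedding $B^\sigma_{2,1}\subset B^{n/2}_{2,1}$.

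For (1) I would follow the argument in \cite[Section 2]{Bahouri} and use the Meyer telescoping $F(\theta)=\sum_j \big(F(S_{j+1}\theta)-F(S_j\theta)\big)=\sum_j m_j\,\Delta_j\theta$, where $m_j=\int_0^1F'(S_j\theta+\tau\Delta_j\theta)\,d\tau$ and $S_j=\Delta_{\le j-1}$. By the chain rule and Bernstein, $\|D^\alpha m_j\|_{L^\infty}\lesssim C_\alpha(\|\theta\|_{L^\infty})2^{j|\alpha|}$. Hence $\|\Delta_k(m_j\Delta_j\theta)\|_{L^2}\lesssim C\,2^{-N(k-j)}\|\Delta_j\theta\|_{L^2}$ for $k\ge j$, obtained by moving $N>\sigma$ derivatives onto the product, and $\lesssim C\|\Delta_j\theta\|_{L^2}$ for $k<j$. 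Summing with weight $2^{k\sigma}$ is then a discrete Young inequality: the kernel $2^{(k-j)\sigma}\min(1,2^{-N(k-j)})$ is summable precisely because $\sigma>0$, and this gives \eqref{Besovalgest1}. The low frequencies in the inhomogeneous case are controlled by $|F(\theta)|\lesssim C|\theta|$, which gives an $L^2$ bound since $F(0)=0$.

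For (2), write $u=(u_1,u_2,u_3)$ with $u-Q=(u_1,u_2,u_3-1)$. Stereographic projection from $-Q$ gives $f=\frac{u_1+\i u_2}{1+u_3}$. Since $\|u-Q\|_{L^\infty}\lesssim\|u\|_{\dot B^{n/2}_Q}\le\epsilon$, we have $u_3\ge1/2$. Write $\frac{1}{1+u_3}=\frac12+G(u_3-1)$ with $G$ smooth and $G(0)=0$ near the range. Then (1) gives $\|G(u_3-1)\|_{\dot B^\sigma_{2,1}}\lesssim\|u\|_{\dot B^\sigma_Q}$. The product estimate (3) together with the $L^\infty$ bounds then yields \eqref{Besovalgest3}. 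For the terms with $\sigma=n/2$ the factor $\|u\|_{\dot B^{n/2}_Q}\le\epsilon$ is harmless, and \eqref{Besovalgest4} is identical.

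I expect the main obstacle to be making the composition estimate in (1) rigorous in the homogeneous setting. The telescoping series must converge in the appropriate sense, and $S_j\theta\to0$ as $j\to-\infty$ must be justified, which requires some a priori decay, for instance $\theta\in L^2$ or a realization of $\dot B^\sigma_{2,1}$ modulo polynomials. I would handle this by first assuming $\theta$ is Schwartz and then passing to the limit by density.
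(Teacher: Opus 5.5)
Your proposal is correct and is essentially the standard argument of \cite[Section 2]{Bahouri}, which the paper cites in place of a proof: Bony's paraproduct decomposition for (3), Meyer's first linearization for (1), and the reduction of (2) to these two. The one step to state more carefully is \eqref{Besovalgest4}, which is not literally identical to the homogeneous case: the inhomogeneous norm $\Vert u\Vert_{B^{n/2}_Q}$ contains the low-frequency part $\Vert u-Q\Vert_{L^2}$ and is not small, so either use the tame form $\Vert ab\Vert_{B^{\sigma}_{2,1}}\lesssim \Vert a\Vert_{L^\infty}\Vert b\Vert_{B^{\sigma}_{2,1}}+\Vert b\Vert_{L^\infty}\Vert a\Vert_{B^{\sigma}_{2,1}}$ (a minor variant of your paraproduct argument gives this for $\sigma>0$), or combine \eqref{Besovalgest3} with $\Vert f\Vert_{B^{\sigma}_{2,1}}\le \Vert f\Vert_{L^2}+\Vert f\Vert_{\dot B^{\sigma}_{2,1}}$ and the pointwise bound $|f|\le |u_1+\i u_2|\le |u-Q|$.
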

\subsection{Commutator identities, estimates:} we start with proving the following pointwise identity which was used in the proof of Proposition \ref{trilineares1}.
\begin{lemma}\label{pointwiseidentitycomm}
Let $f,g,h \in \mathscr{S}(\R^{n})$. Fix $i \in \{1,...,n \}$,  then for $ s \in (1/2,1)$ define 
$$
W^{i}(f,g,h)(x) : = \int_{\R^{n}} \frac{ f(y) (g(x) -g(y)) (h(x) -h(y)) (x-y)_{i} }{ |x -y|^{n +2s}} dy
$$
then we have the following identity (up to a multiplicative constant $c_{n,s} $) 
\begin{equation}\label{pointwscomm}
\begin{split}
& W^{i} (f,g,h) (x) 
\\
& = \nabla^{2s-1}_{i} ( f g h ) - h \nabla^{2s-1}_{i} (fg) 
\\
& - g \nabla_{i}^{2s-1} (f h ) + g h \nabla^{2s-1}_{i} (f)
\end{split}
\end{equation}
\end{lemma}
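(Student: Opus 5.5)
The identity is purely algebraic once we know the kernel representation of $\nabla^{2s-1}_i$. I will use the fact, already used in the proof of Proposition \ref{Firstnonlinearestimate}, that for Schwartz $\phi$ and $s\in(1/2,1)$,
\[
\nabla^{2s-1}_i \phi(x) = c_{n,s}\,\mathrm{p.v.}\int_{\R^n} \frac{(x-y)_i\,\phi(y)}{|x-y|^{n+2s}}\,dy ,
\]
up to a sign convention absorbed into $c_{n,s}$. This follows because the symbol $\i|\xi|^{2s-2}\xi_i$ is $\partial_i$ applied to a multiple of $|\xi|^{2s-2}$, hence is the Fourier transform of $\partial_i$ of a multiple of $|x|^{-(n+2s-2)}$. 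That derivative is a multiple of $x_i|x|^{-(n+2s)}$, which is odd and homogeneous of degree $-(n+2s-1)$. I will take the principal value whenever the integrand is not absolutely integrable near $y=x$.

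The main step is to expand the product in the numerator of $W^i$:
\[
f(y)(g(x)-g(y))(h(x)-h(y)) = g(x)h(x)f(y) - h(x)(fg)(y) - g(x)(fh)(y) + (fgh)(y).
\]
Integrating each of the four terms against the kernel $(x-y)_i|x-y|^{-n-2s}$, with the $x$-dependent factors pulled outside, gives respectively
\[
g h\,\nabla^{2s-1}_i f,\qquad h\,\nabla^{2s-1}_i(fg),\qquad g\,\nabla^{2s-1}_i(fh),\qquad \nabla^{2s-1}_i(fgh),
\]
each times $c_{n,s}^{-1}$ and with the signs shown. This is exactly \eqref{pointwscomm}.

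The only real obstacle is justifying the splitting, since each of the four integrals separately exists only as a principal value, while $W^i$ itself converges absolutely. Absolute convergence of $W^i$ near the diagonal holds because the two differences contribute $O(|x-y|^2)$, giving an integrand of size $|x-y|^{1+2-n-2s}$, which is integrable because $2s<3$. Near infinity every term decays like $|x-y|^{-(n+2s-1)}$ times a bounded or Schwartz factor, and this is integrable because $2s-1>0$. This is exactly where $s>1/2$ enters.

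To make the argument rigorous I will write $W^i$ as the limit as $\varepsilon\to0$ of the integral over $\{|x-y|>\varepsilon\}$, which is legitimate by dominated convergence. On that region all four integrals converge absolutely, by Schwartz decay at infinity and $2s-1>0$, so the splitting is valid there. Each truncated integral then converges as $\varepsilon\to0$ to the corresponding principal value, that is, to the corresponding $\nabla^{2s-1}_i$ term. For that I will use the oddness of the kernel together with $\phi(y)-\phi(x)=O(|x-y|)$, which gives an integrand of size $|x-y|^{2-n-2s}$, integrable since $2s<2$.
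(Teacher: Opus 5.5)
Your proposal is correct and is essentially the paper's argument: both rest on the kernel representation $\nabla^{2s-1}_i\phi(x)=c_{n,s}\,\mathrm{p.v.}\int_{\R^n}(x-y)_i|x-y|^{-n-2s}\phi(y)\,dy$ and on expanding $f(y)(g(x)-g(y))(h(x)-h(y))$ into the four terms. The paper merges the four principal-value integrals into one and simply notes that the double difference makes the p.v.\ unnecessary, while you justify the splitting more carefully by truncating to $\{|x-y|>\varepsilon\}$. One small correction: every term carries the Schwartz factor $f(y)$, so integrability at infinity is automatic, and that is not really where $s>1/2$ is used.
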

\begin{proof}
To prove \eqref{pointwscomm}, we simplify the right hand side. Notice that in physical space, we have 
$$
\nabla^{2s-1}_{i} (f)(x)  = p.v  \int_{\R^{n}} \frac{ (x-y)_{i}}{|x-y|^{n+2s}} f(y) dy. 
$$
And therefore, the right hand side of \eqref{pointwscomm} becomes 
\begin{equation*}
\begin{split}
& \nabla^{2s-1}_{i} ( f g h )(x) - h(x) \nabla^{2s-1}_{i} (fg)(x) 
 - g(x) \nabla_{i}^{2s-1} (f h )(x) + g(x) h(x) \nabla^{2s-1}_{i} (f)(x)
 \\
 & = p.v \int_{\R^{n}} \frac{ (x-y)_{i}}{|x-y|^{n+2s}} \left( f(y) g(y) h(y) - h(x) f(y) g(y) - g(x) f(y) h(y) + g(x) h(x) f(y) \right) dy 
\end{split}
\end{equation*}
and notice that we have 
\begin{equation*}
\begin{split}
& f(y) g(y) h(y) - h(x) f(y) g(y) - g(x) f(y) h(y) + g(x) h(x) f(y)
\\
& = f(y) (g(x) -g(y)) (h(x) - h(y))
\end{split}
\end{equation*}
Therefore, we end up with 
\begin{equation*}
\begin{split}
& \nabla^{2s-1}_{i} ( f g h )(x) - h(x) \nabla^{2s-1}_{i} (fg)(x) 
 - g(x) \nabla_{i}^{2s-1} (f h )(x) + g(x) h(x) \nabla^{2s-1}_{i} (f)(x)
 \\
 & = W^{i} (f,g,h) (x)
 \end{split}
\end{equation*}
Notice that the $p.v$ goes away since we have the double difference. 
\end{proof}
Next, we prove the following elementary estimate which will be used to prove short time existence 
\begin{lemma}
Let $ u : \R^{n} \to \S^{2} $ be Lipschitz, and $v \in H^{2} ( \R^{n} : \R^{3} ) $ so that $v ( x) \cdot u(x) = 0$ for every

$x\in \R^{n}$. Then the following holds
\begin{equation}\label{comestzz1}
\vert \int_{\R^{n}} \langle v(x),  u(x) \wedge (- \Delta)^{s} v(x) \rangle  dx \vert \lesssim \Vert \nabla u \Vert^{2s}_{L^{\infty}} \Vert v \Vert^{2}_{L^{2}}.
\end{equation}
  \end{lemma}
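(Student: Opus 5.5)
The plan is to use the pointwise constraint $v\cdot u=0$ to write the integrand as a commutator, and then to use the antisymmetry of the cross product to cancel the principal part. Since $\langle v, u\wedge w\rangle = \langle w, v\wedge u\rangle$, we have
\[
\int_{\R^{n}} \langle v, u\wedge (-\Delta)^{s} v\rangle\,dx = \int_{\R^{n}} \langle (-\Delta)^{s} v, v\wedge u\rangle\,dx .
\]
Set $w:=v\wedge u$. By self-adjointness of $(-\Delta)^{s}$ the right-hand side equals $\int \langle v, (-\Delta)^{s}(v\wedge u)\rangle$. Adding the two expressions, and noting that $\langle v, (-\Delta)^{s}v\wedge u\rangle = -\langle v, u\wedge(-\Delta)^s v\rangle$, twice the integral becomes
\[
2I=\int \langle v,\ (-\Delta)^{s}(v\wedge u) - ((-\Delta)^{s}v)\wedge u\rangle\,dx = \int \langle v, [(-\Delta)^{s},\wedge u](v)\rangle\,dx .
\]
So the whole task is an $L^2\to L^2$ bound for the commutator $v\mapsto (-\Delta)^s(v\wedge u)-((-\Delta)^s v)\wedge u$, with operator norm $\lesssim \|\nabla u\|_{L^\infty}^{2s}$. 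I will also keep the identity $I=\int\langle v,(-\Delta)^s(v\wedge u)\rangle$ in reserve, in case the symmetric form is useful.

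The commutator is written in physical space via the singular integral representation
\[
[(-\Delta)^{s},\wedge u](v)(x)=c_{n,s}\,\mathrm{p.v.}\int \frac{v(y)\wedge(u(y)-u(x))}{|x-y|^{n+2s}}\,dy ,
\]
which follows from $(-\Delta)^s F(x)=c\int (F(x)-F(y))|x-y|^{-n-2s}dy$ after the $v(x)\wedge u(x)$ terms cancel. I then symmetrize the bilinear form in $x,y$ and use $v(x)\cdot u(x)=0$, i.e.\ $\langle v(x), v(x)\wedge\cdot\rangle=0$. This gives
\[
2I = c\iint \frac{\langle v(x)-v(y),\ v(y)\wedge(u(y)-u(x))\rangle}{|x-y|^{n+2s}}\,dx\,dy ,
\]
since $\langle v(y), v(y)\wedge\cdot\rangle=0$.

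I expect this double difference to be the main obstacle: a crude bound produces $\|v\|_{\dot H^{s}}$ rather than $\|v\|_{L^2}$. To avoid it, I will not split into differences. Instead I estimate the kernel $K(x,y)=|u(y)-u(x)|\,|x-y|^{-n-2s}$ directly, which is what the choice of the form $\int\langle v(x), v(y)\wedge(u(y)-u(x))\rangle K$ is designed for. Schur's test needs $\sup_x\int |u(x)-u(y)||x-y|^{-n-2s}dy<\infty$, and this fails at infinity only mildly. Splitting at radius $R$ gives $\int_{|x-y|<R}\|\nabla u\|_\infty|x-y|^{1-n-2s}$, which diverges at $0$ since $2s>1$.

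So pure Schur fails, and the plan must combine it with the cancellation from $u\cdot v=0$ in both variables. Near the diagonal I will write $\langle v(x),v(y)\wedge(u(y)-u(x))\rangle$ and use $\langle v(x),u(x)\rangle=0$, $\langle v(y),u(y)\rangle=0$. Expanding $u(y)-u(x)$ with $|u|=1$ should yield an extra factor of size $|x-y|$ (because $\langle u(x),u(y)-u(x)\rangle=-\tfrac12|u(x)-u(y)|^2$), giving a kernel $\lesssim \|\nabla u\|_\infty^2|x-y|^{2-n-2s}$. That is integrable near $0$ since $2s<2$. Far from the diagonal I use $|u(x)-u(y)|\le 2$. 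Optimizing at $R=\|\nabla u\|_\infty^{-1}$ then gives exactly $\|\nabla u\|_\infty^{2s}$ via Schur, yielding \eqref{comestzz1}. If the pointwise cancellation does not deliver the extra power directly, the fallback is to decompose $v(y)$ in the orthonormal frame $\{u(x),e_1(x),e_2(x)\}$ and treat each component.
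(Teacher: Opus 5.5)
Your proposal is correct and is essentially the paper's argument. Like the paper, you rewrite the integral as $c\iint \langle v(x),(u(x)-u(y))\wedge v(y)\rangle |x-y|^{-n-2s}\,dy\,dx$, bound the integrand by $|u(x)-u(y)|^2|v(x)||v(y)|$ (this is exactly the orthogonality estimate \eqref{orthoest}), and conclude by Schur/Young with the kernel $\min\{\|\nabla u\|_{L^\infty}|z|,2\}^2|z|^{-n-2s}$, whose integral is $\lesssim \|\nabla u\|_{L^\infty}^{2s}$.

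The one thing to tighten is the pointwise bound. Your parenthetical reason only controls the component of $u(y)-u(x)$ normal to $u(x)$. The tangential component $\delta_T$ needs the second constraint: $v(x)\wedge\delta_T$ is parallel to $u(x)$, and $\langle v(y),u(x)\rangle=\langle v(y),u(x)-u(y)\rangle$, so this part is also quadratic. That is precisely your frame-decomposition fallback, so nothing is actually missing.
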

\begin{proof}
First, notice the following elementary estimates; for any vectors $ q,p \in \S^{2}$ and $ \tilde{q} ,\tilde{p} \in \R^{3}$ so that $ p \cdot \tilde{p} =0 $, and $ q \cdot \tilde{q} =0 $ we have 
\begin{equation}\label{orthoest}
\vert \langle \tilde{p} , ( p-q ) \wedge \tilde{q} \rangle \vert \lesssim \vert p -q \vert^{2} \vert \tilde{p} \vert \vert \tilde{q} \vert.
\end{equation}
Now using that $ |u |=1$, and the fact that $ \langle v (x) , u(x) \wedge v(x) \rangle = 0 $, we can write 
$$
\int_{\R^{n}} \langle v(x) , (u(x) \wedge (- \Delta)^{s} v(x)) \rangle  = c_{s,n} \int_{\R^{n} \times \R^{n}} \frac{ \langle v(x), (u(x)-u(y)) \wedge v(y) \rangle }{|x-y|^{n+2s}} dy  dx
$$
Hence, using \eqref{orthoest} we obtain
\begin{equation}
\begin{split}
& \vert  \int_{\R^{n}} \langle v(x)  ,(u(x) \wedge (- \Delta)^{s} v(x) ) \rangle   \vert 
\\
& \overset{\eqref{orthoest}}{\lesssim} \int_{\R^{n} \times \R^{n}} \frac{ \vert u (x) - u(y) \vert^{2} \vert v(y) \vert \vert v(x) \vert}{\vert x-y \vert^{n+2s}} dy dx 
\\
& \lesssim \Vert v \Vert^{2}_{L^{2}(\R^{n})} \int_{\R^{n}} \frac{ \left( \min \{ \Vert \nabla u \Vert_{L^{\infty}}  |y| , 2 \}\right)^{2} }{ \vert y \vert^{n+2s}} dy
\end{split}
\end{equation}
where in  the last inequality we used H\"older inequality, and Young inequality. Now the result follows by noticing that 
$$
\int_{\R^{n}} \frac{ \left(\min \{ \Vert \nabla u \Vert_{L^{\infty}}  |y| , 2 \}\right)^{2} }{ \vert y \vert^{n+2s}} dy \lesssim \Vert \nabla u \Vert^{2s}_{L^{\infty}}.
$$
\end{proof}
We close this subsection with the following 
\begin{lemma}
Let $u : \R^{n} \to \S^{2}$ with $ u \in B^{\sigma}_{Q}$ , $ \sigma \geq \frac{n}{2} +10$. Then for any $ f : \R^{n} \to \C$ with $ f \in H^{1}$ we have
\begin{equation}\label{comestzz2}
\Vert [ ( -\Delta)^{s} , u ] (f) \Vert_{L^{2}(\R^{n})} \lesssim ( 1+ \Vert u -Q \Vert_{B^{2}_{\infty,1}}) \Vert f \Vert_{H^{1}}.
\end{equation}
\end{lemma}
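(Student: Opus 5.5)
We need to prove the commutator estimate \eqref{comestzz2}: $\|[(-\Delta)^s,u](f)\|_{L^2}\lesssim (1+\|u-Q\|_{B^2_{\infty,1}})\|f\|_{H^1}$.

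The plan is to reduce to $w:=u-Q$ and then run a paraproduct decomposition. Since $Q$ is constant, $[(-\Delta)^s,u](f)=[(-\Delta)^s,w](f)$, so it suffices to prove the bound for $w\in B^2_{\infty,1}$, which lies in $L^\infty$ with $\nabla w,\nabla^2 w\in L^\infty$. We split
\[
wf=\sum_j \Delta_{\le j-C}w\,\Delta_j f+\sum_j \Delta_j w\,\Delta_{\le j-C}f+\sum_{|j_1-j_2|\le C}\Delta_{j_1}w\,\Delta_{j_2}f,
\]
and write the commutator as the sum of the three corresponding commutators. Each piece is estimated separately.

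\emph{Low--high piece.} The term $\sum_j[(-\Delta)^s,\Delta_{\le j-C}w](\Delta_j f)$ is the main one. For each $j$ we apply Lemma \ref{Taylorstuff} with $p=q=2$; the mixed norms reduce to $L^2$, or we simply use the plain $L^2$ version of the Taylor expansion from Lemma \ref{Taylorcalc}. This gives
\[
\|\Delta_{\approx j}[(-\Delta)^s,\Delta_{\le j-C}w](\Delta_j f)\|_{L^2}\lesssim\sum_{|\alpha|\ge1}\frac{a_{|\alpha|}}{\alpha!}2^{j(2s-|\alpha|)}\|\Delta_j f\|_{L^2}\|D^\alpha\Delta_{\le j-C}w\|_{L^\infty}.
\]
We bound $\|D^\alpha\Delta_{\le j-C}w\|_{L^\infty}\lesssim 2^{(j-C)(|\alpha|-1)}\|\nabla w\|_{L^\infty}$ (Bernstein). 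The sum over $\alpha$ then converges by \eqref{qpppa}, leaving $2^{j(2s-1)}\|\nabla w\|_{L^\infty}\|\Delta_jf\|_{L^2}$. Since $2s-1<1$ and the output pieces are almost orthogonal in $j$, square-summing gives a bound by $\|\nabla w\|_{L^\infty}\|f\|_{H^{2s-1}}\le\|w\|_{B^2_{\infty,1}}\|f\|_{H^1}$. For $j\le0$ the low-frequency factor gives $2^{j(2s-1)}\le 1$, so no issue arises there either.

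\emph{High--low and high--high pieces.} Here we do not use the commutator structure; we estimate the terms $(-\Delta)^s(\cdot)$ and $w\,(-\Delta)^s(\cdot)$ separately.
\begin{itemize}
\item High--low, first term: $(-\Delta)^s(\Delta_jw\,\Delta_{\le j-C}f)$ has frequency $\approx 2^j$, so its norm is $\lesssim 2^{2sj}\|\Delta_j w\|_{L^\infty}\|f\|_{L^2}\lesssim 2^{(2s-2)j}2^{2j}\|\Delta_jw\|_{L^\infty}\|f\|_{L^2}$. This sums over $j\ge0$ into $\|w\|_{B^2_{\infty,1}}\|f\|_{L^2}$.
\item High--low, second term: $\Delta_jw\,(-\Delta)^s\Delta_{\le j-C}f$ is bounded by $\|\Delta_jw\|_{L^\infty}2^{2sj}\|f\|_{L^2}$, which sums in the same way.
\item High--low, low frequencies $j<0$: we keep $(-\Delta)^s$ on $f$, and use $\|(-\Delta)^s\Delta_{\le j}f\|_{L^2}\lesssim\|f\|_{H^1}$ together with $\sum_j\|\Delta_jw\|_{L^\infty}\lesssim\|w\|_{B^0_{\infty,1}}$. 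Here we are implicitly using the inhomogeneous $B^2_{\infty,1}$ norm.
\item High--high: the product $\Delta_{j_1}w\,\Delta_{j_2}f$ has output frequency $\lesssim 2^{j_1}$. The $(-\Delta)^s$ term is bounded by $2^{2sj_1}\|\Delta_{j_1}w\|_{L^\infty}\|\Delta_{j_2}f\|_{L^2}$, which is summable as above. The term $\Delta_{j_1}w\,(-\Delta)^s\Delta_{j_2}f$ is handled identically.
\end{itemize}
The term $w\,(-\Delta)^s f$ restricted to these pieces is bounded by $\|w\|_{L^\infty}\|f\|_{H^{2s}}\le\|w\|_{L^\infty}\|f\|_{H^1}$ only if $2s\le1$, which is false for $s\in(1/2,1)$. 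This is why the high-frequency part must stay inside the paraproduct pieces: there the factor $2^{2sj}$ is absorbed by the two derivatives on $w$, rather than putting $2s$ derivatives on $f$.

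\emph{Expected obstacle.} The main difficulty is the low--high piece. There the commutator structure is essential to gain one derivative from $w$, so that only $2s-1\le1$ derivatives remain on $f$. The key point is that the Taylor coefficients $a_{|\alpha|}$ do not grow too fast, which is \eqref{qpppa}. A minor point is to justify $\|u-Q\|_{L^\infty}\lesssim\|u-Q\|_{B^2_{\infty,1}}$ so that the constant term "$1+$" in \eqref{comestzz2} suffices; this holds by the embedding $B^2_{\infty,1}\hookrightarrow L^\infty$.
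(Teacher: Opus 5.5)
Your proof is correct, but it follows a genuinely different route from the paper's.

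The paper does not decompose in frequency. It uses the kernel representation of $(-\Delta)^s$ for $s\in(0,1)$ to write $[(-\Delta)^s,u](f)=f\,(-\Delta)^s u-c_{n,s}\int_{\R^n}\frac{(u(x)-u(x-y))(f(x)-f(x-y))}{|y|^{n+2s}}\,dy$. It bounds the first term by $\Vert(-\Delta)^s u\Vert_{L^\infty}\Vert f\Vert_{L^2}$ and treats the double difference by Minkowski's inequality in $y$:
\begin{itemize}
\item for $|y|<1$ it uses $|u(x)-u(x-y)|\le\Vert\nabla u\Vert_{L^\infty}|y|$ and $\Vert f-f(\cdot-y)\Vert_{L^2}\le|y|\,\Vert\nabla f\Vert_{L^2}$, so $s<1$ enters only through $\int_0^1 r^{1-2s}\,dr<\infty$;
\item for $|y|\ge1$ it uses $|u(x)-u(x-y)|\le2$, which is where the additive $1$ comes from, together with $\int_1^\infty r^{-1-2s}\,dr<\infty$.
\end{itemize}
That argument takes a few lines and needs only $\nabla u,(-\Delta)^su\in L^\infty$.

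Your paraproduct argument is heavier, since it rests on the Taylor expansion of Lemma \ref{Taylorcalc} and the coefficient control \eqref{qpppa}. In exchange it proves a bit more. The function $f$ enters beyond $L^2$ only in the low--high piece, and only through $\Vert f\Vert_{H^{2s-1}}$. So you actually obtain $\Vert[(-\Delta)^s,u](f)\Vert_{L^2}\lesssim\Vert u-Q\Vert_{B^2_{\infty,1}}\Vert f\Vert_{H^{2s-1}}$, with no additive constant. The scheme also carries over unchanged to other multipliers, e.g.\ $\nabla^{2s-1}_i$, for which a kernel formula is less convenient.

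Two small repairs are needed.

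First, your third bullet is unnecessary and, as written, not correct. With homogeneous blocks, $\sum_{j<0}\Vert\Delta_jw\Vert_{L^\infty}$ is not controlled by the inhomogeneous $B^0_{\infty,1}$ norm. Instead, note that your bound $2^{2sj}\Vert\Delta_jw\Vert_{L^\infty}\Vert f\Vert_{L^2}$ for both halves of the high--low commutator holds for every $j\in\Z$. Then $\sum_{j<0}2^{2sj}\Vert\Delta_jw\Vert_{L^\infty}\lesssim\Vert w\Vert_{L^\infty}$ is a geometric sum. The same remark handles $j_1<0$ in the high--high piece.

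Second, the Bernstein constant in $\Vert D^\alpha\Delta_{\le j-C}w\Vert_{L^\infty}\lesssim2^{(j-C)(|\alpha|-1)}\Vert\nabla w\Vert_{L^\infty}$ grows geometrically in $|\alpha|$. The $\alpha$-sum still converges via \eqref{qpppa}, but only after choosing $C$ large relative to $C_0$ and that growth.
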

\begin{proof}
This follows from the integral representation for $ ( -\Delta)^{s}$ when $ s \in (0,1)$ and then integrating in polar coordinates. Namely, 
\begin{equation*}
\begin{split}
& \Vert [ ( -\Delta)^{s} , u ] (f) \Vert_{L^{2}(\R^{n})}
\\
& \lesssim \Vert (-\Delta)^{s} u \Vert_{L^{\infty}} \Vert f \Vert_{L^{2}} + \Vert \int_{\R^{n}}  \frac{ \left( u(x) - u(x-y) \right) \left( f(x) - f(x-y) \right) }{ | y|^{n+2s}} dy  \Vert_{L^{2}(\R^{n})}
\\
& \lesssim  \Vert (-\Delta)^{s} u \Vert_{L^{\infty}} \Vert f \Vert_{L^{2}} + \Vert \nabla u \Vert_{L^{\infty}} \Vert \nabla f \Vert_{L^{2}} \int_{0}^{1} r^{1-2s} dr 
\\
& + \Vert f \Vert_{L^{2}} \int_{1}^{\infty} r^{-1 -2s} dr
\\
& \lesssim ( 1+ \Vert u -Q \Vert_{B^{2}_{\infty,1}}) \Vert f \Vert_{H^{1}}.
\end{split}
\end{equation*}
Where in the last line we used 
$$
\Vert \nabla u \Vert_{L^{\infty}} + \Vert ( -\Delta)^{s} u \Vert_{L^{\infty}} \lesssim \Vert  u -Q \Vert_{B^{2}_{\infty,1}}.
$$
\end{proof}

\subsection{Product structures}

In this subsection we prove several estimates regarding the size of the modulation of a product of two functions. These estimates were used in the proof of the algebra properties of the spaces $F^{\sigma}$. We start with the following 
\begin{lemma}\label{prod1}
Let $ k_{1},k_{2} \in \Z$ with $ k_{1} \leq k_{2} +10$ and $ j_{1},j_{2} \geq 0$. 
Let $ f_{k_{1}}, f_{k_{2}} $ be two functions with Fourier transform supported in $ D_{k_{1}, \leq j_{1}}$, $ D_{k_{2}, \leq j_{2}}$ respectively. Then given $ j \geq 0$ with $j\geq \max\{ k_{1}+ (2s-1)k_{2},0 \} + 100$ if $j \geq  \max\{ j_{1} , j_{2} \} +50 $ then we have 
$$
Q_{j} \left( \Delta_{ \leq k_{2} +20} ( \tilde{f}_{k_{1}} f_{k_{2}}) \right) =0
$$
where $ \tilde{f}_{k_{1}} \in \{ f_{k_{1}}, \bar{f}_{k_{1}} \}$.
\end{lemma}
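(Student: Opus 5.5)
The plan is to argue on the Fourier side, using that the space-time Fourier transform of a product is the convolution of the transforms. Write $\xi = \xi_1 + \xi_2$ and $\tau = \tau_1 + \tau_2$, where $(\xi_2,\tau_2) \in \supp \hat f_{k_2} \subset D_{k_2,\leq j_2}$. For the other factor, $(\xi_1,\tau_1) \in \supp \mathcal{F}\tilde f_{k_1}$. If $\tilde f_{k_1} = f_{k_1}$, then $(\xi_1,\tau_1) \in D_{k_1,\leq j_1}$. If $\tilde f_{k_1} = \bar f_{k_1}$, the support is reflected, so $(-\xi_1,-\tau_1) \in D_{k_1,\leq j_1}$, that is, $|\tau_1 - |\xi_1|^{2s}| \lesssim 2^{j_1}$.

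The whole argument rests on bounding the modulation of the output by the modulations of the inputs plus a resonance term:
\[
\tau + |\xi|^{2s} = (\tau_1 \pm |\xi_1|^{2s}) + (\tau_2 + |\xi_2|^{2s}) + \bigl(|\xi_1+\xi_2|^{2s} - |\xi_2|^{2s} \mp |\xi_1|^{2s}\bigr).
\]
The first two terms are bounded by $2^{j_1+1} + 2^{j_2+1} \le 2^{\max\{j_1,j_2\}+2}$. It therefore suffices to show that the resonance term is at most $C\, 2^{\max\{k_1+(2s-1)k_2,\,0\}}$, with $C$ an absolute constant (say $2^{40}$). Granting this, $|\tau+|\xi|^{2s}| < 2^{j-1}$ holds whenever $j \ge \max\{j_1,j_2\}+50$ and $j \ge \max\{k_1+(2s-1)k_2,0\}+100$. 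Hence $\varphi_j(\tau + |\xi|^{2s})$ vanishes on the support of the convolution, and $Q_j$ annihilates the product. Applying $\Delta_{\le k_2+20}$ only shrinks the support and does no harm. For $j=0$ the hypothesis $j\ge 100$ already excludes this case, so only $j\ge 1$ needs treatment.

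For the resonance bound I would use $|\xi_1| \le 2^{k_1+1}$ and $|\xi_2| \approx 2^{k_2}$ with $k_1 \le k_2+10$. Split into two cases.
\begin{itemize}
\item If $|\xi_1| \ll |\xi_2|$, the mean value theorem applied to $r \mapsto r^{2s}$ gives $\bigl||\xi_1+\xi_2|^{2s}-|\xi_2|^{2s}\bigr| \lesssim |\xi_1|\,|\xi_2|^{2s-1} \lesssim 2^{k_1+(2s-1)k_2}$ (here $2s-1>0$ is used). Also $|\xi_1|^{2s} \le 2^{2s(k_1+1)} \lesssim 2^{k_1+(2s-1)k_2}$, since $k_1 \le k_2$ up to a constant.
\item If $|\xi_1| \gtrsim |\xi_2|$, then $k_1 \approx k_2$ and every term is $\lesssim 2^{2sk_2} \approx 2^{k_1+(2s-1)k_2}$.
\end{itemize}

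The main obstacle is bookkeeping of the constants, so that the implicit constant $C$ is genuinely absorbed by the margins of $50$ and $100$. This needs care because $k_1$ may exceed $k_2$ by up to $10$, which costs a factor of about $2^{20}$ in $2^{2sk_1}$ relative to $2^{k_1+(2s-1)k_2}$. I would track these factors explicitly and check that they stay below $2^{48}$. If $k_1+(2s-1)k_2<0$, the term $\max\{\cdot,0\}$ makes the threshold at least $2^{100}$, and the same bound still suffices.
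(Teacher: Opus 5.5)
Your proposal is correct and follows essentially the same route as the paper. Both arguments look at the Fourier support and control the resonance term $|\xi|^{2s}-|\xi-\eta|^{2s}\mp|\eta|^{2s}$ by the mean value theorem together with $|\eta|^{2s}\lesssim 2^{k_1+(2s-1)k_2}$. The only difference is cosmetic: the paper uses the triangle inequality to show the $f_{k_2}$ factor would need modulation $\gtrsim 2^{j}$, contradicting $j\ge j_2+50$, whereas you bound the output modulation directly by $2^{j-1}$, which is the same estimate rearranged.
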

\begin{proof}
Take the Fourier transform 
\begin{equation*}
\begin{split}
& \mathcal{F} \left( Q_{j} \left( \Delta_{\leq k_{2} +20} ( \tilde{f}_{k_{1}} f_{k_{2}}) \right) \right) (\xi,\tau)
\\
& = \varphi_{j} ( \vert \xi \vert^{2s} + \tau) \varphi_{\leq k_{2} +20} ( \xi) \int_{\R^{n+1}} \hat{ \tilde{f}}_{k_{1}}( \eta,r) \hat{ f}_{k_{2}}(\xi-\eta, \tau -r ) d \eta dr,
\end{split}
\end{equation*}
and in order for this to be nonzero we must have the following 
\begin{enumerate}
    \item 
    $
    2^{j-1} \leq \vert \vert \xi \vert^{2s} + \tau \vert \leq 2^{j+1}
    $
    \item  $ \vert r  \pm \vert \eta \vert^{2s}  \vert \leq 2^{j_{1} +1} \ll 2^{j}$
    \item  $ \vert \xi - \eta \vert \approx 2^{k_{2}} \geq 2^{-30} \vert \xi \vert$.
\end{enumerate}
Here the sign of $ |\eta|^{2s}$ depend on whether we have a complex conjugate or not. 
Then notice the following
\begin{equation*}
\begin{split}
& \vert \vert \xi - \eta \vert^{2s} + \tau -r \vert
\\
& \geq \vert \vert \xi \vert^{2s} + \tau \vert - \left( \vert \pm \vert \eta \vert^{2s} + r \vert \right) 
\\
& - \vert \left( \vert \xi - \eta \vert^{2s} - \vert \xi \vert^{2s}  \pm \vert \eta \vert^{2s} \right) \vert  
\\
& \geq 2^{j-1} - 2^{j-50} - \vert \left( \vert \xi - \eta \vert^{2s} - \vert \xi \vert^{2s} \pm \vert \eta \vert^{2s} \right) \vert
\end{split}
\end{equation*}
it remains to show that 
\begin{equation}
\vert \left( \vert \xi - \eta \vert^{2s} - \vert \xi \vert^{2s} \pm \vert \eta \vert^{2s} \right) \vert \ll 2^{j}.
\end{equation}
This follows easily since $ \vert \eta \vert^{2s} \leq 2^{2s k_{1}} \ll 2^{j}$ and 
\begin{equation*}
\begin{split}
& | \vert \xi - \eta \vert^{2s} - \vert \xi \vert^{2s} |
\\
& \lesssim \max\{ \vert \xi - \eta \vert^{2s-1}, \vert \xi \vert^{2s-1} \} \vert \eta \vert
\\
&  \ll 2^{j}.
\end{split}
\end{equation*}
\end{proof}

We have a similar result for lower bounds
\begin{lemma}\label{prod2}
Let $ k_{1},k_{2} \in \Z$ with $ k_{1} \leq k_{2} -10$, and $ j_{1} \ge \max\{ k_{1} + (2s-1)k_{2}, 0 \} +100$. Take any $ f_{k_{1}}, f_{k_{2}} $ in $ Z_{k_{1}}, Z_{k_{2}}$ respectively. Then there exists  a constant $ C$ so that for any $j \leq j_{1} -C $ we have
\begin{equation}\label{lft}
Q_{j} \left( Q_{\geq j_{1} +100} (f_{k_{2}}) Q_{j_{1}} (\tilde{f}_{k_{1}}) \right) =0
\end{equation}

where $ \tilde{f}_{k_{1}} \in \{ f_{k_{1}} , \bar{f}_{k_{1}} \}.$
\end{lemma}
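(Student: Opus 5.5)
The proof follows the template of Lemma \ref{prod1}: a support argument in Fourier space, with the roles of the two modulations reversed. After taking the space-time Fourier transform, the left-hand side of \eqref{lft} at $(\xi,\tau)$ is
\[
\varphi_{j}(\tau+|\xi|^{2s})\int_{\R^{n+1}} \mathcal{F}(Q_{j_1}\tilde f_{k_1})(\eta,r)\,\mathcal{F}(Q_{\geq j_1+100} f_{k_2})(\xi-\eta,\tau-r)\,d\eta\,dr .
\]
It suffices to show that the integrand vanishes whenever $|\tau+|\xi|^{2s}|\leq 2^{j+1}$ with $j\leq j_1-C$.

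On the support of the integrand the following constraints hold.
\begin{enumerate}
\item $|\eta|\approx 2^{k_1}$ and $|r\pm|\eta|^{2s}|\approx 2^{j_1}$, where the sign is $-$ for $\bar f_{k_1}$ and $+$ for $f_{k_1}$.
\item $|\xi-\eta|\approx 2^{k_2}$ and $|(\tau-r)+|\xi-\eta|^{2s}|\geq 2^{j_1+99}$.
\end{enumerate}
Since $k_1\leq k_2-10$, we also have $|\xi|\approx 2^{k_2}$.

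Write
\[
(\tau-r)+|\xi-\eta|^{2s}=(\tau+|\xi|^{2s})-(r\pm|\eta|^{2s})+\bigl(|\xi-\eta|^{2s}-|\xi|^{2s}\pm|\eta|^{2s}\bigr).
\]
The first term is at most $2^{j+1}\leq 2^{j_1-C+1}$, and the second is at most $2^{j_1+1}$. For the last term, the mean value theorem gives
\[
\bigl||\xi-\eta|^{2s}-|\xi|^{2s}\bigr|\lesssim 2^{(2s-1)k_2}\,2^{k_1}\leq 2^{j_1-100}.
\]
In addition, $|\eta|^{2s}\leq 2^{2sk_1}\leq 2^{k_1+(2s-1)k_2}\leq 2^{j_1-100}$, using $k_1\leq k_2$ and $2s-1>0$. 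Both estimates rely on the hypothesis $j_1\geq k_1+(2s-1)k_2+100$.

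Summing the three contributions bounds the left-hand side by roughly $2^{j_1+2}$. This contradicts the lower bound $2^{j_1+99}$, so the integrand is zero. The constant $C$ only needs to be large enough, say $C\geq 1$, to absorb the first term; no choice of $C$ causes difficulty.

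The one delicate point is the low-frequency case, where $k_1+(2s-1)k_2$ may be negative. This is why the hypothesis uses $\max\{\cdot,0\}$. In that case the bounds on the nonlinear phase term and on $|\eta|^{2s}$ still hold, since both quantities are $\leq 2^{\max\{k_1+(2s-1)k_2,0\}}\cdot O(1)\ll 2^{j_1}$. When $j_1=0$ the relevant cutoff is $\varphi_0$, but the hypothesis forces $j_1\geq 100$, so this case never occurs. Beyond this, no step is a real obstacle; the argument is the same bookkeeping of phase sizes as in Lemma \ref{prod1}.
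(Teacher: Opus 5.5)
Your proposal is correct and is essentially the paper's argument: the same Fourier-support decomposition of the phase $(\tau-r)+|\xi-\eta|^{2s}$, with the triangle inequality read in the contrapositive direction. You bound the modulation of the $f_{k_2}$ factor from above and contradict the $Q_{\geq j_1+100}$ cutoff, whereas the paper bounds $|\tau+|\xi|^{2s}|$ from below by $\gtrsim 2^{j_1}$ and contradicts $j\le j_1-C$. The only blemishes are harmless constants, e.g.\ $|\eta|^{2s}\le 2^{2s(k_1+1)}$ rather than $2^{2sk_1}$, which the $2^{100}$ margin absorbs.
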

\begin{proof}
We take the Fourier transform,
\begin{equation*}
\begin{split}
& \mathcal{F} \left( Q_{j} \left( Q_{\geq j_{1} +100} (f_{k_{2}}) Q_{j_{1}} (\tilde{f}_{k_{1}}) \right) \right)(\xi,\tau)
\\
& = \varphi_{j} ( \vert \xi \vert^{2s} + \tau) \int_{\R^{n+1}} \varphi_{\geq j_{1} + 100} ( \vert \xi - \eta \vert^{2s} + \tau -r) \hat{f}_{k_{2}} ( \xi - \eta, \tau -r) \varphi_{j}( \vert \eta \vert^{2s} + r ) \hat{ \tilde{ f}}_{k_{1}}(\eta,r) d \eta dr
\end{split}
\end{equation*}
and we notice that a necessary condition for this to be nonzero is to have 
\begin{enumerate}
    \item $ \vert \vert \xi - \eta \vert^{2s} + \tau - r \vert \geq 2^{j_{1} +100}$
    \item  $ \vert r \pm \vert \eta \vert^{2s} \vert \in [ 2^{j_{1}-1}, 2^{j_{1} +1}]$
    \item $ 2^{j_{1} -10} \geq \vert \xi \vert^{2s-1} \vert \eta \vert$
    \item  $ \vert \eta \vert \ll \vert \xi \vert$.
\end{enumerate}
Here the sign of $ |\eta|^{2s}$ depend on whether we have a complex conjugate on $ f_{k_{1}}$ or not. In either case, notice 
\begin{equation*}
\begin{split}
& \vert \vert \xi \vert^{2s} + \tau \vert
\\
& \geq \vert \vert \xi - \eta \vert^{2s} + \tau -r \vert \vert - \vert r \pm \vert \eta \vert^{2s} \vert - \vert \left(  \vert \xi - \eta \vert^{2s} - \vert \xi \vert^{2s} \pm \vert \eta \vert^{2s} \right) \vert
\\
& \geq C( 2^{j_{1} + 100} - 2^{j_{1}}) 
\\
& \geq C 2^{j_{1}}.
\end{split}
\end{equation*}
\end{proof}

\section{Short time existence}\label{s:shorttime}
In this section we prove existence of a local solution $u$ of \eqref{SSchrodingereq}. This was needed to derive a modified $s$-Schr\"odinger equation in terms of the functions 
$$
V_{i} : = \langle \partial_{i} u , e_{1} \rangle + \i \langle \partial_{i} u , e_{2} \rangle.
$$
 Namely, we prove
\begin{proposition}\label{localexistencethm}
Let $n \geq 1$, $ s\in ( 1/2,1)$ and $ \sigma_{0} \geq \frac{n}{2} +10$. Then given $u_{0} : \R^{n} \to \S^{2}$ with $ u_{0} \in B^{\sigma_{0}}_{Q}$ there exists $ T : = T( \Vert u_{0} \Vert_{B^{\sigma_{0}}_{Q}}) > 0 $ and $u : \R^{n+1} \to \S^{2}$ so that $u \in L^{\infty}( [-T,T] : B^{\sigma_{0}}_{Q} )$ and $u $ solves \eqref{SSchrodingereq} on the time interval $[-T,T]$ with initial data $u_{0}$. Moreover, we have persistence of regularity. In particular, whenever $ u_{0} \in B^{\sigma}_{Q}$ with $ \sigma > \sigma_{0}$ then $ u \in L^{\infty} ( [ -T,T] : B^{\sigma}_{Q})$. Lastly, for any $ q < \sigma_{0}$ we have $ u \in C ( [-T,T] : B^{q}_{Q} )$. 
\end{proposition}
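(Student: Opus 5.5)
We plan a standard parabolic-regularization (vanishing viscosity) argument that combines energy estimates in $B^{\sigma_0}_Q$ with the two commutator-type lemmas proved in Appendix \ref{s:elementary}, namely \eqref{comestzz1} and \eqref{comestzz2}. For $\mu\in(0,1]$ we consider the regularized problem
\[
\partial_t u^\mu = -u^\mu\wedge(-\Delta)^s u^\mu + \mu\bigl(\Delta u^\mu + |\nabla u^\mu|^2 u^\mu\bigr),\qquad u^\mu(0)=u_0 .
\]
This is the Landau--Lifshitz--Gilbert type flow with a harmonic-map heat flow term. The right-hand side is tangent to $\S^2$ whenever $|u^\mu|=1$, and a maximum-principle computation for $|u^\mu|^2-1$ shows that the constraint is preserved. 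Because $\mu\Delta$ dominates $(-\Delta)^s$ for $s<1$, we obtain a solution $u^\mu-Q\in C([0,T_\mu]:B^{\sigma_0}_{2,1})$ for this semilinear parabolic system by a contraction argument in Duhamel form. The Besov algebra property (Lemma \ref{Besovstuff}(3)) controls the nonlinearities, and the heat smoothing absorbs the $2s<2$ derivatives. Alternatively, one could work with the stereographic image $f=L(u)$ and use \eqref{Besovalgest3}--\eqref{Besovalgest4}.

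The core step is a $\mu$-independent a priori bound. We apply $\Delta_k$ to the equation, pair with $2^{2k\sigma_0}\Delta_k(u^\mu-Q)$, and aim for
\[
\tfrac{d}{dt}\|u^\mu-Q\|_{B^{\sigma_0}_{2,1}}\lesssim C\bigl(\|u^\mu-Q\|_{B^{\sigma_0}_{2,1}}\bigr).
\]
The viscous term contributes a nonpositive piece, while $\mu|\nabla u|^2u$ is lower order by the algebra property. The difficulty is the dispersive term $\Delta_k(u\wedge(-\Delta)^s u)$, which carries $2s>1$ derivatives. We write it as
\[
u\wedge(-\Delta)^s\Delta_k u+[\Delta_k,u\wedge](-\Delta)^s u .
\]
The commutator loses only $2s-1<1$ derivatives, so it is bounded by $\|u-Q\|_{B^{\sigma_0}}$ times the Lipschitz norm, which is controlled because $\sigma_0\ge n/2+10$. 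The leading piece $\langle \Delta_k u, u\wedge(-\Delta)^s\Delta_k u\rangle$ is not yet of the form required by \eqref{comestzz1}, since $\Delta_k u$ is not pointwise orthogonal to $u$. We therefore decompose $\Delta_k u=v+\langle\Delta_k u,u\rangle u$ with $v\perp u$. The normal component $\langle \Delta_k u,u\rangle$ is quadratic in $u-Q$, because $\langle \partial u,u\rangle=0$ gives a paraproduct gain. It can then be treated through \eqref{comestzz2}. The tangential part $v$ is handled by \eqref{comestzz1}, which costs only $\|\nabla u\|_{L^\infty}^{2s}\|v\|_{L^2}^2$. Summing over $k$ with the $\ell^1$ Besov weight requires a Bernstein/paraproduct bookkeeping, and this is where we expect the main obstacle: making the cancellation uniform in $k$ while staying in $\ell^1$ summation rather than $\ell^2$. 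If the $\ell^1$ structure fails, the fallback is to run the energy estimate in $H^{\sigma_0+\delta}$ and then interpolate back to $B^{\sigma_0}_{2,1}$, at the cost of slightly more regularity. The same estimate at level $\sigma>\sigma_0$ is linear in the top norm, which gives persistence of regularity with the same $T$.

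From the a priori bound we obtain $T=T(\|u_0\|_{B^{\sigma_0}_Q})$ and uniform bounds on $[-T,T]$; negative times are handled by the time-reversal symmetry $u(x,-t)$ combined with a reflection of $\S^2$. The equation then gives $\partial_t u^\mu$ uniformly bounded in $B^{\sigma_0-2}$. By Aubin--Lions and Banach--Alaoglu, a subsequence converges in $C([-T,T]:B^{q}_{Q,\mathrm{loc}})$ for $q<\sigma_0$ and weak-$*$ in $L^\infty B^{\sigma_0}_Q$. The limit satisfies $|u|=1$ and solves \eqref{SSchrodingereq}, since the nonlinearity passes to the limit under strong convergence. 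Finally, $u\in C([-T,T]:B^q_Q)$ for $q<\sigma_0$ follows by interpolating between the $L^\infty B^{\sigma_0}$ bound and the Lipschitz-in-time bound in $B^{\sigma_0-2}$, with the $L^2$ part controlled by conservation of $\|u-Q\|_{L^2}$.
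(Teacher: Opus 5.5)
The overall scheme is the same as the paper's: parabolic regularization, a $\mu$-independent bound, a limiting argument, and time reversal. Your handling of the leading piece $\langle \Delta_k u, u\wedge(-\Delta)^s\Delta_k u\rangle$ by the tangential/normal split together with \eqref{comestzz1} and \eqref{comestzz2} is also right.

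\textbf{The gap is the commutator.} You claim that $[\Delta_k,u\wedge](-\Delta)^s u$ ``loses only $2s-1$ derivatives'' and is therefore bounded by $\|u-Q\|_{B^{\sigma_0}_{2,1}}$ times a Lipschitz norm. But a loss of $2s-1>0$ derivatives is a loss at top order. Expanding $u(x-y)-u(x)$ to first order gives
\[
[\Delta_k,u\wedge](-\Delta)^s u=-\sum_{j=1}^n \partial_j u\wedge T_{k,j}u+R_k,\qquad T_{k,j}f(x)=\int_{\R^n} y_j\,\mathcal{F}^{-1}(\varphi_k)(y)\,(-\Delta)^s f(x-y)\,dy .
\]
Here $R_k$ has order $2s-2<0$ and is harmless. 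However, $T_{k,j}$ is localized at frequency $2^k$ with size $2^{(2s-1)k}$. Without further structure, the main term contributes $\|\nabla u\|_{L^\infty}\|u-Q\|_{B^{\sigma_0+2s-1}_{2,1}}$, which the $B^{\sigma_0}$ energy cannot control. The viscosity could absorb it only with $\mu$-dependent constants, so no $\mu$-independent $T$ follows. This, and not the $\ell^1$ bookkeeping, is the real obstacle for $s>1/2$. Passing to $H^{\sigma_0+\delta}$ does not help, because the loss is the same at every regularity level.

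\textbf{The missing idea is a second geometric cancellation, inside the commutator.} Pair the bad term only with the tangential part $v_k=\Delta_k u-\langle u,\Delta_k u\rangle u$. Since $v_k$ and $\partial_j u$ are tangent at $u$, the cross product of $\partial_j u$ with the tangential part of $T_{k,j}u$ is normal to the sphere. Hence
\[
\langle v_k,\partial_j u\wedge T_{k,j}u\rangle=\langle u,T_{k,j}u\rangle\,\langle v_k,\partial_j u\wedge u\rangle .
\]
Because $T_{k,j}$ annihilates constants and $|u|=1$, writing $w=u-Q$ gives
\[
\langle u,T_{k,j}u\rangle=-\tfrac12\sum_{l}\bigl(T_{k,j}(w_l^2)-2w_lT_{k,j}w_l\bigr).
\]
This is a bilinear commutator that gains back a full derivative (Lemma~\ref{3lm}), so the contribution is effectively of order $2s-2$.

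The normal component need not be propagated in time at all. The paper runs the estimate for
\[
\mathcal{E}_K(u)=2^{K\sigma}\|u-Q\|_{L^2}+\sum_{k\ge K}2^{k\sigma}\|v_k\|_{L^2},
\]
which is equivalent to the Besov norm for $K$ large because $\langle u,\Delta_k u\rangle$ gains a derivative (Lemma~\ref{2lm}, Corollary~\ref{enest1}). Every term in $\frac{d}{dt}\|v_k\|_{L^2}^2$ then carries a factor $v_k$, so $\frac{d}{dt}\|v_k\|_{L^2}\le a_k$ with $\sum_k 2^{k\sigma}a_k\lesssim R^2\|u-Q\|_{B^{\sigma}_{2,1}}$, where $R$ bounds $1+\|u-Q\|_{B^2_{\infty,1}}$. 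The $\ell^1$ summation is then automatic.

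The same orthogonality is what makes $\mu|\nabla u|^2u$ harmless. That term is not lower order by the algebra property alone, since it lies only in $B^{\sigma_0-1}$, but $\langle v_k,u\,\Delta_k(|\nabla u|^2)\rangle=0$.
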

To prove Proposition \ref{localexistencethm} we first find functions $ u^{\epsilon}$ that solve an "approximate" PDE to the $s$-Schr\"odinger map. Then we prove uniform estimates for these approximates solutions, which will then allow us to pass to the limit. This idea of taking parabolic perturbations of Schr\"odinger maps is not new. See for example \cite{DingWang2001}. In fact, many of the arguments in this section are relativaly standard. However, since we are unable to find a precise reference for Proposition \ref{localexistencethm}, we include all of the proofs here.
\subsection{Approximate solutions} First, we start with constructing approximate solutions. In particular, we have 
\begin{proposition}\label{parapolicguys}
Let $n \geq 1$, $ s\in (1/2,1)$ and $ u_{0} : \R^{n} \to \S^{2}$ with $ u_{0} \in B^{\sigma_{0}}_{Q}$ where $ \sigma_{0} \geq \frac{n}{2} +10$. Given $ \epsilon \in (0,1)$ there exists $ T^{\epsilon} > 0$ and $ u^{\epsilon}: \R^{n+1} \to \S^{2}, ~ u^{\epsilon} \in C ( [0,T^{\epsilon}] : B^{\sigma_{0}}_{Q})$ which solves 
\begin{equation}\label{SSparabolic}
\begin{cases}
 \partial_{t} u^{\epsilon} - \epsilon \Delta u^{\epsilon}  = - u^{\epsilon} \wedge (-\Delta)^s u^{\epsilon} + \epsilon (  |\nabla u^{\epsilon} |^{2} u^{\epsilon}) \mbox{    } \mbox{ on } \R^{n} \times [0,T^{\epsilon}] \\
 u^{\epsilon}(x,0) = u_{0}.
\end{cases}
\end{equation}
Moreover, the solution $ u^{\epsilon}$ is unique. Namely, given any $ v \in C ( [0,T^{\epsilon}] : B^{\sigma_{0}}_{Q})$ which solves \eqref{SSparabolic}, then we have $ v = u^{\epsilon}$. Lastly, we have persistence of regularity, in the sense that if $ u_{0} \in B^{\sigma}_{Q} $ for some $ \sigma > \sigma_{0}$ then we obtain $ u^{\epsilon} \in C ( [0,T^{\epsilon}] : B^{\sigma}_{Q})$. 
\end{proposition}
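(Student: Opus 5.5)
We treat \eqref{SSparabolic} as a semilinear parabolic system, viewing $-u\wedge(-\Delta)^s u$ as a lower order perturbation of $\epsilon\Delta$ (it costs $2s<2$ derivatives), and solve it by a contraction argument in $C([0,T^\epsilon]:B^{\sigma_0}_Q)$. Concretely, write $w=u^\epsilon-Q$ and recast the equation in Duhamel form
\[
w(t)=e^{\epsilon t\Delta}(u_0-Q)+\int_0^t e^{\epsilon(t-r)\Delta}\Big(-(Q+w)\wedge(-\Delta)^s w+\epsilon\,|\nabla w|^2(Q+w)\Big)(r)\,dr=:\Phi(w)(t).
\]
The heat semigroup satisfies $\|e^{\epsilon t\Delta}F\|_{B^{\sigma_0}_{2,1}}\lesssim (1+(\epsilon t)^{-\beta/2})\|F\|_{B^{\sigma_0-\beta}_{2,1}}$ for $\beta\in[0,2)$, which follows frequency by frequency from $\|\Delta_k e^{\epsilon t\Delta}\|\lesssim e^{-c\epsilon t 2^{2k}}$. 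Taking $\beta=2s$ for the first nonlinear term and $\beta=1$ for the second, and using the algebra property of $B^{\sigma}_{2,1}$ in Lemma \ref{Besovstuff}(3) (valid since $\sigma_0-2s>\tfrac n2$), we get
\[
\|\Phi(w)\|_{L^\infty_TB^{\sigma_0}_{2,1}}\le C\|u_0-Q\|_{B^{\sigma_0}_{2,1}}+C_\epsilon\,(T^{1-s}+T^{1/2})\,(1+\|w\|)^2\|w\|,
\]
together with a matching Lipschitz bound for $\Phi(w)-\Phi(\tilde w)$. Choosing $T^\epsilon$ small then makes $\Phi$ a contraction on a ball. Time continuity of the solution comes from the strong continuity of the heat semigroup on $B^{\sigma_0}_{2,1}$ (here the $\ell^1$ sum in the Besov norm is what makes this work) and the integrability of the Duhamel kernel. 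Uniqueness in $C([0,T^\epsilon]:B^{\sigma_0}_Q)$ follows from the same Lipschitz estimate combined with a continuation/Gronwall argument. Persistence of regularity follows by rerunning the contraction in $B^{\sigma}_{2,1}$ using the tame estimate in Lemma \ref{Besovstuff}(3): this estimate is linear in the higher norm, so the existence time is still governed by the $B^{\sigma_0}$ norm.

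\textbf{Main obstacle: keeping the fixed point on the sphere.} The contraction produces an $\R^3$-valued solution, so we must show $|u^\epsilon|\equiv1$. We plan to set $\rho=|u^\epsilon|^2-1$ and compute its equation. Since $u\wedge(-\Delta)^su\perp u$, the dispersive term drops out, and $\Delta|u|^2=2u\cdot\Delta u+2|\nabla u|^2$ gives
\[
\partial_t\rho-\epsilon\Delta\rho=-2\epsilon|\nabla u|^2+2\epsilon|\nabla u|^2|u|^2=2\epsilon|\nabla u|^2\rho .
\]
This is a linear parabolic equation with bounded coefficient, since $\nabla u\in L^\infty$ because $\sigma_0>\tfrac n2+1$. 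Its initial datum is $\rho(0)=0$ and $\rho\in C([0,T]:L^2)$ (because $u-Q\in L^2\cap L^\infty$). An $L^2$ energy estimate plus Gronwall then forces $\rho\equiv0$. The delicate point is the regularity needed to justify these formal computations. We will first approximate using $u_0\in B^{\sigma_0}$ with $\sigma_0$ large, and then pass to the limit through the uniqueness and persistence statements already proved.
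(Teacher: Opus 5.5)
Your proposal is correct and follows essentially the same route as the paper. The paper also sets $w=u^\epsilon-Q$ and runs a contraction in $C([0,T^\epsilon]:B^{\sigma_0}_{2,1})$ using the heat-semigroup smoothing bounds of Lemma~\ref{heatsemigr} and the Lipschitz bound of Lemma~\ref{lipschitsbound}, proves uniqueness by local continuation and persistence by rerunning the fixed point, and obtains the sphere constraint from $\partial_t\rho=\epsilon\Delta\rho+2\epsilon|\nabla u^\epsilon|^2\rho$ for $\rho=|u^\epsilon|^2-1$ with an $L^2$ Gronwall argument.

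Your closing approximation step is unnecessary. As phrased it would also need continuous dependence on the data, which uniqueness and persistence alone do not give. Since $\sigma_0\ge\frac n2+10$, the fixed point already has $\partial_t u^\epsilon,\Delta u^\epsilon\in C([0,T^\epsilon]:B^{\sigma_0-2}_{2,1})\subset C([0,T^\epsilon]:L^2\cap L^\infty)$, so the computation for $\rho$ is justified directly, exactly as in the paper.
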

This is will follow from a standard fixed point argument. First, notice that the left hand side of \eqref{SSparabolic} annihilates constants. Therefore, taking $ w = u-Q$, it suffices to find $ w^{\epsilon} : \R^{n+1} \to \R^{3},  ~ w^{\epsilon} \in C( [0,T^{\epsilon}] : B^{\sigma}_{2,1} )$, $ |w^{\epsilon} + Q |=1$, which solves  
\begin{equation}\label{SSparabolicscalar}
\begin{cases}
 \partial_{t} w^{\epsilon} - \epsilon \Delta w^{\epsilon}  = - (w^{\epsilon}+Q) \wedge (-\Delta)^s w^{\epsilon} + \epsilon (  |\nabla w^{\epsilon} |^{2} (w^{\epsilon}+Q)) \mbox{    } \mbox{ on } \R^{n} \times [0,T^{\epsilon}] \\
 w^{\epsilon}(x,0) = u_{0} -Q,
\end{cases}
\end{equation}
To that end, notice that the nonlinearity on the right hand side of \eqref{SSparabolicscalar} is Lipschitz. 
\begin{lemma}\label{lipschitsbound}
Let $ \mathcal{N}^{\epsilon}$ denote the nonlinearity on the right hand side of \eqref{SSparabolicscalar}. Then we have the following Lipschitz bound; for any $ \sigma \geq \frac{n}{2} +10$ and any $ w, v \in B^{\sigma}_{2,1}$ with $ \Vert w \Vert_{B^{\sigma}_{2,1}} + \Vert v \Vert_{B^{\sigma}_{2,1}} \leq R$ we have 
\begin{equation}\label{lipschitznepsilon}
\Vert \mathcal{N}^{\epsilon}( w ) - \mathcal{N}^{\epsilon}(v) \Vert_{B^{\sigma -2s}_{2,1}} \lesssim_{\sigma , R} \Vert w - v \Vert_{B^{\sigma}_{2,1}}.
\end{equation}
\end{lemma}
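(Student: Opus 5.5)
The plan is to split $\mathcal{N}^{\epsilon}(w)-\mathcal{N}^{\epsilon}(v)$ into differences of the individual terms and bound each with the Besov algebra property of Lemma \ref{Besovstuff}(3). By definition
\[
\mathcal{N}^{\epsilon}(w) = -(w+Q)\wedge(-\Delta)^{s} w + \epsilon\, |\nabla w|^{2}(w+Q).
\]
For the first term we write
\[
(w+Q)\wedge(-\Delta)^s w-(v+Q)\wedge(-\Delta)^s v=(w-v)\wedge(-\Delta)^s w+(v+Q)\wedge(-\Delta)^s(w-v).
\]
The quadratic term is treated the same way, writing $|\nabla w|^2-|\nabla v|^2=\nabla(w-v)\cdot\nabla(w+v)$ and telescoping the remaining factor $w+Q$.

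Two elementary facts will be used throughout. First, $(-\Delta)^s$ maps $B^{\sigma}_{2,1}$ to $B^{\sigma-2s}_{2,1}$ boundedly. This is immediate for $k\geq 1$ from the dyadic scaling $2^{2sk}$; the low frequencies $k\le 0$ are harmless, since there $|\xi|^{2s}\lesssim 1$ and $\Delta_{\leq 0}$ maps into $L^2$. Second, $\nabla$ maps $B^{\sigma}_{2,1}$ to $B^{\sigma-1}_{2,1}\subset B^{\sigma-2s}_{2,1}$, using $2s>1$.

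Since $\sigma-2s\geq \frac n2+8>\frac n2$, Lemma \ref{Besovstuff}(3) makes $B^{\sigma-2s}_{2,1}$ an algebra, and $B^{\sigma}_{2,1}\hookrightarrow B^{\sigma-2s}_{2,1}$. Applying this to the first splitting gives
\[
\Vert (w-v)\wedge(-\Delta)^s w\Vert_{B^{\sigma-2s}_{2,1}}\lesssim \Vert w-v\Vert_{B^{\sigma}_{2,1}}R .
\]
The terms containing the constant vector $Q$ need separate handling, because $Q\notin B^{\sigma}_{2,1}$. For these we simply use that $Q\wedge(-\Delta)^s(w-v)$ is linear in $w-v$, so it is bounded directly by $\Vert w-v\Vert_{B^\sigma_{2,1}}$. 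Likewise $\epsilon|\nabla w|^2Q-\epsilon|\nabla v|^2Q=\epsilon\,Q\,\nabla(w-v)\cdot\nabla(w+v)$ is bounded by the algebra property in $B^{\sigma-1}_{2,1}$, which gives a constant times $R\Vert w-v\Vert_{B^\sigma_{2,1}}$. The cubic pieces give a constant times $R^2\Vert w-v\Vert$. Since $\epsilon<1$, the constant is $\lesssim_{\sigma,R}$, uniformly in $\epsilon$.

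The main point to check is the low-frequency behaviour of $(-\Delta)^s$ on the inhomogeneous space, together with the separate treatment of the constant $Q$. Both are routine, so I expect no genuine obstacle.
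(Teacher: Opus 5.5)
Your proposal is correct and follows the paper's proof: the same telescoping decomposition of $\mathcal{N}^{\epsilon}(w)-\mathcal{N}^{\epsilon}(v)$, followed by the Besov algebra property of Lemma \ref{Besovstuff}(3). The extra details you supply, which the paper leaves implicit, are all sound: the low-frequency bound for $(-\Delta)^s$, the embedding $B^{\sigma-1}_{2,1}\subset B^{\sigma-2s}_{2,1}$ from $2s>1$, the separate linear treatment of the terms involving the constant $Q$, and the uniformity in $\epsilon$.
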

 \begin{proof}
By definition 
\begin{equation*}
\begin{split}
& \mathcal{N}^{\epsilon}( w ) - \mathcal{N}^{\epsilon}(v)
\\
& = -Q \wedge ( - \Delta)^{s} ( w-v) - (w-v) \wedge (- \Delta)^{s}(w) - v \wedge (-\Delta)^{s} (w-v)
\\
& + \epsilon \left((w +Q) \sum_{j =1}^{n} \partial_{j} ( w-v) \cdot ( \partial_{j} w + \partial_{j} v )   + \vert \nabla v \vert^{2} (w-v) \right) 
\end{split}
\end{equation*}
And hence, \eqref{lipschitznepsilon} follows from the algebra properties of Besov spaces, see Lemma \ref{Besovstuff}. 
 \end{proof}
 In order to run the contraction argument, we will need the following known estimate about the heat semigroup $  e^{ \epsilon t\Delta} $. We include the proof for completeness. 
 \begin{lemma}\label{heatsemigr}
For any  $ \sigma > 0$ and $ t> 0$, we have the following 
\begin{enumerate}
    \item for $ f \in B^{\sigma}_{2,1} $ we have for any $ \beta \geq 0$
    \begin{equation}\label{htest1}
        \Vert e^{\epsilon t \Delta} f \Vert_{B^{\sigma + \beta}_{2,1}} \lesssim_{\sigma, \beta} (1+ ( \epsilon t)^{- \beta/2} ) \Vert f \Vert_{B^{\sigma}_{2,1}}.
    \end{equation}
    \item Given $T >0$,  $ F \in C ( [0,T] : B^{\sigma}_{2,1}) $ we have for any $ t \in (0, T]$, $ \beta \in [0,2)$ 
    \begin{equation}\label{htest2}
        \Vert \int_{0}^{t} e^{\epsilon ( t- \tilde{t}) \Delta} F(\cdot,\tilde{t}) d\tilde{t}  \Vert_{B^{\sigma + \beta}_{2,1}} \lesssim_{\sigma, \beta} \left( t +  t^{1- \beta/2}  \epsilon^{-\beta/2} \right) \sup_{ \tilde{t} \in [0,t]} \Vert F ( \cdot, \tilde{t}) \Vert_{B^{\sigma}_{2,1}}.
    \end{equation}
\end{enumerate}
 \end{lemma}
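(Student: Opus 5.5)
The plan is to work on the Fourier side, dyadic block by dyadic block, using the heat multiplier bound
\[
\sup_{|\xi|\approx 2^{k}} |\xi|^{\beta} e^{-\epsilon t |\xi|^{2}} \lesssim_{\beta} (\epsilon t)^{-\beta/2},
\]
and then sum with the $\ell^{1}$ weights of the Besov norm. For the low-frequency block $\Delta_{\leq 0}$ (the inhomogeneous piece) I will simply use that $e^{\epsilon t\Delta}$ is a contraction on $L^{2}$. This piece produces the ``$1$'' in \eqref{htest1} and the ``$t$'' in \eqref{htest2}, since in the inhomogeneous norm the weight $2^{k(\sigma+\beta)}$ is $\lesssim 1$ there.

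\textbf{Proof of \eqref{htest1}.} Since $e^{\epsilon t\Delta}$ commutes with $\Delta_{k}$, Plancherel gives
\[
\|\Delta_{k} e^{\epsilon t \Delta} f\|_{L^{2}} \leq \sup_{|\xi|\in[2^{k-1},2^{k+1}]} e^{-\epsilon t|\xi|^{2}} \,\|\Delta_{k} f\|_{L^{2}} .
\]
Multiplying by $2^{k(\sigma+\beta)}$, the factor $2^{k\beta} e^{-\epsilon t 2^{2k-2}}$ is bounded by $C_{\beta}(\epsilon t)^{-\beta/2}$, since $x^{\beta/2}e^{-x}$ is bounded. For $k\leq 0$ I bound it instead by $2^{k\beta}\leq 1$. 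Summing over $k$ yields \eqref{htest1}.

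\textbf{Proof of \eqref{htest2}.} I apply Minkowski's inequality to move the norm inside the time integral, and then use \eqref{htest1} at time $t-\tilde t$:
\[
\Big\|\int_{0}^{t} e^{\epsilon(t-\tilde t)\Delta}F\,d\tilde t\Big\|_{B^{\sigma+\beta}_{2,1}} \lesssim \int_{0}^{t}\big(1+(\epsilon(t-\tilde t))^{-\beta/2}\big)\,d\tilde t \;\sup_{\tilde t\in[0,t]}\|F(\tilde t)\|_{B^{\sigma}_{2,1}} .
\]
Since $\beta<2$, the singularity is integrable, and
\[
\int_{0}^{t}(t-\tilde t)^{-\beta/2}\,d\tilde t = \frac{t^{1-\beta/2}}{1-\beta/2},
\]
which gives exactly $t + \epsilon^{-\beta/2} t^{1-\beta/2}$.

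The only delicate point is the integrability at $\tilde t=t$, which is what forces the restriction $\beta<2$. Strong measurability for Minkowski's inequality follows from the continuity of $F$ into $B^{\sigma}_{2,1}$. The constants depend on $\beta$ through $\sup_{x>0} x^{\beta/2}e^{-x}$ and $(1-\beta/2)^{-1}$; no genuine dependence on $\sigma$ arises.
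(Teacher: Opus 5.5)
Your proposal is correct and follows essentially the same route as the paper: a blockwise Plancherel bound $2^{k\beta}e^{-c\epsilon t 2^{2k}}\lesssim_{\beta}(\epsilon t)^{-\beta/2}$ for $k\geq 1$ and the trivial $L^{2}$ bound on the low-frequency block $\Delta_{\leq 0}$ give \eqref{htest1}. Then Minkowski's inequality, \eqref{htest1} applied at time $t-\tilde t$, and the integrability of $(t-\tilde t)^{-\beta/2}$ for $\beta<2$ give \eqref{htest2}; your comments on measurability and on the constants depending only on $\beta$ are minor points the paper leaves implicit.
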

 \begin{proof}
\eqref{htest1} follows from Plancheral. Indeed, for any $k \geq 1$ we have 
\begin{equation*}
\begin{split}
&   \Vert  e^{\epsilon t \Delta} \Delta_{k} f \Vert_{L^{2}(\R^{n})}
\\
& \lesssim e^{ - c \epsilon t 2^{2k}} \Vert \Delta_{k}f  \Vert_{L^{2}(\R^{n})}
\\
& \lesssim 2^{- \beta k} ( \epsilon t)^{-\beta/2} \Vert \Delta_{k}f  \Vert_{L^{2}(\R^{n})}.
\end{split}
\end{equation*}
Thus, 
\begin{equation}\label{htest3}
\sum_{k \geq 1} 2^{ ( \sigma+ \beta) k }  \Vert  e^{\epsilon t \Delta} \Delta_{k} f \Vert_{L^{2}(\R^{n})} \lesssim ( \epsilon t)^{-\beta/2} \Vert f  \Vert_{B^{\sigma}_{2,1}}.
\end{equation}
As for $k=0$, we have the obvious estimate 
\begin{equation}\label{htest4}
\Vert e^{\epsilon t \Delta} \Delta_{ \leq 0} f \Vert_{L^{2}(\R^{n})} \lesssim \Vert f \Vert_{B^{\sigma}_{2,1}}. 
\end{equation}
Combining \eqref{htest3}, and \eqref{htest4} gives \eqref{htest1}. Next, notice that  \eqref{htest2} follows from Minkowki's inequality and \eqref{htest1}. Indeed, 
\begin{equation*}
\begin{split}
& \Vert \int_{0}^{t} e^{\epsilon ( t- \tilde{t}) \Delta} F(\cdot,\tilde{t}) d\tilde{t}  \Vert_{B^{\sigma + \beta}_{2,1}}
\\
& \lesssim \int_{0}^{t} \Vert e^{\epsilon ( t- \tilde{t}) \Delta} F(\cdot,\tilde{t})  \Vert_{B^{\sigma + \beta}_{2,1}} d \tilde{t}
\\
& \overset{\eqref{htest1}}{\lesssim} \sup_{\tilde{t} \in [0,t]} \Vert F(\cdot,\tilde{t} ) \Vert_{B^{\sigma}_{2,1}} | \int_{0}^{t} (1+ \epsilon^{-\beta/2} (t - \tilde{t})^{-\beta/2} ) d\tilde{t}|
\\
& \left( t +  t^{1- \beta/2}  \epsilon^{-\beta/2} \right) \sup_{ \tilde{t} \in [0,t]} \Vert F ( \cdot, \tilde{t}) \Vert_{B^{\sigma}_{2,1}}
\end{split}
\end{equation*}

 \end{proof}
 With the help of Lemma \ref{heatsemigr} we can now run the fixed point argument to obtain 
 \begin{lemma}\label{prepreloc}
Let $ Q \in \S^{2}$, $ w_{0} \in B^{\sigma_{0}}_{2,1}(\R^{n} : \R^{3})$, $ \sigma_{0} \geq \frac{n}{2} +10$. Then for any $ \epsilon \in (0,1)$ there exists $ T:= T( \Vert w\Vert_{B^{\sigma_{0}}_{2,1}}, \epsilon) > 0$ and $ w^{\epsilon} \in C( [ 0, T] : B^{\sigma_{0}}_{2,1} )$ so that $w^{\epsilon}$ solves
\begin{equation}\label{SSparabolicscalar1}
\begin{cases}
 \partial_{t} w^{\epsilon} - \epsilon \Delta w^{\epsilon}  = - (w^{\epsilon}+Q) \wedge (-\Delta)^s w^{\epsilon} + \epsilon (  |\nabla w^{\epsilon} |^{2} (w^{\epsilon}+Q)) \mbox{    } \mbox{ on } \R^{n} \times [0,T^{}] \\
 w^{\epsilon}(x,0) = w_{0}.
\end{cases}
\end{equation}
Moreover, the solution $ w^{\epsilon}$ is unique. In the sense, that given any $ v \in C ( [ 0, T] : B^{\sigma_{0}}_{2,1}) $ which solves \eqref{SSparabolicscalar1} then $ w^{\epsilon} = v$. Lastly, if $w_{0} \in B^{\sigma}_{2,1} ( \R^{n} : \R^{3})$ for $ \sigma >\sigma_{0}$. Then we obtain $ w^{\epsilon} \in C ( [0,T] : B^{\sigma}_{2,1})$

 \end{lemma}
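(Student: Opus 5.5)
The plan is a standard Banach fixed point argument in $X_T := C([0,T]:B^{\sigma_{0}}_{2,1})$, applied to the Duhamel form of \eqref{SSparabolicscalar1}. Define
\[
\Phi(w)(t) := e^{\epsilon t\Delta} w_{0} + \int_{0}^{t} e^{\epsilon(t-\tilde t)\Delta}\mathcal{N}^{\epsilon}(w)(\cdot,\tilde t)\, d\tilde t ,
\]
and work on the closed ball $\mathcal{B}_{R} := \{ w\in X_T : \sup_{t}\Vert w(t)\Vert_{B^{\sigma_0}_{2,1}}\le R\}$ with $R := 2C\Vert w_0\Vert_{B^{\sigma_0}_{2,1}}+1$. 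Note that \eqref{SSparabolicscalar1} uses the nonlinearity without the constraint $|w+Q|=1$; the constraint is not needed here and will be recovered in Proposition \ref{parapolicguys}.

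First, self-mapping. By \eqref{htest1} with $\beta=0$, $\Vert e^{\epsilon t\Delta}w_0\Vert_{B^{\sigma_0}_{2,1}}\lesssim\Vert w_0\Vert_{B^{\sigma_0}_{2,1}}$. For the Duhamel term I use \eqref{htest2} with input regularity $\sigma_0-2$ and gain $\beta\in(2s,2)$, say $\beta = 1+s$. The lower bound $\beta>2s$ handles the fractional term, since $\mathcal{N}^\epsilon(w)\in B^{\sigma_0-2s}_{2,1}\subset B^{\sigma_0-\beta}_{2,1}$ by \eqref{lipschitznepsilon} with $v=0$ (note $\mathcal{N}^\epsilon(0)=0$). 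The term $\epsilon|\nabla w|^2(w+Q)$ only lies in $B^{\sigma_0-1}_{2,1}$, which is better still. This gives
\[
\Big\Vert \int_0^t e^{\epsilon(t-\tilde t)\Delta}\mathcal{N}^\epsilon(w)\,d\tilde t\Big\Vert_{B^{\sigma_0}_{2,1}}\lesssim (t+t^{1-\beta/2}\epsilon^{-\beta/2})\,C(R)\,R .
\]
The prefactor tends to $0$ as $T\to 0$, so for $T$ small depending on $R$ and $\epsilon$, $\Phi$ maps $\mathcal{B}_R$ into itself.

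Second, contraction. The same estimate applied to $\mathcal{N}^\epsilon(w)-\mathcal{N}^\epsilon(v)$ via \eqref{lipschitznepsilon} yields $\Vert\Phi(w)-\Phi(v)\Vert_{X_T}\le \tfrac12\Vert w-v\Vert_{X_T}$ after shrinking $T$ further. Time continuity in $B^{\sigma_0}_{2,1}$ follows from strong continuity of the heat semigroup on $B^{\sigma_0}_{2,1}$, which holds because $\ell^1$ summability lets dominated convergence apply dyadically, together with continuity of the Duhamel integral. The one subtle point is continuity at $t=0$ when the integrand has singular weight $(t-\tilde t)^{-\beta/2}$; since this weight is integrable, the contribution vanishes as $t\to0$, which settles it. Banach's theorem then gives a fixed point, which is a mild solution; one checks it is a solution of \eqref{SSparabolicscalar1} in the distributional sense.

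Third, uniqueness and persistence. For uniqueness, take two solutions in $C([0,T]:B^{\sigma_0}_{2,1})$. Both are bounded by some $R'$, and the contraction estimate on a short interval $[0,\delta(R')]$ forces them to agree there. Iterating with the same $\delta$ covers $[0,T]$. For persistence, if $w_0\in B^{\sigma}_{2,1}$ with $\sigma>\sigma_0$, I redo the argument in $C([0,T_\sigma]:B^{\sigma}_{2,1})$ and obtain a solution, which by uniqueness coincides with $w^\epsilon$. To keep the existence time at $T$ rather than some smaller $T_\sigma$, I would use the tame product estimates of Lemma \ref{Besovstuff}(3): the $B^{\sigma}$ norm of $\mathcal{N}^\epsilon$ is linear in $\Vert w\Vert_{B^\sigma}$ with coefficients controlled by the $B^{\sigma_0}$ norm. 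A Gronwall-type bound on the Duhamel formula, using the integrable kernel, then prevents blow-up of the $B^\sigma$ norm before $T$. I expect this tame linear bound, and making the Gronwall step work with the singular weight $(t-\tilde t)^{-\beta/2}$, to be the main technical point. The rest is routine.
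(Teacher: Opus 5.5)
Your proposal is correct and follows essentially the same route as the paper: a Banach fixed point for the Duhamel map on a ball in $C([0,T]:B^{\sigma_0}_{2,1})$ built from the Lipschitz bound \eqref{lipschitznepsilon} and the heat estimates \eqref{htest1}--\eqref{htest2}, uniqueness by iterating the short-time difference estimate, and persistence by rerunning the argument at level $\sigma$. Two small remarks. The paper simply takes the gain $\beta=2s$, which already works since $s<1$, and your phrase ``input regularity $\sigma_0-2$'' should read $\sigma_0-\beta$. Your tame-estimate/Gronwall justification that the $B^{\sigma}$ solution persists on all of $[0,T]$ is a more careful version of the one-line appeal to \eqref{htest2} that the paper makes.
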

\begin{proof}
Let $ N^{\epsilon}( w)$ be the nonlinearity on the right hand side of \eqref{SSparabolicscalar1}. Fix $T > 0$ to be chosen later and choose $ R \geq 1$ such that $ \Vert w_{0} \Vert_{B^{\sigma_{0}}_{2,1}} \ll R$. Then set 
$$
B_{R} : = \{ w \in C ( [0,T] : B^{\sigma_{0}}_{2,1} ) : \sup_{t \in [0,T]} \Vert w \Vert_{B^{\sigma_{0}}_{2,1}} \leq R \}. 
$$
Consider the operator given by 
$$
\Phi(w) : = e^{\epsilon t \Delta} w_{0} + \int_{0}^{t} e^{ \epsilon ( t- \tilde{t}) \Delta} \mathcal{N}^{\epsilon} (w) d\tilde{t}.
$$
By Lemma \ref{lipschitznepsilon}, and Lemma \ref{heatsemigr} we obtain
\begin{equation*}
\begin{split}
& \sup_{t \in [0,T]} \Vert \Phi (w) \Vert_{B^{\sigma}_{2,1}}
\\
& \leq C \Vert w_{0} \Vert_{B^{\sigma}_{2,1}} +  \left( T+ { \epsilon^{-s} T^{1-s}} \right) R 
\\
& \leq \frac{R}{4} +  C ( T + \epsilon^{-s} T^{1-s} ) R.
\end{split}
\end{equation*}
Hence, for small enough $T$ we obtain
$$
\sup_{t \in [0,T]} \Vert \Phi (w) \Vert_{B^{\sigma_{0}}_{2,1}} \leq R.
$$
Moreover, Lemma \ref{lipschitznepsilon}, and Lemma \ref{heatsemigr} imply that $ \Phi( w) \in C ( [0,T] : B^{\sigma_{0}}_{2,1})$. Therefore, 
$$
\Phi : B_{R} \to B_{R}.
$$
Lastly, for small $T$, $ \Phi$ is a contraction. Indeed, by Lemma \ref{heatsemigr}, Lemma \ref{lipschitznepsilon} we obtain for any $w,v \in B_{R}$ 
$$
\sup_{t \in [0,T]} \Vert \Phi(w) - \Phi(v) \Vert_{B^{\sigma_{0}}_{2,1}} \lesssim C_{R,s } ( T + \epsilon^{-s} T^{1-s} ) \Vert w - v \Vert_{B^{\sigma_{0}}_{2,1}}.
$$
Hence, for small $ T > 0$, we obtain 
$$
\sup_{t \in [0,T]} \Vert \Phi(w) - \Phi(v) \Vert_{B^{\sigma_{0}}_{2,1}} \leq \frac{1}{2} \Vert w - v \Vert_{B^{\sigma_{0}}_{2,1}}.
$$
Thus, a unique fixed point exist. Namely, there exist a unique $ w^{\epsilon } \in B_{R}$ which solves \eqref{SSparabolicscalar1}. \\
It remains to show uniqueness of $ w^{\epsilon}$ in the larger space $ C ( [0,T] : B^{\sigma_{0}}_{2,1})$. To that end, take $ v \in C( [0,T] : B^{\sigma_{0}}_{2,1}) $ which solves \eqref{SSparabolicscalar1}. It suffices to show that there exists $ h \in (0,T]$ such that for any $ t_{0} \in [0,T]$ for which $ w(\cdot, t_{0}) = v(\cdot,t_{0})$ we have $ w( \cdot,t) = v(\cdot,t)$ for all $ t\in [t_{0} , t_{0}+h] \cap [t_{0} , T]$. To that end, fix $ t_{0}$ so that $ w(\cdot, t_{0}) = v(\cdot,t_{0})$. And fix $ h > 0$ to be chosen later. Notice that we can write
$$
w(x,t) = e^{(t-t_{0}) \epsilon \Delta} w(\cdot,t_{0}) + \int_{t_{0}}^{t} e^{ ( t - \tilde{t}) \epsilon \Delta} \mathcal{N}^{\epsilon}( w)(x,\tilde{t}) d \tilde{t}. 
$$
Similarly, for $ v $ we can write 
$$
v(x,t) = e^{(t-t_{0}) \epsilon \Delta} v(\cdot,t_{0}) + \int_{t_{0}}^{t} e^{ ( t - \tilde{t}) \epsilon \Delta} \mathcal{N}^{\epsilon}( v)(x,\tilde{t}) d \tilde{t}. 
$$
Then using \eqref{htest2}, \eqref{lipschitznepsilon} we obtain
\begin{equation*}
\begin{split}
&\sup_{t \in [t_{0},t_{0} +h]} \Vert w - v \Vert_{B^{\sigma_{0}}_{2,1}}
\\
& \leq C (h + \epsilon^{-s} h^{1-s} ) \sup_{t \in [t_{0},t_{0} + h]} \Vert w - v \Vert_{B^{\sigma_{0}}_{2,1}},
\end{split}
\end{equation*}
where $ C $ depends on the Besov norms $ \sup_{t \in [0,T]} \Vert w \Vert_{B^{\sigma}_{2,1}}$ and $ \sup_{t \in [0,T]} \Vert v \Vert_{B^{\sigma}_{2,1}}.$ Hence, taking $ h $ small enough we obain
$$
\sup_{t \in [t_{0},t_{0} +h]} \Vert w - v \Vert_{B^{\sigma_{0}}_{2,1}} \leq \frac{1}{2} \sup_{t \in [t_{0},t_{0} +h]} \Vert w - v \Vert_{B^{\sigma_{0}}_{2,1}}
$$
which implies 
$$
\sup_{t \in [t_{0},t_{0} +h]} \Vert w - v \Vert_{B^{\sigma_{0}}_{2,1}} =0.
$$
Which gives uniqueness. For persistence of regularity, we can apply the above fixed point argument with $ \sigma > \sigma_{0}$ and see that the unique solution $ w^{\epsilon}$ satisfies 
$$
w^{\epsilon} \in C( [0, \tilde{T}^{\epsilon}] : B^{\sigma}_{2,1}),
$$
for some $ \tilde{T}^{\epsilon} \leq T^{\epsilon}$. But using \eqref{htest2} we arrive at 
$$
w^{\epsilon} \in C( [0, {T}^{\epsilon}] : B^{\sigma}_{2,1}),
$$

\end{proof}
Now we can prove Proposition \ref{parapolicguys}.

\begin{proof}[Proof of Proposition \ref{parapolicguys}]
Using Lemma \ref{prepreloc} with initial data $ u_{0} -Q $ we obtain $ w^{\epsilon} : \R^{n+1} \to \R^{3} $ which solves
    \begin{equation}\label{SSparabolicscalar2}
\begin{cases}
 \partial_{t} w^{\epsilon} - \epsilon \Delta w^{\epsilon}  = - (w^{\epsilon}+Q) \wedge (-\Delta)^s w^{\epsilon} + \epsilon (  |\nabla w^{\epsilon} |^{2} (w^{\epsilon}+Q)) \mbox{    } \mbox{ on } \R^{n} \times [0,T^{}] \\
 w^{\epsilon}(x,0) = u_{0} -Q.
\end{cases}
\end{equation}
Therefore, taking $ u^{\epsilon} = w^{\epsilon} + Q$, we see that $ u^{\epsilon}$ has the required regularity and solves \eqref{SSparabolic}. Moreover, uniqueness follows from uniqueness of the solutions of \eqref{SSparabolicscalar2}. It remains to show that 
$ | u^{\epsilon } | = 1$. To that end, set 
$$
\rho(x,t) : = | u^{\epsilon} |^{2} -1.
$$
Then $ \rho (\cdot,0) = 0$. A direct computation gives
$$
\partial_{t} \rho = \epsilon \Delta \rho +2 \epsilon | \nabla u^{\epsilon} |^{2} \rho.
$$
Therefore, integration by parts yields
$$
\frac{d}{dt} \Vert \rho (\cdot,t) \Vert_{L^{2}(\R^{n})} \lesssim \Vert \nabla u^{\epsilon}(\cdot,t) \Vert^{2}_{L^{\infty}(\R^{n})} \Vert \rho (\cdot,t) \Vert_{L^{2}(\R^{n})}.
$$
Hence, by Grownell's inequality we obtain $ \rho(\cdot,t) = 0$. Which implies that $ u^{\epsilon} : \R^{n+1} \to \S^{2}$.  
\end{proof}

 \subsection{Uniform estimates for the approximate solutions} In this subsection, we show that the solutions obtained in Proposition \ref{parapolicguys}  can be extended to an interval $ [0,T]$ where $ T $ does not depend on $\epsilon$. Namely, we prove 
 \begin{proposition}\label{mainuniformsec}
For any $ \epsilon \in (0,1]$ let $ u^{\epsilon}$ denote the solution obtained in Proposition \ref{parapolicguys}. Then there exists $ T : = T( \Vert u_{0} \Vert_{B^{\sigma_{0}}_{Q}}) > 0$ so that $ u^{\epsilon}$ solves \eqref{SSparabolic} on the time interval $ [0, T]$. 
 Moreover, whenever $ u_{0} \in B^{\sigma}_{Q}$ with $ \sigma \geq \sigma_{0}$ then  we have the following uniform estimate 
 \begin{equation}\label{uniformepsilon}
    \sup_{0< \epsilon \leq 1} \sup_{t \in [0,T]} \Vert u^{\epsilon} \Vert_{B^{\sigma}_{Q}} \lesssim \Vert u_{0} \Vert_{B^{\sigma}_{Q}}.
 \end{equation}
 \end{proposition}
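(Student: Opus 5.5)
The plan is to prove an $\epsilon$-independent a priori energy estimate for $u^\epsilon$ in $B^{\sigma}_Q$, and then combine it with the continuation property of the local solutions from Proposition \ref{parapolicguys}. Write $w = u^\epsilon - Q$. The $L^2$ part of the norm is handled directly. Pairing \eqref{SSparabolic} with $w$ and arguing as in the conservation law in Proposition \ref{shorttimeexs}, the Schr\"odinger term drops out because $\langle (-\Delta)^{s/2}w \wedge Q, (-\Delta)^{s/2} w\rangle = 0$. The parabolic terms give $-\epsilon\|\nabla w\|_{L^2}^2 + \epsilon\int |\nabla u^\epsilon|^2 \langle w, u^\epsilon\rangle$. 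Since $|u^\epsilon|=1$, we have $\langle w,u^\epsilon\rangle = 1-\langle Q,u^\epsilon\rangle = \tfrac12|w|^2 \ge 0$. This term is bounded by $\epsilon \|\nabla u^\epsilon\|_{L^\infty}^2\|w\|_{L^2}^2$, which is harmless because $\epsilon\le 1$.

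For the high frequencies, fix $k\ge 1$ and apply $\Delta_k$ to the equation. Pair with $\Delta_k w$ and aim for
\[
\frac{d}{dt}\|\Delta_k w\|_{L^2}^2 \lesssim c_k(t)\, 2^{-2k\sigma}\, G\big(\|w\|_{B^{\sigma_0}_{2,1}}\big)\, \|w\|_{B^{\sigma}_{2,1}}\, \|\Delta_k w\|_{L^2}, \qquad \sum_k c_k \lesssim 1 .
\]
Summing $2^{k\sigma}\|\Delta_k w\|_{L^2}$ then gives $\frac{d}{dt}\|w\|_{B^\sigma_{2,1}} \lesssim G(\|w\|_{B^{\sigma_0}_{2,1}})\|w\|_{B^\sigma_{2,1}}$ uniformly in $\epsilon$.

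The heat term $-\epsilon\|\nabla\Delta_k w\|^2$ has a good sign and is simply dropped. The term $\epsilon\,\Delta_k(|\nabla u|^2 u)$ is estimated with the algebra property (Lemma \ref{Besovstuff}) by $\epsilon\,C(\|w\|_{B^{\sigma_0}})\|w\|_{B^{\sigma+1}}$. This term loses a derivative, so I would absorb it into the dropped dissipation. Rather than discarding $\epsilon\|\nabla \Delta_k w\|^2$, I keep it. The worst piece is $2\epsilon\langle \nabla u\cdot\nabla\Delta_k w\, u,\Delta_k w\rangle$, which after a paraproduct decomposition costs $\epsilon\|\nabla\Delta_k w\|\,\|\Delta_kw\|\,\|\nabla u\|_{L^\infty}$. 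By Young's inequality this goes into the dissipation, and everything else is order zero.

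The main obstacle is the Schr\"odinger term $-\Delta_k(u\wedge(-\Delta)^s u)$, which loses $2s$ derivatives. I would split it as
\[
\Delta_k(u\wedge(-\Delta)^s u) = u\wedge(-\Delta)^s\Delta_k u + [\Delta_k, u\wedge](-\Delta)^s u .
\]
The commutator carries a derivative on $u$ and loses only $2s-1<1$ derivatives. A standard commutator/paraproduct estimate bounds it by $c_k 2^{-k\sigma}\|\nabla u\|_{B^{\sigma_0-2}_{\infty}}\|w\|_{B^\sigma_{2,1}}$, using $\sigma_0\ge n/2+10$ to put low-frequency factors in $L^\infty$; here I would use the Taylor expansions of Appendix \ref{s:taylor} (Lemma \ref{Taylorstuff}).

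The principal part needs a geometric cancellation, because $\langle v, u\wedge(-\Delta)^s v\rangle$ does not vanish when $v=\Delta_k w$ is not tangent to the sphere. I would decompose $\Delta_k w = v^{\top} + \langle \Delta_k w,u\rangle u$. For the tangential part, \eqref{comestzz1} bounds the pairing by $\|\nabla u\|_{L^\infty}^{2s}\|v^\top\|_{L^2}^2$; $v^\top$ is only in $H^{\sigma_0}\subset H^2$, which is enough. For the normal part I use $\langle \Delta_k w, u\rangle = \Delta_k\langle w,u\rangle - [\Delta_k,u]\cdot w$ together with $\langle w,u\rangle = \tfrac12|w|^2$ from $|u|=1$. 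Both pieces gain a derivative, as one sees from Lemma \ref{Besovstuff} and the commutator estimate. This is why the normal component is lower order, and the remaining cross terms are handled with \eqref{comestzz2}. The constraint $|u|=1$ is exactly what keeps these losses under control.

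Given the a priori bound, I first apply it with $\sigma=\sigma_0$. Grönwall's inequality gives $\|w(t)\|_{B^{\sigma_0}}\le 2\|u_0-Q\|_{B^{\sigma_0}}$ on $[0,T]$ with $T=T(\|u_0\|_{B^{\sigma_0}_Q})$ independent of $\epsilon$. Because the lifespan in Lemma \ref{prepreloc} depends only on the $B^{\sigma_0}$ norm (for fixed $\epsilon$), the solution can be continued, and uniqueness shows it exists on all of $[0,T]$. Finally, for $\sigma>\sigma_0$, persistence of regularity together with the linear-in-$\|w\|_{B^\sigma}$ inequality and Grönwall on the same interval gives \eqref{uniformepsilon}.
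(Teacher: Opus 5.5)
Your overall strategy ($\epsilon$-uniform $B^{\sigma}$ energy estimate plus continuation) is right. Your $L^2$ estimate and your treatment of the principal term $\langle \Delta_k w, u\wedge(-\Delta)^s\Delta_k u\rangle$ (tangential/normal splitting, \eqref{comestzz1}, \eqref{comestzz2}) agree with the paper, cf.\ Lemma \ref{tngvk}.

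The gap is the commutator. You claim $[\Delta_k,u\wedge](-\Delta)^s u$ is bounded by $c_k2^{-k\sigma}\|w\|_{B^\sigma_{2,1}}$. That is false, and it contradicts your own remark that it loses $2s-1$ derivatives. Its first-order Taylor term is $-\sum_j\partial_j u\wedge T_{k,j}u$, where $T_{k,j}$ has symbol $\i\,\partial_{\xi_j}\varphi_k(\xi)|\xi|^{2s}$ of size $2^{(2s-1)k}$. So this piece really has size $2^{(2s-1)k}\|\nabla u\|_{L^\infty}\|\Delta_{\sim k}w\|_{L^2}$; the $|\alpha|=1$ term of the expansions in Appendix \ref{s:taylor} that you cite carries exactly this factor $2^{k(2s-1)}$. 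Since $s>1/2$, summing puts $\|w\|_{B^{\sigma+2s-1}_{2,1}}$ on the right-hand side and the estimate does not close. The dissipation $\epsilon\|\nabla\Delta_k w\|_{L^2}^2$ can absorb this loss only with constants that blow up as $\epsilon\to 0$, which destroys the uniformity you need.

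The missing idea is that the sphere geometry must be used in the commutator too, not only in the principal part.
\begin{itemize}
\item Pair with the tangent vector $v_k=\Delta_ku-\langle u,\Delta_ku\rangle u$ and use $\partial_ju\perp u$. The tangential part of $T_{k,j}u$ then drops out, because the cross product of two tangent vectors is normal. This leaves $\langle v_k,\partial_ju\wedge T_{k,j}u\rangle=\langle u,T_{k,j}u\rangle\langle v_k,\partial_ju\wedge u\rangle$.
\item Since $T_{k,j}$ kills constants and $|u|=1$, one has $\langle u,T_{k,j}u\rangle=-\tfrac12\tilde C_k(u-Q,u-Q)$. This gains a full derivative, which beats the $2s-1$ loss (Lemma \ref{3lm}).
\item The second-order remainder $R_k$ is harmless (Lemma \ref{taylrreminder}).
\end{itemize}
This is why the paper runs the estimate on the tangential energy $\mathcal{E}_K(u)$, which is equivalent to $\|u-Q\|_{B^{\sigma}_{2,1}}$ for $K$ large by Corollary \ref{enest1}. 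With that energy every term carries a factor $\|v_k\|_{L^2}$, giving $\tfrac12\tfrac{d}{dt}\|v_k\|_{L^2}^2\le \|v_k\|_{L^2}a_k$ with $\sum_k 2^{k\sigma}a_k\lesssim C(\|u-Q\|_{B^2_{\infty,1}})\|u-Q\|_{B^\sigma_{2,1}}$. It also makes the normal term $\epsilon\, u\,\Delta_k|\nabla u|^2$ vanish identically.

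If you keep the full $\|\Delta_k w\|_{L^2}$ instead, you must also handle $\langle u,\Delta_ku\rangle\langle u,\partial_ju\wedge T_{k,j}u\rangle$. It has acceptable size thanks to the gain in $\langle u,\Delta_k u\rangle$, but it comes without a factor $\|\Delta_kw\|_{L^2}$, so the $\ell^1$ summation needs an extra argument.

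A minor point: neither $\Delta_k\langle w,u\rangle$ nor $[\Delta_k,u]\cdot w$ gains a derivative on its own. Only their combination $-\tfrac12 C_k(w,w)$ does, cf.\ Lemma \ref{2lm}.
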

 The proof of Proposition \ref{mainuniformsec} will be a sequence of lemmata.
 To begin, for any $k \in \N$ consider the following operator 
 \begin{equation}\label{ckdef}
    C_{k}(f,g) : = \Delta_{k} ( fg) - f \Delta_{k} (g) - g \Delta_{k}(f). 
 \end{equation}
We start with the following elementary estimate
\begin{lemma}\label{1lm}
For any $ \sigma >1  $ and any $ f,g \in L^{2}(\R^{n})$ we have 
\begin{equation}\label{mnm1}
\sum_{k \geq 1} 2^{k \sigma} \Vert C_{k} (f,g) \Vert_{L^{2}(\R^{n})}\lesssim_{\sigma} \Vert f \Vert_{B^{1}_{{\infty},1}} \Vert g \Vert_{B^{\sigma-1}_{2,1}} +\Vert g \Vert_{B^{1}_{{\infty},1}} \Vert f \Vert_{B^{\sigma-1}_{2,1}} 
\end{equation}

\end{lemma}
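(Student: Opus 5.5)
We prove \eqref{mnm1} with a Bony paraproduct decomposition of $fg$, tailored so that the two "low--high" pieces each produce a commutator with $\Delta_k$ that gains one derivative, while the "high--high" piece is estimated crudely.

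Write
\[
fg = \sum_{j} \Delta_{\leq j-3} f\, \Delta_j g + \sum_{j} \Delta_{\leq j-3} g\, \Delta_j f + \sum_{|j_1-j_2|\leq 2} \Delta_{j_1} f\, \Delta_{j_2} g =: T_f g + T_g f + R(f,g).
\]
This yields
\[
C_k(f,g) = \Big([\Delta_k, T_f] g - \sum_j \Delta_j f\, \Delta_k\Delta_{\geq j-2} g\Big) + (\text{same with } f\leftrightarrow g) + \Delta_k R(f,g).
\]
It is cleaner to group $C_k(f,g) = \Delta_k(T_f g) - T_f\Delta_k g$ plus remainders. Here $f\Delta_k g - T_f \Delta_k g$ equals $\sum_{j\gtrsim k}\Delta_j f\,\Delta_k g$, and similarly with $f,g$ interchanged.

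\textbf{Step 1: commutator term.} Only $|j-k|\leq 3$ contributes to $\Delta_k(T_f g)-T_f\Delta_k g = \sum_{|j-k|\le 3}[\Delta_k,\Delta_{\leq j-3}f]\Delta_j g$. We use the kernel representation $\Delta_k h(x)=\int 2^{kn}\check\varphi(2^k y)h(x-y)\,dy$ together with the mean value theorem:
\[
\|[\Delta_k, a]b\|_{L^2}\lesssim 2^{-k}\|\nabla a\|_{L^\infty}\|b\|_{L^2}.
\]
With $a=\Delta_{\leq j-3}f$ and $\|\nabla \Delta_{\le j-3} f\|_{L^\infty}\lesssim \|f\|_{B^1_{\infty,1}}$, we get
\[
2^{k\sigma}\cdot(\text{this term}) \lesssim \|f\|_{B^1_{\infty,1}}\, 2^{k(\sigma-1)}\|\Delta_{\approx k} g\|_{L^2}.
\]
Summing over $k\ge1$ gives $\|f\|_{B^1_{\infty,1}}\|g\|_{B^{\sigma-1}_{2,1}}$.

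\textbf{Step 2: leftover low--high pieces.} The term $-\sum_{j\ge k-2}\Delta_j f\,\Delta_k g$ is bounded by $\sum_{j\geq k-2}\|\Delta_j f\|_{L^\infty}\|\Delta_k g\|_{L^2}$. We write $\|\Delta_j f\|_{L^\infty}\le 2^{-j}\,2^{j}\|\Delta_j f\|_{L^\infty}$, so the factor $2^{-j}\lesssim 2^{-k}$ supplies the missing derivative. This again gives $\|f\|_{B^1_{\infty,1}}\|g\|_{B^{\sigma-1}_{2,1}}$. The symmetric pieces give $\|g\|_{B^1_{\infty,1}}\|f\|_{B^{\sigma-1}_{2,1}}$.

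\textbf{Step 3: high--high part.} The main point is the term $\Delta_k R(f,g)$, where $\Delta_k R$ only sees $j_1\gtrsim k-3$. We bound
\[
\|\Delta_{j_1}f\,\Delta_{j_2}g\|_{L^2}\le \|\Delta_{j_1} f\|_{L^\infty}\|\Delta_{j_2}g\|_{L^2}\lesssim 2^{-j_1}\|f\|_{B^1_{\infty,1}}\|\Delta_{j_2}g\|_{L^2}.
\]
Then
\[
\sum_{k\ge1}2^{k\sigma}\sum_{j_1\ge k-3}(\cdots)\lesssim \|f\|_{B^1_{\infty,1}}\sum_{j_2}2^{j_2(\sigma-1)}\|\Delta_{j_2}g\|_{L^2},
\]
where the sum $\sum_{k\le j_1+3}2^{k\sigma}\lesssim 2^{j_1\sigma}$ is used. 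This requires only $\sigma>0$, and $\sigma>1$ certainly suffices.

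The $k$-th Littlewood--Paley piece with $k\ge 1$ sits away from the origin, so the low frequencies $\Delta_{\le 0}$ cause no trouble: the inhomogeneous norms absorb them. The only delicate point is Step 1, namely checking that the kernel commutator bound holds with constants uniform in $k$. This follows since $\int |y|\,2^{kn}|\check\varphi(2^ky)|\,dy\approx 2^{-k}$.
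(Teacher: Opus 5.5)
Your proof is correct, but it takes a different route from the paper, and it proves slightly more.

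\textbf{The paper's route.} The paper uses no paraproduct. It splits both $f$ and $g$ into all dyadic pieces and writes $C_k(f_{k_1},g_{k_2})(x)=\int \mathcal{F}^{-1}(\varphi_k)(y)\,(f_{k_1}(x-y)-f_{k_1}(x))(g_{k_2}(x-y)-g_{k_2}(x))\,dy$, which is valid for $k\ge 1$ because $\mathcal{F}^{-1}(\varphi_k)$ has mean zero. It then applies the mean value theorem to the lower-frequency factor for \emph{every} pair $k_1\le k_2$ and every output frequency $k\le k_2+C$, getting $2^{-k}2^{k_1}\Vert f_{k_1}\Vert_{L^\infty}\Vert g_{k_2}\Vert_{L^2}$. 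This is a one-line estimate, but it is lossy whenever $k\ll k_2$. The summation $\sum_{k\le k_2+C}2^{k(\sigma-1)}$ is exactly where $\sigma>1$ is used.

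\textbf{Your route.} Your splitting isolates the only interaction that genuinely needs a commutator gain: low--high with output at the input frequency, your Step 1. You treat the leftover $\Delta_{\gtrsim k}f\,\Delta_k g$ and the high--high part without any cancellation. There you put the higher-frequency factor in $L^\infty$ and take the decay $2^{-j}$ from the $B^1_{\infty,1}$ weight.

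Two consequences stand out. For $g_{k_2}\Delta_k f_{k_1}$ with $k\approx k_1\ll k_2$, you pair $\Vert g\Vert_{B^1_{\infty,1}}$ with $\Vert f\Vert_{B^{\sigma-1}_{2,1}}$, the opposite of the paper's pairing. In the high--high part you use the trivial bound instead of the commutator bound, which for $k\ll k_1$ would be worse than trivial. Together these make your argument work for every $\sigma>0$, a slightly stronger statement than the lemma. The price is more bookkeeping: three kinds of terms and the paraproduct commutator $[\Delta_k,\Delta_{\le j-3}f]$.

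\textbf{One slip to fix.} In your first displayed identity the correction term should be $\sum_j \Delta_j f\,\Delta_k\Delta_{\le j+2}g$, that is, $f\Delta_k g-T_f\Delta_k g=\sum_{|j-k|\le 1}\Delta_{\ge j-2}f\,\Delta_j\Delta_k g$. With $\Delta_{\ge j-2}$ on $g$, the low and high roles are reversed. The verbal description you give right afterwards is the correct one, and it is what you actually estimate.
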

\begin{proof}
Write 
$$
f = \sum_{k_{1} \geq 1} \Delta_{k_{1}} f + \Delta_{\leq 0} f
$$
and similarly, for $g$
$$
g = \sum_{k_{2} \geq 1 } \Delta_{k_{2}} g + \Delta_{\leq 0} g.
$$
Write $ f_{k_{1}} : = \Delta_{k_{1}} f$ , $g_{k_{2}} := \Delta_{k_{2}} g$. Assume first $ k_{1} \leq k_{2}$. Then notice that for any $k \leq k_{2} +C$ we have by Minkowski's inequality 
\begin{equation}\label{mnm2}
\begin{split}
& \Vert C_{k} (f_{k_{1}},g_{k_{2}}) \Vert_{L^{2}(\R^{n}) }
\\
& \lesssim  \int_{\R^{n}} \vert \mathcal{F}^{-1} ( \varphi_{k}) (y) \vert  \left( \Vert (f_{k_{1} } ( x -y) - f_{k_{1}}(x) ) ( g_{k_{2}}(x-y) - g_{k_{2}}(x))  \Vert_{L^{2}_{x}} \right) dy 
\\
& \lesssim \Vert \nabla f_{k_{1}} \Vert_{L^{\infty}(\R^{n})} \Vert g_{k_{1}} \Vert_{L^{2}(\R^{n})} \Vert \int_{\R^{n}} \vert  \mathcal{F}^{-1} ( \varphi_{k}) (y)  \vert \vert y \vert dy 
\\
& \lesssim 2^{k_{1}}  \Vert f_{k_{1}} \Vert_{L^{\infty}(\R^{n})} \Vert g_{k_{1}} \Vert_{L^{2}(\R^{n})} \Vert \int_{\R^{n}} \vert  \mathcal{F}^{-1} ( \varphi_{k}) (y)  \vert \vert y \vert dy 
\\
& \lesssim 2^{-k} 2^{k_{1}} \Vert f_{k_{1}} \Vert_{L^{\infty}(\R^{n})} \Vert g_{k_{2}} \Vert_{L^{2}(\R^{n})}.
\end{split}
\end{equation}
Multiplying by $ 2^{k \sigma} $ and summing over $k \leq k_{2} +C$ yields 
\begin{equation}\label{mnm3}
\begin{split}
& \sum_{k \leq k_{2} +C} 2^{k \sigma} \Vert C_{k} (f_{k_{1}},g_{k_{2}}) \Vert_{L^{2}(\R^{n}) }
\\
& \lesssim 2^{k_{1}} \Vert f_{k_{1}} \Vert_{L^{\infty}(\R^{n})} 2^{k_{2} ( \sigma -1)} \Vert g_{k_{2}} \Vert_{L^{2}( \R^{n})},
\end{split}
\end{equation}
whenever $k_{1} \leq k_{2}$. If $ k_{1} \geq k_{2}$ then by reversing the roles of $f_{k_{1}}, g_{k_{2}}$ in \eqref{mnm2} we obtain 

\begin{equation}\label{mnm4}
\begin{split}
& \sum_{k \leq k_{1} +C} 2^{k \sigma} \Vert C_{k} (f_{k_{1}},g_{k_{2}}) \Vert_{L^{2}(\R^{n}) }
\\
& \lesssim 2^{k_{1}(\sigma -1)} \Vert f_{k_{1}} \Vert_{L^{2}(\R^{n})} 2^{k_{2}} \Vert g_{k_{2}} \Vert_{L^{\infty}( \R^{n})},
\end{split}
\end{equation}
whenever $k_{1} \geq k_{2}$. Combining \eqref{mnm4}, \eqref{mnm3} yields \eqref{mnm1}. 
\end{proof}
%Next, fix $ u \in B^{\sigma}_{Q}$ where $ \sigma \geq \frac{n}{2} +10$. And set $ v : = u - Q$. Then 
%$$
%v_{l} \in B^{\sigma_{0}}_{2,1} \quad \text{ for } l =1,2,3.
%$$
%In particular, $ v = \sum_{k \geq 1} \Delta_{k} v + \Delta_{\leq 0} v$. And by definition of $ \Delta_{k}$ we have $ \Delta_{k} v = \Delta_{k} u$ whenever $ k \geq 1$. 

An immediate consequence of Lemma \ref{1lm} is following estimate. In what follows, for $u,v : \R^{n+1} \to \R^{3}$ we write
$$
C_{k} ( u,v) := \Delta_{k} ( \langle u , v \rangle ) - \langle u, \Delta_{k} v \rangle - \langle \Delta_{k } u , v \rangle.
$$
\begin{lemma}\label{2lm}
Let $u: \R^{n} \to \S^{2}$ , $u \in B^{\sigma}_{Q} $ with $ \sigma \geq \frac{n}{2} +10$. Then we have 
\begin{equation}\label{mnm5}
\sum_{k \geq 1} 2^{k ( \sigma +1)} \Vert \langle u , \Delta_{k} u \rangle \Vert_{L^{2}( \R^{n})} 
\lesssim   \Vert u -Q \Vert_{B^{\sigma}_{2,1}} \Vert u -Q \Vert_{B^{1}_{\infty,1}}.
\end{equation}
Moreover, for the Sobolev norm we get 
\begin{equation}\label{mnm6}
\sum_{k \geq 1} 2^{k \sigma} \Vert \langle u , \Delta_{k} u \rangle \Vert_{H^{1}( \R^{n})} 
\lesssim   \Vert u -Q \Vert_{B^{\sigma}_{2,1}} \Vert u -Q \Vert_{B^{2}_{\infty,1}}.
\end{equation}

\end{lemma}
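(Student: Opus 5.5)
The key is the constraint $|u|=1$: since $\langle u,u\rangle = 1$ is constant, $\Delta_k(\langle u,u\rangle)=0$ for $k\ge 1$. Expanding with the commutator $C_k$ defined above gives, componentwise,
\[
0=\Delta_k(\langle u,u\rangle) = 2\langle u,\Delta_k u\rangle + C_k(u,u), \qquad k\ge 1,
\]
so that $\langle u,\Delta_k u\rangle = -\tfrac12 C_k(u,u)$. Writing $v:=u-Q$ and using that $\Delta_k$ kills constants for $k\ge 1$, we have $C_k(u,u)=C_k(v,v)+2C_k(Q,v)+C_k(Q,Q)$. Here $C_k(Q,v)=\langle Q,\Delta_k v\rangle-\langle Q,\Delta_k v\rangle=0$ and $C_k(Q,Q)=0$. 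Hence
\[
\langle u,\Delta_k u\rangle = -\tfrac12 C_k(v,v), \qquad k\ge 1.
\]
This identity does all the work; $v$ lies componentwise in $B^{\sigma}_{2,1}\subset L^2$, so Lemma \ref{1lm} applies to the components.

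For \eqref{mnm5}, apply Lemma \ref{1lm} to each component pair $(v_l,v_l)$ with exponent $\sigma+1>1$. This gives
\[
\sum_{k\ge1}2^{k(\sigma+1)}\Vert C_k(v,v)\Vert_{L^2}\lesssim \Vert v\Vert_{B^1_{\infty,1}}\Vert v\Vert_{B^{\sigma}_{2,1}},
\]
which is exactly the claim.

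For \eqref{mnm6}, first split $\Vert\cdot\Vert_{H^1}\lesssim\Vert\cdot\Vert_{L^2}+\Vert\nabla\cdot\Vert_{L^2}$. The $L^2$ part is bounded by \eqref{mnm5}, since $2^{k\sigma}\le 2^{k(\sigma+1)}$ and $\Vert\cdot\Vert_{B^1_{\infty,1}}\lesssim\Vert\cdot\Vert_{B^2_{\infty,1}}$. For the gradient part, differentiate the identity before estimating: $\partial_j C_k(v,v)=2C_k(\partial_j v,v)$ by the product rule, since $\Delta_k$ commutes with $\partial_j$. Apply Lemma \ref{1lm} with exponent $\sigma>1$ to $(\partial_j v,v)$:
\[
\sum_{k\ge1}2^{k\sigma}\Vert C_k(\partial_jv,v)\Vert_{L^2}\lesssim \Vert \partial_j v\Vert_{B^1_{\infty,1}}\Vert v\Vert_{B^{\sigma-1}_{2,1}}+\Vert v\Vert_{B^1_{\infty,1}}\Vert\partial_j v\Vert_{B^{\sigma-1}_{2,1}}.
\]
Using $\Vert\partial_jv\Vert_{B^1_{\infty,1}}\lesssim\Vert v\Vert_{B^2_{\infty,1}}$ and $\Vert\partial_jv\Vert_{B^{\sigma-1}_{2,1}}\lesssim\Vert v\Vert_{B^{\sigma}_{2,1}}$ (in the inhomogeneous space, with low frequencies handled trivially) gives \eqref{mnm6}.

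The only real step is the cancellation from the sphere constraint that converts $\langle u,\Delta_k u\rangle$ into the commutator $C_k(v,v)$, which gains a derivative. The point to check is that the constant $Q$ drops out of $C_k$ for $k\ge1$, which is immediate.
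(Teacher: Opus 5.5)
Your proposal is correct and follows the paper's proof essentially verbatim. The sphere constraint gives $\langle u,\Delta_k u\rangle=-\tfrac12 C_k(u-Q,u-Q)$ for $k\ge 1$. Then \eqref{mnm5} is Lemma \ref{1lm} at exponent $\sigma+1$, and \eqref{mnm6} follows by differentiating the identity to $\partial_j\langle u,\Delta_k u\rangle=-C_k(u-Q,\partial_j u)$ and applying Lemma \ref{1lm} at exponent $\sigma$.
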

\begin{proof}
Notice that for any $ k \geq 1$ we have $ \Delta_{k} ( u - Q) = \Delta_{k} u $. Moreover, since $ \vert u \vert^{2} =1 $ then we have
\begin{equation*}
0 = \Delta_{k} ( \vert u \vert^{2} ) = 2 \langle u , \Delta_{k} u \rangle  + C_{k} ( u ,u ).
\end{equation*}
Therefore, 
\begin{equation}\label{1fmnm}
 \langle u , \Delta_{k} u \rangle = -\frac{1}{2}  C_{k} ( u,u) =- \frac{1}{2} C_{k} ( u -Q, u -Q).
\end{equation}
Then \eqref{mnm5} follows from \eqref{mnm1}.

Next, for \eqref{mnm6} notice that by definition 
$$
\Vert \langle u , \Delta_{k} u \rangle \Vert_{H^{1}(\R^{n})} = \Vert \langle u , \Delta_{k} u \rangle \Vert_{L^{2}( \R^{n})} + \Vert \nabla \langle u , \Delta_{k} u \rangle \Vert_{L^{2}( \R^{n})}. 
$$
In view of \eqref{mnm5} we only need to control the derivatives of $ \langle u, \Delta_{k} u \rangle$.  To that end, for any $j \in \{ 1,...,n \}$ we have from \eqref{1fmnm}
$$
\partial_{j} \langle u , \Delta_{k} u \rangle =-  C_{k}( u-Q, \partial_{j} u) .
$$
Hence, applying \eqref{mnm1} gives the result.
\end{proof}
Lemma \ref{2lm} gives us control of the normal projection of $ \Delta_{k} u$. More importantly, an immediate consequence of \eqref{mnm5} is the following 
\begin{equation}\label{3fmnm}
    \sum_{k \geq j} 2^{k ( \sigma )} \Vert \langle u , \Delta_{k} u \rangle \Vert_{L^{2}( \R^{n})} 
\lesssim   2^{-j} \Vert u -Q \Vert_{B^{\sigma}_{2,1}} \Vert u -Q \Vert_{B^{1}_{\infty,1}}, \quad j \geq 1.
\end{equation}
The point of the estimate \eqref{3fmnm} is that, roughly speaking, we control the Besov norm of $ u $ in terms of its $L^{2}$ norm and the Besov norm of the tangential projection of $ \Delta_{k} u$. More precisely, for $ u \in B^{\sigma}_{Q}$, $ \sigma \geq \frac{n}{2}+10 $, and $j \in \N$ consider the energy  
\begin{equation}\label{enE}
\mathcal{E}_{j} (u) = 2^{j \sigma} \Vert u - Q \Vert_{L^{2}(\R^{n})} + \sum_{ k \geq j} 2^{k \sigma} \Vert \Delta_{k} u - \langle u , \Delta_{k} u \rangle u \Vert_{L^{2} ( \R^{n})}.
\end{equation}
Then we have the following
\begin{corollary}\label{enest1}
There exist $K \in \N$ and $C_{0}:= C_{0}(n,s,\sigma) \geq 1 $ so that 
\begin{equation}\label{4fmnm}
\Vert u -Q \Vert_{B^{\sigma}_{2,1}} \leq C_{0} \mathcal{E}_{K}(u) \lesssim C_{0} 2^{\sigma K} \Vert u -Q \Vert_{B^{\sigma}_{2,1}}
\end{equation}
\end{corollary}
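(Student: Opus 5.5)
The right-hand inequality is straightforward: I would bound each piece of $\mathcal{E}_K(u)$ by Besov norms. Since $\Vert \Delta_k u\Vert_{L^2}\le \Vert \Delta_k(u-Q)\Vert_{L^2}$ for $k\ge1$ and $|u|=1$, we get
\[
\Vert \Delta_k u - \langle u,\Delta_k u\rangle u\Vert_{L^2}\le 2\Vert \Delta_k u\Vert_{L^2}.
\]
Therefore $\sum_{k\ge K}2^{k\sigma}\Vert\cdot\Vert_{L^2}\lesssim \Vert u-Q\Vert_{B^\sigma_{2,1}}$. The remaining term satisfies $2^{K\sigma}\Vert u-Q\Vert_{L^2}\le 2^{K\sigma}\Vert u-Q\Vert_{B^\sigma_{2,1}}$. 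This gives the upper bound with the factor $2^{\sigma K}$.

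For the left inequality, I split $\Vert u-Q\Vert_{B^\sigma_{2,1}}$ into low frequencies $\Delta_{\le 0}$ together with $1\le k<K$, and high frequencies $k\ge K$. The low part is at most $C\,2^{K\sigma}\Vert u-Q\Vert_{L^2}$, since each $\Vert\Delta_k(u-Q)\Vert_{L^2}\le\Vert u-Q\Vert_{L^2}$ and the geometric sum gives $\lesssim 2^{K\sigma}$. For the high part I decompose
\[
\Delta_k u=\bigl(\Delta_k u-\langle u,\Delta_k u\rangle u\bigr)+\langle u,\Delta_k u\rangle u .
\]
Using $|u|=1$ and \eqref{3fmnm} with $j=K$, this yields
\[
\sum_{k\ge K}2^{k\sigma}\Vert\Delta_k u\Vert_{L^2}\le \mathcal{E}_K(u)+C2^{-K}\Vert u-Q\Vert_{B^\sigma_{2,1}}\Vert u-Q\Vert_{B^1_{\infty,1}}.
\]

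The main obstacle is absorbing this last term. It would close if $C2^{-K}\Vert u-Q\Vert_{B^1_{\infty,1}}\le 1/2$, which requires choosing $K$ depending on $\Vert u-Q\Vert_{B^1_{\infty,1}}$. By Bernstein, that norm is controlled by $\Vert u-Q\Vert_{B^{n/2+1}_{2,1}}$, and in turn by a low-regularity energy. Since a fixed $K$ cannot work for arbitrarily large data, I would let $K$ and $C_0$ depend on an a priori bound $R$ for $\Vert u-Q\Vert_{B^{n/2+1}_{2,1}}$, which the intended application supplies. With that $K$, absorption gives
\[
\Vert u-Q\Vert_{B^\sigma_{2,1}}\le 2\mathcal{E}_K(u)+C2^{K\sigma}\Vert u-Q\Vert_{L^2}\le C_0\mathcal{E}_K(u),
\]
because $\Vert u-Q\Vert_{L^2}\le 2^{-K\sigma}\mathcal{E}_K(u)$. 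If instead $K$ is to depend only on $(n,s,\sigma)$, I would try to interpolate $\Vert u-Q\Vert_{B^1_{\infty,1}}$ between $L^2$ and $B^\sigma_{2,1}$ and absorb via Young's inequality. I expect this step to be delicate, since the product term is quadratic.
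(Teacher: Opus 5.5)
Your proposal is correct and follows essentially the same route as the paper: the upper bound is immediate, and for the lower bound the paper likewise splits off the frequencies below $K$, decomposes $\Delta_k u$ into its tangential and normal parts, applies \eqref{3fmnm} with $j=K$, and absorbs by choosing $K$ with $2^{-K}\Vert u-Q\Vert_{B^{1}_{\infty,1}}\ll 1$. So $K$ depends on $u$, exactly as in the later application where $K=K(R,\sigma)$. Your own computation shows that only $K$, and not $C_0$, depends on this a priori bound, so the interpolation alternative for a data-independent $K$ is not needed.
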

\begin{proof}
The estimate 
$$
\mathcal{E}_{K} (u) \lesssim 2^{\sigma K} \Vert u -Q \Vert_{B^{\sigma}_{2,1}}
$$
is self evident. We only need to prove 
$$
\Vert u -Q \Vert_{B^{\sigma}_{2,1}} \leq C_{0} \mathcal{E}_{K}(u).
$$
Take $K $ large enough so that 
$$
 2^{-K}  \Vert u -Q \Vert_{B^{1}_{\infty,1}} \ll 1.
$$
Now write 
$$
\Delta_{k} u = \langle u , \Delta_{k} u \rangle u + \left( \Delta_{k} - \langle u , \Delta_{k} u \rangle u \right). 
$$
Then we estimate 
\begin{equation*}
\begin{split}
& \Vert u- Q \Vert_{B^{\sigma}_{2,1}} = \sum_{k =0}^{K} 2^{\sigma k} \Vert \Delta_{k}( u -Q) \Vert_{L^{2}(\R^{n})} + \sum_{k > K} 2^{\sigma k} \Vert \Delta_{k} u \Vert_{L^{2}(\R^{n})} 
\\
& \lesssim 2^{\sigma K} \Vert u -Q \Vert_{L^{2}(\R^{n})} + \sum_{k > K} 2^{\sigma k} \Vert \Delta_{k} u  \Vert_{L^{2}(\R^{n})} 
\\
& \lesssim 2^{\sigma K} \Vert u -Q \Vert_{L^{2}(\R^{n})} + \sum_{k >K} 2^{\sigma k} \Vert \langle u , \Delta_{k} u \rangle \Vert_{L^{2}(\R^{n})} + \sum_{k >K} 2^{\sigma k} \Vert  \Delta_{k} u - \langle u , \Delta_{k} u \rangle u   \Vert_{L^{2}(\R^{n})}
\\
& \overset{\eqref{3fmnm}}{\lesssim} \mathcal{E}_{K}(u) + 2^{-K} \Vert u -Q \Vert_{B^{\sigma}_{2,1}} \Vert u -Q \Vert_{B^{1}_{\infty,1}}
\\
& \leq \mathcal{E}_{K}(u) + \frac{1}{2} \Vert u- Q \Vert_{B^{\sigma}_{2,1}}.
\end{split}
\end{equation*}
This gives the first inequality in \eqref{4fmnm}.
\end{proof}

Next, we need the following technical lemma
\begin{lemma}\label{3lm}
Let $ u \in B^{\sigma}_{Q}$ with $ \sigma \geq \frac{n}{2} +10$. Then for $ j \in \{1,...,n \}$ and $k \geq 1$ consider the following operator 
\begin{equation}\label{ldingterm}
T_{k,j}( f)(x) = \int_{\R^{n}} y_{j} \mathcal{F}^{-1}(\varphi_{k})(y) ( - \Delta)^{s}f(x-y) dy.
\end{equation}
Then we have 
\begin{equation}\label{2fmnm}
  \sum_{k \geq 1} 2^{\sigma k} \Vert  \langle u , T_{k,j} (u)  \rangle \Vert_{L^{2}(\R^{n}) }  \lesssim \Vert u -Q \Vert_{B^{1}_{\infty,1}} + \Vert u -Q \Vert_{B^{1}_{\infty,1}} \Vert u -Q \Vert_{B^{\sigma}_{2,1}}.
\end{equation}
\end{lemma}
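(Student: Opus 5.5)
Since $\int y_j \mathcal{F}^{-1}(\varphi_k)(y)\,dy$ is (up to a constant) $\partial_{\xi_j}\varphi_k(0)=0$ for $k\ge 1$, the operator $T_{k,j}$ annihilates constants. It is the Fourier multiplier with symbol $c\,\partial_{\xi_j}\varphi_k(\xi)\,|\xi|^{2s}$, which is supported where $|\xi|\approx 2^k$ and has size $2^{-k}2^{2sk}$. In particular $T_{k,j}(u)=T_{k,j}(u-Q)=\tilde T_{k,j}\Delta_{\approx k}(u-Q)$, where $\tilde T_{k,j}$ has an $L^1$ kernel of norm $\lesssim 2^{(2s-1)k}$ after rescaling. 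So the trivial bound is $\Vert\langle u,T_{k,j}u\rangle\Vert_{L^2}\lesssim 2^{(2s-1)k}\Vert\Delta_{\approx k}(u-Q)\Vert_{L^2}$. This loses $2s-1$ derivatives relative to \eqref{2fmnm}, and the proof has to recover that loss from the geometry $|u|=1$, exactly as in Lemma \ref{2lm}.

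The plan is to mimic the identity $\langle u,\Delta_k u\rangle=-\frac12 C_k(u,u)$. Write $T_{k,j}=S_{k,j}\circ(-\Delta)^s$, where $S_{k,j}f=\int y_j\mathcal{F}^{-1}(\varphi_k)(y)f(x-y)\,dy$ is convolution against an odd kernel with $L^1$ norm $\lesssim 2^{-k}$ and $|y|$-weighted $L^1$ norm $\lesssim 2^{-2k}$. Then:
\begin{enumerate}
\item Use the pointwise identity $\langle u,(-\Delta)^s u\rangle=\frac{c}{2}\int\frac{|u(x)-u(z)|^2}{|x-z|^{n+2s}}\,dz=:\frac12 D_s(u)$, which follows from $|u|=1$.
\item Decompose
\[
\langle u,S_{k,j}(-\Delta)^s u\rangle=S_{k,j}\big(\langle u,(-\Delta)^s u\rangle\big)+\big(\langle u,S_{k,j}w\rangle-S_{k,j}\langle u,w\rangle\big),\qquad w:=(-\Delta)^s u.
\]
The second term is a commutator $[S_{k,j},u]$ acting on $w$. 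Its kernel is $y_j\mathcal{F}^{-1}(\varphi_k)(y)\,(u(x)-u(x-y))$, which gains one more factor of $|y|$.
\end{enumerate}

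For the first term, $\Vert S_{k,j}D_s(u)\Vert_{L^2}\lesssim 2^{-k}\Vert\Delta_{\approx k}D_s(u)\Vert_{L^2}$. Summing with weight $2^{\sigma k}$ gives $\Vert D_s(u)\Vert_{B^{\sigma-1}_{2,1}}$. Writing $D_s$ as a bilinear form in $u-Q$ (like $H_{(-\Delta)^s}(u-Q,u-Q)$) and using a Besov product estimate of Lemma \ref{Besovstuff} type, its $B^{\sigma-1}_{2,1}$ norm is bounded by $\Vert u-Q\Vert_{B^1_{\infty,1}}\Vert u-Q\Vert_{B^{\sigma}_{2,1}}$ plus a low-frequency part. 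That low-frequency part is where the linear term $\Vert u-Q\Vert_{B^1_{\infty,1}}$ in \eqref{2fmnm} would enter, since $2s-1<1$.

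For the commutator, argue as in Lemma \ref{1lm}. Decompose $u-Q$ and $w$ into Littlewood–Paley pieces $f_{k_1}$ and $w_{k_2}$, using the fact that the output lives near frequency $2^k$. In the high–low configuration ($k_1\ll k$, $k_2\approx k$), Minkowski and the mean value theorem give
\[
\Vert[S_{k,j},f_{k_1}]w_{k_2}\Vert_{L^2}\lesssim 2^{-2k}\,2^{k_1}\Vert f_{k_1}\Vert_{L^\infty}\,2^{2sk}\Vert\Delta_{k_2}(u-Q)\Vert_{L^2},
\]
which yields the factor $2^{(2s-2)k}\le 1$ after summing. In the case $k_1\approx k$ with $k_2\lesssim k$, place $f_{k_1}$ in $L^2$ and $w_{k_2}$ in $L^\infty$ with $\Vert w_{k_2}\Vert_{L^\infty}\lesssim 2^{2sk_2}\Vert\Delta_{k_2}(u-Q)\Vert_{L^\infty}$. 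The high–high interactions summing down to frequency $2^k$ are handled as in the third term of \eqref{corr1}.

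I expect the main obstacle to be the bookkeeping of derivatives in $D_s(u)$ and in the high–low commutator, so that exactly $\sigma$ derivatives land on one factor and at most about one on the $L^\infty$ factor. That gives the $B^1_{\infty,1}$ norm rather than a $B^{2s}_{\infty,1}$ norm. If the $B^1$ bound turns out to be too tight, I would first try splitting $(-\Delta)^s=|\nabla|^{2s-1}|\nabla|$ and absorbing the extra $2s-1<1$ derivatives using the $2^{-k}$ gain of $S_{k,j}$.
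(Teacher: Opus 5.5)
Your argument works, but it is organized differently from the paper's. The paper never separates $(-\Delta)^s$ from the cutoff. Exactly as in Lemma \ref{2lm}, it uses $T_{k,j}(|u|^2)=0$ to write $\langle u,T_{k,j}u\rangle=-\tfrac12\tilde C_k(v,v)$, with $v=u-Q$ and $\tilde C_k(f,g)=T_{k,j}(fg)-fT_{k,j}g-gT_{k,j}f$. It regards $T_{k,j}$ as convolution with a single kernel $L_{k,j}$ (symbol $c\,\partial_{\xi_j}\varphi_k(\xi)|\xi|^{2s}$) and uses only $\int L_{k,j}=0$ and $\int|y||L_{k,j}(y)|\,dy\lesssim 2^{(2s-2)k}$. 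Then $\tilde C_k(f,g)(x)=\int L_{k,j}(y)(f(x-y)-f(x))(g(x-y)-g(x))\,dy$. For $k_1\le k_2$, one mean-value bound on the lower-frequency factor, the same for every $k\le k_2+C$, gives \eqref{mnm7} without any case analysis. Your two terms are just a re-splitting of that identity,
\[
-\tfrac12\tilde C_k(v,v)=-\tfrac12 S_{k,j}H_{(-\Delta)^s}(v,v)+\bigl(\langle v,S_{k,j}(-\Delta)^s v\rangle-S_{k,j}\langle v,(-\Delta)^s v\rangle\bigr),
\]
since $\langle u,(-\Delta)^s u\rangle=\tfrac12 D_s(u)=-\tfrac12 H_{(-\Delta)^s}(v,v)$ (summed over components). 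What you gain is that the geometric cancellation and the extra factor $|y|$ in the kernel of $S_{k,j}$ appear separately. What you pay is two bilinear estimates instead of one.

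Three points in your write-up need care.

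The bound for $S_{k,j}D_s(u)$ is not a product estimate of Lemma \ref{Besovstuff} type. Estimating $(-\Delta)^s|v|^2$ and $\langle v,(-\Delta)^s v\rangle$ separately in $B^{\sigma-1}_{2,1}$ loses exactly the $2s-1$ derivatives you want to recover. You must instead run the double-difference/mean-value argument on $H_{(-\Delta)^s}(v_{k_1},v_{k_2})$, which is essentially the paper's estimate with a different kernel.

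The commutator output does not always live near frequency $2^k$. For $k_1\gg k\approx k_2$, the piece $\langle v_{k_1},S_{k,j}w_{k_2}\rangle$ sits at frequency $2^{k_1}$ and falls in none of your three cases. Put $v_{k_1}$ in $L^2$ and $w_{k_2}$ in $L^\infty$, then sum over $k<k_1$ using $\sigma+2s-2>0$.

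Neither route produces the linear term in \eqref{2fmnm}. Both give $\Vert u-Q\Vert_{B^1_{\infty,1}}\Vert u-Q\Vert_{B^{\sigma+2s-2}_{2,1}}$, so there is no low-frequency contribution to find.
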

\begin{proof}
We would like to use a similar argument to Lemma \ref{2lm}. Namely, we want to rewrite $ \langle u, T_{k,j} u \rangle$ in terms of a bilinear operator which annihilates constants and satisfy a conclusion similar to that of Lemma \ref{1lm}. To that end, for $f,g \in L^{2}(\R^n) $ define 
\begin{equation*}
\tilde{C}_{k}(f,g) = T_{k,j} ( fg) - f T_{k,j}(g) - g T_{k,j} (f).
\end{equation*}
Then we first prove 
\begin{equation}\label{mnm7}
\sum_{k \geq 1} 2^{k \sigma} \Vert \tilde{C}_{k} (f,g) \Vert_{L^{2}(\R^{n})}\lesssim_{} \Vert f \Vert_{B^{1}_{{\infty},1}} \Vert g \Vert_{B^{\sigma+2s-2}_{2,1}} +\Vert g \Vert_{B^{1}_{{\infty},1}} \Vert f \Vert_{B^{\sigma+2s-2}_{2,1}} 
\end{equation}
To prove \eqref{mnm7} write $f_{k_{1}} = \Delta_{k_{1}} f$ and $g_{k_{2}} = \Delta_{k_{2}} g$ and assume without loss of generality that $k_{1} \leq k_{2}$. Moreover, define $L_{k,j} $ via the Fourier transform
$$
\hat{L}_{k,j}(\xi) = (\i \partial_{\xi_{j}}  \varphi_{k} (\xi) )\vert \xi \vert^{2s} .
$$
Then we have 
$$
T_{k,j} (f ) = L_{k,j} * f,
$$
and the kernel $L_{k,j}$ satisfy 
$$
\int_{\R^{n}} L_{k,j} (y) dy =0 \quad \int_{\R^{n}} |y| |L_{k,j}(y)| dy\lesssim 2^{(2s-2)k}.
$$
Then we estimate 
\begin{equation}\label{mnm8}
\begin{split}
& \Vert \tilde{C}_{k} (f_{k_{1}},g_{k_{2}}) \Vert_{L^{2}(\R^{n}) }
\\
& = \Vert \int_{\R^{n}} | L_{k,j}(y) |\left( (f_{k_{1} } ( x -y) - f_{k_{1}}(x) ) ( g_{k_{2}}(x-y) - g_{k_{2}}(x))  \right) dy \Vert_{L^{2}(\R^{n})}
\\
& \lesssim \int_{\R^{n}} \vert  L_{k,j}(y) \vert  \left( \Vert (f_{k_{1} } ( x -y) - f_{k_{1}}(x) ) ( g_{k_{2}}(x-y) - g_{k_{2}}(x))  \Vert_{L^{2}_{x}} \right) dy 
\\
& \lesssim 2^{(2s-2)k} \Vert \nabla f_{k_{1}} \Vert_{L^{\infty}(\R^{n})} \Vert g_{k_{2}} \Vert_{L^{2}(\R^{n})}  
\\
& \lesssim 2^{(2s-2)k} 2^{k_{1}}  \Vert f_{k_{1}} \Vert_{L^{\infty}(\R^{n})} \Vert g_{k_{2}} \Vert_{L^{2}(\R^{n})}  
\\
& \lesssim 2^{-2k} 2^{k_{1}} 2^{2sk_{2}} \Vert f_{k_{1}} \Vert_{L^{\infty}(\R^{n})} \Vert g_{k_{2}} \Vert_{L^{2}(\R^{n})}.
\end{split}
\end{equation}
Multiplying by $2^{\sigma k} $ and summing over $k \leq k_{2} +C$ gives \eqref{mnm7}. 

Now notice that $T_{k,j}(c) =0 $ for any constant. Therefore,
$$
0 = T_{k,j}( | u |^{2}) = 2 \langle u , T_{k,j}(u) \rangle  + \tilde{C}_{k} ( u,u).
$$
Using that $ \tilde{C}_{k} ( u,u) = \tilde{C}_{k}(u-Q,u-Q)$ we end up with 
$$
\langle u , T_{k,j}(u) \rangle = - \frac{1}{2} \tilde{C}_{k}(u-Q,u-Q).
$$
Thus, \eqref{2fmnm} follows from \eqref{mnm7} and the assumption that $ s \in (1/2,1)$.
\end{proof}
Next, for any $k \geq 1$, $u,v : \R^{n} \to \R^{3}$ consider the commutator given by 
\begin{equation}\label{wedgecommutator}
[\Delta_{k},u]^{\wedge}(v) : =\Delta_{k} ( u \wedge v ) -u \wedge \Delta_{k} v.
\end{equation}

Recall the definition of $T_{k,j}$ given in \eqref{ldingterm}. Then write
\begin{equation}\label{defofrk}
[\Delta_{k},u]^{\wedge}((-\Delta)^{s}u)= -\sum_{j=1}^{n} ( \partial_{j} u ) \wedge T_{k,j} u + R_{k}
\end{equation}
for some reminder $R_{k}$. The following gives control of $R_{k}$
\begin{lemma}\label{taylrreminder}
Let $ \sigma \geq \frac{n}{2} +10$, $ w \in B^{\sigma}_{2,1}(\R^{n} :\R^{3})$. 
And let $R_{k}$ be given as above, i.e 
$$
R_{k} = [\Delta_{k},w]^{\wedge}( ( - \Delta)^{s}w)+\sum_{j=1}^{n} ( \partial_{j} w ) \wedge T_{k,j} w
$$
Then we have the following 
$$
\sum_{k \geq 1} 2^{\sigma k} \Vert R_{k} \Vert_{L^{2}(\R^{n})} \lesssim \Vert w \Vert_{B^{2}_{\infty,1}} \Vert w \Vert_{B^{\sigma}_{2,1}}
$$
\end{lemma}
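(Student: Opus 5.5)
We write $R_k$ as a first-order Taylor remainder in physical space. Since $\Delta_k$ is convolution with $\check\varphi_k:=\mathcal F^{-1}(\varphi_k)$,
\[
[\Delta_k,w]^{\wedge}(g)(x)=\int_{\R^n}\check\varphi_k(y)\,\bigl(w(x-y)-w(x)\bigr)\wedge g(x-y)\,dy,\qquad g:=(-\Delta)^s w .
\]
Taylor's formula with integral remainder gives
\[
w(x-y)-w(x)=-\sum_j y_j\,\partial_j w(x)+\sum_{|\alpha|=2}\frac{2}{\alpha!}\,y^{\alpha}\int_0^1(1-\theta)\,D^\alpha w(x-\theta y)\,d\theta .
\]
The linear term reproduces $-\sum_j\partial_j w\wedge T_{k,j}w$ by the definition \eqref{ldingterm}, so it cancels with the correction in $R_k$. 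Hence $R_k$ is exactly the quadratic remainder
\[
R_k(x)=\sum_{|\alpha|=2}\frac{2}{\alpha!}\int_0^1(1-\theta)\int_{\R^n}y^\alpha\check\varphi_k(y)\,D^\alpha w(x-\theta y)\wedge g(x-y)\,dy\,d\theta .
\]
The kernel $y^\alpha\check\varphi_k$ has $L^1$ norm $\lesssim 2^{-2k}$.

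A direct estimate of this expression, putting $D^\alpha w$ in $L^\infty$, would lose because $g$ is not frequency localized. We therefore decompose $w=\sum_{k_1}w_{k_1}$ with $w_{k_1}=\Delta_{k_1}w$ (low frequencies grouped in $\Delta_{\le0}$), and $g=\sum_{k_2}g_{k_2}$ with $g_{k_2}=(-\Delta)^s\Delta_{k_2}w$. The operators involved are bilinear, so we estimate $R_k(w_{k_1},g_{k_2})$ in the three usual regimes.

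First, the high–low regime $k_2\le k_1-C$. Then $\Delta_k$ forces $|k-k_1|\lesssim1$. We drop the Taylor structure and bound each piece of $R_k$ separately:
\[
\|\Delta_k(w_{k_1}\wedge g_{k_2})\|_{L^2}+\|w_{k_1}\wedge\Delta_k g_{k_2}\|_{L^2}\lesssim\|w_{k_1}\|_{L^2}\,2^{2sk_2}\|w_{k_2}\|_{L^\infty},
\]
\[
\|\partial_j w_{k_1}\wedge T_{k,j}g_{k_2}\|_{L^2}\lesssim 2^{k_1}\|w_{k_1}\|_{L^2}\,2^{-k}2^{2sk_2}\|w_{k_2}\|_{L^\infty}.
\]
Here the second term of the first line vanishes unless $k\approx k_2$, so it only contributes in the diagonal regime. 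Since $2s<2$, multiplying by $2^{\sigma k}$ and summing over $k_2\le k_1$ gives a bound by $\|w\|_{B^\sigma_{2,1}}\|w\|_{B^2_{\infty,1}}$.

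Second, the low–high regime $k_1\le k_2-C$, which is the essential case. Now $k\approx k_2$, and we use the remainder formula. It gives
\[
\|R_k\|_{L^2}\lesssim 2^{-2k}\|D^2w_{k_1}\|_{L^\infty}\|g_{k_2}\|_{L^2}\lesssim 2^{2k_1}\|w_{k_1}\|_{L^\infty}\,2^{(2s-2)k_2}\|w_{k_2}\|_{L^2}.
\]
Because $2s-2<0$, we get $\sum_k 2^{\sigma k}\|R_k\|_{L^2}\lesssim\|w\|_{B^2_{\infty,1}}\|w\|_{B^{\sigma}_{2,1}}$.

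Third, the diagonal regime $|k_1-k_2|\le C$, where $k\le k_1+C$. The crude bound $\|w_{k_1}\|_{L^\infty}2^{2sk_2}\|w_{k_2}\|_{L^2}$ is used for $\Delta_k(w\wedge g)$, and similarly for the other two terms. Summing $2^{\sigma k}$ over $k\le k_1$ costs $2^{\sigma k_1}$, and $2^{2sk_1}\le 2^{2k_1}$ is absorbed into the $B^2_{\infty,1}$ norm. The $T_{k,j}$ term gains $2^{k_1-k}$, which also sums. The low-frequency piece $\Delta_{\le0}w$ is handled identically, with constants.

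The main obstacle is the low–high case. One must check that the cancellation from the Taylor expansion is exact, i.e. that the linear term matches $T_{k,j}$ precisely, including sign and ordering in the wedge. One must also check that the $\theta$-averaged translate $D^\alpha w_{k_1}(x-\theta y)$ is controlled uniformly in $L^\infty$; this holds by translation invariance combined with Minkowski's inequality. Everything else is bookkeeping.
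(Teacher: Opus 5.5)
Your proof is correct and follows essentially the paper's argument: Littlewood--Paley decomposition of both copies of $w$, the exact second-order Taylor remainder (with $\|\,|y|^{2}\mathcal{F}^{-1}(\varphi_k)\|_{L^1}\lesssim 2^{-2k}$) when $w_{k_1}$ is at the lower frequency, and crude term-by-term bounds when it is at the higher frequency. The only difference is that the paper uses the remainder for all $k_1\lesssim k_2$, diagonal included, and absorbs the $2^{-2k}$ via $\sum_{1\le k\le k_2+C}2^{(\sigma-2)k}\lesssim 2^{(\sigma-2)k_2}$.

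One slip needs fixing. In the high--low regime only $\Delta_k(w_{k_1}\wedge g_{k_2})$ forces $k\approx k_1$; the terms $w_{k_1}\wedge\Delta_k g_{k_2}$ and $\partial_j w_{k_1}\wedge T_{k,j}w_{k_2}$ are supported at $k\approx k_2$. So they do contribute in that regime, and there $2^{k_1-k}\approx 2^{k_1-k_2}$ is not $O(1)$. Your stated bounds still close, because $2^{\sigma k_2}\le 2^{\sigma k_1}$ and $2^{(\sigma-1)k_2+k_1}\le 2^{\sigma k_1}$.
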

\begin{proof}
for $ k_{1},k_{2} \geq 1$  write $ w_{k_{i}} = \Delta_{k_{i}} w$. And $ w_{0} = \Delta_{\leq 0 } w$. Then we can write 
$$
R_{k} = \sum_{ k_{1},k_{2} \geq 0} R_{k,k_{1},k_{2}},
$$
where 
$$
R_{k,k_{1},k_{2}} : = [\Delta_{k},w_{k_{1}}]^{\wedge}( ( - \Delta)^{s}w_{k_{2}})+\sum_{j=1}^{n} ( \partial_{j} w_{k_{1}} ) \wedge T_{k,j} w_{k_{2}}.
$$
Carrying out the convolution and using that $ \mathcal{F}^{-1}(\varphi_{k})$ has integral $0$ for $k \geq 1$, we obtain
\begin{equation*}
\begin{split}
& R_{k,k_{1},k_{2}}(x)
\\
& = \int_{\R^{n}}  \mathcal{F}^{-1}(\varphi_{k})(y) \left( w_{k_{1}}(x-y) - w_{k_{1}}(x) +y \cdot \nabla w_{k_{1}}(x ) \right) \wedge ( - \Delta)^{s} w_{k_{2}}(x-y) dy.
\end{split}
\end{equation*}
If $ 2^{k_{1}} \lesssim 2^{k_{2}}$ then $ 2^{k} \lesssim 2^{k_{2}}$ and we obtain by Minkowki's inequality, \eqref{multiplier2}, and Taylor's Theorem 
\begin{equation*}
\begin{split}
& \Vert R_{k,k_{1},k_{2}} \Vert_{L^{2}(\R^{n})}
\\
& \lesssim 2^{2sk_{2}} \Vert \nabla^{2} w_{k_{1}} \Vert_{L^{\infty}} \Vert w_{k_{2}} \Vert_{L^{2}(\R^{n})} \Vert \left( \vert y \vert^{2} | \mathcal{F}^{-1} ( \varphi_{k} ) (y) dy \right) \Vert_{L^{1}_{y}}
\\
& \lesssim  2^{2k_{1}} \Vert  w_{k_{1}} \Vert_{L^{\infty}(\R^{n})}   2^{k_{2}2s} 2^{-2k}\Vert w_{k_{2}} \Vert_{L^{2}(\R^{n})}
\end{split}
\end{equation*}
Multiplying by $ 2^{k \sigma}$ and summing over $k \leq k_{2}+C$, and using the fact that $ s\in (1/2,1)$ we obtained 
\begin{equation}\label{rmnd1}
\begin{split}
& \sum_{k=1}^{k_{2} +C} 2^{k \sigma} \Vert R_{k,k_{1},k_{2}} \Vert_{L^{2}(\R^{n})}
\\
& \lesssim 2^{2k_{1}} \Vert  w_{k_{1}} \Vert_{L^{\infty}(\R^{n})} 2^{\sigma k_{2}} \Vert w_{k_{2}} \Vert_{L^{2}(\R^{n})},
\end{split}
\end{equation}
whenever $ 2^{k_{1}} \lesssim 2^{k_{2}}$. On the other hand, if $ 2^{k_{2}} \ll 2^{k_{1}}$. Then we place $ w_{k_{1}} $ in the $L^{2}$ norm, that is 
\begin{equation*}
\begin{split}
& \Vert R_{k,k_{1},k_{2}} \Vert_{L^{2}(\R^{n})}
\\
& \lesssim \Vert w_{k_{1}} \Vert_{L^{2}(\R^{n})} 2^{2sk_{2}} \Vert w_{k_{2}} \Vert_{L^{\infty}(\R^{n})} 
+ \sum_{j =1}^{n} \Vert \partial_{j} w_{k_{1} } \wedge T_{k,j} (w_{k_{2}}) \Vert_{L^{2}}
\end{split}
\end{equation*}
The second term is only non-zero if $ k \leq k_{2} +C$. And so for $k \leq k_{2} +C$ we have 
\begin{equation}
\begin{split}
\sum_{j =1}^{n} \Vert \partial_{j} w_{k_{1} } \wedge T_{k,j} (w_{k_{2}}) \Vert_{L^{2}}
\lesssim \Vert w_{k_{1}} \Vert_{L^{2}(\R^{n})} 2^{2sk_{2}+k_{1}} 2^{-k} \Vert w_{k_{2}} \Vert_{L^{\infty}(\R^{n})}
\end{split}
\end{equation}
using this and the assumption that $ s < 1$, we obtain 
\begin{equation}\label{rmnd2}
\begin{split}
& \sum_{k=1}^{k_{1} +C} 2^{k \sigma} \Vert R_{k,k_{1},k_{2}} \Vert_{L^{2}(\R^{n})}
\\
& \lesssim 2^{\sigma k_{1}} \Vert  w_{k_{1}} \Vert_{L^{2}(\R^{n})} 2^{2 k_{2}} \Vert w_{k_{2}} \Vert_{L^{\infty}(\R^{n})} + \Vert w_{k_{1}} \Vert_{L^{2}(\R^{n})} 2^{2sk_{2}+k_{1}} \Vert w_{k_{2}} \Vert_{L^{\infty}(\R^{n})} \sum_{k=1}^{k_{2}+C} 2^{k ( \sigma -1)}
\\
& \lesssim 2^{\sigma k_{1}} \Vert  w_{k_{1}} \Vert_{L^{2}(\R^{n})} 2^{2 k_{2}} \Vert w_{k_{2}} \Vert_{L^{\infty}(\R^{n})}.
\end{split}
\end{equation}
whenever $ 2^{k_{2}} \ll 2^{k_{1}}$. Combining \eqref{rmnd1} and \eqref{rmnd2} gives the result. 
\end{proof}
Now we use the above estimates to prove the following 
\begin{lemma}\label{tngvk}
Let $ u : \R^{n} \to \S^{2}$, $ u \in B^{\sigma}_{Q}$ with $ \sigma \geq \frac{n}{2} +10$. For $k \geq 1$ define 
$$
v_{k} := \Delta_{k} u - \langle u , \Delta_{k} u \rangle u. 
$$
Then we have the following 
\begin{equation*}
\begin{split}
& \vert \int_{\R^{n}} \langle v_{k}(x), \Delta_{k} ( u \wedge (- \Delta)^{s} u ) \rangle dx \vert
\\
& \lesssim \Vert v_{k} \Vert_{L^{2}(\R^{n})} e_{k},
\end{split}
\end{equation*}
where the constants $e_{k}$ satisfy 
\begin{equation}\label{ekbounds}
\sum_{k \geq 1} 2^{\sigma k} e_{k} \lesssim \left( 1+ \Vert u -Q\Vert_{B^{2}_{\infty,1}} \right)^{2} \Vert u - Q \Vert_{B^{\sigma}_{2,1}}.
\end{equation}
\end{lemma}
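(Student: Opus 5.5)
The plan is to split $\Delta_{k}(u\wedge(-\Delta)^{s}u)$ into a principal part, which is pointwise orthogonal to $v_k$ in an integrated sense, and error terms controlled by Lemma \ref{1lm}, Lemma \ref{3lm} and Lemma \ref{taylrreminder}. Using the commutator \eqref{wedgecommutator} and the decomposition \eqref{defofrk}, we write
\[
\Delta_{k}\big(u\wedge(-\Delta)^{s}u\big)= u\wedge(-\Delta)^{s}\Delta_{k}u-\sum_{j=1}^{n}(\partial_{j}u)\wedge T_{k,j}u+R_{k}.
\]
The remainder $R_k$ is bounded by Cauchy--Schwarz and Lemma \ref{taylrreminder} applied with $w=u-Q$. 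Since $(-\Delta)^{s}$ and $T_{k,j}$ annihilate constants, $[\Delta_k,Q]^\wedge=0$, so the commutators for $u$ and $u-Q$ agree. This gives a contribution $e_k^{(1)}=\Vert R_k\Vert_{L^2}$ with $\sum_k 2^{\sigma k}e_k^{(1)}\lesssim\Vert u-Q\Vert_{B^2_{\infty,1}}\Vert u-Q\Vert_{B^\sigma_{2,1}}$.

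For the principal term, write $\Delta_k u=v_k+\langle u,\Delta_k u\rangle u$. Then
\[
u\wedge(-\Delta)^{s}\Delta_{k}u=u\wedge(-\Delta)^{s}v_{k}+u\wedge(-\Delta)^{s}\big(\langle u,\Delta_{k}u\rangle u\big).
\]
Because $v_k\cdot u=0$ pointwise, the lemma preceding \eqref{comestzz1} gives
\[
\Big|\int\langle v_k,u\wedge(-\Delta)^s v_k\rangle\Big|\lesssim\Vert\nabla u\Vert_{L^\infty}^{2s}\Vert v_k\Vert_{L^2}^2 .
\]
Since $\Vert v_k\Vert_{L^2}\lesssim\Vert\Delta_k u\Vert_{L^2}$ and $\Vert\nabla u\Vert_{L^\infty}\lesssim \Vert u-Q\Vert_{B^{2}_{\infty,1}}$, this yields $e_k^{(2)}=(1+\Vert u-Q\Vert_{B^2_{\infty,1}})^2\Vert\Delta_k u\Vert_{L^2}$, which is summable with weight $2^{\sigma k}$.

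For the term $u\wedge(-\Delta)^s(\langle u,\Delta_k u\rangle u)$, I would use the commutator estimate \eqref{comestzz2} together with $u\wedge u=0$:
\[
u\wedge(-\Delta)^{s}\big(\langle u,\Delta_k u\rangle u\big)=u\wedge[(-\Delta)^s,u]\big(\langle u,\Delta_k u\rangle\big)+u\wedge u\,(-\Delta)^s\langle u,\Delta_k u\rangle .
\]
The last term vanishes, and the first is bounded in $L^2$ by $(1+\Vert u-Q\Vert_{B^2_{\infty,1}})\Vert\langle u,\Delta_k u\rangle\Vert_{H^1}$. Estimate \eqref{mnm6} then controls $\sum_k2^{\sigma k}\Vert\langle u,\Delta_k u\rangle\Vert_{H^1}$.

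The remaining, and main, obstacle is the first-order commutator term $\sum_j(\partial_j u)\wedge T_{k,j}u$. Naively this costs $2^{(2s-1)k}$ derivatives, which is not acceptable in the energy estimate. I would exploit the geometry: $\partial_j u\perp u$, so both $\partial_j u$ and $v_k$ lie in the tangent plane $u^\perp$. For two vectors $a,b\in u^\perp$, $a\wedge b$ is parallel to $u$, and therefore $\langle v_k,\partial_ju\wedge P^{\perp}T_{k,j}u\rangle=0$, where $P^\perp$ is the tangential projection. Only the normal component survives:
\[
\langle v_k,(\partial_j u)\wedge T_{k,j}u\rangle=\langle u,T_{k,j}u\rangle\,\langle v_k,\partial_j u\wedge u\rangle .
\]
This is bounded by $\Vert v_k\Vert_{L^2}\Vert\nabla u\Vert_{L^\infty}\Vert\langle u,T_{k,j}u\rangle\Vert_{L^2}$, and Lemma \ref{3lm}, estimate \eqref{2fmnm}, sums this with weight $2^{\sigma k}$ to give $\Vert u-Q\Vert_{B^1_{\infty,1}}(1+\Vert u-Q\Vert_{B^\sigma_{2,1}})$.

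There is a remaining technical point. The low-frequency piece in \eqref{2fmnm} produces a term not linear in $\Vert u-Q\Vert_{B^\sigma_{2,1}}$. I expect it can be absorbed by the $\Vert u-Q\Vert_{B^{1}_{\infty,1}}$ factor, using Besov embedding $B^{\sigma}_{2,1}\hookrightarrow B^{2}_{\infty,1}$. Alternatively, one can redo the proof of \eqref{mnm7} with the factor $\Vert u-Q\Vert_{B^{\sigma+2s-2}_{2,1}}$ kept explicitly. Summing $e_k=e_k^{(1)}+e_k^{(2)}+\dots$ then gives \eqref{ekbounds}.
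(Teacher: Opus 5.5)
Your proposal is correct and follows the paper's proof essentially step by step. It uses the same splitting via \eqref{defofrk}, \eqref{comestzz1} for $u\wedge(-\Delta)^s v_k$, \eqref{comestzz2} together with $u\wedge u=0$ for the normal part $\langle u,\Delta_k u\rangle u$, the observation that the cross product of two tangent vectors is parallel to $u$ for $\sum_j\partial_j u\wedge T_{k,j}u$, and Lemmas \ref{taylrreminder}, \ref{3lm}, \ref{2lm} for the summation. Your extra remarks are correct details that the paper leaves implicit: the reduction of $R_k$ to $w=u-Q$, and the absorption of the stray term from \eqref{2fmnm} via $\Vert\nabla u\Vert_{L^\infty}\Vert u-Q\Vert_{B^1_{\infty,1}}\lesssim\Vert u-Q\Vert_{B^2_{\infty,1}}\Vert u-Q\Vert_{B^\sigma_{2,1}}$.
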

\begin{proof}
For $k \geq 1$ we can write ( using the definition of $R_{k}$, \eqref{defofrk})
$$
\Delta_{k} ( u \wedge (- \Delta)^{s} u )= u \wedge ( - \Delta)^{s} \Delta_{k} u - \sum_{j=1}^{n} ( \partial_{j} u) \wedge T_{k,j} u + R_{k}.
$$
Next, since $ \Delta_{k} u = v_{k} + \langle u , \Delta_{k} u \rangle u$, then we have 
$$
u \wedge (( - \Delta)^{s} \Delta_{k}u) = u \wedge  ( - \Delta)^{s} v_{k} + u \wedge [ ( - \Delta)^{s} , u  ] ( \langle u , \Delta_{k } u \rangle ).
$$
Therefore, using that $ \langle v_{k} , u \rangle = 0$ for any $k \geq 1$, we obtain 
\begin{equation}\label{vbv1}
\begin{split}
& \vert \int_{\R^{n}} \langle v_{k} (x) , \Delta_{k} ( u \wedge (- \Delta)^{s} u ) \rangle dx \vert 
\\
& \lesssim \vert \int_{\R^{n}} \langle v_{k} (x) , u \wedge (- \Delta)^{s} v_{k} \rangle dx \vert
\\
& + \vert \int_{\R^{n}} \vert v_{k} \vert  \vert u  \wedge [ (- \Delta)^{s} u]( \langle u , \Delta_{k} u \rangle) \vert dx
\\
& \overset{\eqref{comestzz1}, \text{H\"older}}{\lesssim} \Vert \nabla u \Vert^{2s}_{L^{\infty}} \Vert v_{k} \Vert^{2}_{L^{2}} + \Vert v_{k} \Vert_{L^{2}} \Vert [ (- \Delta)^{s} , u ] ( \langle u , \Delta_{k} u \rangle ) \Vert_{L^{2}}
\\
& \overset{\eqref{comestzz2}}{\lesssim} \Vert \nabla u \Vert^{2s}_{L^{\infty}} \Vert v_{k} \Vert^{2}_{L^{2}} +  \Vert v_{k} \Vert_{L^{2}} \left( 1 + \Vert u -Q \Vert_{B^{2}_{\infty,1}} \right) \Vert \langle u , \Delta_{k} u \rangle \Vert_{H^{1}}
\\
& \lesssim \left( 1 + \Vert u -Q \Vert_{B^{2}_{\infty,1}} \right)^{2} \Vert v_{k} \Vert^{2}_{L^{2}} +  \Vert v_{k} \Vert_{L^{2}} \left( 1 + \Vert u -Q \Vert_{B^{2}_{\infty,1}} \right) \Vert \langle u , \Delta_{k} u \rangle \Vert_{H^{1}}
\end{split}
\end{equation}
where in the last line we used that $ s < 1$.
Next, by H\"older inequality we have 
\begin{equation}\label{vbv2}
\vert \int_{\R^{n}} \langle v_{k} ( x) , R_{k} (x) \rangle d x \vert  \leq \Vert v_{k} \Vert_{L^{2}} \Vert R_{k} \Vert_{L^{2}}.
\end{equation}
So it remains to estimate the terms involving $\partial_{j} u \wedge T_{k,j} (u)$. Notice that since $ v_{k}, \partial_{j} u $, and $ T_{k,j} u - \langle u , T_{k,j} u \rangle u $ are all orthogonal to $u$ then we can write 
$$
\langle v_{k} , \partial_{j} u \wedge T_{k,j} u \rangle = \langle u , T_{k,j} u \rangle \langle v_{k} , \partial_{j} u \wedge u \rangle.
$$
Hence, 
\begin{equation}\label{vbv3}
\vert \int_{\R^{n}} \langle v_{k} ( x) , \partial_{j} u(x) \wedge T_{k,j} u(x) \rangle dx \vert \leq \Vert \partial_{j} u \Vert_{L^{\infty}} \Vert v_{k} \Vert_{L^{2}} \Vert \langle u , T_{k,j} u \rangle \Vert_{L^{2}}.
\end{equation}
Combining \eqref{vbv1},\eqref{vbv2} and \eqref{vbv3} we obtain
\begin{equation}
\vert \int_{\R^{n}} \langle v_{k}(x), \Delta_{k} ( u \wedge (- \Delta)^{s} u ) \rangle dx \vert
 \lesssim \Vert v_{k} \Vert_{L^{2}(\R^{n})} e_{k}
\end{equation}

where 
\begin{equation}
\begin{split}
& e_{k} = C \left( 1+ \Vert u - Q \Vert_{B^{2}_{\infty,1}} \right) \left( \Vert \langle u , \Delta_{k} u \rangle \Vert_{H^{1}} + \Vert R_{k} \Vert_{L^{2}} + \Vert \langle u , T_{k,j} u \rangle \Vert_{L^{2}}   \right)
\\
&+  \left( 1+ \Vert u - Q \Vert_{B^{2}_{\infty,1}} \right)^{2}  \Vert v_{k} \Vert_{L^{2}}
\end{split}
\end{equation}
Then the estimate \eqref{ekbounds} follows from Lemma \ref{taylrreminder}, Lemma \ref{3lm}, and Lemma \ref{2lm}. 
\end{proof}
We apply the above estimates to the approximate solutions $ u^{\epsilon}$ to obtain the following 
\begin{lemma}\label{estuepsilon}
Let $ u^{\epsilon} \in C([ 0,T] : B^{\sigma}_{Q})$ be a solution to \eqref{SSparabolic} for some $T > 0$. Let $ R > 0 $ be large enough so that 
$$
1+ \sup_{t \in [0,T^{}]} \Vert u^{\epsilon }-Q \Vert_{B^{2}_{\infty,1}} \leq R.
$$
Then there exists $K \in \N$ which depends on $ \sigma , R$ and a constant $C:= C ( n,s, \sigma) \geq 1$ so that 
$$
\mathcal{E}_{K} ( u^{\epsilon}(t))  \leq \mathcal{E}_{K} ( u_{0} ) \text{exp} ( C R^{2} t ) 
$$
for any $ t \in [0,T ]$. 
Here $ \mathcal{E}_{K}$ is defined in \eqref{enE}. 
\end{lemma}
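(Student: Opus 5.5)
The plan is a Gronwall argument on the two pieces of $\mathcal{E}_K(u^\epsilon)$: I will show that each one has time derivative bounded by $CR^2$ times $\mathcal{E}_K(u^\epsilon(t))$, plus terms that are either nonpositive or can be absorbed.

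\textbf{$L^2$ part.} The term $2^{K\sigma}\Vert u^\epsilon - Q\Vert_{L^2}$ is handled as in the energy computation of Proposition \ref{shorttimeexs}. Differentiating $\Vert u^\epsilon - Q\Vert_{L^2}^2$, the cross-product term vanishes exactly as there. The term $\epsilon\langle u^\epsilon - Q, \Delta u^\epsilon\rangle$ equals $-\epsilon\Vert\nabla u^\epsilon\Vert_{L^2}^2$. The term $\epsilon|\nabla u^\epsilon|^2\langle u^\epsilon - Q, u^\epsilon\rangle$ is at most $\epsilon|\nabla u^\epsilon|^2|u^\epsilon - Q|$, which I bound by $C\epsilon R^2\Vert u^\epsilon - Q\Vert_{L^2}\Vert \nabla u^\epsilon\Vert_{L^2}$. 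Using the Bernstein bound $\Vert\nabla u^\epsilon\Vert_{L^2}\lesssim 2^{-K\sigma}\mathcal{E}_K$ after Corollary \ref{enest1} (or absorbing into $-\epsilon\Vert\nabla u^\epsilon\Vert^2$ by Young), this gives $\frac{d}{dt}$ of this piece $\lesssim R^2\mathcal{E}_K$.

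\textbf{High-frequency part.} For $k\ge K$ set $v_k := \Delta_k u^\epsilon - \langle u^\epsilon, \Delta_k u^\epsilon\rangle u^\epsilon$. Since $v_k\perp u^\epsilon$, we have $\frac{d}{dt}\frac12\Vert v_k\Vert^2 = \langle v_k, \partial_t\Delta_k u^\epsilon\rangle + \langle v_k, \text{terms from }\partial_t u^\epsilon\rangle$. The second group involves $\partial_t u^\epsilon$ multiplied by the normal component $\langle u^\epsilon,\Delta_k u^\epsilon\rangle$, and possibly $\langle v_k,\partial_t u^\epsilon\rangle$-type pairings. These are controlled using Lemma \ref{2lm} together with $\Vert\partial_t u^\epsilon\Vert_{L^\infty}\lesssim R^2$; the parabolic part of $\partial_t u^\epsilon$ is handled similarly since $\epsilon\le 1$. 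For the main term I substitute the equation. The dispersive part is exactly Lemma \ref{tngvk}, contributing $\Vert v_k\Vert e_k$. The parabolic part $\epsilon\langle v_k, \Delta\Delta_k u^\epsilon + \Delta_k(|\nabla u^\epsilon|^2 u^\epsilon)\rangle$ is handled as follows.
\begin{itemize}
\item Write $\Delta\Delta_k u^\epsilon = \Delta v_k + \Delta(\langle u^\epsilon,\Delta_k u^\epsilon\rangle u^\epsilon)$.
\item Integrate by parts to get $-\epsilon\Vert\nabla v_k\Vert^2$.
\item Bound the remaining pieces by $\epsilon\Vert\nabla v_k\Vert$ times $R\Vert\langle u^\epsilon,\Delta_k u^\epsilon\rangle\Vert_{H^1}$ (via \eqref{mnm6}), plus products of $R^2$ with $\Vert \Delta_k (\cdot)\Vert$, using Lemma \ref{1lm}-type commutator bounds.
\item Absorb the $\Vert\nabla v_k\Vert$ factors into the good term by Young.
\end{itemize}
Dividing by $\Vert v_k\Vert$ gives $\frac{d}{dt}\Vert v_k\Vert\lesssim e_k + \tilde e_k$. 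Multiplying by $2^{\sigma k}$ and summing over $k\ge K$, \eqref{ekbounds} gives a bound $CR^2\Vert u^\epsilon - Q\Vert_{B^\sigma_{2,1}}$. The choice of $K$ from Corollary \ref{enest1} (depending on $R$) converts this into $C R^2\mathcal{E}_K$, and Gronwall concludes.

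\textbf{Main obstacle.} I expect the hardest step to be the parabolic commutator terms, where a loss of one derivative must be cancelled by the dissipation $-\epsilon\Vert\nabla v_k\Vert^2$ and not by $R$, so that the constant stays uniform in $\epsilon$. A second delicate point is the justification of differentiating $\Vert v_k\Vert$ at times where it vanishes. I would handle this by working with $\sqrt{\Vert v_k\Vert^2+\delta}$ and letting $\delta\to0$.
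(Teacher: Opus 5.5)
Your overall architecture is the paper's. It uses the same $v_k$, the same formula for $\frac{d}{dt}\frac12\Vert v_k\Vert_{L^2}^2$ based on $v_k\perp u^\epsilon$, Lemma \ref{tngvk} for the dispersive part, Lemmas \ref{2lm} and \ref{1lm} for the remainders, Corollary \ref{enest1} to pass from $\Vert u^\epsilon-Q\Vert_{B^\sigma_{2,1}}$ to $\mathcal{E}_K$, and Gronwall. The step that fails is your treatment of the parabolic normal piece. Set $\phi_k:=\langle u^\epsilon,\Delta_k u^\epsilon\rangle$. If you integrate $\epsilon\int\langle v_k,\Delta(\phi_k u^\epsilon)\rangle\,dx$ by parts and absorb $\epsilon R\Vert\nabla v_k\Vert_{L^2}\Vert\phi_k\Vert_{H^1}$ into $-\epsilon\Vert\nabla v_k\Vert_{L^2}^2$ by Young, the leftover is of size $\epsilon R^2\Vert\phi_k\Vert_{H^1}^2$. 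This is not proportional to $\Vert v_k\Vert_{L^2}$, so you cannot then divide by $\Vert v_k\Vert_{L^2}$ to get $\frac{d}{dt}\Vert v_k\Vert_{L^2}\lesssim e_k+\tilde e_k$: the quotient is uncontrolled where $v_k$ is small. Since $\mathcal{E}_K$ in \eqref{enE} is an $\ell^1$ sum of $\Vert v_k\Vert_{L^2}$ rather than of squares, you genuinely need an inequality linear in $\Vert v_k\Vert_{L^2}$. Working with $\Vert v_k\Vert_{L^2}^2$ instead does not close into the form $\mathcal{E}_K(u^\epsilon(t))\le\mathcal{E}_K(u_0)\exp(CR^2t)$.

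The missing observation is that there is no derivative loss to cancel. Integrate by parts only $\int\langle v_k,\Delta v_k\rangle=-\Vert\nabla v_k\Vert_{L^2}^2\le0$ and simply drop it. For the normal piece, expand $\Delta(\phi_k u^\epsilon)=u^\epsilon\Delta\phi_k+2\sum_j\partial_j u^\epsilon\,\partial_j\phi_k+\phi_k\Delta u^\epsilon$. The first term is pointwise orthogonal to $v_k$. The other two are bounded by $CR\Vert v_k\Vert_{L^2}\Vert\phi_k\Vert_{H^1}$, because $\Vert\nabla u^\epsilon\Vert_{L^\infty}+\Vert\Delta u^\epsilon\Vert_{L^\infty}\lesssim\Vert u^\epsilon-Q\Vert_{B^2_{\infty,1}}\le R$. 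Combine this with $\langle v_k,|\nabla u^\epsilon|^2\Delta_k u^\epsilon\rangle=|\nabla u^\epsilon|^2|v_k|^2$ and the commutator $C_k(|\nabla u^\epsilon|^2,u^\epsilon-Q)$. This is exactly how the paper obtains $\frac12\frac{d}{dt}\Vert v_k\Vert_{L^2}^2\le\Vert v_k\Vert_{L^2}a_k$ with $\sum_k2^{k\sigma}a_k\lesssim R^2\Vert u^\epsilon-Q\Vert_{B^\sigma_{2,1}}$. Uniformity in $\epsilon$ comes from $\epsilon\le1$, not from the dissipation.

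A smaller point concerns the $L^2$ piece. The bound $\Vert\nabla u^\epsilon\Vert_{L^2}\lesssim2^{-K\sigma}\mathcal{E}_K$ is not true. One only gets $2^{K\sigma}\Vert\nabla u^\epsilon\Vert_{L^2}\lesssim2^K\mathcal{E}_K$, which would make $C$ depend on $K$, hence on $R$. Your Young alternative does work there, since that piece is quadratic in $\Vert u^\epsilon-Q\Vert_{L^2}$. The paper's identity $\langle u^\epsilon-Q,u^\epsilon\rangle=\frac12|u^\epsilon-Q|^2$ gives $\frac{d}{dt}\frac12\Vert u^\epsilon-Q\Vert_{L^2}^2\le\frac{\epsilon}{2}R^2\Vert u^\epsilon-Q\Vert_{L^2}^2$ at once.
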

\begin{proof}
Choose $K:= K(R,\sigma)$ to be large enough so that the conclusion of Corollary \ref{enest1} holds for every $ t \in [0,T]$. Namely,  
\begin{equation}\label{c0kr0}
 \Vert u^{\epsilon} (\cdot,t) -Q \Vert_{B^{\sigma}_{2,1}} \leq C_{0} \mathcal{E}_{K}(u^{\epsilon}(\cdot,t)) \lesssim C_{0} 2^{\sigma K} \Vert u^{\epsilon} (\cdot,t) -Q \Vert_{B^{\sigma}_{2,1}} ,\quad t \in [0,T].
\end{equation}
For better readability, in what follows we write $ u^{\epsilon} (t)$ to mean the function $ u^{\epsilon}(\cdot,t)$. By definition 
\begin{equation}\label{engdefprf}
\mathcal{E}_{K} ( u^{\epsilon} (t))= 2^{K \sigma} \Vert u^{\epsilon}(t) - Q \Vert_{L^{2}(\R^{n})} + \sum_{ k \geq K} 2^{k \sigma} \Vert \Delta_{k} u^{\epsilon}(t) - \langle u^{\epsilon}(t) , \Delta_{k} u^{\epsilon}(t) \rangle u^{\epsilon} (t)\Vert_{L^{2} ( \R^{n})}.
\end{equation}
We first estimate the tangential projection. Set 
$$
v_{k}(t) : = \Delta_{k} u^{\epsilon}(t) - \langle u^{\epsilon}(t) , \Delta_{k} u^{\epsilon}(t) \rangle u^{\epsilon}(t). 
$$
Then by definition $ \langle v_{k}(t) , u^{\epsilon} (t) \rangle =0 $. Hence,
taking the derivative of $ \Vert  v_{k} \Vert^{2}_{L^{2}}$ we obtain
\begin{equation}\label{bpz1}
\frac{1}{2}\frac{d}{dt} \Vert v_{k} (t) \Vert^{2}_{L^{2}(\R^{n})} = \int_{\R^{n} } \langle v_{k} (t), \Delta_{k} \partial_{t} u^{\epsilon}(t) \rangle dx - \int_{\R^{n}} \langle u^{\epsilon}(t) , \Delta_{k} u^{\epsilon}(t) \rangle \langle v_{k} (t), \partial_{t} u^{\epsilon} \rangle dx.
\end{equation}
We estimate the first term on the right hand side of \eqref{bpz1}. Using that 
$$
\partial_{t} u^{\epsilon} = \epsilon \Delta u^{\epsilon} - u^{\epsilon} \wedge (-\Delta)^s u^{\epsilon} + \epsilon (  |\nabla u^{\epsilon} |^{2} u^{\epsilon})
$$
we have 
\begin{equation}\label{bpz2}
\begin{split}
& \int_{\R^{n} } \langle v_{k} (t), \Delta_{k} \partial_{t} u^{\epsilon}(t) \rangle dx 
\\
& =  \int_{\R^{n}} \langle v_{k}(t), \Delta_{k} \left( (-\Delta)^{s} u^{\epsilon}(t) \wedge u^{\epsilon}(t) \right) \rangle dx  
\\
& + \epsilon  \int_{\R^{n}} \langle v_{k} (t) , \Delta_{k} \Delta u^{\epsilon}(t) \rangle dx 
\\
& + \epsilon  \int_{\R^{n}} \langle v_{k}(t) , \Delta_{k} \left( \vert \nabla u^{\epsilon}(t) \vert^{2} u^{\epsilon}(t) \right) \rangle dx.
\end{split}
\end{equation}
For the first term on the right hand side of \eqref{bpz2} we use Lemma \ref{tngvk} to obtain
\begin{equation}\label{bpz3}
\vert \int_{\R^{n}} \langle v_{k}(t) ,\Delta_{k} \left( ( - \Delta)^{s} u^{\epsilon}(t) \wedge u^{\epsilon}(t) \right) \rangle dx \vert \lesssim \Vert v_{k}(t) \Vert_{L^{2}} e_{k}(t),
\end{equation}
where $ e_{k}(t)$ satisfy \eqref{ekbounds} with $u^{\epsilon}(t)$. For the second term on the right hand side of \eqref{bpz2} notice that we have 
\begin{equation*}
\begin{split}
& \Delta (\Delta_{k} u^{\epsilon} (t)) = \Delta v_{k}(t) + u^{\epsilon}(t)  \Delta \langle u^{\epsilon} (t) , \Delta_{k} u^{\epsilon}(t) \rangle + 2 \sum_{j =1}^{n} \partial_{j} u^{\epsilon}(t) \partial_{j} \langle u^{\epsilon}(t) , \Delta_{k} u^{\epsilon}(t) \rangle + \Delta u^{\epsilon} \langle u^{\epsilon}, \Delta_{k} u^{\epsilon } \rangle.
\end{split}
\end{equation*}
By integration by parts and using that $ \langle u^{\epsilon} , v_{k} \rangle =0$ we obtain 
\begin{equation*}
\int_{\R^{n}} \langle v_{k} (t) , \Delta (\Delta_{k} u^{\epsilon}(t)) \rangle dx \leq - \Vert \nabla v_{k} (t) \Vert^{2}_{L^{2}(\R^{n})} + C R \Vert v_{k}(t) \Vert_{L^{2}(\R^{n})} \Vert \langle u^{\epsilon}(t) , \Delta_{k} u^{\epsilon}(t) \rangle \Vert_{H^{1}(\R^{n})}.
\end{equation*}
In particular, 
\begin{equation}\label{bpz4}
\int_{\R^{n}} \langle v_{k} (t) , \Delta (\Delta_{k} u^{\epsilon}(t)) \rangle dx \lesssim R \Vert v_{k} \Vert_{L^{2}(\R^{n})}  \Vert \langle u^{\epsilon}(t) , \Delta_{k} u^{\epsilon}(t) \rangle \Vert_{H^{1}(\R^{n})}.
\end{equation}
Lastly, for the third term on the right hand side of \eqref{bpz2} we have 
$$
\Delta_{k} \left( \vert \nabla u^{\epsilon} (t) \vert^{2} u^{\epsilon} \right) = \vert \nabla u^{\epsilon} (t) \vert^{2} \Delta_{k} ( u^{\epsilon} (t) ) + u^{\epsilon} ( t) \Delta_{k} ( \vert \nabla u^{\epsilon}(t) \vert^{2} ) + C_{k} ( \vert \nabla u^{\epsilon}(t) \vert^{2} , u^{\epsilon}(t) - Q),
$$
where $C_{k}$ is defined in \eqref{ckdef}. Then we have (again using that $\langle u^{\epsilon}, v_{k} \rangle =0$) 
\begin{equation}\label{bpz5}
\vert \int_{\R^{n}} \langle v_{k}(t) , \Delta_{k} \left( \vert \nabla u^{\epsilon}(t) \vert^{2} u^{\epsilon}(t) \right) \rangle dx\vert  \lesssim R^{2} \Vert v_{k} (t) \Vert^{2}_{L^{2}(\R^{n})} + \Vert v_{k}(t) \Vert_{L^{2}(\R^{n})} \Vert C_{k}( \vert \nabla u^{\epsilon} (t)\vert^{2}, u^{\epsilon}(t) - Q) \Vert_{L^{2}(\R^{n})} 
\end{equation}
Combining \eqref{bpz3}, \eqref{bpz4}, and \eqref{bpz5} we obtain
\begin{equation}\label{bpz6}
\frac{1}{2} \frac{d}{dt} \left( \Vert v_{k} (t) \Vert^{2}_{L^{2}( \R^{n})} \right) \leq \Vert v_{k}(t) \Vert_{L^{2}(\R^{n})} a_{k}(t)
\end{equation}
where  
\begin{equation*}
\begin{split}
& a_{k}(t) : = C \left( e_{k} (t) + R \Vert \langle u^{\epsilon}(t) , \Delta_{k} u^{\epsilon}(t) \rangle \Vert_{H^{1}(\R^{n})} + \Vert C_{k} ( \vert \nabla u^{\epsilon}(t)  \vert^{2}, u^{\epsilon}(t) -Q )\Vert_{L^{2}(\R^{n})} \right)
\\
& + C R^{2} \left( \Vert v_{k} (t) \Vert_{L^{2}(\R^{n})} + \Vert \langle u^{\epsilon}(t), \Delta_{k} u^{\epsilon}(t) \rangle \Vert_{L^{2}(\R^{n})} \right).
\end{split}
\end{equation*}
In particular, using \eqref{ekbounds}, Lemma \ref{taylrreminder},  Lemma \ref{2lm}, and Lemma \ref{1lm} we have 
\begin{equation}\label{bpz7}
\begin{split}
\sum_{k \geq 1} 2^{k \sigma} a_{k}(t) \lesssim R^{2} \Vert u^{\epsilon}(t) - Q \Vert_{B^{\sigma}_{2,1}}.
\end{split}
\end{equation}
The inequalities \eqref{bpz6}, \eqref{bpz7} readily implies  
\begin{equation}\label{bpz8}
\begin{split}
& \sum_{k \geq K} 2^{k \sigma} \Vert v_{k} (t) \Vert_{L^{2}(\R^{n})}
\\
& \leq \sum_{k \geq K} 2^{\sigma k} \Vert v_{k}(0) \Vert_{L^{2}(\R^{n})} + C R^{2} \int_{0}^{t} \Vert u^{\epsilon}(\tilde{t}) - Q \Vert_{B^{\sigma}_{2,1}} d \tilde{t}. 
\end{split}
\end{equation}
This gives us the estimate needed for the second term on the right hand side of \eqref{engdefprf}. It remains to estimate $ \Vert u^{\epsilon}(t) - Q \Vert_{L^{2}(\R^{n})}$. To that end, 
\begin{equation*}
\begin{split}
& \frac{1}{2} \frac{d}{dt} \left( \Vert u^{\epsilon}(t) - Q \Vert^{2}_{L^{2}(\R^{n})} \right) 
\\
& = - \epsilon \Vert \nabla u^{\epsilon} (t) \Vert^{2}_{L^{2}(\R^{n})} + \frac{ \epsilon}{2} \int_{\R^{n}} | \nabla u^{\epsilon}(t) |^{2} | u^{\epsilon} -Q |^{2} dx  
\\
& \lesssim R^{2} \Vert u^{\epsilon} (t) - Q \Vert^{2}_{L^{2}(\R^{n})}.
\end{split}
\end{equation*}
This readily implies 
\begin{equation}\label{bpz9}
\begin{split}
& \Vert u^{\epsilon}(t) - Q \Vert_{L^{2}(\R^{n})} \leq \Vert u_{0} -Q \Vert_{L^{2}( \R^{n})} + C_{0} R^{2} \int_{0}^{t} \Vert u^{\epsilon}(\tilde{t} ) - Q \Vert_{L^{2}(\R^{n})} d \tilde{t}.
\end{split}
\end{equation}
Multiplying \eqref{bpz9} by $2^{\sigma K}$ and combining it with \eqref{bpz8} we obtain, using \eqref{c0kr0},
\begin{equation*}
\begin{split}
& \mathcal{E}_{K} ( u^{\epsilon}(t))  
\\
& \leq \mathcal{E}_{K}( u_{0}) + C R^{2} \int_{0}^{t} \mathcal{E}_{K}(u^{\epsilon}(\tilde{t})) d \tilde{t}.
\end{split}
\end{equation*}
Now the result follows by Gronwall's inequality.
\end{proof}
Now we can prove Proposition \ref{mainuniformsec}
\begin{proof}[Proof of Proposition \ref{mainuniformsec}]
Fix $ \epsilon \in (0,1]$ and let $ u^{\epsilon}$ be the solution to \eqref{SSparabolic} on $[0,T^{\epsilon}]$. Then let $ [0,T^{\epsilon}_{\text{max}})$ the maximal interval for which $ u^{\epsilon}$ solves \eqref{SSparabolic}.   Clearly whenever $ T^{\epsilon}_{\text{max}} < \infty$ then we have $ \lim_{ t \to T^{\epsilon}_{\text{max}}}  \Vert u^{\epsilon}(t) \Vert_{B^{\sigma_{0}}_{Q}} = \infty $. Next, fix a constant $ C$ so that 
$$
\Vert f \Vert_{B^{2}_{\infty,1}} \leq C \Vert f \Vert_{B^{\sigma_{0}}_{2,1}}.
$$
Then set 
$$
R : = 2 C \left( 1+ \Vert u_{0} \Vert_{B^{\sigma_{0}}_{Q}} \right).
$$
Choose $K : = K (R , \sigma_{0})$ as in Lemma \ref{estuepsilon}. And set 
$$
L:= 2 C_{\sigma_{0}} 2^{K \sigma_{0}} \left( 1 + \Vert u_{0} \Vert_{B^{\sigma_0}_{Q}} \right)
$$
with $ C_{\sigma_{0}}$ large enough so that 
$$
\mathcal{E}_{K}(u_{0}) < L/2.
$$
Such a choice is possible in view of Corollary \ref{enest1}.
Now notice that the bounds 
\begin{equation}\label{recr}
1+ \Vert u^{\epsilon}(t) -Q \Vert_{B^{2}_{\infty,1}} \leq R , \quad \mathcal{E}_{K}(u^{\epsilon}(t)) \leq L
\end{equation}
both hold at time $ t = 0$. Assume next that \eqref{recr} holds for some fixed $ t \in [0,T^{\epsilon}_{\text{max}} )$. 
Then by Lemma \ref{estuepsilon} we have 
$$
\mathcal{E}_{K} ( u^{\epsilon}(t)) \leq \mathcal{E}_{K}(u_{0}) \text{exp}\left(  C R^{2} t \right).
$$
So if $T >0$ is chosen small enough so that 
$$
\text{exp}\left(  C R^{2} t \right) \leq \frac{3}{2}
$$
for any $ t \in [0,T]$.
Then we have 
$$
\mathcal{E}_{K} ( u^{\epsilon}(t)) \leq \frac{3}{2} \mathcal{E}_{K}(u_{0}) \leq \frac{3}{4} L.
$$
Similarly, the second bound in \eqref{recr} improves. 
Notice  \eqref{c0kr0} implies 
$$
\Vert u^{\epsilon}  -Q\Vert_{B^{\sigma_{0}}_{2,1}} \leq C_{0} L.
$$
Moreover, since $ u^{\epsilon}$ solves \eqref{SSparabolic} then we have by the Algebra properties of Besov spaces
\begin{equation*}
\begin{split}
& \Vert \partial_{t} u^{\epsilon} \Vert_{B^{2}_{\infty,1}}
\\
& \lesssim \Vert \partial_{t} u^{\epsilon} \Vert_{B^{\sigma_{0} -2}_{2,1}}
\\
& \lesssim \left( 1+ \Vert u^{\epsilon} \Vert_{B^{\sigma_{0}}_{2,1}} \right)^{3}
\\
& \leq C' \left( 1+ C_{0} L \right)^{3}
\end{split}
\end{equation*}
for some constant $C'$ that depends only on $n,s$ and $ \sigma_{0}$.
This readily implies 
$$
1+ \Vert u^{\epsilon}(t) -Q\Vert_{B^{2}_{\infty,1}} \leq C'( 1+ C_{0} L)^{3}T + \frac{R}{2}   
$$
and if $T $ was chosen so that 
$$
C'( 1+ C_{0} L)^{3}T \leq \frac{R}{4} 
$$
then we obtain
$$
1+ \Vert u^{\epsilon}(t) -Q\Vert_{B^{2}_{\infty,1}} \leq \frac{3}{4} R.
$$
Therefore, we conclude that the bounds in \eqref{recr} hold for any $ t \in [0,T]$. Consequently 
 \begin{equation}\label{uniformepsilonz}
    \sup_{0< \epsilon \leq 1} \sup_{t \in [0,T]} \Vert u^{\epsilon} \Vert_{B^{\sigma_{0}}_{Q}} \lesssim \Vert u_{0} \Vert_{B^{\sigma_{0}}_{Q}}.
 \end{equation}
Hence,
$T < T^{\epsilon}_{\text{max }}.$ And the solutions $u^{\epsilon}$ exist on the common interval $[0,T]$. It remains to show persistent of regularity. Assume $u_{0} \in B^{\sigma}_{Q}$ for some $ \sigma \geq \sigma_{0}$. Then by Proposition \ref{parapolicguys} we obtain that $ u^{\epsilon} \in C( [0,T] : B^{\sigma}_{Q})$. Using \eqref{uniformepsilonz} we have for any $t \in [0,T]$ and any $ \epsilon \in (0,1]$
$$
 \Vert u^{\epsilon}(t)-Q \Vert_{B^{2}_{\infty,1}} \leq C \Vert u_{0} \Vert_{B^{\sigma_{0}}_{Q}}.
$$
Set $R :=  C \Vert u_{0} \Vert_{B^{\sigma_{0}}_{Q}} +1$.
Hence choosing $K := K( R, \sigma)$ so that we have the conclusion of Lemma \ref{estuepsilon} holds, we obtain
\begin{equation*}
\begin{split}
& \Vert u^{\epsilon}(t) \Vert_{B^{\sigma}_{Q}}
\\
& \lesssim \mathcal{E}_{K}(u_{0}) \text{exp}(C R^{2} T)
\\
& \overset{\eqref{4fmnm}}{\lesssim} 2^{K \sigma}  \text{exp}(C R^{2} T) \Vert u_{0} \Vert_{B^{\sigma}_{Q}}.
\end{split}
\end{equation*}
This gives \eqref{uniformepsilon} since $ R,K,$ depend only on $\Vert u \Vert_{B^{\sigma_{0}}_{Q}} $  and $ \sigma$.
\end{proof}
\subsection{Convergence of the approximate solutions} In this section we show that the solutions $u^{\epsilon}$ converge to a function $ u$ which solve the $s$-Schr\"odinger map on the same uniform time interval $[0,T]$. We start with the following
\begin{lemma}\label{convergenceueps}
Let $ \epsilon , \delta \in (0,1)$ and let $u^{\epsilon}$, $u^{\delta} :\R^{n+1} \to \S^{2}$ be the solutions to \eqref{SSparabolic} obtained in Proposition \ref{parapolicguys} on the time interval $[0,T]$ where $T$ does not depend on $ \epsilon$ or $ \delta$. Then we have
\begin{equation}\label{cauchyuep}
\sup_{t \in [0,T]} \Vert u^{\epsilon}(t) - u^{\delta}(t) \Vert_{H^{s}} \lesssim \epsilon + \delta.
\end{equation}
\end{lemma}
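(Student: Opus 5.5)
We plan to prove the estimate by an energy argument for the difference $w := u^{\epsilon} - u^{\delta}$. Since both solutions start from the same data, $w(0)=0$. By Proposition \ref{mainuniformsec} we have the uniform bound
\[
\sup_{t\in[0,T]}\bigl(\Vert u^{\epsilon}\Vert_{B^{\sigma_0}_Q}+\Vert u^{\delta}\Vert_{B^{\sigma_0}_Q}\bigr)\le M,
\]
with $M$ depending only on $\Vert u_0\Vert_{B^{\sigma_0}_Q}$. Hence all norms $\Vert \nabla^m u^{\epsilon}\Vert_{L^\infty}$, $\Vert (-\Delta)^s u^{\epsilon}\Vert_{L^\infty}$ and $\Vert \Delta u^{\epsilon}\Vert_{H^{s}}$ for $m\le 5$ are bounded uniformly in $\epsilon$, and likewise for $u^{\delta}$. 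Subtracting the two copies of \eqref{SSparabolic} gives
\[
\partial_t w = -w\wedge(-\Delta)^s u^{\epsilon} - u^{\delta}\wedge(-\Delta)^s w + \epsilon\Delta u^{\epsilon} - \delta \Delta u^{\delta} + \epsilon|\nabla u^{\epsilon}|^2u^{\epsilon} - \delta|\nabla u^{\delta}|^2u^{\delta}.
\]
We do not split the viscous terms further. Writing them as $\epsilon(\Delta u^{\epsilon}+|\nabla u^{\epsilon}|^2u^{\epsilon})$ minus the $\delta$ analogue, each is bounded in $H^{s}$ by $C(M)(\epsilon+\delta)$. This is exactly where the right-hand side $\epsilon+\delta$ comes from.

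We will estimate $\frac{d}{dt}\Vert w\Vert_{H^s}^2 = \frac{d}{dt}\bigl(\Vert w\Vert_{L^2}^2+\Vert (-\Delta)^{s/2}w\Vert_{L^2}^2\bigr)$. The first nonlinear term $w\wedge(-\Delta)^su^{\epsilon}$ is harmless: it is bounded in $H^s$ by $C(M)\Vert w\Vert_{H^s}$ via the Besov algebra estimates of Lemma \ref{Besovstuff}. The viscous terms contribute at most $C(M)(\epsilon+\delta)\Vert w\Vert_{H^s}$.

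The main obstacle is the term $u^{\delta}\wedge(-\Delta)^s w$, which loses $2s$ derivatives. At the $L^2$ level we have $\langle w, u^{\delta}\wedge (-\Delta)^s w\rangle$. We rewrite this as $\langle (-\Delta)^{s/2} w, [(-\Delta)^{s/2}, u^{\delta}]^{\wedge}(\dots)\rangle$ plus a term that vanishes pointwise by antisymmetry of $\langle a, u\wedge a\rangle$. The commutator, in the spirit of \eqref{comestzz2}, is bounded by $C(M)\Vert w\Vert_{H^{s}}^2$, using that $s<1$ and $u^{\delta}$ is Lipschitz. At the $\dot H^{s/2}$ level we pair with $(-\Delta)^{s}w$ and obtain $\int\langle (-\Delta)^s w, u^{\delta}\wedge(-\Delta)^s w\rangle = 0$ exactly, again by antisymmetry. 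This leaves only the remaining terms, each at most $C(M)\Vert w\Vert_{H^s}\Vert (-\Delta)^{s/2}w\Vert_{L^2}$, plus the viscous contribution $C(\epsilon+\delta)\Vert w\Vert_{H^s}$. Here we use the fact that the viscous terms can be placed in $\dot H^{s}$ thanks to the uniform high regularity, so no integration by parts is needed for them.

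Collecting these bounds gives
\[
\frac{d}{dt}\Vert w\Vert_{H^s}^2\le C(M)\Vert w\Vert_{H^s}^2 + C(M)(\epsilon+\delta)\Vert w\Vert_{H^s}.
\]
Equivalently, $\frac{d}{dt}\Vert w\Vert_{H^s}\le C\Vert w\Vert_{H^s}+C(\epsilon+\delta)$. Gronwall's inequality on $[0,T]$, together with $w(0)=0$, then yields \eqref{cauchyuep}.
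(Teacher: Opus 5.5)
Your argument is correct and is essentially the paper's proof. Both compute $\frac{d}{dt}\Vert w\Vert_{H^s}^2$, remove the top-order term $\langle (-\Delta)^s w, u^{\delta}\wedge(-\Delta)^s w\rangle$ by pointwise antisymmetry, and bound the rest with the Lipschitz commutator estimate \eqref{esyestimate} (valid since $s<1$) and the uniform bounds \eqref{uniformepsilon}. The viscous error is then bounded in $H^s$ by $\epsilon+\delta$ and Gronwall closes the argument; the only difference is that the paper puts $u^{\epsilon}$ rather than $u^{\delta}$ in front of $(-\Delta)^s w$.

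Two cosmetic corrections. Pairing with $(-\Delta)^s w$ is the $\dot H^{s}$ level, not the $\dot H^{s/2}$ level. Also, the bound $\Vert w\wedge(-\Delta)^s u^{\epsilon}\Vert_{H^s}\le C(M)\Vert w\Vert_{H^s}$ follows from the elementary Gagliardo-seminorm product estimate with a $W^{1,\infty}$ factor. It does not follow from the Besov algebra estimates of Lemma \ref{Besovstuff}, which require $\sigma\geq \frac{n}{2}$.
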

\begin{proof}
Set $ v : = u^{\epsilon} - u^{\delta}$. Then notice that a direct computation gives ( recall the definition $ [ (- \Delta)^{s} ,f ]^{\wedge} g := (- \Delta)^{s} ( f \wedge g ) - f \wedge ( - \Delta)^s g$ )
\begin{equation}\label{dervv}
\begin{split}
& \frac{1}{2} \frac{d}{dt} \Vert v(t) \Vert^{2}_{H^{s}}
\\
& = \int_{\R^{n}} \langle [ ( - \Delta)^{s/2},u^{\epsilon}(t)]^{\wedge} v(t) , (-\Delta)^{s/2} v(t) \rangle dx
\\
&+ \int_{\R^{n}} \langle [ ( - \Delta)^{s/2}, (-\Delta)^{s} u^{\delta}(t)]^{\wedge} v(t) , ( - \Delta)^{s/2} v(t) \rangle dx
\\
& + \int_{\R^{n}} \langle v(t) ,  E \rangle dx + \int_{\R^{n}} \langle ( - \Delta)^{s/2} v(t) , (-\Delta)^{s/2}  E \rangle dx
\end{split}
\end{equation}
where 
$$
E:= \epsilon \left( \Delta u^{\epsilon} + | \nabla u^{\epsilon}|^{2} u^{\epsilon} \right) - \delta \left( \Delta u^{\delta} + | \nabla u^{\delta}|^{2} u^{\delta} \right).
$$
Notice that by the embeddings $ B^{\sigma_{0}}_{2,1} \hookrightarrow W^{1,\infty}$, $ B_{2,1}^{\sigma_{0} -2s} \hookrightarrow W^{1,\infty}$ and the uniform estimates  \eqref{uniformepsilon} we have for any $ t\in [0,T]$
\begin{equation}\label{jhg}
\Vert u^{\epsilon}(t) \Vert_{W^{1,\infty}} + \Vert u^{\delta}(t) \Vert_{W^{1,\infty}} \leq C
\end{equation}
where $C := C(n,s\sigma_{0}, \Vert u_{0} \Vert_{B^{\sigma_{0}}_{Q}})$. Similarly, 
\begin{equation}\label{jhg1}
\Vert E \Vert_{H^{s}} \leq C ( \epsilon  + \delta).
\end{equation}
Moreover, 
the same argument that lead to the proof of \eqref{comestzz2} gives   for any $ f,g: \R^{n} \to \R^{3}$ the following 
\begin{equation}\label{esyestimate}
\begin{split}
& \Vert [ ( - \Delta)^{s/2}, f ]^{\wedge} g \Vert_{L^{2}}
\\
& \lesssim \left( \Vert f \Vert_{L^{\infty}} + \Vert \nabla f \Vert_{L^{\infty}} \right) \Vert g \Vert_{L^{2}}.
\end{split}
\end{equation}
Hence using  \eqref{esyestimate}, \eqref{jhg}, and \eqref{jhg1} we have 
\begin{equation*}
\begin{split}
& \frac{d}{dt} \Vert v(t) \Vert^{2}_{H^{s}}
\\
& \leq C \Vert v(t) \Vert^{2}_{H^{s}} + C ( \epsilon + \delta) \Vert v(t) \Vert_{H^{s}}.
\end{split}
\end{equation*}
Since $ v(0) =0$ then this readily implies 
$$
\Vert v(t) \Vert_{H^{s}} \leq C ( \epsilon + \delta)
$$
for any $ t \in [0,T]$. This proves the result.

\end{proof}
Now we finish the proof of Proposition \ref{localexistencethm}
\begin{proof}[Proof of Proposition \ref{localexistencethm}]
Let $ u_{0} \in B^{\sigma_{0}}_{Q}$ with $ \sigma_{0} \geq \frac{n}{2} +10$. Then denote by  $ u^{\epsilon}$ the solutions to \eqref{SSparabolic}. Set $ \epsilon_{m} := 2^{-m}$ then by Lemma \ref{convergenceueps}, there exists $ u : \R^{n}\times [0,T] \to \R^{3}$ with $ u -Q \in C ( [0,T], H^{s})$ and
\begin{equation}\label{convergence}
u^{\epsilon_{m}} -Q \to u-Q \quad \text{in } C([0,T]: H^{s} ).
\end{equation}
Now we show that $u \in L^{\infty} ( [ 0,T] :B^{\sigma}_{Q})$ whenever $ u_{0} \in B^{\sigma}_{Q}$ with $ \sigma \geq \sigma_{0}$. To that end, fix an $M \in \N$ then
\begin{equation*}
\begin{split}
& \Vert \Delta_{\leq 0} (u(\cdot,t) -Q) \Vert_{L^{2}(\R^{n})} + \sum_{k=1}^{M} 2^{k \sigma} \Vert \Delta_{k} (u(\cdot,t) -Q) \Vert_{L^{2}(\R^{n})}
\\
& = \lim_{m \to \infty} \left( \Vert \Delta_{\leq 0} (u^{\epsilon_{m}}(\cdot,t) -Q) \Vert_{L^{2}(\R^{n})} + \sum_{k=1}^{M}2^{k \sigma} \Vert \Delta_{k} (u^{\epsilon_{m}}(\cdot,t) -Q) \Vert_{L^{2}(\R^{n})} \right)
\\
& \overset{\eqref{uniformepsilon}}{\lesssim}  \Vert u_{0} \Vert_{B^{\sigma}_{Q}}.
\end{split}
\end{equation*}
Letting $M \to \infty$
we obtain
\begin{equation}\label{xcxc}
\sup_{t \in [0,T]} \Vert u(\cdot,t) \Vert_{B^{\sigma}_{Q}} \lesssim \Vert u_{0} \Vert_{B^{\sigma}_{Q}}
\end{equation}
hence
$ u \in L^{\infty} ( [0,T] : B^{\sigma}_{Q})$. Now we show that $u$ solves the $ s$-Schr\"odinger map equation. To that end, we need to show for any $ r \in [0, \sigma_{0})$ the following convergence holds 
\begin{equation}\label{stongerconv}
u^{\epsilon_{m}} -Q \to u -Q \quad \text{ in } C ( [0,T] : B^{r}_{2,1} ).
\end{equation}
In view of \eqref{convergence}, we obtain \eqref{stongerconv} once we prove 
\begin{equation}\label{strongconv2}
\sum_{ k \geq 1} 2^{rk} \Vert \Delta_{k} ( u^{\epsilon_{m}}(t) - u(t) ) \Vert_{L^{2}(\R^{n})} \to 0
\end{equation}
as $m \to \infty$. Fix $ K \in \N$ then 
\begin{equation*}
\begin{split}
& \sum_{ k \geq 1} 2^{rk} \Vert \Delta_{k} ( u^{\epsilon_{m}}(t) - u(t) ) \Vert_{L^{2}(\R^{n})}
\\
& \leq \sum^{K}_{ k \geq 1} 2^{rk} \Vert \Delta_{k} ( u^{\epsilon_{m}}(t) - u(t) ) \Vert_{L^{2}(\R^{n})} + \sum_{ k > K} 2^{rk} \Vert \Delta_{k} ( u^{\epsilon_{m}}(t) - u(t) ) \Vert_{L^{2}(\R^{n})}
\\
& \lesssim \sum^{K}_{ k \geq 1} 2^{rk} \Vert \Delta_{k} ( u^{\epsilon_{m}}(t) - u(t) ) \Vert_{L^{2}(\R^{n})} 
\\
& + 2^{- ( \sigma_{0}-r)K} \left( \Vert u^{\epsilon_{m}}(t) - Q \Vert_{B^{\sigma_{0}}_{2,1}} + \Vert u(t) - Q \Vert_{B^{\sigma_{0}}_{2,1}} \right)
\\
& \overset{\eqref{uniformepsilon}, \eqref{xcxc}}{\lesssim}  \sum^{K}_{ k \geq 1} 2^{rk} \Vert \Delta_{k} ( u^{\epsilon_{m}}(t) - u(t) ) \Vert_{L^{2}(\R^{n})} + 2^{-(\sigma_{0}-r) K} \Vert u_{0} \Vert_{B^{\sigma_{0}}_{Q}} 
\end{split}
\end{equation*}
letting $m \to \infty$ first, and then letting $ K \to \infty $ yields \eqref{strongconv2} which proves \eqref{stongerconv}. Notice that since $ \sigma_{0} \geq \frac{n}{2} +10$ then using  \eqref{stongerconv} with $ r= \sigma_{0} -1 $ implies that $ u^{\epsilon_{m}} \to u$ uniformly. In particular $ | u| =1$. Moreover,
 using \eqref{stongerconv} and \eqref{uniformepsilon} we obtain 
$$
\mathcal{N}^{\epsilon_{m}} ( u^{\epsilon_{m}}) \to - u \wedge (- \Delta)^{s}u \quad \text{ in } C ( [0,T] : B^{\sigma_{0} -1 -2s}_{2,1}) 
$$
where $ \mathcal{N}^{\epsilon_{m}} ( u^{\epsilon_{m}})$ is the right hand side of \eqref{SSparabolic}. 
In particular, since $ \sigma_{0} \geq \frac{n}{2} +10$ then
$$
\mathcal{N}^{\epsilon_{m}} ( u^{\epsilon_{m}}) \to - u \wedge (- \Delta)^{s}u 
$$
uniformly as $m \to \infty$. This gives the result since by construction 
$$
u^{\epsilon_{m}}(x,t) = u_{0}(x) + \int_{0}^{t} \mathcal{N}^{\epsilon_{m}}(u^{\epsilon_{m}} ) (x,\tilde{t}) + \epsilon_{m} \Delta u^{\epsilon_{m}} (x, \tilde{t}) d \tilde{t} 
$$
taking the limit of both sides  
$$
u(x,t) = u_{0} (x) - \int_{0}^{t} u (x,\tilde{t}) \wedge (-\Delta)^{s} u (x,\tilde{t}) d \tilde{t}.  
$$
This proves Proposition \ref{localexistencethm} on the interval $[0,T]$. To show that the solution can be extended to $ [-T,T]$. Denote the solution we obtained above by $u^{+}$. Then we use the same argument but for the equation 
\begin{equation}\label{sreverse}
\begin{cases}
 \partial_{t} u =  u \wedge (-\Delta)^s u \mbox{    } \mbox{ on } \R^{n} \times [0,T] \\
 u(x,0) = u_{0}.
\end{cases}
\end{equation}
Namely, we omit the negative sign. Then one obtains a solution $ {u}^{-} $ of \eqref{sreverse} on the same interval $[0,T]$. Then to conclude we simply set
\begin{equation}\
u(x,t)=
\begin{cases}
  u^{+}(x,t) \quad t \in [0,T] \\
 u^{-}(x,-t) \quad t \in [-T,0]
\end{cases}
\end{equation}

\end{proof}

\section{Ghost of a null structure}\label{null}
In this section we include the calculations mentioned in the introduction regarding the null structure when $ s \in (0,1)$. These calculations were not used in the proof of Theorem \ref{mainthm}. However, since this reduction is valid for any $ s \in (0,1)$, we include it for future use.
% This could serve as a starting point for investigating well posedness for the regime $ s \in (0,1/2)$.\\
First, we recall the following lemma, which was proven in \cite{subcritical}
\begin{lemma}\label{nullred}
Let $u : \R^{n+1} \to \S^{2}$ with $ \Vert u -Q \Vert_{L^{\infty}_{x,t}} \ll 1$, $ u \in C( \R :B^{n/2 + 10}_{Q})$. Then set $ f= L(u)$ where $L : \S^{2} \to \C $ is the stereographic projection. Then $u $ solves \eqref{SSchrodingereq} if and only if $ f $ solves 
\begin{equation}\label{null1}
( \i \partial_{t} - ( - \Delta)^{s}) f =  H_{ ( - \Delta)^{s}} ( f, \frac{1}{1+|f|^{2}}) + f H_{(-\Delta)^{s}} ( f, \frac{ \bar{f}}{1+|f|^{2}} ).
\end{equation}
\end{lemma}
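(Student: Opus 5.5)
We compute directly with the inverse stereographic projection. Write $u = L^{-1}(f)$ with $Q=(0,0,1)$, so
\[
u = \Bigl( \frac{2f_{1}}{1+|f|^{2}}, \frac{2f_{2}}{1+|f|^{2}}, \frac{|f|^{2}-1}{1+|f|^{2}} \Bigr)
\]
(up to the choice of pole/sign convention; the smallness $\Vert u-Q\Vert_{L^\infty}\ll 1$ guarantees $f$ is bounded and smooth in $u$, so this is a diffeomorphism near $Q$ and all manipulations are legitimate for $u\in C(\R:B^{n/2+10}_Q)$). Identify the horizontal part with the complex number $w := u_{1}+\i u_{2} = \frac{2f}{1+|f|^{2}}$ and set $u_{3} = 1 - \frac{2}{1+|f|^{2}}$. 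Since $L$ is a diffeomorphism near $Q$, $u$ solves \eqref{SSchrodingereq} iff $f$ solves the pulled-back equation, so it suffices to compute this pullback and show it equals \eqref{null1}.

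The first step is to rewrite the cross product in complex notation. For vectors $a=(a_{h},a_{3})$, $b=(b_h,b_3)$ with $a_h,b_h\in\C$, the horizontal part of $a\wedge b$ is $\i(a_{h} b_{3} - a_{3} b_{h})$, while the vertical part is $\Im(\bar a_{h} b_{h})$. Hence, writing $D:=(-\Delta)^{s}$, the equation $\partial_t u = -u\wedge Du$ gives $\partial_{t} w = -\i( w\, Du_{3} - u_{3}\, Dw )$. Next express $\partial_{t} w$ via $\partial_t f$: from $w = 2f g$ with $g := \frac{1}{1+|f|^{2}}$, and using that the pullback of the Schr\"odinger map flow is tangent, it is cleaner to use the conformality $\partial_{t} f = \frac{1+|f|^{2}}{2}(\langle e_1',\partial_t u\rangle + \i\langle e_2',\partial_t u\rangle)$, i.e. \eqref{eqq1}, and then express the right side $\langle e_j', -u\wedge Du\rangle = \pm\langle e_{3-j}', Du\rangle$ exactly as in \eqref{eqq5}. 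This yields $\i\partial_t f = \frac{1+|f|^2}{2}\,\langle E, Du\rangle$ for the explicit complex frame $E = e_1'+\i e_2'$ (computed from $\partial_{f_j}L^{-1}$), which turns out to be $E = g\,(1-\bar f^{\,2}\text{-type}, \dots)$; I will compute $E$ explicitly as a polynomial in $f,\bar f$ times $g$.

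The key step is then the algebra: expand $Du$ componentwise, with $u_h = 2fg$, $u_3 = 1-2g$, and pair against $E$. Every occurrence of $D(f g)$ and $D(\bar f\cdot \text{stuff})$ is written via $D(ab) = aDb + bDa + H_D(a,b)$. The terms without $H_D$ must reassemble into $-Df$ (this is the identity that holds for $s=1$ up to the usual harmonic-map-type terms, and for the nonlocal case one checks it using only the pointwise relations $\langle u, u\rangle=1$ and $|f|^2 g = 1-g$, which are linear in $D$). What remains are exactly the commutators $H_D(f,g)$ and $f\,H_D(f,\bar f g)$, giving \eqref{null1} after multiplying by $\i$ and moving $Df$ to the left. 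I expect this bookkeeping — verifying that all non-commutator terms cancel to $-Df$ with no leftover like $fDg$ — to be the main obstacle; I would first check it by setting $H_D\equiv 0$ (i.e. pretending $D$ is a derivation, the $s=1$-like formal case) where cancellation reduces to a finite identity in $f,\bar f, g$, and then separately track that each product was split exactly once. The converse direction follows since $L$ is invertible and each step was an equivalence.
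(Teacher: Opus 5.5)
The paper does not prove this lemma; it quotes it from \cite{subcritical}, so your plan has to stand on its own. Your second route, \eqref{eqq1} combined with \eqref{eqq5}, is the right one. As written, however, it fails on conventions, and ``up to the choice of pole'' is not harmless here, because the lemma is exactly a sign-sensitive identity.

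With your $u_3=\frac{|f|^2-1}{1+|f|^2}$, the point $f=0$ corresponds to $-Q$. So $\Vert u-Q\Vert_{L^\infty}\ll1$ makes $|f|$ large, not bounded. Moreover the frame $(e_1',e_2',u)$ is then negatively oriented, so \eqref{eqq5} holds with the opposite sign. The correct outcome is then $(\i\partial_t+(-\Delta)^s)f=-H_{(-\Delta)^s}(f,g)-fH_{(-\Delta)^s}(f,\bar f g)$, i.e.\ $\bar f$ rather than $f$ solves \eqref{null1}. You need $L^{-1}(f)=(2gf_1,2gf_2,2g-1)$ with $g=(1+|f|^2)^{-1}$. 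This is the convention behind the paper's formula for $e_1'$, and it gives $e_1'\wedge e_2'=u$, which is the orientation \eqref{eqq5} uses.

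Your complex form of the cross product is also off by a sign: the horizontal part of $a\wedge b$ is $\i(a_3b_h-a_hb_3)$, as $e_1\wedge e_3=-e_2$ shows. You drop that route, but it would have produced a wrong equation.

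The second problem is that the computation which constitutes the lemma is only announced, and you predict its outcome with the wrong sign. With the correct convention and $D:=(-\Delta)^s$, one finds $E=e_1'+\i e_2'=g\,(1-f^2,\ \i(1+f^2),\ -2f)$ and $Du=2\bigl(D(gf_1),D(gf_2),Dg\bigr)$. Hence, with the bilinear pairing,
\[
\i\partial_t f=\frac{1}{2g}\langle E,Du\rangle=D(gf)-2f\,Dg-f^{2}D(g\bar f).
\]
The only nonlocal input needed is that $D$ annihilates constants: $D(f\cdot\bar f g)=D(1-g)=-Dg$. Therefore
\[
fH_D(f,\bar f g)=-f\,Dg-f^2D(g\bar f)-g|f|^2Df .
\]
Adding $H_D(f,g)=D(fg)-g\,Df-f\,Dg$ and using $g(1+|f|^2)=1$ gives $\i\partial_t f=Df+H_D(f,g)+fH_D(f,\bar f g)$, which is \eqref{null1}. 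So the non-commutator terms collapse to $+Df$, not $-Df$. Your ``$H_D\equiv0$'' test is not needed, since the identity holds for every linear operator that kills constants.

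For the converse, note that $\partial_t u$ and $-u\wedge Du$ both lie in $T_u\S^2=\mathrm{span}(e_1',e_2')$. Via \eqref{eqq1} and \eqref{eqq5}, \eqref{null1} says exactly that $\langle e_1',\cdot\rangle+\i\langle e_2',\cdot\rangle$ agrees on these two vectors, which forces $\partial_t u=-u\wedge Du$.
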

Next, we simplify the right hand side of \eqref{null1}.
\begin{lemma}
Let $ f : \R^{n} \to \C $ and $ s \in (0,1)$. Define 
$$
\Phi(f) : = \frac{ \bar{f}}{1+ | f |^{2}}.
$$
Then we have 
\begin{equation}\label{null3}
\begin{split}
& H_{ ( - \Delta)^{s}} ( f, \frac{1}{1+|f|^{2}}) + f H_{(-\Delta)^{s}} ( f, \frac{ \bar{f}}{1+|f|^{2}})
\\
& = \frac{-\bar{f}}{1+|f|^{2}} H_{(-\Delta)^{s}}(f,f) + C_{s} \int_{\R^{n}} \frac{(f(x)-f(y))^{2} ( \Phi(f)(x) - \Phi(f)(y) }{|x-y|^{n+2s}} dy.
\end{split}
\end{equation}
\end{lemma}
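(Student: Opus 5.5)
This identity is purely algebraic, so the plan is to expand both sides pointwise using the singular integral representation of $(-\Delta)^s$ and match integrands. For $s\in(0,1)$ we have $(-\Delta)^s g(x)=C_s\,\mathrm{p.v.}\int_{\R^n}\frac{g(x)-g(y)}{|x-y|^{n+2s}}dy$. Plugging this into the definition $H_{(-\Delta)^s}(a,b)=(-\Delta)^s(ab)-a(-\Delta)^s b-b(-\Delta)^s a$ gives the standard formula
\[
H_{(-\Delta)^{s}}(a,b)(x)=-C_s\int_{\R^n}\frac{(a(x)-a(y))(b(x)-b(y))}{|x-y|^{n+2s}}\,dy .
\]
This holds because $a(x)b(x)-a(y)b(y)-a(x)(b(x)-b(y))-b(x)(a(x)-a(y))=-(a(x)-a(y))(b(x)-b(y))$. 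The integral is now absolutely convergent for smooth, decaying $a,b$, so the principal value may be dropped, exactly as in Lemma \ref{pointwiseidentitycomm}. I will first prove the identity for Schwartz-type $f$ and then pass to the regularity class at hand by density.

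Write $\Psi=\frac{1}{1+|f|^2}$ and $\Phi=\Phi(f)=\bar f\,\Psi$, and abbreviate $\delta g:=g(x)-g(y)$. Then the left-hand side of \eqref{null3} equals
\[
-C_s\int_{\R^n}\frac{\delta f\,\big(\delta\Psi+f(x)\,\delta\Phi\big)}{|x-y|^{n+2s}}dy .
\]
The key algebraic step is to rewrite $\delta\Psi+f(x)\delta\Phi$ using the relation $\Psi+f\Phi=1$, which holds pointwise since $\Psi(1+|f|^2)=1$. Evaluating this relation at $y$ gives $\Psi(y)=1-f(y)\Phi(y)$. Therefore
\[
\delta\Psi+f(x)\delta\Phi=\Psi(x)-1+f(y)\Phi(y)+f(x)\Phi(x)-f(x)\Phi(y).
\]
Using $\Psi(x)-1=-f(x)\Phi(x)$, this collapses to $f(y)\Phi(y)-f(x)\Phi(y)=-\delta f\,\Phi(y)$. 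Next I write $\Phi(y)=\Phi(x)-\delta\Phi$. The integrand then becomes
\[
C_s\,\frac{(\delta f)^2\,\Phi(x)}{|x-y|^{n+2s}}-C_s\,\frac{(\delta f)^2\,\delta\Phi}{|x-y|^{n+2s}} .
\]
The first piece integrates to $-\Phi(x)H_{(-\Delta)^s}(f,f)(x)=\frac{-\bar f}{1+|f|^2}H_{(-\Delta)^s}(f,f)$, which is exactly the first term on the right of \eqref{null3}. The second piece is the claimed trilinear integral. Its overall sign and normalization get absorbed into the constant $C_s$, which the statement leaves unspecified.

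The main obstacle is bookkeeping rather than analysis. I need to track the sign convention in $H$ and make sure the constant $C_s$ in \eqref{null3} is understood as some ($n,s$-dependent) constant, possibly differing in sign from the kernel constant. I also need to justify that each separate integral converges absolutely. For $s<1$ this follows from $|\delta f|\lesssim\min(|x-y|,1)$ and the boundedness of $\Phi$ and $\delta\Phi$, which give integrability near the diagonal and at infinity. Finally, the density argument needs continuity of both sides in $B^{n/2+10}_{2,1}$, which follows from these same kernel bounds.
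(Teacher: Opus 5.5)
Your proof is correct and is essentially the paper's argument: both use the kernel representation of $H_{(-\Delta)^s}$, the relation $\frac{1}{1+|f|^2}=1-f\Phi(f)$, and a pointwise simplification of the numerator. The paper adds $\Phi(f)(x)H_{(-\Delta)^s}(f,f)(x)$ to the left-hand side and collapses the combined numerator to $(f(x)-f(y))^2(\Phi(f)(x)-\Phi(f)(y))$, whereas you first reduce to $(f(x)-f(y))^2\Phi(f)(y)$ and then split off $\Phi(f)(x)$. Your sign bookkeeping also agrees with the paper: it writes $H_{(-\Delta)^s}(f,g)=C_s\int\frac{(f(x)-f(y))(g(x)-g(y))}{|x-y|^{n+2s}}dy$, so its $C_s$ is minus your kernel constant, and with that convention your trilinear term carries exactly the paper's $C_s$.
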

\begin{proof}
Observe that
\[
H_{(-\Delta)^s}\left(f,\frac{1}{1+|f|^2}\right)
=-H_{(-\Delta)^s}\left(f,\frac{|f|^2}{1+|f|^2}\right)
=-H_{(-\Delta)^s}\bigl(f,f\Phi(f)\bigr).
\]
Since $s\in(0,1)$, we have
\[
H_{(-\Delta)^s}(f,g)(x)
=C_s\int_{\mathbb R^n}
\frac{(f(x)-f(y))(g(x)-g(y))}
{|x-y|^{n+2s}}\,dy.
\]
Therefore,
\begin{align*}
&H_{(-\Delta)^s}\left(f,\frac{1}{1+|f|^2}\right)(x)
+f(x)H_{(-\Delta)^s}\bigl(f,\Phi(f)\bigr)(x)
+\Phi(f)(x)H_{(-\Delta)^s}(f,f)(x)\\
&=-H_{(-\Delta)^s}\bigl(f,f\Phi(f)\bigr)(x)
+f(x)H_{(-\Delta)^s}\bigl(f,\Phi(f)\bigr)(x)
+\Phi(f)(x)H_{(-\Delta)^s}(f,f)(x)\\
&=-C_s\int_{\mathbb R^n}
\frac{(f(x)-f(y))
\bigl(f(x)\Phi(f)(x)-f(y)\Phi(f)(y)\bigr)}
{|x-y|^{n+2s}}\,dy\\
&\quad+C_s\int_{\mathbb R^n}
\frac{f(x)(f(x)-f(y))
\bigl(\Phi(f)(x)-\Phi(f)(y)\bigr)}
{|x-y|^{n+2s}}\,dy\\
&\quad+C_s\int_{\mathbb R^n}
\frac{\Phi(f)(x)(f(x)-f(y))^2}
{|x-y|^{n+2s}}\,dy.
\end{align*}
The numerator of the sum of the above three integrals is
\begin{equation*}
\begin{split}
&-(f(x)-f(y))
\bigl(f(x)\Phi(f)(x)-f(y)\Phi(f)(y)\bigr)\\
&\quad+f(x)(f(x)-f(y))
\bigl(\Phi(f)(x)-\Phi(f)(y)\bigr)
+\Phi(f)(x)(f(x)-f(y))^2\\
&=(f(x)-f(y))
\Bigl[
-f(x)\Phi(f)(x)+f(y)\Phi(f)(y)
+f(x)\Phi(f)(x)-f(x)\Phi(f)(y)\\
&\hspace{42mm}
+f(x)\Phi(f)(x)-f(y)\Phi(f)(x)
\Bigr]\\
&=(f(x)-f(y))
\Bigl[
f(y)\Phi(f)(y)-f(x)\Phi(f)(y)
+f(x)\Phi(f)(x)-f(y)\Phi(f)(x)
\Bigr]\\
&=(f(x)-f(y))^2
\bigl(\Phi(f)(x)-\Phi(f)(y)\bigr).
\end{split}
\end{equation*}
Hence,
\begin{equation*}
\begin{split}
&H_{(-\Delta)^s}\left(f,\frac{1}{1+|f|^2}\right)(x)
+f(x)H_{(-\Delta)^s}\bigl(f,\Phi(f)\bigr)(x)
+\Phi(f)(x)H_{(-\Delta)^s}(f,f)(x) 
\\
&=C_s\int_{\mathbb R^n}
\frac{(f(x)-f(y))^2
\bigl(\Phi(f)(x)-\Phi(f)(y)\bigr)}
{|x-y|^{n+2s}}\,dy.
\end{split}
\end{equation*}
Moving $\Phi(f)(x)H_{(-\Delta)^s}(f,f)(x)$ to the right hand side
gives the result.
\end{proof}
The reminder term on the right hand side of \eqref{null3} is well behaved with respect to our resolution spaces. Namely, for any $ f \in F^{\frac{n}{2}}$, if we define 
$$
R(f)(x,t) := \int_{\R^{n}} \frac{(f(x,t)-f(y,t))^{2} ( \Phi(f)(x,t) - \Phi(f)(y,t) }{|x-y|^{n+2s}} dy
$$
then by a similar argument to Proposition \ref{trilineares1}, one can obtain (for $ s \in (1/2,1) $ ) 
\begin{equation}\label{qj}
\Vert R(f) \Vert_{N^{\frac{n}{2}}} \lesssim \Vert f \Vert^{3}_{F^{\frac{n}{2}}}.
\end{equation}
In view of Lemma  \ref{nullred}, the formula \eqref{null3}, and the estimate \eqref{qj}, it is natural to reduce studying \eqref{SSchrodingereq} to studying the scalar equation 
\begin{equation}
\begin{split}
& ( \i \partial_{t} - ( - \Delta)^{s} ) (f) = - \Phi(f) H_{( - \Delta)^{s}}(f,f) + \int_{\R^{n}} \frac{(f(x,t)-f(y,t))^{2} ( \Phi(f)(x,t) - \Phi(f)(y,t) }{|x-y|^{n+2s}} dy
\end{split}
\end{equation}
The issue however is in the leading term 
$$
H_{(-\Delta)^{s}} (f,f).
$$
Namely, one exploits the null property of this nonlinearity by writing 
$$
H_{(-\Delta)^{s}}(f,g)  = ( \i \partial_{t} - ( - \Delta)^{s} ) (f g) - f ( \i \partial_{t} - ( - \Delta)^{s} ) (g) - g ( \i \partial_{t} - ( - \Delta)^{s} )(f).
$$
Then the lost of derivative alluded to in the introduction is present in the following case: Let $k_{1},k_{2} \geq 100$ with $ 2^{k_{2}} \gg 2^{k_{1}}$,  $ f_{k_{1},j_{1}} \in Z_{k_{1}}$, $ f_{k_{2},j_{2}} \in Z_{k_{2}}$ with $ f_{k_{i},j_{i}}$ having modulation $ 2^{j_{i}}$. Assume further that $j_{1},j_{2} $ satisfy 
$$
\max \{ j_{1} , j_{2} \} \leq k_{1} + (2s-1)k_{2} +20.
$$
Then adapting the argument in \cite[Lemma 7.1]{cmp} to the fractional case, one obtains the following bilinear estimate 
\begin{equation*}
\begin{split}
&\Vert \Delta_{k} ( \i \partial_{t} - ( - \Delta)^{s} +\i )^{-1} \left( f_{k_{2},j_{2}} ( \i \partial_{t} - ( - \Delta)^{s} ) f_{k_{1},j_{1}} \right)\Vert_{Z_{k}}
\\
& \lesssim 2^{ k_{2} (1-s)} \Vert f_{k_{2}} \Vert_{Z_{k_{2}}} 2^{k_{1} \frac{n}{2}} \Vert f_{k_{1}} \Vert_{Z_{k_{1}}}.
\end{split}
\end{equation*}
It is unclear whether the factor  $ 2^{k_{2} (1-s)}$  can be removed or not in this specific case. It is worth mentioning that all of the other cases in the proof of \cite[Lemma 7.1]{cmp} can be adapted to when $ s \in (1/2, 1 )$ without the factor $ 2^{k_{2}(1-s)}$. The only exception is the above case. 

\bibliographystyle{abbrv}
\bibliography{bib}

\end{document}